\newtheorem{lthm}{Theorem}
\newtheorem{theorem}{Theorem}
\numberwithin{theorem}{section}
\newtheorem{corollary}[theorem]{Corollary}
\newtheorem{lemma}[theorem]{Lemma}
\newtheorem*{notation}{Notation}
\newtheorem*{theorem*}{Theorem}
\newtheorem{proposition}[theorem]{Proposition}
\theoremstyle{definition}
\newtheorem{remark}[theorem]{Remark}
\newtheorem{definition}[theorem]{Definition}
\def\Q{\ensuremath {{\mathbb Q}}}
\def\D{\ensuremath {{\mathbb D}}}
\def\R{\ensuremath {{\mathbb R}}}
\def\A{\ensuremath {{\mathbb A}}}
\def\C{\ensuremath {{\mathbb C}}}
\def\Z{\ensuremath {{\mathbb Z}}}
\def\I{\ensuremath {{\mathbb I}}}
\def\fp{\ensuremath {{\mathfrak p}}}
\def\fr{\ensuremath {{\mathfrak r}}}
\def\fq{\ensuremath {{\mathfrak q}}}
\def\fa{\ensuremath {{\mathfrak a}}}
\def\fd{\ensuremath {{\mathfrak d}}}
\def\cO{\ensuremath {{\mathcal O}}}
\def\sk{\ensuremath {{\mathsf k}}}
\def\sq{\ensuremath {{\mathsf q}}}
\def\sp{\ensuremath {{\mathsf p}}}
\DeclareMathOperator{\el}{ell}
\DeclareMathOperator{\spl}{spl}
\DeclareMathOperator{\sgn}{sgn}
\DeclareMathOperator{\ch}{ch}
\DeclareMathOperator{\CH}{CH}
\DeclareMathOperator{\vol}{vol}
\DeclareMathOperator{\val}{val}
\DeclareMathOperator{\Gal}{Gal}
\DeclareMathOperator{\Mat}{Mat}
\DeclareMathOperator{\GL}{GL}
\DeclareMathOperator{\SL}{SL}
\DeclareMathOperator{\fin}{fin}
\DeclareMathOperator{\Tr}{Tr}
\DeclareMathOperator{\Norm}{\mathsf{N}}
\DeclareMathOperator{\siminf}{\overset{\infty}{\;\sim\;}}
\begin{document}

\title[]{Beyond Endoscopy via Poisson summation for $\GL(2,K)$}

\author[M.~Emory]{Melissa Emory}
\address{Department of Mathematics, Oklahoma State University; 401 MSCS, Stillwater, OK 74078 USA}
\email{melissa.emory@okstate.edu}

\author[M.~Espinosa Lara]{Malors Espinosa Lara}
\address{Department of Mathematics,University of Toronto; 40 St. George St., Toronto, ON M5S 2E4 CANADA }
\email{srolam.espinosalara@mail.utoronto.ca}

\author[D.~Kundu]{Debanjana Kundu}
\address{Department of Mathematical and Statistical Sciences, UTRGV; 1201 W University Drive, Edinburg, TX 78539 USA}
\email{dkundu@math.toronto.edu}

\author[T.~A.~Wong]{Tian An Wong}
\address{Department of Mathematics,University of Michigan-Dearborn; 4901 Evergreen Rd, Dearborn, MI 48128 USA}
\email{tiananw@umich.edu}

\date{\today}

\begin{abstract}
In the early 2000's, R.~Langlands proposed a strategy called Beyond Endoscopy to attack the principle of functoriality, which is one of the central questions of present day mathematics.
A first step was achieved by A.~Altu\u{g} who worked with the group $\GL(2,\Q)$ in a series of three papers.
We generalize the first part of this result to a class of totally real number fields.
In particular, we cancel the contribution of the trivial and special representations to the trace formula using an additive Poisson summation on the elliptic terms.
\end{abstract}

\maketitle

\tableofcontents
\addtocontents{toc}{\protect\setcounter{tocdepth}{1}}

\section{Introduction}

\subsection{Philosophy}
The principle of functoriality is one of the central open problems in modern number theory.
Its study is a driving force for the development of number theory, harmonic analysis, representation theory, and the deep connections between them.
More precisely, the principle of functoriality aims to provide a link between fundamental arithmetic and analytic information.
The arithmetic information -- which includes data that classify algebraic number fields, and more general algebraic varieties -- may be viewed as arising from the solutions of algebraic equations.
On the other hand, the analytic information -- including data that classify irreducible representations of reductive groups -- arises from spectra of differential equations and group representations.

A successful proof of this principle would imply deep results in these areas.
Na\"{i}vely, any case of functoriality that can be established gives us an explicit reciprocity law between the arithmetic data which is implicit in the automorphic representations.
For more details on this philosophy, we refer the reader to \cite{Art_PoF}.
The trace formula has been developed as a major tool to study the structure of automorphic representations.
Some pioneering work in this direction include \cite{JacquetLanglands1970, Langlands1980, Langlands1997}.
In recent years, progress has been made to prove particular cases of functoriality such as \cite{AliI, AliII, AliIII, clozel2023nonabelian, feng2023modular, FLN, Lafforgue,Venkatesh}.
Regardless of these major achievements, a complete proof of the conjectured principle still appears to be out of reach.
To know more about the history of the problem and recent progress made in this direction, we refer the reader to \cite{arthur2023work}.

\subsection{A strategy to prove the principle of functoriality}
The (stable) trace formula is a certain equality between two distributions, usually called the \emph{geometric side}
and the \emph{spectral side}.
Given a reductive group $G$ over a global field $K$, the main part on the geometric side is 
\[
\sum_{[\gamma]} \vol(\gamma)\cO(\gamma, f),
\]
which is called the \emph{regular elliptic part} of the trace formula; here $[\gamma]$ runs over regular elliptic conjugacy classes in $G$, $\vol(\gamma)$ refers to a certain ad\`{e}lic volume and $\cO(\gamma,f)$ is the orbital integral associated to the test function $f$.
On the other hand, the main term on the \emph{spectral side} is 
\[
\sum_{\pi} m(\pi)\Tr(\pi(f))
\]
which is called the \emph{discrete part} of the trace formula.
It is the part of the spectral side of the trace formula that decomposes as a discrete sum.
Again, $f$ is a test function and $\pi(f)$ is a corresponding representation of $f$, $m(\pi)$ is the multiplicity of $\pi$ in the $L^2$-spectrum of $G$, and $\pi$ runs over the representations occurring in the discrete spectrum.

In \cite{LanBE04}, R.~Langlands proposed a strategy to prove the principle of functoriality via the construction of a \emph{refined} trace formula.
Roughly, given a reductive group $G$ over a global field $K$ and a representation $r$ of the $L$-group (denoted by $^LG$), the idea is to weight the \emph{spectral side} of the trace formula with a coefficient $\mu_r(\pi)$, whence
\[
\sum_{\pi} \mu_r(\pi)m(\pi)\Tr(\pi(f)).
\]
The factor $\mu_r(\pi)$ should be non-zero if and only if the associated automorphic $L$-function $L(z,\pi,r)$ has a pole at $z=1$.
The presence of non-tempered representations in the discrete part of the trace formula implies that the spectral side limit does not converge.

When $K=\Q$, Langlands proved that associated to a test function $f$, there is a family of ad\`{e}lic test functions $f_p$ parameterized by the primes $p$ and showed that
\begin{align*}
\sum_{\pi} \mu_r(\pi)m(\pi)\Tr(\pi(f)) &= 
\lim_{n\rightarrow \infty} \frac{1}{\# \{p \mid p\leq n\}} \sum_{p\leq n} \log(p) \sum_\pi m(\pi)\Tr(r(a_p(\pi)))\Tr(\pi(f))\\
&=\lim_{n\rightarrow \infty} \frac{1}{\# \{p \mid p\leq n\}} \sum_{p\leq n} \log(p) \sum_\pi m(\pi) \Tr(\pi(f_p)),
\end{align*}
where $a_p$ is the Satake parameter of $\pi$ at $p$.
Na{\"i}vely, assuming the equality of the main terms on both sides of the stable trace formula, we see that
\[
\sum_{\pi} \mu_r(\pi)m(\pi)\Tr(\pi(f)) = \lim_{n\rightarrow \infty} \frac{1}{\# \{p \mid p\leq n\}} \sum_{p\leq n} \log(p) \sum_{[\gamma]} \vol(\gamma)\cO(\gamma, f).
\]
As explained above, for the limit to exist, the contribution of the non-tempered spectrum of each of these trace formulas needs to be removed.
In our case of interest, i.e., when $G=\GL(2)$ note that the only non-tempered representation is the trivial one.
Therefore,
\begin{tiny}
\begin{equation}
\label{intro: remove tempered equation}
\lim_{n\rightarrow \infty} \frac{1}{\# \{p \mid p\leq n\}} \sum_{p\leq n} \log(p) \sum_{\pi\neq \mathbf{1}} m(\pi) \Tr(\pi(f_p)) = \lim_{n\rightarrow \infty} \frac{1}{\# \{p \mid p\leq n\}} \sum_{p\leq n} \log(p) \left( \sum_{[\gamma]} \vol(\gamma)\cO(\gamma, f) - \Tr(\mathbf{1}(f_p))\right).
\end{equation}
\end{tiny}
What remains on the spectral side should now converge.
Thus, the geometric side must converge as well.
In \cite{FLN}, it was suggested that an additive Poisson summation-type formula could be applied to the regular elliptic contribution on the geometric side to isolate the trace of the trivial representation and cancel it.

In a series of papers by A.~Altu{\u{g}} \cite{AliI, AliII, AliIII} this cancellation is performed for $G=\GL(2,\Q)$ with $f$ such that its $q$-adic local factor is the
indicator of the maximal compact open subgroup of $\GL(2,\mathbb{Q}_q)$ and when the representation $r$ is the $k$-th symmetric power of the
standard representation.
In this case, the above limit is computed to the expected value.
Concretely, in \cite{AliI} the trace of the trivial representation is isolated from the discrete spectrum -- which is the only representation that has to be removed in this case.
He shows that
\[
\left( \sum_{[\gamma]} \vol(\gamma)\cO(\gamma, f) - \Tr(\mathbf{1}(f_p))\right)
\]
can be manipulated so that the limit on the right hand side of \eqref{intro: remove tempered equation} converges.
This has been regarded as a proof of concept of the strategy proposed in \cite{FLN}, whose hypothetical Poisson summation formula, while highly suggestive, `cannot literally be true,' (see \cite[Footnote 6]{problems}).
Until now, no other example of the removal of the non-tempered spectrum via Poisson summation has been established since then. 

\subsection{Goal of this article}
We extend the work of \cite{AliI} to a general number field $K$ (under certain technical assumptions which will be clarified as we progress through the paper), a problem posed by Arthur in \cite[\S11]{arthur2023work}.
That is to say, we show that 
\[
\left( \sum_{[\gamma]} \vol(\gamma)\cO(\gamma, f) - \Tr(\mathbf{1}(f_p))\right)
\]
can be manipulated so that the limit on the right hand side of \eqref{intro: remove tempered equation} converges when $G=\GL(2,K)$.
The strategy of proving our main result can be divided into five parts: 
\begin{description}
 \item[Rewriting the regular elliptic part]
 The regular elliptic part of the trace formula is given by a concrete sum that runs over the regular elliptic conjugacy classes.
 For $\GL(2)$, these classes can be parameterized by the trace and determinant of their characteristic polynomial.
 In this step, we explicitly rewrite every term involved in terms of these two parameters.

 \item[Application of the approximate functional equation] The regular elliptic part of the trace formula involves a volume term that can be expressed as the value of certain $L$-functions evaluated at $z = 1$.
 However, these $L$-functions are not absolutely convergent on the left of the $\Re(z) = 1$ line.
 To overcome this difficulty, we use the approximate functional equation -- a technique to expand the values within the critical strip in a way that absolute convergence is recovered at the expense of certain weight factors.
 The goal is to modify the $L$-functions appropriately and obtain absolute convergence on the line of interest.

 \item[Problem of completion and Poisson summation]
 We can now perform Poisson summation provided the following two conditions are satisfied:
 \begin{itemize}
 \item the original sum arising from the regular elliptic part of the trace formula can be written as a sum of values of some function over a \textit{complete lattice}.
 \item the function referred to above is a smooth function of compact support.
 \end{itemize}
 The second issue is resolved in a relatively easy way by introducing weight factors when applying the approximate functional equation.
 The first issue is significantly more deep -- the condition of being `regular elliptic' rules out many pairs of integers which cannot be ignored when performing the Poisson summation.
 This is a serious problem which we circumvent by introducing a modified Hilbert symbol.
 We can now perform Poisson summation in the standard way.

 \item[Concrete evaluation of Kloosterman-type sums] Performing Poisson summation yields character sums, called Kloosterman-type sums; here the `character' is the symbol introduced above.
 Evaluating these sums concretely is a critical but lengthy \textit{local} process and the final answer is an explicit quotient of zeta functions.

 \item[Isolation of the trivial representation from the discrete spectrum] There were two steps above which introduced integrals; namely, introducing the weight factors during the approximate functional equation and the definition of the Fourier transform in the Poisson summation.
 In this stage, we invoke complex analysis -- we shift the contours of integration and go over the poles of the weight factors, as well as of those of the quotient of zeta functions coming from the Kloosterman-type sums.
 The original sum gets divided into the sum of many other terms via the residue theorem.
 Some of the terms cancel each other and among those that remain, one is exactly the trace of the trivial representation whereas the others are multiples of the trace of the special representation.
 (We follow here the terminology of Altu\u{g}, referring to the discrete contribution of the continuous spectrum to the trace formula for GL(2) as the special representation.)
\end{description}

\section{Results and detailed outline of the strategy}

The purpose of this section is to discuss the above steps in a little more detail and highlight our main contributions.
In addition to explaining the results we prove in this paper, this section includes a discussion of obstacles that appear when working over a general number field but do not arise over $\Q$.
In \S\ref{new section} we pose some questions and present some speculations of what we think should happen in the more general setting.

For our discussion, once and for all we fix a prime ideal $\fp$ in $K$.
Next, we choose the representation $r$ to be the $k$-th symmetric power of the standard representation for a fixed positive integer $k$.
The choice of test function $f$ is the same as in \cite{AliI} and depends on $k$ -- see Definition~\ref{defi: ffq(k) defi} for the precise form.
Over a general number field, we need to be more careful about the choices of the measures -- see \S\ref{sec: compatibility}.
Throughout this paper we assume that $K$ is a totally real field, although several of the facts we discuss are true in general.
In \S\ref{section: blending step} we explain the main reason for invoking this hypothesis.
In fact, for certain steps we need to make a stronger assumption that the system of fundamental units is `totally positive' -- see \ref{ass: positivity}.
Finally, we have to assume that $2$ is completely split in $K$ to carry out the evaluation of the Kloosterman-type sums -- see \ref{ass: split}.
These hypotheses are technical and were invoked to simplify our calculations.
Our strategy should work without these hypotheses - we will only need to be more careful and precise with some definitions and some calculations will become more cumbersome.
Mathematically, we do not expect there to be any major obstructions in removing these hypotheses.

\subsection{Rewriting of the regular elliptic part}
An immediate and striking challenge of considering a general base field is that if a regular elliptic class contributes a non-zero term to the regular elliptic part, its determinant must generate the ideal $\fp^k$ where $k$ is a positive integer fixed at the time of defining the test function.
This is automatic when the base field is $\mathbb{Q}$ because it has class number $1$, but not always true for a general number field.
Therefore,
\begin{itemize}
\item if we make the `wrong' choice of the prime $\fp$ or integer $k$, the regular elliptic part on the geometric side equals $0$.
But on the spectral side, the trace can be computed explicitly and it does not vanish.
Therefore, on the geometric side, we need to find the contribution of the trivial representation in some other part of the trace formula.
\item The `right' choice of $\fp$ and $k$ is then used to extract the unit -- we fix $\fp$ and a generator (say $\varepsilon$) of $\fp^k$.
We write $\det(\gamma) = u\varepsilon$, for some unit $u$ in the ring of integers of $K$ (denote by $\cO_K$).
Altu\u{g} implicitly does this when writing $\det(\gamma) = \pm p^k$; choosing $\varepsilon = p^k$ and $u = \pm 1$.
Throughout this paper, we work under the assumption that this \emph{divisibility hypothesis} is satisfied -- see \ref{ass: class number}.
\end{itemize}

\begin{remark}
Note that \ref{ass: class number} is not a technical hypothesis -- this is crucial to our technique. 
It is of independent interest to study what happens if this assumption fails.
\end{remark}

The regular elliptic part of the trace formula is written as
\[
\sum_{u\in \cO_K^*} \sum_{\substack{\tau\in \cO_K\\ \tau^2 - 4u\varepsilon\neq\square}} \mbox{vol}(\gamma)\cO(\gamma, f),
\]
where $\cO(\gamma, f)$ is the orbital integral and we have used the irreducible characteristic polynomial of $\gamma$, namely
\[
P_{\gamma}(X) = X^2 - \tau X + u\varepsilon
\]
to classify the regular elliptic conjugacy classes.
The goal of this step is to rewrite the volume and the orbital integral explicitly in terms of the trace $\tau$ and the unit $u$.
Since $\varepsilon$ is fixed, the determinant is in fact determined by the unit $u$.
This calculation is carried out in \S\ref{sec: The Main Setup} and \S\ref{section: Manipulation of the Regular Elliptic Part}.
Our main result is the following which is Theorem~\ref{manipulation}.
\begin{lthm}
\label{Thm A}
The regular elliptic part of $\GL(2,K)$ can be written as
\[
R(f) = \abs{D_K}^{1/2} \sp^{-k/2} \sum_{u\in\cO_K^*}\sum_{\tau\in L(u)}
 \theta^{\pm}(\tau, u) L(1, \chi_{\gamma})\left( \sum_{\fd \mid S_{\gamma}} \frac{1}{\Norm_K(\fd)} \prod_{\fq\mid (S_{\gamma}/\fd)}\left(1 - \frac{\chi_{\gamma}(\fq)}{\Norm_K(\fq)}\right)\right).
\]
\end{lthm}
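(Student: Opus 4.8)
The plan is to start from the regular elliptic part of the trace formula written in the form
\[
R(f) = \sum_{u\in \cO_K^*} \sum_{\substack{\tau\in \cO_K\\ \tau^2 - 4u\varepsilon\neq\square}} \vol(\gamma)\,\cO(\gamma, f),
\]
and to compute each of the three ingredients — the adèlic volume $\vol(\gamma)$, the orbital integral $\cO(\gamma,f)$, and the range of summation — explicitly in terms of the pair $(\tau,u)$, which classifies the regular elliptic class via $P_\gamma(X)=X^2-\tau X+u\varepsilon$. First I would factor the orbital integral into local orbital integrals over the places $v$ of $K$. At the archimedean places the test function is chosen (as in \cite{AliI}) so that the local factor contributes the archimedean constant $\abs{D_K}^{1/2}\sp^{-k/2}$ after accounting for the discriminant of $K$ and the chosen measures (this is where the care with measures from \S\ref{sec: compatibility} enters). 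At the finite place $\fp$, the local orbital integral of the function $f_{\fp}^{(k)}$ from Definition~\ref{defi: ffq(k) defi} against a regular elliptic element produces the factor $\theta^{\pm}(\tau,u)$, which records the local behaviour at $\fp$ and at the real places (the sign $\pm$). At every other finite place $\fq$, the local factor is an unramified orbital integral of the characteristic function of the maximal compact, which by the Kottwitz/classical computation is a local $L$-factor twisted by the quadratic character attached to $K(\gamma)/K$; multiplying these together gives $L(1,\chi_\gamma)$ corrected by the ramified local factors, which is exactly the Euler-product-over-$\fd$ correction term $\sum_{\fd\mid S_\gamma}\Norm_K(\fd)^{-1}\prod_{\fq\mid(S_\gamma/\fd)}(1-\chi_\gamma(\fq)\Norm_K(\fq)^{-1})$; here $S_\gamma$ is the "conductor-type" ideal measuring how far $\cO_K[\gamma]$ sits inside the maximal order of $K(\gamma)$.

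The steps, in order, would be: (1) fix compatible Haar measures at all places and express $\vol(\gamma)$ as a product of local volumes times $L(1,\chi_\gamma)$ up to a global discriminant factor — this uses the Tamagawa number of the torus and the standard relation between the regularized volume and the residue/value of the Dedekind-type $L$-function of the quadratic extension; (2) compute the archimedean orbital integrals to isolate $\abs{D_K}^{1/2}\sp^{-k/2}$ and the part of $\theta^\pm$ coming from the infinite places; (3) compute the $\fp$-adic orbital integral of $f_\fp^{(k)}$, which is the genuinely local and slightly delicate computation, yielding the remaining part of $\theta^\pm(\tau,u)$ together with the constraint that $\det\gamma$ generate $\fp^k$ (this is where Assumption~\ref{ass: class number} is used to write $\det\gamma = u\varepsilon$ and reduce the sum to one over units $u$); (4) compute the orbital integral at all finite $\fq\neq\fp$, obtaining the $\fd$-sum factor — this is the classical local orbital integral computation relative to an order, and it organizes itself exactly as a divisor sum over $\fd\mid S_\gamma$; (5) identify the set of $\tau$ for which the resulting term is nonzero with the set $L(u)$ (the "regular elliptic" condition $\tau^2-4u\varepsilon\neq\square$ together with integrality conditions forced by the support of $f$), and reassemble the product of local factors into the displayed global formula.

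I expect the main obstacle to be step (3), the explicit evaluation of the $\fp$-adic orbital integral of the test function $f_\fp^{(k)}$, both because the test function is built specifically to reproduce $\Tr(r(a_\fp(\pi)))$ on the spectral side (so the geometric-side computation must match a nontrivial Satake transform) and because over a general $K$ the local field $K_\fp$ need not be $\Q_p$, so ramification and residue-field-size bookkeeping enter; getting the correct normalization of $\theta^\pm(\tau,u)$ — including the precise power of $\sp=\Norm_K(\fp)$ and the sign conventions at the real places — is where the delicate part lies. A secondary subtlety is bookkeeping the measures consistently across all places so that the global discriminant factor comes out as exactly $\abs{D_K}^{1/2}$ with no stray powers; this is routine but error-prone, and is the reason \S\ref{sec: compatibility} is needed. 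Once these local computations are in hand, assembling them into the stated identity is a formal manipulation of the Euler product, using that $\chi_\gamma$ is unramified outside $S_\gamma$ and the discriminant of $K(\gamma)/K$.
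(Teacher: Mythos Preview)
Your proposal misattributes where each factor in the formula originates, and these are not cosmetic errors: they reflect a mistaken picture of the underlying computation.

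First, the interpolation function $\theta^{\pm}(\tau,u)$ is \emph{not} the $\fp$-adic orbital integral. It is defined in Definition~\ref{def:thetaplusminus} as (a product over the real places of) the archimedean orbital integrals, rewritten via Shelstad's germ expansion in the $(r,N)$ coordinates and then multiplied by the local Weyl-discriminant factors $D(\sigma_i(\lambda_1),\sigma_i(\lambda_2))^{-1}$ coming from the volume term. The $\fp$-adic orbital integral contributes only the scalar $\sp^{-k/2}$ (built into the normalization of $f_\fp$) together with its share of the divisor sum; nothing like $\theta^\pm$ comes from there.

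Second, the factor $L(1,\chi_\gamma)$ does \emph{not} arise from the unramified orbital integrals. The orbital integral of $f_\fq^{(0)}$ at a place $\fq\neq\fp$ is not a local $L$-factor: by Lemma~\ref{behaviour O} it equals $1$ unless $\fq\mid S_\gamma$, and the product of all finite orbital integrals is exactly the divisor sum $\sum_{\fd\mid S_\gamma}\Norm_K(\fd)^{-1}\prod_{\fq\mid(S_\gamma/\fd)}(1-\chi_\gamma(\fq)/\Norm_K(\fq))$ (Proposition~\ref{Multiplicative Formula Langlands}). The $L$-value $L(1,\chi_\gamma)$ comes instead from the volume $\vol(\gamma)$, via the identification $Z_+G(K)_\gamma\backslash G(\A_K)_\gamma\cong L^*\backslash\I_L^1$ and the analytic class number formula (Proposition~\ref{prop: volume manipulation}). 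Your step~(1) gestures at this, but then your main paragraph has the $L$-function appearing a second time from the orbital integrals, which would double-count.

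Third, the constant $\abs{D_K}^{1/2}$ is not produced by the archimedean orbital integrals; it emerges from the measure-compatibility correction in \S\ref{sec: compatibility} together with the relation $\sqrt{|D_L|}=|D_K|\sqrt{\Norm_K(\Delta_\gamma)}$ and the identity of Proposition~\ref{relation discriminants}, which distributes $\sqrt{|D_L|}$ among $\Norm_K(S_\gamma)$ and the archimedean discriminant factors. The paper's argument is: expand $\vol(\gamma)=|D_K|^{-1/2}\kappa^{-1}\cdot\kappa L(1,\chi_\gamma)\sqrt{|D_L|}$, substitute Proposition~\ref{Multiplicative Formula Langlands} for $\cO(f_{\fin},\gamma)$, use Proposition~\ref{relation discriminants} to split $\sqrt{|D_L|}$, and absorb the resulting archimedean factors into the germ expansion to form $\theta^\pm$. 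Your step~(3), the ``genuinely local and slightly delicate'' $\fp$-adic computation you flag as the main obstacle, is in fact already handled by Lemma~\ref{behaviour O} and is not where $\theta^\pm$ lives.
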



\subsubsection{Volume term}
The characteristic polynomial $P_{\gamma}(X)$ generates a quadratic field extension $K_{\gamma}/K$ and associated to it, we have the quadratic sign character $\chi_\gamma(\fq)$ for a prime $\fq$ defined by
\[
\chi_{\gamma}(\fq) = \left\{\begin{array}{ccl}
 1 & \text{if }\fq &\textrm{is split}\\
 -1 & \text{if }\fq &\textrm{is inert}\\
 0 & \text{if }\fq &\textrm{is ramified.}\\
 \end{array}\right.
\]
Set $D_{\gamma}$ to denote the absolute discriminant of $K_{\gamma}$.
Proposition~\ref{prop: volume manipulation} shows that the volume term is proportional to
\[
\sqrt{\abs{D_{\gamma}}}\cdot L(1, \chi_{\gamma}).
\]
In other words, up to a constant $\kappa$ which is equal to the residue of $\zeta_K$ at $z=1$, the volume term is a product of $\sqrt{\abs{D_{\gamma}}}$ and a special value of an $L$-function.

\subsubsection{Orbital integrals}

First, we prove the existence of an integral ideal $S_{\gamma}$ satisfying
\[
(\tau^2 - 4u\varepsilon) = S_{\gamma}^2\Delta_{\gamma},
\]
where $\Delta_{\gamma}$ is the relative discriminant of $K_{\gamma}/K$.
When the base field is $\mathbb{Q}$, this coincides with the fundamental discriminant and the above equation can be taken as an equality of numbers as opposed to ideals.
In Proposition~\ref{Multiplicative Formula Langlands} we show that the orbital integrals at finite primes satisfy
\[
\cO(f_{\fin},\gamma) = \Norm_K(\fp)^{-k/2} \sum_{\fd \mid S_{\gamma}}\Norm_K(\fd) \prod_{\fq \mid \fd}\left(1 - \frac{\chi_{\gamma}(\fq)}{\Norm_K(\fq)}\right).
\]
On the other hand at the archimedean places the test function is
\[
f_{\infty} = f_{\nu_1}\times\cdots\times f_{\nu_n},
\]
where $n = [K:\Q]$.
For each real place (which we generically denote by) $\nu$, we follow \cite{AliI} and expand the local orbital integral using germ expansions.
This is carefully explained in \S\ref{sec: arch orb int}.

\begin{remark}
The fact that we have such a concrete formula is essential for the success of the strategy we adopt.
\end{remark}

\subsubsection{Units}
If we have to isolate the step whose generalization to the number field setting was the most non-trivial, it would probably be in handling the units.
To deal with the units, we include them as a part of the incomplete lattice, over which we later perform Poisson summation.
This part of the lattice is an artificial one but, its presence is crucial for matching the two sides of the trace formula.
We will return to this point in the last step; see \S\ref{intro - isolating the contribution}.

A crucial observation for dealing with the units is that via the Dirichlet Unit Theorem, the pairs $(\tau, u)$ indexing the regular elliptic part of the trace formula can be identified with a full lattice inside of ${\pm}\times\mathbb{R}^{2n-1}$.
This technical construction is carried out in \S\ref{section: blending step}.
More concretely, we define the \emph{interpolation function} $\theta^\pm(x,y)$ from $\R^{2n-1}$ to $\C$ in Definition~\ref{def:thetaplusminus} where $x$ is viewed as a vector in $\R^n$ and $y$ as a vector in $\R^{n-1}$.
This is a generalization of $\theta^{\pm}(x)$ used in \cite{AliI}, except that the function appearing in \emph{op.~cit.} does not have two coordinates.
In our case, the main reason for the difficulty arises from the presence of fundamental units.
It is in handling these fundamental units that we require that the number field is totally real and satisfies the condition of `positivity'.
Settling the issue with the units is particularly challenging because there any many choices of how to handle the units without any apparent reason as to why one is better than the other.
Our choice of definition of $\theta^{\pm}$ is essentially dictated by our understanding of how to generalize Langlands' calculations for the special representation when working with a general number field rather than over $\Q$.
In other words, once we developed the strategy for the calculations in \S\ref{sec: Computation of the trace of the trivial and the special representations}, we could make the appropriate choice for the definition of the interpolation function.

The interpolation function resembles the germ expansion of the archimedean orbital integral.
This is because the discriminant term arising from studying the volume decomposes into local factors (see Proposition~\ref{relation discriminants}).
Distributing each local factor to its corresponding orbital integral yields the form of the interpolation function we have defined (in coordinates) at the archimedean places, while at the finite places it amounts to a change of variable $\fd$ to $S_{\gamma}/\fd$.

Substituting the above analysis into the expansion of the regular elliptic part yields Theorem~\ref{Thm A}.

\subsection{Application of the approximate functional equation}

The Dirichlet series that defines $L(z,\chi_\gamma)$ does not converge absolutely on the real line $\Re(z) = 1$ (which is where it is being evaluated).
To continue the analysis, we first show that the regular elliptic part of the trace formula can be written as
\[
\sum_{u\in\cO_K^*}\sum_{\tau\in L(u)}\sum_{\fd\mid S_{\gamma}}\sum_{\fa}\chi_{\fd}(\fa)\left(\frac{\Phi_{\pm}(\tau, u)}{\Norm_K(\fd)\Norm_K(\fa)} + \Norm_K(\fd)\Psi_{\pm}(\tau, u)\right),
\]
where $\chi_{\fd}(\fa)$ is a quadratic character whose primitivization yields $\chi_{\gamma}$, and $\Phi_{\pm}(\tau, u), \Psi_{\pm}(\tau, u)$ are certain smooth function with compact support arising from smoothing the interpolation function $\theta^{\pm}$.
The sum over $\fa$ comes from the Dirichlet series expansion of the $L$-function of $\chi_{\fd}(\fa)$.
This verification is carried out in \S\ref{sec: Approx Funct eqn} and \S\ref{sec: Smoothing of the Singularities}.

To prove this, we have to apply the approximate functional equation.
In particular, we need to verify that the hypothesis of the approximate functional equation is satisfied.
Recall that the approximate functional equation applies to $L$-functions whose functional equation is already known.
For us, there are two $L$-functions of interest
\[
L(z, \gamma) := \Norm_K(S_{\gamma})^z \sum_{\fd\mid S_{\gamma}}\Norm_K(\fd)^{(1 - 2z)} \prod_{\fq\mid (S_{\gamma}/\fd)}\left(1 - \frac{\chi_{\gamma}(\fq)}{\Norm_K(\fq)^z}\right), 
\]
and its completion
\[
\Lambda(z, \gamma) = \Lambda(z, \chi_{\gamma})L(z, \gamma).
\]
Here, $\Lambda(z, \chi_{\gamma})$ is the standard completion of the quadratic sign character.
It is an entire function with a functional equation.
In Theorem~\ref{functional equation of imprimitive Hecke functions}, we prove the functional equation
\[
\Lambda(z, \gamma) = \Lambda(1-z, \gamma),
\]
which follows from verifying that $L(z, \gamma)$ admits a functional equation.
Our proof\footnote{Another way to prove the functional equation of this (Zagier) zeta function is via local methods (see \cite[Theorem~4]{malors21}).
This local approach is more suited for studying the nature of this function as conjectured in \cite{artstrat}.
} involves an algebraic manipulation similar to the one in \cite{zagier2006modular}.
In Proposition~\ref{prop: reformulation mult formula of Langlands} we verify that the product of the finite orbital integrals coincide with the value of an appropriate $L$-function (which admits a functional equation) when evaluated at $z = 1$.
This allows us to now apply the approximate functional equation.

The step of applying the approximate functional equation introduces two functions; namely, an odd Bessel function (denoted by $F$) whose residues we understand precisely and another related function $H_{\gamma}$ for which we know the poles and residues.
Furthermore, similar to the interpolation functions above we can extend $H_{\gamma}$ to a function $H_{(x, y)}$.

Since there are several real places, there are several Gamma factors that need to be evaluated.
The main technical estimate is recorded in Lemma~\ref{lemma:boundH}.
In Theorem~\ref{theorem:approximate functional equation} we write the expansion we get from the approximate functional equation, as a function of a complex variable $z$.
More precisely, 

\begin{lthm}
\label{Thm B}
The following expression holds for any choice of real $A>0$,
\begin{tiny}
\begin{align*}
L(z, {\gamma}) &= \sum_{\fd\mid S_{\gamma}}\frac{1}{\Norm_K\left(\fd\right)^{2z - 1}}\sum_{\fa} \frac{\chi_{\fd}{(\fa)}}{\Norm_K\left(\fa\right)^{z}}F\left(\frac{\Norm_K\left(\fd\right)^2 \Norm_K(\fa)}{A}\right)\\
&+ \left(\Norm_K\left(S_{\gamma}\right)\abs{D_K}^{1/2}\Norm_K(\Delta_{\gamma})^{1/2}\right)^{1 - 2z}
\sum_{\fd\mid S_{\gamma}}\frac{1}{\Norm_K\left(\fd\right)^{1 - 2z}}\sum_{\fa}\frac{\chi_{\fd}\left(\fa\right)}{\Norm_K\left(\fa\right)^{1 - z}}H_{\gamma}\left(z, \frac{\Norm_K\left(\fd\right)^{2}\Norm_K\left(\fa\right)A}{\Norm_K\left(S_{\gamma}\right)^{2}\abs{D_K}\Norm_K(\Delta_{\gamma})}\right).
\end{align*}
\end{tiny}
\end{lthm}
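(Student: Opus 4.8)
The plan is to apply the classical approximate functional equation to the completed $L$-function $\Lambda(z,\gamma)=\Lambda(z,\chi_{\gamma})L(z,\gamma)$, the only new feature being that over a totally real field of degree $n=[K:\Q]$ its archimedean factor is a product of $n$ Gamma factors, and then to \emph{define} $F$ and $H_{\gamma}$ as the weight functions that fall out of the contour shift. The analytic input is already available: since $\gamma$ is regular elliptic, $K_{\gamma}/K$ is a quadratic \emph{field} extension, so $\chi_{\gamma}$ is a nontrivial quadratic Hecke character and $\Lambda(z,\chi_{\gamma})$ — hence $\Lambda(z,\gamma)$ — is entire, while by Theorem~\ref{functional equation of imprimitive Hecke functions} it satisfies $\Lambda(z,\gamma)=\Lambda(1-z,\gamma)$. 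Using Proposition~\ref{prop: reformulation mult formula of Langlands} and the definition of $\chi_{\fd}$ one first rewrites $\Lambda(z,\gamma)=\bigl(\Norm_K(S_{\gamma})^{2}\abs{D_K}\Norm_K(\Delta_{\gamma})\bigr)^{z/2}\gamma_{\infty}(z)\,\widetilde M(z)$, where $\widetilde M(z)=\sum_{\fd\mid S_{\gamma}}\Norm_K(\fd)^{1-2z}\sum_{\fa}\chi_{\fd}(\fa)\Norm_K(\fa)^{-z}$ is an honest Dirichlet series (indexed, after collecting terms, by $\Norm_K(\fd)^{2}\Norm_K(\fa)$) and $\gamma_{\infty}(z)=\prod_{\nu\mid\infty}\Gamma_{\R}(z+\mathfrak a_{\nu})$ with each $\mathfrak a_{\nu}\in\{0,1\}$ read off from the sign of $\chi_{\gamma}$ at $\nu$ — all equal to $1$, producing the \emph{odd} Bessel function $F$, when $\gamma$ is elliptic at every real place.

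Next I would carry out the contour shift. Fix $A>0$ and a cutoff $G(s)$, even, holomorphic and of rapid decay in a wide vertical strip, with $G(0)=1$ and vanishing to high order at the finitely many points where the archimedean ratios below have poles. Starting from
\[
\mathcal I(z)=\frac{1}{2\pi i}\int_{(\sigma)}\bigl(\Norm_K(S_{\gamma})^{2}\abs{D_K}\Norm_K(\Delta_{\gamma})\bigr)^{s/2}\,\frac{\gamma_{\infty}(z+s)}{\gamma_{\infty}(z)}\,\widetilde M(z+s)\,A^{s}G(s)\,\frac{ds}{s}
\]
with $\sigma$ large, move the contour to $\Re(s)=-\sigma$. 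The only pole crossed is the simple pole of $1/s$ at $s=0$ — here we use that $\Lambda(z,\gamma)$ is entire — with residue $\widetilde M(z)$; in the remaining integral substitute $s\mapsto -s$ and apply the functional equation, which turns it into an integral of the same shape with $z$ replaced by $1-z$ and produces the prefactor $\bigl(\Norm_K(S_{\gamma})\abs{D_K}^{1/2}\Norm_K(\Delta_{\gamma})^{1/2}\bigr)^{1-2z}$ of the second sum of the statement from the mismatch of the conductor powers. Expanding $\widetilde M(z+s)$ and $\widetilde M(1-z+s)$ into their Dirichlet series — absolutely convergent on $\Re(s)=\sigma$ — and interchanging sum and integral then yields the two sums in the statement, with $F$ and $H_{\gamma}$ \emph{defined} as the surviving inner integrals: choosing the cutoff so that the archimedean ratio on the principal side is absorbed leaves a fixed, $z$-independent Bessel-type inverse-Mellin transform $F$, while the dual-side integrand retains $\gamma_{\infty}(1-z+s)/\gamma_{\infty}(z)$, whence the persistent dependence of $H_{\gamma}$ on $z$ and on the completed conductor. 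That the identity holds for \emph{every} real $A>0$ is automatic, since the two pieces always recombine to $L(z,\gamma)$.

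The routine parts are the interchange of summation and integration (legitimate for $\sigma$ large because $G$ decays rapidly) and the bookkeeping matching the powers of $\Norm_K(S_{\gamma})$, $\abs{D_K}$ and $\Norm_K(\Delta_{\gamma})$ in the two sums and in the arguments of $F$ and $H_{\gamma}$. The genuine obstacle — and the reason the conclusion can be phrased uniformly in $z$ with rapidly decaying weights — is controlling the archimedean integrand: with $n$ Gamma factors one must estimate $\gamma_{\infty}(1-z+s)/\gamma_{\infty}(z)$ on vertical lines uniformly in $\Im(s)$, i.e.\ apply Stirling over all real places simultaneously, so that $G$ can be chosen to annihilate its poles while keeping $F$ and $H_{\gamma}$ holomorphic and of rapid decay; for the contour manipulations in \S\ref{sec: Computation of the trace of the trivial and the special representations} one also needs the precise location of the poles of $H_{\gamma}$ in $z$ and $s$ together with their residues. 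This is exactly the content of Lemma~\ref{lemma:boundH}, and it is the step I expect to absorb the bulk of the effort.
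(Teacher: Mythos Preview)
Your overall strategy---contour shift plus the functional equation of $\Lambda(z,\gamma)$---is the right one and matches the paper, but your setup is muddled in a way that creates an artificial difficulty and leads to a misreading of what $F$ is.

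In the paper the starting integral is simply
\[
I=\frac{1}{2\pi i}\int_{\Re(u)=\sigma} L(z+u,\gamma)\,\widetilde F(u)\,A^{u}\,du,
\]
with \emph{no} archimedean factors in the integrand: $\widetilde F$ is the Mellin transform of the \emph{fixed} smooth cutoff $F$ introduced in~\eqref{cut-off}. Expanding $L(z+u,\gamma)$ as the Dirichlet series $\sum_{\fd\mid S_\gamma}\Norm_K(\fd)^{1-2(z+u)}\sum_{\fa}\chi_{\fd}(\fa)\Norm_K(\fa)^{-(z+u)}$ and applying Mellin inversion gives the first sum directly, and $F$ is $z$-independent automatically---there is nothing to ``absorb''. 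After shifting past $u=0$ (residue $L(z,\gamma)$, since $\widetilde F$ has a simple pole there with residue $1$), one substitutes $u\mapsto -u$ and replaces $L(z-u,\gamma)$ using the functional equation of $\Lambda$; it is \emph{this} step that introduces the ratio $L_\infty(1-z+u,\chi_\gamma)/L_\infty(z-u,\chi_\gamma)$, and $H_\gamma$ is then \emph{defined} as the resulting inverse-Mellin integral on $\Re(u)=1$.

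Your version inserts $\gamma_\infty(z+s)/\gamma_\infty(z)$ into the principal integral and then tries to choose the cutoff $G$ to cancel it. But if $G$ is $z$-independent, the principal-side weight $G(s)\gamma_\infty(z+s)/\gamma_\infty(z)$ depends on $z$, and you do not get a fixed $F$; if $G$ is $z$-dependent, it is no longer the kind of cutoff you described. Your remark that $F$ is the odd Bessel function ``when $\gamma$ is elliptic at every real place'' is a symptom of this confusion: in the paper $F$ is the same function for every $\gamma$ and every $z$ (its oddness is Lemma~\ref{lemma:boundforFtilde}), and all the $\gamma$-dependence sits in $H_\gamma$. Finally, Lemma~\ref{lemma:boundH} is not used in the proof of the present theorem at all---only the decay of $\widetilde F$ from Lemma~\ref{lemma:boundforFtilde} is needed to justify the contour shift; the Stirling bound on $H_\gamma$ is invoked later, in \S\ref{sec: Smoothing of the Singularities} and \S\ref{section: the problem of completion}.
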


\subsubsection{Recap: the situation over $\Q$}
Recall that at this stage (see \cite[Proposition~3.4]{AliI}) Altu\u{g} obtains 
\begin{tiny}
\begin{equation*}
\sum_{d^2\mid N}^{'}\frac{1}{d^{2z - 1}} \sum_{a = 1}^{\infty}\frac{1}{a^s}\left(\frac{N/d^2}{a}\right)F\left(\frac{ad^2}{A}\right)
 +\left(\frac{\abs{N}}{\pi}\right)^{1/2 - z} \sum^{'}_{d^2\mid N}\frac{1}{d^{1 - 2z}} \sum_{a = 1}^{\infty}\frac{1}{a^{1-z}}\left(\frac{N/d^2}{a}\right)H_N\left(z, \frac{ad^2A}{\abs{N}}\right).
\end{equation*}
\end{tiny}
Here, $N = \tau^2 \pm 4p^k$ and the $'$ over the sum refers to some congruence conditions we discuss later.
A quick comparison convinces us that the two expressions are almost the same -- except how the $L$-functions at infinity are displayed.
A crucial difference is the appearance of $\chi_{\fd}(\fa)$ in the expression in Theorem~\ref{Thm B} while Altu\u{g}'s expression has the Kronecker symbol $\left(\frac{N/d^2}{a}\right)$.
These two objects though essentially the same, are constructed (and expressed) differently.
Altu\u{g}'s expression is more explicit because of the availability of the quadratic reciprocity law over $\Q$.

\subsubsection{The situation over a general number field}
At this stage, while rewriting the regular elliptic part of the trace formula, we encounter cancellation.
More precisely, the Euler factors associated to the primes dividing $S_{\gamma}/\fd$ in the expansion of the $L$-function associated to the volume term gets cancelled by the factors arising in $L(z,\gamma)$ above.
What remains is the Euler product expansion of a \emph{non-primitive} character.

When the Kronecker symbol is available (e.g. over $\Q$) the conductor of the non-primitive character becomes the `new upper entry' of the symbol; otherwise we leave the change of conductor of the `new' character implicit.
Set $\partial = \tau^2 - 4\det(\gamma)$.
After evaluating $L(z,\gamma)$ at $z = 1$ we see that the regular elliptic part can be expressed as
\begin{equation*}
\begin{split}
&\abs{D_K}^{1/2}\sp^{-k/2}\sum_{u\in\cO_K^*}\sum_{\tau\in L(u)} \theta^{\pm}(\tau, u) \left(\sum_{\fd\mid S_{\gamma}}\frac{1}{\Norm_K(\fd)} \sum_{\fa} \frac{\chi_{\fd}{(\fa)}}{\Norm_K(\fa)}F\left(\frac{\Norm_K(\fd)^2 \Norm_K(\fa)}{\Norm_K(\partial)^{\alpha}\abs{D_K}^{\alpha}}\right)\right)\\
 &+ \frac{\abs{D_K}^{1/2}\sp^{-k/2}}{\Norm_K(\partial)^{1/2}\abs{D_K}^{1/2}}
 \sum_{u\in\cO_K^*}\sum_{\tau\in L(u)}\sum_{\fd\mid S_{\gamma}}\frac{1}{\Norm_K(\fd)^{-1}} \sum_{\fa}\chi_{\fd}(\fa)H_{\gamma}\left(1, \frac{\Norm_K(\fd)^{2}\Norm_K(\fa)}{\Norm_K(\partial)^{1 - \alpha}\abs{D_K}^{1 - \alpha}}\right).
\end{split}
\end{equation*}
For the final step, we define two smooth functions $\Phi_{\pm}(x, y)$ and $\Psi_{\pm}(x, y)$ with compact support in $\R^{2n-1}$ in terms of the interpolation function $\theta^{\pm}$, the Bessel function $F$, and $H_\gamma$.
In Proposition~\ref{prop: precise value of A} we conclude with the statement that (up to constants depending on the field $K$) the regular elliptic part can be expressed as
\[
\sum_{u\in\cO_K^*}\sum_{\tau\in L(u)}\sum_{\fd\mid S_{\gamma}}\sum_{\fa}\chi_{\fd}(\fa)\left(\frac{\Phi_{\pm}(\tau, u)}{\Norm_K(\fd)\Norm_K(\fa)} + \Norm_K(\fd)\Psi_{\pm}(\tau, u)\right).
\]
The approximate functional equation yields series and integrals that are absolutely convergent, and so they can be appropriately swapped when required.
At this stage the analytic issues have been resolved.

\subsection{Problem of completion and Poisson summation}

\subsubsection{Recap: problem of completion over $\Q$}
To understand the gravity of the challenge this step poses, we recall how the issue is resolved in \cite{AliI}.
Over $\Q$, the regular elliptic part can be rewritten as
\begin{tiny}
\[
\sum_{\mp} \sum_{\substack{\tau\in\mathbb{Z}\\ \tau^2-4p^k\neq\square}} \sum^{'}_{d^2 \mid \tau^2\pm 4p^k} \sum_{a = 1}^{\infty} \theta^{\mp}_{\infty} \left(\frac{\tau}{2p^{k/2}}\right)\frac{1}{ad}\left(\frac{(\tau^2\pm 4p^k)/f^2}{a}\right) \left(F\left(\frac{ad^2}{A}\right) + \frac{ad^2}{\sqrt{\abs{\tau^2 \pm 4p^k}}}H\left(\frac{ad^2A}{\abs{\tau^2 \pm 4p^k}}\right)\right).
\]
\end{tiny}
To perform Poisson summation, one needs a complete lattice.
In this case the (complete) lattice required is $\mathbb{Z}$, but several integers are ruled out by imposing $\tau^2-4p^k\neq\square$.
To fix this issue the missing integers are added; i.e., we consider the sum
\begin{tiny}
\[
\sum_{\mp} \sum_{\substack{\tau\in\mathbb{Z}\\ \tau^2-4p^k=\square}} \sum^{'}_{d^2 \mid \tau^2\pm 4p^k} \sum_{a = 1}^{\infty} \theta^{\mp}_{\infty} \left(\frac{\tau}{2p^{k/2}}\right)\frac{1}{ad} \left(\frac{(\tau^2\pm 4p^k)/f^2}{a}\right) \left(F\left(\frac{ad^2}{A}\right) + \frac{ad^2}{\sqrt{\abs{\tau^2 \pm 4p^k}}}H\left(\frac{ad^2A}{\abs{\tau^2 \pm 4p^k}}\right) \right).
\]
\end{tiny}
Summing both these expressions yields a sum over $\tau \in \Z$.
After relatively straight forward manipulations, one obtains a sum of the values of a smooth function with compact support over a complete lattice.
This allows for performing Poisson summation.

\subsubsection{Problem of completion over a general number field}
Na{\"i}vely, one would hope that one can mimic the same idea as above.
In other words, we have to consider the following piece to complete the lattice
\[
\sum_{u\in\cO_K^*}\sum_{\tau^2- 4u\varepsilon=\square}\sum_{\fd\mid S_{\gamma}}\sum_{\fa}\chi_{\fd}(\fa)\left(\frac{\Phi_{\pm}(\tau, u)}{\Norm_K(\fd)\Norm_K(\fa)} + \Norm_K(\fd)\Psi_{\pm}(\tau, u)\right).
\]
We immediately have a problem -- what do $S_{\gamma}$ and $\chi_{\fd}(\fa)$ mean?
These were constructed from the knowledge of the quadratic extension associated to $\gamma$.
Without such a quadratic extension, there is no quadratic character or an ideal $S_{\gamma}$; thus, the terms have no meaning.
Over $\Q$, both these problems do not arise and the terms involved can be defined regardless of whether their parameters define a regular elliptic class or not because the Kronecker character and the congruence conditions always make sense.

Concretely, we have to settle the following two issues:
\begin{description}
 \item[Extension of the quadratic symbol]
 For any unit $u$, any integer $\tau$, and any integral ideals $\fa,\fd$, define a a symbol that coincides with $\chi_{\fd}(\fa)$ whenever there exists a regular elliptic matrix $\gamma$ parameterized by $(\tau,u)$ and $\fd$ divides $ S_{\gamma}$, but that makes sense for any choice of parameters.
 
 \item[Equivalent explicit conditions for $\fd\mid S_{\gamma}$] Find an alternative description of the indexing set $\{\fd : \fd\mid S_{\gamma}\}$ that depends on the parameters $\tau, u$ alone and that makes sense when $\tau^2- 4u\varepsilon=\square$.
\end{description}

The second problem is addressed by the second named author in \cite{malors21} which is discussed in \S\ref{section: the problem of completion}.
We state the relevant result below but refer the reader to the original article for the proof.
\begin{lthm}
For a regular elliptic matrix $\gamma$, the following two conditions are equivalent:
\begin{enumerate}[label = \textup{(\roman*)}]
 \item $\fd\mid S_{\gamma}$,
\item $\fd^2\mid \tau^2 - 4\det(\gamma)$ and (in $\cO_{K_{\fp_i}}$) the following congruence holds
 \[
 \frac{\tau^2 - 4\det(\gamma)}{\pi_{\fp_i}^{2\val_{\fp_i}(\fd)}} \equiv 0, 1 \pmod{\pi_{\fp_i}^2} \ \text{ for each }i = 1, \cdots, n.
 \]
\end{enumerate}
\end{lthm}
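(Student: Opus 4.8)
The plan is to reduce the equivalence to a local statement, one prime of $K$ at a time, and then to the clean assertion that a single element of the completion is a quadratic discriminant. Write $D := \tau^2 - 4\det(\gamma)$; since $\gamma$ is regular elliptic, $D$ is a nonzero nonsquare and $K_{\gamma} = K(\sqrt{D})$. Condition (i) says $\val_v(\fd) \le \val_v(S_{\gamma})$ at every finite prime $v$, and the relative discriminant $\Delta_{\gamma}$ is the ideal with $\val_v(\Delta_{\gamma})$ equal to the exponent of the local relative discriminant of $K_{\gamma}\otimes_K K_v$ over $K_v$; so the defining relation $(D) = S_{\gamma}^2\Delta_{\gamma}$ reads $2\val_v(S_{\gamma}) + \val_v(\Delta_{\gamma}) = \val_v(D)$ at each $v$. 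Fixing $v$ and setting $m = \val_v(D)$, $a = \val_v(\fd)$, everything follows once one proves
\[
\pi_v^{a} \mid S_{\gamma} \quad\Longleftrightarrow\quad D\,\pi_v^{-2a}\in\cO_{K_v}\ \text{ and }\ D\,\pi_v^{-2a}\equiv b^2\pmod{4\,\cO_{K_v}}\ \text{ for some }b\in\cO_{K_v},
\]
that is, $D\pi_v^{-2a}$ is a quadratic discriminant over $K_v$; one then reassembles this over all $v$.

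For the displayed equivalence I would use the classification of orders in the quadratic $K_v$-algebra $A := K_{\gamma}\otimes_K K_v$: these are exactly the $\cO_{K_v} + \pi_v^{j}\cO_A$ for $j\ge 0$ (with $\cO_A$ the maximal order), the order $\cO_{K_v} + \pi_v^{j}\cO_A$ has discriminant of valuation $2j + \val_v(\Delta_{\gamma})$, and $\cO_{K_v}[\theta]$ --- where $\theta = \tfrac12(\tau + \sqrt{D})$ is a root of the characteristic polynomial of $\gamma$ --- has discriminant $D$, hence is the one with $j = \val_v(S_{\gamma})$, so $S_{\gamma}$ is locally its conductor in $\cO_A$. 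Since $K_v(\sqrt{D\pi_v^{-2a}}) = K_v(\sqrt{D})$, the element $D\pi_v^{-2a}$ is a quadratic discriminant over $K_v$ if and only if it is the discriminant of some order in $A$, i.e.\ if and only if $\val_v(D\pi_v^{-2a}) = \val_v(\Delta_{\gamma}) + 2\bigl(\val_v(S_{\gamma}) - a\bigr)$ has the form $\val_v(\Delta_{\gamma}) + 2t$ with $t\ge 0$, i.e.\ if and only if $a\le \val_v(S_{\gamma})$. For a prime $v\nmid 2$ this already settles the statement: $4$ is a unit in $\cO_{K_v}$, so every $c\in\cO_{K_v}$ is a quadratic discriminant, the condition reduces to $D\pi_v^{-2a}\in\cO_{K_v}$, i.e.\ $2a\le m$, which is exactly $\fd^2\mid D$ at $v$, and (ii) imposes no congruence away from $2$.

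The essential point --- and the reason for assuming that $2$ is completely split in $K$, see \ref{ass: split} --- is the behaviour at the primes $\fp_i\mid 2$. There $K_{\fp_i}\cong\Q_2$, so $\val_{\fp_i}(2) = 1$, the residue field is $\mathbb{F}_2$, and $(4) = (\pi_{\fp_i}^2)$. Over $\Q_2$, an element $c\in\Z_2$ satisfies $c\equiv b^2\pmod 4$ for some $b$ if and only if $c\equiv 0$ or $1\pmod 4$ (since $b^2\bmod 4\in\{0,1\}$), equivalently $c\equiv 0,1\pmod{\pi_{\fp_i}^2}$; this is the local avatar of the classical fact that $c$ is a quadratic discriminant over $\Z$ iff $c\equiv 0,1\pmod 4$. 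Applying it with $c = D\pi_{\fp_i}^{-2a}$ turns the local condition at $\fp_i$ into ``$2a\le\val_{\fp_i}(D)$ and $D\pi_{\fp_i}^{-2a}\equiv 0,1\pmod{\pi_{\fp_i}^2}$'', and combining this over the $\fp_i\mid 2$ with the condition $\fd^2\mid D$ coming from all primes yields precisely (ii). (When $a = 0$ the congruence is automatic: $D = \tau^2 - 4\det(\gamma)\equiv\tau^2\equiv 0,1\pmod 4$, matching $\cO_K\mid S_{\gamma}$.) I expect essentially all of the difficulty to lie in this dyadic bookkeeping --- pinning down $S_{\gamma}$ as a conductor and reading the ramification of $K_{\fp_i}\bigl(\sqrt{D\pi_{\fp_i}^{-2a}}\bigr)/K_{\fp_i}$ off a congruence modulo $\pi_{\fp_i}^2$ --- rather than in the standard behaviour at the odd primes; these details are carried out in \cite{malors21}.
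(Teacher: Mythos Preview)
The paper does not actually prove this statement: in both the introduction and in \S\ref{section: congruence condition} it is simply attributed to \cite[Proposition~9]{malors21}. Your sketch supplies what the paper omits, and the line of argument is sound --- localize, identify $\val_v(S_\gamma)$ with the conductor of $\cO_{K_v}[\theta]$ in the maximal order of $K_\gamma\otimes_K K_v$, and translate $a\le\val_v(S_\gamma)$ into the statement that $D\pi_v^{-2a}$ is the discriminant of \emph{some} order in that algebra; at odd $v$ this collapses to $\fd^2\mid D$, and at $v=\fp_i$ with $K_{\fp_i}\cong\Q_2$ the classical ``$\equiv 0,1\pmod 4$'' criterion for quadratic discriminants gives exactly the stated congruence. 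The one step worth tightening is the passage ``$D\pi_v^{-2a}$ is a quadratic discriminant $\Longleftrightarrow$ it is the discriminant of some order in $A$'': the forward direction needs the observation that if $D\pi_v^{-2a}=b^2-4d$ then $(b+\sqrt{D\pi_v^{-2a}})/2$ is integral in $A$, forcing $\val_v(D\pi_v^{-2a})\ge\val_v(\Delta_\gamma)$ and hence $a\le\val_v(S_\gamma)$; you gesture at this via the valuation count but it is the crux of the converse, so I would make it explicit. With that in place your proof is complete and is presumably the same argument as in \cite{malors21}, which you also cite.
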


Note that over $\Q$, the second condition simplifies and becomes
\[
\frac{\tau^2 - 4\det(\gamma)}{4d^2}\equiv 0, 1\pmod{4},
\]
which is precisely the congruence conditions we have been referring to so far.
This congruence condition over $\Q$ was already available to Altu\u{g} from the work of D.~Zagier.

\subsubsection{Modified Hilbert symbol}
Another major obstacle was in choosing the right symbol which could uncouple the variables $\tau$, $u$, $\fd$, and $\fa$.
None of the obvious choices such as the Artin symbol, the power residue symbol, the Frobenius symbol, and the Hilbert symbol could do this job.
We define a \emph{modified Hilbert symbol} in terms of the restricted Hilbert symbol (see Definitions~\ref{defi: restricted Hilbert} and \ref{defi: our symbol}).
This new symbol recovers the quadratic sign character and upon appropriate twisting also recovers the imprimitive character (see Theorem~\ref{thm: 7-9}).

The vanishing conditions defining the \textit{restricted} Hilbert symbol guarantees that we work with units (whereas the general Hilbert Symbol is defined for all non-zero entries).
A crucial property for our symbol is that it must make explicit the dependence on $\tau$ for \textit{all} the imprimitive characters that can be obtained from \textit{all} the quadratic sign characters of quadratic extensions of $K$ \textit{simultaneously}.
For the Kronecker symbol, this property is immediate since
\[
\chi_{d}(a) = \left(\frac{(\tau^2\pm 4p^k)/f^2}{a}\right).
\]
It is not possible to replace the Kronecker symbol with one of the other choices mentioned above because there is incompatibility with the division at this stage.
The quantity $\tau^2 - 4u\varepsilon$ is an algebraic integer, and must remain such because any conjugacy class is parameterized by a pair of algebraic integers.
Over $\Q$, the class number is $1$ and there is only one unit which is a square, so the problem gets resolved immediately.

\begin{remark}
To prove the desired properties of the modified Hilbert symbol, we needed to appeal to Artin's reciprocity law despite our best efforts.
We see this as a positive sign because it suggests that the strategy is sensitive to reciprocity laws which are in great part a motivation for the work in the area.
\end{remark}

After resolving both these issues, the \textit{completed regular elliptic part} of the trace formula has the following shape,
\[
\abs{D_K}^{1/2}\sp^{-k/2}
\sum_{\fa}\sum_{\fd}
 \sum_{u\in \cO_K^*}
\sum_{\fd^2\mid \tau^2-4u\varepsilon}^{'}
\binom{\tau^2 - 4u\varepsilon, \fd}{\fa}\left(\frac{\Phi_{\pm}(\tau, u)}{\Norm_K(\fd)\Norm_K(\fa)} + \Norm_K(\fd)\Psi_{\pm}(\tau, u)\right)\]

\subsubsection{Poisson summation}

We show that for a unit $v$
\[
\binom{\tau^2 - 4u\varepsilon, \fd}{\fa} = \binom{(\tau v)^2 - 4uv^2\varepsilon, \fd}{\fa}.
\]
Exploiting this, we divide the units $\cO_K^*$ into their cosets modulo $\cO_K^{*, 2}$ and rearrange the sum over the units as a double sum.
The character depends on the coset representative of the outer sum (called $Y$) and is independent of the coset representative of the inner sum (a parameter denoted by $y$).
The additive periodicity modulo $4\fa\fd^2$ allows us to divide the $\tau$-sum according to the congruence $\mu \pmod{4\fa\fd^2}$.
In Theorem~\ref{thm: after Poisson} we show that the completed regular elliptic part becomes
\begin{tiny}
\[
\frac{\sp^{-k/2}}{2^{3n-1}} \sum_{\pm}\sum_{\fa}\sum_{\fd}\sum_{Y\in\mathbb{F}_2^{n-1}}\sum_{\mu\in \cO_K/4\fa\fd^2}^{'}\binom{\mu^2 - 4 {u(Y)}\varepsilon, \fd}{\fa}\sum_{\xi\in \mathbb{Z}^{n}}\sum_{\eta\in\mathbb{Z}^{n-1}} \frac{1}{\Norm_K(\fa)\Norm_K(\fd)^2}\left(\frac{\Phi_{\pm}^{1}(\xi, \eta)}{\Norm_K(\fd)\Norm_K(\fa)} + \Norm_K(\fd)\Psi_{\pm}^{1}(\xi, \eta)\right)
.
\]
\end{tiny}
Next, we need to show that the sum coming from the units but not contributing to the Poisson summation (i.e., the sum over $\pm Y$) is a finite sum.
We can (and will) apply Poisson summation to the inner (double) sum.
The main contribution to the spectral side of Poisson summation is when $(\xi, \eta) = (0, 0)$ since there is no exponential factor mitigating its growth.
Restricting to this part only proves the main theorem of this section which is Theorem~\ref{prop:dominanttermwith(tau,u)=0}.

\begin{lthm}
\label{Thm D}
The dominant part of the spectral side of the Poisson summation is
\[
\frac{\sp^{-k/2}}{2^{3n-1}} \sum_{\pm}\sum_{Y\in\mathbb{F}_2^{n-1}}\sum_{\fa}\sum_{\fd}\left(\frac{\Phi_{\pm}^1(0, 0)}{\Norm_K(\fd)^3\Norm_K(\fa)^2} + \frac{\Psi_{\pm}^1(0, 0)}{\Norm_K(\fa)\Norm_K(\fd)}\right)K_{\fa, \fd}(\pm u(Y)).
\]
where $\Phi_{\pm}^1$ and $\Psi_{\pm}^1$ are the scaled Fourier transform in $\mathbb{R}^{2n-1}$ of $\Phi_{\pm}$ and $\Psi_{\pm}$.
For given ideals $\fa, \fd$ of $\cO_K$ and a unit $u\in \cO_K^*$, we write $K_{\fa, \fd}(u)$ to denote a \emph{Kloosterman-type sum}.
\end{lthm}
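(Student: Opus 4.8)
The plan is to read Theorem~\ref{thm: after Poisson} as the output of Poisson summation applied to a full-rank lattice and then to isolate its zero-frequency contribution. Recall how that theorem is reached: after writing the completed regular elliptic part with the $\tau$-sum split into residues $\mu \pmod{4\fa\fd^2}$ and the unit sum split into square classes (indexed by the sign $\pm$ and by $Y\in\mathbb{F}_2^{n-1}$), the remaining \emph{inner} summation runs over the full-rank lattice in $\R^{2n-1}$ obtained by pairing the ideal $4\fa\fd^2$ — viewed in $\R^n$ through the $n$ real embeddings of $K$ — with the rank-$(n-1)$ exponent lattice of the unit cosets, against the functions $\Phi_\pm,\Psi_\pm$, which are smooth of compact support. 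Applying the $(2n-1)$-dimensional Poisson summation formula to this lattice sum is legitimate precisely because the completion step made the lattice complete and because $\Phi_\pm,\Psi_\pm$ are smooth of compact support; it replaces the lattice sum by a sum over the dual lattice, indexed by $(\xi,\eta)\in\Z^n\times\Z^{n-1}$, of the scaled Fourier transforms $\Phi^1_\pm,\Psi^1_\pm$, with reciprocal-covolume normalization $2^{-(3n-1)}\Norm_K(\fa)^{-1}\Norm_K(\fd)^{-2}$, in which the power of $2$ bundles $\Norm_K(4)=4^n$ from the covolume of $4\fa\fd^2$ together with a factor $2^{n-1}$ from the square-class reindexing of the units, while $\abs{D_K}^{1/2}$ cancels the prefactor of Theorem~\ref{manipulation}. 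For the present statement it then suffices to isolate the term $(\xi,\eta)=(0,0)$ of that dual sum.

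I would argue that this term is the dominant one because it carries no oscillatory exponential, hence is neither damped nor summable against the $\fa,\fd$-series by oscillation, whereas every term with $(\xi,\eta)\neq(0,0)$ involves Fourier transforms of compactly supported smooth functions, which decay faster than any polynomial, and so — together with the crude bound $\abs{K_{\fa,\fd}(\pm u(Y))}\le\Norm_K(4\fa\fd^2)$ — contribute an absolutely convergent, lower-order remainder. At zero frequency, $\Phi^1_\pm(0,0)$ and $\Psi^1_\pm(0,0)$ are the total integrals of $\Phi_\pm$ and $\Psi_\pm$, constants independent of $\fa,\fd,Y$, and factor out; what remains inside is $\sum_{\mu\in\cO_K/4\fa\fd^2}^{'}\binom{\mu^2-4u(\pm,Y)\varepsilon,\fd}{\fa}$, which is well defined because the modified Hilbert symbol and the congruence conditions indicated by the prime are $4\fa\fd^2$-periodic in $\tau$, and which is by definition the Kloosterman-type sum $K_{\fa,\fd}(\pm u(Y))$. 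Collecting the two pieces, and using that the coefficient of $\Phi^1_\pm$ carried an extra $\Norm_K(\fd)^{-1}\Norm_K(\fa)^{-1}$ and that of $\Psi^1_\pm$ an extra $\Norm_K(\fd)$ relative to the common normalization, one reads off exactly the displayed formula. Finally $\sum_\pm\sum_{Y\in\mathbb{F}_2^{n-1}}$ is a finite sum of $2^n$ terms, since by Dirichlet's unit theorem $\cO_K^*/(\cO_K^*)^2\cong(\Z/2\Z)^n$ for the totally real field $K$, the totally positive units of \ref{ass: positivity} forming a subgroup of finite index.

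The main obstacle I anticipate is not the Poisson step as such, but the bookkeeping around it: realizing $4\fa\fd^2$ as a clean full-rank lattice in $\R^n$ and gluing it to the $(n-1)$-dimensional unit lattice so that the composite lattice in $\R^{2n-1}$ is exactly the domain on which $\Phi_\pm,\Psi_\pm$ and the interpolation function were defined, and then tracking every scaling so that the dual lattice is again a scaled $\Z^n\times\Z^{n-1}$ and the normalizing constant collapses to precisely $2^{3n-1}\Norm_K(\fa)\Norm_K(\fd)^2$; the totally real hypothesis is what keeps this picture free of complex embeddings. A second, more structural point is the $4\fa\fd^2$-periodicity in the $\tau$-variable of the symbol $\binom{\,\cdot\,,\fd}{\fa}$, which is what makes the residue decomposition — and hence the very definition of $K_{\fa,\fd}$ — legitimate; this is where the properties of the modified Hilbert symbol established earlier, in particular those recorded in Theorem~\ref{thm: 7-9}, are used. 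The explicit evaluation of $K_{\fa,\fd}(\pm u(Y))$ is deliberately postponed to the next section, so only its trivial bound is invoked here, which avoids any circularity.
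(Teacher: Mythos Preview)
Your approach is essentially the paper's: pass to Theorem~\ref{thm: after Poisson} via Poisson summation on the full lattice in $\R^{2n-1}$, then extract the $(\xi,\eta)=(0,0)$ term and collapse the $\mu$-sum into the Kloosterman-type sum $K_{\fa,\fd}(\pm u(Y))$. Your decomposition of the normalizing constant $2^{3n-1}=4^n\cdot 2^{n-1}$ and the cancellation of $\abs{D_K}^{1/2}$ match Proposition~\ref{prop: properties A} exactly.

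One genuine slip: you assert that $\Phi^1_\pm(0,0)$ and $\Psi^1_\pm(0,0)$ are ``constants independent of $\fa,\fd,Y$''. They are not independent of $\fa,\fd$: the very definitions of $\Phi_\pm,\Psi_\pm$ carry $\Norm_K(\fd)^2\Norm_K(\fa)$ in the argument of $F$ and $H_{(x,y)}$, so their total integrals depend on $\fa,\fd$ (this dependence is precisely what is unpacked via Mellin inversion in \S\ref{sec: Isolation of the Contribution of the Trivial and Special Representations}). What you need, and what the paper uses, is only independence from $\mu$ and $Y$: at zero frequency the exponential $\exp(-(\xi,\eta)\cdot A^{-1}(t,s))$ collapses to $1$, so the dependence of the change of variables $A$ on $\mu,Y$ disappears and $\Phi^1_\pm(0,0)=\int\!\!\int\Phi_\pm(t,s)\,ds\,dt$, which depends only on $\pm,\fa,\fd$. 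That is enough to pull it outside the $\mu$-sum and the $Y$-sum, leaving $K_{\fa,\fd}(\pm u(Y))$; but it must remain inside $\sum_\fa\sum_\fd$, as indeed it does in the displayed formula. Also, the $4\fa\fd^2$-periodicity of the symbol you invoke is Lemma~\ref{prop: periodicity 4ad2}, not Theorem~\ref{thm: 7-9} (the latter identifies the symbol with $\chi_\fd$, not its periodicity).
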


\subsection{Evaluation of Kloosterman-type sums}

This is the content of \S\ref{sec: Kloosterman} and the proof is presented in Appendix ~\ref{Appendix_A}.
In Theorem~\ref{main theorem of the kloosterman section} we show that the Dirichlet series of a complex parameter $z = \sigma + it$ with $\sigma > 1$ defined as
\[
\D(z,u):= \sum_{\fd}\frac{1}{\Norm_K(\fd)^{2z+1}} \sum_{\fa}\frac{K_{\fa,\fd}(u)}{\Norm_K(\fa)^{z+1}}
\]
admits an analytic continuation to a meromorphic function in the whole complex plane with poles at 
$z=0$ and $z=\frac12$.
Very crucially, we show that this Dirichlet series is in fact independent of $u$.
More precisely,
\[
\D(z,u)=4^n\frac{\zeta_K(2z)}{\zeta_K(z+1)}\cdot\frac{1-1/\sp^{z(\sk+1)}}{1-1/\sp^{z}}.
\]

\begin{remark}
When $z = 1$, the above expression becomes 
\[
\D(z,u)=4^n\cdot\frac{1-1/\sp^{(\sk+1)}}{1-1/\sp}.
\]
This will recover the $\fp$-adic contribution of the trace of the trivial representation (the $4^n$ gets
cancelled with other terms).
At $z = 0$ the residue can be computed explicitly and is connected to the value of the special representation.
This concrete expression gives us exactly the contributions of both expected representations.
However, the proof of the above theorem is computational and does not provide insight of \emph{why} should this happen.
\end{remark}

Using the \textit{Fundamental Theorem of Arithmetic} and the local nature of the modified Hilbert symbol we factor the above sum into its local pieces.
These local factors look slightly different for primes above 2 and the odd primes.
This motivates the introduction (see Lemma~\ref{lemma:kl1}) of the local Dirichlet series $\D_{\fq}(z,u)$ satisfying 
\[
\D(z,u)= \prod_{\fq}\D_{\fq}(z,u).
\] 

The proof of evaluating $\D(z,u)$ proceeds by evaluating explicitly $\D_{\fq}$ for each prime $\fq$.
The evaluation of the $\D_{\fq}$ is a long computation which we do not discuss here.
We limit our discussion to some highlights:
\begin{itemize}
 \item the definition of the Kloosterman-type sum involves the restricted Hilbert symbol but after evaluating the Dirichlet series, we see that this dependence disappears.
 We know this \textit{after} the computation -- this begs the question why does the symbol disappear?
 \item when $\fq\nmid 2$, evaluating $\D_{\fq}$ is precisely counting the number of solutions of a certain quadratic form modulo $\Norm_K(\fq)^b$ for varying values of $b$.
 Hensel's Lemma applies and it suffices to understand the situation modulo $\fq$, which is a field and lift the solutions appropriately.
 \item when $\fq\mid 2$, to apply Hensel's Lemma we need to consider solutions modulo $\fq^{2e_{\fq} + 1}$, where $e_{\fq}$ is the ramification degree of $\fq$ over $2$.
 Also, the counting problem is complicated because the local Kloosterman-type sum at primes above $2$ have a congruence condition modulo $4$ for a certain quotient.
 These two issues together make the computation of the corresponding counting problem quite challenging.
 Despite our best efforts to tame these Kloosterman-type sums, we were unable to do so.
 It is for avoiding this calculation we make the hypothesis that the prime 2 splits in $K$ -- see \ref{ass: split}.
\end{itemize}

These complications are already visible over $\mathbb{Q}$.
Here, the counting problem is to determine the number of solutions to a system of congruences.
When $\fq\nmid 2$, the quadratic character \textit{is} the modified Hilbert Symbol; hence, the nature of one implies the behaviour of the other.
This is not the case for primes above $2$ -- not even for the rational numbers.
The quadratic character assigns the value $-1$ to $3, 5, 7 \pmod{8}$ and the value $1$ to $1 \pmod{8}$.
The Kronecker symbol on the other hand assigns the value $-1$ to $3, 5 \pmod{8}$ and the value $1$ to $1,7\pmod{8}$.
This discrepancy at the value of $7$ makes the reciprocity laws work for the Kronecker symbol but not recover the quadratic character.
As a consequence, the behaviour of the solutions to the system is erratic as the two symbols contribute differently.

For more general number fields, this problem at primes above $2$ grows.
The behaviour of the square class is now dictated modulo $2e_{\fq} + 1$, but its behaviour has influence from both ramification degree as well as the inertia degree.
Also the formulas of the local Hilbert symbol at primes above $2$ are significantly more complicated.
It is unclear how to proceed to get closed formulas for this counting problems in such `a low key way'.
Nevertheless, the fact that we know what to expect should hint at the possibility that better methods should be available for this evaluation.

\begin{remark}
We wish to emphasize that these discussions and computations do not appear in \cite{altug2013beyond} nor in \cite{AliI} where they are left as exercises for the reader.
This might create the false impression that evaluating Kloosterman-types sums at the primes above $2$ and at the odd primes have the same level of difficulty.
An important part of our contribution here lies in elucidating that this is not the case.
We are able to do it in the case of the rationals because of the explicit knowledge of how to evaluate the Kloosterman-type sums.
We can also tackle a few other cases -- but it is still not in sufficient generality to get the desired formula over a general number field.
\end{remark}

\subsection{Isolation of the contribution of the trivial representation}
\label{intro - isolating the contribution}
In the last two sections of the paper, we apply tools from complex analysis and calculate the residues of the two integrals arising from the approximate functional equation and the Fourier transform in the Poisson summation formula.
We first isolate the contribution of the \textit{trivial representation}.
Concretely, the value of this representation is computed in Theorem~\ref{main theorem section 9}\ref{prop: trivial rep value} using the Weyl integration formula and it turns out to be 
\[
\sp^{-k/2}\frac{1-\sp^{-(k+1)}}{1-{\sp^{-1}} } \sum_{\pm} \int \int \theta^{\pm}(x,y) dy dx.
\] 
As in the case of \cite{AliI}, we can also find the trace of the special representation -- which is done in Theorem~\ref{main theorem section 9}\ref{thm: Weyl Int special functions}.

We now explain the strategy of our proof which is a generalization of calculations done by Langlands.
However, there are some subtle differences and pitfalls to which we would like to draw the reader's attention.
Set $G_{\infty} = \GL(2, \R)\times\cdots\times \GL(2, \R)$ and recall that by definition $Z_+$ is the central subgroup of scalar matrices with positive entries.
As a Lie group, this is one dimensional; hence, $Z_+\setminus G_{\infty}$ has dimension $2n - 1$.
Let $f\in C_c(Z_+\setminus G_{\infty})$.
The group $Z_+$ is not a product group.
Thus, its action on $G_{\infty}$ is not a `product of actions'.
More precisely, we cannot compute the value of the trace at the archimedean places by doing local computations in each real embedding, since the scaling affects all of them in different ways.
This prevents us from following Langlands and Altu\u{g} when scaling the trace and determinant when transforming the germ expansions.

A crucial observation at this point is that the dimensional reduction attributed to the $Z_+$-action on the spectral side corresponds to the fact that rank of the units is $n - 1$.
The remaining $n$ dimensions come from the trace lattice.

After performing the Weyl integration for our choice of the (archimedean) test function, the innermost integral becomes the orbital integral while the outer sum over tori gives a partition of $\mathbb{R}^{2n - 1}$.
Our careful computations show that as $T$ varies over the maximal tori, the inner integral varies over a region $S_T$ which (up to a set of measure 0) partitions $\{\pm\}\times \R^{2n-1}$.
For example, for the case of the trivial representation, we show in Theorem~\ref{thm: malors int fgdg} that
\[
\int f(g)dg = \sp^{-k} \sum_{\pm}\int_{\R^{2n-1}}\theta^{\pm}(x, y)dxdy,
\]
where $x$ (resp. $y$) is viewed as a vector in $\R^n$ (resp. $\R^{n-1}$).
An analogous result for the case of the special representation is shown in Proposition~\ref{prop: 9-17}.
It is pertinent to emphasize that if the units were not taken into account by the Poisson summation and were fixed throughout the process by having an outer sum that runs over the units, we would be unable to arrive at this desired conclusion.
This is because the sum over the units might be indexed by an infinite set while the sum on the spectral side is indexed by a finite one.
A significant advantage of our approach is that it ensures that all that remains from the units in the indexing set is the sign.

In \S\ref{sec: Isolation of the Contribution of the Trivial and Special Representations}, we identify the contribution of the trivial representation and the residues of Eisenstein series to the trace formula in the dominant term.
To complete the proof of the main theorem (see Theorem~\ref{Thm E} below) we then isolate the contribution of the special representation.
The innermost term in the expression in Theorem~\ref{Thm D} consists of two sums, one with $\Psi_{\pm}^1(0, 0)$ and another with $\Phi_{\pm}^1(0, 0)$.
We explicitly write down the (scaled) Fourier transform of $\Psi_{\pm}^1(0, 0)$ and $\Phi_{\pm}^1(0, 0)$, invoke absolute convergence, and change the order of integrals with the sums over $\fa, \fd$.
More precisely, in \eqref{eq:contour1new} we get an expression with three integrals for the sum corresponding to $\Phi_{\pm}^1(0, 0)$: the two outer ones come from the Fourier transform -- one of which corresponds to the variable $x$ and the other other to the variable $y$ -- together they correspond to an integral over $\mathbb{R}^{2n - 1}$.
The complex line integral comes from the definition of $F$ in the approximate functional equation.
We can shift the contour of integration to the left and pass over the poles introduced by the zeta functions and by the Mellin transform of $F$.
This introduces three factors: two residues and the new contour integral.
In Lemma~\ref{lem:Contribution1new} we calculate these values precisely and the first of the three pieces is 
\[
\sp^{-k/2}\frac{1-\sp^{-(k+1)}}{1-{\sp^{-1}} } \sum_{\pm} \int \int \theta^{\pm}(x,y) dy dx.
 \]
A similar analysis for $\Psi_{\pm}^1(0, 0)$ is done in Lemma~\ref{lemma 9.2}.
After appropriate cancellations we obtain our main theorem.

\begin{lthm}
\label{Thm E}
Let $f$ be the fixed test function.
Writing $\mathbf{1}$ to denote the trivial representation and $\xi_0$ to denote the special representation, the dominant part of the spectral side of the Poisson summation is equal to
\[
\Tr({\bf{1}}(f)) - 2\Tr(\xi_0(f)) + \textrm{remainder terms}.
\]
\end{lthm}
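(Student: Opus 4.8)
\textbf{Proof plan for Theorem~\ref{Thm E}.}
The plan is to combine the explicit dominant-term formula of Theorem~\ref{Thm D} with the evaluation of the Kloosterman Dirichlet series $\D(z,u)$ and the Weyl-integration formulas for the trivial and special representations, and to recognize the output of the contour shift as exactly $\Tr(\mathbf 1(f))-2\Tr(\xi_0(f))$ up to the announced remainder. First I would take the dominant part of the spectral side as given in Theorem~\ref{Thm D}, namely
\[
\frac{\sp^{-k/2}}{2^{3n-1}} \sum_{\pm}\sum_{Y\in\mathbb{F}_2^{n-1}}\sum_{\fa}\sum_{\fd}\left(\frac{\Phi_{\pm}^1(0, 0)}{\Norm_K(\fd)^3\Norm_K(\fa)^2} + \frac{\Psi_{\pm}^1(0, 0)}{\Norm_K(\fa)\Norm_K(\fd)}\right)K_{\fa, \fd}(\pm u(Y)),
\]
and split it into the $\Phi^1$-piece and the $\Psi^1$-piece. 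For each piece I would write $\Phi^1_\pm(0,0)$ (resp. $\Psi^1_\pm(0,0)$) as the scaled Fourier transform evaluated at the origin, which unfolds into an integral over $\R^{2n-1}$ of $\theta^{\pm}(x,y)$ against the line integral coming from the definition of $F$ (resp. the integral defining $H_\gamma$) in the approximate functional equation; invoking the absolute convergence established after Theorem~\ref{Thm B}, I would interchange the sums over $\fa,\fd$ with these integrals.

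The key move is that after this interchange the $\fa,\fd$-sum is precisely $\D(z,u(Y))$ (up to the normalizing powers of $\Norm_K(\fa),\Norm_K(\fd)$ already displayed), so by Theorem~\ref{main theorem of the kloosterman section} it equals $4^n\,\zeta_K(2z)/\zeta_K(z+1)\cdot(1-\sp^{-z(\sk+1)})/(1-\sp^{-z})$ and, crucially, is \emph{independent of $Y$}. Hence the sum over $Y\in\mathbb F_2^{n-1}$ collapses to a factor $2^{n-1}$, and together with the sum over $\pm$ this cancels the $2^{3n-1}$ in the denominator against the $4^n=2^{2n}$ coming from $\D$, leaving the clean constant $\sp^{-k/2}$ in front. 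I would then, following the excerpt's description of \eqref{eq:contour1new}, shift the complex contour to the left past the poles of the zeta quotient (at $z=0$ and $z=\tfrac12$) and of the Mellin transform of $F$; the residue theorem writes each piece as (residue at the $\zeta_K(z+1)$-governed point giving the trivial representation) $+$ (residue at $z=0$ tied to the special representation) $+$ (shifted contour = remainder). Lemma~\ref{lem:Contribution1new} and Lemma~\ref{lemma 9.2} supply the precise values: the leading residue of the $\Phi^1$-piece is
\[
\sp^{-k/2}\,\frac{1-\sp^{-(k+1)}}{1-\sp^{-1}}\sum_{\pm}\int\!\!\int \theta^{\pm}(x,y)\,dy\,dx,
\]
which by Theorem~\ref{main theorem section 9}\ref{prop: trivial rep value} (equivalently Theorem~\ref{thm: malors int fgdg}) is exactly $\Tr(\mathbf 1(f))$; the residue contributions at $z=0$ from both pieces assemble, after the cancellations flagged in the text, into $-2\Tr(\xi_0(f))$ via Theorem~\ref{main theorem section 9}\ref{thm: Weyl Int special functions} and Proposition~\ref{prop: 9-17}; everything else is collected as ``remainder terms.''

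Several bookkeeping points need care: one must check that the powers of $\Norm_K(\partial)$ and $\abs{D_K}$ introduced by the approximate functional equation (the parameter $\alpha$) and by the scaled Fourier transforms recombine so that the argument of the zeta quotient is genuinely the same complex variable $z$ in both the $\Phi^1$- and $\Psi^1$-pieces, and that the shift of contour is legitimate — this uses the bound in Lemma~\ref{lemma:boundH} on $H_\gamma$ and the rapid decay of the Bessel function $F$ to control the integrals on the shifted line and to justify interchanging the $Y,\pm$ sums with the contour integral. The main obstacle I anticipate is precisely matching the residue at $z=0$ to $-2\Tr(\xi_0(f))$: the special representation enters through the discrete part of the continuous spectrum, the combinatorial factor $2$ and the sign must be tracked through the two separate residue computations in Lemmas~\ref{lem:Contribution1new} and \ref{lemma 9.2}, and one must verify that the Weyl-integration identity of Proposition~\ref{prop: 9-17} produces exactly the integral transform of $\theta^{\pm}$ that appears there — the analogous step over $\Q$ in \cite{AliI} is where the ``special representation'' bookkeeping is most delicate, and over a totally real field the dependence of $\theta^{\pm}$ on the extra unit-coordinate $y$ makes the identification of the relevant integral less transparent.
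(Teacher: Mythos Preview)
Your plan is essentially the paper's own route: unfold $\Phi_\pm^1(0,0)$ and $\Psi_\pm^1(0,0)$, insert the Kloosterman Dirichlet series via Theorem~\ref{main theorem of the kloosterman section}, collapse the $Y$-sum, shift contours, and match residues using Theorem~\ref{main theorem section 9}. One bookkeeping point to straighten out: the trivial representation is the $z=0$ residue of the $\Phi^1$-piece (coming solely from the simple pole of $\widetilde F$, not from anything ``$\zeta_K(z+1)$-governed''), while $-2\Tr(\xi_0(f))$ is the $z=0$ residue of the $\Psi^1$-piece alone; the cancellation ``flagged in the text'' is between the $z=-\tfrac12$ residue of the $\Phi^1$-piece and the $z=\tfrac12$ residue of the $\Psi^1$-piece, using that $\widetilde F$ is odd.
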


\begin{remark}
This is the exact generalization of Altu\u{g}'s result.
\end{remark}

\subsection{Future Directions and Motivation for Further Research}
\label{new section}

Our intention of writing this long introduction is to convince the reader that the quest of generalizing \cite{AliI} to general number fields was full of surprises and that the mathematics that lurks behind all of this is beautiful.
We made our best efforts to encapsulate these ideas in the questions we propose.
We hope they are the right questions.

\subsubsection{Relaxing the divisibility hypothesis}
The vanishing of the regular elliptic part when \ref{ass: class number} is not satisfied foreshadows some deep mathematics.
Most often, the test function is constructed by picking its local factors in some appropriate way for the intended purposes and then the archimedean factors are left to be arbitrary.
These choices are informed by what parts of the trace formula we want to vanish or not vanish.
For an example of this, we point the reader to \cite[Lectures V.3]{Gel95}.
There they discuss the simple trace formula, and why it cannot be used to prove the equality of certain Tamagawa numbers -- the choice of functions cancels too many terms to make the strategy work; \emph{c.f.} \cite[Proposition~1.3 of Lecture VI]{Gel95}.

Suppose we desire to study families of primes with certain conditions.
For example, which primes ideals are principal in $K$ (which say is a number field of class number $>1$).
By choosing the standard representation to be $r$ (i.e. $k = 1$), we see that the order of the prime ideal $\fp$ in the class group of $K$ divides 1 exactly for those primes that are principal.
Studying the refined trace formula behaviour for this particular $r$ will indicate how the different choice of the test function $f^p$ affects different parts of the trace formula.
We hope that by studying this difference we can extract information of the behaviour we desire to understand by varying $r$.

\subsubsection{Formula for the non-archimedean orbital integral}
This precise formula for the product of non-archimedean orbital integrals is a fundamental problem when rewriting the regular elliptic part of the trace formula.
Obtaining such a precise formula for $\GL(n)$ when $n\geq 3$ and for other groups is an open problem.
In \cite{artstrat}, J.~Arthur put forth a conjecture regarding what it must be in the case of $\GL(n)$.
For the case of $\GL(2)$, this conjecture was verified in two papers by the second named author, see \cite{malors21, espinosa2023zeta} and was crucially used in this work.

\subsubsection{Relaxing the hypothesis on the number field being being totally real and positive}
As we have explained before, we made a technical assumption that the number field $K$ is totally real and positive so as to define an interpolation function.
We are optimistic that our results should be provable without this hypothesis, as well.
However, we used both these conditions in significant ways and the process of its removal will enlighten us on how the number theoretic aspects of $K$ inform the representation theory at the archimedean places.
It appears that settling the issue on the existence of an appropriate interpolating functions is a problem of independent interest.
This depends on the field $K$ itself unrelated to its application to the strategy of Beyond Endoscopy.

\subsubsection{Reinterpreting the congruence conditions to complete the lattice}
When $G=\GL(n)$ for $n\geq 3$, it is not clear to the authors how to rewrite the condition $\fd\mid S_\gamma$ in an explicit way so that it depends on the parameters that produced the conjugacy class.
We remind the reader that establishing such an explicit dependence is essential if we want to complete the lattice for performing Poisson summation.

It would indeed be interesting if the answer to the question is given as
congruence conditions of some sort as in the case of $\GL(2)$.
In fact, Langlands already hints at the existence of the congruence
conditions for $\GL(2, \Q)$ in \cite{LanBE04}.
He writes
``The sum over $r\in \Z$ that occurs in the elliptic term will be replaced by sums over $r$ satisfying a congruence condition.
This will entail that whatever behaviour we find for $S = \{\infty \}$ should remain valid when congruence conditions are imposed.
Such an assertion, which implies a greater theoretical regularity that may make the proofs easier to come by, has to be tested further, but these are the principles that justify confining myself at first to $m = 1$ and to representations unramified at all finite places.”

The congruence conditions referred to by Langlands are precisely those given by the congruence conditions we have discussed earlier.
The regularity is manifested in the periodicity which is intrinsic to the symbol.
Our investigations show that such congruence conditions still hold for all symmetric powers (i.e. all choices of $k$, which Langlands calls $m$).
However, if class field theory is to be of any guide, we think that the congruence conditions might not be the appropriate way to describe the equivalent form of the divisibility conditions.
As we know, the splitting behaviour of primes in non-abelian extensions cannot be described by congruences alone.
We might be looking for more complicated representation theory conditions which, in the abelian case, collapse to congruences.

\subsubsection{Relaxing the hypothesis involving splitting of 2 and more general Kloosterman-like sums}
The condition on the prime 2 is a technical hypothesis.
We are optimistic that it should be possible to compute the value of the Kloosterman-like sums for the primes above 2 when the ramification or inertia degree are not both 1.

Another question to ask is what are these Kloosterman-like sums for other cases? 
Giving a complete description in the general case is beyond our reach since the shape that the regular elliptic part of the trace formula will take is still unknown to us.
However, we might succeed in making educated guesses for situations when the base field $K$ is well behaved (for example, a Kummer extension of $\mathbb{Q}$) and where
the prime splitting is well-understood.

Finally, we reiterate that the value of $\D(z, u)$ does not depend on the unit $u$.
This makes us believe that the computations might be doable in a better way that elucidates why this parameter disappears.

\subsubsection{The appearance of \texorpdfstring{$-2$}{} in Theorem~\ref{Thm E}}
At the time of writing this article, the authors are perplexed by the appearance of $-2$ in Theorem~\ref{Thm E} as it matches Altu\u{g}'s calculations exactly.
We note that the negative sign is expected because it appears in the trace formula computations.
However, the appearance of 2 in our result and in \cite{AliI} is surprising.
A common feature in the trace formula theory is to build objects with `nicer' properties out of others which lack the property (for example, stable orbital integrals from invariant ones).
In this process, each of the building blocks appears with some multiplicity.
Is this $-2$ a foreshadowing of a construction of a new trace formula with `nicer' properties?
If so, what is it that we are constructing that requires the special representation to have multiplicity $-2$ as opposed to $-1$?

\section*{Notation}
For the ease of reading this paper, we now summarize notation that is used throughout.

\subsubsection*{Terms related to a number field/$p$-adic local field} 

\begin{itemize}
\item $K/\Q$ is a totally real number field of degree $n$.
\item $\A_K$ is the group of ad\'eles of $K$.
\item For a general number field $F$, $\sigma$ denotes the real embeddings and $\eta$ denotes the complex embeddings.
There are $r_1$-many real embeddings $r_2$-many pairs of complex embeddings.
\item For a general number field, the map $\sigma_i: F \rightarrow \R$ denotes the $i$-th real embedding with $1\leq i \leq r_1$ and $\eta_j$ are the complex embeddings with $1\leq j \leq r_2$.
For an element $\alpha\in K$, we sometimes write $\alpha_i = \sigma_i(\alpha)$ when there is no scope of confusion.
\item For a general number field $F$, the ring of integers is $\cO_F$, the class number is $h_F$, the regulator is $R_F$, the number of units in $F$ is $\omega_F$, and $D_F$ is the discriminant.
\item $\zeta_K(z)$ is the Dedekind zeta function of $K$ and $\xi_K(z)$ is the completed Dedekind zeta function of $K$.
\item $\kappa$ is the residue of $\zeta_K$ at $z=1$.
\item ideals of $K$ will be denoted by gothic letters $\fa,\mathfrak{b},\fp,\fq$; the prime ideal $\fp$ is fixed throughout the paper.
\item $\nu$ is an infinite place of $K$.
\item For a prime (ideal) $\fq$ of $K$, the $\fq$-adic completion of $K$ is denoted by $K_{\fq}$.
Its uniformizer is $\pi_\fq$.
\item $\Norm_K(\fa)$ is the absolute norm of $\fa$ from $K$ to $\Q$; $\Norm_K(\fp) = \sp$ and $\Norm_K(\fq)=\sq$.
\item $h_{\fp}$ is the order of $\fp$ in the class group of $K$.
\item $\rho_{\fp} = \rho$ is the generator of $\fp^{h_{\fp}}$.
\item For fixed positive integer $\sk$ divisible by $h_{\fp}$, set $\sk' = \sk/h_{\fp}$ and write $\varepsilon = \rho^{\sk/h_{\fp}} = \rho^{\sk'}$.
\item $u,v$ are units in $\cO_K^*$.
\item $\beta_1, \cdots, \beta_{n-1}$ is a system of fundamental units in $\cO_K^*$ realizing the isomorphism in Dirichlet's Unit Theorem.
\item $u=u(\pm,y) = \pm \beta_1^{y_1}\cdots \beta_{n-1}^{y_{n-1}}$ for a vector $y=(y_1, \cdots, y_{n-1})\in \Z^{n-1}$.
If the sign is fixed, we write $u=u(y)$.
\item $L(u) = \{\tau\in \cO_K : \tau^2 - 4u\varepsilon \neq\square \}$.
\end{itemize}

\subsubsection*{Terms related to the regular elliptic element}
\begin{itemize}
\item $R$ is any ring
\item $G(R)=\GL(2,R)$ is the algebraic group of invertible $2\times 2$ matrices, over $R$.
\item $\gamma$ is the regular elliptic element.
\item $G(R)_\gamma$ is the group centralizer where $R\in \{K, K_\fq, \cO_{K_\fq}, K_\nu, \A_K\}$.
\item $\vol(\gamma)$ is the volume of the regular elliptic element.
\item $L=K(\gamma)$ is a quadratic extension of $K$.
\item $P_{\gamma}(X)=P(X)$ is the characteristic polynomial of $\gamma$ with roots $\lambda_1,\lambda_2$.
\item $D(\gamma)=D(\lambda_1,\lambda_2)$ is the Weyl discriminant.
\item $\tau=\Tr(\gamma),\ \det(\gamma)$ are the trace and determinant of $\gamma$.
Set $\partial = \tau^2 - 4\det(\gamma)$.
\item $\ch: G(\R) \longrightarrow \R^2$ is a map defined via $\gamma \mapsto ( \Tr(\gamma),4\det(\gamma))$.
\item $\CH$ is the map $\ch \otimes \cdots \otimes \ch: G_{\infty} \longrightarrow \R^2\times\cdots\times \R^2$
\item $T_{\el}$ is the elliptic torus.
\item $T_{\spl}$ is the split torus.
\item $f_{\fq}$ is a test function at $\fq$, $f_\infty$ is a test function at the real place(s), and $f=\prod_{\fq}f_{\fq}\times f_{\infty}$.
\item $\cO(f_\fq,\gamma)$ is the local orbital integral at $\fq$ and $\cO(f_\infty,\gamma)$ is the local orbital integral at the archimedean place(s).
\item $\cO(f,\gamma)$ is the ad\'elic orbital integral.
\item $R(f)$ is the regular elliptic part of the trace formula for $\GL(2,K)$.
\item $\overline{R(f)}$ is the completed regular elliptic part of the trace formula for $\GL(2,K)$.
\item $\Sigma(\square) = \overline{R(f)}-R(f)$.
\item $\overline{\Sigma_0}(f)$ = completed regular elliptic part with $\xi=\eta=0$.
\item $\chi_\gamma$ is the the quadratic sign character associated to the quadratic extension $L=K(\gamma)$ over $K$.
\item $D_{L/K}(\gamma)$ is the discriminant of the element $\gamma$.
\item $\Delta_{\gamma}$ is the relative discriminant of the quadratic extension $L/K$.
\item $S_\gamma$ is an ideal defined via
\[S_{\gamma}^2 = \frac{D_{L/K}(\gamma)}{\Delta_{\gamma}}.\]
\item $\chi_{\fd}$ is an imprimitive quadratic Hecke character and $L(z,\chi_{\fd})$ is the associated $L$-function.
\item $\theta^{\pm}: \mathbb{R}^{2n-1} \rightarrow \mathbb{C}$ is the interpolation function.
\item $\mathcal{P}^{\pm}: \mathbb{R}^{2n-1} \rightarrow \mathbb{R}$ is the discriminant function.
\end{itemize}

\subsubsection*{Terms related to the approximate functional equation}
\begin{itemize}
\item For a complex number $z$ and non-square $\delta\equiv 0,1\pmod{4}$, write $L(z, \delta)$ to denote the Zagier zeta function.
\item $L(z,\gamma)$ is the generalized multiplicative formula of Langlands for $\GL(2,K)$.
\item $L_{\nu}(z,\chi_\gamma) = L_\R(z)$ is the $L$-factor at the archimedean place $\nu$ for the character $\chi_\gamma$.
\item $\Lambda(z,\gamma)$ is the completed multiplicative formula of Langlands.
\item $F(x)$ is the smooth cut-off function on $\R^+$ with Mellin transform $\widetilde{F}(x)$.
\end{itemize}

\subsubsection*{Symbols}
\begin{itemize}
\item The Kronecker symbol is denoted by $\binom{\cdot}{\cdot}_K$.
\item The Hilbert symbol is denoted by $ \binom{\cdot, \cdot}{\bullet}_H$.
\item The restricted Hilbert symbol is denoted by $ \binom{\cdot}{\bullet}_{rH}$.
\item The modified Hilbert symbol is denoted by $ \binom{\cdot, \bullet}{\bullet}$.
\end{itemize}

\subsubsection*{Terms related to the Kloosterman-type sum}
\begin{itemize}
\item $K_{\fa, \fd}(u)$ denotes the Kloosterman-type sum.
\item $\D(-,u)$ denotes the auxiliary Dirichlet series associated to $K_{\fa, \fd}(u)$.
\item For $\upsilon,r\in \Z_{\geq 0}$, the local Kloosterman-type sum at $\fq$ are written as $\widetilde{K}_{\fq^\upsilon, \fq^r}(u)$.
\item $\D_{\fq}(-,u)$ denotes the local auxiliary Dirichlet series associated to the local Kloosterman-type sum.
\end{itemize}

\subsubsection*{Terms related to the representation theory side}
\begin{itemize}
\item $\mathbf{1}(f)$ is the trivial representation for the test function $f$.
\item $\xi_0(f)$ is the special representation for the test function $f$.
\item For any compact connected Lie group $G$ with maximal torus $T$, write $W(G,T)$ to denote the Weyl group of $G$ with respect to $T$.
\end{itemize}

\subsection*{Acknowledgements}The authors would like to thank the American Institute of Math for sponsoring the workshop ``Functoriality and the trace formula" in December 2017 and for providing a productive and enjoyable environment for our initial discussions on this work.
We would especially like to thank the organizers Ali Altu\u{g}, James Arthur, Bill Casselman, and Tasho Kaletha for making the workshop and this collaboration possible.
We would also like to thank Bill Casselman, Julia Gordon, Jayce Getz, and James Arthur for helpful conversations around this work.
Part of this work was carried out during the NUS program ``On the Langlands Program: Endoscopy and Beyond" in December 2018--January 2019 and we thank them for their hospitality.
We also thank the Fields Institute for hosting the Beyond Endoscopy Mini Conference in April 2023 and providing us with a wonderful working atmosphere.
ME was partially supported by an AMS-Simons Travel Award and the NSF grant DMS-2002085.
DK was partially supported by a PIMS Postdoctoral Fellowship during the preparation of this article.
TAW was partially supported by NSF grant DMS-2212924.

\section{The Main Setup}
\label{sec: The Main Setup}

This section is preliminary in nature.
The main goal is to set the stage, give precise definitions, and describe the main setup of our work.
We clarify our first main hypothesis on a certain divisibility condition that must be satisfied.

\subsection{Definition of the test function and its implications}
\label{sec: Definition of the test function and its implications}

For any ring $R$, set $G(R)=\GL(2,R)$ to denote the algebraic group of $2\times 2$ invertible matrices over $R$.
Throughout this paper, $K$ is a totally real number field\footnote{Many statements we prove in this paper do not require that the number field is totally real.
However, this is a technical hypothesis we need to impose for certain calculations which will be clarified later on in \S\ref{section: blending step}.}.

\begin{definition}
An element $\gamma\in G(K)$ is called a \emph{regular elliptic element} if its characteristic polynomial is irreducible over $K$.
Given such a $\gamma$, define its \emph{characteristic polynomial} as
\[
P_{\gamma}(X) = P(X) = X^2 - \Tr(\gamma) X + \det(\gamma) 
\] 
where $\Tr(\gamma)$ and $\det(\gamma)$ are the trace and determinant of $\gamma$, respectively.
\end{definition}

\begin{notation}
We henceforth write $\tau=\Tr(\gamma)$.
\end{notation}

This polynomial $P_{\gamma}(X)$ generates a quadratic extension $L=K(\gamma)$ of $K$.

\begin{proposition}
\label{prop kot 5.9}
For a regular elliptic element $\gamma\in G(K)$, the following statements hold.
\begin{enumerate}[label = \textup{(\roman*)}]
\item The algebra centralizer
\[
{\Mat}_{\gamma}(K) = \{X\in {\Mat}_{2\times 2}(K) \mid \ X\gamma = \gamma X\}
\]
is (field) isomorphic to $L$.
Moreover, the diagonal matrices are identified with $K$.
\item The group centralizer $G(K)_\gamma = \GL(2, K)_{\gamma}\subset {\Mat}_{\gamma}(K)$ coincides with the invertible elements $L^*$ of $L$.
\item Under the previous identifications, $\gamma$ is a root of its characteristic polynomial, allowing us to identify the matrix $\gamma$ with an element of $L$.
\end{enumerate}
\end{proposition}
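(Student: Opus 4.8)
The plan is to prove this as a standard consequence of the fact that a regular elliptic element generates a quadratic field. First I would establish (i): the key point is that $\Mat_\gamma(K)$ is a commutative subalgebra of $\Mat_{2\times 2}(K)$ containing $K[\gamma]$. Since $P_\gamma(X)$ is irreducible of degree $2$, the subalgebra $K[\gamma]\cong K[X]/(P_\gamma(X))$ is already a field of degree $2$ over $K$, isomorphic to $L$. Then I would argue that $\Mat_\gamma(K)$ cannot be any larger: a matrix commuting with $\gamma$ must preserve each eigenspace of $\gamma$ over the algebraic closure, and since the two eigenvalues $\lambda_1,\lambda_2$ are distinct (regularity/irreducibility forces the discriminant $\partial=\tau^2-4\det(\gamma)\neq 0$), the simultaneous centralizer in $\Mat_{2\times 2}(\overline K)$ is the two-dimensional space of matrices diagonal in an eigenbasis; intersecting with $\Mat_{2\times 2}(K)$ gives a space of dimension at most $2$, hence exactly $K[\gamma]$. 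The identification of diagonal matrices with $K$ is then immediate: the scalar matrices are the only diagonal matrices lying in $K[\gamma]$ (otherwise $\gamma$ itself would be diagonal over $K$, contradicting irreducibility), and they form a copy of $K$ sitting inside $L$ as the base field.

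Next I would deduce (ii). The group centralizer $G(K)_\gamma$ consists precisely of the invertible elements of the algebra centralizer $\Mat_\gamma(K)$: indeed $g\gamma = \gamma g$ with $g\in\GL(2,K)$ says exactly that $g\in\Mat_\gamma(K)\cap\GL(2,K)$. Under the isomorphism $\Mat_\gamma(K)\xrightarrow{\sim} L$ of (i), invertibility in $\Mat_{2\times 2}(K)$ corresponds to invertibility in $L$ (a matrix in $K[\gamma]$ is invertible as a matrix iff it is nonzero in the field $L$, since $L$ is a field and the determinant of a nonzero element of $K[\gamma]$ is its field norm down to $K$, which is nonzero). Hence $G(K)_\gamma \cong L^*$.

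For (iii): the isomorphism $K[\gamma]\cong K[X]/(P_\gamma(X))\cong L$ sends the matrix $\gamma$ to the class of $X$, which by construction satisfies $P_\gamma$; transporting through the isomorphism to $L$, the image of $\gamma$ is a root of $P_\gamma(X)$ in $L$, and this is exactly the element of $L$ we identify $\gamma$ with. So all three parts follow from the single structural fact that $K[\gamma]$ is a degree-$2$ field and is its own centralizer.

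I do not expect a serious obstacle here — the statement is essentially linear algebra together with the definition of regular elliptic. The one place requiring a little care is showing $\Mat_\gamma(K)$ is no bigger than $K[\gamma]$: one should either pass to $\overline K$ and use that $\gamma$ has distinct eigenvalues (so its centralizer is a maximal torus, of dimension $2$), or argue directly that for a non-scalar $\gamma$ the centralizer in $\Mat_{2\times 2}(K)$ always has dimension exactly $2$ over $K$ by a rank computation on the linear map $X\mapsto X\gamma-\gamma X$. Either way the argument is short, and the irreducibility of $P_\gamma$ is used exactly to guarantee $\partial\neq 0$ and that $K[\gamma]$ is a field rather than a split étale algebra.
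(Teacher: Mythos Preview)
Your argument is correct and complete. The paper does not actually prove this proposition at all: its entire proof is the one-line citation ``See \cite[\S 5.9]{Art05\_Harmonic-Analysis}.'' So your direct linear-algebra argument (showing $K[\gamma]\cong L$ via irreducibility of $P_\gamma$, bounding the centralizer dimension by passing to $\overline{K}$ and using distinctness of eigenvalues, then reading off (ii) and (iii)) is strictly more than what the paper provides, and is exactly the standard justification one would find upon chasing the reference.
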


\begin{proof}
See \cite[\S 5.9]{Art05_Harmonic-Analysis}.
\end{proof}

\begin{remark}{\leavevmode}
\begin{enumerate}[label = (\roman*)]
\item Even though $\gamma$ is a priori a matrix, Proposition~\ref{prop kot 5.9} allows us to view it as an element of $L$.
We will shift between both perspectives without specifying when there is no possibility of confusion.
\item When $\tau$ and $\det(\gamma)$ are algebraic integers of $K$, the regular elliptic element $\gamma$ is an algebraic integer of $K(\gamma)$.
\end{enumerate}
\end{remark}

Throughout this paper, \textbf{we fix a prime ideal $\fp$ in $K$}.
Let $\fq$ be any prime ideal of $\cO_K$, which depending on the situation may or may not be equal to $\fp$.
The $\fq$-adic completion of $K$ is denoted by $K_{\fq}$.
A regular elliptic element $\gamma\in G(K)$ is an element of $G(K_{\fq})=\GL(2, K_{\fq})$.
It generates a quadratic extension of reduced $K_{\fq}$-algebras and there are three possibilities for these extensions:
\begin{enumerate}[label = (\alph*)]
\item the split case (i.e., the extension is $K_{\fq}\times K_{\fq}$),
\item the non-split unramified case (i.e., the extension is unramified),
\item the non-split ramified case (i.e., the extension is ramified).
\end{enumerate}
This behaviour parallels the splitting behaviour of $\fq$ in the quadratic extension $L/K$.
In particular, $\gamma$ can be identified with an element of the (local) extension of $K_{\fq}$.

\subsubsection{Choice of Measures}
\label{assumption: measure}
For ease of calculations, in this section we follow the choices of measures made by Langlands, and we explain them below: 
\begin{itemize}
\item At a finite place $\fq$, we equip $G(K_{\fq})$ with the Haar measure.
Thus, the maximal compact set $G(\cO_{K_{\fq}})$ has measure 1.
 For a regular elliptic element $\gamma$, the centralizer $G(K_{\fq})_{\gamma}$ is also endowed with a Haar measure and $G(\cO_{K_{\fq}})_{\gamma}$ has measure 1.
 \item At an infinite place $\nu$, we equip $G(K_{\nu})$ with the standard Lebesgue measure.
For a regular elliptic element $\gamma$, the centralizer $G(K_{\nu})_{\gamma}$ is endowed with a measure that depends on whether this torus is split or elliptic.
\begin{itemize}
\item \emph{split case:} we endow it with the measure $\frac{d\lambda_1d\lambda_2}{\lambda_1\lambda_2}$, where $\lambda_1$ and $\lambda_2$ are the eigenvalues of elements in the split torus $G(K_{\nu})_{\gamma}$.
\item \emph{elliptic case:} we parameterize letting $0 < r < \infty$; and the invariant Haar measure is $drd\theta/2\pi r$.
\end{itemize}
\end{itemize}

In each situation, it follows from the general theory of reductive groups that the space $G_{\gamma}\setminus G$ admits a unique right invariant measure that makes the integration by parts formula true without constants.

\begin{remark}
In \S\ref{sec: compatibility} we will discuss a compatibility condition required for the trace formula to work.
\end{remark}

\subsubsection{Definition of the Test Function}
As mentioned in the introduction, we need to manipulate the regular elliptic part for a specific choice of test function.
We now explain its construction.

\begin{definition}
\label{defi: ffq(k) defi}
Let $\fa$ be any ideal of $\cO_K$ and write $\Norm_K(\fa)$ to denote the absolute norm of $\fa$ from $K$ to $\Q$.
Let $k$ be a fixed non-negative integer and let $\fq$ be a prime ideal of $K$ (possibly equal to $\fp$).
Let $\mathbbm{1}$ be the indicator function of the maximal compact subgroup $G(\cO_{K_{\fq}})$.
For a non-negative integer $k$ define
\begin{small}
\begin{align*}
f_{\fq}^{(k)} &:= \mathbbm{1}\left(X\in\Mat(\cO_{K_{\fq}})\mid\; \abs{\det(X)}_{\fq} = \Norm_{K}(\fq)^{-k}\right) \text{ and }\\
f_\fq &:= \begin{cases}
f_{\fq}^{(0)}
 = \mathbbm{1}\left(X\in\Mat(\cO_{K_{\fq}})\mid \abs{\det(X)}_{\fq} = 1\right) & \text{ if }\fq\neq\fp\\
\Norm_{K}(\fp)^{-k/2}f_{\fp}^{(k)}
 =\Norm_{K}(\fp)^{-k/2}\mathbbm{1}\left(X\in\Mat(\cO_{K_{\fp}})\mid\; \abs{\det(X)}_{\fp} = \Norm_{K}(\fp)^{-k}\right) & \text{ if }\fq=\fp.
\end{cases}
\end{align*}
\end{small}
\end{definition}

At the archimedean places, the \emph{only} condition we impose on the test functions is that their orbital integrals must have compact support.
We denote these test functions (at each real place) by $f_{\nu_i}$.

\begin{definition}
\label{defi: k fixed here}
Let $K/\Q$ be a totally real number field of degree $n$.
Define
\[
f_{\infty} = \prod_{i = 1}^{n} f_{\nu_i}.
\]
Fix a positive integer $k$ as in Definition~\ref{defi: ffq(k) defi}.
The test function used throughout this paper is the following
\[
f : = \prod_{\fq} f_{\fq} \times f_{\infty} = \Norm_{K}(\fp)^{-k/2}f_{\fp}^{(k)} \times \prod_{\fq\neq \fp} f_{\fq}^{(0)} \times f_{\infty}.
\]
Set
\[
f_{\textrm{fin}} = \prod_{\fq} f_{\fq} = \Norm_{K}(\fp)^{-k/2}f_{\fp}^{(k)} \times \prod_{\fq\neq \fp} f_{\fq}^{(0)}.
\]
\end{definition}

Since this function can be expressed as a product, its ad\`elic orbital integral can be split into the product of the local orbital integrals.
More precisely, let $\gamma$ represent a regular elliptic element in $G(K)$; then
\[
\cO(f, \gamma) = \prod_{\fq} \cO(f_{\fq}, \gamma)\times \cO(f_{\infty}, \gamma) = \cO(f_{\fin}, \gamma)\times \cO(f_{\infty}, \gamma).
\]

\subsubsection{Definition of the orbital integral}
In view of our choice of measures, the orbital integrals are defined as follows: 
\begin{align*}
\cO(f_{\fq}, \gamma) &:= \int_{G(K_{\fq})_{\gamma}\setminus G(K_{\fq})} f_{\fq}(x^{-1}\gamma x) dx,\\
\cO(f_{\infty}, \gamma) &:= \int_{G(\R)_{\gamma}\setminus G(\R)} f_{\infty}(x^{-1}\gamma x) dx.
\end{align*}
These orbital integrals are invariant under conjugation by $\gamma$.
If this integral is not $0$, then at some point the integrand must not be $0$ itself.
Hence, there exists $x\in G(K_{\fq})$ such that
\[
x^{-1}\gamma x \in G(\cO_{K_{\fq}})
\]
and satisfies the condition of the indicator function appearing in the definition of $f_{\fq}$.
Since the determinant is invariant under conjugation, we conclude that
\[
\abs{\det(x^{-1}\gamma x)}_{\fq} = \abs{\det(\gamma)}_{\fq} =\left\{
\begin{array}{cc}
 \Norm_{K}(\fp)^{-k} & \text{if }\fq= \fp\\
 1 & \text{otherwise}.
\end{array}
\right.
\]
This specifies the value of the $\fq$-adic norm of $\gamma$ at every finite place $\fq$.
The following result is now almost immediate.

\begin{proposition}
\label{prop 3-7}
Let $\gamma\in G(K)$ be a regular elliptic element such that $\cO(f_{\fin}, \gamma) \neq 0$.
Then $\det(\gamma)\in \cO_K$ and $\tau\in \cO_K$.
Fix an integer $k$ as in Definition~\ref{defi: k fixed here}.
As ideals,
\[
(\det(\gamma)) = \fp^k.
\]
\end{proposition}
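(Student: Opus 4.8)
The plan is to combine the determinant-norm computation already in hand with the product structure of the test function, extracting integrality from the local conditions one prime at a time.

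First I would observe that the hypothesis $\cO(f_{\fin},\gamma)\neq 0$ forces each local orbital integral $\cO(f_\fq,\gamma)$ to be nonzero, since the finite orbital integral factors as $\cO(f_{\fin},\gamma)=\prod_\fq \cO(f_\fq,\gamma)$. For each finite prime $\fq$, nonvanishing of $\cO(f_\fq,\gamma)$ means there exists $x\in G(K_\fq)$ with $x^{-1}\gamma x\in G(\cO_{K_\fq})=\GL(2,\cO_{K_\fq})$ meeting the support condition of $f_\fq$. In particular $x^{-1}\gamma x$ is an integral matrix, so its trace and determinant lie in $\cO_{K_\fq}$; since trace and determinant are conjugation-invariant, we get $\tau=\Tr(\gamma)\in\cO_{K_\fq}$ and $\det(\gamma)\in\cO_{K_\fq}$ for every finite prime $\fq$. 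Because $\cO_K=\bigcap_{\fq}\cO_{K_\fq}$ (the intersection over all finite primes inside $K$), it follows that $\tau\in\cO_K$ and $\det(\gamma)\in\cO_K$.

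Next I would pin down the ideal generated by $\det(\gamma)$. The displayed computation preceding the statement already shows
\[
\abs{\det(\gamma)}_\fq=\begin{cases}\Norm_K(\fp)^{-k}&\text{if }\fq=\fp,\\ 1&\text{otherwise},\end{cases}
\]
which in valuation terms says $\val_\fp(\det(\gamma))=k$ and $\val_\fq(\det(\gamma))=0$ for every finite prime $\fq\neq\fp$. Since $\det(\gamma)$ is now known to be an element of $\cO_K$ (so all its valuations at finite primes are nonnegative, consistent with the above), the unique factorization of the fractional ideal $(\det(\gamma))$ into prime ideals reads $(\det(\gamma))=\prod_\fq \fq^{\val_\fq(\det(\gamma))}=\fp^k$, as claimed.

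I do not expect a serious obstacle here; the statement is essentially a bookkeeping consequence of the support of $f_{\fin}$ together with the factorization of the adelic orbital integral, both of which are in place. The one point to state carefully is the passage from ``integral at every finite place'' to ``integral in $\cO_K$,'' i.e.\ the identity $\cO_K=\bigcap_\fq\cO_{K_\fq}$, and the fact that the characteristic polynomial coefficients of $\gamma$ are genuinely global elements of $K$ (so that comparing their images in each $K_\fq$ is legitimate); this is immediate since $\gamma\in G(K)$, hence $\tau,\det(\gamma)\in K$ a priori. Everything else is the elementary valuation argument above.
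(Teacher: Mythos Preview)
Your argument is correct. One small notational slip: at $\fq=\fp$ with $k>0$ the matrix $x^{-1}\gamma x$ lies in $\Mat(\cO_{K_\fp})$ with $\abs{\det}_\fp=\sp^{-k}$, not in $G(\cO_{K_\fp})=\GL(2,\cO_{K_\fp})$; but this does not affect your reasoning, since integrality of the entries is all you use to conclude $\tau\in\cO_{K_\fq}$.

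Your route for the trace differs from the paper's. The paper passes to the quadratic extension $L=K(\gamma)$, extends each $\fq$-adic absolute value to a place $\fq_L$ of $L$, and bounds
\[
\abs{\lambda_1+\lambda_2}_{\fq_L}\le\max\{\abs{\lambda_1}_{\fq_L},\abs{\lambda_2}_{\fq_L}\}=\abs{\lambda_1\lambda_2}_{\fq_L}^{1/2}=\abs{\det(\gamma)}_{\fq_L}^{1/2}\le 1,
\]
using that the eigenvalues are Galois conjugates of equal norm; then $\tau\in\cO_L\cap K=\cO_K$. Your argument is more elementary: since some conjugate of $\gamma$ has entries in $\cO_{K_\fq}$ and trace is conjugation-invariant, $\tau\in\cO_{K_\fq}$ directly, and then $\cO_K=\bigcap_\fq\cO_{K_\fq}$ finishes. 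Both are valid; yours avoids the detour through $L$, while the paper's approach makes the role of the eigenvalues (and hence the ambient quadratic extension already in play) more visible. For the determinant and the ideal identity $(\det(\gamma))=\fp^k$, your argument is exactly the bookkeeping the paper records just before the proposition.
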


\begin{proof}
The only part that requires a proof is the claim that $\tau\in \cO_K$.
Let $\lambda_1, \lambda_2$ be the roots of $P_{\gamma}(X)$ in $L/K$.
Since they are Galois conjugates, they have the same norm (in $L$).
If $\fq_L\mid \fq$ is a prime ideal in $L$, the $\fq$-adic norm can be extended to a $\fq_L$-adic norm.
\[
\abs{\lambda_1 + \lambda_2}_{\fq_L} \le \max\left\{\abs{\lambda_1}_{\fq_L}, \abs{\lambda_2}_{\fq_L}\right\} = \abs{\lambda_1}_{\fq_L} = \abs{\lambda_1\lambda_2}_{\fq_L}^{1/2} = \abs{\det(\gamma)}_{\fq_L}^{1/2}\le 1.
\]
We deduce that $\tau = \lambda_1 + \lambda_2 \in \cO_L$.
However, $\tau$ is an element of $K$, so $\tau\in \cO_K$.
\end{proof}

\begin{definition}
Conjugacy classes with $\cO(f_{\fin}, \gamma) \neq 0$ will be referred to as \emph{contributing conjugacy classes}.
\end{definition}

\begin{corollary}
If $\gamma$ represents a contributing conjugacy class, then it is an algebraic integer in $L=K(\gamma)$ and in $L_\fq=K_{\fq}(\gamma)$ for all primes $\fq$.
\end{corollary}

\begin{definition}
\label{defi: volume}
Set $\A_K$ to denote the group of ad\`eles of $K$.
Let $Z_+$ be the connected central subgroup of $G(K)_\gamma$.
The \emph{volume} of the regular elliptic element $\gamma$ is defined as
\[
\vol(\gamma) := \vol\left(Z_{+}G\left(K\right)_{\gamma}\setminus G\left(\A_{K}\right)_{\gamma}\right).
\] 
\end{definition}

Our object of study is the regular elliptic part of the trace formula for $\GL(2, K)$.

\begin{proposition}
\label{prop: using arthur to define RE part}
The regular elliptic part of the trace formula for $\GL(2, K)$ is 
\[
R(f) = \sum_{\gamma} \vol(\gamma)\cO(f,\gamma),
\]
where the sum runs over all conjugacy classes in $\GL(2, K)$ of regular elliptic elements.
\end{proposition}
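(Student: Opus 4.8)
The plan is to recognize this proposition as the identification of a named constituent of Arthur's geometric expansion, and hence to deduce it by reading off the relevant terms from the (fine) geometric side of the trace formula for $G=\GL(2)$ over $K$. First I would recall that the geometric side is a sum $\sum_{\mathfrak o} J_{\mathfrak o}(f)$ over the geometric equivalence classes $\mathfrak o$, each of which expands, via the fine geometric expansion, into a sum over Levi subgroups $M$ of global coefficients times weighted orbital integrals $J_M^G(\gamma,f)$. For $\GL(2)$ these classes fall into four families: the central ones, the (central times) unipotent ones, the split regular semisimple ones, and the elliptic regular semisimple ones. The last family is precisely the set of $G(K)$-conjugacy classes of elements with irreducible characteristic polynomial, i.e. the regular elliptic elements in the sense of our Definition, and by Proposition~\ref{prop kot 5.9} the connected centralizer of such a $\gamma$ is the maximal torus $T_\gamma\cong L^\times$, which is anisotropic modulo $Z_+$.

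Next I would argue that for these classes the fine expansion collapses. Since $\gamma$ is regular and $T_\gamma$ is anisotropic modulo the center, $\gamma$ is not $G(K)$-conjugate into the Levi component of any proper parabolic subgroup of $G$, so the only $M$ contributing to the fine expansion of the corresponding $J_{\mathfrak o}(f)$ is $M=G$ itself; the associated $(G,M)$-family is trivial, so the weighted orbital integral $J_G(\gamma,f)$ reduces to the invariant orbital integral $\cO(f,\gamma)$, while the global coefficient attached to $\gamma$ is exactly, with the measure normalizations of \S\ref{assumption: measure}, the ad\`elic volume $\vol(\gamma)$ of Definition~\ref{defi: volume}. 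Summing over all regular elliptic classes then yields $R(f)=\sum_\gamma \vol(\gamma)\cO(f,\gamma)$. I would also note that for our test function this sum is unproblematic — indeed finite: Proposition~\ref{prop 3-7} forces $\tau,\det(\gamma)\in\cO_K$ with $(\det(\gamma))=\fp^k$, and the hypothesis that the archimedean orbital integrals have compact support confines every archimedean conjugate of $\tau$ to a fixed compact set, leaving only finitely many classes with $\cO(f,\gamma)\neq 0$.

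The hard part will be the bookkeeping of measure normalizations: one must verify that Arthur's normalization of the geometric terms agrees with the ad\`elic volume $\vol\!\left(Z_+G(K)_\gamma\setminus G(\A_K)_\gamma\right)$ together with the local orbital integrals normalized as in \S\ref{assumption: measure}, so that no spurious constant intervenes; this compatibility is exactly what \S\ref{sec: compatibility} is devoted to establishing. A secondary and routine point is to confirm that ``regular elliptic'' in the sense of an irreducible characteristic polynomial coincides, for $\GL(2)$, with ``elliptic regular semisimple'' in Arthur's sense, which is immediate from Proposition~\ref{prop kot 5.9}. Modulo these normalization checks, the proposition follows by directly extracting the elliptic regular terms of the trace formula, as carried out classically for $\GL(2)$ in \cite{JacquetLanglands1970} and in the general formalism of \cite{Art05_Harmonic-Analysis}.
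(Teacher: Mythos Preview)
Your proposal is correct and takes essentially the same approach as the paper: both identify the formula by reading off the regular elliptic terms from Arthur's geometric expansion. The paper's proof is more terse---it simply cites Arthur's stated formula (27.1) in \cite{Art-intro}, identifies $a^G(\gamma)$ with $\vol(\gamma)$ and $f_G(\gamma)$ with $\cO(f,\gamma)$, and then checks the one technical point that Arthur's $(G,S)$-equivalence on $\Gamma(G)_S$ never identifies two distinct regular elliptic conjugacy classes; you instead explain structurally \emph{why} the fine expansion collapses (anisotropic centralizer forces $M=G$, so weighted orbital integrals reduce to invariant ones), which amounts to the same conclusion with more justification supplied.
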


\begin{proof}
Technically, this statement follows immediately from definitions.
We make references to the literature so that the reader can chase the appropriate definitions for a general number field.

\textbf{Within this short proof we follow the notation of \cite{Art-intro}.}
In (27.1) 
it is stated that the regular elliptic part of the invariant trace formula, for a general reductive group $G$ over $F$, is
\[
\sum_{\gamma\in\Gamma_{\textrm{reg, ell}}(G)} a^G(\gamma)f_G(\gamma).
\]
In this notation, $a^G(\gamma)$ stands for the volume we defined in Definition~\ref{defi: volume} and $f_G(\gamma) = \cO(f, \gamma)$.

As explained in the line above this equation in \emph{op.~cit.}, $\Gamma_{\textrm{reg}, {\el}}(G)$ consists of the equivalence classes of regular elliptic conjugacy classes in $\Gamma(G)_S$, where $S$ denotes the set of archimedean places.

In (22.1), it is defined that
\[
\Gamma(G)_S = (G(F))_{G, S},
\]
and the notation on the left consists of conjugacy classes under $(G, S)$-equivalence.
This concept is defined in p.~113, and consists of two conditions that must be satisfied (the second of which refers to a definition on unipotent elements given on p.~110).
These conditions \textit{never} identify two different regular elliptic conjugacy classes.
Hence, each regular conjugacy class contributes a different term to the regular elliptic part of the trace formula.
The result follows.
\end{proof}

Now we can prove the following result.
\begin{theorem}
\label{thm: condn on k and h}
Let $f$ be a test function as before.
If the order of $\fp$ in the class group of $K$ \emph{does not} divide the non-negative integer $k$, then the regular elliptic part of the trace formula for the test function $f$ vanishes.
Equivalently,
\[
\sum_{\gamma}\vol(\gamma)\cO(f, \gamma) = 0.
\]
\end{theorem}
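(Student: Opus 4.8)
The plan is to leverage Proposition~\ref{prop 3-7} together with the fact that the regular elliptic part is a sum of orbital integrals that vanish termwise under the stated hypothesis. First I would recall that by Proposition~\ref{prop: using arthur to define RE part}, the quantity $\sum_\gamma \vol(\gamma)\cO(f,\gamma)$ is a sum over conjugacy classes of regular elliptic elements, and that the adelic orbital integral factors as $\cO(f,\gamma) = \cO(f_{\fin},\gamma)\times\cO(f_\infty,\gamma)$ by the product structure of $f$ in Definition~\ref{defi: k fixed here}. So it suffices to show that every summand vanishes, and for that it suffices to show that $\cO(f_{\fin},\gamma) = 0$ for every regular elliptic $\gamma$ whenever $h_\fp\nmid k$.

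Next I would argue by contraposition: suppose some $\gamma$ has $\cO(f_{\fin},\gamma)\neq 0$, i.e.\ $\gamma$ is a contributing conjugacy class. Then Proposition~\ref{prop 3-7} applies and gives $\det(\gamma)\in\cO_K$ together with the ideal equality $(\det(\gamma)) = \fp^k$. The key observation is that this forces $\fp^k$ to be a \emph{principal} ideal. But $\fp^k$ is principal if and only if the order $h_\fp$ of the class of $\fp$ in the ideal class group of $K$ divides $k$ — this is immediate from the definition of the order of an element in a finite abelian group. Hence if $h_\fp\nmid k$, no contributing conjugacy class exists, every term $\cO(f_{\fin},\gamma)$ is zero, and therefore $R(f) = \sum_\gamma \vol(\gamma)\cO(f,\gamma) = 0$.

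The only point requiring a little care is convergence/well-definedness: one should note that the sum defining $R(f)$ is (at worst) a sum of terms each of which we are showing to be identically zero, so no delicate interchange or absolute convergence argument is needed — the conclusion is literally that every summand vanishes. I would also remark that $k\geq 0$ is assumed (Definition~\ref{defi: k fixed here} fixes $k$ a positive integer, but the statement allows $k$ non-negative; when $k=0$ the ideal $\fp^0=\cO_K$ is trivially principal, so the hypothesis $h_\fp\nmid 0$ is vacuous and there is nothing to prove, consistent with the statement).

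I do not expect any serious obstacle here: the whole argument is a two-line deduction from Proposition~\ref{prop 3-7} plus the elementary characterization of when a power of an ideal class is trivial. The substance of the result lies entirely in Proposition~\ref{prop 3-7} (specifically in the norm computation showing $(\det\gamma) = \fp^k$), which has already been established; the present theorem is essentially a corollary repackaging that fact in terms of the class number. If anything, the ``hard part'' is purely expository — making clear why this vanishing is not a defect of the method but rather the phenomenon that motivates Assumption~\ref{ass: class number} (the divisibility hypothesis $h_\fp\mid k$) under which the rest of the paper operates.
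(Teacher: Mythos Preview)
Your proof is correct and follows essentially the same route as the paper: argue by contraposition, invoke Proposition~\ref{prop 3-7} to conclude that a nonvanishing term forces $(\det\gamma)=\fp^k$ to be principal, and deduce $h_\fp\mid k$. The paper's proof is in fact the two-line version of exactly this argument; your additional remarks on factoring the orbital integral, convergence, and the $k=0$ case are sound elaborations but not strictly needed.
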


\begin{proof}
If the regular elliptic part of the trace formula for the function $f$ does not vanish, then Proposition~\ref{prop 3-7} asserts that $(\det(\gamma)) = \fp^k$.
Hence, $\fp^k$ is a principal ideal and the result follows.
\end{proof}

\begin{remark}{\leavevmode}
\begin{enumerate}[label = \textup{(\roman*)}]
\item Even when $\gamma$ is a contributing conjugacy class, it is possible that the archimedean contribution of the orbital integral is zero.
In other words, when $\gamma$ is a contributing conjugacy class, the summand $\vol(\gamma)\cO(f,\gamma)$ can vanish precisely when the infinite orbital integral does.
\item Set $h_{\fp}$ to denote the order of $\fp$ in the class group of $K$.
In view of Theorem~\ref{thm: condn on k and h}, we make the assumption $h_{\fp}\mid k$ throughout this paper.
This allows us to manipulate the regular elliptic part and find the contribution of the trivial representation in it.
\end{enumerate}
\end{remark}

\begin{equation}
\tag*{\textup{\textbf{(Hyp-div)}}}
\label{ass: class number}
\begin{minipage}{0.8\textwidth}
With notation as above, we assume $h_{\fp}\mid k$ and refer to this as (Hyp-div).
\end{minipage}
\end{equation}

\begin{notation}
Fix a positive integer $\sk$ satisfying \ref{ass: class number}.
Since $\fp^{h_{\fp}}$ is principal, we fix a generator $\rho_{\fp} = \rho$ of $\fp^{h_{\fp}}$
and suppose that $\sk = {h_{\fp}}\sk'$.
Then 
\begin{equation}
\label{eqn: det gamma and rho}
\left(\det\left(\gamma\right)\right) = \fp^{\sk} = \fp^{{h_{\fp}}\sk'} = \left(\rho\right)^{\sk'} = \left(\rho^{\sk'}\right).
\end{equation}
Set $\varepsilon = \rho^{\sk'}$.
Note that $\rho$ depends on the prime $\fp$, up to units of $\cO_K$, and $\sk'$ depends on $\sk$ and $\fp$.
There is no dependence on $\gamma$.
In particular, it is possible to define and fix $\varepsilon$ without any reference to $\gamma$.
\end{notation}

\subsection{The analytic class number formula}

So far we have shown that
\[
\sum_{\gamma} \vol\left(\gamma\right)\cO\left(f, \gamma\right) = \sum_{\left(\det\left(\gamma\right)\right) = \fp^{\sk}} \vol\left(\gamma\right)\cO\left(f, \gamma\right).
\]
Next, we simplify the volume term and express it in terms of arithmetic quantities associated with the number fields $K$ and $L=K(\gamma)$.
We first quote the following well-known result.

\begin{proposition}
\label{centralizer idele equation}
For a regular elliptic element $\gamma$, there exists an isomorphism
\[
Z_{+}G(K)_{\gamma}\setminus G(\A_{K})_{\gamma} \cong L^*\setminus \I_{L}^1,
\]
where $\I_{L}^1$ are the unit id{\`e}les.
\end{proposition}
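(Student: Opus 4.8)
The plan is to identify both sides with adelic cohomology/id\`ele-class groups of the quadratic extension $L=K(\gamma)$, using the fact (Proposition~\ref{prop kot 5.9}) that the centralizer $G(K)_\gamma$ is $L^*$ and, more generally, that $G(\A_K)_\gamma$ is the group of id\`eles $\I_L$ of $L$. The key points to establish are: (a) the group identification $G(\A_K)_\gamma \cong \I_L$, compatibly with the identification $G(K)_\gamma \cong L^*$ of Proposition~\ref{prop kot 5.9}; (b) under this identification, the connected central subgroup $Z_+$ of $G(K)_\gamma$ corresponds to a copy of $\R_{>0}$ sitting diagonally in $\I_L$ (via the archimedean places of $L$), whose image under the norm/absolute-value map on id\`eles is exactly $\R_{>0}$; and (c) quotienting $\I_L$ by this $\R_{>0}$ yields the norm-one id\`eles $\I_L^1$.

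First I would make (a) precise: since $\gamma$ is regular elliptic, $\Mat_\gamma(K)\cong L$ as a $K$-algebra, and this base-changes to $\Mat_\gamma(\A_K)\cong L\otimes_K \A_K \cong \A_L$ (the last is the standard isomorphism $\A_L \cong L\otimes_K\A_K$ for a finite extension). Passing to units, $G(\A_K)_\gamma = \Mat_\gamma(\A_K)^\times \cong \A_L^\times = \I_L$, and this restricts on the $K$-points to the isomorphism $G(K)_\gamma\cong L^*$ already recorded. Next, for (b), recall $Z_+$ is the identity component (in the real topology) of the center of $G(K)_\gamma$: concretely it is the group of scalar matrices $\diag(t,t)$ with $t\in\R_{>0}$. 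Under the embedding $G(K)_\gamma\cong L^*\hookrightarrow \I_L$, the element $\diag(t,t)$ maps to the id\`ele that is $t$ at every archimedean place of $L$ lying over the (necessarily archimedean, in fact real since $K$ is totally real) places of $K$ and $1$ elsewhere — i.e., $Z_+$ embeds as the ``positive diagonal'' $\R_{>0}\hookrightarrow (L\otimes_\Q\R)^\times \hookrightarrow \I_L$. The id\`ele norm $\|\cdot\|_{\A_L}\colon \I_L\to\R_{>0}$ restricted to this copy of $\R_{>0}$ is $t\mapsto t^{[L:\Q]}$, which is an isomorphism onto $\R_{>0}$.

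Then I would assemble the conclusion. Because $\|\cdot\|_{\A_L}$ is trivial on $L^*$ (product formula) and surjective with kernel $\I_L^1$, and because $Z_+$ maps isomorphically onto $\R_{>0}$ under $\|\cdot\|_{\A_L}$, the multiplication map $Z_+\times \I_L^1\to\I_L$ is an isomorphism of topological groups; hence $\I_L = Z_+\cdot \I_L^1$ with $Z_+\cap\I_L^1 = \{1\}$. Dividing by $L^*$ on the left (which lies in $\I_L^1$) gives
\[
Z_+ L^*\backslash \I_L \;\cong\; L^*\backslash \I_L^1,
\]
and transporting back along the isomorphisms of (a)--(b) yields
\[
Z_+ G(K)_\gamma\backslash G(\A_K)_\gamma \;\cong\; L^*\backslash \I_L^1,
\]
as claimed. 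One should check the identifications are measure-compatible if the statement is to be used quantitatively later, but as stated it is just an isomorphism of groups.

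\textbf{Main obstacle.} The only genuinely delicate point is (b): pinning down precisely which one-parameter subgroup of $\I_L$ the group $Z_+$ becomes. One must be careful that $Z_+$ is defined as the connected central subgroup of $G(K)_\gamma$ (scalars with \emph{positive} real entries), not the full center $K^*$ or the center of $G(\A_K)_\gamma$, and that its image is the archimedean positive diagonal rather than, say, a subgroup of $\I_L$ supported only over the real places of $K$ in some asymmetric way — here totally-realness of $K$ and the structure of $L\otimes_\Q\R$ keep the bookkeeping clean. Everything else is a formal consequence of the id\`ele product formula and the $\A_L\cong L\otimes_K\A_K$ identification, so I would expect the write-up to be short, with most of the care concentrated on the $Z_+$ bookkeeping.
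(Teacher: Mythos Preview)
Your argument is correct and is precisely the standard one; the paper itself does not give a proof but simply cites \cite[Proposition~26.1]{KL06}, so there is no real comparison of approaches to make.

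Two minor wording points worth tightening. First, you describe $Z_+$ as the connected central subgroup of $G(K)_\gamma$ and then trace $\diag(t,t)$ through the embedding $G(K)_\gamma\cong L^*\hookrightarrow\I_L$; but for generic $t\in\R_{>0}$ the scalar $\diag(t,t)$ is not a $K$-point at all --- it lives in $G(\A_K)_\gamma$ via the archimedean components. What you actually use (and what the paper means, cf.\ their later description of the $Z_+$-action on $G_\infty$) is that $Z_+\cong\R_{>0}$ sits in the archimedean factor of $G(\A_K)$ as simultaneous positive scalars, and it is the adelic isomorphism $G(\A_K)_\gamma\cong\I_L$ of your step~(a) that carries it to the archimedean diagonal in $\I_L$. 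Second, your norm computation $t\mapsto t^{[L:\Q]}$ is right but uses that at each real place $v$ of $K$ the local contribution is $t^2$ regardless of whether $L_w/\R$ is $\R\times\R$ or $\C$; it is worth saying this explicitly so the reader sees why the exponent is $[L:\Q]=2n$ and not something depending on the splitting type. With those clarifications your write-up would stand on its own and supply what the paper defers to the literature.
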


\begin{proof}
See {\cite[Proposition~26.1]{KL06}}.
\end{proof}

The right hand side of the above isomorphism is the object of study when one proves the analytic class number formula via id{\`e}lic methods.
In particular, by transferring the measure via the isomorphism we obtain the the main result of this section.

\begin{proposition}
\label{prop: volume manipulation}
Let $\gamma$ be a regular elliptic element and let $\chi_{\gamma}$ denote the quadratic sign character associated the quadratic extension $L/K$ generated by $\gamma$.
Then, by transferring the id{\`e}lic measure from proposition \ref{centralizer idele equation}, we have
\[
\mbox{meas}(\gamma) = \frac{2^{r_1}h_KR_K}{\sqrt{\abs{D_K}} \omega_K} L(1, \chi_{\gamma}) \sqrt{\abs{D_L}},
\]
where $h_K$ is the class number of $K$, $R_K$ is the regulator of $K$, and $D_F$ is the discriminant of the number field $F\in \{K,L\}$.
\end{proposition}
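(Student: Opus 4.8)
The plan is to compute the measure of $\gamma$ by transporting everything through the isomorphism $Z_+ G(K)_\gamma \setminus G(\A_K)_\gamma \cong L^* \setminus \I_L^1$ of Proposition~\ref{centralizer idele equation} and then recognizing the right-hand side as exactly the volume that appears in the idèlic proof of the analytic class number formula for $L$, corrected by a similar volume for $K$ because of the presence of $Z_+$. First I would recall that $G(K)_\gamma \cong L^*$ and $G(\A_K)_\gamma \cong \I_L$ (Proposition~\ref{prop kot 5.9} and its adelic analogue), and that under these identifications the diagonal copy of $K$ inside $\Mat_\gamma(K)$ corresponds to the scalar matrices. Hence $Z_+$ — the connected component of the center of $G(K)_\gamma$ — is identified with the group $K_\infty^{+,\Delta}$ of totally positive scalars embedded diagonally in $\I_{L,\infty}$, i.e. a copy of $\R_{>0}$ sitting inside the archimedean idèles of $L$. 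So the quotient we must measure is $\R_{>0}\backslash L^*\backslash \I_L$, and the measure is the one transferred from our chosen Haar/Lebesgue measures on $G$ and $G_\gamma$ at every place.

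Next I would carry out the standard idèlic class-number-formula computation for $L$: writing $\I_L = \I_L^1 \times \R_{>0}$ (via the norm), one has $\vol(L^*\backslash \I_L^1) $ expressible, after taking the residue at $s=1$ of $\zeta_L(s)$ through the Tate/Iwasawa integral, in terms of the residue $\kappa_L = \dfrac{2^{r_1(L)}(2\pi)^{r_2(L)} h_L R_L}{\omega_L \sqrt{|D_L|}}$. For a CM-type or totally real quadratic extension $L/K$ with $K$ totally real of degree $n$, we have $\zeta_L(s) = \zeta_K(s) L(s,\chi_\gamma)$, so the ratio of residues gives $\kappa_L/\kappa_K = L(1,\chi_\gamma)$ up to the explicit ratio of the archimedean/class-number/regulator constants of $L$ and $K$. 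The factor $Z_+ \backslash (\text{archimedean scalars})$ is precisely what converts the "$\I_L^1$" normalization into a normalization matching the $K$-side, and bookkeeping the real places of $K$ (there are $r_1 = n$ of them, each contributing a factor $2$ from $\R^\times / \R_{>0}$ on the $K$-side relative to $L$) produces the $2^{r_1}$, the $h_K R_K / (\omega_K \sqrt{|D_K|})$, and leaves $\sqrt{|D_L|}$ uncancelled. I would assemble these pieces, being careful that the local measures fixed in \S\ref{assumption: measure} (mass $1$ on $G(\cO_{K_\fq})$ and on $G(\cO_{K_\fq})_\gamma$ at finite places, Lebesgue and the $dr\,d\theta/2\pi r$ resp.\ $d\lambda_1 d\lambda_2/\lambda_1\lambda_2$ conventions at infinity) are exactly the ones that make the integration-by-parts formula constant-free, so that no stray powers of $2\pi$ or discriminant factors are introduced in the transfer.

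The main obstacle I expect is the careful matching of archimedean measures and the precise tracking of powers of $2$, $2\pi$, $\omega$, and discriminants: the quotient $Z_+\backslash(\text{idèlic unit group})$ mixes the archimedean places of $K$ with those of $L$ in a way that is easy to get off by a power of $2^n$ or $(2\pi)^{r_2(L)}$, and one has to be especially careful about whether $L$ is totally real or CM (which changes $r_1(L), r_2(L), \omega_L$) and about the relation $R_L = 2^{?} R_K \cdot (\text{regulator of } \chi_\gamma)$. Rather than re-derive the class number formula, I would cite the idèlic computation (e.g.\ as in \cite{KL06}) for both $L$ and $K$, take the quotient, and then isolate exactly the contribution of $Z_+$; the totally-real hypothesis on $K$ is what keeps this bookkeeping uniform. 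Once the constant $\kappa = \dfrac{2^{r_1} h_K R_K}{\omega_K \sqrt{|D_K|}}$ (which is, up to the relation $\kappa = $ residue of $\zeta_K$ at $1$ noted in the notation section) is pinned down, the formula $\mathrm{meas}(\gamma) = \kappa \, L(1,\chi_\gamma)\sqrt{|D_L|}$ follows.
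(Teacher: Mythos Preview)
Your approach is correct and lands on the same argument as the paper, but you make it considerably harder on yourself than necessary. The paper's proof is five lines: Proposition~\ref{centralizer idele equation} already hands you the quotient $L^*\setminus \I_L^1$ with $Z_+$ fully absorbed, so there is nothing further to do with $Z_+$; then one simply quotes that the measure of $L^*\setminus \I_L^1$ equals $\sqrt{|D_L|}$ times the residue $\kappa_L$ of $\zeta_L(z)$ at $z=1$, and the factorization $\zeta_L(z)=\zeta_K(z)L(z,\chi_\gamma)$ gives $\kappa_L=\kappa_K\,L(1,\chi_\gamma)$ immediately. That is the whole proof.

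Your anticipated obstacles --- comparing $R_L$ to $R_K$, tracking $\omega_L$, $r_1(L)$, $r_2(L)$, stray powers of $2$ and $2\pi$, the CM-versus-totally-real dichotomy for $L$ --- are all bypassed by working at the level of zeta residues rather than unpacking the class number formula for $L$ and for $K$ separately and matching invariants term by term. Likewise, your sentence ``the factor $Z_+\backslash(\text{archimedean scalars})$ is precisely what converts the $\I_L^1$ normalization into a normalization matching the $K$-side'' is a red herring: the passage from $\I_L$ to $\I_L^1$ \emph{is} the quotient by $Z_+$, already encoded in Proposition~\ref{centralizer idele equation}, so no further archimedean bookkeeping is required.
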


\begin{proof}
Let $r_1^\prime$ (resp. $r_2'$) denote the number of real (resp. complex) embeddings of $L$.
Therefore,
\begin{align*}
\mbox{meas}(\gamma) 	&:= \mbox{meas}\left(Z_+G\left(K\right)_{\gamma}\setminus G\left(\A_K\right)_{\gamma}\right)\\
&= 	\mbox{meas}\left(L^*\setminus \I_L^1\right) \qquad \textrm{by Proposition~\ref{centralizer idele equation}}\\
&= 	\frac{2^{r_1^\prime} (2\pi)^{r_2^\prime} h_L R_L}{\omega_L \sqrt{\abs{D_L}}}\sqrt{\abs{D_L}}\\
&= 	\lim_{z\rightarrow 1}(z - 1)\zeta_L(z) \sqrt{\abs{D_L}}\\
&= 	\lim_{z\rightarrow 1}(z - 1)\zeta_K(z) L(z, \chi_{\gamma}) \sqrt{\abs{D_L}} \qquad \textrm{by \cite[Theorem 8.15]{IwanamiSeries}}\\ 
&= 	\frac{2^{r_1} h_KR_K}{\sqrt{\abs{D_K}}\omega_K} \lim_{z\rightarrow 1} L(z, \chi_{\gamma}) \sqrt{\abs{D_L}}\\
&=	\kappa L(1, \chi_{\gamma}) \sqrt{\abs{D_L}},
\end{align*}
where $\kappa = \frac{2^{r_1} h_KR_K}{\sqrt{\abs{D_K}}\omega_K}$ is the residue of $\zeta_K$ at $z=1$.
\end{proof}

\subsection{Compatibility of measures and the trace formula}
\label{sec: compatibility}
We now bring into attention the choices of measures we have made.
To carry out the computations above, we have made three choices of measures.
\begin{enumerate}[label = \textup{(\alph*)}]
 \item We have computed the volume in Proposition~\ref{prop: volume manipulation} by claiming that there is a \textit{measure preserving} homomorphism, which allow us to invoke the analytic class number formula.
 This crucially requires that we have the product measure on the centralizers.

 \item We have claimed that the orbital integrals decomposes into a product over all places (archimedean and non-archimedean).
 This requires that the quotient $G_{\gamma}(\A_K)\setminus G(\A_K)$ has the product measure.

 \item We have put the restricted product measure on $G(\A) = \GL(2, \A)$.
\end{enumerate}
When performing the integration in stages we only have freedom to pick two of the three measures and the third is determined automatically by the integration formula.
Furthermore, these three product measures \textit{are not} compatible.

For our method to work, we need that the orbital integral breaks up as a product; thus, the measure on the quotient must be the product measure.
We also crucially need the ad\`{e}lic group to have restricted product measure.
Thus, the measure on the centralizers must be adjusted.
In other words, the measure that we must put in the ad{\`e}lic centralizers that appear in the regular elliptic part of the trace formula are \textit{not} the one we computed above, but a constant multiple of it.

One can prove that the 
\[
\vol(\gamma) = \kappa^{-1}\abs{D_K}^{-1/2}\mbox{meas}(\gamma),
\]
where $\vol(\gamma)$ is the volume that appears in the regular elliptic part of the trace formula, that comes from the restricted product measure ad{\`e}lically and the product measure in the quotients.
For a discussion on how the measures relate to each other we refer to \cite{Gor22}, and the references therein.

Thus, the trace formula becomes 
\[
\sum_{\gamma} \vol\left(\gamma\right)\cO\left(f, \gamma\right) = \abs{D_K}^{-1/2}\sum_{\left(\det\left(\gamma\right)\right) = \fp^{\sk}} L(1, \chi_{\gamma})\sqrt{\abs{D_L}}\cO\left(f, \gamma\right).
\]

From now on, we will work with the right hand side of the above equation.

\subsection{$\fq$-adic Orbital Integrals}

We begin by recording a result of Langlands from \cite[Lemma 1]{LanBE04} which involves computing orbital integrals.
His proof is explained over $\Q$ but it can be extended to number fields in general.
Throughout, we will assume that $\gamma$ be a contributing conjugacy class.

\begin{lemma}
\label{behaviour O}
Fix a positive integer\footnote{This lemma is true for a general positive integer $k$ which need not necessarily satisfy \ref{ass: class number}.} $k$.
Let $\fq$ a prime ideal of $\cO_K$ and let $K_{\fq}$ denote its $\fq$-adic completion.
Set $\sq = \Norm_K(\fq)$.
Then
\[
\cO(f_{\fq}^{(k)}, \gamma) = 0,
\]
unless $\abs{\det(\gamma)}_{\fq} = \sq^{-k}$.
In the latter case, there exists a non-negative integer $n_{\gamma} = n_{\gamma,\fq}$ such that
\begin{enumerate}[label = \textup{(\roman*)}]
\item if the extension $K_{\fq}(\gamma)/K_{\fq}$ \textbf{splits}, then
\[
\cO(f_{\fq}^{(k)}, \gamma) = \sq^{n_{\gamma}}.
\]
\item if the extension $K_{\fq}(\gamma)/K_{\fq}$ does not split and is \textbf{unramified} then
\[
\cO(f_{\fq}^{(k)}, \gamma) = \sq^{n_{\gamma}}\left(\frac{\sq + 1}{\sq - 1}\right) - \frac{2}{\sq - 1}.
\]
\item if the extension $K_{\fq}(\gamma)/K_{\fq}$ does not split and is \textbf{ramified} then
\[
\cO(f_{\fq}^{(k)}, \gamma) = \left(\frac{\sq^{n_{\gamma}+1}}{\sq - 1}\right) - \frac{1}{\sq - 1}.
\]
\end{enumerate}
In particular, when $n_{\gamma}=0$ the orbital integral $\cO(f_{\fq}, \gamma)=1$ is independent of the splitting behaviour.
\end{lemma}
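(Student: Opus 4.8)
The statement is local, so I would fix the finite prime $\fq$ and work entirely inside $G(K_\fq) = \GL(2,K_\fq)$. The first reduction is the vanishing claim: if $\cO(f_\fq^{(k)},\gamma)\neq 0$ then some conjugate $x^{-1}\gamma x$ lands in $\Mat(\cO_{K_\fq})$ and satisfies $|\det|_\fq = \sq^{-k}$; since the determinant is a conjugacy invariant this forces $|\det(\gamma)|_\fq = \sq^{-k}$, exactly as in the discussion preceding Proposition~\ref{prop 3-7}. So from now on I assume $|\det(\gamma)|_\fq = \sq^{-k}$.

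The main computation is the orbital integral itself. I would identify $G(K_\fq)_\gamma$ with $L_\fq^\times$, where $L_\fq = K_\fq(\gamma)$ is the étale quadratic $K_\fq$-algebra cut out by $P_\gamma$, using Proposition~\ref{prop kot 5.9}. The quotient $G(K_\fq)_\gamma\setminus G(K_\fq)$ with its invariant measure can be described via the building of $\GL(2)$, or equivalently via the lattice-counting interpretation: $\cO(f_\fq^{(k)},\gamma)$ counts (with the normalization fixing $G(\cO_{K_\fq})$ and $G(\cO_{K_\fq})_\gamma$ measure $1$) the set of $\cO_{K_\fq}$-lattices $\Lambda$ in $K_\fq^2$, up to the action of $L_\fq^\times$, that are stable under the order $\cO_{K_\fq}[\gamma]$ and on which $\gamma$ acts with the prescribed determinant valuation. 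Concretely one reduces to counting $\gamma$-stable lattices between $\cO_{K_\fq}[\gamma]\cdot\Lambda_0$ and a suitable scaling, which is governed by the conductor of the order $\cO_{K_\fq}[\gamma]$ inside the maximal order of $L_\fq$; this conductor exponent is the integer $n_\gamma = n_{\gamma,\fq}$ in the statement (equivalently $2n_\gamma = \val_\fq(\tau^2 - 4\det\gamma) - \val_\fq(\Delta_\gamma)$, matching the definition of $S_\gamma$). The three cases — split $L_\fq \cong K_\fq\times K_\fq$, unramified field, ramified field — correspond to the three splitting types of $\fq$ and give three different counts of intermediate lattices, producing the geometric-series expressions $\sq^{n_\gamma}$, $\sq^{n_\gamma}\frac{\sq+1}{\sq-1} - \frac{2}{\sq-1}$, and $\frac{\sq^{n_\gamma+1}}{\sq-1} - \frac{1}{\sq-1}$ respectively. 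Since this is exactly \cite[Lemma 1]{LanBE04}, I would carry out Langlands' computation over $\Q_p$ verbatim and observe that every step — the reduction to lattice counting, the structure of orders in étale quadratic algebras over a local field, the geometric summation — is insensitive to whether the residue field is $\mathbb{F}_p$ or a general finite field, so the argument transports to $K_\fq$ with $p$ replaced by $\sq = \Norm_K(\fq)$ throughout.

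Finally, for the last sentence: when $n_\gamma = 0$, the order $\cO_{K_\fq}[\gamma]$ is already maximal, there are no nontrivial intermediate lattices to count, and each of the three formulas collapses. In the split case one gets $\sq^0 = 1$; in the unramified case $\frac{\sq+1}{\sq-1} - \frac{2}{\sq-1} = \frac{\sq-1}{\sq-1} = 1$; in the ramified case $\frac{\sq}{\sq-1} - \frac{1}{\sq-1} = \frac{\sq-1}{\sq-1} = 1$. So $\cO(f_\fq,\gamma) = 1$ regardless of splitting type, which is the claim; note this is consistent with Proposition~\ref{prop 3-7}, since for $\fq\neq\fp$ a contributing class automatically has $n_{\gamma,\fq} = 0$.

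\textbf{Main obstacle.} The genuinely delicate point is the precise bookkeeping in the lattice count: getting the conductor exponent $n_\gamma$ correctly identified (and consistent with the later definition of $S_\gamma$ via $S_\gamma^2 = D_{L/K}(\gamma)/\Delta_\gamma$), and verifying that the measure normalizations — $G(\cO_{K_\fq})$ and $G(\cO_{K_\fq})_\gamma$ both of mass $1$, with the unique compatible quotient measure — produce exactly these counts with no stray factor of $\sq$ or $2$. Everything else is a routine transcription of Langlands' argument with $p \rightsquigarrow \sq$.
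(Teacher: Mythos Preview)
Your proposal is correct and aligns with the paper's own treatment: the paper does not give a self-contained proof but simply refers to \cite{LanBE04} for exactly the lattice-counting argument in the building of $\SL(2,K_\fq)$ that you outline, noting (as you do) that Langlands' computation over $\Q_p$ transports verbatim to $K_\fq$ with $p$ replaced by $\sq$. The paper also points to \cite[Proposition~5]{malors21} for an alternative derivation, but your sketch matches the primary reference.
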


\begin{proof}
See \cite{LanBE04} for an approach counting lattices in the building of $\SL(2, K_{\fq})$.
For a different approach see \cite[Proposition~5]{malors21}.
\end{proof}

The contributing conjugacy classes are precisely the ones where $\cO(f_{\sq}^{(\sk)},\gamma)\neq 0$.
This is because $(\det(\gamma)) = \fp^{\sk}$.
At the fixed prime $\fp$, we have that $\abs{\det(\gamma)}_{\fp} = \Norm_K(\fp)^{-\sk}$ and at primes $\fq\neq \fp$, we have that $\abs{\det(\gamma)}_{\fq} = \Norm_K(\fq)^{0}=1$.

\begin{notation}
As in the above lemma, we often write $\Norm_K(\fp)=\sp$ and $\Norm_K(\fq)=\sq$.
\end{notation}

We now define a new ideal of $\cO_K$.
\begin{definition}
\label{def:sgamma}
Consider the regular elliptic element $\gamma$ (viewed as an element of $L$).
Write $D_{L/K}(\gamma)$ to denote the discriminant of the element $\gamma$ and $\Delta_{\gamma}$ to denote the relative discriminant of the quadratic extension $L/K$.
Define the ideal $S_{\gamma}$ as follows
\[
S_{\gamma}^2 := \frac{D_{L/K}(\gamma)}{\Delta_{\gamma}}.
\]
\end{definition}

The ideal $S_\gamma$ encodes ramification behaviour and in turn is related to the orbital integral.
More precisely,

\begin{lemma}
\label{behaviour k}
Let $\fq$ be a any prime ideal of $\cO_K$.
Then,
\[
n_{\gamma, \fq} = \val_{\fq}(S_{\gamma}).
\]
\end{lemma}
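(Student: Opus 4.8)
The plan is to reduce the statement to a purely local computation at the prime $\fq$, matching the exponent $n_{\gamma,\fq}$ that appears in Lemma~\ref{behaviour O} with the $\fq$-valuation of the ideal $S_\gamma$ defined by $S_\gamma^2 = D_{L/K}(\gamma)/\Delta_\gamma$. Both quantities are defined place-by-place: $n_{\gamma,\fq}$ is intrinsic to the local extension $K_\fq(\gamma)/K_\fq$ and the local conjugacy class of $\gamma$, while $\val_\fq(S_\gamma)$ is half of $\val_\fq(D_{L/K}(\gamma)) - \val_\fq(\Delta_\gamma)$, i.e.\ half the difference between the valuation of the discriminant of the element $\gamma$ and the valuation of the relative discriminant of $L/K$. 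So the equality to prove is $2 n_{\gamma,\fq} = \val_\fq(D_{L/K}(\gamma)) - \val_\fq(\Delta_\gamma)$.

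First I would recall the standard algebraic fact that the discriminant of the element $\gamma$ (equivalently, of the order $\cO_K[\gamma]$ inside $\cO_L$) differs from the relative discriminant $\Delta_\gamma$ of $L/K$ precisely by the square of the conductor $\mathfrak{f}$ of the order $\cO_K[\gamma]$ in the maximal order: $D_{L/K}(\gamma) = \mathfrak{f}^2 \, \Delta_\gamma$, so that $S_\gamma = \mathfrak{f}$ and $\val_\fq(S_\gamma) = \val_\fq(\mathfrak{f})$. This reduces the claim to showing that $n_{\gamma,\fq}$ equals the $\fq$-valuation of the conductor of $\cO_{K_\fq}[\gamma]$ in $\cO_{L_\fq}$ (where $L_\fq = K_\fq(\gamma)$, possibly a product of two copies of $K_\fq$ in the split case). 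The key point is that the integer $n_{\gamma,\fq}$ produced in the proof of Lemma~\ref{behaviour O} is, by construction, a measure of how far $\gamma$ is from being a generator of the maximal order of $L_\fq$ over $\cO_{K_\fq}$: it counts lattices in the building of $\SL(2,K_\fq)$ fixed by $\gamma$, equivalently, it records the index $[\cO_{L_\fq} : \cO_{K_\fq}[\gamma]]$ in valuation. I would make this precise by writing $\gamma = a + \varpi^{m}\, \gamma_0$ where $\gamma_0$ generates $\cO_{L_\fq}$ over $\cO_{K_\fq}$, $a \in \cO_{K_\fq}$, and $m = \val_\fq(\mathfrak f)$, and then check directly — in each of the three cases (split, unramified non-split, ramified) — that the orbital-integral formulas in Lemma~\ref{behaviour O} are exactly those with $n_\gamma = m$, for instance by comparing with the explicit lattice-counting argument of \cite{LanBE04} or the computation in \cite[Proposition~5]{malors21}.

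Alternatively, and perhaps more cleanly, I would invoke the formula already used in the companion computations: the local orbital integral $\cO(f_\fq^{(k)},\gamma)$ admits a closed form in terms of the single integer $\val_\fq\!\big(D_{L/K}(\gamma)/\Delta_\gamma\big)/2$, and then simply observe that this integer is what Lemma~\ref{behaviour O} calls $n_{\gamma,\fq}$ by uniqueness — the formulas in (i), (ii), (iii) determine $n_{\gamma,\fq}$ uniquely from the value of the orbital integral (each is strictly monotone in $n_\gamma$), so it suffices to exhibit \emph{one} expression for the orbital integral in terms of $\val_\fq(S_\gamma)$ and match. The cleanest route is therefore: (a) establish $D_{L/K}(\gamma) = S_\gamma^2 \Delta_\gamma$ with $S_\gamma$ the conductor ideal; (b) quote or reprove that the conductor exponent governs the lattice count / orbital integral in \cite{LanBE04, malors21}; (c) conclude $n_{\gamma,\fq} = \val_\fq(S_\gamma)$ by comparing the two expressions term by term across the three splitting types.

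The main obstacle I anticipate is bookkeeping in the ramified and split degenerate cases rather than any conceptual difficulty. In the split case $L_\fq \cong K_\fq \times K_\fq$, one must interpret "discriminant of $\gamma$" and "conductor" for a possibly non-maximal $\cO_{K_\fq}$-order in an étale algebra, and be careful that $D_{L/K}(\gamma) = (\lambda_1 - \lambda_2)^2$ still makes sense and that $\Delta_\gamma = \cO_{K_\fq}$, so $\val_\fq(S_\gamma) = \val_\fq(\lambda_1 - \lambda_2)$; one then checks this matches the $n_\gamma$ in (i). In the ramified case the relative different/discriminant computation is the delicate one, since $\Delta_\gamma$ itself is nontrivial and wild ramification (at primes above $2$) could in principle intervene — but as long as we are only comparing $\fq$-valuations of $D_{L/K}(\gamma)$ and $\Delta_\gamma$, the identity $D_{L/K}(\gamma) = S_\gamma^2 \Delta_\gamma$ is formal (it is the conductor-discriminant relation for the order $\cO_{K_\fq}[\gamma]$) and the only real work is confirming that Langlands' lattice count indeed yields $n_\gamma = \val_\fq(S_\gamma)$ and not, say, $\val_\fq(S_\gamma)$ shifted by the ramification index. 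I would resolve this by a direct low-dimensional check: pick a uniformizer-adapted basis of $\cO_{L_\fq}$ over $\cO_{K_\fq}$, write $\gamma$ in that basis, compute both the index of $\cO_{K_\fq}[\gamma]$ and the fixed-lattice count explicitly, and verify they agree.
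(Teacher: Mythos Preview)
Your outline is correct and is essentially the content of the result being cited: the paper does not give an argument here but simply refers to \cite[Proposition~4]{malors21}, where the identification of $n_{\gamma,\fq}$ with the local conductor exponent $\val_\fq(S_\gamma)$ is carried out along the lines you describe (via the conductor--discriminant relation $D_{L/K}(\gamma)=S_\gamma^2\Delta_\gamma$ and the lattice-counting interpretation of the orbital integral). Your case-by-case plan for split, unramified, and ramified $\fq$ matches that argument.
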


\begin{proof}
See \cite[Proposition~4]{malors21}.
\end{proof}

The next result follows immediately from the above discussion.

\begin{proposition}
\label{Multiplicative Formula Langlands}
Let $\gamma$ be a contributing matrix.
Then,
\begin{tiny}
\[
\cO(f_{\fin}, \gamma) = \sp^{-\sk/2}\prod_{\fq \mid S_{\gamma}} \cO(f_{\fq}, \gamma) = \sp^{-\sk/2}\sum_{\fd\mid S_{\gamma}}\Norm_K(\fd) \prod_{\fq\mid \fd}\left(1 - \frac{\chi_{\gamma}(\fq)}{\Norm_K(\fq)}\right) 
= \sp^{-\sk/2}\Norm_K(S_{\gamma}) \sum_{\fd\mid S_{\gamma}}\frac{1}{\Norm_K(\fd)} \prod_{\fq\mid \fd'}\left(1 - \frac{\chi_{\gamma}(\fq)}{\Norm_K(\fq)}\right), 
\]
\end{tiny}
where $\fd':= S_{\gamma}/\fd$.
\end{proposition}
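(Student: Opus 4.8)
\textbf{Proof proposal for Proposition~\ref{Multiplicative Formula Langlands}.}

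The plan is to combine the definition of the test function with the local computation from Lemma~\ref{behaviour O} and the identification of the exponents in Lemma~\ref{behaviour k}. First I would write out the adelic factorization: since $\gamma$ is a contributing matrix, $(\det(\gamma)) = \fp^{\sk}$, so $\abs{\det(\gamma)}_{\fp} = \sp^{-\sk}$ and $\abs{\det(\gamma)}_{\fq} = 1$ for all $\fq\neq\fp$. By Definition~\ref{defi: k fixed here}, $f_{\fin} = \sp^{-\sk/2}f_{\fp}^{(\sk)}\times\prod_{\fq\neq\fp}f_{\fq}^{(0)}$, so the adelic orbital integral factors as
\[
\cO(f_{\fin},\gamma) = \sp^{-\sk/2}\,\cO(f_{\fp}^{(\sk)},\gamma)\prod_{\fq\neq\fp}\cO(f_{\fq}^{(0)},\gamma).
\]
Next I would invoke Lemma~\ref{behaviour O} at each finite place. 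At $\fp$ the norm condition $\abs{\det(\gamma)}_{\fp}=\sp^{-\sk}$ holds, so $\cO(f_{\fp}^{(\sk)},\gamma)$ is given by one of the three formulas there; at $\fq\neq\fp$ the function is $f_{\fq}^{(0)}$ and $\abs{\det(\gamma)}_{\fq}=1=\sq^{0}$, so the relevant formula applies with exponent $k=0$. In all cases, the factor at $\fq$ depends only on the nonnegative integer $n_{\gamma,\fq}$ and the splitting type of $K_{\fq}(\gamma)/K_{\fq}$, and by the final sentence of Lemma~\ref{behaviour O} it equals $1$ whenever $n_{\gamma,\fq}=0$. By Lemma~\ref{behaviour k}, $n_{\gamma,\fq}=\val_{\fq}(S_{\gamma})$, which is $0$ for all but finitely many $\fq$ (exactly those dividing $S_{\gamma}$), so the product over all finite $\fq$ collapses to a finite product over $\fq\mid S_{\gamma}$:
\[
\cO(f_{\fin},\gamma) = \sp^{-\sk/2}\prod_{\fq\mid S_{\gamma}}\cO(f_{\fq},\gamma).
\]

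The remaining work is the purely combinatorial step of checking that
\[
\prod_{\fq\mid S_{\gamma}}\cO(f_{\fq},\gamma) = \sum_{\fd\mid S_{\gamma}}\Norm_K(\fd)\prod_{\fq\mid\fd}\left(1-\frac{\chi_{\gamma}(\fq)}{\Norm_K(\fq)}\right).
\]
Here I would argue that for each $\fq\mid S_{\gamma}$ the splitting type of $K_{\fq}(\gamma)/K_{\fq}$ matches the value of $\chi_{\gamma}(\fq)$ (split $\leftrightarrow 1$, inert $\leftrightarrow -1$, ramified $\leftrightarrow 0$), following the parallel between local and global splitting noted after Proposition~\ref{prop kot 5.9}. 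A short manipulation of the three cases of Lemma~\ref{behaviour O} with $n_{\gamma,\fq}=\val_{\fq}(S_{\gamma})=:m_{\fq}\ge 1$ shows that in every case the local factor can be written uniformly as
\[
\cO(f_{\fq},\gamma) = \sum_{j=0}^{m_{\fq}}\sq^{\,j}\,w_{\fq}(j),\qquad w_{\fq}(0)=1,\ w_{\fq}(j)=1-\frac{\chi_{\gamma}(\fq)}{\sq}\ (j\ge 1),
\]
by treating the geometric-type sums $\sum_{j}\sq^{j}$ that appear; the split case is $\sq^{m_{\fq}}$, and one rewrites the unramified and ramified expressions as such telescoping sums. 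Expanding the product $\prod_{\fq\mid S_{\gamma}}\cO(f_{\fq},\gamma)$ and reindexing the resulting choice of exponent $j$ at each prime by the divisor $\fd=\prod_{\fq}\fq^{j}$ yields exactly the divisor sum, since $\Norm_K(\fd)=\prod_{\fq}\sq^{j}$ and the factor $\prod_{\fq\mid\fd}(1-\chi_{\gamma}(\fq)/\sq)$ collects precisely the $w_{\fq}(j)$ with $j\ge 1$. Finally, the third expression in the statement follows from the second by the change of variable $\fd\mapsto S_{\gamma}/\fd$, using $\Norm_K(S_{\gamma}/\fd)=\Norm_K(S_{\gamma})/\Norm_K(\fd)$ and $\fd'=S_{\gamma}/\fd$.

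The main obstacle is the uniform rewriting of the three cases of Lemma~\ref{behaviour O} as a single telescoping sum $\sum_{j=0}^{m_{\fq}}\sq^{j}w_{\fq}(j)$; once that bookkeeping identity is in place the rest is a formal expansion of a product of sums into a sum over divisors. One subtlety to be careful about is that the ramified case contributes $\chi_{\gamma}(\fq)=0$, so the corresponding factor is $\sum_{j=0}^{m_{\fq}}\sq^{j}$, and one must check this still matches $(\sq^{m_{\fq}+1}-1)/(\sq-1)$ as given in Lemma~\ref{behaviour O}(iii); similarly the unramified case must be verified to telescope correctly against the $\frac{\sq+1}{\sq-1}$ and $\frac{2}{\sq-1}$ terms. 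These are elementary finite-geometric-series checks, but doing them cleanly is where the proof's content lies.
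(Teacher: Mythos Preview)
Your argument is correct. The three telescoping checks go through exactly as you outline: with $m_{\fq}=\val_{\fq}(S_{\gamma})$ and $w_{\fq}(0)=1$, $w_{\fq}(j)=1-\chi_{\gamma}(\fq)/\sq$ for $j\ge 1$, one has $\sum_{j=0}^{m_{\fq}}\sq^{j}w_{\fq}(j)$ equal to $\sq^{m_{\fq}}$ in the split case, to $\sq^{m_{\fq}}\tfrac{\sq+1}{\sq-1}-\tfrac{2}{\sq-1}$ in the inert case, and to $\tfrac{\sq^{m_{\fq}+1}-1}{\sq-1}$ in the ramified case, matching Lemma~\ref{behaviour O} on the nose. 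The expansion of the product into a divisor sum and the final change of variable $\fd\mapsto S_{\gamma}/\fd$ are then formal.

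The paper itself does not give this argument; its proof consists of a citation to \cite[Theorem~3]{malors21}, together with the remark that the extra $\sp^{-\sk/2}$ comes from the normalization of $f_{\fp}$ in Definition~\ref{defi: ffq(k) defi}. What you have written is essentially a self-contained unpacking of that cited result using the two ingredients (Lemmas~\ref{behaviour O} and~\ref{behaviour k}) that the paper already imports from the same source. So your approach is not different in spirit from what lies behind the citation, but it has the advantage of making the proof internal to the paper and transparent: one sees directly why the product of local factors reorganizes into the divisor sum, rather than taking it on faith.
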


\begin{proof}
See \cite[Theorem~3]{malors21}.
Note that the presence of the factor of $\sp^{-\sk/2}$ is due to the slight difference in the definition of the test function -- see \cite[line above (24)]{malors21}.
\end{proof}

\subsection{Extracting the unit sum.}

Recall that we have fixed a prime ideal $\fp$ and an integer $\sk$ satisfying \ref{ass: class number} such that $\fp^{h_{\fp}}$ is generated by $\rho$ and $\varepsilon = \rho^{\sk'}$ is a generator of $\fp^{\sk}$.
We now manipulate the regular elliptic part.

\begin{proposition}
\label{prop 3-24}
A contributing conjugacy class $\gamma$ is determined precisely by a pair $(\tau, u)$, where $\tau$ is an algebraic integer in $\cO_K$, $u$ is in $\cO^*_K$, and $\tau^2 - 4u\varepsilon$ is not a square in $K$.
\end{proposition}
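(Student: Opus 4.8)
\textbf{Proof plan for Proposition~\ref{prop 3-24}.}
The goal is to establish a bijection between contributing conjugacy classes $\gamma$ and pairs $(\tau, u)$ with $\tau \in \cO_K$, $u \in \cO_K^*$, and $\tau^2 - 4u\varepsilon \notin (K^*)^2$. The plan is to build this correspondence through the characteristic polynomial and the classification results already available. First I would recall that by Proposition~\ref{prop kot 5.9}, a regular elliptic conjugacy class in $\GL(2,K)$ is determined by its characteristic polynomial $P_{\gamma}(X) = X^2 - \tau X + \det(\gamma)$, since two regular elliptic elements are $\GL(2,K)$-conjugate if and only if they have the same characteristic polynomial (this is the rational canonical form statement, valid because the polynomial is irreducible, hence separable, so the element is regular semisimple). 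Conversely, any monic irreducible quadratic over $K$ is the characteristic polynomial of its companion matrix, which is regular elliptic. So the set of regular elliptic classes is in bijection with the set of monic irreducible quadratics $X^2 - \tau X + \delta$ over $K$, equivalently with pairs $(\tau, \delta) \in K \times K$ such that $\tau^2 - 4\delta \notin (K^*)^2$.

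Next I would impose the ``contributing'' condition. By Proposition~\ref{prop 3-7}, if $\cO(f_{\fin}, \gamma) \neq 0$ then $\tau \in \cO_K$, $\det(\gamma) \in \cO_K$, and $(\det(\gamma)) = \fp^{\sk}$ as ideals. Since $\varepsilon = \rho^{\sk'}$ is a fixed generator of $\fp^{\sk}$ (using \ref{ass: class number}), the equality of ideals $(\det(\gamma)) = (\varepsilon)$ means exactly that $\det(\gamma) = u\varepsilon$ for a unique $u \in \cO_K^*$. This rewrites $\det(\gamma)$ in terms of $u$, so the pair $(\tau, \det(\gamma))$ becomes the pair $(\tau, u)$, and the irreducibility condition $\tau^2 - 4\det(\gamma) \notin (K^*)^2$ becomes $\tau^2 - 4u\varepsilon \notin (K^*)^2$, i.e., $\tau \in L(u)$ in the notation of the excerpt. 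Conversely, given such a pair $(\tau, u)$, the companion matrix of $X^2 - \tau X + u\varepsilon$ is regular elliptic with $\tau \in \cO_K$ and $(\det) = \fp^{\sk}$; one still needs to check its finite orbital integral is actually nonzero. This follows from Lemma~\ref{behaviour O}: at every $\fq \neq \fp$ one has $|\det(\gamma)|_\fq = 1$ and the lemma gives $\cO(f_{\fq}, \gamma) \geq 1 > 0$ regardless of splitting type (the ``$n_\gamma = 0$'' remark, or more precisely each of the three formulas is positive when $n_\gamma \geq 0$); at $\fp$ one has $|\det(\gamma)|_\fp = \sp^{-\sk}$ matching the indicator, so $\cO(f_{\fp}^{(\sk)}, \gamma)$ is again given by one of the positive expressions in Lemma~\ref{behaviour O} and the normalizing factor $\sp^{-\sk/2}$ is nonzero. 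Hence $\cO(f_{\fin},\gamma) \neq 0$, so $\gamma$ is contributing.

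Assembling these two directions gives the bijection, and I would close by noting uniqueness: $\tau$ is recovered as $\Tr(\gamma)$, and $u$ is recovered as $\det(\gamma)/\varepsilon$, both canonically, so the pair is genuinely determined by the class and vice versa. The main obstacle — really the only nontrivial point — is the converse direction: verifying that \emph{every} pair $(\tau, u)$ with $\tau^2 - 4u\varepsilon$ a non-square genuinely yields a \emph{contributing} class, i.e.\ that no such pair is accidentally killed by a vanishing finite orbital integral. This is exactly what the positivity of the local orbital integral formulas in Lemma~\ref{behaviour O} guarantees, so the proposition reduces to citing that lemma together with Propositions~\ref{prop kot 5.9} and \ref{prop 3-7}; no genuinely new computation is needed. (One should be mildly careful that the archimedean orbital integral may still vanish — but the statement concerns contributing classes, which by definition only constrains $\cO(f_{\fin},\gamma)$, so this does not affect the claim.)
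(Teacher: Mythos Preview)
Your proof is correct and follows essentially the same approach as the paper: identify the class with its characteristic polynomial, use Proposition~\ref{prop 3-7} to extract $(\tau,u)$, and use the companion matrix for the converse. The only difference is that you are more careful in the converse direction, explicitly invoking Lemma~\ref{behaviour O} to verify that the finite orbital integral is nonzero, whereas the paper simply asserts the class is contributing ``since $u\varepsilon$ generates $\fp^{\sk}$'' without spelling out why the necessary conditions of Proposition~\ref{prop 3-7} are also sufficient.
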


\begin{proof}
Given $\gamma$, we view it as a matrix with $\Tr(\gamma)=\tau$.
By Proposition~\ref{prop 3-7}, we know that $\tau$ is an integer in $K$.
Since $(\det(\gamma)) = \fp^{\sk} = (\varepsilon)$, we obtain that $\det(\gamma) = u\varepsilon$ for some unit $u$.
Since $\gamma$ is a regular elliptic element, we also know that $\tau^2 - 4u\varepsilon$ is not a square; otherwise, the roots of the characteristic polynomial would be in $K$.

Conversely, given such a $\tau$ and $u$, consider the matrix
$\begin{pmatrix}
0 & 1\\
-u\varepsilon & \tau
\end{pmatrix}\in G(K)$.
The associated characteristic polynomial is $X^2 - \tau X + u\varepsilon$, which is irreducible over $K$.
Hence, this represents a regular elliptic class, which is in fact a contributing class since $u\varepsilon$ generates $\fp^{\sk}$.
 \end{proof}

\begin{proposition}
With notation and assumptions introduced above, define the set
\[
L(u) = \{\tau\in \cO_K : \tau^2 - 4u\varepsilon \neq\square \}.
\]
Then
\[
R(f) = \sum_{\gamma} \vol(\gamma)\cO(f, \gamma) = \sum_{u\in \cO_K^*} \sum_{\tau\in L(u)} \vol\left(
 \begin{pmatrix}
 0 & 1\\
 -u\varepsilon & \tau
 \end{pmatrix}
 \right) \cO\left(f,
 \begin{pmatrix}
 0 & 1\\
 -u\varepsilon & \tau
 \end{pmatrix}
 \right).
\]
\end{proposition}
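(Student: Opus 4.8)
The plan is to combine Theorem~\ref{thm: condn on k and h} (which discards the non-contributing classes) with the parametrization of Proposition~\ref{prop 3-24}, using that both $\vol(\gamma)$ and $\cO(f,\gamma)$ depend only on the conjugacy class of $\gamma$.

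First I would observe that in $R(f) = \sum_{\gamma} \vol(\gamma)\cO(f,\gamma)$ the summand vanishes on every conjugacy class that is not contributing: by Proposition~\ref{Multiplicative Formula Langlands} (or directly Lemma~\ref{behaviour O}) one has $\cO(f_{\fin},\gamma) = 0$ unless $(\det(\gamma)) = \fp^{\sk}$, and since $\cO(f,\gamma) = \cO(f_{\fin},\gamma)\cdot\cO(f_{\infty},\gamma)$ the full ad\`elic orbital integral is then $0$. Hence the sum may be restricted to conjugacy classes of contributing elements, i.e. those $\gamma$ with $(\det(\gamma)) = \fp^{\sk} = (\varepsilon)$.

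Next I would invoke Proposition~\ref{prop 3-24}: sending a contributing class $[\gamma]$ to the pair $(\tau, u) = \big(\Tr(\gamma),\, \det(\gamma)/\varepsilon\big)$ gives a bijection onto the set of pairs $(\tau, u)$ with $u \in \cO_K^*$, $\tau \in \cO_K$, and $\tau^2 - 4u\varepsilon$ not a square in $K$, that is, $\tau \in L(u)$; an explicit inverse sends $(\tau,u)$ to the class of the companion matrix $\begin{pmatrix} 0 & 1\\ -u\varepsilon & \tau\end{pmatrix}$, whose characteristic polynomial is $X^2 - \tau X + u\varepsilon$. Re-indexing the restricted sum along this bijection replaces $\sum_{[\gamma]\ \text{contributing}}$ by $\sum_{u\in\cO_K^*}\sum_{\tau\in L(u)}$.

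Finally, to evaluate $\vol(\gamma)$ and $\cO(f,\gamma)$ at the companion-matrix representative in each term, I would use that both are class functions: the orbital integral is invariant under conjugation of $\gamma$ by the change of variables $x \mapsto gx$, and $\vol(\gamma) = \vol\big(Z_+ G(K)_\gamma \setminus G(\A_K)_\gamma\big)$ is unchanged because conjugation by $g \in G(K)$ induces a measure-preserving isomorphism $Z_+ G(K)_\gamma \setminus G(\A_K)_\gamma \cong Z_+ G(K)_{g^{-1}\gamma g} \setminus G(\A_K)_{g^{-1}\gamma g}$. Substituting the representative $\begin{pmatrix} 0 & 1\\ -u\varepsilon & \tau\end{pmatrix}$ into each summand then yields the displayed identity. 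I do not expect a genuine obstacle here: the statement is a bookkeeping reorganization of $R(f)$, and the only point needing care — that distinct pairs $(\tau,u)$ index distinct conjugacy classes — is precisely the injectivity half of Proposition~\ref{prop 3-24}, since two regular elliptic elements of $\GL(2)$ are conjugate exactly when their characteristic polynomials agree, and with $\varepsilon$ fixed the polynomial $X^2 - \tau X + u\varepsilon$ determines $(\tau,u)$.
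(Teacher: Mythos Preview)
Your proposal is correct and takes essentially the same approach as the paper: restrict the sum to contributing classes using the vanishing of $\cO(f_{\fin},\gamma)$ when $(\det\gamma)\neq\fp^{\sk}$, then reindex via the bijection of Proposition~\ref{prop 3-24} with the companion-matrix representative. The paper's proof adds one remark you omit, namely that passing to the iterated double sum over $u$ and $\tau$ is justified by absolute convergence of the regular elliptic part; also, your opening reference to Theorem~\ref{thm: condn on k and h} is slightly off (that theorem concerns vanishing when $h_{\fp}\nmid k$, not discarding non-contributing classes), though your actual argument cites the right inputs.
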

\begin{proof}
We have seen that 
\[
 \sum_{\gamma} \vol\left(\gamma\right)\cO\left(f, \gamma\right) = \sum_{\left(\det\left(\gamma\right)\right) = \fp^{\sk}} \vol\left(\gamma\right)\cO\left(f, \gamma\right).
\]
Using Proposition~\ref{prop 3-24} and rearranging the sum, we obtain
\[
\sum_{\gamma} \vol(\gamma)\cO(f, \gamma) = \sum_{u\in \cO_K^*} \sum_{\tau\in L(u)} \vol\left(
\begin{pmatrix}
0 & 1\\
-u\varepsilon & \tau
\end{pmatrix}
\right) \cO\left(f,
\begin{pmatrix}
0 & 1\\
-u\varepsilon & \tau
\end{pmatrix}
\right).
\]
Note that this re-grouping can be done because this part of the trace formula converges absolutely.
\end{proof}

\begin{notation}
We continue to write 
\[
 \sum_{u\in \cO_K^*} \sum_{\tau\in L(u)} \vol(\gamma)\cO(f, \gamma)
\]
instead of 
\[
 \sum_{u\in \cO_K^*} \sum_{\tau\in L(u)} \vol\left(
 \begin{pmatrix}
 0 & 1\\
 -u\varepsilon & \tau
 \end{pmatrix}
 \right) \cO\left(f,
 \begin{pmatrix}
 0 & 1\\
 -u\varepsilon & \tau
 \end{pmatrix}
 \right).
\]
But, we keep in mind that $\gamma$ is now a specific conjugacy class that depends on $u$ and $\tau$.
\end{notation}

\subsection{Archimedean Orbital Integrals}
\label{sec: arch orb int}
We begin with a brief review of what happens in $G(\R)=\GL(2, \R)$, so that our next computations are clearer.

\subsubsection{Overview of Germ Expansion}

Let $h$ be a function of compact support in $G(\R)$ and let $\gamma$ be a regular elliptic element.
Its orbital integral defined as 
\[
\cO(h,\gamma) := \int_{G(\R)_{\gamma}\setminus G(\R)} h(x^{-1}\gamma x) dx
\]
is a class function, i.e., it only depends on the conjugacy class of $\gamma$.
Following Langlands, we define the map
\begin{equation}
\label{langlands' ch function}
\begin{split}
\ch: G(\R) &\longrightarrow \R^2\\
\gamma &\mapsto ( \Tr(\gamma),4\det(\gamma)),
\end{split}
\end{equation}
which we picture in the coordinates $(r,N)$.
The center of $G(\R)$ becomes the parabola $N = r^2$, which divides the plane into two disjoint regions.
Away from the parabola, the map $\ch:G(\R)\rightarrow \R^2$ is a submersion and its fibers represent regular elliptic conjugacy classes.
A matrix in $G(\R)$ maps to a point in the interior of the parabola precisely when the matrix corresponds to an element of the elliptic torus (denoted by $T_{\el}$).
Whereas, the matrix maps to a point in the exterior of the parabola when it corresponds to an element of the split torus (denoted by $T_{\spl}$).

These tori are defined as follows:
\[
T_{\spl} = \left\{\begin{pmatrix}
 \lambda_1 & 0\\
 0 & \lambda_2\\
 \end{pmatrix}\mid 
 \lambda_1, \lambda_2 \neq 0 \right\} \ \textrm{ and } \
 T_{\el} = \left\{\begin{pmatrix}
 r\cos(\theta) & r\sin\theta\\
 -r\sin(\theta) & r\cos\theta\\
 \end{pmatrix}
 \mid r > 0,\; 0\le \theta \le \pi \right\}.
\]
Depending on the nature of the regular elliptic element $\gamma$, its centralizer $G(\R)_{\gamma}$ is either conjugate to $T_{\spl}$ or $T_{\el}$ (often denoted as $A,B$ in the literature).
\textbf{In what follows, we follow \cite{shel} closely and use her notation.}

Recall that the unipotent matrices are the ones of the form
\[
\begin{pmatrix}
 1 & x\\
 0 & 1\\
 \end{pmatrix}
\]
with standard Haar measure $dn = dx$.
Let $K_0$ be the circle group of matrices
\[
\begin{pmatrix}
 \cos(\theta) & \sin\theta\\
 -\sin(\theta) & \cos\theta\\
 \end{pmatrix},
\]
with Haar measure $dk = d\theta$.
Further define the integrals
\begin{align*}
 h_0(x) &:= \int_{K_0} h(kxk^{-1})dk,\\
 H_{r}(u, v) &:= \int_{K_0}h\left(r k\exp\begin{pmatrix}
 0 & u\\
 -v& 0\\
 \end{pmatrix}k^{-1}\right)dk.
\end{align*}

Let $\lambda_1,\lambda_2$ be the two roots of the polynomial $P_{\gamma}$.
Define the \emph{Weyl discriminant}
\begin{equation}
\label{defi Weyl discriminant}
D(\gamma) = D(\lambda_1, \lambda_2) 
= \frac{\abs{\lambda_1 \lambda_2}^{1/2}}{\abs{\lambda_1 - \lambda_2}}.
\end{equation}
Then, D.~Shelstad \cite{shel} has shown that \begin{align*}
\abs{D(\gamma)}^{\frac{1}{2}}\cO_{T_{\spl}}(\gamma) & = \frac{1}{2}\abs{\frac{\lambda_1}{\lambda_2}}^{\frac{1}{2}} \int_N h_0(n\gamma) dn \text{ and}\\
\cO_{T_{\el}}\left(\begin{pmatrix}
 r\cos(\theta) & r\sin\theta\\
 -r\sin(\theta) & r\cos\theta\\
 \end{pmatrix}\right)
&= \frac{1}{4} \int_{0}^{\infty}(H_{r}(e^t\theta, e^{-t}\theta) + H_{r}(-e^t\theta, -e^{-t}\theta))(e^t - e^{-t}) dt.
\end{align*}

Since the orbital integrals are class functions, there exist two functions $g_1, g_2$ in the coordinates $(r, N)$ which depend on $h$, are defined everywhere away from the parabola $N = r^2$ and away from $N = 0$ (which would correspond to non-invertible matrices).
Moreover, both $\cO_{T_{\spl}}$ and $\cO_{T_{\el}}$ can be written in terms of these two smooth functions $g_1$ and $g_2$.
More precisely, 
\begin{align*}
 \cO_{T_{\spl}}(r,N) &= \frac{1}{2}\abs{\frac{r^2}{N}-1}^{-1/2} g_2(r,N) \text{ and}\\
 \cO_{T_{\el}}(r,N) &= g_1(r,N) + \frac{1}{2}\abs{\frac{r^2}{N}-1}^{-1/2} g_2(r,N).
\end{align*}
Since $h$ has compact support, so do $g_1, g_2$.
Also, $g_2$ is supported in the elliptic torus (i.e. inside the parabola).

For maintaining uniformity in notation, we write the right hand side of both $\cO_{T_{\spl}}$ and $\cO_{T_{\el}}$ as
\[
g_1(r,N) + \frac{1}{2}\abs{\frac{r^2}{N}-1}^{-1/2} g_2(r,N),
\]
with the understanding that $g_1 \equiv 0$ outside the parabola.
The singularity of the orbital integral is detected by
\[
\abs{\frac{r^2}{N}-1}^{-1/2}
\]
which diverges as we approach the `singular set' given by the parabola.

Consider the orbital integral $\cO(h,\gamma)$.
After performing an inner automorphism it becomes either $T_{\spl}$ or $T_{\el}$.
These automorphisms do not change the class $\ch(\gamma)$.
We conclude that as a function in the $(r, N)$ coordinates
\begin{equation}
\label{OhNr}
 \cO(h,(r, N)) = g_1(r,N) + \frac{1}{2}\abs{\frac{r^2}{N}-1}^{-1/2} g_2(r,N).
\end{equation}

\subsubsection{Calculations over \texorpdfstring{$\Q$}{}: Langlands and Altu\u{g}}
\label{blue orange line picture section}

The standing assumption that Langlands and Altu\u{g} have is that the function $h$ is invariant under $Z_+$.
This invariance property is inherited by the orbital integral, and by the functions $g_1$ and $g_2$.
In the $(r, N)$ coordinates it means that for positive $\lambda$
\[
g_m(\lambda r, \lambda^2N) = g_m(r, N), \ \text{ for } m= 1, 2.
\]
Setting $\lambda = \frac{1}{\sqrt{\abs{N}}}$, \eqref{OhNr} becomes
\begin{align*}
 \cO_h(r,N) 
 &= g_1(r, N) + \frac{1}{2}\abs{\frac{r^2}{N}-1}^{-1/2} g_2(r, N)\\
 &= g_1\left(\frac{r}{\sqrt{\abs{N}}}, \sgn(N)\right) + \frac{1}{2}\abs{\frac{r^2}{N}-1}^{-1/2} g_2\left(\frac{r}{\sqrt{\abs{N}}}, \sgn(N)\right).
\end{align*}
Thus, we are led to four functions: $g_1(r, 1)$ , $g_1(r, -1)$, $g_2(r, 1)$ and $g_2(r, -1)$.
Looking at the lines $N = 1$ (blue) and $N = -1$ (orange) in the $(r,N)$ plane we conclude:
\begin{center}
 \begin{tikzpicture}
 \draw[step=1 cm,gray,very thin] (-4,-4) grid (4, 4);
 \draw (-4,0) -- (4,0);
 \draw (0,-4) -- (0,4);
 \draw (-2,4) parabola bend (0,0) (2, 4); 
 \draw[orange, very thick] (-4,-1)--(4, -1);
 \draw[blue, very thick] (-4,1)--(4, 1);
 \draw[fill, red] (1,1) circle (.1cm);
 \draw[fill, red] (-1,1) circle (.1cm);
 \end{tikzpicture}
\end{center} 
\begin{enumerate}[label = (\alph*)]
 \item the orbital integrals are functions of the trace and depend only on the four functions $g_m(r,\pm 1)$ with $m=1,2$.
 \item the functions $g_m(r, -1)$ are smooth and of compact support since their support lies entirely on the line $N = -1$ which is in the split torus.
 On the other hand, the functions $g_m(r, 1)$ are also compactly supported but might not be smooth as they might have singularities as they approach the value $r = \pm 1$, precisely because they change from the elliptic torus to the split one at those points.
\end{enumerate}

\subsubsection{Totally Real Fields}
\label{germ expansion section for us}

Our archimedean test function is
\[
f_{\infty} = f_{\nu_1}\times\cdots\times f_{\nu_n},
\]
where $n = [K:\Q]$.
For each real place (which we generically denote by) $\nu$, we can perform the analysis as before and conclude the existence of two functions $g_{\nu,1}$ and $g_{\nu,2}$ in the coordinates $(r, N)$ with
\[
\cO(f_{\nu}, (r, N)) = g_{\nu,1}(r, N) + \frac{1}{2}\abs{\frac{r^2}{N}-1}^{-1/2} g_{\nu,2}(r, N).
\]
Picking a unit $u\in\cO_K^*$ and $\tau\in L(u)$ fixes $\gamma$.
We can then write the archimedean orbital integral as
\[
\cO(f_{\infty}, \gamma(\tau,u)) = \prod_{i = 1}^n\left(g_{\nu_i,1}(\tau_i, 4u_i\varepsilon_i) + \frac{1}{2}\abs{\frac{\tau_i^2}{4u_i\varepsilon_i}-1}^{-1/2} g_{\nu_i,2}(\tau_i, 4u_i\varepsilon_i)\right),
\]
where $\tau_i, u_i, \varepsilon_i$ stand for the embedding of $\tau, u, \varepsilon$ into the $i$-th real place via $\sigma_i:K\rightarrow\R$, respectively.

We can conclude this section by recording the following result.

\begin{theorem}
\label{prop: regular elliptic part}
The regular elliptic part of the trace formula is
\begin{tiny}
\begin{align*}
 R(f) &=\sum_{u\in \cO_K^*}\sum_{\tau\in L(u)}\vol(\gamma(\tau,u))\cO(f_{\fin}, \gamma(\tau,u)) \prod_{i = 1}^n\left(g_{\nu_i,1}(\tau_i, 4u_i\varepsilon_i) + \frac{1}{2}\abs{\frac{\tau_i^2}{u_i\varepsilon_i}-1}^{-1/2} g_{\nu_i,2}(\tau_i, 4u_i\varepsilon_i)\right).
\end{align*}
\end{tiny}
\end{theorem}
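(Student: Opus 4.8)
The plan is to simply assemble the pieces that have already been established in the preceding subsections, since Theorem~\ref{prop: regular elliptic part} is essentially a bookkeeping statement combining the volume computation, the decomposition of the orbital integral into finite and archimedean parts, and the germ expansion at the real places. First I would start from the reorganized form of the regular elliptic part
\[
R(f) = \sum_{u\in\cO_K^*}\sum_{\tau\in L(u)} \vol(\gamma(\tau,u))\,\cO(f,\gamma(\tau,u)),
\]
which is exactly the statement of the Proposition immediately preceding this theorem, where $\gamma(\tau,u)$ denotes the companion-matrix representative with characteristic polynomial $X^2-\tau X + u\varepsilon$. This is legitimate because, as noted there, the regular elliptic part converges absolutely, so the re-grouping of the sum by the pair $(\tau,u)$ (valid by Proposition~\ref{prop 3-24}) causes no trouble.

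Next I would factor the adelic orbital integral. Since the test function $f = f_{\fin}\times f_\infty$ is a product over all places, its adelic orbital integral splits as $\cO(f,\gamma) = \cO(f_{\fin},\gamma)\cdot\cO(f_\infty,\gamma)$, as recorded in \S\ref{sec: Definition of the test function and its implications}; this relies on the compatibility of measures discussed in \S\ref{sec: compatibility}, which has already been arranged so that the relevant quotient carries the product measure. The finite part $\cO(f_{\fin},\gamma(\tau,u))$ is left intact (it has been given an explicit formula in Proposition~\ref{Multiplicative Formula Langlands}, but the present theorem does not need that expansion). For the archimedean part, since $f_\infty = f_{\nu_1}\times\cdots\times f_{\nu_n}$ is itself a product over the $n$ real places, the orbital integral factors as $\cO(f_\infty,\gamma) = \prod_{i=1}^n \cO(f_{\nu_i},\gamma)$, and each local factor is evaluated at the point $(r,N) = (\tau_i, 4u_i\varepsilon_i)$ in Langlands' coordinates, where $\tau_i = \sigma_i(\tau)$ etc.\ are the images under the $i$-th real embedding; this is exactly because the conjugacy class of $\gamma$ at the $i$-th real place is determined by $(\Tr,4\det) = (\tau_i, 4u_i\varepsilon_i)$.

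Finally I would substitute the germ expansion. By the analysis recalled in \S\ref{germ expansion section for us}, for each real place $\nu$ there exist functions $g_{\nu,1}, g_{\nu,2}$ in the coordinates $(r,N)$, of compact support, with
\[
\cO(f_\nu,(r,N)) = g_{\nu,1}(r,N) + \tfrac12\Bigl|\tfrac{r^2}{N}-1\Bigr|^{-1/2} g_{\nu,2}(r,N),
\]
with $g_{\nu,1}\equiv 0$ outside the parabola. Plugging $(r,N)=(\tau_i,4u_i\varepsilon_i)$ into the $i$-th factor and taking the product over $i=1,\dots,n$ yields exactly the displayed formula in the statement. (I note the mild typo that the argument of the singular factor should read $\tau_i^2/(4u_i\varepsilon_i)$ rather than $\tau_i^2/(u_i\varepsilon_i)$, consistent with $N=4u_i\varepsilon_i$; this does not affect the structure of the proof.) There is no real obstacle here — the content of the theorem is entirely in the earlier results (Proposition~\ref{prop 3-24}, the product decomposition of $\cO(f,\gamma)$, the measure compatibility of \S\ref{sec: compatibility}, and the germ expansion), and the only thing to check carefully is that the coordinates used at each real embedding are the correct ones, i.e.\ that the $i$-th local orbital integral genuinely sees $\gamma$ as the matrix with trace $\tau_i$ and determinant $u_i\varepsilon_i$, which is immediate from the definition of $\gamma(\tau,u)$ and the compatibility of the embeddings with trace and determinant.
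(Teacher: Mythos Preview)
Your proposal is correct and matches the paper's approach exactly: the paper presents this theorem without explicit proof, simply recording it as the conclusion of the section after the germ-expansion discussion in \S\ref{germ expansion section for us} and the unit-extraction Proposition, and your write-up spells out precisely those assembly steps. Your observation about the typo in the singular factor (it should be $\tau_i^2/(4u_i\varepsilon_i)$, matching $N=4u_i\varepsilon_i$) is also correct and consistent with how the expression appears later in \S\ref{section: Manipulation of the Regular Elliptic Part}.
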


\begin{remark}
Notice that the functions $g_{\nu_i, 1}, g_{\nu_i, 2}$ depend only on $f_{\nu_i}$.
Contrary to what was done in \cite{AliI}, we do not make the change of variables to transform this into a function that only depends on the trace.
This is because $Z_{+}$ affects all entries in \textit{the same way} but the change of variable would require a different scaling factor at each place.
\end{remark}

Our objective in the next sections is to analytically study these expressions.

\subsection{A partition of \texorpdfstring{$\mathbb{R}^{2n-1}$}{}}
\label{subsec: A partition of R 2n-1}

To make the manipulation clearer at later stages, we partition $\mathbb{R}^{2n-1}$ into different subsets.
Recall from \S\ref{sec: arch orb int} that in the $(r, N)$ coordinates the $\R^2$ plane is partitioned into different regions by the parabola $r^2 = N$.
For calculations in this section, we consider the following subdivision of $\mathbb{R}^2$.

\begin{description}
 \item[$R_0$] A set of measure zero - the axis $N = 0$ union the parabola $r^2 = N$.
 \item[$R_1$] Outside the parabola and $r > 0, N >0 $.
 \item[$R_2$] Outside the parabola and $r > 0, N < 0$.
 \item[$R_3$] Outside the parabola and $r < 0, N < 0$.
 \item[$R_4$] Outside the parabola and $r, N > 0$.
 \item[$R_5$] Inside the parabola, which consists of the elliptic torus $T_{\el}$.
\end{description}
The regions look as follows:
\begin{center}
 \begin{tikzpicture}
 \draw[step=1 cm,gray,very thin] (-4,-4) grid (4, 4);
 \draw [dashed](-4,0) -- (4,0);
 \draw (0,-4) -- (0,4);
 \draw (-2,4) parabola bend (0,0) (2, 4);
 
 \draw (3,1) node[fill=green!30, minimum size=10mm] {\textbf{$R_1$: $r > 0, N >0 $}};
 \draw (3,-2) node[fill=green!30, minimum size=10mm] {\textbf{$R_2$: $r > 0, N < 0$}};
 \draw (-3,-2) node[fill=green!30, minimum size=10mm] {\textbf{$R_3$: $r < 0, N < 0$}};
 \draw (-3,1) node[fill=green!30, minimum size=10mm] {\textbf{$R_4$: $r<0, N > 0$}};
 \draw (0,2) node[fill=red!30, minimum size=10mm] {\textbf{$R_5$}};
 \end{tikzpicture}
\end{center} 
We see that these regions can be described by precise inequalities.
Moreover,
\[
\mathbb{R}^2 = R_0 \sqcup R_1 \sqcup R_2 \sqcup R_3 \sqcup R_4 \sqcup R_5.
\]
In the figure in \S\ref{blue orange line picture section}, we showed that upon quotienting by $Z_+$, the coordinates $(r, N)$ became two lines $N = \pm 1$.
In fact, these two lines also get partitioned into a disjoint union of six regions that correspond to the six regions above.
We will denote them by the same notation as there will be no possibility of confusion.
For example, in the context of the two lines the region $R_0$ consists of the two (red) points on the parabola.

For our calculations, it suffices to consider a coarser partition.
Define
\begin{align*}
S_0 &= R_0\\
S_{\spl} &= R_1 \sqcup R_2 \sqcup R_3 \sqcup R_4\\
S_{\el} &= R_5.
\end{align*}
They are also a partition of $\mathbb{R}^{2}$ (or, correspondingly, the two lines).

\begin{notation}
Let $0\le i_1, \ldots, i_n \le 5$ and $I = (i_1, \ldots , i_n)$.
Define
\[
R_{i_1,\ldots, i_n} := R_I: = R_{i_1}\times R_{i_2}\times\cdots\times R_{i_n}\subset\mathbb{R}^{2n-1}.
\]
Notice that $R_{i_1}$ is a region of the two lines while the rest are regions in $\mathbb{R}^2$.

Let $j_1, \ldots, j_n \in \{0, T_{\spl}, T_{\el}\}$ and $J = (j_1, \ldots , j_n)$.
Define
\[
S_J: = S_{j_1}\times S_{j_2}\times\cdots\times S_{j_n}\subset\mathbb{R}^{2n-1}.
\]
\end{notation}

\begin{proposition}
Let $f$ be an integrable function on $\mathbb{R}^{2n-1}$.
Then
\begin{align}
\int_{\R^{2n-1}}f(x)dx &= \sum_{I}\displaystyle\int_{R_I}f(x)dx\\
&= \sum_{J}\displaystyle\int_{R_J}f(x)dx.
\end{align}
Here, $I$ runs over the multi-indices $0\le i_1,\ldots, i_n \le 5$ and $J$ runs over the multi-indices $j_1, \ldots, j_n \in \{0, T_{\spl}, T_{\el}\}$.
\end{proposition}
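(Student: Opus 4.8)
The plan is to deduce both identities from the observation that the regions in question form a genuine finite measurable partition of $\R^{2n-1}$ (not merely one valid up to null sets), after which the statement is just finite additivity of the Lebesgue integral for $f\in L^1$. First I would recall the decomposition $\R^2 = R_0\sqcup R_1\sqcup R_2\sqcup R_3\sqcup R_4\sqcup R_5$ already recorded above, together with its analogue on the pair of lines $N=\pm1$: there the six pieces are the traces on $\{N=\pm1\}$ of the same six regions, with $R_0$ reducing to the two points $(\pm1,1)$ (hence of one-dimensional Lebesgue measure zero) and $R_1,\dots,R_5$ the open arcs cut out by the defining inequalities. In either ambient space one has an honest partition into six measurable pieces.

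Next I would take products. Writing the first factor of $\R^{2n-1}$ as the pair of lines and each of the remaining $n-1$ factors as a copy of $\R^2$, the identity
\[
\R^{2n-1} = \bigsqcup_{I\in\{0,\dots,5\}^n} R_I,\qquad R_I = R_{i_1}\times R_{i_2}\times\cdots\times R_{i_n},
\]
is simply the $n$-fold product of the one-factor decompositions, so it is a disjoint union of $6^n$ measurable sets (a product of disjoint unions being a disjoint union). Those $R_I$ with some $i_k=0$ are product-measure null, but we need not even discard them. Finite additivity of $\int_{\R^{2n-1}}$ over this finite partition, valid because $f\in L^1$, then yields $\int_{\R^{2n-1}}f = \sum_I\int_{R_I}f$, which is the first equality.

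For the second equality I would observe that the coarser index set arises by grouping $\{0,\dots,5\}$ into the three blocks $\{0\}$, $\{1,2,3,4\}$, $\{5\}$, corresponding to the labels $0$, $T_{\spl}$, $T_{\el}$, which is exactly how $S_0=R_0$, $S_{\spl}=R_1\sqcup R_2\sqcup R_3\sqcup R_4$, and $S_{\el}=R_5$ were defined. Consequently each $S_J$ is the disjoint union of precisely those $R_I$ whose $k$-th coordinate lies in the block prescribed by $j_k$, and summing the first identity block by block gives $\sum_J\int_{S_J}f = \sum_I\int_{R_I}f = \int_{\R^{2n-1}}f$.

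I do not expect any real obstacle: this is a bookkeeping lemma. The only point needing a little care is that the first tensor factor is the pair of lines $N=\pm1$ rather than another copy of $\R^2$, so that $R_0$ must be re-interpreted there as a finite set of points; and one should phrase everything as an exact finite partition, so that no ``up to measure zero'' hedging — and hence no hypothesis on $f$ beyond integrability — is required.
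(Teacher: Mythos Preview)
Your argument is correct and is precisely the approach the paper takes: the paper's proof simply observes that the $R_I$ form a measurable partition of $\R^{2n-1}$ and says the second case is similar. You have spelled out more carefully the point about the first factor being the pair of lines and the regrouping into the $S_J$, but there is no substantive difference.
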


\begin{proof}
As $I$ runs over the multi-indices $0\le i_1,\ldots, i_n \le 5$, regions $R_I$ form a partition of measurable sets of $\mathbb{R}^{2n-1}$.

The other proof is similar, and we do not repeat it.
\end{proof}

\begin{remark}
Notice that $R_I$ has measure zero if and only if one of its $i_j = 0$ (i.e., if in one of the coordinates is a set of measure zero).
In particular, the integral of any integrable function over such a set is zero.
\end{remark}

\begin{remark}
Notice that $S_0$ corresponds to the set of measure zero.
However, $S_{\spl}$ corresponds to the matrices which give split tori and $S_{\el}$ corresponds to those that produce elliptic tori.
\end{remark}

Let $I_n$ be the identity matrix of dimension $n$ and $z$ be a positive real number.
Set $G_{\infty} = \GL(2, \R)\times\cdots\times \GL(2, \R)$.
The $Z_+$ action on $G_{\infty}$ is given by
\[
zI_n\cdot(\gamma_1,\cdots, \gamma_n) = (z\gamma_1,\cdots, z\gamma_n).
\]
This action corresponds to the action of $\R^+$ on $(r_1, N_1,\ldots, r_n, N_n)$ given by
\[
z\cdot (r_1, N_1,\ldots, r_n, N_n) = (zr_1, z^2N_1,\ldots,zr_n, z^2N_n).
\]
Every orbit under this action has a unique representative of the form $(r_1, N_1, r_2, N_2, \ldots , r_n, N_n)$ with
\[
\abs{N_1N_2\cdots N_n} = \sp^{\sk}.
\]
Given $(r_1', N_1',\ldots, r_n', N_n')$, to construct such representative we act by $z$ specified uniquely by
\[
z^{2n}\abs{N_1'N_2' \cdots N_n'} = \sp^{\sk},
\]
We denote this representative by $(r_1, r_2,\ldots, r_n, N_2, \ldots, N_n)$.
The coordinate $N_1$ is not present because it is determined by the product formula above.
We do this rearrangement of coordinates to match the order of the coordinates $(x, y)$ of our interpolation function.

The following lemma is straightforward.
\begin{lemma}
\label{lemma: regions are invariant}
The regions\footnote{Here, $R_{i_1}$ is considered as a region on $\R^2$ and not the two lines.} $R_I\subseteq\R^{2n}$ are invariant under the action of $Z_+$.
Thus, the quotient space $Z_+ \setminus G_{\infty}$ is a Lie group which admits $(r_1, r_2,\ldots, r_n, N_2, \ldots, N_n)$ as coordinates.
Furthermore, $R_I$ is a partition of this space.

The analogous statement also holds for the regions $S_J$.
\end{lemma}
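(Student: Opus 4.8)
The plan is to reduce the entire statement to one elementary observation: every condition that cuts out the six regions $R_0,\dots,R_5\subseteq\R^2$ is homogeneous for the rescaling $(r,N)\mapsto(zr,z^2N)$ with $z>0$, which (as recorded just before the lemma) is exactly the $Z_+$-action on the characteristic data, transported via $\CH$. Concretely, $R_0=\{N=0\}\cup\{N=r^2\}$, and the two functions $N$ and $N-r^2$ are carried to $z^2N$ and $z^2(N-r^2)$; since $z^2>0$ both their vanishing loci and their signs are preserved. Similarly $\sgn(r)=\sgn(zr)$ and $\sgn(N)=\sgn(z^2N)$ because $z>0$. As $R_1,\dots,R_4$ are defined by a choice of sign for $r$, a choice of sign for $N$, together with $N<r^2$, and $R_5$ by $N>r^2$, each $R_i$ is invariant under $(r,N)\mapsto(zr,z^2N)$. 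Taking $n$-fold products, $R_I=R_{i_1}\times\cdots\times R_{i_n}$ is invariant under the diagonal rescaling on $(r_1,N_1,\dots,r_n,N_n)$. Since $S_{\spl}=R_1\sqcup R_2\sqcup R_3\sqcup R_4$ and $S_{\el}=R_5$ are unions of the $R_i$ (and $S_0=R_0$), the same conclusion holds for every $S_J$.

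For the claim that $Z_+\setminus G_\infty$ is a Lie group carrying the indicated coordinates, note first that $Z_+\cong\R^+$ is a closed central subgroup, so the quotient is automatically a Lie group; it remains to produce the coordinates on the quotient of the characteristic data $\R^{2n}$ by the above action. Restrict to the open dense, $Z_+$-stable subset $U=\{N_1\cdots N_n\neq 0\}$, whose complement is a union of regions $R_I$ with some $i_j=0$ and hence has measure zero. On $U$ the function $(r_i,N_i)_i\mapsto|N_1\cdots N_n|$ is a submersion onto $\R^+$ intertwining the $Z_+$-action with $z\cdot t=z^{2n}t$, so its fiber over $\sp^{\sk}$ is a global slice: it meets each orbit in exactly one point, namely the ``unique representative'' constructed before the lemma. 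On this slice $|N_1|=\sp^{\sk}/|N_2\cdots N_n|$, and once a single region $R_I$ (equivalently, a single $S_J$) is fixed the sign of $N_1$ is forced by $i_1$, since each of $R_1,\dots,R_5$ lies entirely on one side of $\{N_1=0\}$. Hence $(r_1,\dots,r_n,N_2,\dots,N_n)$ determine the representative uniquely and serve as coordinates on each region of $Z_+\setminus G_\infty$, patching to the asserted global coordinates.

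Finally the partition claim is formal: if $q\colon\R^{2n}\to Z_+\setminus\R^{2n}$ is the quotient map and $\R^{2n}=\bigsqcup_I R_I$ with each $R_I$ being $Z_+$-stable, then $Z_+\setminus\R^{2n}=\bigsqcup_I q(R_I)$, the images covering because $q$ is surjective and being pairwise disjoint because $q^{-1}(q(R_I))=R_I$ by invariance; the identical argument works for $\{S_J\}$. I do not expect any genuine obstacle here — the content is the homogeneity remark of the first paragraph — and the only point meriting care is that the slice $\{|N_1\cdots N_n|=\sp^{\sk}\}$ pins down only $|N_1|$, so that $(r_1,\dots,r_n,N_2,\dots,N_n)$ label points of the quotient unambiguously only after one restricts to a region (which kills the residual sign of $N_1$); this is precisely why the coordinates are introduced region by region rather than claimed globally in a naive way.
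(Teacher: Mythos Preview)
Your proposal is correct and is exactly the elementary homogeneity check the paper has in mind: the paper does not give a proof at all, merely declaring the lemma ``straightforward,'' and your argument spells out precisely why. Your care in noting that the slice $\{|N_1\cdots N_n|=\sp^{\sk}\}$ determines only $|N_1|$, so that the sign of $N_1$ must be recovered from the region, is a genuine point that the paper glosses over.
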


\subsection{Splitting behaviour at infinity}

We now define the following function:
\begin{definition}
Let $\CH:= \ch \otimes \cdots \otimes \ch: G_{\infty} \longrightarrow \R^2\times\cdots\times \R^2$ be given by
\[
\CH(\gamma_1,\ldots, \gamma_n) = (r_1, N_1, \cdots, r_n, N_n).
\]
\end{definition}
Note that $\ch(T_{\el}) = S_{\el}$ and $\ch(T_{\spl}) = S_{\spl}$.
We now define an equivalence relation in points of $\mathbb{R}^{2n-1}$ that will simplify our computations and explanations significantly.

\begin{definition}
Let $v_1$ and $v_2$ be two points in $\mathbb{R}^{2n}$.
They are said to be \textit{equivalent at infinity} if $v_1, v_2$ belong to the same region $S_J$.
The notation is 
\[
v_1\siminf v_2.
\]
The point $v\in\mathbb{R}^n\times\mathbb{R}^{2n}$ is \textit{completely split} at infinity or \textit{not split at infinity} according as $v\in S_{\spl}\times\cdots\times S_{\spl}$ or not.
\end{definition}

By Lemma~\ref{lemma: regions are invariant} the equivalence class of $v$ under the above equivalence class is well defined on $Z_+\backslash G_{\infty}$.
However, we always refer to the quotient with respect to the coordinates $(r_1, r_2,\ldots, r_n, N_2, \ldots, N_n)$.
\begin{definition}
Let $(r_1, r_2,\ldots, r_n, N_2, \ldots, N_n)$ and $(r_1', r_2',\ldots, r_n', N_2', \ldots, N_n')$ be two points in $\mathbb{R}^n\times \mathbb{R}^{n-1}$.
We say they are \emph{equivalent at infinity} if 
\[
(r_1, N_1, r_2, N_2,\ldots , r_n, N_n) \siminf (r_1', N_1', r_2', N_2',\ldots , r_n', N_n'),
\]
where $N_1$ and $N_1'$ are determined by
\[
\abs{N_1N_2\cdots N_n} = \sp^{\sk},\;\; 
\abs{N_1'N_2'\cdots N_n'} = \sp^{\sk},
\] 
respectively.
The point $(r_1, r_2,\ldots, r_n, N_2, \ldots, N_n)$ is \emph{completely split} or \emph{not split} at infinity according to whether $(r_1, N_1, r_2, N_2,\ldots , r_n, N_n)$ completely splits or does not split at infinity.
\end{definition}

Finally, we extend the definitions to the regular elliptic matrices.
\begin{definition}
Let $\gamma_1$ and $\gamma_2$ be two regular elliptic matrices.
They are called \emph{equivalent at infinity} when $\CH(\gamma_1)\siminf \CH(\gamma_2)$.
The matrix $\gamma_1$ is said to be \emph{completely split} or \textit{not split} at infinity according as $\CH(\gamma_1)$ is \emph{completely split} or \emph{not split} at infinity.
\end{definition}

The following will be useful later for explicit computations.

\begin{definition}
\label{indicator function}
Write $C: \mathbb{R}^{2n}\longrightarrow \{0, 1\}$ to be the indicator function of
\[
S_{\spl}\times\cdots\times S_{\spl}.
\]
By abuse of notation set $C$ to denote the indicator function after quotienting by $Z_+$.
\end{definition}

\section{Manipulation of the Regular Elliptic Part}
\label{section: Manipulation of the Regular Elliptic Part}

In this section the goal is to rewrite the expression of the regular elliptic part of the trace formula obtained in Theorem~\ref{prop: regular elliptic part} in a way so that we can apply the Approximate Functional Equation.
We will carry out the manipulation by substituting the volume term with the expression obtained in Proposition~\ref{prop: volume manipulation}, the finite orbital integral with the expression obtained in Proposition~\ref{Multiplicative Formula Langlands}, and finally getting a better handle on the infinite orbital integral by studying the units.

A key difficulty in generalizing Altu\u{g}'s result over $\Q$ to the general number field case was in handling the units.
The first step towards solving this issue is done by defining an appropriate \emph{interpolation function} in \S\ref{section: blending step}.
It is because of how we define this function that we crucially need to assume that our base field is totally real.

\subsection{The First Simplification}

Let $\gamma$ be a contributing conjugacy class.
By earlier discussion we know that $\left(\det(\gamma)\right) = \fp^{\sk}$.
By the product formula, 
\[
 \prod_{\nu} \abs{\det(\gamma)}_{\nu} = 1.
\]
Since the only finite prime contributing to its factorization is $\fp$, we have 
\[
\prod_{\nu\mid \infty} \abs{\det(\gamma)}_{\nu} = \sp^{\sk}.
\]
Given two different complex numbers $\alpha_1, \alpha_2$, recall that we have defined
\[
D(\alpha_1, \alpha_2) = \frac{\abs{\alpha_1\alpha_2}^{1/2}}{\abs{\alpha_1 - \alpha_2}}.
\]
This factor determines the singularity of the orbital integrals.

The next result is proven here only for totally real number fields.
The proof remains unchanged in the general case except that one also needs to account for the complex embeddings.

\begin{proposition}
\label{relation discriminants}
Let $\lambda_1$ and $\lambda_2$ be the roots of $P_\gamma$ and define $\partial = \tau^2 - 4\det(\gamma)$.
Then,
\[
\sp^{\sk/2} = \sqrt{\Norm_K\left(\partial\right)} \prod_{i = 1}^{r_1} D\left(\sigma_i\left(\lambda_1\right), \sigma_i\left(\lambda_2\right)\right),
\]
where we write $\sigma_i$ to denote the real embeddings of $K$.
\end{proposition}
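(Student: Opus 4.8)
The plan is to reduce everything to Vieta's formulas for $P_\gamma$ together with the product formula, which was already recorded just above the statement. Write $P_\gamma(X) = X^2 - \tau X + \det(\gamma)$, so that $\lambda_1 + \lambda_2 = \tau$ and $\lambda_1\lambda_2 = \det(\gamma)$, and hence $(\lambda_1 - \lambda_2)^2 = \tau^2 - 4\det(\gamma) = \partial$. For each real embedding $\sigma_i \colon K \hookrightarrow \R$ with $1 \le i \le r_1$, interpret $\sigma_i(\lambda_1), \sigma_i(\lambda_2)$ as the two roots (in $\C$) of the image polynomial $X^2 - \sigma_i(\tau)X + \sigma_i(\det\gamma)$; then applying $\sigma_i$ to the identities above gives $\sigma_i(\lambda_1)\sigma_i(\lambda_2) = \sigma_i(\det\gamma)$ and $(\sigma_i(\lambda_1) - \sigma_i(\lambda_2))^2 = \sigma_i(\partial)$. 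Note $\partial \ne 0$ since $\gamma$ is regular elliptic (so $\partial$ is a non-square, in particular nonzero) and $\det(\gamma) \ne 0$, so all the quantities below are well-defined and nonzero.

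Next I would unwind the definition of $D$. From
\[
D(\sigma_i(\lambda_1), \sigma_i(\lambda_2))^2 = \frac{\abs{\sigma_i(\lambda_1)\sigma_i(\lambda_2)}}{\abs{\sigma_i(\lambda_1) - \sigma_i(\lambda_2)}^2} = \frac{\abs{\sigma_i(\det\gamma)}}{\abs{\sigma_i(\partial)}},
\]
taking the product over $i = 1, \dots, r_1$ yields
\[
\prod_{i=1}^{r_1} D(\sigma_i(\lambda_1), \sigma_i(\lambda_2))^2 = \frac{\prod_{i=1}^{r_1}\abs{\sigma_i(\det\gamma)}}{\prod_{i=1}^{r_1}\abs{\sigma_i(\partial)}}.
\]
Here is where the totally real hypothesis enters: since $K$ is totally real, $r_1 = n = [K:\Q]$ and there are no complex places, so for any $\alpha \in K^*$ we have $\prod_{i=1}^{r_1}\abs{\sigma_i(\alpha)} = \prod_{\nu\mid\infty}\abs{\alpha}_\nu = \abs{\Norm_{K/\Q}(\alpha)}$. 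Applying this to $\alpha = \partial$ gives the denominator $\Norm_K(\partial)$, and applying it to $\alpha = \det(\gamma)$ gives $\prod_{\nu\mid\infty}\abs{\det(\gamma)}_\nu$, which by the product formula (as derived in the lines preceding the proposition, using that $\fp$ is the only finite prime dividing $\det(\gamma)$ and $(\det(\gamma)) = \fp^{\sk}$) equals $\Norm_K(\fp^{\sk}) = \sp^{\sk}$.

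Combining, $\prod_{i=1}^{r_1} D(\sigma_i(\lambda_1), \sigma_i(\lambda_2))^2 = \sp^{\sk}/\Norm_K(\partial)$, and taking positive square roots (all factors being positive reals) gives
\[
\prod_{i=1}^{r_1} D(\sigma_i(\lambda_1), \sigma_i(\lambda_2)) = \frac{\sp^{\sk/2}}{\sqrt{\Norm_K(\partial)}},
\]
which rearranges to the claimed identity. I do not expect any genuine obstacle here: the proof is essentially a bookkeeping exercise. The only points requiring care are the notational convention that $\sigma_i(\lambda_j)$ denotes a root of the embedded polynomial (since $\lambda_j \notin K$), the matching of archimedean absolute-value normalizations so that the product over real places recovers $\abs{\Norm_{K/\Q}(\cdot)}$ — which is exactly where totally real is used — and the reuse of the product-formula computation $\prod_{\nu\mid\infty}\abs{\det(\gamma)}_\nu = \sp^{\sk}$ already established above. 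For a general number field one would additionally carry the complex places, contributing squared moduli on both numerator and denominator, which cancel in the ratio, so the statement and proof go through with only cosmetic changes, as remarked before the proposition.
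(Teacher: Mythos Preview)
Your proof is correct and follows essentially the same approach as the paper: both use Vieta's formulas to identify $\abs{\sigma_i(\lambda_1)\sigma_i(\lambda_2)} = \abs{\sigma_i(\det\gamma)}$ and $\abs{\sigma_i(\lambda_1)-\sigma_i(\lambda_2)}^2 = \abs{\sigma_i(\partial)}$, take the product over real embeddings, invoke the product-formula identity $\prod_{\nu\mid\infty}\abs{\det(\gamma)}_\nu = \sp^{\sk}$, and take a square root. The only difference is the order in which the substitutions are made, which is purely cosmetic.
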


\begin{proof}
Using the product formula and the definition of the norms given by the archimedean valuations,
\begin{equation}
\label{equation qk}
\begin{split}
\sp^{\sk}	&= 
\prod_{\nu\mid \infty}\abs{\det(\gamma)}_{\nu}\\
&= \prod_{i = 1}^{r_1} \abs{\sigma_i(\det(\gamma))} \\ 
&= \prod_{i = 1}^{r_1} \abs{\sigma_i(\lambda_1\lambda_2)} \\ 
&= \prod_{i = 1}^{r_1} \abs{\sigma_i(\lambda_1)\sigma_i(\lambda_2)}. \\ 
\end{split}
\end{equation}
Moreover, in $L$ we have
\[
\partial = \tau^2 - 4\det(\gamma) = (\lambda_1 + \lambda_2)^2 - 4\lambda_1\lambda_2 = (\lambda_1 - \lambda_2)^2.
\]
Applying any real embedding to $\partial$ and taking the absolute value, we deduce
\[
\abs{\sigma(\partial)} = \abs{\sigma(\lambda_1) - \sigma(\lambda_2)}^2.
\]
For a (real) place this implies
\[
\abs{\sigma(\lambda_1)\sigma(\lambda_2)} = D\left(\sigma\left(\lambda_1\right), \sigma\left(\lambda_2\right)\right)^2 \abs{\sigma\left(\partial\right)}.
\]
Substituting this in \eqref{equation qk}, we obtain
\begin{align*}
\sp^{\sk}	&= \prod_{i = 1}^{r_1} \abs{\sigma_i\left(\lambda_1\right)\sigma_i\left(\lambda_2\right)} \\ &= \prod_{i = 1}^{r_1} D\left(\sigma_i\left(\lambda_1\right), \sigma_i\left(\lambda_2\right)\right)^2\abs{\sigma_i(\partial)} \\ &= \Norm_{K}(\partial)\prod_{i = 1}^{r_1} D\left(\sigma_i\left(\lambda_1\right), \sigma_i\left(\lambda_2\right)\right)^2.
\end{align*}
Since $\partial\in K$, we have used the product formula in $K$ for the above calculation.
Taking square root,
\[
 \sp^{\sk/2} = \sqrt{\Norm_K\left(\partial\right)} \prod_{i = 1}^{r_1} D\left(\sigma_i\left(\lambda_1\right), \sigma_i\left(\lambda_2\right)\right).
 \qedhere
\]
\end{proof}

\subsubsection{}
Fix a contributing regular elliptic matrix $\gamma$ that corresponds to the unit $u$ and the trace $\tau$.
By Theorem~\ref{prop: regular elliptic part}, the regular elliptic part of the trace formula can be written as
\begin{tiny}
\[
R(f)=\sum_{u\in \cO_K^*}\sum_{\tau\in L(u)}\vol(\gamma(\tau,u))\cO(f_{\fin}, \gamma(\tau,u)) \prod_{i = 1}^n\left(g_{\nu_i,1}(\tau_i, 4u_i\varepsilon_i) + \frac{1}{2}\abs{\frac{\tau_i^2}{4u_i\varepsilon_i}-1}^{-1/2} g_{\nu_i,2}(\tau_i, 4u_i\varepsilon_i)\right).
\]
\end{tiny}
Substituting the values we have obtained Proposition~\ref{prop: volume manipulation}, the above expression equals
\begin{tiny}
\[
\abs{D_K}^{-1/2}\sum_{u\in \cO_K^*}\sum_{\tau\in L(u)}L(1, \chi_\gamma)\sqrt{\abs{D_L}}\cO(f_{\fin}, \gamma(\tau,u)) \prod_{i = 1}^n\left(g_{\nu_i,1}(\tau_i, 4u_i\varepsilon_i) + \frac{1}{2}\abs{\frac{\tau_i^2}{4u_i\varepsilon_i}-1}^{-1/2} g_{\nu_i,2}(\tau_i, 4u_i\varepsilon_i)\right).
\]
\end{tiny}
We will now rewrite $\sqrt{\abs{D_L}}$ in a way that will be more useful.
Recall that
\[
\sqrt{\abs{D_L}} = \abs{D_K}\sqrt{\Norm_K\left(\Delta_{\gamma}\right)}.
\]
Using the definition $\Delta_{\gamma}S_{\gamma}^2 = (\partial)$ and Proposition~\ref{relation discriminants}, we conclude
\[
\sqrt{\abs{D_L}} 
= \frac{ \abs{D_K}\sqrt{\Norm_K\left(\partial\right)}}{\Norm_K\left(S_{\gamma}\right)}
= \frac{\sp^{\frac{\sk}{2}}\abs{D_K}}{\Norm_K\left(S_{\gamma}\right) \prod_{i = 1}^{n} D\left(\sigma_i\left(\lambda_1\right), \sigma_i\left(\lambda_2\right)\right)}.
\]
Hence, each term arising in the regular elliptic part of the trace formula is of the form
\begin{tiny}
\[
\sp^{\frac{\sk}{2}}\abs{D_K}\frac{L(1, \chi_\gamma)\cO(f_{\fin}, \gamma(\tau,u))}{\Norm_K\left(S_{\gamma}\right) \prod_{i = 1}^{n} D\left(\sigma_i\left(\lambda_1\right), \sigma_i\left(\lambda_2\right)\right)}\prod_{i = 1}^n\left(g_{\nu_i,1}(\tau_i, 4u_i\varepsilon_i) + \frac{1}{2}\abs{\frac{\tau_i^2}{4u_i\varepsilon_i}-1}^{-1/2} g_{\nu_i,2}(\tau_i, 4u_i\varepsilon_i)\right).
\]
\end{tiny}

\subsubsection{}
For a fixed unit $u\in\cO_K^{*}$, we now study 
\[
\label{orbitalsone}
\frac{ \prod_{i = 1}^n\left(g_{\nu_i,1}(\tau_i, 4u_i\varepsilon_i) + \frac{1}{2}\abs{\frac{\tau_i^2}{4u_i\varepsilon_i}-1}^{-1/2} g_{\nu_i,2}(\tau_i, 4u_i\varepsilon_i)\right)}{ \prod_{i = 1}^{n} D\left(\sigma_i\left(\lambda_1\right), \sigma_i\left(\lambda_2\right)\right)}.
\]
Recall that $\lambda_1, \lambda_2$ are the roots of $P_\gamma(X)$.
Notice that by definition
\begin{tiny}
\[
 D(\sigma_i(\lambda_1), \sigma_{i}(\lambda_2)) = \frac{1}{2}\abs{\frac{\tau_i^2}{4u_i\varepsilon_i}-1}^{-1/2}.
\]
\end{tiny}
By substituting these terms and by putting $D(\sigma_i(\lambda_1), \sigma_{i}(\lambda_2))$ with the $i$-th real orbital integral, we deduce that
\begin{tiny}
\[
R(f) = \abs{D_K}^{1/2}\sum_{u\in \cO_K^*}\sum_{\tau\in L(u)} \sp^{\frac{\sk}{2}}\frac{L(1, \chi_\gamma)\cO(f_{\fin}, \gamma(\tau,u))}{\Norm_K\left(S_{\gamma}\right)}
 \prod_{i = 1}^n\left(2\abs{\frac{\tau_i^2}{4u_i\varepsilon_i}-1}^{1/2} g_{\nu_i,1}(\tau_i, 4u_i\varepsilon_i) + g_{\nu_i,2}(\tau_i, 4u_i\varepsilon_i)\right).
\]
\end{tiny}

\subsection{Interpolation function \texorpdfstring{$\theta^{\pm}$}{}}
\label{section: blending step}

We begin by defining two maps: the first will allow us to handle the traces $\tau$ in the regular elliptic part of the trace formula, while the second will give a handle on the units $u$.

Define the map 
\begin{align*}
T:K&\longrightarrow\R^n \\
\alpha &\mapsto \left(\sigma_1(\alpha),\cdots, \sigma_{n}(\alpha)\right).
\end{align*}
We know that $T(\cO_K)$ is a lattice in $\R^n$.
By Dirichlet's Unit Theorem (for a totally real field) we have,
\[
\cO_K^* \simeq \{\pm 1\}\times \Z^{n-1}.
\]
Let us fix once and for all a system of fundamental units $\beta_1, \cdots, \beta_{n-1}$ in $\cO_K^*$ that realizes the above isomorphism.
To proceed with the calculations, we are required to make an assumption here, namely that the system of fundamental units is `totally positive'.
\begin{equation}
\tag*{\textup{\textbf{(Hyp-pos)}}}
\label{ass: positivity}
\begin{minipage}{0.8\textwidth}
Let $K$ be a totally real number field such that the system of fundamental units is totally positive.
\end{minipage}
\end{equation}
Recall that a system of fundamental units is called totally positive if $T(\beta_i)>0$ for all $1\leq i \leq n-1$.
This is essentially the only place in the paper where we require that the base field $K$ is totally real.
If $K/\Q$ is a totally real \emph{quadratic field} then it was observed by H.~Stark that a density of $100\%$ have a system of totally positive fundamental units; see \cite[p.~684]{DV18}.
When $[K:\Q]>2$, there are fewer fields satisfying this property, but such fields exist.
In fact, if $[K:\Q]=n$, then the system of fundamental units can be totally positive if and only if $\Gal(H^{\textrm{nar}}/H) = 2^{n-1}$ where $H$ is the Hilbert class field of $K$ and $H^{\textrm{nar}}$ is the narrow Hilbert class field of $K$.

Let $W\subset\R^n$ be the $n-1$-dimensional subspace with
\[
x_1+\cdots+x_n = 0.
\]
The embedding $S:\cO_K^*\rightarrow \R^n$ is given by
\[
S(x) = (\log{\abs{x}_1},\cdots, \log{\abs{x}_{n}}),
\]
where $\abs{x}_i$ is the absolute value with respect to the $i$-th archimedean place.
For all $1\leq i \leq n-1$, we see from the proof of the Dirichlet unit theorem that $S(\beta_i)$ is a basis for the full lattice in $W$.
We write the coordinates of $W$ with respect to these generators as $y = (y_1,\cdots,y_{n-1})$ by which we mean that a unit $u\in \cO^*_K$ can be written as 
\[
u = \pm \beta_1^{y_1} \cdots \beta_{n-1}^{y_{n-1}}.
\]

\begin{definition}
\label{def:thetaplusminus}
Define the \emph{interpolation function} $\theta^\pm$ from $\R^{2n-1}$ to $\C$ as 
\begin{small}
\begin{multline*}
\theta^{\pm}(x_1,\cdots, x_n, y_1,\cdots, y_{n-1}) = \\ 
\sp^{\frac{\sk}{2}}\prod_{i = 1}^n\left(2\abs{\frac{x_i^2}{\pm4\beta_{1,i}^{y_1}\cdots \beta_{n-1,i}^{y_{n-1}}\varepsilon_i}-1}^{1/2} g_{\nu_i,1}( x_i, \pm4\beta_{1,i}^{y_1}\cdots \beta_{n-1,i}^{y_{n-1}}\varepsilon_i) + g_{\nu_i,2}(x_i, \pm4\beta_{1,i}^{y_1}\cdots \beta_{n-1,i}^{y_{n-1}}\varepsilon_i)\right),
\end{multline*}
\end{small}
where $\beta_{t,i} = \sigma_i(\beta_t)$ and the $\varepsilon_i$ are the real embedding of the fixed $\varepsilon$.
\end{definition}

The domain of this function is the product of the domains of each local component, which is the $(r,N)$ plane with both the parabola $N = r^2$ and the $N$-axis removed.
Moreover, it is compactly supported because $f_{\nu_1}, \cdots ,f_{\nu_n}$ are.

\begin{notation}
To simplify notation when we write our equations, we will follow the following conventions:
\begin{itemize}
\item we write $\theta^{\pm}(x, y)$ instead of $\theta^{\pm}(x_1, \cdots , x_n, y_1, \cdots , y_{n-1})$.
\item we write $\beta$ to denote a \emph{generic unit} in the $\Z^{n-1}$.
In other words, the units of $\cO_K$ are $\pm \beta$ as $\beta$ varies.
\item we continue to write $u$ to denote a unit in $\cO_{K}^*$ with the understanding that if we are evaluating the function $\theta^{\pm}(x, y)$ we are referring to its $\beta$-component under the isomorphism induced by the Dirichlet unit theorem.
\item we identify $\tau$ and $T(\tau)$.
\end{itemize}
\end{notation}

The regular elliptic part of the trace formula can then be written as
\begin{align*}
R(f) &= \abs{D_K}^{1/2} \sum_{u\in\cO_K^*}\sum_{\tau\in L(u)}
 \frac{L(1, \chi_\gamma)\cO(f_{\fin}, \gamma(\tau,u))}{\Norm_K\left(S_{\gamma}\right) }\theta^{\pm}(\tau, u) \\
& = \abs{D_K}^{1/2} \sum_{\pm} \sum_{\beta}\sum_{\tau\in L(u)}
 \frac{L(1, \chi_\gamma)\cO(f_{\fin}, \gamma(\tau,u))}{\Norm_K\left(S_{\gamma}\right) }\theta^{\pm}(\tau_1,\cdots,\tau_n, y_1, \ldots, y_{n-1}).
\end{align*}
Note that $\tau \in \cO_K$ can be viewed as an element of $\R^n$, so it makes sense to set $x=\tau$ as an input for $\theta^{\pm}$.
On the other hand, when we write $y=u$, we mean that $y=(y_1, \cdots, y_{n-1})$ corresponding to the unit $u = \pm\beta_1^{y_1}\cdots \beta_{n-1}^{y_{n-1}}$.

We wish to perform Poisson summation on the variables $(\tau,u)$; but we need to address the following issues: 
\begin{enumerate}[label = (\alph*)]
\item the function $\theta^{\pm}$ has singularities.
\item the coefficient $\frac{L(1, \chi_\gamma)\cO(f_{\fin}, \gamma(\tau,u))}{\Norm_K\left(S_{\gamma}\right) }$ in front of $\theta^{\pm}$ is not yet a smooth function with compact support when evaluated at the points of a lattice.
\item we do not have a complete lattice; the lack of completeness is due to missing $\tau$'s.
\end{enumerate}
This will be done in \S\ref{sec: Smoothing of the Singularities} and \S\ref{section: the problem of completion}.

We conclude this section with the following result
\begin{theorem}
\label{manipulation}
With notation and assumptions introduced so far, the regular elliptic part of $G(K)$ can be written as
\[
R(f) = \abs{D_K}^{1/2} \sp^{-\sk/2} \sum_{u\in\cO_K^*}\sum_{\tau\in L(u)}
 \theta^{\pm}(\tau, u) L(1, \chi_{\gamma})\left( \sum_{\fd \mid S_{\gamma}} \frac{1}{\Norm_K(\fd)} \prod_{\fp\mid (S_{\gamma}/\fd)}\left(1 - \frac{\chi_{\gamma}(\fp)}{\Norm_K(\fp)}\right)\right).
\]
\end{theorem}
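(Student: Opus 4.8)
The plan is to assemble the statement from the ingredients already established in \S\ref{sec: The Main Setup} and the earlier parts of \S\ref{section: Manipulation of the Regular Elliptic Part}, keeping careful track of all normalizing constants. First I would start from the form of the regular elliptic part obtained after the measure-compatibility discussion in \S\ref{sec: compatibility}, namely
\[
R(f) = \abs{D_K}^{-1/2}\sum_{(\det(\gamma)) = \fp^{\sk}} L(1,\chi_{\gamma})\sqrt{\abs{D_L}}\,\cO(f,\gamma),
\]
and then substitute the three factorizations we have proved. From Proposition~\ref{relation discriminants}, together with $\sqrt{\abs{D_L}} = \abs{D_K}\sqrt{\Norm_K(\Delta_{\gamma})}$ and the defining relation $\Delta_{\gamma}S_{\gamma}^2 = (\partial)$, one gets $\sqrt{\abs{D_L}} = \abs{D_K}\sp^{\sk/2}\big/\!\big(\Norm_K(S_{\gamma})\prod_{i=1}^n D(\sigma_i(\lambda_1),\sigma_i(\lambda_2))\big)$. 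From Proposition~\ref{Multiplicative Formula Langlands} the finite orbital integral $\cO(f_{\fin},\gamma)$ supplies the factors $\sp^{-\sk/2}$, $\Norm_K(S_{\gamma})$, and the divisor sum. From Theorem~\ref{prop: regular elliptic part} the archimedean orbital integral is the product over the real places of the germ expansions $g_{\nu_i,1}(\tau_i,4u_i\varepsilon_i) + \tfrac12\bigl|\tfrac{\tau_i^2}{4u_i\varepsilon_i}-1\bigr|^{-1/2} g_{\nu_i,2}(\tau_i,4u_i\varepsilon_i)$.

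The observation that makes everything collapse is the identity $D(\sigma_i(\lambda_1),\sigma_i(\lambda_2)) = \tfrac12\bigl|\tfrac{\tau_i^2}{4u_i\varepsilon_i}-1\bigr|^{-1/2}$, which holds because $\partial = (\lambda_1-\lambda_2)^2$ in $L$, hence $\sigma_i(\partial) = (\sigma_i(\lambda_1)-\sigma_i(\lambda_2))^2$. Dividing the $i$-th germ expansion by $D(\sigma_i(\lambda_1),\sigma_i(\lambda_2))$ turns $g_{\nu_i,1} + \tfrac12|\cdots|^{-1/2} g_{\nu_i,2}$ into $2|\cdots|^{1/2} g_{\nu_i,1} + g_{\nu_i,2}$, which is exactly the $i$-th local factor appearing in Definition~\ref{def:thetaplusminus}. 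Thus the product over $i$ of these corrected germ expansions equals $\sp^{-\sk/2}\theta^{\pm}(\tau,u)$ (the $\sp^{\sk/2}$ being the global prefactor carried by $\theta^{\pm}$). At this stage the factor $\sp^{\sk/2}$ from $\sqrt{\abs{D_L}}$ cancels the $\sp^{-\sk/2}$ from $\cO(f_{\fin},\gamma)$; the two copies of $\Norm_K(S_{\gamma})$ cancel; one copy of $\abs{D_K}$ combines with the overall $\abs{D_K}^{-1/2}$ to leave $\abs{D_K}^{1/2}$; and the residual $\sp^{-\sk/2}$ visible in the statement is precisely the prefactor of $\theta^{\pm}$. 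What remains multiplying $\theta^{\pm}(\tau,u)L(1,\chi_{\gamma})$ is the divisor sum $\sum_{\fd\mid S_{\gamma}}\Norm_K(\fd)^{-1}\prod_{\fq\mid (S_{\gamma}/\fd)}\bigl(1-\chi_{\gamma}(\fq)/\Norm_K(\fq)\bigr)$.

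Finally I would invoke Proposition~\ref{prop 3-24} to re-index the sum over contributing conjugacy classes as the double sum $\sum_{u\in\cO_K^*}\sum_{\tau\in L(u)}$, which is legitimate because this portion of the trace formula is absolutely convergent, so the rearrangement is harmless. Assembling all of the above gives the displayed formula. I do not expect a genuine obstacle here: the argument is pure bookkeeping. The one point that requires care — and where a slip would be easy — is the chain of constants from the measure-compatibility normalization: one must combine $\vol(\gamma) = \kappa^{-1}\abs{D_K}^{-1/2}\mathrm{meas}(\gamma)$ correctly with the analytic class number formula $\mathrm{meas}(\gamma) = \kappa\,L(1,\chi_{\gamma})\sqrt{\abs{D_L}}$ of Proposition~\ref{prop: volume manipulation} so that the residue $\kappa$ of $\zeta_K$ cancels cleanly, and then verify that all powers of $\sp$ and all copies of $\Norm_K(S_{\gamma})$ cancel exactly as claimed.
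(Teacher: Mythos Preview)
Your proposal is correct and follows essentially the same path as the paper: the paper's one-line proof (``This follows from substituting [Proposition~\ref{Multiplicative Formula Langlands}] into our previous expansion of the regular elliptic part'') defers all the work to the preceding subsections, and you have accurately reconstructed exactly those steps, including the key identity $D(\sigma_i(\lambda_1),\sigma_i(\lambda_2)) = \tfrac12\bigl|\tfrac{\tau_i^2}{4u_i\varepsilon_i}-1\bigr|^{-1/2}$ and the cancellations of $\sp^{\sk/2}$, $\Norm_K(S_\gamma)$, and the powers of $\abs{D_K}$.
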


\begin{proof}
This follows from substituting Theorem~\ref{Multiplicative Formula Langlands} into our previous expansion of the regular elliptic part.
\end{proof}

In the next part, we will carry out a detailed study of the sum over incomplete $L-$functions and the sum over the finite orbital integrals via the \textit{Approximate Functional Equation}.
 
\section{The Approximate Functional Equation}
\label{sec: Approx Funct eqn}

Let $z$ be a complex number and consider a non-square integer $\delta \equiv 0, 1\pmod 4$.
In this setting, the \textit{Zagier zeta function} is defined as below
\[
 L(z, \delta) := \sum_{d^2\mid \delta}'\frac{1}{d^{2z - 1}}L\left( z, \binom{ \frac{\delta}{d^2}}{\cdot}_K \right).
\]
Here, we use the notation $\binom{ \frac{\delta}{d^2}}{\cdot}_K$ to denote the Kronecker symbol.
The sum is taken only over those $d$ such that $\delta/d^2 \equiv 0, 1\pmod 4$.
This congruence condition is indicated by the ' on top of the summation sign.

The Zagier zeta function has the following two pivotal roles in \cite{AliI}:
\begin{enumerate}[label = (\alph*)]
\item it recovers the values of the finite orbital integrals when evaluated at $z = 1$.
\item an appropriate completion of the Zagier zeta function satisfies a functional equation to which it is possible to apply the approximate functional equation.
\end{enumerate}
This allows us to fix the issues preventing us from performing Poisson summation.

\begin{remark}
For a general number field $F$, Arthur conjectures in \cite[\S3]{artstrat} that the generalization of the Zagier zeta function for GL$(n,F)$ is obtained via the work of Z.~Yun, see \cite{yun2013orbital}.
This is verified by the second named author in \cite{malors21} in the case $n=2$.
We will make use of this construction below.
For more details about the Zagier zeta function over a general number field, we refer the reader to \cite{malors21}.
\end{remark}

\subsection{Multiplicative formula of Langlands} 
\label{subsub: MF of Langlands}
In this section, we define the Zagier zeta function for a totally real number field and prove a functional equation.
As before, let $\chi_{\gamma}$ be the quadratic sign character associated to the extension generated by the regular elliptic element $\gamma$.

\begin{definition}
Let $K$ be a totally real number field\footnote{The definition does not require that $K$ is totally real and holds true for all number fields.} and let $z$ be a complex parameter with $\Re(z) > 1$.
Set
\[
 \cO(z, \gamma) := \Norm_K(S_{\gamma})^z \sum_{\fd\mid S_{\gamma}}\Norm_K(\fd)^{(1 - 2z)} \prod_{\fq\mid \fd'}\left(1 - \frac{\chi_{\gamma}(\fq)}{\Norm_K(\fq)^z}\right), 
\]
where $\fd' := \frac{S_{\gamma}}{\fd}$ for any ideal $\fd$ such that $\fd\mid S_{\gamma}$.
Writing $L(z,\chi_\gamma)$ for the Hecke $L$-function of the character $\chi_\gamma$, define
\[
L(z, \gamma) := L(z, \chi_{\gamma}) \sum_{\fd\mid S_{\gamma}}\Norm_K(\fd)^{(1 - 2z)} \prod_{\fq\mid \fd'}\left(1 - \frac{\chi_{\gamma}(\fq)}{\Norm_K(\fq)^z}\right).
\]
This is the \emph{(generalized) multiplicative formula of Langlands} for $G(K)$.
Next define its completion as
\[
{\Lambda}(z, \gamma) 
:=\Lambda(z, \chi_{\gamma})\cO(z, \gamma).
\]
This is the \emph{(generalized) completed multiplicative formula of Langlands} for $G(K)$.
\end{definition}

\begin{notation}
Set
\[
L_{\nu}(z,\chi_\gamma) = L_\R(z) := \pi^{-z/2}\Gamma\left(\dfrac{z}{2}\right).
\]
\end{notation}
For a totally real field $K$, the completed Dedekind zeta function is defined as
\[
\xi_K(z) := \abs{D_K}^{z/2}L_{\R}(z)^n\zeta_K(z).
\]
Also, the $L$-factors at the archimedean places for $\chi_{\gamma}$ are given by
\[
L_{\nu}(z,\chi_\gamma) = L_{\R}(z + \delta_{\nu, \gamma}) = \pi^{-z/2}\Gamma((z+\delta_{\nu, \gamma})/2)
\]
with $\delta_{\nu, \gamma} = 1$ if $\chi_\gamma$ is ramified at $\nu$ and 0 otherwise.
Thus
\[
\Lambda(z, \chi_{\gamma}) = \abs{D_K}^{z/2}\Norm(\Delta_{\gamma})^{z/2}L(z, \chi_{\gamma})\prod_{\nu \mid \infty}L_{\mathbb{R}}(z + \delta_{\nu, \gamma}).
\]

We first show that $\Lambda(z, \gamma)$ behaves nicely.
More precisely,

\begin{theorem}
\label{functional equation of imprimitive Hecke functions}
The completed multiplicative formula of Langlands ${\Lambda}(z, \gamma)$, admits an analytic continuation to an entire function and satisfies the functional equation
\[
{\Lambda}(1 - z, \gamma) = {\Lambda}(z, \gamma).
\]
\end{theorem}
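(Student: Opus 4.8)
The plan is to reduce the functional equation for $\Lambda(z,\gamma)$ to the (already-known) functional equation of the completed Hecke $L$-function $\Lambda(z,\chi_\gamma)$ together with a purely algebraic identity for the finite Euler-type factor
\[
E(z,\gamma) := \sum_{\fd\mid S_{\gamma}}\Norm_K(\fd)^{1-2z}\prod_{\fq\mid (S_{\gamma}/\fd)}\left(1-\frac{\chi_{\gamma}(\fq)}{\Norm_K(\fq)^{z}}\right),
\]
so that $\Lambda(z,\gamma)=\Norm_K(S_\gamma)^z\,\Lambda(z,\chi_\gamma)\,E(z,\gamma)/\Norm_K(S_\gamma)^z$; more precisely $\Lambda(z,\gamma)=\Lambda(z,\chi_\gamma)\,\Norm_K(S_\gamma)^z E(z,\gamma)$, and I want to show $\Norm_K(S_\gamma)^z E(z,\gamma)$ transforms under $z\mapsto 1-z$ in a way that exactly complements the functional equation of $\Lambda(z,\chi_\gamma)$. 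Since $\Lambda(z,\chi_\gamma)$ is entire and satisfies $\Lambda(z,\chi_\gamma)=\Lambda(1-z,\chi_\gamma)$ (standard for the completion of a quadratic Hecke character, with the relative discriminant $\Delta_\gamma$ folded into the completion as recorded just before the statement), and since $E(z,\gamma)$ is a finite sum of holomorphic functions hence entire, the product $\Lambda(z,\gamma)$ is automatically entire; the only real content is the functional equation.

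First I would make the Euler factor multiplicative: because $S_\gamma$ factors into prime powers and the summand over $\fd\mid S_\gamma$ is multiplicative in $\fd$, we get $\Norm_K(S_\gamma)^z E(z,\gamma)=\prod_{\fq\mid S_\gamma} E_{\fq}(z)$ where, writing $m=\val_{\fq}(S_\gamma)$, $q=\Norm_K(\fq)$, $\chi=\chi_\gamma(\fq)$,
\[
E_{\fq}(z)=q^{mz}\sum_{j=0}^{m} q^{j(1-2z)}\left(1-\frac{\chi}{q^{z}}\right)^{\!m-j}.
\]
The next step is the key algebraic manipulation, in the spirit of Zagier's computation in \cite{zagier2006modular}: one shows by a direct summation (reindex $j\mapsto m-j$, expand the geometric-type sum, and collect terms) that $E_{\fq}(z)$ satisfies the local reciprocity
\[
E_{\fq}(1-z) = q^{m(1-2z)}\,\frac{1-\chi q^{-(1-z)}}{1-\chi q^{-z}}\;E_{\fq}(z)
\]
— or, more cleanly, that $\bigl(1-\chi q^{-z}\bigr)E_{\fq}(z)$ is, up to the explicit power of $q$, invariant under $z\mapsto 1-z$; the factor $\prod_{\fq\mid S_\gamma}\bigl(1-\chi_\gamma(\fq)q^{-z}\bigr)^{-1}$ that this produces is precisely the ratio of the imprimitive $L(z,\chi_\gamma)$ (Euler factors at $\fq\mid S_\gamma$ present) to the primitive one, which is how the conductor of $\chi_\gamma$ inside $\Lambda(z,\chi_\gamma)$ absorbs it. Assembling the local identities over all $\fq\mid S_\gamma$, and using $\prod_{\fq\mid S_\gamma} q^{m(1-2z)} = \Norm_K(S_\gamma)^{1-2z}$, gives a global transformation law for $\Norm_K(S_\gamma)^z E(z,\gamma)$ whose discriminant-power discrepancy is exactly $\Norm_K(\Delta_\gamma)^{z-1/2}$ times what is needed; multiplying by the functional equation $\Lambda(z,\chi_\gamma)=\Lambda(1-z,\chi_\gamma)$ (which carries the $|D_K|$, $\Norm_K(\Delta_\gamma)$ and Gamma factors) then yields $\Lambda(1-z,\gamma)=\Lambda(z,\gamma)$ after the powers of $\Norm_K(S_\gamma)$, $|D_K|$ and $\Norm_K(\Delta_\gamma)$ cancel.

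The main obstacle I anticipate is the bookkeeping in the local identity: one must track simultaneously (i) the $q^{mz}$ prefactor, (ii) the imprimitive-versus-primitive Euler factors $1-\chi q^{-z}$ at the ramified-for-$S_\gamma$ primes, and (iii) the $\Norm_K(\Delta_\gamma)^{z/2}$ sitting inside $\Lambda(z,\chi_\gamma)$, and verify that all three conspire to leave no residual power of $z$-dependent constants — this is where sign conventions and the precise normalization of $\chi_\gamma$ at ramified primes ($\chi_\gamma(\fq)=0$) matter, and where an off-by-one in the exponent would break the symmetry. A clean way to organize it is to first treat the unramified local factors (where $\chi=\pm1$ and the geometric sum telescopes), then handle $\chi=0$ separately (there $E_{\fq}(z)=q^{mz}\sum_{j=0}^m q^{j(1-2z)}$ is a pure geometric sum and the reciprocity is immediate), and finally invoke \cite[Theorem~4]{malors21} as an independent check that the functional equation has the stated form. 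Everything else — entirety, absolute convergence for $\Re(z)>1$ justifying the manipulations, and analytic continuation — follows formally once the finite product is exhibited as a product of entire functions matching the known completion.
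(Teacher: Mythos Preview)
Your overall strategy is correct and matches the paper: reduce to the known functional equation $\Lambda(z,\chi_\gamma)=\Lambda(1-z,\chi_\gamma)$ of the completed Hecke $L$-function, and then show that the remaining finite factor $\cO(z,\gamma)=\Norm_K(S_\gamma)^z E(z,\gamma)$ is itself invariant under $z\mapsto 1-z$. The paper does this globally by expanding $\prod_{\fq\mid\fd'}(1-\chi_\gamma(\fq)\Norm_K(\fq)^{-z})$ via M\"obius, swapping the order of summation over $\fd$ and $\fr$, and recognizing that the inner piece $\Norm_K(\fr')^{z-1/2}\sum_{\fd\mid\fr'}\Norm_K(\fd)^{-(2z-1)}$ is manifestly symmetric under $z\mapsto 1-z$ (reindex $\fd\mapsto\fr'/\fd$).

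However, your local computation has a genuine error. The product $\prod_{\fq\mid\fd'}$ runs over \emph{distinct} primes dividing $\fd'=S_\gamma/\fd$, not with multiplicity, so at a prime $\fq$ with $m=\val_\fq(S_\gamma)$ and $\fd=\fq^j$ the local contribution is $(1-\chi/q^z)$ when $j<m$ and $1$ when $j=m$ --- not $(1-\chi/q^z)^{m-j}$ as you wrote. Your formula already fails to be symmetric at $m=2$, $\chi=\pm 1$: your expression gives $q^{2z}-2\chi q^z+\chi^2+q-\chi q^{1-z}+q^{2-2z}$, which is not invariant under $z\mapsto 1-z$. With the correct local factor one obtains
\[
E_\fq(z)=q^{mz}\Bigl[(1-\chi q^{-z})\sum_{j=0}^{m-1}q^{j(1-2z)}+q^{m(1-2z)}\Bigr],
\]
and a short computation (e.g.\ $m=1$ gives $q^z-\chi+q^{1-z}$) shows $E_\fq(z)=E_\fq(1-z)$ \emph{exactly}, with no residual factor $\frac{1-\chi q^{-(1-z)}}{1-\chi q^{-z}}$. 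Consequently the discussion of imprimitive-versus-primitive Euler factors being absorbed into $\Lambda(z,\chi_\gamma)$ is a phantom correction that should not appear; $\cO(z,\gamma)$ is already symmetric and multiplying by the symmetric $\Lambda(z,\chi_\gamma)$ finishes the proof without further bookkeeping.
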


\begin{proof}
Writing $\mu(\cdot)$ to denote the M{\"o}bius function of an ideal of $\cO_K$, we have
\begin{equation}
\label{Rewriting product as dirichlet sum}
\prod_{\fq\mid \fd'}\left(1 - \frac{\chi_{\gamma}(\fq)}{\Norm_K(\fq)^s}\right) = \sum_{\fr\mid \fd'}\frac{\chi_{\gamma}(\fr)\mu(\fr)}{\Norm_K(\fr)^s}.
\end{equation} 
On the left hand side, the primes are not repeated in the product.
So, we only obtain terms with different prime factors whose sign depends on the number of such factors - this is controlled by the M{\"o}bius function on the right.

Substituting \eqref{Rewriting product as dirichlet sum} into definition of the completed multiplicative formula yields, 
\begin{align*}
{\Lambda}(z, \gamma) &= \Norm_K(S_{\gamma})^z \Lambda(z, \chi_{\gamma}) \sum_{\fd\mid S_{\gamma}}\frac{1}{\Norm_K(\fd)^{2z - 1}} \sum_{\fr\mid \fd'}\frac{\chi_{\gamma}(\fr)\mu(\fr)}{\Norm_K(\fr)^z}\\
&= \Norm_K(S_{\gamma})^z\Lambda(z, \chi_{\gamma}) \sum_{\fd\mid S_{\gamma}, \fr\mid \fd'}\frac{1}{\Norm_K(\fd)^{2z - 1}} \frac{\chi_{\gamma}(\fr)\mu(\fr)}{\Norm_K(\fr)^z}\\
&= \Norm_K(S_{\gamma})^z\Lambda(z, \chi_{\gamma}) \sum_{\fr\mid S_{\gamma}, \fd\mid \fr^\prime}\frac{1}{\Norm_K(\fd)^{2z - 1}} \frac{\chi_{\gamma}(\fr)\mu(\fr)}{\Norm_K(\fr)^z} \qquad \text{ where }\fr'=\frac{S_\gamma}{\fr}\\
&= \Norm_K(S_{\gamma})^z\Lambda(z, \chi_{\gamma}) \sum_{\fr\mid S_{\gamma}}\frac{\chi_{\gamma}(\fr)\mu(\fr)}{\Norm_K(\fr)^z} \sum_{\fd\mid \fr^\prime}\frac{1}{\Norm_K(\fd)^{2z - 1}}\\
&= \Norm_K(S_{\gamma})^z\Lambda(z, \chi_{\gamma}) \sum_{\fr\mid S_{\gamma}}\frac{\chi_{\gamma}(\fr)\mu(\fr)}{\Norm_K(S_{\gamma})^z}\Norm_K(\fr^\prime)^z \sum_{\fd\mid \fr^\prime}\frac{1}{\Norm_K(\fd)^{2z - 1}}\\
&= \Lambda(z, \chi_{\gamma}) \sum_{\fr\mid S_{\gamma}}\frac{\chi_{\gamma}(\fr)\mu(\fr)}{\sqrt{\Norm_K(S_{\gamma})}}\sqrt{\Norm_K(S_{\gamma})}\Norm_K(\fr^\prime)^{z} \sum_{\fd\mid \fr^\prime}\frac{1}{\Norm_K(\fd)^{2z - 1}}\\
&= \Lambda(z, \chi_{\gamma})\Norm_K(S_{\gamma})^{1/2} \sum_{\fr\mid S_{\gamma}}\frac{\chi_{\gamma}(\fr)\mu(\fr)}{\sqrt{\Norm_K(\fr\fr^\prime)}}\Norm_K(\fr^\prime)^{z} \sum_{\fd\mid \fr^\prime}\frac{1}{\Norm_K(\fd)^{2z - 1}}\\
&= \Lambda(z, \chi_{\gamma})\Norm_K(S_{\gamma})^{1/2} \sum_{\fr\mid S_{\gamma}}\frac{\chi_{\gamma}(\fr)\mu(\fr)}{\sqrt{\Norm_K(\fr)}}\Norm_K(\fr^\prime)^{z-1/2} \sum_{\fd\mid \fr^\prime}\frac{1}{\Norm_K(\fd)^{2z - 1}}.
\end{align*}
Shifting our focus to the term
\[
\Norm_K(\fr^\prime)^{z-1/2} \sum_{\fd\mid \fr^\prime}\frac{1}{\Norm_K(\fd)^{2z - 1}},
\]
we see it remains unchanged when we replace $z$ by $1 - z$.
Indeed
\begin{align*}
\Norm_K(\fr^\prime)^{(1 - z) -1/2} \sum_{\fd\mid \fr'}\frac{1}{\Norm_K(\fd)^{2(1 - z) - 1}}
&= \Norm_K(\fr')^{1/2 - z} \sum_{\fd\mid \fr'}\frac{1}{\Norm_K(\fd)^{1 - 2z}}\\
&= \Norm_K(\fr')^{1/2 - z} \sum_{\fd\mid \fr'}\frac{1}{\Norm_K\left(\frac{\fr'}{\fd}\right)^{1 - 2z}}\\
&= \Norm_K(\fr')^{1/2 - z - (1 - 2z)} \sum_{\fd\mid \fr'}\frac{1}{\Norm_K(\fd)^{2z - 1}}\\
&= \Norm_K(\fr')^{z - 1/2} \sum_{\fd\mid \fr'}\frac{1}{\Norm_K(\fd)^{2z - 1}}.
\end{align*}
Hence, when we substitute $1 - z$ for $z$ in the whole expression we obtain
\begin{align*}
{\Lambda}(1 - z, \gamma) &= \Lambda(1 - z, \chi_{\gamma})\Norm_K(S_{\gamma})^{1/2} \sum_{\fr\mid S_{\gamma}}\frac{\chi(\fr)\mu(\fr)}{\sqrt{\Norm_K(\fr)}}\Norm_K(\fr')^{(1 -z)-1/2} \sum_{\fd\mid \fr'}\frac{1}{\Norm_K(\fd)^{2(1 - z) - 1}}\\
&= \Lambda(z, \chi_{\gamma})\Norm_K(S_{\gamma})^{1/2} \sum_{\fr\mid S_{\gamma}}\frac{\chi(\fr)\mu(\fr)}{\sqrt{\Norm_K(\fr)}}\Norm_K(\fr')^{z-1/2} \sum_{\fd\mid \fr'}\frac{1}{\Norm_K(\fd)^{2z - 1}}\\
 &= {\Lambda}(z, {\gamma}).
\end{align*}
This manipulation required using the functional equation for the primitive quadratic Hecke character.
\end{proof}

\begin{remark}
Theorem \ref{functional equation of imprimitive Hecke functions} is an adaptation of \cite[Equations (4), (5), and Lemma~2.1]{SY13}, where they work over $\mathbb{Z}$.
\end{remark}

The following result connects this construction with our work on the regular elliptic part of the trace formula.

\begin{proposition}
\label{prop: reformulation mult formula of Langlands}
With notation as introduced above,
\[
\cO(f_{\fin}, \gamma) = \sp^{-\sk/2}\cO(1, \gamma).
\]
\end{proposition}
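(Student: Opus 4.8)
The plan is to establish the identity by a direct comparison of two formulas that are already in hand. First I would unwind the definition of $\cO(z,\gamma)$ at the point $z=1$. Since
\[
\cO(z, \gamma) = \Norm_K(S_{\gamma})^z \sum_{\fd\mid S_{\gamma}}\Norm_K(\fd)^{1 - 2z} \prod_{\fq\mid \fd'}\left(1 - \frac{\chi_{\gamma}(\fq)}{\Norm_K(\fq)^z}\right), \qquad \fd' = S_\gamma/\fd,
\]
specializing to $z=1$ collapses the power $\Norm_K(\fd)^{1-2z}$ to $\Norm_K(\fd)^{-1}$ and each local factor $1 - \chi_\gamma(\fq)/\Norm_K(\fq)^z$ to $1 - \chi_\gamma(\fq)/\Norm_K(\fq)$, so that
\[
\cO(1, \gamma) = \Norm_K(S_{\gamma}) \sum_{\fd\mid S_{\gamma}}\frac{1}{\Norm_K(\fd)} \prod_{\fq\mid \fd'}\left(1 - \frac{\chi_{\gamma}(\fq)}{\Norm_K(\fq)}\right).
\]

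Next I would place this next to the third expression for the product of finite orbital integrals recorded in Proposition~\ref{Multiplicative Formula Langlands}, namely
\[
\cO(f_{\fin}, \gamma) = \sp^{-\sk/2}\,\Norm_K(S_{\gamma}) \sum_{\fd\mid S_{\gamma}}\frac{1}{\Norm_K(\fd)} \prod_{\fq\mid \fd'}\left(1 - \frac{\chi_{\gamma}(\fq)}{\Norm_K(\fq)}\right),
\]
with the same convention $\fd' = S_\gamma/\fd$. The two right-hand sides agree up to the factor $\sp^{-\sk/2}$, which gives $\cO(f_{\fin}, \gamma) = \sp^{-\sk/2}\cO(1,\gamma)$, as claimed. (Equivalently, one may match $\cO(1,\gamma)$ against the \emph{first} form in Proposition~\ref{Multiplicative Formula Langlands} after the change of summation variable $\fd \mapsto S_\gamma/\fd$, using that $\Norm_K(S_\gamma)\Norm_K(\fd)^{-1} = \Norm_K(S_\gamma/\fd)$ at $z=1$; this turns $\sum_{\fd\mid S_\gamma}\Norm_K(\fd)\prod_{\fq\mid\fd}(1-\chi_\gamma(\fq)/\Norm_K(\fq))$ into the displayed sum.)

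I do not expect any real obstacle: the arithmetic content of the statement is entirely packaged inside Proposition~\ref{Multiplicative Formula Langlands} (which itself rests on Lemma~\ref{behaviour O}, Lemma~\ref{behaviour k}, and the cited results of \cite{malors21}), and all that remains is the observation that the Dirichlet series $\cO(z,\gamma)$ was defined precisely so that its value at $z=1$ reproduces that formula. The only point requiring mild care is bookkeeping -- keeping track of the normalizing constant $\sp^{-\sk/2}$ that is built into the local test function $f_\fp$, and remaining consistent about whether the auxiliary divisor appearing in the product over primes is written as $\fd$ or as $\fd' = S_\gamma/\fd$.
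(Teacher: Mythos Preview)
Your proposal is correct and follows exactly the paper's approach: the paper's proof is the single sentence ``This is a reformulation of Proposition~\ref{Multiplicative Formula Langlands},'' and you have simply spelled out that reformulation by evaluating the defining series $\cO(z,\gamma)$ at $z=1$ and matching it against the third displayed expression in Proposition~\ref{Multiplicative Formula Langlands}.
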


\begin{proof}
This is a reformulation of Proposition~\ref{Multiplicative Formula Langlands}.
\end{proof}

We now define certain quadratic characters that arise naturally in our calculations.

\begin{definition}
\label{average L function definition}
Let $\fd$ be an ideal dividing $S_{\gamma}$.
Define an \emph{imprimitive character} $\chi_{\fd}$, obtained from $\chi_{\gamma}$, by setting $\chi(\fq) =0$ for any $\fq\mid \frac{S_\gamma}{\fd}$.
Associated to this imprimitive quadratic Hecke character is an $L$-function
\[
L(z, \chi_{\fd}) = L(z, \chi_{\gamma}) \prod_{\fq\mid \frac{S_\gamma}{\fd}}\left(1 - \frac{\chi_{\gamma}(\fq)}{\Norm_K(\fq)^z}\right),
\]
where $L(z,\chi_\gamma)$ is the Hecke $L$-function associated to the quadratic character $\chi_{\gamma}$.
\end{definition}

Notice that
\begin{equation}
\label{eq: L(z, gamma) rewrite}
L(z, \gamma) = L(z, \chi_{\gamma})\sum_{\fd \mid S_{\gamma}} \frac{1}{\Norm_K(\fd)^{2z - 1}}\prod_{\fp\mid (S_{\gamma}/\fd)} \left(1 - \frac{\chi_{\gamma}(\fp)}{\Norm_K(\fp)^z}\right) = \sum_{\fd \mid S_{\gamma}} \frac{1}{\Norm_K(\fd)^{2z-1}} L(z, \chi_{\fd}).
\end{equation}
Recall from the expression of $R(f)$ obtained in Theorem~\ref{manipulation} the presence of the factor
\[
L(1, \chi_{\gamma})\left( \sum_{\fd \mid S_{\gamma}} \frac{1}{\Norm_K(\fd)} \prod_{\fp\mid (S_{\gamma}/\fd)}\left(1 - \frac{\chi_{\gamma}(\fp)}{\Norm_K(\fp)}\right)\right),
\]
which is precisely $L(1, \gamma)$.
We now Theorem~\ref{manipulation} as follows.

\begin{proposition}
The regular elliptic part of the trace formula is
\[
R(f) = \abs{D_K}^{1/2} \sp^{-\sk/2} \sum_{u\in\cO_K^*}\sum_{\tau\in L(u)} \theta^{\pm}(\tau, u)L(1, \gamma).
\]
\end{proposition}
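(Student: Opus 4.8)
The plan is simply to observe that the arithmetic factor attached to each pair $(\tau,u)$ in Theorem~\ref{manipulation} is, by definition, the special value $L(1,\gamma)$. First I would recall the (generalized) multiplicative formula of Langlands, $L(z,\gamma) = L(z,\chi_{\gamma})\sum_{\fd\mid S_{\gamma}}\Norm_K(\fd)^{1-2z}\prod_{\fq\mid (S_{\gamma}/\fd)}\bigl(1-\chi_{\gamma}(\fq)\Norm_K(\fq)^{-z}\bigr)$, and specialize to $z=1$: the exponent $1-2z$ becomes $-1$, so $\Norm_K(\fd)^{1-2z}=\Norm_K(\fd)^{-1}$, while the finite product over $\fq\mid (S_{\gamma}/\fd)$ becomes $\prod_{\fq\mid (S_{\gamma}/\fd)}\bigl(1-\chi_{\gamma}(\fq)/\Norm_K(\fq)\bigr)$. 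This is exactly the parenthetical factor $L(1,\chi_{\gamma})\bigl(\sum_{\fd\mid S_{\gamma}}\Norm_K(\fd)^{-1}\prod_{\fp\mid (S_{\gamma}/\fd)}(1-\chi_{\gamma}(\fp)/\Norm_K(\fp))\bigr)$ appearing on the right-hand side of Theorem~\ref{manipulation}. Equivalently, one may invoke identity~\eqref{eq: L(z, gamma) rewrite}, which writes $L(z,\gamma)=\sum_{\fd\mid S_{\gamma}}\Norm_K(\fd)^{1-2z}L(z,\chi_{\fd})$, and evaluate at $z=1$.

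With this identification in hand, the proof reduces to a direct substitution: replacing the bracketed expression in Theorem~\ref{manipulation} by $L(1,\gamma)$ yields $R(f)=\abs{D_K}^{1/2}\sp^{-\sk/2}\sum_{u\in\cO_K^*}\sum_{\tau\in L(u)}\theta^{\pm}(\tau,u)L(1,\gamma)$, as claimed. There is no genuine obstacle here; the statement is a notational repackaging whose purpose is to record, for the application of the approximate functional equation in the next section, that the arithmetic weight of each regular elliptic class is the value at $z=1$ of the $L$-function $L(z,\gamma)$ — the one whose completion $\Lambda(z,\gamma)$ was shown to be entire and self-dual in Theorem~\ref{functional equation of imprimitive Hecke functions}. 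The only point worth a sentence of care is to confirm that rewriting the product over $\fq\mid(S_\gamma/\fd)$ from Theorem~\ref{manipulation} in terms of $L(1,\gamma)$ does not disturb absolute convergence of the double sum over $(u,\tau)$; but this convergence is already established for the regular elliptic part of the trace formula, so no new estimate is needed.
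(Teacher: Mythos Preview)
Your proposal is correct and follows essentially the same approach as the paper: the paragraph preceding the proposition observes that the bracketed arithmetic factor in Theorem~\ref{manipulation} is precisely $L(1,\gamma)$ (via the definition of $L(z,\gamma)$ or equivalently \eqref{eq: L(z, gamma) rewrite} at $z=1$), and the proposition is then a direct substitution. Your write-up is slightly more explicit about the specialization $1-2z\mapsto -1$ and adds a remark on convergence, but the argument is the same notational repackaging.
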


\begin{remark}
For $K = \mathbb{Q}$ the above decomposition becomes 
\[
\sum_{\pm}\sum_{\tau\in L(\pm 1)}
 \theta^{\pm}(\tau)L(1, \tau^2 \pm 4p^\sk),
\] 
which corresponds to \cite[(unnumbered) equation at the start of \S4]{AliI}.
An important deviation from the work of Altu\u{g} is that in order to define $L(z,\tau^2 \pm 4p^\sk)$, he uses a sum which has some congruence condition as opposed to the condition $d\mid S_{\gamma}$.
We will discuss this at length in \S\ref{section: congruence condition}.
 
When $K=\Q$, the sum over $\pm$ comes from the Dirichlet unit theorem (but there is no $\beta$-sum, i.e., sum over fundamental units).
In the general case, we have to deal with the extra $\beta$-sum.
The factor $\sp^{-\sk/2}$ that we have and Altu\u{g} does not is just a manifestation of the slight difference between our interpolation functions.

Another fundamental difference is that $\chi_d$ coincides with the Kronecker symbol which is well-understood and can be easily computed in $\mathbb{Q}$.
For a general number field, calculating the symbols is non-trivial which we explore in \S\ref{subsection: The Normalized Character}.
\end{remark}

\subsection{Statement and Proof of Approximate Functional Equation}

\subsubsection{}
The approximate functional equation requires the use of a smooth cut-off function.
While it is not necessary to specify this function precisely, it is possible for us to do so as in \cite{AliI}.
For example, consider the smooth function
\begin{equation}
\label{cut-off}
F(x) = \frac{1}{2K_0(2)} \int_x^\infty e^{-y - 1/y}\frac{dy}{y} \textrm{ where }x>0,
\end{equation}
where $K_0$ is the zero-th Bessel function of the second kind.
We know from \cite[p.~257]{IK04} that $0< F(x) < \frac{e^{-x}}{2K_0(2)}$.

Define its Mellin transform,
\[
\widetilde{F}(z) := \int_0^\infty F(x)x^z\frac{dx}{x}.
\]
Moreover, the following bound will be required.

\begin{lemma}
\label{lemma:boundforFtilde}
The Mellin transform $\widetilde{F}(z)$ is meromorphic with a unique simple pole at $z = 0$ with residue $1$.
Furthermore, it is odd and is uniformly bounded for $z = \sigma + it$, with the uniform bound
\[
\widetilde{F}(z) \ll \abs{z}^{\abs{\sigma}-1}e^{-\frac{\pi\abs{t}}{2}}.
\]
\end{lemma}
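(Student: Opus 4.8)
The plan is to identify $\widetilde{F}$ with a modified Bessel function of order $z$ and to read off all three assertions from that single identity. First I would note that for $\Re(z)>0$ the integral $\widetilde{F}(z)=\int_0^\infty F(x)x^{z-1}\,dx$ converges absolutely, and integrate by parts: since $F(x)\ll e^{-x}$ as $x\to\infty$ while $F(0)=\frac{1}{2K_0(2)}\int_0^\infty e^{-y-1/y}\frac{dy}{y}=1$ is finite, the boundary terms vanish for $\Re(z)>0$, and from $F'(x)=-\frac{1}{2K_0(2)}e^{-x-1/x}x^{-1}$ one gets
\[
\widetilde{F}(z)=\frac{1}{2K_0(2)\,z}\int_0^\infty e^{-x-1/x}x^{z-1}\,dx=\frac{K_z(2)}{z\,K_0(2)},\qquad \Re(z)>0,
\]
where I have used the classical integral representation $\int_0^\infty e^{-x-1/x}x^{z-1}\,dx=2K_z(2)$ of the modified Bessel function.

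Next, since for a fixed positive argument $z\mapsto K_z(2)$ extends to an \emph{entire} function of the order (for instance from $K_z(2)=\int_0^\infty e^{-2\cosh t}\cosh(zt)\,dt$, which converges locally uniformly in $z$), the displayed identity furnishes the meromorphic continuation of $\widetilde{F}$ to all of $\C$, with the only possible pole at $z=0$. As $K_z(2)$ is even in $z$, one has $K_z(2)=K_0(2)+O(z^2)$ near the origin, so the pole is simple with residue $K_0(2)/K_0(2)=1$. Oddness is then immediate: $K_{-z}(2)=K_z(2)$ forces $\widetilde{F}(-z)=-\widetilde{F}(z)$.

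It remains to establish the uniform bound, which I expect to be the main obstacle. Writing $x=e^{w}$ gives $K_z(2)=\tfrac12\int_{\R}e^{-2\cosh w+zw}\,dw$; the integrand is entire and, for $\abs{\vartheta}<\pi/2$, decays as $\Re(w)\to\pm\infty$ on the line $\Im(w)=\vartheta$, so I would shift the contour to $\Im(w)=\vartheta_0\,\sgn(t)$ with $0<\vartheta_0<\pi/2$. On that line $\Re(-2\cosh w+zw)=-2\cos\vartheta_0\cosh v+\sigma v-\abs{t}\vartheta_0$, which yields $\abs{K_z(2)}\le e^{-\abs{t}\vartheta_0}\,K_\sigma\!\bigl(2\cos\vartheta_0\bigr)$. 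Choosing $\vartheta_0=\pi/2-1/\abs{t}$ for $\abs{t}$ large, so that $e^{-\abs{t}\vartheta_0}\asymp e^{-\pi\abs{t}/2}$ and $2\cos\vartheta_0\asymp\abs{t}^{-1}$, and invoking the standard small-argument asymptotics of $K_\sigma$ (or the large-order expansion of $K_\nu(2)$ directly), one bounds $K_\sigma(2\cos\vartheta_0)\ll\abs{z}^{\abs{\sigma}}$ uniformly for $\sigma$ in a fixed compact interval; dividing by $z$ then gives $\widetilde{F}(z)\ll\abs{z}^{\abs{\sigma}-1}e^{-\pi\abs{t}/2}$, the bounded-$\abs{t}$ range being absorbed into the implied constant. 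The delicate point is exactly this: extracting the sharp exponential rate $e^{-\pi\abs{t}/2}$ together with the correct polynomial factor uniformly across a vertical strip — morally the reflection, through the Bessel asymptotics, of the exponential decay of $\Gamma$ on vertical lines — so the contour shift (or an equivalent saddle-point analysis) has to be carried out with some care. Everything else follows formally from the identity $\widetilde{F}(z)=K_z(2)/(zK_0(2))$.
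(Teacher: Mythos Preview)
Your argument is correct and is in fact precisely the computation behind the paper's citation: the paper does not give a self-contained proof but simply invokes \cite[pp.~257--258]{IK04}, where the identity $\widetilde{F}(z)=K_z(2)/(zK_0(2))$ and the resulting decay estimate are established exactly as you outline. So you have not taken a different route so much as supplied the details of the reference the paper defers to.
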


\begin{proof}
This follows from the properties of $F_1$ in \cite[pp.~257--258]{IK04}.
In particular we note that the residue of $\widetilde{F}_1(z)$ at $z=0$ is $1$, and the bounds \cite[(10.57)]{IK04} also hold for $\widetilde{F}$.
\end{proof}

\subsubsection{}
We are now in a position to state and prove the \emph{Approximate Functional Equation}.
The key idea is to apply contour integration to the right integral.

As in \S\ref{subsub: MF of Langlands}, we need to work with the $L$-factor at the infinite places which are defined in terms of local Gamma factors (see, \cite[Lecture 2]{Rohrlich} or \cite[Equation (5.3)]{IK04}).

\begin{notation}
Let $z$ be a complex number and define 
\[
L_{\infty}(z,\chi_\gamma):= \prod_{\nu \mid \infty}L_{\mathbb{R}}(z + \delta_{\nu, \gamma})
\]
where $\delta_{\nu, \gamma} =1$ if $\chi_\gamma$ is ramified at $\nu$.
\end{notation}

\begin{theorem}[Approximate Functional Equation]
\label{theorem:approximate functional equation}
Let $\gamma$ be a regular elliptic element and let $\chi_{\gamma}$ denote the quadratic sign character associated the quadratic extension $L/K$ generated by $\gamma$.
Let $F$ be a smooth function as defined above.
For any $y\in\R$, further define\footnote{In this definition (and later on in the proof), note that $u$ is a choice of variable and is not to be confused with the unit $u$, which does not appear in this section.}
\begin{equation}
\label{defi: H-gamma}
H_{\gamma}(z, y) := \frac{1}{2\pi i}\int_{\Re(u) = 1} y^{-u}\widetilde{F}(u)\frac{L_{\infty}(1 - z + u, \chi_{\gamma})}{L_{\infty}(z - u, \chi_{\gamma})}du.
\end{equation}
Let $A$ be any real positive number.
Then
\begin{tiny}
\begin{align*}
L(z, {\gamma}) &= \sum_{\fd\mid S_{\gamma}}\frac{1}{\Norm_K\left(\fd\right)^{2z - 1}}\sum_{\fa} \frac{\chi_{\fd}{(\fa)}}{\Norm_K\left(\fa\right)^{z}}F\left(\frac{\Norm_K\left(\fd\right)^2 \Norm_K(\fa)}{A}\right)\\
&+ \left(\Norm_K\left(S_{\gamma}\right)\abs{D_K}^{1/2}\Norm_K(\Delta_{\gamma})^{1/2}\right)^{1 - 2z}
\sum_{\fd\mid S_{\gamma}}\frac{1}{\Norm_K\left(\fd\right)^{1 - 2z}}\sum_{\fa}\frac{\chi_{\fd}\left(\fa\right)}{\Norm_K\left(\fa\right)^{1 - z}}H_{\gamma}\left(z, \frac{\Norm_K\left(\fd\right)^{2}\Norm_K\left(\fa\right)A}{\Norm_K\left(S_{\gamma}\right)^{2}\abs{D_K}\Norm_K(\Delta_{\gamma})}\right),
\end{align*}
\end{tiny}
where the sum over $\fa$ runs over all ideals of $\cO_K$.
\end{theorem}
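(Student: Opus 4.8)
The strategy is the standard derivation of an approximate functional equation via Mellin inversion and contour shifting, adapted to our completed $L$-function $\Lambda(z,\gamma)$. First I would start from the integral representation
\[
\frac{1}{2\pi i}\int_{\Re(u)=2} L(z+u,\gamma)\,\widetilde F(u)\,A^{u}\,\frac{du}{u'}
\]
where the notation is loose; more precisely, using that $\widetilde F$ has a simple pole at $u=0$ with residue $1$ (Lemma~\ref{lemma:boundforFtilde}) and is rapidly decreasing in vertical strips, I would consider
\[
I:=\frac{1}{2\pi i}\int_{\Re(u)=\sigma_0} L(z+u,\gamma)\,\widetilde F(u)\,A^{u}\,du
\]
for $\sigma_0$ large enough that the Dirichlet series for $L(z+u,\gamma)$, rewritten via \eqref{eq: L(z, gamma) rewrite} as $\sum_{\fd\mid S_\gamma}\Norm_K(\fd)^{1-2(z+u)}L(z+u,\chi_{\fd})$ and then expanded into the absolutely convergent double Dirichlet series $\sum_{\fd\mid S_\gamma}\sum_{\fa}\Norm_K(\fd)^{1-2(z+u)}\chi_{\fd}(\fa)\Norm_K(\fa)^{-(z+u)}$, converges absolutely. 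Interchanging sum and integral (justified by absolute convergence and the exponential decay of $\widetilde F$) and recognizing the resulting one-dimensional Mellin integral as $F$ evaluated at the appropriate argument yields exactly the first sum in the statement:
\[
I=\sum_{\fd\mid S_{\gamma}}\frac{1}{\Norm_K(\fd)^{2z-1}}\sum_{\fa}\frac{\chi_{\fd}(\fa)}{\Norm_K(\fa)^{z}}\,F\!\left(\frac{\Norm_K(\fd)^{2}\Norm_K(\fa)}{A}\right).
\]

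Next I would shift the contour from $\Re(u)=\sigma_0$ to $\Re(u)=-\sigma_0$ (or to a symmetric line to the left of $\Re(u)=1-z$), picking up residues. The only pole crossed inside the critical strip is the simple pole of $\widetilde F(u)$ at $u=0$, whose residue is $L(z,\gamma)\cdot A^{0}=L(z,\gamma)$ by Lemma~\ref{lemma:boundforFtilde}; here one uses that $L(z,\gamma)$ is holomorphic (which follows from Theorem~\ref{functional equation of imprimitive Hecke functions}, since $\Lambda(z,\gamma)$ is entire and the archimedean $L$-factors are nonvanishing). The contour at infinity contributes nothing because of the uniform bound $\widetilde F(u)\ll |u|^{|\sigma|-1}e^{-\pi|t|/2}$ combined with polynomial control of $L(z+u,\gamma)$ in vertical strips (convexity / Phragmén–Lindelöf applied to $\Lambda$). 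Thus
\[
I=L(z,\gamma)+\frac{1}{2\pi i}\int_{\Re(u)=-\sigma_0} L(z+u,\gamma)\,\widetilde F(u)\,A^{u}\,du.
\]
On the shifted line I would apply the functional equation $\Lambda(z+u,\gamma)=\Lambda(1-z-u,\gamma)$ from Theorem~\ref{functional equation of imprimitive Hecke functions}, i.e.
\[
L(z+u,\gamma)=\bigl(\Norm_K(S_\gamma)|D_K|^{1/2}\Norm_K(\Delta_\gamma)^{1/2}\bigr)^{1-2(z+u)}\frac{L_\infty(1-z-u,\chi_\gamma)}{L_\infty(z+u,\chi_\gamma)}\,L(1-z-u,\gamma),
\]
reading off the completion $\Lambda(z,\gamma)=|D_K|^{z/2}\Norm_K(\Delta_\gamma)^{z/2}\Norm_K(S_\gamma)^{?}L_\infty(z,\chi_\gamma)L(z,\gamma)$ carefully to get the exponent of the conductor-type factor right (this is where I must be most careful with the $\Norm_K(S_\gamma)$ bookkeeping, since $\Lambda(z,\gamma)$ involves $\cO(z,\gamma)=\Norm_K(S_\gamma)^z(\cdots)$ whereas $L(z,\gamma)$ does not). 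After the substitution $u\mapsto -u$ in the shifted integral and expanding $L(1-z-u,\gamma)$ again into its absolutely convergent double Dirichlet series in the variable $1-z-u$, I would once more interchange summation and integration and recognize the $u$-integral as precisely $H_\gamma(z,\cdot)$ from \eqref{defi: H-gamma}, with argument $\Norm_K(\fd)^2\Norm_K(\fa)A/(\Norm_K(S_\gamma)^2|D_K|\Norm_K(\Delta_\gamma))$. Collecting the two pieces and solving for $L(z,\gamma)$ gives the claimed identity for every $A>0$.

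\textbf{Main obstacle.} The analytic content — Mellin inversion, the residue computation, the decay estimates closing the contour — is routine given Lemma~\ref{lemma:boundforFtilde} and standard convexity bounds for Hecke $L$-functions. The genuinely delicate point is the \emph{exact normalization}: tracking the powers of $\Norm_K(S_\gamma)$, $|D_K|$, and $\Norm_K(\Delta_\gamma)$ through the functional equation $\Lambda(1-z,\gamma)=\Lambda(z,\gamma)$ so that the conductor factor $(\Norm_K(S_\gamma)|D_K|^{1/2}\Norm_K(\Delta_\gamma)^{1/2})^{1-2z}$ and the argument of $H_\gamma$ come out precisely as stated. The subtlety is that $\Lambda(z,\gamma)$ is built from $\cO(z,\gamma)$ (which carries a $\Norm_K(S_\gamma)^z$) rather than from $L(z,\gamma)$ directly, so passing between the completed object and $L(z,\gamma)$ introduces compensating powers of $\Norm_K(S_\gamma)$ that must be reconciled with the $\Norm_K(\fd)^{1-2z}$ weights already present in \eqref{eq: L(z, gamma) rewrite}. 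A secondary technical point is justifying the interchange of the (double) infinite sum with the vertical integral on the \emph{left} contour, where individual Dirichlet series no longer converge absolutely termwise in $z$ alone; this is handled by performing the shift on the region $\Re(z)>1$ first, where everything converges, and then invoking analytic continuation in $z$ of both sides of the final identity.
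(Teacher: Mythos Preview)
Your proposal is correct and follows essentially the same route as the paper: define $I$ as the Mellin-type integral, expand the Dirichlet series to get the first sum via Mellin inversion, shift the contour past the pole of $\widetilde F$ at $u=0$ to extract $L(z,\gamma)$, then on the new contour substitute $u\mapsto -u$, apply the functional equation of Theorem~\ref{functional equation of imprimitive Hecke functions}, re-expand, and identify $H_\gamma$. The only minor difference is that the paper sidesteps your secondary concern about interchanging sum and integral on the left contour by performing the substitution $u\mapsto -u$ \emph{before} expanding, so that $L(1-z+u,\gamma)$ is evaluated with $\Re(1-z+u)$ large and its Dirichlet series is again absolutely convergent---no appeal to analytic continuation in $z$ is needed.
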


\begin{proof}
Fix $z\in\C$ and a positive real number $\sigma$ such that $\sigma + \Re(z) > 1$ and $\Re(z) - \sigma < 0$.
Define the integral
\begin{equation}
\label{define I}
I := \frac{1}{2\pi i} \int_{\Re(u) = \sigma}{L}(z + u, \gamma)\widetilde{F}(u)A^udu.
\end{equation}
By the choice of $\sigma$, this integral is absolutely convergent.
By equation \eqref{eq: L(z, gamma) rewrite},
\begin{align*}
 {L}(z + u, \gamma) 
 &= \sum_{\fd\mid S_{\gamma}}\frac{1}{\Norm_K(\fd)^{2z + 2u - 1}}L(z + u, \chi_{\fd})\\
 &= \sum_{\fd\mid S_{\gamma}}\frac{1}{\Norm_K(\fd)^{2z + 2u - 1}} \sum_{\fa}\frac{\chi_{\fd}(\fa)}{\Norm_K(\fa)^{z+u}}.
\end{align*}
The last equality follows since we are in the half-plane of absolute convergence of the Dirichlet series for each $\chi_{\fd}$.
Substituting this in \eqref{define I} gives
\begin{align*}
I &= \frac{1}{2\pi i} \int_{\Re(u) = \sigma}{L}(z + u, \gamma)\widetilde{F}(u)A^udu\\
&= \frac{1}{2\pi i} \int_{\Re(u) = \sigma}\left( \sum_{\fd\mid S_{\gamma}}\frac{1}{\Norm_K(\fd)^{2z + 2u - 1}} \sum_{\fa}\frac{\chi_{\fd}(\fa)}{\Norm_K(\fa)^{z + u}}\right)\widetilde{F}(u)A^udu\\
&= \frac{1}{2\pi i} \int_{\Re(u) = \sigma} \sum_{\fd\mid S_{\gamma}} \sum_{\fa}\frac{1}{\Norm_K(\fd)^{2z + 2u - 1}}\frac{\chi_{\fd}(\fa)}{\Norm_K(\fa)^{z + u}}\widetilde{F}(u)A^udu\\
&= \frac{1}{2\pi i} \sum_{\fd\mid S_{\gamma}} \sum_{\fa} \int_{\Re(u) = \sigma}\frac{1}{\Norm_K(\fd)^{2z + 2u - 1}}\frac{\chi_{\fd}(\fa)}{\Norm_K(\fa)^{z + u}}\widetilde{F}(u)A^udu\\
&= \frac{1}{2\pi i} \sum_{\fd\mid S_{\gamma}}\frac{1}{\Norm_K(\fd)^{2z - 1}} \sum_{\fa} \frac{\chi_{\fd}{(\fa)}}{\Norm_K(\fa)^{z}} \int_{\Re(u) = \sigma}\frac{1}{\Norm_K(\fd)^{2u}}\frac{1}{\Norm_K(\fa)^{u}}\widetilde{F}(u)A^udu\\
&= \frac{1}{2\pi i} \sum_{\fd\mid S_{\gamma}}\frac{1}{\Norm_K(\fd)^{2z - 1}} \sum_{\fa} \frac{\chi_{\fd}{(\fa)}}{\Norm_K(\fa)^{z}} \int_{\Re(u) = \sigma}\Norm_K(\fd)^{-2u}\Norm_K(\fa)^{-u}\widetilde{F}(u)A^udu.
\end{align*}
It is possible to interchange the order of integration as the integrand is absolutely convergent.
Using the Mellin inversion formula for the last integral gives
\[
 \frac{1}{2\pi i} \int_{\Re(u) = \sigma}\Norm_K(\fd)^{-2u}\Norm_K(\fa)^{-u}\widetilde{F}(u)A^udu = F\left(\frac{\Norm_K(\fd)^2 \Norm_K(\fa)}{A}\right).
\]
Hence,
\[
I = \sum_{\fd\mid S_{\gamma}}\frac{1}{\Norm_K(\fd)^{2z - 1}} \sum_{\fa} \frac{\chi_{\fd}{(\fa)}}{\Norm_K(\fa)^{z}}F\left(\frac{\Norm_K(\fd)^2 \Norm_K(\fa)}{A}\right).
\]
Shifting the contour from $\Re(u) = \sigma$ to $\Re(u) = \sigma'$ for any $\sigma' < 0$, we get a residue from the pole of $\widetilde{F}(u)$ at $u = 0$.
This pole is simple, so the residue is
$ \lim_{u \rightarrow 0} u \widetilde{F}(u)$.
We deduce the equality
\[
 I = L(z, \gamma) + \frac{1}{2\pi i} \int_{\Re(u) = \sigma'}{L}(z + u, \gamma)\widetilde{F}(u)A^udu,
\]
or equivalently, writing $I'$ to denote the integral running over $\sigma'$ instead of $\sigma$, we have that
\[
 {L}(z, \gamma) = I - I'.
\]

Next we evaluate $I'$.
Making the change of variables $u \rightarrow -u$ and using that $\widetilde{F}$ is odd
\begin{align*}
 I' &= -\frac{1}{2\pi i} \int_{\Re(u) = \sigma'}{L}(z - u, \gamma)\widetilde{F}(-u)A^{-u}du\\
 &= -\frac{1}{2\pi i} \int_{\Re(u) = \sigma'}{L}(z - u, \gamma)\widetilde{F}(u)A^{-u}du.
\end{align*}
Evaluating ${L}(z,\gamma)$ at $1 - (z - u)$, a point in the half plane of absolute convergence, we obtain the Dirichlet series

\[
 {L}(1 - z + u, \gamma) = \sum_{\fd\mid S_{\gamma}}\frac{1}{\Norm_K(\fd)^{1 - 2z + 2u}}L(1 - z + u, \chi_{\fd}).
\]

Substituting this into $I'$ and using the functional equation of the completed Zagier zeta function,
\begin{tiny}
\begin{align*}
 -I' &= \frac{1}{2\pi i} \int_{\Re(u) = \sigma'}{L}(z - u, \gamma)\widetilde{F}(u)A^{-u}du.\\
 &= \frac{1}{2\pi i} \int_{\Re(u) = \sigma'}\Norm_K(S_{\gamma})^{1 - 2(z - u)}\abs{D_K}^{(1 - 2(z - u))/2}\Norm_K(\Delta_{\gamma})^{(1 - 2(z - u))/2} \\ 
 & \hspace{3in}\times\frac{L_{\infty}(1 - z + u, \chi_\gamma)}{L_{\infty}(z - u, \chi_\gamma)}{L}(1 - z + u, \gamma)
 \widetilde{F}(u)A^{-u}du\\
 &= \frac{1}{2\pi i} \int_{\Re(u) = \sigma'}\Norm_K(S_{\gamma})^{1 - 2(z - u)}\abs{D_K}^{(1 - 2(z - u))/2}\Norm_K(\Delta_{\gamma})^{(1 - 2(z - u))/2}\frac{L_{\infty}(1 - z + u, \chi_\gamma)}{L_{\infty}(z - u, \chi_\gamma)}\\
 & \hspace{3in}\times\widetilde{F}(u)A^{-u} \sum_{\fd\mid S_{\gamma}}\frac{1}{\Norm_K(\fd)^{1 - 2z + 2u}} \sum_{\fa}\frac{\chi_{\fd}(\fa)}{\Norm_K(\fa)^{1 - z + u}}du.\\
 &= \frac{1}{2\pi i} \int_{\Re(u) = \sigma'} \sum_{\fd\mid S_{\gamma}} \sum_{\fa}\frac{\Norm_K(S_{\gamma})^{1 - 2(z - u)}\abs{D_K}^{(1 - 2(z - u))/2}\Norm_K(\Delta_{\gamma})^{(1 - 2(z - u))/2} \widetilde{F}(u)A^{-u}}{\Norm_K(\fd)^{1 - 2z + 2u}}\\
 & \hspace{3in}\times \frac{L_{\infty}(1 - z + u, \chi_\gamma)}{L_{\infty}(z - u, \chi_\gamma)}\frac{\chi_{\fd}(\fa)}{\Norm_K(\fa)^{1 - z + u}}du.
\end{align*} 
\end{tiny}
As the Dirichlet series converges absolutely and uniformly in a half-plane away from the abscissa of convergence, we may interchange the order of integration.
Shifting the integral to $\sigma = 1$ since there are no poles in between, 
\begin{tiny}
\begin{align*}
 -I' &= \frac{1}{2\pi i} \sum_{\fd\mid S_{\gamma}} \sum_{\fa} \int_{\Re(u) = \sigma'}\frac{\Norm_K(S_{\gamma})^{1 - 2(z - u)}\abs{D_K}^{(1 - 2(z - u))/2}\Norm_K(\Delta_{\gamma})^{(1 - 2(z - u))/2}\widetilde{F}(u)A^{-u}}{\Norm_K(\fd)^{1 - 2z + 2u}}\\
 & \hspace{3in}\times \frac{L_{\infty}(1 - z + u, \chi_\gamma)}{L_{\infty}(z - u, \chi_\gamma)}\frac{\chi_{\fd}(\fa)}{\Norm_K(\fa)^{1 - z + u}}du\\
 &= \frac{\left(\Norm_K\left(S_{\gamma}\right)\abs{D_K}^{1/2}\Norm_K(\Delta_{\gamma})^{1/2}\right)^{1 - 2z}}{2\pi i}
 \sum_{\fd\mid S_{\gamma}}\frac{1}{\Norm_K\left(\fd\right)^{1 - 2z}} \sum_{\fa}\frac{\chi_{\fd}(\fa)}{\Norm_K(\fa)^{1 - z}}\\
 & \hspace{2in}\times \int_{\Re(u) = \sigma'}\frac{\Norm_K(S_{\gamma})^{2u}\abs{D_K}^{u}\Norm_K(\Delta_{\gamma})^u\widetilde{F}(u)A^{-u}}{\Norm_K(\fd)^{2u}\Norm_K(\fa)^{u}}\frac{L_{\infty}(1 - z + u, \chi_\gamma)}{L_{\infty}(z - u, \chi_\gamma)}du.
\end{align*}
\end{tiny}
Collecting terms inside the last integral, it is possible to write it as
\[
 \int_{\Re(u) = 1}\left(\frac{\Norm_K\left(\fd\right)^{2}\Norm_K\left(\fa\right)A}{\Norm_K\left(S_{\gamma}\right)^{2}\abs{D_K}\Norm_K(\Delta_{\gamma})}\right)^{-u}\frac{L_{\infty}\left(1 - z + u, \chi_\gamma\right)}{L_{\infty}\left(z - u, \chi_\gamma\right)}\widetilde{F}\left(u\right)du.
\]
This motivates the definition in the theorem 
\[
 H_{\gamma}(z, y) := \frac{1}{2\pi i}\int_{\Re(u) = 1} y^{-u}\widetilde{F}(u)\frac{L_{\infty}(1 - z + u, \chi_\gamma)}{L_{\infty}(z - u, \chi_\gamma)}du.
\]
Therefore, we have proven
\begin{tiny}
\[
 -I' = \left(\Norm_K\left(S_{\gamma}\right)\abs{D_K}^{1/2}\Norm_K(\Delta_{\gamma})^{1/2}\right)^{1 - 2z}
 \sum_{\fd\mid S_{\gamma}}\frac{1}{\Norm_K\left(\fd\right)^{1 - 2z}} \sum_{\fa}\frac{\chi_{\fd}\left(\fa\right)}{\Norm_K\left(\fa\right)^{1 - z}}H_{\gamma}\left(z, \frac{\Norm_K\left(\fd\right)^{2}\Norm_K\left(\fa\right)A}{\Norm_K\left(S_{\gamma}\right)^{2}\abs{D_K}\Norm_K(\Delta_{\gamma})}\right).
\]
\end{tiny}
Hence, putting together our evaluations of $I$ and $I'$ we get
\begin{tiny}
\begin{align*}
 {L}(z, \gamma) &= \sum_{\fd\mid S_{\gamma}}\frac{1}{\Norm_K\left(\fd\right)^{2z - 1}}\sum_{\fa} \frac{\chi_{\fd}{(\fa)}}{\Norm_K\left(\fa\right)^{z}}F\left(\frac{\Norm_K\left(\fd\right)^2 \Norm_K(\fa)}{A}\right)\\
 &+ \left(\Norm_K\left(S_{\gamma}\right)\abs{D_K}^{1/2}\Norm_K(\Delta_{\gamma})^{1/2}\right)^{1 - 2z}
 \sum_{\fd\mid S_{\gamma}}\frac{1}{\Norm_K\left(\fd\right)^{1 - 2z}}\sum_{\fa}\frac{\chi_{\fd}\left(\fa\right)}{\Norm_K\left(\fa\right)^{1 - z}}H_{\gamma}\left(z, \frac{\Norm_K\left(\fd\right)^{2}\Norm_K\left(\fa\right)A}{\Norm_K\left(S_{\gamma}\right)^{2}\abs{D_K}\Norm_K(\Delta_{\gamma})}\right).
\end{align*}
\end{tiny}
\end{proof}

\subsection{The function $H_{(x, y)}$}

The following result connects the archimedean behaviour of the quadratic character $\chi_{\gamma}$ with the regions we defined in \S\ref{subsec: A partition of R 2n-1}.

\begin{theorem}
\label{thm: reinterpretation of C}
Let $\gamma\in\GL(2, K)$ be a regular elliptic matrix and $\nu$ be a real place of $K$.
Then, $\chi_{\gamma}$ is unramified at $\nu$ if and only if $(r_{\nu}, N_{\nu}) \in S_{\spl}$.
\end{theorem}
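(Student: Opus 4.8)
The plan is to reduce the claim to an elementary sign computation, using the dictionary: ``$\chi_\gamma$ unramified at the real place $\nu$'' should mean ``$L=K(\gamma)$ splits at $\nu$'', which should mean ``$P_\gamma$ splits over $K_\nu=\R$'', which is detected by the sign of the discriminant of $P_\gamma$ pushed through the embedding $\sigma_\nu$. I would organize the argument as a chain of three equivalences.

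First, I would unwind the character. Since $\chi_\gamma$ is the quadratic Hecke character of $K$ attached to the quadratic extension $L/K$, its local component at $\nu$ is the quadratic character of $K_\nu^{\times}=\R^{\times}$ attached to the \'etale $\R$-algebra $L\otimes_K K_\nu$. A quadratic $\R$-algebra is either $\R\times\R$ or $\C$, and correspondingly the character of $\R^{\times}$ is either trivial or $\sgn$; thus $\chi_\gamma$ is unramified at $\nu$ --- i.e. $\delta_{\nu,\gamma}=0$ in the archimedean $L$-factors fixed earlier --- if and only if $L\otimes_K K_\nu\cong\R\times\R$, and it is ramified if and only if $L\otimes_K K_\nu\cong\C$.

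Second, I would detect which case occurs by the discriminant. Writing $L=K[X]/(P_\gamma(X))$ with $P_\gamma(X)=X^2-\tau X+\det(\gamma)$, we have $L\otimes_K K_\nu\cong K_\nu[X]/(\overline{P_\gamma})$, where $\overline{P_\gamma}$ is the image of $P_\gamma$ under $\sigma_\nu$; this algebra is $\R\times\R$ or $\C$ according as the discriminant $\sigma_\nu(\tau)^2-4\sigma_\nu(\det(\gamma))=\sigma_\nu(\partial)$ is positive or negative, where $\partial=\tau^2-4\det(\gamma)$. Because $\gamma$ is regular elliptic, $P_\gamma$ is irreducible, hence separable, so $\partial\neq 0$ in $K$ and $\sigma_\nu(\partial)\neq 0$ --- there is no boundary case. (Equivalently, from $\partial=(\lambda_1-\lambda_2)^2$ in $L$ one sees that a real place above $\nu$ forces $\sigma_\nu(\partial)>0$ and a complex place forces $\sigma_\nu(\partial)<0$.)

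Third, I would match this with the regions of \S\ref{subsec: A partition of R 2n-1}. By the definition of $\ch$ and $\CH$, the coordinates of $\gamma$ at $\nu$ are $r_\nu=\sigma_\nu(\tau)$ and $N_\nu=4\sigma_\nu(\det(\gamma))$, so $r_\nu^2-N_\nu=\sigma_\nu(\partial)$; hence $\sigma_\nu(\partial)>0$ exactly when $(r_\nu,N_\nu)$ lies strictly outside the parabola $N=r^2$. Since $N_\nu\neq 0$ and $r_\nu^2-N_\nu\neq 0$, the point avoids $R_0$ and avoids the interior $S_{\el}=R_5$, so it lies in $S_{\spl}$; conversely a point of $S_{\spl}$ is outside the parabola, giving $\sigma_\nu(\partial)>0$. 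Chaining the three equivalences proves the theorem. I do not anticipate a genuine obstacle here: the only thing to watch is the bookkeeping of conventions --- that ``unramified at $\nu$'' is precisely the condition $\delta_{\nu,\gamma}=0$ appearing in the local $L$-factors, and that a point off the parabola and off the line $N=0$ really does lie in $S_{\spl}$, so that the coordinate axes are absorbed into the measure-zero set $R_0$.
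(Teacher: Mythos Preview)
Your proposal is correct and reaches the same endpoint as the paper --- both reduce to the sign of $\sigma_\nu(\partial)=r_\nu^2-N_\nu$ --- but your bridge from ``$\chi_\gamma$ unramified at $\nu$'' to ``$L$ splits at $\nu$'' is more direct than the paper's. The paper passes through the global id\`elic Artin map and Artin reciprocity: it writes $\chi_\nu(-1)=\rho_\gamma(c_\nu)$, invokes $\ker(\rho_\gamma)=K^*\Norm_{K_\gamma/K}(K_\gamma^*)$, and then argues that $c_\nu$ lies in this norm group only if the local extension at $\nu$ is $\R/\R$ rather than $\C/\R$. You instead identify the local component $\chi_{\gamma,\nu}$ directly as the quadratic character of $\R^\times$ attached to the \'etale algebra $L\otimes_K K_\nu$, which is trivial or $\sgn$ according as that algebra is $\R\times\R$ or $\C$. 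This is the standard local description of a quadratic Hecke character and avoids any appeal to global reciprocity; it buys you a shorter and more transparent argument, at the cost of assuming the reader knows this local dictionary. The paper's detour through Artin reciprocity is not needed for the statement at hand, though it is consistent with the authors' later remark that reciprocity laws surface throughout their manipulations.
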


\begin{notation}
In this proof we write $\chi$ in place of $\chi_{\gamma}$ for ease of notation.
\end{notation}

\begin{proof}
Let $\nu$ denote a real place of $K$ and $K_{\nu} = \R$ be the localization.
Let 
\[
\rho_{\gamma}: \mathbb{I}_{K}\longrightarrow \{1, -1\}
\]
be the id\`elic Artin map associated to the quadratic extension $K_{\gamma}/K$.
This may also be viewed as an id\`elic Hecke character associated to $\chi$.
Thus, for the real place $\nu$ we have \[
\chi_{\nu}(r) = \rho_{\gamma}(1,\ldots, r,\ldots, 1),
\]
where $(1, \ldots , r,\ldots, 1)$ is the id\`ele with $r$ in the $\nu$-th entry and $1$ everywhere else.
We write $c_{\nu}$ to denote the element $(1, \ldots, -1, \ldots, 1)$ where $-1$ is the $\nu$-th entry.

Recall that 
\begin{align*}
\chi \textrm{ is unramified} &\Longleftrightarrow \chi_{\nu}(-1) = 1 \textrm{ by definition}\\
&\Longleftrightarrow \rho_{\gamma}(c_{\nu}) = 1\\
&\Longleftrightarrow c_{\nu}\in \ker(\rho_{\gamma}) = K^*\Norm_{K_{\gamma}/K}(K_{\gamma}^*) \textrm{ by Artin's reciprocity law.}
\end{align*}
Thus, there exists an id\`ele $b\in \mathbb{I}_{K_{\gamma}}$ and an $\alpha\in K^*$ such that
\[
c_{\nu} = \alpha \Norm_{K_{\gamma}/K}(b).
\]
Comparing the $\omega$-th entry, and using the definition of the id{\`e}lic norm, we get
\[
(c_{\nu})_{\omega}= \alpha \prod_{\omega'\mid \omega}\Norm_{K_{\gamma, \omega'}/K_{\omega}}(b_{\omega'}).
\]
where $\omega'$ varies over the places of $K_{\gamma}$ above $\omega$.
This happens only when $-1\in \Norm_{K_{\gamma, \omega'}/K_\omega}(b_{\omega'})$.
However, by definition of the local norm in the archimedean case, this is not possible if $\nu$ ramifies (i.e. if $K_{\gamma, \omega'} = \mathbb{C}$).
Thus, $\nu$ splits.

Thus, we have proven that $\chi$ is unramified at $\nu$ if and only if $K_{\gamma, \omega'} = \mathbb{R}$.
We know that this extension is precisely $K_{\gamma, \nu}(\sqrt{\tau_\nu^2 - 4\delta_\nu})$.
This extension is $\mathbb{R}$ if and only if $\tau_i^2 - 4\delta_i > 0$.

Using the $(r_\nu, N_\nu)$-coordinates, $\chi$ is unramified at $\nu$ if and only if $r_{\nu}^2 - N_{\nu} > 0$.
So, $(r_\nu, N_\nu)$ lies in $S_{\spl}$.
\end{proof}

\begin{proposition}
\label{prop: extension H}
Let $\gamma_1, \gamma_2$ be regular elliptic matrices.
If $\gamma_1\siminf\gamma_2$ then $H_{\gamma_1}= H_{\gamma_2}$.
\end{proposition}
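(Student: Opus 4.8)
The plan is to notice that $H_{\gamma}(z,y)$ depends on $\gamma$ \emph{only} through the archimedean factor $L_{\infty}(w,\chi_{\gamma}) = \prod_{\nu\mid\infty}L_{\R}(w+\delta_{\nu,\gamma})$, hence only through the finite tuple of signs $(\delta_{\nu,\gamma})_{\nu\mid\infty}\in\{0,1\}^{n}$ recording at which real places $\chi_{\gamma}$ is ramified. So it suffices to prove that $\gamma_1\siminf\gamma_2$ forces $\delta_{\nu,\gamma_1}=\delta_{\nu,\gamma_2}$ for every real place $\nu$.

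First I would record that regular ellipticity keeps $\CH(\gamma)$ off the exceptional locus at each real place: since $\gamma$ is regular, $\partial=(\lambda_1-\lambda_2)^2$ is a nonzero element of $K$, and $\det(\gamma)=u\varepsilon$ is also a nonzero element of $K$, so for each real embedding $\sigma_i$ both $\sigma_i(\partial)\neq 0$ and $\sigma_i(\det(\gamma))\neq 0$; in the $(r,N)$-coordinates this says $(r_{\nu_i},N_{\nu_i})$ lies off the parabola $N=r^2$ and off the axis $N=0$, i.e. outside $S_0$. Together with the partition $\R^2=S_0\sqcup S_{\spl}\sqcup S_{\el}$, each archimedean coordinate of $\CH(\gamma)$ lies in exactly one of $S_{\spl}$ or $S_{\el}$.

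Next I would invoke Theorem~\ref{thm: reinterpretation of C}: $\chi_{\gamma}$ is unramified at $\nu_i$ if and only if $(r_{\nu_i},N_{\nu_i})\in S_{\spl}$, which by the previous paragraph is equivalent to $(r_{\nu_i},N_{\nu_i})\notin S_{\el}$. Hence $\delta_{\nu_i,\gamma}=1$ precisely when the $i$-th archimedean coordinate of $\CH(\gamma)$ lies in $S_{\el}$, so $\delta_{\nu_i,\gamma}$ is determined by which of the regions $S_0,S_{\spl},S_{\el}$ contains that coordinate. By the definition of $\siminf$, if $\gamma_1\siminf\gamma_2$ then $\CH(\gamma_1)$ and $\CH(\gamma_2)$ lie in the same region $S_J$ with $J=(j_1,\dots,j_n)$, so for each $i$ the $i$-th coordinates lie in the same $S_{j_i}$; therefore $\delta_{\nu_i,\gamma_1}=\delta_{\nu_i,\gamma_2}$ for all $i$.

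Finally this gives $L_{\infty}(w,\chi_{\gamma_1})=L_{\infty}(w,\chi_{\gamma_2})$ identically in $w$, so the integrand $y^{-u}\widetilde{F}(u)\,L_{\infty}(1-z+u,\chi_{\gamma_1})/L_{\infty}(z-u,\chi_{\gamma_1})$ defining $H_{\gamma_1}(z,y)$ agrees termwise with that for $H_{\gamma_2}(z,y)$, whence $H_{\gamma_1}=H_{\gamma_2}$. There is no serious obstacle: the whole argument is a bookkeeping reduction, with all the real content already contained in Theorem~\ref{thm: reinterpretation of C}; the only point requiring a moment's care is confirming that a regular elliptic $\gamma$ cannot have an archimedean component in the degenerate set $S_0$.
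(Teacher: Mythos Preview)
Your proof is correct and follows the same approach as the paper: observe that $H_{\gamma}$ depends on $\gamma$ only through the archimedean $L$-factors $L_{\infty}(\cdot,\chi_{\gamma})$, which are determined by the ramification data $(\delta_{\nu,\gamma})_{\nu}$, and then invoke Theorem~\ref{thm: reinterpretation of C} to conclude that this ramification data is constant on $\siminf$-equivalence classes. You are in fact slightly more careful than the paper in explicitly verifying that the archimedean coordinates of a regular elliptic $\gamma$ avoid the degenerate locus $S_0$, so that the dichotomy $S_{\spl}\leftrightarrow\text{unramified}$, $S_{\el}\leftrightarrow\text{ramified}$ is exhaustive.
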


\begin{proof}
\label{Prop 5.9}
By definition
\[
H_{\gamma}(z, y) := \frac{1}{2\pi i}\int_{\Re(u) = 1} y^{-u}\widetilde{F}(u)\frac{L_{\infty}(1 - z + u, \chi_{\gamma})}{L_{\infty}(z - u, \chi_{\gamma})}du.
\]
The dependence of $\gamma$ is on the factors at infinity.
But, these factors are determined by the ramification of $\gamma$ at infinity.
Theorem \ref{thm: reinterpretation of C} precisely states that the ramification behaviour of $\chi_{\gamma}$ remains invariant if $\gamma_1\siminf\gamma_2$.
\end{proof}

\begin{definition}
\label{def: Function H}
Let $(x, y)\in\mathbb{R}^n\times\mathbb{R}^{n-1}$ belong to the region $S_J$ for the multi-index $J = (j_1, \ldots, j_n)$ where each $j_t \in\{ {\spl}, {\el}\}$.
Define $H_{(x, y)}: \mathbb{C}\times\mathbb{R}\longrightarrow\mathbb{C}$ by
\[
H_{(x, y)}(z, r) := H_{\gamma}(z, r)
\]
for any regular elliptic matrix with $\gamma\in\GL(2, K)$ with $\CH(\gamma)\in S_J$.
\end{definition}

\begin{remark}{\leavevmode}
\begin{enumerate}[label = (\roman*)]
\item The above definition is well-defined because all maximal tori of $G_{\infty}$ span all possible splitting behaviour and Proposition~\ref{Prop 5.9} asserts that the function $H_{\gamma}$ is invariant under splitting behaviour.
\item 
Definition~\ref{def: Function H} allows us to extend the definition of $H_{\gamma}=H_{(\tau,u)}$ to \emph{all} points $(x,y)\in \R^{2n-1}$.
[Here we are using the fact that $\gamma$ is characterized by $(\tau,u)$ where $u$ is once again a unit in $\cO_K$.]
\item Notice that the ratio
\[
\frac{L_{\infty}(1 - z + u, \chi_{\gamma})}{L_{\infty}(z - u, \chi_{\gamma})}
\]
is independent of $\gamma$ and only depends on the behaviour at infinity (because the $L$-factors only depend on the factors at infinity).
For ease of notation, when we want to emphasize this independence we will suppress the subscript $\gamma$ and write
\[
\frac{L_{\infty}(1 - z + u, \chi)}{L_{\infty}(z - u, \chi)}.
\]
In particular, we will use this notation in Corollary~\ref{no dep on gamma}.
\end{enumerate}
\end{remark}

\subsection{Estimates on $H_{\gamma}(1,x)$}

The difference for each $H_{\gamma}(1,x)$ as defined in Theorem~\ref{theorem:approximate functional equation} comes from the $L_{\infty}$ factors.
We need to calculate the bound when $z=1$.

\begin{lemma}
\label{lemma:boundH}
For $x \geq 1$, 
\[
H_{\gamma}(1,x) \ll \frac{1}{x} e^{-(\sqrt[n+1]{x})}\] where the implied constant is absolute.
\end{lemma}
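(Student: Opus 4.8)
The plan is to estimate the contour integral defining $H_\gamma(1,x)$ by moving the line of integration far to the right and exploiting the rapid decay of $\widetilde F$ together with the growth of the ratio of archimedean $L$-factors. Recall from \eqref{defi: H-gamma} with $z=1$ that
\[
H_{\gamma}(1,x) = \frac{1}{2\pi i}\int_{\Re(u) = 1} x^{-u}\widetilde{F}(u)\frac{L_{\infty}(u, \chi_{\gamma})}{L_{\infty}(1 - u, \chi_{\gamma})}\,du,
\]
where $L_\infty(s,\chi_\gamma) = \prod_{\nu\mid\infty} L_\R(s+\delta_{\nu,\gamma})$ is a product of $n$ shifted Gamma factors (up to powers of $\pi$). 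The first step is to record the behaviour of the integrand: by Lemma~\ref{lemma:boundforFtilde}, on the line $\Re(u)=\sigma$ we have $\widetilde F(u)\ll |u|^{|\sigma|-1}e^{-\pi|t|/2}$, which is holomorphic for $\Re(u)>0$; and by Stirling's formula each ratio $\Gamma\big((u+\delta)/2\big)/\Gamma\big((1-u+\delta')/2\big)$ grows like $|u|^{\Re(u)-1/2}$ in the relevant region, so the full ratio of $L_\infty$ factors is $\ll |u|^{n(\Re(u)-1/2)}$ times harmless exponential factors in $|t|$ which are dominated by the $e^{-\pi|t|/2}$ from $\widetilde F$ (there are $n$ Gamma quotients, and one checks the accumulated polynomial-exponential in $t$ does not overwhelm the super-exponential decay of $\widetilde F$; this is exactly the kind of bookkeeping that makes the constant absolute since the $\delta_{\nu,\gamma}\in\{0,1\}$ range over a fixed finite set). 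The integrand is holomorphic in the half-plane $\Re(u)>0$ (the poles of $\widetilde F$ at $u=0$ and of the Gamma factors in the numerator at $u\le -\delta_{\nu,\gamma}$ all lie in $\Re(u)\le 0$), so we may shift the contour from $\Re(u)=1$ to $\Re(u)=\sigma$ for any $\sigma>1$ without crossing singularities.

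The second step is the optimization. On $\Re(u)=\sigma$ we get
\[
H_\gamma(1,x) \ll x^{-\sigma}\int_{-\infty}^{\infty}(1+|t|)^{n(\sigma-1/2)+\sigma-1}e^{-\pi|t|/2}\,dt \ll x^{-\sigma}\,\Gamma\big(C\sigma\big)
\]
for an absolute constant $C$ depending only on $n$, where the $t$-integral converges because of the $e^{-\pi|t|/2}$ and contributes a factor of size roughly $(C\sigma)!$, i.e. $\ll \Gamma(C\sigma+O(1))$. Writing $x^{-\sigma} = e^{-\sigma\log x}$ and balancing against $\Gamma(C\sigma)\approx e^{C\sigma\log\sigma}$ via Stirling, we choose $\sigma$ so that $\log x \sim (C+1)\log\sigma$, i.e. $\sigma \asymp x^{1/(C+1)}$; one extra power of $x^{-\sigma}$ is kept in reserve to absorb the polynomial prefactors. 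This yields a bound of the shape $x^{-1}\exp(-c\,x^{1/(C+1)})$. Since $C$ is an explicit function of $n$ — in fact the natural count gives $C = n+1$ after carefully tracking that the total polynomial-in-$\sigma$ exponent accumulated from the $n$ Gamma-quotients plus the $\widetilde F$ factor is $(n+1)\sigma + O(1)$ — this produces exactly
\[
H_\gamma(1,x) \ll \frac{1}{x}\,e^{-(\sqrt[n+1]{x})},
\]
with the implied constant absolute because everything in sight depended only on $n$ and on the finite data $\{\delta_{\nu,\gamma}\}$.

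The main obstacle I expect is the Step-one bookkeeping: getting the \emph{exact} exponent $n+1$ (rather than some larger constant) requires being careful that the accumulated growth in $|u|$ from the ratio of $n$ Gamma factors is precisely $|u|^{n(\Re u - 1/2)}$ and not worse, and that the $\widetilde F$ bound contributes only one additional power of $|u|$ — while simultaneously making sure the exponential-in-$|t|$ contributions from Stirling (each Gamma quotient contributes an $e^{O(|t|)}$, possibly with a $+$ sign) stay strictly dominated by the $e^{-\pi|t|/2}$ coming from $\widetilde F$, so that the $t$-integral is genuinely convergent and of the claimed size. Once the integrand bound on vertical lines is pinned down with the right constant, the saddle-point choice of $\sigma$ and the final estimate are routine. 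An alternative, perhaps cleaner route is to recognize $H_\gamma(1,x)$ (or its components) as a Bessel-type integral — each $L_\R$-quotient is, by the duplication formula and the integral representation of $\widetilde F$ in \eqref{cut-off}, expressible through $K$-Bessel functions — and then invoke known exponential decay estimates for products of Bessel functions; this would sidestep the contour optimization but requires identifying the right special-function identity, which over a product of $n$ archimedean places is itself somewhat delicate.
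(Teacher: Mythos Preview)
Your proposal is essentially the same approach as the paper's: bound the ratio of archimedean $L$-factors via Stirling, combine with the bound on $\widetilde F$ from Lemma~\ref{lemma:boundforFtilde}, shift the contour to $\Re(u)=\sigma$ with $\sigma\asymp x^{1/(n+1)}$, and estimate. The paper carries out Stirling on $\Gamma((u+\delta_{\nu,\gamma})/2)/\Gamma((1-u+\delta_{\nu,\gamma})/2)$ explicitly to obtain a bound of the shape $(u/\sqrt{2}e)^{u-1/2}$, which already packages the polynomial-in-$|u|$ growth together with the exponential factors you flag as the main obstacle, and then shifts directly to $\Re(u)=\max\{1,\sqrt[n+1]{x}\}$ rather than phrasing it as a saddle-point optimization; these are the same computation.
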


\begin{notation}
In this proof we write $u$ to denote a variable and it should not be confused with a unit of $\cO_K$.
\end{notation}

\begin{proof}
We begin the proof by recalling Stirling's formula for the Gamma function, 
\[
\Gamma(u)=\sqrt{2\pi} u^{u-1/2}e^{-u}\left(1+O\left(\abs{u}^{-1/2}\right)\right).
\]
Therefore, 
\begin{tiny}
\begin{align*}
\Gamma\left((u+\delta_{\nu,\gamma})/2\right) &=\sqrt{2\pi}\left((u+\delta_{\nu,\gamma})/2\right)^{(u+\delta_{\nu,\gamma}-1)/2}e^{(-u-\delta_{\nu,\gamma})/2} \left(1+O\left(\abs{(u+\delta_{\nu,\gamma})/2}^{-1/2}\right)\right) \\
\Gamma\left((1-u+\delta_{\nu,\gamma})/2\right) & =\sqrt{2\pi}\left((1-u+\delta_{\nu,\gamma})/2\right)^{(-u+\delta_{\nu,\gamma})/2}e^{(-1+u-\delta_{\nu,\gamma})/2}\left(1+O\left(\abs{(1-u+\delta_{\nu,\gamma})/2}^{-1/2}\right)\right).
\end{align*}
\end{tiny}
Combining the above two expressions,
\begin{tiny}
\begin{align*}
\frac{\Gamma\left((u+\delta_{\nu,\gamma})/2\right)}{\Gamma\left((1-u+\delta_{\nu,\gamma})/2\right)}
&=e^{-u+1/2}\left(\frac{u+\delta_{\nu,\gamma}}{2}\right)^{u-1/2}\left(\frac{u+\delta_{\nu,\gamma}}{2}\right)^{-u/2+\delta_{\nu,\gamma}/2}\left(\frac{1-u+\delta_{\nu,\gamma}}{2}\right)^{u-\delta_{\nu,\gamma}/2}\left(1+O\left({\abs{u}^{-1/2}}\right)\right)\\
&=\left(\frac{u+\delta_{\nu,\gamma}}{2e}\right)^{u-1/2}\left(\frac{1-u+\delta_{\nu,\gamma}}{u+\delta_{\nu,\gamma}}\right)^{u/2-\delta_{\nu,\gamma}/2}\left(1+O\left({\abs{u}^{-1/2}}\right)\right)\\
&=\left(\frac{u}{2e}\right)^{u-1/2}\left(\frac{u+\delta_{\nu,\gamma}}{u}\right)^{u-1/2}\left(\frac{1-u+\delta_{\nu,\gamma}}{u+\delta_{\nu,\gamma}}\right)^{(u-\delta_{\nu,\gamma})/2}\left(\frac{u}{u}\right)^{(u-\delta_{\nu,\gamma})/2}\left(1+O\left({\abs{u}^{-1/2}}\right)\right)\\
&=\left(\frac{u}{2e}\right)^{u-1/2}\left(\frac{u+\delta_{\nu,\gamma}}{u}\right)^{u-1/2}\left(\frac{u}{u+\delta_{\nu,\gamma}}\right)^{(u-\delta_{\nu,\gamma})/2}\left(\frac{1-u+\delta_{\nu,\gamma}}{u}\right)^{(u-\delta_{\nu,\gamma})/2}\left(1+O\left({\abs{u}^{-1/2}}\right)\right)\\
&=\left(\frac{u}{2e}\right)^{u-1/2}\left(1+\frac{\delta_{\nu,\gamma}}{u}\right)^{u/2+(\delta_{\nu,\gamma}-1)/2}\left(\frac{1+\delta_{\nu,\gamma}}{u}-1\right)^{(u-\delta_{\nu,\gamma})/2}\left(1+O\left({\abs{u}^{-1/2}}\right)\right).
\end{align*}
\end{tiny}
For $\abs{\delta_{\nu,\gamma}} < \abs{u}$ we have that $\abs{1+\delta_{\nu,\gamma}/u}\leq 2$ and $\abs{(1+\delta_{\nu,\gamma})/u-1}\leq 1$ which gives 
\[
\frac{\Gamma\left((u+\delta_{\nu,\gamma})/2\right)}{\Gamma\left((1-u+\delta_{\nu,\gamma})/2\right)}\ll \left(\frac{u}{\sqrt{2}e}\right)^{u-1/2}
\]
where the implied constant is absolute.
Hence we have 

\[
\frac{L_{\infty}(u,\chi_\gamma)}{L_{\infty}(1-u,\chi_\gamma)}= \prod_{j=1}^n\frac{\Gamma\left((u+\delta_{\nu,\gamma})/2\right)}{\Gamma\left((1-u+\delta_{\nu,\gamma})/2\right)}\ll \left(\frac{u}{\sqrt{2}e}\right)^{n(u-1/2)}.
\]
Substituting into the definition for $H_{\gamma}(1,x)$ and using the bound on $\widetilde{F}(x)$ given in Lemma~\ref{lemma:boundforFtilde}, 

\begin{align*}
H_{\gamma}(1,x)& \ll \int_{\Re(u)=1} \abs{\frac{u}{\sqrt{2}e}}^{n(\Re(u)-1/2)}x^{-\Re(u)}\abs{u}^{\Re(u)-1}e^{-\abs{\Im(u)}\pi/2}\\
&\ll \int_{\Re(u)=1}\abs{u}^{(n+1)\Re(u)-(n+2)/2}(e^{n+1}x)^{-\Re(u)}e^{-\pi\abs{\Im(u)}/2}du.
\end{align*}
Shifting the contour to $\Re(u) = \max\{1, \sqrt[n+1]{x} \}$ gives

\begin{align*}
H_{\gamma}(1,x)&\ll \frac{1}{x^{\frac{n+2}{2(n+1)}}}e^{-(\sqrt[n+1]{x})}\\
&\ll \frac{1}{x} e^{-(\sqrt[n+1]{x})}.\qedhere
\end{align*}
\end{proof}
\begin{corollary}
For $\Re(x) \geq 1$, 
\[
H_{(x, y)}(1, r) \ll \frac{1}{r} e^{-(\sqrt[n+1]{r})}\] where the implied constant is absolute.
\end{corollary}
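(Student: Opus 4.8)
The plan is to deduce this immediately from Lemma~\ref{lemma:boundH} together with the way $H_{(x,y)}$ was defined. Recall from Definition~\ref{def: Function H} that for a point $(x,y)\in\R^{n}\times\R^{n-1}$ lying in a region $S_J$ we set $H_{(x,y)}(z,r):=H_{\gamma}(z,r)$ for any regular elliptic $\gamma\in\GL(2,K)$ with $\CH(\gamma)\in S_J$; by Proposition~\ref{prop: extension H} this does not depend on the choice of such $\gamma$. So the first step is just to fix one such $\gamma$ and write $H_{(x,y)}(1,r)=H_{\gamma}(1,r)$.

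Next I would apply Lemma~\ref{lemma:boundH} with its variable $x$ specialized to the size parameter $r$: for $r\ge 1$ this gives $H_{\gamma}(1,r)\ll \frac{1}{r}\,e^{-\sqrt[n+1]{r}}$. The one point worth checking is the uniformity of the implied constant. Tracing through the proof of Lemma~\ref{lemma:boundH}, the constant arises from Stirling's formula for the $n$ Gamma factors making up $L_{\infty}$ and from the uniform bound on $\widetilde{F}$ supplied by Lemma~\ref{lemma:boundforFtilde}; neither of these sees the arithmetic of $\gamma$ beyond the number $n=[K:\Q]$ of archimedean places and the ramification parameters $\delta_{\nu,\gamma}\in\{0,1\}$, which are already absorbed into the estimate. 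In particular the constant does not depend on which region $S_J$ contains $(x,y)$, so the same bound (with the same absolute constant) holds for $H_{(x,y)}(1,r)$.

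There is no genuine obstacle here: the analytic content is entirely contained in Lemma~\ref{lemma:boundH}, and the corollary is the bookkeeping observation that the estimate, valid on the discrete family $\{H_{\gamma}\}_{\gamma\text{ reg.\ ell.}}$, persists after passing to the interpolated family $\{H_{(x,y)}\}_{(x,y)\in\R^{2n-1}}$. The only thing to be careful about when writing it up is the reading of the hypothesis: the condition should be understood as a lower bound on the size argument $r$ (playing the role of $x$ in Lemma~\ref{lemma:boundH}), while the coordinates $(x,y)$ enter only through the choice of region $S_J$ and therefore have no effect on the bound.
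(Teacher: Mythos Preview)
Your proposal is correct and matches the paper's approach: the paper gives no explicit proof for this corollary, treating it as immediate from Lemma~\ref{lemma:boundH} together with Definition~\ref{def: Function H} and Proposition~\ref{prop: extension H}, which is exactly the deduction you spell out. Your observation that the hypothesis should be read as $r\ge 1$ (the size parameter, playing the role of $x$ in Lemma~\ref{lemma:boundH}) and that the implied constant is uniform over the finitely many possible ramification patterns $(\delta_{\nu,\gamma})_\nu$ is the right way to make this precise.
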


\subsection{Manipulation using the Approximate Functional Equation}

At this point, we have the following expression for the regular elliptic part of the trace formula
\[
R(f) ={\abs{D_K}^{1/2} \sp^{-\sk/2}} \sum_{u\in\cO_K^*}\sum_{\tau\in L(u)}
 \theta^{\pm}(\tau, u)L(1, \gamma).
\]
Using the approximate functional equation (see Theorem~\ref{theorem:approximate functional equation}) at $z = 1$, we find
\begin{align*}
L(1, \gamma) &= \sum_{\fd\mid S_{\gamma}}\frac{1}{\Norm_K(\fd)} \sum_{\fa} \frac{\chi_{\fd}{(\fa)}}{\Norm_K(\fa)}F\left(\frac{\Norm_K(\fd)^2 \Norm_K(\fa)}{A}\right)\\
&+ \frac{1}{ \Norm_K(S_{\gamma})\abs{D_K}^{1/2}\Norm_K(\Delta_{\gamma})^{1/2}} \sum_{\fd\mid S_{\gamma}}\frac{1}{\Norm_K(\fd)^{-1}} \sum_{\fa}\chi_{\fd}(\fa) H_{\gamma}\left(1, \frac{\Norm_K(\fd)^{2}\Norm_K(\fa)A}{\Norm_K(S_{\gamma})^{2}\abs{D_K}\Norm_K(\Delta_{\gamma})}\right).
\end{align*}
Recall that $\partial = \tau^2 - 4\det(\gamma)$, therefore
\[
\Norm_K(\partial) = \frac{\abs{D_L}\Norm_K(S_{\gamma}^2)}{\abs{D_K}^2} = \Norm_K(\Delta_{\gamma})\Norm_K(S_{\gamma})^2.
\]
Combining these expressions, we obtain that
\begin{equation*}
\begin{split}
 R(f) &= {\abs{D_K}^{1/2} \sp^{-\sk/2}} \sum_{u\in\cO^*_K}\sum_{\tau\in L(u)} \theta^{\pm}(\tau, u) \left(\sum_{\fd\mid S_{\gamma}}\frac{1}{\Norm_K(\fd)} \sum_{\fa} \frac{\chi_{\fd}{(\fa)}}{\Norm_K(\fa)}F\left(\frac{\Norm_K(\fd)^2 \Norm_K(\fa)}{A}\right) \right.\\
 & +\left. \frac{1}{\Norm_K(\partial)^{1/2}\abs{D_K}^{1/2}} \sum_{\fd\mid S_{\gamma}}\frac{1}{\Norm_K(\fd)^{-1}} \sum_{\fa}\chi_{\fd}(\fa)H_{\gamma}\left(1, \frac{\Norm_K(\fd)^{2}\Norm_K(\fa)A}{\Norm_K(\partial)\abs{D_K}}\right)\right).
 \end{split}
\end{equation*}

\begin{remark}
At this stage two sums have appeared: one runs over all ideals $\fa$ (this sum arises from the Dirichlet series of the imprimitive character).
The other runs over $\fd\mid S_{\gamma}$ (this arises from the definition of the generalized Multiplicative Formula of Langlands).
The sum over $\fa$ has no dependence on any of the other variables involved.
\end{remark}

\section{Smoothing the Singularities}
\label{sec: Smoothing of the Singularities}

Throughout this section we fix a totally positive system of fundamental units of $K$ denoted by $\{\beta_1, \ldots, \beta_{n-1}\}$.
Let $\gamma$ be a contributing regular elliptic matrix that corresponds to the unit $u$ and the trace $\tau$.
Recall from \eqref{eqn: det gamma and rho} that $\varepsilon = \rho^{\sk'}$ is a fixed generator for the principal ideal $\left( \det(\gamma)\right) = \fp^{\sk}$ with $\Norm_K(\fp)=\sp$.

\begin{notation}
We write $\varepsilon_i=\sigma_i(\varepsilon)$ and $\beta_{t,i}=\sigma_i(\beta_t)$ for $1\leq t \leq n-1$, where $\sigma_i$ is the $i$-th real embedding of $K$.
\end{notation}

\begin{definition}\label{def: P pm}
Let $x=(x_1, \cdots, x_n)\in \R^n$ and $y=(y_1, \cdots, y_{n-1})\in \R^{n-1}$.
Define the discriminant function $\mathcal{P}^{\pm}:\R^{n}\times\R^{n-1}\rightarrow\R$ by
\begin{align*}
\mathcal{P}^{\pm}(x, y) &:= \frac{1}{2^n}\prod_{i = 1}^n \left( \frac{\abs{x_i^2 \pm 4 \beta_{1,i}^{y_1}\cdots \beta_{n-1,i}^{y_{n-1}}\varepsilon_i}^{-1/2}}{\abs{4\beta_{1,i}^{y_1}\cdots \beta_{n-1,i}^{y_{n-1}}\varepsilon_i}^{-1/2}} \right).
\end{align*}
\end{definition}

In the following proposition, we evaluate the above defined function at $x=\tau$ and $y=u$.
Our notation here will be similar to the one adapted for $\theta^\pm$.
We explain it once again for convenience.
Since $\tau \in \cO_K$, it can be viewed as an element of $\R^n$.
On the other hand, when we write $y=u$, we mean that $y=(y_1, \cdots, y_{n-1})$ corresponding to the unit $u = \pm\beta_1^{y_1}\cdots \beta_{n-1}^{y_{n-1}}$.
The relevance of the functions $\mathcal{P}^{\pm}$ is clear from the following result.

\begin{proposition}
\label{Prop: restatement}
With notation as above, 
\[
\mathcal{P}^{\pm}(\tau, u) = \frac{\sp^{\sk/2}}{\sqrt{\Norm_K(\partial)}}.
\]
\end{proposition}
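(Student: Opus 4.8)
The plan is a direct computation: substitute $x=\tau,\ y=u$ into Definition~\ref{def: P pm}, read off each of the $n$ local factors, and telescope. Recall that $\gamma$ is the contributing conjugacy class attached to the pair $(\tau,u)$, so $\det(\gamma)=u\varepsilon$ and $\partial=\tau^2-4\det(\gamma)=(\lambda_1-\lambda_2)^2$, where $\lambda_1,\lambda_2$ are the roots of $P_\gamma$, and $y=(y_1,\dots,y_{n-1})$ is the exponent vector of $u$ in the fixed totally positive fundamental units $\beta_1,\dots,\beta_{n-1}$. First I would unwind the substitution exactly as in the conventions fixed just before the statement (the same ones governing $\theta^{\pm}$ in Definition~\ref{def:thetaplusminus}): for each $i$, the denominator $\abs{4\beta_{1,i}^{y_1}\cdots\beta_{n-1,i}^{y_{n-1}}\varepsilon_i}$ becomes $4\abs{\sigma_i(\det(\gamma))}$, while the numerator $\abs{x_i^2\pm4\beta_{1,i}^{y_1}\cdots\beta_{n-1,i}^{y_{n-1}}\varepsilon_i}$ becomes $\abs{\sigma_i(\tau^2-4\det(\gamma))}=\abs{\sigma_i(\partial)}$ -- this is precisely why $\mathcal{P}^{\pm}$ deserves the name \emph{discriminant function}, $\partial$ being the discriminant of $P_\gamma$.

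Once the substitution is made, the product telescopes. Each factor equals $\bigl(4\abs{\sigma_i(\det(\gamma))}/\abs{\sigma_i(\partial)}\bigr)^{1/2}$, so
\[
\mathcal{P}^{\pm}(\tau,u)=\frac{1}{2^n}\prod_{i=1}^n\left(\frac{4\abs{\sigma_i(\det(\gamma))}}{\abs{\sigma_i(\partial)}}\right)^{1/2}=\frac{1}{2^n}\cdot 2^n\cdot\left(\frac{\prod_{i=1}^n\abs{\sigma_i(\det(\gamma))}}{\prod_{i=1}^n\abs{\sigma_i(\partial)}}\right)^{1/2},
\]
the factor $2^n=(4^n)^{1/2}$ canceling the $1/2^n$. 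Then I would identify the two remaining products. Since $\gamma$ is regular elliptic, $\lambda_1\ne\lambda_2$, so $\partial\ne0$ and $\prod_{i=1}^n\abs{\sigma_i(\partial)}=\Norm_K(\partial)$ is the absolute norm of the nonzero principal ideal $(\partial)$. Moreover $\prod_{i=1}^n\abs{\sigma_i(\det(\gamma))}=\sp^{\sk}$: this is exactly \eqref{equation qk} from the proof of Proposition~\ref{relation discriminants}, obtained from $(\det(\gamma))=\fp^{\sk}$ (Proposition~\ref{prop 3-7}) together with the product formula, $\fp$ being the only finite place at which $\det(\gamma)$ has a nontrivial valuation. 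Substituting yields $\mathcal{P}^{\pm}(\tau,u)=(\sp^{\sk}/\Norm_K(\partial))^{1/2}=\sp^{\sk/2}/\sqrt{\Norm_K(\partial)}$, as required.

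Equivalently, I could package both identifications at once by noting that after the substitution each factor $\tfrac12\bigl(4\abs{\sigma_i(\det(\gamma))}/\abs{\sigma_i(\partial)}\bigr)^{1/2}$ is precisely the local Weyl discriminant $D(\sigma_i(\lambda_1),\sigma_i(\lambda_2))=\abs{\sigma_i(\lambda_1)\sigma_i(\lambda_2)}^{1/2}/\abs{\sigma_i(\lambda_1)-\sigma_i(\lambda_2)}$ of \eqref{defi Weyl discriminant} (using $\sigma_i(\lambda_1\lambda_2)=\sigma_i(\det(\gamma))$ and $\sigma_i(\lambda_1-\lambda_2)^2=\sigma_i(\partial)$), so that $\mathcal{P}^{\pm}(\tau,u)=\prod_{i=1}^n D(\sigma_i(\lambda_1),\sigma_i(\lambda_2))$; the claim is then immediate from Proposition~\ref{relation discriminants} (recalling $r_1=n$ since $K$ is totally real). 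There is no genuine obstacle here; the only thing requiring attention is the bookkeeping of the sign conventions inherited from $\theta^{\pm}$ -- and ultimately from Altu\u{g}'s ``$\tau^2\pm4p^k$'' notation -- when carrying out the substitution, which is a purely notational matter.
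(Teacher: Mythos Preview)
Your proposal is correct and essentially matches the paper's proof. In fact, your ``Equivalently'' paragraph \emph{is} the paper's argument: it identifies each local factor (after absorbing the $1/2^n$) with the Weyl discriminant $D(\sigma_i(\lambda_1),\sigma_i(\lambda_2))$ and then invokes Proposition~\ref{relation discriminants}; your first approach merely unwinds the latter's proof in place.
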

\begin{proof}
By hypothesis we are evaluating the functions $\mathcal{P}^{\pm}$ at $x=\tau$ and $y=u$.
Since there exists a regular elliptic matrix corresponding to $(\tau, u)$, for each $i$ we can write
\[
\frac{\abs{x_i^2 \pm 4 \beta_{1,i}^{y_1}\cdots \beta_{n-1,i}^{y_{n-1}}\varepsilon_i}^{-1/2}}{\abs{4\beta_{1,i}^{y_1}\cdots \beta_{n-1,i}^{y_{n-1}}\varepsilon_i}^{-1/2}} = \abs{\frac{\tau_i}{4u_i\varepsilon_i} \pm 1}^{-1/2} = 2D(\sigma_i(\lambda_1), \sigma_i(\lambda_2)),
\]
where $\lambda_1, \lambda_2$ are the eigenvalues of $\gamma(\tau, u)$.
By Proposition~\ref{relation discriminants}
\[
\mathcal{P}^{\pm}(\tau, u) = \frac{\sp^{\sk/2}}{\sqrt{\Norm_K(\partial)}}.
\qedhere
\]
\end{proof}

Recall that our interpolation function $\theta^{\pm}$ has singularities.
In what follows we define smoothed-out versions of this interpolation function which then appear more naturally in the expression for the regular elliptic part.

\begin{definition}
Let $F$ be the smooth function defined in \eqref{cut-off}.
For a parameter $0< \alpha < 1$, and for $(x, y)\in\mathbb{R}^n\times\mathbb{R}^{n-1}$ in the union of the open regions $S_J$ define the functions
\begin{tiny}
\begin{align*}
 \Phi_{(\pm, \alpha)}(x, y)&:= \Phi_{\pm}(x, y) := \theta^{\pm}(x, y)F\left(\frac{\Norm_K(\fd)^2\Norm_K(\fa)\sp^{-\sk\alpha }}{\abs{D_K}^\alpha} \mathcal{P}^{\pm}(x, y)^{2\alpha}\right),\\
\Psi_{(\pm, \alpha)}(x, y)&:= \Psi_{\pm}(x, y) := \theta^{\pm}(x, y) \mathcal{P}^{\pm}(x, y)\abs{D_K}^{-1/2}\sp^{-\sk/2}H_{(x, y)}\left(1, \frac{\Norm_K(\fd)^2\Norm_K(\fa)\sp^{-\sk(1-\alpha)}}{\abs{D_K}^{1 - \alpha}}\mathcal{P}^{\pm}(x, y)^{2(1-\alpha)}\right).
\end{align*}
\end{tiny}
\end{definition}

Suppose that $x = \tau$ and $y = u$.
Then
\begin{equation}
\begin{split}
\label{defn: Phi}
\Phi_{\pm}(\tau, u) 
&= \theta^{\pm}(\tau, u)F\left(\frac{\Norm_K(\fd)^2\Norm_K(\fa)\sp^{-\sk\alpha }}{\abs{D_K}^\alpha} \mathcal{P}^{\pm}(\tau, u)^{2\alpha}\right)\\
&= \theta^{\pm}(\tau, u)F\left(\frac{\Norm_K(\fd)^2\Norm_K(\fa)\sp^{-\sk\alpha }}{\abs{D_K}^\alpha} \left(\frac{\sp^{\sk/2}}{\Norm_K(\partial)^{1/2}}\right)^{2\alpha}\right) \quad \textrm{by Proposition}~\ref{Prop: restatement}\\
&= \theta^{\pm}(\tau, u)F\left(\frac{\Norm_K(\fd)^2\Norm_K(\fa)}{\Norm_K(\partial)^{\alpha}\abs{D_K}^\alpha} \right).
\end{split}
\end{equation}

Recall the definition of the function $H_\gamma(z,y)$ introduced in \eqref{defi: H-gamma}.
A similar calculation as the one above yields
\begin{tiny}
\begin{equation}
\begin{split}
\label{defn: Psi}
\Psi_{\pm}(\tau, u)
&= \theta^{\pm}(\tau, u) \mathcal{P}^{\pm}(\tau, u)\abs{D_K}^{-1/2}\sp^{-\sk/2}H_{\gamma}\left(1, \frac{\Norm_K(\fd)^2\Norm_K(\fa)\sp^{-\sk(1-\alpha)}}{\abs{D_K}^{1 - \alpha}}\mathcal{P}^{\pm}(\tau, u)^{2(1-\alpha)}\right)\\
&= \theta^{\pm}(\tau, u) \left(\frac{\sp^{\sk/2}}{\Norm_K(\partial)^{1/2}}\right)\abs{D_K}^{-1/2}\sp^{-\sk/2}H_{\gamma}\left(1, \frac{\Norm_K(\fd)^2\Norm_K(\fa)\sp^{-\sk(1-\alpha)}}{\abs{D_K}^{1 - \alpha}}\left(\frac{\sp^{\sk/2}}{\Norm_K(\partial)^{1/2}}\right)^{2(1-\alpha)}\right)\\
&= \frac{\theta^{\pm}(\tau, u)}{\abs{D_K}^{1/2}\Norm_K(\partial)^{1/2}}H_{\gamma}\left(1,\frac{\Norm_K(\fd)^2\Norm_K(\fa)}{\Norm_K(\partial)^{1-\alpha}\abs{D_K}^{1 - \alpha}}\right),
\end{split}
\end{equation}
\end{tiny}
where $\gamma = \gamma(\tau, u)$ is the corresponding regular elliptic matrix.

\begin{proposition}
Let $\alpha>0$ and let $\phi$ be a Schwartz function\footnote{For our purposes, $\phi$ will either be the function $F$ or the function $H_{\gamma}$.} on $\mathbb R$.
Then 
\[
\theta^\pm(x,y)\phi(\mathcal{P}^{\pm}(x,y)^{2\alpha}),\qquad 
\theta^\pm(x,y)\mathcal{P}^{\pm}(x,y)\phi(\mathcal{P}^{\pm}(\tau, u)^{2(1-\alpha)})
\]
are smooth in $(x,y)$ and compactly supported.
\end{proposition}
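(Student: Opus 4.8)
The plan is to establish the two assertions — smoothness and compact support — separately, since they rely on very different features of the building blocks. The key point is that $\theta^{\pm}(x,y)$ is already known (from Definition~\ref{def:thetaplusminus} and the discussion following it) to be smooth on the union of the open regions $S_J$ and compactly supported there, the singularities of $\theta^{\pm}$ being confined to the boundary set where some coordinate lies on the parabola $N=x_i^2$ or on the axis $N=0$. The function $\mathcal{P}^{\pm}(x,y)$, on the other hand, is built precisely out of the factors $\abs{x_i^2\pm 4\beta_{1,i}^{y_1}\cdots\beta_{n-1,i}^{y_{n-1}}\varepsilon_i}^{-1/2}$, which \emph{blow up} exactly on that same boundary set. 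So the heuristic is that the decay of $\theta^{\pm}$ near the bad set should kill the growth of any power of $\mathcal{P}^{\pm}$ once it is fed into a Schwartz function.

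First I would address compact support, which is the easy half. Since $\theta^{\pm}$ is compactly supported (inheriting this from the fact that each $f_{\nu_i}$, hence each germ-expansion function $g_{\nu_i,1},g_{\nu_i,2}$, is compactly supported), and $\phi$ and $\mathcal{P}^{\pm}$ are finite away from the bad set, both products $\theta^{\pm}(x,y)\phi(\mathcal{P}^{\pm}(x,y)^{2\alpha})$ and $\theta^{\pm}(x,y)\mathcal{P}^{\pm}(x,y)\phi(\mathcal{P}^{\pm}(x,y)^{2(1-\alpha)})$ vanish outside a fixed compact set; extend them by $0$ across the bad set. So the real content is to check that these extensions are genuinely $C^\infty$ on all of $\mathbb{R}^{2n-1}$, i.e.\ that no singularity survives at the boundary.

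For smoothness, I would work locally near a point $p$ of the bad set (say with $i$-th coordinate on the parabola; the axis case is similar and in fact excluded since $g_{\nu_i,m}$ is supported away from $N=0$). In coordinates, $\mathcal{P}^{\pm}(x,y)$ behaves like $\abs{t}^{-1/2}$ times a smooth nonvanishing factor, where $t$ is the local defining function of the parabola. Correspondingly, from the germ expansion \eqref{OhNr} the function $\theta^{\pm}$ has the shape $a(x,y)+\abs{t}^{1/2}b(x,y)$ with $a,b$ smooth and compactly supported — wait, more precisely one has a factor $\abs{\tfrac{x_i^2}{\pm 4u_i\varepsilon_i}-1}^{1/2}g_{\nu_i,1}$ plus $g_{\nu_i,2}$, so $\theta^{\pm}$ itself is of the form $\abs{t}^{1/2}\cdot(\text{smooth})$ plus $(\text{smooth, supported in }t<0)$; and $g_{\nu_i,2}$ is supported inside the parabola. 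Now when I multiply by $\phi(\mathcal{P}^{\pm}(x,y)^{2\alpha})$: since $\phi$ is Schwartz and $\mathcal{P}^{\pm}\sim\abs{t}^{-\alpha}$ (up to smooth units) is blowing up as $t\to 0$, the argument of $\phi$ tends to $+\infty$, so $\phi$ and \emph{all its derivatives} decay faster than any power of $\abs{t}^{-1}$ — faster than $\exp(-c\abs{t}^{-\alpha})$-type decay from the explicit $F$ in \eqref{cut-off}, or generically: faster than any polynomial in $\abs{t}^{-1}$. Differentiating the product by Leibniz and the chain rule produces, at worst, negative powers of $t$ from differentiating $\mathcal{P}^{\pm}$ and from the $\abs{t}^{1/2}$ in $\theta^{\pm}$; each such negative power is dominated by the superpolynomial decay coming from $\phi$. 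Hence every partial derivative extends continuously by $0$ across $\{t=0\}$, giving $C^\infty$. The second function is handled identically, the extra factor $\mathcal{P}^{\pm}(x,y)\sim\abs{t}^{-1/2}$ being just one more negative power absorbed by the decay of $\phi$; note also that the composite $\theta^{\pm}\cdot\mathcal{P}^{\pm}$ already loses the problematic half-power, since $\abs{t}^{1/2}\cdot\abs{t}^{-1/2}$ is bounded, though possibly not smooth — this is why we still need $\phi$'s decay for the $a$-part, but for the half-power part $\abs{t}^{1/2}b\cdot\abs{t}^{-1/2}=b$ is outright smooth.

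\textbf{Main obstacle.} The delicate step is the smoothness check at the boundary: one must verify that \emph{every} iterated partial derivative of the product, computed via Leibniz and the Fa\`a di Bruno / chain rule, has a limit of $0$ as one approaches the bad set, uniformly enough to conclude $C^\infty$ rather than merely continuous. This requires a careful bookkeeping of how many negative powers of the local defining function $t$ a $k$-th derivative of $\mathcal{P}^{\pm}(x,y)^{2\alpha}$ (and of the $\abs{t}^{1/2}$-factor inside $\theta^{\pm}$) can produce — at most $O(\abs{t}^{-N_k})$ for some $N_k$ depending on $k$ — against the fact that for a Schwartz $\phi$, $\phi^{(j)}(\abs{t}^{-\alpha}\cdot u(x,y))=O(\abs{t}^{M})$ for \emph{every} $M>0$ as $t\to 0$. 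The inequality $M>N_k$ being achievable for each fixed $k$ is what makes the extension smooth; writing this out with explicit multi-index estimates is the only genuinely technical part, and it is entirely parallel to the corresponding argument in \cite{AliI} over $\Q$.
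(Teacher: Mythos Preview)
Your proposal is correct and follows essentially the same approach as the paper's proof: the paper likewise observes that the singularities of $\theta^{\pm}$ lie on the zero locus of the discriminant (where $\mathcal{P}^{\pm}\to\infty$), uses the Schwartz decay of $\phi$ to obtain $\phi(\mathcal{P}^{\pm}(x,y)^{2\alpha})\ll_M \mathcal{P}^{\pm}(x,y)^{-2\alpha M}$ for every $M>0$, and then takes $M$ large enough to dominate the polynomial blow-up of $\theta^{\pm}$ and its derivatives, citing \cite[Theorem~4.1]{GKMPW} and \cite[Proposition~4.1]{AliI} for the detailed bookkeeping you describe. Your local analysis with the defining function $t$ is simply a more explicit rendering of that same argument.
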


\begin{proof}
The proof is essentially that of \cite[Theorem 4.1]{GKMPW}, also \cite[Proposition 4.1]{AliI}, in the setting of $\GL(n,\mathbb Q)$ and $\GL(2,\mathbb Q)$ respectively.
Following the discussion of \cite[\S4]{GKMPW}, the singularities of the archimedean orbital integral $\theta^\pm$ in $\mathbb R^{2n-1}$ are precisely the zero locus of the discriminant, which is given by the vanishing of $\mathcal{P}^{\pm}(\tau, u)$, and are at most $\Vert (x, y)\Vert^{-\beta}$ for some $\beta\ge0$.
Then for any singular point $a$ of $\theta^\pm$ and $\alpha>0$ we have $\mathcal{P}^{\pm}(x, y)^{2\alpha}\to \infty$ as $v=(x,y)\to a$, and
\[
\phi(\mathcal{P}^{\pm}(x, y)^{2\alpha}) \ll_M \mathcal{P}^{\pm}(x, y)^{-2\alpha M}
\]
for any $M>0.$ In particular, taking $M > (\beta + 1)/\alpha$ and arguing as in the proof of \cite[Theorem 4.1]{GKMPW}, we see that the product $\theta^\pm(x, y)\phi(\mathcal{P}^{\pm}(\tau, u)^{2\alpha})$ and its partial derivatives vanish as $(x, y)$ approaches $a$.
The argument for the second product $\theta^\pm(x, y)\mathcal{P}^{\pm}(x, y)\phi(\mathcal{P}^{\pm}(x, y)^{2(1-\alpha)})$ follows in a similar manner.

Finally, at the archimedean places, we had imposed the condition on the test functions that their orbital integrals must have compact support.
This ensures that the functions of interest are compactly supported.
\end{proof}

\begin{remark}
The proof shows these functions are now defined in $\mathbb{R}^n\times\mathbb{R}^{n-1}$ and not only in the regular set (i.e., union of the open regions $S_J$).
\end{remark}

Upon choosing a precise value of $A$ in Theorem~\ref{theorem:approximate functional equation}, it is possible to ensure that $\Phi_{\pm}$ and $\Psi_{\pm}$ emerge in our manipulation of the regular elliptic part.

\begin{proposition}
\label{prop: precise value of A}
For a regular elliptic class $\gamma$ and a parameter $0 < \alpha <1$,
\begin{align*}
L(1, \gamma) & = \sum_{\fd\mid S_{\gamma}}\frac{1}{\Norm_K(\fd)} \sum_{\fa} \frac{\chi_{\fd}{(\fa)}}{\Norm_K(\fa)}F\left(\frac{\Norm_K(\fd)^2 \Norm_K(\fa)}{\Norm_K(\partial)^\alpha \abs{D_K}^\alpha}\right) \\
&+ \frac{1}{ \sqrt{\Norm_K(\partial)} \abs{D_K}^{1/2}} \sum_{\fd\mid S_{\gamma}}\frac{1}{\Norm_K(\fd)^{-1}} \sum_{\fa}\chi_{\fd}(\fa)H_{\gamma}\left(1, \frac{\Norm_K(\fd)^{2}\Norm_K(\fa)}{\Norm_K(\partial)^{1-\alpha}\abs{D_K}^{1-\alpha}}\right), 
\end{align*}
\end{proposition}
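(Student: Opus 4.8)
The plan is to obtain this identity directly from the Approximate Functional Equation (Theorem~\ref{theorem:approximate functional equation}) evaluated at $z=1$, by making the single, $\gamma$-dependent choice
\[
A = \left(\Norm_K(\partial)\,\abs{D_K}\right)^{\alpha}.
\]
First I would check this is a legitimate choice: since $\gamma$ is regular elliptic we have $\partial = \tau^2 - 4\det(\gamma)\neq 0$, so $\Norm_K(\partial)>0$, and $\abs{D_K}>0$; hence $A$ is a positive real number, which is all Theorem~\ref{theorem:approximate functional equation} requires. The fact that $A$ varies with $\gamma$ is harmless, since the Approximate Functional Equation is applied separately for each regular elliptic class.

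Next I would record the arithmetic identity linking $\partial$ to the invariants appearing in the Approximate Functional Equation. From Definition~\ref{def:sgamma} we have $\Delta_{\gamma}S_{\gamma}^2 = (\partial)$, so taking absolute norms gives $\Norm_K(\partial) = \Norm_K(\Delta_{\gamma})\Norm_K(S_{\gamma})^2$, whence
\[
\Norm_K(S_{\gamma})\,\abs{D_K}^{1/2}\,\Norm_K(\Delta_{\gamma})^{1/2} = \Norm_K(\partial)^{1/2}\abs{D_K}^{1/2},\qquad \Norm_K(S_{\gamma})^2\,\abs{D_K}\,\Norm_K(\Delta_{\gamma}) = \Norm_K(\partial)\,\abs{D_K}.
\]
This is the same relation already used in the manipulation preceding the statement.

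Then I would specialize Theorem~\ref{theorem:approximate functional equation} to $z=1$ and substitute the chosen $A$. In the first sum the factors $\Norm_K(\fd)^{2z-1}$ and $\Norm_K(\fa)^{z}$ become $\Norm_K(\fd)$ and $\Norm_K(\fa)$, and the argument of $F$ becomes $\Norm_K(\fd)^2\Norm_K(\fa)/A = \Norm_K(\fd)^2\Norm_K(\fa)/\big(\Norm_K(\partial)^{\alpha}\abs{D_K}^{\alpha}\big)$. In the second sum, the prefactor $\big(\Norm_K(S_{\gamma})\abs{D_K}^{1/2}\Norm_K(\Delta_{\gamma})^{1/2}\big)^{1-2z}$ evaluates at $z=1$ to $\big(\Norm_K(\partial)^{1/2}\abs{D_K}^{1/2}\big)^{-1}$ by the identity above, $\Norm_K(\fd)^{1-2z}$ becomes $\Norm_K(\fd)^{-1}$, $\Norm_K(\fa)^{1-z}$ becomes $1$, and the argument of $H_{\gamma}$ becomes
\[
\frac{\Norm_K(\fd)^2\Norm_K(\fa)\,A}{\Norm_K(S_{\gamma})^2\abs{D_K}\Norm_K(\Delta_{\gamma})} = \frac{\Norm_K(\fd)^2\Norm_K(\fa)\left(\Norm_K(\partial)\abs{D_K}\right)^{\alpha}}{\Norm_K(\partial)\abs{D_K}} = \frac{\Norm_K(\fd)^2\Norm_K(\fa)}{\Norm_K(\partial)^{1-\alpha}\abs{D_K}^{1-\alpha}}.
\]
Assembling the two sums yields exactly the displayed formula.

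There is no genuine obstacle here: the content is the bookkeeping verification that one value of $A$ simultaneously normalizes the argument of $F$ (which wants $A$ to absorb $\Norm_K(\partial)^{\alpha}\abs{D_K}^{\alpha}$) and, after the functional-equation flip, the argument of $H_{\gamma}$ (which wants $A/\big(\Norm_K(\partial)\abs{D_K}\big)$ to leave behind the complementary $(1-\alpha)$-power), and this works precisely because $\alpha + (1-\alpha) = 1$. The only point worth stressing is that this is the value of $A$ for which, after multiplying the summands by $\theta^{\pm}(\tau,u)$ and invoking Proposition~\ref{Prop: restatement}, the smoothed interpolation functions $\Phi_{\pm}$ and $\Psi_{\pm}$ of \eqref{defn: Phi} and \eqref{defn: Psi} are exactly the terms produced, which is what the next stage of the argument requires.
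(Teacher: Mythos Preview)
Your proposal is correct and matches the paper's proof exactly: the paper simply states that the result follows from the Approximate Functional Equation (Theorem~\ref{theorem:approximate functional equation}) upon choosing $A = \Norm_K(\partial)^{\alpha}\abs{D_K}^{\alpha}$. You have just spelled out in detail the substitutions and the use of $\Norm_K(\partial) = \Norm_K(\Delta_{\gamma})\Norm_K(S_{\gamma})^2$ that the paper leaves implicit.
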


\begin{proof}
This follows from the Approximate Functional Equation upon choosing
\[
A = \Norm_K(\partial)^{\alpha}\abs{D_K}^{\alpha}.
\qedhere
\]
\end{proof}

Using Proposition~\ref{prop: precise value of A} and the definitions of $\Psi_{\pm}$ and $\Phi_{\pm}$, we get
\begin{tiny}
\begin{equation}
\begin{split}
 R(f)
 &= {\abs{D_K}^{1/2} \sp^{-\sk/2}\sum_{u\in\cO_K^*}} \sum_{\tau\in L(u)} \theta^{\pm}(\tau, u) \left(\sum_{\fd\mid S_{\gamma}}\frac{1}{\Norm_K(\fd)} \sum_{\fa} \frac{\chi_{\fd}{(\fa)}}{\Norm_K(\fa)}F\left(\frac{\Norm_K(\fd)^2 \Norm_K(\fa)}{\Norm_K(\partial)^{\alpha}\abs{D_K}^{\alpha}}\right)\right)\\
 &+ \frac{\abs{D_K}^{1/2} \sp^{-\sk/2}}{ \Norm_K(\partial)^{1/2}\abs{D_K}^{1/2}}
 \sum_{u\in\cO_K^*}\sum_{\tau\in L(u)}\sum_{\fd\mid S_{\gamma}}\frac{1}{\Norm_K(\fd)^{-1}} \sum_{\fa}\chi_{\fd}(\fa)H_{\gamma}\left(1, \frac{\Norm_K(\fd)^{2}\Norm_K(\fa)}{\Norm_K(\partial)^{1 - \alpha}\abs{D_K}^{1 - \alpha}}\right)\\
&= {\abs{D_K}^{1/2} \sp^{-\sk/2}} \sum_{u\in\cO_K^*}\sum_{\tau\in L(u)}\left(\sum_{\fd\mid S_{\gamma}}\frac{1}{\Norm_K(\fd)} \sum_{\fa} \frac{\chi_{\fd}{(\fa)}}{\Norm_K(\fa)}\Phi_{\pm}(\tau, u)
 + \sum_{u\in\cO_K^*}\sum_{\fd\mid S_{\gamma}}\Norm_K(\fd) \sum_{\fa}\chi_{\fd}(\fa)\Psi_{\pm}(\tau, u)\right)\\
&= {\abs{D_K}^{1/2} \sp^{-\sk/2}} \sum_{u\in\cO_K^*}\sum_{\tau\in L(u)}\sum_{\fd\mid S_{\gamma}}\left(\frac{1}{\Norm_K(\fd)} \sum_{\fa} \frac{\chi_{\fd}{(\fa)}}{\Norm_K(\fa)}\Phi_{\pm}(\tau, u)
 + \Norm_K(\fd) \sum_{\fa}\chi_{\fd}(\fa)\Psi_{\pm}(\tau, u)\right)\\
&= {\abs{D_K}^{1/2} \sp^{-\sk/2}} \sum_{u\in\cO_K^*}\sum_{\tau\in L(u)}\sum_{\fd\mid S_{\gamma}}\sum_{\fa}\chi_{\fd}(\fa)\left(\frac{\Phi_{\pm}(\tau, u)}{\Norm_K(\fd)\Norm_K(\fa)} + \Norm_K(\fd)\Psi_{\pm}(\tau, u)\right).
\end{split}
\label{eqn: RU in terms of PhiU and PsiU}
\end{equation}
\end{tiny}

\section{The Problem of Completion}
\label{section: the problem of completion}

In this section, we perform two main tasks.
First, we replace the sum over ideals $\fd \mid S_\gamma$ arising in the expression of the regular elliptic part by a congruence condition.
Second, we introduce a modified Hilbert symbol which correctly detects the character $\chi_{\fd}(-)$ in our case and clarifies the dependence on $\tau$.
These two steps will then allow us to `complete the lattice' and set the stage for Poisson summation.
 
\subsection{The Congruence Conditions}
\label{section: congruence condition}

So far we have dealt with the condition $\fd\mid S_{\gamma}$.
However, to perform Poisson summation over the variables $u, \tau$ we need to be explicit regarding its role.
We will now aim to replace this divisibility condition by a congruence condition as was done by Altu\u{g}.
The main result of this section is stated below.

Let $\fp_1,\cdots,\fp_n$ be the primes above $2$ in $\cO_K$, that is,
\[
2\cO_K = \fp_1\cdots\fp_n.
\]
We assume $\pi_{\fp_i}$ is a uniformizer.

\begin{theorem}
Let $\gamma$ be a contributing conjugacy class.
The following two conditions are equivalent
\begin{enumerate}[label = \textup{(\roman*)}]
 \item $\fd\mid S_{\gamma}$,
\item $\fd^2\mid \tau^2 - 4\det(\gamma)$ and for each $i = 1, \cdots, n$ the following congruence condition holds in $\cO_{K_{\fp_i}}$,
\[
\frac{\tau^2 - 4\det(\gamma)}{\pi_{\fp_i}^{2\val_{\fp_i}(\fd)}} \equiv 0, 1 \pmod{\pi_{\fp_i}^2}.
\]
\end{enumerate}
\end{theorem}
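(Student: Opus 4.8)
The plan is to reduce the ideal-theoretic statement $\fd\mid S_\gamma$ to a purely local condition at each prime $\fq$ of $\cO_K$, and then to show that at the odd primes the local condition is simply $\val_\fq(\fd)\le \val_\fq(S_\gamma)$ rephrased as a divisibility of $\partial := \tau^2-4\det(\gamma)$, while at the primes above $2$ one picks up an extra congruence modulo $\pi_{\fp_i}^2$. The starting point is the defining relation $S_\gamma^2 = D_{L/K}(\gamma)/\Delta_\gamma$ together with the factorization $(\partial) = (\lambda_1-\lambda_2)^2 = D_{L/K}(\gamma)$, which gives $(\partial) = S_\gamma^2\,\Delta_\gamma$ as ideals of $\cO_K$. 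Since $\Delta_\gamma$ is the relative discriminant of the quadratic extension $L/K$, it is supported only at ramified primes, so $\val_\fq(\Delta_\gamma)\in\{0,1\}$ for $\fq\nmid 2$ and $\val_\fq(\Delta_\gamma)$ can be larger (up to $2e_\fq+1$ in the wild case) for $\fq\mid 2$. This is the arithmetic input that makes the two-primes-above-$2$ behaviour genuinely different.

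First I would treat a single prime $\fq\nmid 2$. Here $2$ is a unit in $\cO_{K_\fq}$, so completing the square shows $L_\fq = K_\fq(\gamma)$ is generated by $\sqrt{\partial}$, and the local extension is unramified or split iff $\val_\fq(\partial)$ is even, ramified iff it is odd; in all cases $\val_\fq(\Delta_\gamma) = \val_\fq(\partial)\bmod 2$, hence $\val_\fq(S_\gamma) = \lfloor \val_\fq(\partial)/2\rfloor$. Therefore $\val_\fq(\fd)\le\val_\fq(S_\gamma)$ is equivalent to $2\val_\fq(\fd)\le\val_\fq(\partial)$, i.e. $\fq^{2\val_\fq(\fd)}\mid\partial$, which at odd primes is exactly the condition $\fd^2\mid\partial$ restricted to $\fq$ with no congruence needed (the congruence $\partial/\pi_{\fp_i}^{2\val}\equiv 0,1\pmod{\pi_{\fp_i}^2}$ only appears at the $\fp_i\mid 2$). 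So for $\fq\nmid 2$, condition (i) localized at $\fq$ and condition (ii) localized at $\fq$ coincide.

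Next I would handle a prime $\fq = \fp_i$ above $2$. The key classical fact is the local theory of quadratic extensions of a $2$-adic field: writing $\partial = \pi_\fq^{2m}\delta$ with $\delta$ a unit times possibly one factor of $\pi_\fq$, the extension $K_\fq(\sqrt{\partial})/K_\fq$ is unramified precisely when, after removing the largest even power of $\pi_\fq$, the remaining element is $\equiv \square \pmod{\pi_\fq^2}$ (equivalently $\equiv 0,1$ in the residue-plus-one-digit sense), and otherwise ramified. Translating through $(\partial) = S_\gamma^2\Delta_\gamma$ and the structure of $\Delta_\gamma$ at $\fq$, one gets that $\val_\fq(S_\gamma) = \val$ is characterized by: $\pi_\fq^{2\val}\mid\partial$ and $\partial/\pi_\fq^{2\val}\equiv 0,1\pmod{\pi_\fq^2}$ — the "$0$" case corresponding to one more factor of $\pi_\fq$ still dividing (pushing into $\Delta_\gamma$ via ramification), the "$1$" case to a unit square (unramified). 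Then $\fd\mid S_\gamma$ localized at $\fq$ says $\val_\fq(\fd)\le\val_\fq(S_\gamma)$, and unwinding this monotonicity in $\val$ gives exactly $\fq^{2\val_\fq(\fd)}\mid\partial$ together with $\partial/\pi_\fq^{2\val_\fq(\fd)}\equiv 0,1\pmod{\pi_\fq^2}$. Assembling the local statements over all $\fq$ — using that $\fd\mid S_\gamma$ and $\fd^2\mid\partial$ are both equivalent to their collections of local conditions, and that the congruences at odd primes are automatic once $\fd^2\mid\partial$ — yields the theorem. I expect the main obstacle to be the bookkeeping in the wildly ramified $2$-adic case: verifying that the single congruence modulo $\pi_{\fp_i}^2$ (rather than a longer congruence modulo $\pi_{\fp_i}^{2e_{\fp_i}+1}$) genuinely captures the stabilization of $\val_\fq(S_\gamma)$, which is precisely the computation carried out in \cite{malors21}; accordingly I would quote the relevant proposition of \emph{op.~cit.} for that local input rather than reprove it here.
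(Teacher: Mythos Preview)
Your proposal is correct and aligns with the paper's approach: the paper simply cites \cite[Proposition~9]{malors21} for this result, and your plan sketches the local-at-$\fq$ reduction that underlies that proposition before deferring the delicate $2$-adic bookkeeping to the same reference. In effect you are giving an expanded outline of the argument that the paper treats as a black box.
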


\begin{proof}
This is a special case of \cite[Proposition~9]{malors21}.
\end{proof}

Using this result we have
\[
R(f)
= {\abs{D_K}^{1/2} \sp^{-\sk/2}} \sum_{u\in \cO_K^*}\sum_{\tau\in L(u)}\sum_{\fd^2\mid \tau^2-4u\varepsilon}^{'}\sum_{\fa}\chi_{\fd}(\fa)\left(\frac{\Phi_{\pm}(\tau,u)}{\Norm_K(\fd)\Norm_K(\fa)} + \Norm_K(\fd)\Psi_{\pm}(\tau,u)\right),
\]
where following Altu\u{g} we use $'$ to denote the congruence conditions given by the previous theorem.

\subsection{The Normalized Character}
\label{subsection: The Normalized Character}

The primary goal of this section is to clarify the dependence of $\chi_{\fd}$ on $\tau$.

\begin{remark}
Over $\Q$, the ideals can be replaced by rational elements and the Kronecker symbol satisfies
\[
\chi_{d}(a) = \binom{\frac{\tau^2 - 4p^{\sk}}{d^2}}{a}_{K},
\]
making the following step unnecessary (see \cite[(4)]{AliI}).
However, we want to provide a uniform proof and obtain an expression similar to the one above for $\chi_{\fd}$.
\end{remark}

\subsubsection{}
We begin by recalling the definition of the Hilbert Symbol in our context.

\begin{definition}
\label{defi: restricted Hilbert}
Let $K$ be a number field and let $\fq$ be a prime of $K$.
The \emph{Hilbert Symbol} of $K_{\fq}$ is defined in $K_{\fq}^*\times K_{\fq}^*$ given by 
\[
 \binom{a, b}{\fq}_H = 
 \left\{
	\begin{array}{ll}
		1 & \text{ if } z^2 = ax^2 + by^2 \text{ has solutions in } \cO_{K_q} \\ 
		-1 & \text{ otherwise}.
	\end{array}
\right.
\]
Fix a uniformizer $\pi_{\fq}$ and define the \emph{restricted Hilbert Symbol} as the function in $\cO_{K_{\fq}}^*$ given by
\[
\binom{a}{\fq}_{rH} = 
 \left\{
	\begin{array}{ll}
		 \binom{a, \pi_{\fq}}{\fq}_{\tiny{H}}& \text{ if } a\in\cO_{K_{\fq}}^* \\
		0 & \text{ otherwise}
	\end{array}
\right.
\]
and we extend it to $K_{\fq}$ as $0$ for all $a$ without $\abs{a}_{\fq} \neq 1$.
\end{definition}

\begin{remark}
For primes $\fq\nmid 2$, the restricted Hilbert Symbol coincides with the quadratic character of $\cO_{K_{\fq}}^*$, i.e., the character whose value is $\pm 1$ according as $a$ is a square or not.
It is independent of the choice of the uniformizer $\pi_{\fq}$, see \cite[chapter 4]{Lem_RecLaw}.
For the primes above $2$ this is not the case but the above definition will allow us make explicit the character $\chi_{\fd}$.
\end{remark}

We record the following fact of the Hilbert Symbol that will be useful for us.
We include the proof of (ii) as we were not able to find it written in the literature.

\begin{lemma}\label{periodicity hilbert symbols}
Let $\fq$ be a prime of $K$ and $a_1, a_2$ elements of $\cO_{K_{\fq}}^*$.
Then
\begin{enumerate}[label = \textup{(\roman*)}]
\item \label{case 1 q not divides 2 periodicity}If $\fq\nmid 2$ and $a_1\equiv a_2 \pmod{\pi_{\fq}}$, then
\[
\binom{a_1, \pi_{\fq}}{\fq}_H= \binom{a_2, \pi_{\fq}}{\fq}_H.
\]
\item\label{case 2 q divides 2 rami eq} If $\fq\mid 2$, has ramification degree $e_{\fq}$, and $a_1\equiv a_2 \pmod{\pi_{\fq}^{2e_{\fq} + 1}}$, then
\[
\binom{a_1, \pi_{\fq}}{\fq}_H= \binom{a_2, \pi_{\fq}}{\fq}_H.
\]
\end{enumerate}
\end{lemma}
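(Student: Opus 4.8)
\textbf{Proof proposal for Lemma~\ref{periodicity hilbert symbols}.}

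The plan is to prove both parts by reducing to the statement that the square classes of $\cO_{K_\fq}^*$, together with the fixed uniformizer $\pi_\fq$, are already detected after a finite truncation of the power series expansion, and then invoking bimultiplicativity of the Hilbert symbol. Concretely, recall the basic identity $\binom{a_1,\pi_\fq}{\fq}_H \binom{a_2,\pi_\fq}{\fq}_H = \binom{a_1 a_2,\pi_\fq}{\fq}_H$, so that $\binom{a_1,\pi_\fq}{\fq}_H = \binom{a_2,\pi_\fq}{\fq}_H$ is equivalent to $\binom{a_1 a_2^{-1},\pi_\fq}{\fq}_H = 1$. Thus in both cases it suffices to show: if $c \in \cO_{K_\fq}^*$ satisfies $c \equiv 1 \pmod{\pi_\fq^{m}}$ for the relevant exponent $m$ (namely $m=1$ when $\fq\nmid 2$ and $m = 2e_\fq+1$ when $\fq\mid 2$), then $c$ is a square in $\cO_{K_\fq}^*$, and hence $\binom{c,\pi_\fq}{\fq}_H = 1$ since $z^2 = c x^2 + \pi_\fq y^2$ has the solution $(x,y,z) = (1,0,\sqrt c)$.

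First I would handle part~\ref{case 1 q not divides 2 periodicity}. Here $2 \in \cO_{K_\fq}^*$, and the squaring map on $1 + \pi_\fq\cO_{K_\fq}$ is a bijection onto itself: this is the standard Hensel-type argument applied to $f(X) = X^2 - c$ with $c \equiv 1 \pmod{\pi_\fq}$, using that $f'(1) = 2$ is a unit, so the approximate root $X_0 = 1$ lifts to an exact square root in $1 + \pi_\fq\cO_{K_\fq}$. Therefore $c \equiv 1 \pmod{\pi_\fq}$ implies $c$ is a square, and the claim follows from bimultiplicativity as above.

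For part~\ref{case 2 q divides 2 rami eq}, the residue characteristic is $2$ and $2 = \pi_\fq^{e_\fq} \cdot (\text{unit})$, so the naive derivative bound fails; the correct statement is that $1 + \pi_\fq^{2e_\fq+1}\cO_{K_\fq} \subseteq (\cO_{K_\fq}^*)^2$. I would prove this via the quadratic form of Hensel's lemma (the version requiring $|f(X_0)|_\fq < |f'(X_0)|_\fq^2$): for $f(X) = X^2 - c$ with $c \equiv 1 \pmod{\pi_\fq^{2e_\fq+1}}$, take $X_0 = 1$, so $|f(X_0)|_\fq = |1 - c|_\fq \le |\pi_\fq|_\fq^{2e_\fq+1}$ while $|f'(X_0)|_\fq^2 = |2|_\fq^2 = |\pi_\fq|_\fq^{2e_\fq}$; since $2e_\fq + 1 > 2e_\fq$ the hypothesis is satisfied and $c$ has a square root in $\cO_{K_\fq}$. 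Again bimultiplicativity finishes the proof. The main obstacle — and the reason, presumably, that the authors remark they could not find it in the literature — is getting the exponent $2e_\fq+1$ exactly right: one must be careful that it is the sharp threshold coming from $\val_\fq(2) = e_\fq$ in the dyadic Hensel estimate, and that no smaller exponent works in general (e.g. over $\Q_2$, where $e_\fq = 1$ and indeed $1 + 8\Z_2 = (\Z_2^*)^2$ but $1 + 4\Z_2 \not\subseteq (\Z_2^*)^2$). I would include a one-line sanity check against this classical case to confirm the constant.
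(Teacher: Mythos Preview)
Your proposal is correct, and it takes a route that is close in spirit to, but organized differently from, the paper's own proof.

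Both arguments rest on the same Hensel-type fact: units congruent to a square modulo $\pi_\fq^{m}$ (with $m=1$ if $\fq\nmid 2$ and $m=2e_\fq+1$ if $\fq\mid 2$) are themselves squares. The paper, however, does not pass through bimultiplicativity. It works directly with the defining quadratic form: assuming $\binom{a_1,\pi_\fq}{\fq}_H=1$, it takes a solution $Z_0^2 = a_1 X_0^2 + \pi_\fq Y_0^2$, runs an infinite-descent argument to arrange $X_0, Z_0 \in \cO_{K_\fq}^*$, observes that replacing $a_1$ by $a_2$ perturbs the right-hand side only modulo $\pi_\fq^{2e_\fq+1}$, and then Hensel-lifts the approximate square root $Z_0$ to an exact root $Z_1$ of $a_2 X_0^2 + \pi_\fq Y_0^2$. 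Symmetry then forces both symbols to agree. Your reduction via $c = a_1 a_2^{-1}$ and bimultiplicativity is cleaner --- it sidesteps the descent normalization entirely and isolates the structural content ($1+\pi_\fq^{m}\cO_{K_\fq} \subset (\cO_{K_\fq}^*)^2$) --- whereas the paper's argument stays closer to the raw definition of the symbol and would transport more readily to settings where multiplicativity is not immediately available.
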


\begin{proof}
Both these assertions are applications of Hensel's Lemma.
We show (ii) and leave (i) as an exercise for the reader.
If $\fq$ ramifies, then Hensel's lemma asserts that $u\in \cO_{K_{\fq}}^*$ is a square if and only if $u$ is a square modulo $\pi^{2e_{\fq}+1}_{\fq}$.

If $\binom{a_1, \pi_{\fq}}{\fq}_H = 1$ then there exist $X_0, Y_0, Z_0$ (not all equal to 0) satisfying
\[
Z_0^2 = a_1X_0^2 + \pi_{\fq}Y_0^2.
\]
Using an infinite descent argument, we assume without loss of generality that $Z_0, X_0$ are in $\cO_{K_{\fq}}^*$.
In particular,
\[
a_2X_0^2 + \pi_{\fq}Y_0^2 \equiv a_1X_0^2 + \pi_{\fq}Y_0^2 \equiv Z_0^2\pmod{ \pi_{\fq}^{2e_{\fq}+1}},
\]
where the first congruence follows from the assumptions.
In view of Hensel's Lemma, we deduce that $a_2X_0^2 + \pi_{\fq}Y_0^2$ is a square; i.e., there exists $Z_1\in \cO_{K_{\fq}}^*$ with
\[
Z_1^2 = a_2X_0^2 + \pi_{\fq}Y_0^2.
\]
Thus, the Hilbert symbol on the right is also $1$.
Both sides play symmetric roles so we deduce that both sides are either $\pm 1$.
This concludes the proof.
\end{proof}

\subsubsection{}
Now, we define a symbol which allows us to recover the characters $\chi_{\fd}$.
It also clarifies the role of $\tau$ and $u$.

\begin{definition}
\label{defi: our symbol}
Let $\fa, \mathfrak{b}$ be any ideals of $\cO_K$ and let $\alpha$ be an element of $K$.
For each prime ideal $\fq$ of $K$, let $\pi_{\fq}$ be a uniformizer of $K_{\fq}$.
Define the \emph{modified Hilbert symbol}, with respect to the $\pi_{\fq}$, as
\[
 \binom{\alpha, \fa}{\mathfrak{b}} = \prod_{\fq\mid \mathfrak{b}}\binom{\alpha \pi_{\fq}^{-2\val_{\fq}(\fa)}}{\fq}^{\val_{\fq}(\mathfrak{b})}_{rH}.
\]
\end{definition}
\begin{remark}
 The modified Hilbert symbol doesn't depend on the choice of the uniformizer because from the definition of the Artin symbol 
\[
 \binom{\alpha, \pi_{\fq}}{\fq}\sqrt{\alpha} = \left(\pi_{\fq}, K_{\fq}(\gamma)/K_{\fq}\right)_A\sqrt{\alpha}.
\]
and the Artin Symbol doesn't depend on uniformizers.
\end{remark}

Now we prove a crucial property of the modified symbol:
it recovers the quadratic Hecke character at the primes.

\begin{lemma}
\label{lemma: to check}
For $u\in \cO_K^*$ and $\tau\in L(u)$, let $\gamma$ be the regular elliptic class corresponding to the pair $(\tau,u)$.
Let $\fq$ be any prime of $\cO_K$.
Then,
\[
 \chi_{\gamma}(\fq) 
 = \binom{\tau^2 - 4u\varepsilon, S_{\gamma}}{\fq} =\binom{(\tau^2 - 4u\varepsilon)\pi_{\fq}^{-2\val_{\fq}(S_{\gamma})}}{\fq}_{rH}.
\]
\end{lemma}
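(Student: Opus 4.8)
\textbf{Proof proposal for Lemma~\ref{lemma: to check}.}

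The plan is to reduce the identity to a purely local statement at the prime $\fq$ and then split into the cases $\fq \nmid 2$ and $\fq \mid 2$, handling the residual ramification behaviour via the local class field theory description of $\chi_\gamma$. First I would unwind the definitions: by Definition~\ref{defi: our symbol} the modified Hilbert symbol $\binom{\tau^2-4u\varepsilon,\, S_\gamma}{\fq}$ is the single local factor $\binom{(\tau^2-4u\varepsilon)\pi_\fq^{-2\val_\fq(S_\gamma)}}{\fq}_{rH}$ (the product over primes dividing $S_\gamma$ collapses to $\fq$, with exponent $1$ when $\fq\mid S_\gamma$, and when $\fq\nmid S_\gamma$ both sides are checked to agree as explained below). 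Next, recall from Definition~\ref{def:sgamma} and Lemma~\ref{behaviour k} that $\val_\fq(S_\gamma)=n_{\gamma,\fq}$, so $2\val_\fq(S_\gamma)$ is exactly the $\fq$-adic valuation of $(\tau^2-4u\varepsilon)/\Delta_\gamma$; thus $\delta_\fq := (\tau^2-4u\varepsilon)\pi_\fq^{-2\val_\fq(S_\gamma)}$ is, up to the unit accounting for $\Delta_\gamma$, the local discriminant of $L_\fq/K_\fq$. The key point is that $\binom{\delta_\fq}{\fq}_{rH}$ detects whether $\delta_\fq$ is a square in $\cO_{K_\fq}^*$ (when $|\delta_\fq|_\fq=1$), and this is precisely the splitting behaviour of $\fq$ in $L=K(\gamma)$ — which by the $\chi_\gamma$ table in the introduction is $\chi_\gamma(\fq)$.

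The argument proceeds by cases. \textbf{Case $\fq$ unramified in $L$}: Then $\Delta_\gamma$ is a unit at $\fq$, so $|\delta_\fq|_\fq=1$ and $\binom{\delta_\fq}{\fq}_{rH}=\binom{\delta_\fq,\pi_\fq}{\fq}_H$, which equals $1$ iff $z^2=\delta_\fq x^2 + \pi_\fq y^2$ has a nontrivial solution in $\cO_{K_\fq}$. Standard properties of the Hilbert symbol for a unit against a uniformizer give that this equals $1$ iff $\delta_\fq\in (\cO_{K_\fq}^*)^2$, i.e. iff $K_\fq(\sqrt{\delta_\fq})=K_\fq$. Since $L_\fq=K_\fq(\sqrt{\tau^2-4u\varepsilon})=K_\fq(\sqrt{\delta_\fq})$ (the two differ by a square), this is $\chi_\gamma(\fq)=1$ (split) versus $\chi_\gamma(\fq)=-1$ (inert), as required; and here $n_{\gamma,\fq}=0$ so the symbol reads against $\tau^2-4u\varepsilon$ itself when $\fq\nmid S_\gamma$. \textbf{Case $\fq$ ramified in $L$}: here $\chi_\gamma(\fq)=0$, so we must show $\binom{\delta_\fq}{\fq}_{rH}=0$, i.e. $|\delta_\fq|_\fq\neq 1$. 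This follows because ramification forces $\val_\fq(\Delta_\gamma)\geq 1$, hence $\val_\fq(\delta_\fq)=\val_\fq(\Delta_\gamma)>0$, so $\delta_\fq\notin\cO_{K_\fq}^*$ and the restricted Hilbert symbol vanishes by Definition~\ref{defi: restricted Hilbert}. For $\fq\nmid 2$ the identification of $\binom{\cdot}{\fq}_{rH}$ with the quadratic residue character (noted in the remark after Definition~\ref{defi: restricted Hilbert}) makes the unramified case immediate; for $\fq\mid 2$ one instead invokes that $\delta_\fq \equiv 0,1 \pmod{\pi_\fq^2}$ (from the congruence characterization of $\fd\mid S_\gamma$ with $\fd$ the exact $\fq$-power dividing $S_\gamma$) together with Lemma~\ref{periodicity hilbert symbols}\ref{case 2 q divides 2 rami eq} to control the Hilbert symbol from finite data, and then the relation to $\chi_\gamma$ via the idèlic Artin map exactly as in the proof of Theorem~\ref{thm: reinterpretation of C}.

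The cleanest uniform route, and the one I would actually write, is to go directly through Artin reciprocity, mirroring the remark after Definition~\ref{defi: our symbol}: the restricted Hilbert symbol $\binom{\alpha,\pi_\fq}{\fq}_H$ is the value of the local Artin symbol $(\pi_\fq, K_\fq(\sqrt\alpha)/K_\fq)$ acting on $\sqrt\alpha$, i.e. $\pm1$ according as $\pi_\fq$ is or is not a local norm from $K_\fq(\sqrt\alpha)$. Taking $\alpha=\delta_\fq$, so that $K_\fq(\sqrt\alpha)=L_\fq$, the local-global compatibility of $\chi_\gamma$ (it is the idèlic Hecke character attached to $L/K$, whose $\fq$-component is the local Artin character of $L_\fq/K_\fq$) shows that $\chi_\gamma(\fq)$ equals $\rho_\gamma$ evaluated at a uniformizer at $\fq$, which is exactly $\binom{\delta_\fq,\pi_\fq}{\fq}_H$ when $\fq$ is unramified and $0$ when $\fq$ is ramified. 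The main obstacle is the bookkeeping at primes above $2$: there the restricted Hilbert symbol genuinely depends on the uniformizer and on $\delta_\fq$ only modulo $\pi_\fq^{2e_\fq+1}$ rather than modulo $\pi_\fq$, so one must carefully check that the power $\pi_\fq^{-2\val_\fq(S_\gamma)}$ appearing in the definition lands $\delta_\fq$ in the right unit square class — this is where the congruence condition $\delta_\fq\equiv 0,1\pmod{\pi_\fq^2}$ and Lemma~\ref{periodicity hilbert symbols} do the essential work, and where being careless about whether one means an equality of ideals or of elements would cause trouble.
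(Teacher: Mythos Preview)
Your proposal is correct and follows essentially the same approach as the paper: compute $\val_\fq(\delta_\fq)=\val_\fq(\Delta_\gamma)$ from $(\tau^2-4u\varepsilon)=S_\gamma^2\Delta_\gamma$, conclude both sides vanish when $\fq$ is ramified, and in the unramified case identify the restricted Hilbert symbol with $\chi_\gamma(\fq)$ via the quadratic residue character (for $\fq\nmid 2$) or directly via the local Artin symbol on a uniformizer (for $\fq\mid 2$). Your ``cleanest uniform route'' paragraph is exactly the paper's argument; the preceding detour through the congruence condition $\delta_\fq\equiv 0,1\pmod{\pi_\fq^2}$ and Lemma~\ref{periodicity hilbert symbols} for $\fq\mid 2$ is unnecessary and does not appear in the paper, and note that the product in Definition~\ref{defi: our symbol} runs over primes dividing $\mathfrak{b}=\fq$, not over primes dividing $S_\gamma$, so there is nothing to ``collapse''.
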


\begin{proof}
First, consider the case that $\fq\nmid 2$.
Since
\[
(\tau^2 - 4u\det(\gamma)) = (\tau^2 - 4u\varepsilon) = S_{\gamma}^2\Delta_{\gamma},
\]
we deduce
\[
\val_{\fq}(\tau^2 - 4u\varepsilon) - 2\val_{\fq}(S_{\gamma}) = \val_{\fq}(\Delta_{\gamma}).
\]
The upper entry of the modified Hilbert symbol has $\fq$-valuation equal to that of $\Delta_{\gamma}$.

Note that if $\fq$ is a ramified prime, then both sides of the equality (we wish to prove) are $0$.
If $\fq$ is not ramified, then $\alpha:= (\tau^2 - 4u\varepsilon){\pi_{\fq}^{-2\val_{\fq}(S_{\gamma})}}$ is an element of $\cO^*_{K_{\fq}}$.
By Hensel's Lemma (applied to the polynomial $X^2 - \alpha$) we know that $\alpha$ is a square in $\cO_{K_\fq}$ if and only if $\alpha$ is a square modulo $\fq$.
Recall that $\alpha$ is a square in $\cO_{K_\fq}$ precisely when the extension $K(\sqrt{\alpha})/K$ is either degree $1$ or $2$; i.e., $\fq$ is either split or inert.
On the other hand, the condition $\alpha$ is a square (resp. non-square) modulo $\fq$ is equivalent to the symbol being $1$ (resp. $-1$) by definition.
This concludes the proof for odd primes.

Now, suppose that $\fq\mid 2$.
If $\fq$ is ramified, the proof is the same as before.
So, we only consider the case that $\fq\mid 2$ and is unramified.
It follows from the definition of the Artin symbol that
\[
 \binom{\alpha, \pi_{\fq}}{\fq}\sqrt{\alpha} = \left(\pi_{\fq}, K_{\fq}(\gamma)/K_{\fq}\right)_A\sqrt{\alpha}.
\]
Since $\pi_{\fq}$ is a uniformizer, the Artin symbol $\left(\pi_{\fq}, K_{\fq}(\gamma)/K_{\fq}\right)_A$ is the Frobenius element of $\fq$.
This takes the value $1$ or $-1$ according as $\fq$ is split or inert.
This is precisely the definition of $\chi_{\gamma}(\fq)$.
Hence,
\[
\binom{\alpha, \pi_{\fq}}{\fq} = \chi_{\gamma}(\fq).
\]
This completes the proof.
\end{proof}

\begin{theorem}
\label{thm: 7-9}
Let $u$ be an element of $\cO^*_K$ and let $\tau\in L(u)$.
Denote by $\gamma$ the regular elliptic class that corresponds to the pair $(\tau,u)$.
Let $\fd$ be an ideal of $\cO_K$ satisfying $\fd\mid S_{\gamma}$ and let $\fa$ be any ideal of $\cO_K$.
Write
\[
\fa = \prod \fq^{v_{\fq}}, \; \fd = \prod \fq^{r_{\fq}},
\]
where $v_{\fq}$ or $r_{\fq}$ may be equal to 0.
Then,
\[
\chi_{\fd}(\fa) = \binom{\tau^2 - 4u\varepsilon, \fd}{\fa}.
\]
\end{theorem}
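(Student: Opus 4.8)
The plan is to reduce the equality of the imprimitive character $\chi_{\fd}(\fa)$ with the modified Hilbert symbol $\binom{\tau^2-4u\varepsilon,\fd}{\fa}$ to a prime-by-prime check, and then invoke Lemma~\ref{lemma: to check} on each prime factor. Recall from Definition~\ref{average L function definition} that $\chi_{\fd}$ is the imprimitive character obtained from $\chi_{\gamma}$ by setting $\chi_{\fd}(\fq)=0$ for each $\fq\mid S_{\gamma}/\fd$ and keeping $\chi_{\fd}(\fq)=\chi_{\gamma}(\fq)$ otherwise; since $\chi_{\fd}$ is completely multiplicative on ideals, writing $\fa=\prod_{\fq}\fq^{v_{\fq}}$ gives
\[
\chi_{\fd}(\fa)=\prod_{\fq}\chi_{\fd}(\fq)^{v_{\fq}}.
\]
On the other side, Definition~\ref{defi: our symbol} gives
\[
\binom{\tau^2-4u\varepsilon,\fd}{\fa}=\prod_{\fq\mid\fa}\binom{(\tau^2-4u\varepsilon)\,\pi_{\fq}^{-2\val_{\fq}(\fd)}}{\fq}_{rH}^{\val_{\fq}(\fa)}
=\prod_{\fq}\binom{(\tau^2-4u\varepsilon)\,\pi_{\fq}^{-2r_{\fq}}}{\fq}_{rH}^{v_{\fq}}.
\]
So it suffices to show, for each prime $\fq$, that $\chi_{\fd}(\fq)=\binom{(\tau^2-4u\varepsilon)\,\pi_{\fq}^{-2r_{\fq}}}{\fq}_{rH}$, where $r_{\fq}=\val_{\fq}(\fd)$.

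First I would dispose of the primes $\fq$ with $\fq\mid S_{\gamma}/\fd$, equivalently $r_{\fq}<\val_{\fq}(S_{\gamma})$. For such $\fq$ the left side is $\chi_{\fd}(\fq)=0$ by construction. For the right side, the $\fq$-valuation of $(\tau^2-4u\varepsilon)\pi_{\fq}^{-2r_{\fq}}$ is $\val_{\fq}(\tau^2-4u\varepsilon)-2r_{\fq}=2\val_{\fq}(S_{\gamma})+\val_{\fq}(\Delta_{\gamma})-2r_{\fq}$, using $(\tau^2-4u\varepsilon)=S_{\gamma}^2\Delta_{\gamma}$ (Definition~\ref{def:sgamma}). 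Since $r_{\fq}<\val_{\fq}(S_{\gamma})$ this valuation is $\geq 2+\val_{\fq}(\Delta_{\gamma})\geq 1$, so the element is not a unit at $\fq$, and by the extension of $\binom{\cdot}{\fq}_{rH}$ to all of $K_{\fq}$ as $0$ off the units (Definition~\ref{defi: restricted Hilbert}), the symbol vanishes. Hence both sides are $0$.

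Next, the primes $\fq$ with $r_{\fq}=\val_{\fq}(\fd)$ not constrained by $\fq\mid S_{\gamma}/\fd$; here one expects $r_{\fq}=\val_{\fq}(S_{\gamma})$ when $\fq\mid S_{\gamma}$, and $r_{\fq}=0$ otherwise, but in all cases $r_{\fq}\leq\val_{\fq}(S_{\gamma})$ since $\fd\mid S_{\gamma}$. If $\fq\mid\Delta_{\gamma}$ (ramified case) both sides are $0$: the left by $\chi_{\gamma}(\fq)=0$, the right because Lemma~\ref{lemma: to check} already handled $r_{\fq}=\val_{\fq}(S_{\gamma})$, and for smaller $r_{\fq}$ the element again fails to be a unit. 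The genuinely substantive case is $\fq\nmid S_{\gamma}/\fd$ and $\fq$ unramified in $L/K$, where I claim $\binom{(\tau^2-4u\varepsilon)\pi_{\fq}^{-2r_{\fq}}}{\fq}_{rH}=\binom{(\tau^2-4u\varepsilon)\pi_{\fq}^{-2\val_{\fq}(S_{\gamma})}}{\fq}_{rH}=\chi_{\gamma}(\fq)=\chi_{\fd}(\fq)$. The middle equality is exactly Lemma~\ref{lemma: to check}, and the last holds because $\fq\nmid S_{\gamma}/\fd$. For the first equality, note that when $\fq\nmid S_{\gamma}/\fd$ and $\fq$ is unramified we actually have $r_{\fq}=\val_{\fq}(S_{\gamma})$ (if $\fq\nmid S_{\gamma}$, both are $0$; if $\fq\mid S_{\gamma}$, then $\fq\nmid S_{\gamma}/\fd$ forces $\val_{\fq}(\fd)=\val_{\fq}(S_{\gamma})$), so the two symbols literally coincide. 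Assembling the three cases over all $\fq$ and taking the product gives $\chi_{\fd}(\fa)=\binom{\tau^2-4u\varepsilon,\fd}{\fa}$.

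The main obstacle, and the place to be careful, is bookkeeping the exponent $r_{\fq}=\val_{\fq}(\fd)$ against $\val_{\fq}(S_{\gamma})$: one must verify that the only way the modified symbol can be nonzero at $\fq$ is when $r_{\fq}=\val_{\fq}(S_{\gamma})$ and $\fq$ is unramified, so that Lemma~\ref{lemma: to check} applies verbatim; every other configuration forces a common value of $0$. There is also a subtlety that the restricted Hilbert symbol at primes above $2$ depends a priori on the choice of uniformizer, but the remark following Definition~\ref{defi: our symbol} (via the Artin symbol interpretation) shows the modified symbol is independent of that choice, so the computation is well posed. Once this valuation bookkeeping is pinned down, the proof is a clean reduction to the already-established Lemma~\ref{lemma: to check}.
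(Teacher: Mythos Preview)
Your argument is correct and is essentially the same as the paper's: both reduce to a prime-by-prime comparison, show the symbol vanishes at any $\fq\mid S_\gamma/\fd$ because $(\tau^2-4u\varepsilon)\pi_\fq^{-2r_\fq}$ has positive $\fq$-valuation, and at the remaining primes note that $r_\fq=\val_\fq(S_\gamma)$ so Lemma~\ref{lemma: to check} gives $\chi_\gamma(\fq)=\chi_\fd(\fq)$ directly. The paper organizes the case split by whether $(\fa,\fd')=1$ globally rather than prime-by-prime, but the content is identical; your phrasing in the second case is slightly tangled (once $\fq\nmid S_\gamma/\fd$ the equality $r_\fq=\val_\fq(S_\gamma)$ is forced, not merely ``expected'', and the ``for smaller $r_\fq$'' clause is then vacuous), but nothing is mathematically wrong.
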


\begin{proof}
By definition
\[
 \binom{\tau^2 - 4u\varepsilon, \fd}{\fa} = \prod_{\fq\mid \fa}\binom{(\tau^2 - 4u\varepsilon)\pi_{\fq}^{-2r_{\fq}}}{\fq}^{v_{\fq}}_{rH}.
\]
As before, set $\fd' = S_\gamma/\fd$.
Recall that the definition of $\chi_{\fd}$ is
\[
\chi_{\fd} = \left\{
	\begin{array}{ll}
		\chi_{\gamma}(\fa) & \mbox{if } (\fa, \fd') = 1 \\
		0 & \mbox{if } (\fa, \fd') \neq 1.
	\end{array}
\right.
\]
Hence, we divide the proof into two cases depending on $(\fa, \fd')$.

\underline{\emph{Case 1:}}
If $(\fa, \fd')\neq 1$ then $\chi_{\fd}(\fa) = 0$ by definition and for some $\fq$ both $r_{\fq}$ and $v_{\fq}$ are non-zero.
Hence
\[
\val_{\fq}((\tau^2 - 4u\varepsilon)\pi_{\fq}^{-2r_\fq}) = \val_{\fq}(\fd'\Delta_{\gamma})\ge 1.
\]
Hence, $(\tau^2 - 4u\varepsilon)\pi_{\fq}^{-2r_\fq}$ is not a unit in $\cO_{K_{\fq}}$; and since $v_{\fq}\ge 1$
\[
 \binom{(\tau^2 - 4u\varepsilon)\pi_{\fq}^{-2r_{\fq}}}{\fq}^{v_{\fq}}_{rH} = 0.
\]
By the definition of the modified Hilbert symbol, we have
\[
 \binom{\tau^2 - 4u\varepsilon, \fd}{\fa} = 0.
\]

\underline{\emph{Case 2:}} Let us now suppose $(\fa, \fd') = 1$ and pick a prime $\fq\mid \fa$.
When $\fq$ is ramified, i.e., $\fq\mid \Delta_{\gamma}$,
\[
\binom{\tau^2 - 4u\varepsilon, \fd}{\fa} = 0.
\]
This is because
\[
\val_{\fq}((\tau^2 - 4u\varepsilon)\pi_{\fq}^{-2r_\fq}) = \val_{\fq}(\fd'\Delta_{\gamma})\ge 1.
\]
On the other hand, since a ramified prime divides $\fa$,
\[
\chi_{\fd}(\fa) = \chi_{\gamma}(\fa) = 0.
\]
Thus we are left to consider the case in which all the primes that divide $\fa$ are not ramified.
In this situation, $(\tau^2 - 4u\varepsilon)\pi_{\fq}^{-2r_\fq}$ is a unit for each $\fq\mid \fa$.
We have
\[
\val_{\fq}(S_{\gamma}) = \val_{\fq}(\fd) + \val_{\fq}(\fd') = \val_{\fq}(\fd) = r_\fq.
\]
Hence, 
\begin{align*}
\binom{\tau^2 - 4u\varepsilon, \fd}{\fa} 
 &= \prod_{\fq\mid \fa}\binom{(\tau^2 - 4u\varepsilon)\pi_{\fq}^{-2r_{\fq}}}{\fq}_{rH}^{v_{\fq}}\\
 &= \prod_{\fq\mid \fa}\binom{(\tau^2 - 4u\varepsilon)\pi_{\fq}^{-2\val_{\fq}(S_{\gamma})}}{\fq}_{rH}^{v_{\fq}}\\
 &= \prod_{\fq\mid \fa}\chi_{\gamma} (\fq)^{v_{\fq}} \quad \text{by Lemma}~\ref{lemma: to check}\\
 &= \chi_{\gamma}(\fa).\qedhere
\end{align*}
\end{proof}

Theorem~\ref{thm: 7-9} together with the congruence conditions from \S\ref{section: congruence condition} will allow us to clarify how every term of the trace formula depends on $\tau$.
We now prove that the modified Hilbert symbol is periodic.

\begin{lemma}
\label{prop: periodicity 4ad2}
Let $\tau_1\equiv\tau_2 \pmod{4\fa\fd^2}$ in $\cO_K$.
Then
\[
\binom{\tau_1^2 - 4u\varepsilon, \fd}{\fa} = \binom{\tau_2^2 - 4u\varepsilon, \fd}{\fa}.
\]
 \end{lemma}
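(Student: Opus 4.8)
The plan is to reduce the statement to a purely local computation at each prime dividing $\fa$, and then invoke the two periodicity properties of the ordinary Hilbert symbol recorded in Lemma~\ref{periodicity hilbert symbols}. Write $\fa=\prod\fq^{v_\fq}$ and $\fd=\prod\fq^{r_\fq}$. By Definition~\ref{defi: our symbol},
\[
\binom{\tau_j^2-4u\varepsilon,\fd}{\fa}=\prod_{\fq\mid\fa}\binom{(\tau_j^2-4u\varepsilon)\pi_{\fq}^{-2r_\fq}}{\fq}_{rH}^{\,v_\fq}\qquad(j=1,2),
\]
so it suffices to prove that for each fixed prime $\fq\mid\fa$ the two restricted Hilbert symbols of $\alpha_j:=(\tau_j^2-4u\varepsilon)\pi_{\fq}^{-2r_\fq}\in K_\fq$ agree.

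First I would record the valuation estimate. From $\tau_1\equiv\tau_2\pmod{4\fa\fd^2}$ we get $\tau_1-\tau_2\in(4)\fa\fd^2$, hence $\tau_1^2-\tau_2^2=(\tau_1-\tau_2)(\tau_1+\tau_2)\in(4)\fa\fd^2$ as well. Localizing at $\fq$ and writing $e=\val_\fq(2)$ (so $e=0$ unless $\fq\mid2$), $v=v_\fq\ge1$, $r=r_\fq$, this gives
\[
\val_\fq(\alpha_1-\alpha_2)=\val_\fq(\tau_1^2-\tau_2^2)-2r\ge 2e+v.
\]
In particular $\val_\fq(\alpha_1-\alpha_2)\ge1$ always, and $\val_\fq(\alpha_1-\alpha_2)\ge 2e+1$ when $\fq\mid2$; the factor $4$ in the modulus is inserted precisely to produce this last bound.

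Next I would dispose of the non-unit case. Since $\val_\fq(\alpha_1-\alpha_2)\ge1$, the ultrametric inequality forces $\alpha_1$ and $\alpha_2$ to be simultaneously units of $\cO_{K_\fq}$ or simultaneously non-units: if $\val_\fq(\alpha_1)=0$ then $\alpha_2\equiv\alpha_1\not\equiv0\pmod{\pi_\fq}$, while if $\val_\fq(\alpha_1)\ne0$ then $\val_\fq(\alpha_2)$ lies in the same non-unit regime. When both are non-units, both restricted Hilbert symbols vanish by the extension convention in Definition~\ref{defi: restricted Hilbert}, and there is nothing more to check at this $\fq$. When both are units, $\binom{\alpha_j}{\fq}_{rH}=\binom{\alpha_j,\pi_\fq}{\fq}_H$, and I would apply Lemma~\ref{periodicity hilbert symbols}: if $\fq\nmid2$, the estimate gives $\alpha_1\equiv\alpha_2\pmod{\pi_\fq}$ and part~\ref{case 1 q not divides 2 periodicity} yields equality; if $\fq\mid2$ with ramification degree $e_\fq=e$, the estimate gives $\alpha_1\equiv\alpha_2\pmod{\pi_\fq^{2e_\fq+1}}$ and part~\ref{case 2 q divides 2 rami eq} again yields equality. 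Raising the common local value to the power $v_\fq$ and multiplying over $\fq\mid\fa$ finishes the proof.

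The argument is essentially bookkeeping, so there is no deep obstacle; the one point that needs care is the $\fq$-adic precision at the primes above $2$. One must verify that the modulus $4\fa\fd^2$ is exactly strong enough to feed Lemma~\ref{periodicity hilbert symbols}\ref{case 2 q divides 2 rami eq}, which demands congruence modulo $\pi_\fq^{2e_\fq+1}$ rather than merely modulo $\pi_\fq$. This is what pins down the otherwise mysterious factor of $4$ in the statement, and it is the reason the proof cannot be carried out with the ``naive'' modulus $\fa\fd^2$.
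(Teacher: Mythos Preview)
Your proof is correct and follows essentially the same route as the paper's own argument: reduce to each prime $\fq\mid\fa$ via the product definition of the modified symbol, use the hypothesis to bound $\val_\fq(\alpha_1-\alpha_2)$, separate the unit and non-unit cases, and then invoke the two parts of Lemma~\ref{periodicity hilbert symbols}. Your handling of the non-unit case (explicitly noting that both restricted symbols vanish by the convention in Definition~\ref{defi: restricted Hilbert}) is in fact slightly cleaner than the paper's, which only checks simultaneous integrality and leaves the ``integer but not a unit'' case implicit.
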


\begin{proof}
Let $\fq\mid \fa$ and set $r = \val_{\fq}(\fd)$.
Let $\pi_{\fq}$ be a uniformizer of $K_\fq$.
By definition of the modified Hilbert symbol, it is enough to prove
\[
\binom{(\tau_1^2 - 4u\varepsilon)\pi_{\fq}^{-2r}}{\fq} = \binom{(\tau_2^2 - 4u\varepsilon)\pi_{\fq}^{-2r}}{\fq}.
\]
Notice that $\pi_{\fq}^{2r}\mid \tau_1^2 - \tau_2^2$.
Hence $(\tau_1^2 - 4u\varepsilon)\pi_{\fq}^{-2r}$ is an integer if and only if $(\tau_2^2 - 4u\varepsilon)\pi_{\fq}^{-2r}$ is an integer.
Since $\fa\mid \tau_1 - \tau_2$, we get an extra factor $\pi_{\fq}$ than those than come from $\fd^2$.
Whence
\[
 \pi_{\fq}^{2r + 1}\mid \tau_1^2 - \tau_2^2 = (\tau_1^2 - 4u\varepsilon) - (\tau_2^2 - 4u\varepsilon).
\]
So, in case both `numerators' of the symbols are integers, we have
\[
(\tau_1^2 - 4u\varepsilon)\pi_{\fq}^{-2r} \equiv (\tau_1^2 - 4u\varepsilon)\pi_{\fq}^{-2r} \pmod{\pi_{\fq}}.
\]
 
Lemma~\ref{periodicity hilbert symbols}\ref{case 1 q not divides 2 periodicity} asserts that if $\fq\nmid 2$, then 
\[
\binom{(\tau_1^2 - 4u\varepsilon)\pi_{\fq}^{-2r}}{\fq} = \binom{(\tau_2^2 - 4u\varepsilon)\pi_{\fq}^{-2r}}{\fq}.
\]
On the other hand, if $\fq\mid 2$, then 
\[
\val_{\fq}(4\fa\fd^2) = 2e_{\fq} + \val_{\fq}(\fa) + r\ge 2e_{\fq} + 1, 
\]
where $e_{\fq}$ is the ramification of $\fq$ above $2$.
Notice equality can occur if $r = 0$ and $\val_{\fq}(\fa) = 1$.
Hence
\[
\pi_{\fq}^{2e_{\fq} + 1}\mid (\tau_1^2 - 4u\varepsilon)\pi_{\fq}^{-2r} - (\tau_2^2 - 4u\varepsilon)\pi_{\fq}^{-2r}.
\]
By Lemma~\ref{periodicity hilbert symbols}\ref{case 2 q divides 2 rami eq} we conclude the proof.
\end{proof}

Next, we prove the periodicity of the congruence conditions.
\begin{lemma}
\label{prop: periodicities}
Let $\fa$ and $\fd$ be ideals in $\cO_K$.
If $\tau_1\equiv \tau_2 \pmod{4\fa\fd^2}$ then 
\[
\fd^2 \mid \tau_1^2-4u\varepsilon \Longleftrightarrow \fd^2 \mid \tau_2^2-4u\varepsilon.
\]
Moreover, if $\fd^2 \mid \tau_i^2-4u\varepsilon$, then
\[
\frac{\tau_1^2-4u\varepsilon}{\fd^2}\equiv \frac{\tau_2^2-4u\varepsilon}{\fd^2} \pmod{4}.
\]
\end{lemma}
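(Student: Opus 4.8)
The plan is to reduce everything to the elementary factorization $\tau_1^2-\tau_2^2=(\tau_1-\tau_2)(\tau_1+\tau_2)$ together with the observation that $\fa$ is an \emph{integral} ideal, so its presence in the modulus is harmless. First I would record the basic divisibility: the hypothesis $\tau_1\equiv\tau_2\pmod{4\fa\fd^2}$ means $4\fa\fd^2\mid(\tau_1-\tau_2)$, and since $\fa\subseteq\cO_K$ we have $4\fd^2\mid 4\fa\fd^2$, hence $4\fd^2\mid(\tau_1-\tau_2)$. Multiplying the principal ideal $(\tau_1-\tau_2)$ by $(\tau_1+\tau_2)\subseteq\cO_K$ then gives
\[
4\fd^2\mid\big(\tau_1^2-\tau_2^2\big),
\]
and in particular $\fd^2\mid(\tau_1^2-\tau_2^2)$.

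For the first assertion, I would write the identity $\tau_1^2-4u\varepsilon=(\tau_2^2-4u\varepsilon)+(\tau_1^2-\tau_2^2)$ of elements of $\cO_K$. Since $\tau_1^2-\tau_2^2\in\fd^2$ by the displayed divisibility, the element $\tau_1^2-4u\varepsilon$ lies in $\fd^2$ if and only if $\tau_2^2-4u\varepsilon$ does; that is, $\fd^2\mid\tau_1^2-4u\varepsilon\Longleftrightarrow\fd^2\mid\tau_2^2-4u\varepsilon$. (If one prefers to read the divisibility by $\fd^2$ locally at the primes $\fp_1,\dots,\fp_n$ above $2$, as in the congruence-condition theorem of \S\ref{section: congruence condition}, the same identity applied in each $\cO_{K_{\fp_i}}$ gives the statement prime by prime, which is the form actually used later.)

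For the second assertion, suppose $\fd^2\mid\tau_i^2-4u\varepsilon$ for $i=1,2$, so that the quotients $(\tau_i^2-4u\varepsilon)/\fd^2$ are defined (as ideals, or, under the hypothesis that $2$ splits in $K$, as elements of each $\cO_{K_{\fp_i}}$ after dividing by $\pi_{\fp_i}^{2\val_{\fp_i}(\fd)}$). Then
\[
\frac{\tau_1^2-4u\varepsilon}{\fd^2}-\frac{\tau_2^2-4u\varepsilon}{\fd^2}=\frac{\tau_1^2-\tau_2^2}{\fd^2},
\]
and the displayed divisibility $4\fd^2\mid(\tau_1^2-\tau_2^2)$ says exactly that the right-hand side is divisible by $4$. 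Hence $(\tau_1^2-4u\varepsilon)/\fd^2\equiv(\tau_2^2-4u\varepsilon)/\fd^2\pmod 4$, which is the claim.

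Since every step is a one-line congruence manipulation, there is no genuine obstacle here; the only point that warrants care is the bookkeeping of what ``$\pmod 4$'' and division by the ideal $\fd^2$ mean — precisely the local interpretation at the primes above $2$ used in \S\ref{section: congruence condition} — and the elementary but essential remark that the extra factor $\fa$ in the modulus can be discarded because $\fa$ is an integral ideal.
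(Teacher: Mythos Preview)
Your proof is correct and is essentially the same argument as the paper's, only written out in greater detail: the paper simply records that $4\fd^2\mid\tau_1^2-\tau_2^2$ and hence $4\fd^2\mid(\tau_1^2-4u\varepsilon)-(\tau_2^2-4u\varepsilon)$, then says the result follows. Your extra remarks about discarding the integral ideal $\fa$ and about the local interpretation at primes above $2$ are accurate but not needed for the paper's purposes.
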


\begin{proof}
Recall that $4\fd^2\mid \tau_1^2 - \tau_2^2$.
Therefore
\[
4\fd^2\mid (\tau_1^2 - 4u\varepsilon) - (\tau_2^2 - 4u\varepsilon).
\]
The result now follows.
\end{proof}

\subsection{Completion of the Lattice}

Using the modified Hilbert symbol, we can rewrite \eqref{eqn: RU in terms of PhiU and PsiU} as
\begin{small}
\[
R(f)
=\abs{D_K}^{1/2} \sp^{-\sk/2}\sum_{u\in \cO_K^*}\sum_{\tau\in L(u)}\sum_{\fd^2\mid \tau^2-4u\varepsilon}^{'}\sum_{\fa}\binom{\tau^2 - 4u\varepsilon, \fd}{\fa}\left(\frac{\Phi_{\pm}(\tau, u)}{\Norm_K(\fd)\Norm_K(\fa)} + \Norm_K(\fd)\Psi_{\pm}(\tau, u)\right).
\]
\end{small}
Notice that the expression
\[
\binom{\tau^2 - 4u\varepsilon, \fd}{\fa}\left(\frac{\Phi_{\pm}(\tau, u)}{\Norm_K(\fd)\Norm_K(\fa)} + \Norm_K(\fd)\Psi_{\pm}(\tau, u)\right)
\]
makes sense for any value of $\tau, u, \fd, \fa$ regardless of whether there is a regular elliptic matrix $\gamma$ (corresponding to $\tau$ and $u$) whose quadratic character produces the divisors $\fd\mid S_{\gamma}$.
In particular, we can consider
\[
\sum_{u\in \cO_K^*}\sum_{\tau\notin L(u)}\sum_{\fd^2\mid \tau^2-4u\varepsilon}^{'}\sum_{\fa}\binom{\tau^2 - 4u\varepsilon, \fd}{\fa}\left(\frac{\Phi_{\pm}(\tau, u)}{\Norm_K(\fd)\Norm_K(\fa)} + \Norm_K(\fd)\Psi_{\pm}(\tau, u)\right).
\]
Introducing the congruence condition in place of the condition $\fd\mid S_{\gamma}$ allowed us to replace the $\fd$-sum in \eqref{eqn: RU in terms of PhiU and PsiU} with a sum whose condition depends on $\tau$ and $u$ (but not on whether there is associated to such $\fd$ a quadratic extension which determines $S_{\gamma}$).

\begin{definition}
Define
\begin{small}
\begin{align*}
\Sigma(\square) &:= \abs{D_K}^{1/2} \sp^{-\sk/2} \sum_{u\in \cO_K^*}\sum_{\tau\notin L(u)}\sum_{\fd^2\mid \tau^2-4u\varepsilon}^{'}\sum_{\fa}\binom{\tau^2 - 4u\varepsilon, \fd}{\fa}\left(\frac{\Phi_{\pm}(\tau, u)}{\Norm_K(\fd)\Norm_K(\fa)} + \Norm_K(\fd)\Psi_{\pm}(\tau, u)\right).
\end{align*}
\end{small}
\end{definition}

For this definition makes sense, we need to prove the following theorem.

\begin{theorem}
 The series defining $\Sigma(\square)$ is an absolutely convergent sum.
\end{theorem}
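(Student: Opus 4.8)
The plan is to bound the modified Hilbert symbol trivially by $1$, reduce the outer sum over the pairs $(\tau,u)$ to a \emph{finite} sum by exploiting the compact support of the archimedean orbital integrals hidden inside $\Phi_{\pm}$ and $\Psi_{\pm}$, and then finish with the rapid decay of $F$ and of $H_{\gamma}(1,\cdot)$ established above.

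First I would use that $\abs{\binom{\tau^2-4u\varepsilon,\fd}{\fa}}\le 1$, so it suffices to prove that
\[
\sum_{u\in\cO_K^*}\sum_{\tau\notin L(u)}\sum_{\fd^2\mid\tau^2-4u\varepsilon}^{'}\sum_{\fa}\left(\frac{\abs{\Phi_{\pm}(\tau,u)}}{\Norm_K(\fd)\Norm_K(\fa)}+\Norm_K(\fd)\,\abs{\Psi_{\pm}(\tau,u)}\right)<\infty,
\]
and since all summands are non-negative the order of summation is immaterial (Tonelli). I would then observe that $\Phi_{\pm}$ and $\Psi_{\pm}$ each carry a factor of the interpolation function $\theta^{\pm}$, which is compactly supported in $\R^{2n-1}$: if $\theta^{\pm}(\tau,u)\neq 0$ then each $\sigma_i(\tau)$ is bounded and each $\log\abs{\sigma_i(u)\varepsilon_i}$ lies in a fixed bounded interval. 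The first constraint confines $T(\tau)=(\sigma_1(\tau),\dots,\sigma_n(\tau))$ to a compact subset of $\R^n$, hence $\tau$ to finitely many points of the lattice $T(\cO_K)$; the second, together with the fact that the regulator matrix $(\log\abs{\beta_{t,i}})_{1\le t\le n-1,\ 1\le i\le n}$ has rank $n-1$, confines the exponent vector $y$ of $u=\pm\beta_1^{y_1}\cdots\beta_{n-1}^{y_{n-1}}$ to a bounded set, hence $u$ to finitely many units. Thus $\mathcal S:=\{(\tau,u)\in\cO_K\times\cO_K^*:\theta^{\pm}(\tau,u)\neq 0\}$ is finite and independent of $\fd,\fa$; moreover for $(\tau,u)\notin\mathcal S$ the summand vanishes, using that $\Phi_{\pm}$ and $\Psi_{\pm}$ were extended to smooth compactly supported functions on $\R^{2n-1}$ that vanish on the singular locus of $\mathcal{P}^{\pm}$ (the smoothing proposition of \S\ref{sec: Smoothing of the Singularities}). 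So the outer sum reduces to a finite one.

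Fixing $(\tau,u)\in\mathcal S$, I would split according to whether $\partial:=\tau^2-4u\varepsilon$ vanishes. If $\partial=0$, then by Proposition~\ref{Prop: restatement} (whose proof, through Proposition~\ref{relation discriminants}, is a formal computation that does not use $\tau\in L(u)$, only $(\det\gamma)=\fp^{\sk}$) the point $(\tau,u)$ lies on the singular locus of $\mathcal{P}^{\pm}$, so $\Phi_{\pm}(\tau,u)=\Psi_{\pm}(\tau,u)=0$ and there is no contribution. If $\partial\neq 0$, then Proposition~\ref{Prop: restatement} gives $\mathcal{P}^{\pm}(\tau,u)=\sp^{\sk/2}/\sqrt{\Norm_K(\partial)}$, so the closed forms \eqref{defn: Phi} and \eqref{defn: Psi} hold verbatim with $H_{(\tau,u)}$ in place of $H_{\gamma}$. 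Here $\Norm_K(\partial)$ is the norm of the nonzero ideal $(\partial)$, a positive rational integer, hence $\ge 1$ and — over the finite set $\mathcal S$ — also bounded above; furthermore $\fd^2\mid(\partial)$ leaves only finitely many ideals $\fd$. It then remains to bound, for each fixed $\fd$, the sum over all ideals $\fa$. For the $\Phi$-term this is immediate from $F(t)\ll_A t^{-A}$ (any $A>0$), which bounds the $\fa$-sum by a constant multiple of $\zeta_K(1+A)$. For the $\Psi$-term, the argument of $H_{(\tau,u)}(1,\cdot)$ is a bounded-below constant times $\Norm_K(\fa)$, so the corollary of Lemma~\ref{lemma:boundH}, together with the boundedness of $H_{(\tau,u)}(1,r)$ for $r$ bounded below (clear from its integral definition and the decay of $\widetilde F$ from Lemma~\ref{lemma:boundforFtilde}), yields exponential decay in $\Norm_K(\fa)$; the extra factor $\Norm_K(\fd)$ is harmless, and the $\fa$-sum converges. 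Summing the finitely many contributions over $\fd$ and over $(\tau,u)\in\mathcal S$ completes the argument.

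The step deserving genuine care — and the only real obstacle — is the reduction to a finite set of pairs $(\tau,u)$: one must check that the compact support of the archimedean orbital integral constrains the \emph{unit} $u$, not merely the trace $\tau$, to a bounded region (this is exactly where the rank of the regulator matrix enters), and one must dispose of the degenerate locus $\partial=0$ by appealing to the vanishing of $\Phi_{\pm}$ and $\Psi_{\pm}$ there rather than letting the factor $1/\sqrt{\Norm_K(\partial)}$ produce a spurious divergence. Everything past the finite reduction is a routine tail estimate driven by the decay of $F$ and $H_{\gamma}$.
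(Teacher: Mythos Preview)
Your argument is correct and follows essentially the same strategy as the paper: bound the symbol by $1$, use the compact support of $\theta^{\pm}$ to reduce the $(\tau,u)$-sum to a finite one, and then control the inner sums via the decay of $F$ and of $H_{(\tau,u)}(1,\cdot)$ from Lemma~\ref{lemma:boundH}. Two minor differences are worth noting: you exploit the divisibility constraint $\fd^2\mid(\partial)$ with $\partial\neq 0$ to make the $\fd$-sum finite outright, whereas the paper simply lets the decay of $F$ and $H$ produce extra powers of $\Norm_K(\fd)$ and sums over all $\fd$; and you explicitly dispose of the degenerate case $\partial=0$ via the smoothing proposition, a point the paper leaves implicit. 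Both are harmless refinements of the same proof.
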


\begin{proof}
The absolute value of the character is bounded by $1$.
Thus, what we must prove is that
\begin{small}
\[
\sum_{u\in \cO_K^*}\sum_{\tau\notin L(u)}\sum_{\fd^2\mid \tau^2-4u\varepsilon}^{'}\sum_{\fa}\frac{\abs{\Phi_{\pm}(\tau, u)}}{\Norm_K(\fd)\Norm_K(\fa)} = \sum_{u\in \cO_K^*}\sum_{\tau\notin L(u)}\abs{\theta^{\pm}(\tau, u)}\sum_{\fd^2\mid \tau^2-4u\varepsilon}^{'}\sum_{\fa}\frac{\abs{F\left(\frac{\Norm_K(\fd)^2\Norm_K(\fa)}{\Norm_K(\partial)^{\alpha}\abs{D_K}^\alpha} \right)}}{\Norm_K(\fd)\Norm_K(\fa)}
\]
\end{small}
and
\begin{small}
\begin{multline*}
\sum_{u\in \cO_K^*}\sum_{\tau\notin L(u)}\sum_{\fd^2\mid \tau^2-4u\varepsilon}^{'}\sum_{\fa} \Norm_K(\fd)\abs{\Psi_{\pm}(\tau, u)} 
= \\
\sum_{u\in \cO_K^*}\sum_{\tau\notin L(u)}\abs{\frac{\theta^{\pm}(\tau, u)}{\abs{D_K}^{1/2}\Norm_K(\partial)^{1/2}}}\sum_{\fd^2\mid \tau^2-4u\varepsilon}^{'}\sum_{\fa} \Norm_K(\fd)\abs{H_{\gamma}\left(1,\frac{\Norm_K(\fd)^2\Norm_K(\fa)}{\Norm_K(\partial)^{1-\alpha}\abs{D_K}^{1 - \alpha}}\right)},
\end{multline*}
\end{small}
\normalsize
are both convergent.
If we prove the inner sums over $\fd$ and $\fa$ are absolutely convergent we are done, because the outer sums over $u$ and $\tau$ are finite (since $\theta^{\pm}$ are of compact support).
Thus they contribute a finite nonzero terms.

Let us study the interior sums separately.
We know that for $\fd, \fa$ large enough
\[
\abs{F\left(\frac{\Norm_K(\fd)^2\Norm_K(\fa)}{\Norm_K(\partial)^{\alpha}\abs{D_K}^\alpha} \right)} \ll \exp\left(-\frac{\Norm_K(\fd)^2\Norm_K(\fa)}{\Norm_K(\partial)^{\alpha}\abs{D_K}^\alpha}\right) \ll \left(\frac{\Norm_K(\fd)^2\Norm_K(\fa)}{\Norm_K(\partial)^{\alpha}\abs{D_K}^\alpha}\right)^{-1}.
\]
Thus 
\begin{small}
\[
\sum_{\fd^2\mid \tau^2-4u\varepsilon}^{'}\sum_{\fa}\frac{\abs{F\left(\frac{\Norm_K(\fd)^2\Norm_K(\fa)}{\Norm_K(\partial)^{\alpha}\abs{D_K}^\alpha} \right)}}{\Norm_K(\fd)\Norm_K(\fa)} 
\ll 
\sum_{\fd^2\mid \tau^2-4u\varepsilon}^{'}\sum_{\fa} \dfrac{1}{\Norm_K(\fd)^3\Norm_K(\fa)^2} < \infty.
\]
\end{small}
Using Lemma~\ref{lemma:boundH}, we have
\begin{tiny}
\[
\Norm_K(\fd)\abs{H_{\gamma}\left(1,\frac{\Norm_K(\fd)^2\Norm_K(\fa)}{\Norm_K(\partial)^{1-\alpha}\abs{D_K}^{1 - \alpha}}\right)} 
\ll 
\frac{\Norm_K(\fd)}{\frac{\Norm_K(\fd)^2\Norm_K(\fa)}{\Norm_K(\partial)^{1-\alpha}\abs{D_K}^{1 - \alpha}}}\exp\left(-\left(\frac{\Norm_K(\fd)^2\Norm_K(\fa)}{\Norm_K(\partial)^{1-\alpha}\abs{D_K}^{1 - \alpha}}\right)^{1/n+1}\right)
\]
\end{tiny}
For large enough $\Norm_K(\fa), \Norm_K(\fd)$ the above expression can be bounded by
\[
\ll \frac{1}{\Norm_K(\fd)\Norm_K(\fa)\left( \Norm_K(\fd)^2\Norm_K(\fa)\right)^{1/n+1}},
\]
where the implied constant only depends on $\tau, u$, and by the finiteness of their contribution, it can be taken to be absolute.
This completes the proof.
\end{proof}

\begin{remark}
The proof works exactly the same way if we instead use $\tau\in L(u)$, even though we already know that that sum converges absolutely by the approximate functional equation (since it comes from the Zagier Zeta Function which is entire).
We cannot use the approximate functional equation directly for the cases of $\Sigma(\square)$ because the character in that case returns the zeta function, and thus has poles itself.
\end{remark}

\begin{definition}
We define the \emph{completed regular elliptic part} of the trace formula as
\begin{tiny}
\begin{equation}
\label{eqn: CRE part}
\overline{R}(f) = {\abs{D_K}^{1/2} \sp^{-\sk/2}} \sum_{u\in \cO_K^*}\sum_{\tau\in\cO_K}\sum_{\fd^2\mid \tau^2-4u\varepsilon}^{'}\sum_{\fa}\binom{\tau^2 - 4u\varepsilon, \fd}{\fa}\left(\frac{\Phi_{\pm}(\tau, u)}{\Norm_K(\fd)\Norm_K(\fa)} + \Norm_K(\fd)\Psi_{\pm}(\tau, u)\right) = R(f) + \Sigma(\square).
\end{equation}
\end{tiny}
\end{definition}

\begin{remark}
The name comes from the fact that we have added those $\tau\in\cO_K$ that were missing from the sum.
We have \textit{completed} the lattice of integers.
\end{remark}

In the following result we show that the order of summation in the completed regular elliptic part can be swapped.

\begin{proposition}
\label{prop 7.16}
The order of summation in the completed regular elliptic part can be swapped.
More precisely,
\begin{small}
\[
\overline{R}(f)= {\abs{D_K}^{1/2} \sp^{-\sk/2}} 
\sum_{\fa}\sum_{\fd} \sum_{u\in \cO_K^*}
\sum_{\fd^2\mid \tau^2-4u\varepsilon}^{'}
\binom{\tau^2 - 4u\varepsilon, \fd}{\fa}\left(\frac{\Phi_{\pm}(\tau, u)}{\Norm_K(\fd)\Norm_K(\fa)} + \Norm_K(\fd)\Psi_{\pm}(\tau, u)\right).
\]
\end{small}
\end{proposition}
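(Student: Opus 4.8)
The plan is to deduce the proposition from a single fact that has already been established in the section: the entire multiple sum defining $\overline{R}(f)$ is \emph{absolutely} convergent. Once this is in hand, the statement is nothing more than the rearrangement (Fubini--Tonelli) theorem for an absolutely summable family over a countable index set, which permits any reordering of the summation variables — in particular moving the $\fa$- and $\fd$-sums to the front.

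First I would make the common index set explicit. The summand appearing in \eqref{eqn: CRE part},
\[
\binom{\tau^2 - 4u\varepsilon, \fd}{\fa}\left(\frac{\Phi_{\pm}(\tau, u)}{\Norm_K(\fd)\Norm_K(\fa)} + \Norm_K(\fd)\Psi_{\pm}(\tau, u)\right),
\]
is a well-defined function of the tuple $(\fa,\fd,u,\tau)$, where $\fa,\fd$ range over ideals of $\cO_K$, $u\in\cO_K^*$, and $\tau\in\cO_K$, subject to the divisibility $\fd^2\mid \tau^2-4u\varepsilon$ together with the congruence conditions of \S\ref{section: congruence condition}; indeed, by Definition~\ref{defi: our symbol} the modified Hilbert symbol is defined for arbitrary parameters, and $\Phi_{\pm},\Psi_{\pm}$ depend only on $(\tau,u)$ while the factors $\Norm_K(\fd),\Norm_K(\fa)$ enter multiplicatively. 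Call $S$ this set of tuples. The key elementary observation is that the constraints carving $S$ out of $(\text{ideals})^2\times\cO_K^*\times\cO_K$ refer to no ordering of the variables, so both the iterated sum in \eqref{eqn: CRE part} (ordered $u,\tau,\fd,\fa$, the $\fd$-constraint splitting off the inner sum) and the iterated sum in the statement (ordered $\fa,\fd,u,\tau$, the $\tau$-sum now carrying the divisibility and congruences, and the $\fd$-sum running over all ideals with an empty inner sum contributing $0$) are legitimate ways of summing the \emph{same} family $(a_s)_{s\in S}$.

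Next I would record absolute convergence over $S$. By definition $\overline{R}(f)=R(f)+\Sigma(\square)$, and these two pieces are indexed by the disjoint subsets of $S$ with $\tau\in L(u)$ and $\tau\notin L(u)$, whose union is $S$. The preceding theorem shows that the series defining $\Sigma(\square)$ is absolutely convergent; as noted in the remark following its proof, the same argument applies verbatim with $\tau\in L(u)$ in place of $\tau\notin L(u)$, so the series defining $R(f)$ is absolutely convergent as well (this also follows from the absolute convergence furnished by the approximate functional equation together with the compact support of $\theta^{\pm}$). Hence $\sum_{s\in S}\abs{a_s}<\infty$. Given this, the conclusion is immediate: for an absolutely summable family over a countable set, every grouping of the index set into an iterated sum yields the common value $\sum_{s\in S}a_s$, and grouping $S$ first by $(\fa,\fd)$, then by $u$, and finally over the (finite, by compact support of $\Phi_{\pm},\Psi_{\pm}$) set of admissible $\tau$ produces exactly the expression in the statement.

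I do not expect a genuine obstacle here; the only point deserving a line of care is the bookkeeping when the $\fd$-sum is pulled outside the $\tau$-sum, which is handled by the observation above that the divisibility and congruence conditions jointly describe membership in the single set $S$ and so are transported intact under the reordering. All of the analytic substance sits in the already-proven absolute convergence of $\Sigma(\square)$ (and of $R(f)$), so the proof itself is short.
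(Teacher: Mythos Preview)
Your proposal is correct and follows essentially the same approach as the paper: the paper's proof simply observes that $\overline{R}(f)$ was defined as the sum of two absolutely convergent series ($R(f)$ and $\Sigma(\square)$), so any reordering is permitted. Your version is a more detailed elaboration of the same argument, making explicit the common index set and the Fubini--Tonelli justification that the paper leaves implicit.
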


\begin{proof} 
We have defined $\overline{R}(f)$ as the sum of two absolutely convergent series.
Thus we can arrange the series in any alternative order without altering the convergence properties or values.
\end{proof}

\begin{remark}
In the expression defining $\overline{R}(f)$, we see that $\tau$ is a free variable and $\fd$ runs over the possible ideals such that $\fd^2\mid \tau^2 - 4u\varepsilon$ and satisfies certain congruence conditions.
After changing the order of summation, $\fd$ is a free variable and the innermost sum is now over the $\tau$ such that $\tau^2 - 4u\varepsilon$ has $\fd^2$ as divisor, with the appropriate congruence conditions.
Both sums look the same but it is pertinent to note that they have different free and dependent variables.
\end{remark}

\section{Poisson Summation}

\subsection{Explicitly writing the lattice}
Recall from \S\ref{section: blending step} that we fixed the fundamental units $\beta_1,\cdots,\beta_{n-1}$ in $\cO_K^*$ (which are embedded into $W$ under the function $S$) satisfying \ref{ass: positivity}.
Any unit $u\in \cO_K^*$ can be written as
\[
u=\pm\beta_1^{y_1}\cdots\beta_{n-1}^{y_{n-1}}.
\]
In particular, $u = u(\pm, y)$ where $y=(y_1, \ldots, y_{n-1})\in\Z^{n-1}$.

\begin{notation}
In what follows, we write $u$ in place of $u(\pm,y)$ for a unit in $\cO_K^*$.
\end{notation}

The expression obtained in Proposition~\ref{prop 7.16} can be rewritten as
\begin{tiny}
\begin{equation}
\label{eqn: explicit CRE part}
\overline{R}(f) = {\abs{D_K}^{1/2} \sp^{-\sk/2}} \sum_{\pm}\sum_{\fa}\sum_{\fd} \sum_{y\in \mathbb{Z}^{n-1}}\sum_{\fd^2\mid \tau^2-4u\varepsilon}^{'}\binom{\tau^2 - 4u\varepsilon, \fd}{\fa}\left(\frac{\Phi_{\pm}(\tau, u)}{\Norm_K(\fd)\Norm_K(\fa)} + \Norm_K(\fd)\Psi_{\pm}(\tau, u)\right).
\end{equation}
\end{tiny}
Given ideals $\fd$ and $\fa$ in $\cO_K$, the periodicity of the congruence conditions proved in Lemma~\ref{prop: periodicities} implies that 
for a given congruence class modulo $4\fa\fd^2$, either every term of the arithmetic progression appears in the sum or no term appears.
Therefore, the expression \eqref{eqn: explicit CRE part} can be rewritten as
\begin{small}
\[
{\abs{D_K}^{1/2} \sp^{-\sk/2}} \sum_{\pm}\sum_{\fa}\sum_{\fd}\sum_{y\in\mathbb{Z}^{n-1}}\sum_{\mu\in \cO_K/4\fa\fd^2}^{'}\sum_{\tau\equiv\mu} \binom{\tau^2 - 4u\varepsilon, \fd}{\fa}\left(\frac{\Phi_{\pm}(\tau, u)}{\Norm_K(\fd)\Norm_K(\fa)} + \Norm_K(\fd)\Psi_{\pm}(\tau, u)\right).
\]
\end{small}
By Lemma~\ref{prop: periodicities} we know that the symbol is periodic so the above expression can be further rewritten as
\begin{small}
\[
{\abs{D_K}^{1/2} \sp^{-\sk/2}} \sum_{\pm}\sum_{\fa}\sum_{\fd}\sum_{y\in\mathbb{Z}^{n-1}}\sum_{\mu\in \cO_K/4\fa\fd^2}^{'}\binom{\mu^2 - 4u\varepsilon, \fd}{\fa}\sum_{\tau\equiv\mu} \left(\frac{\Phi_{\pm}(\tau, u)}{\Norm_K(\fd)\Norm_K(\fa)} + \Norm_K(\fd)\Psi_{\pm}(\tau, u)\right).
\]
\end{small}
Within the lattice $\mathbb{Z}^{n-1}$ there is the sub-lattice $(2\mathbb{Z})^{n-1}$ and the quotient is $\mathbb{F}_2^{n-1}$, where $\mathbb{F}_2$ is the field of two elements.
Let $Y$ be a variable that runs over this quotient.
We can rewrite the above expression as
\begin{tiny}
\[
{\abs{D_K}^{1/2} \sp^{-\sk/2}} \sum_{\pm}\sum_{\fa}\sum_{\fd}\sum_{Y\in\mathbb{F}_2^{n-1}}\sum_{y\equiv Y}\sum_{\mu\in \cO_K/4\fa\fd^2}^{'}\binom{\mu^2 - 4u\varepsilon, \fd}{\fa}\sum_{\tau\equiv\mu} \left(\frac{\Phi_{\pm}(\tau, u)}{\Norm_K(\fd)\Norm_K(\fa)} + \Norm_K(\fd)\Psi_{\pm}(\tau, u)\right).
\]
\end{tiny}
Fix the sign $\pm$ (at the start) and a vector $Y=(Y_1, \cdots, Y_{n-1})\in\mathbb{F}_2^{n-1}$.
If $y\in \Z^{n-1}$ satisfies $y\equiv Y$, there exists another unit $v\in \cO_K^*$ such that $u=u(y) = u(Y)v^2$ with
\[
u(Y) = \beta_1^{Y_1} \cdots \beta_{n-1}^{Y_{n-1}}
\]
once the sign is fixed and each $Y_i\in \{0,1\}$.
This unit $v$ is uniquely determined by $Y$ and $y$, if we impose the extra condition that it lies in $W$, which is to say, that the two units have the same sign.
The condition $y\equiv Y$ implies the existence of $y' = (y'_1,\cdots, y'_{n-1})\in \Z^{n-1}$ such that for each $1\leq t \leq n-1$,
\[
y_t = Y_t + 2y'_t.
\]
This vector $y'\in \Z^{n-1}$ is in one-to-one correspondence with $v=\beta_1^{y'_1}\cdots\beta_{n-1}^{y'_{n-1}}$.
In other words, the unit $v$ can be associated with an $(n-1)$-dimensional vector given by
\[
\frac{1}{2}\left(y_1 - Y_1,\cdots, y_{n-1}-Y_{n-1}\right).
\]
We can then rewrite the completed regular elliptic part as 
\begin{tiny}
\[
{\abs{D_K}^{1/2} \sp^{-\sk/2}} \sum_{\pm}\sum_{\fa}\sum_{\fd}\sum_{Y\in\mathbb{F}_2^{n-1}}\sum_{y\equiv Y}\sum_{\mu\in \cO_K/4\fa\fd^2}^{'}\binom{\mu^2 - 4 {u(Y)}v^2\varepsilon, \fd}{\fa}\sum_{\tau\equiv\mu} \left(\frac{\Phi_{\pm}(\tau, u)}{\Norm_K(\fd)\Norm_K(\fa)} + \Norm_K(\fd)\Psi_{\pm}(\tau, u)\right).
\]
\end{tiny}
Since
\[
\binom{\mu^2 - 4 {u(Y)}v^2\varepsilon, \fd}{\fa} = \binom{v^2(\mu v^{-1})^2 - 4 {u(Y)}v^2\varepsilon, \fd}{\fa} = \binom{(\mu v^{-1})^2 - 4 {u(Y)}\varepsilon, \fd}{\fa},
\]
the completed regular elliptic part is
\begin{tiny}
\[
{\abs{D_K}^{1/2} \sp^{-\sk/2}} \sum_{\pm}\sum_{\fa}\sum_{\fd}\sum_{Y\in\mathbb{F}_2^{n-1}}\sum_{y\equiv Y}\sum_{\mu\in \cO_K/4\fa\fd^2}^{'}\binom{(\mu v^{-1})^2 - 4 {u(Y)}\varepsilon, \fd}{\fa}\sum_{\tau\equiv\mu} \left(\frac{\Phi_{\pm}(\tau, u)}{\Norm_K(\fd)\Norm_K(\fa)} + \Norm_K(\fd)\Psi_{\pm}(\tau, u)\right).
\]
\end{tiny}
Since $v$ is a unit, multiplication by it only permutes the classes modulo $4\fa\fd^2$.
Relabeling $\mu v^{-1}$ as $\mu$, we may remove $v^{-1}$ from the symbol at the expense of changing the inner sum to $\tau\equiv \mu v$.
Therefore, 
\begin{tiny}
\[
\overline{R}(f) = {\abs{D_K}^{1/2} \sp^{-\sk/2}}\sum_{\pm}\sum_{\fa}\sum_{\fd}\sum_{Y\in\mathbb{F}_2^{n-1}}\sum_{y\equiv Y}\sum_{\mu\in \cO_K/4\fa\fd^2}^{'}\binom{\mu^2 - 4 {u(Y)}\varepsilon, \fd}{\fa}\sum_{\tau\equiv\mu v} \left(\frac{\Phi_{\pm}(\tau, u)}{\Norm_K(\fd)\Norm_K(\fa)} + \Norm_K(\fd)\Psi_{\pm}(\tau, u)\right).
\]
\end{tiny}

We can record the main theorem of this section.

\begin{theorem}
\label{thm: regular elliptic part before poisson}
The completed regular elliptic part can be rewritten as
\begin{tiny}
\[
\overline{R}(f) = {\abs{D_K}^{1/2} \sp^{-\sk/2}} \sum_{\pm}\sum_{\fa}\sum_{\fd}\sum_{Y\in\mathbb{F}_2^{n-1}}\sum_{\mu\in \cO_K/4\fa\fd^2}^{'}\binom{\mu^2 - 4 {u(Y)}\varepsilon, \fd}{\fa}\sum_{y\equiv Y}\sum_{\tau\equiv\mu v} \left(\frac{\Phi_{\pm}(\tau, u)}{\Norm_K(\fd)\Norm_K(\fa)} + \Norm_K(\fd)\Psi_{\pm}(\tau, u)\right).
\]
\end{tiny}
In particular, the symbol does not detect the sum $y\equiv Y$.

\end{theorem}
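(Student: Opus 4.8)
\textbf{Proof proposal for Theorem~\ref{thm: regular elliptic part before poisson}.}

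The plan is to show that the expression for $\overline{R}(f)$ obtained just before the theorem statement can have its sums rearranged so that the outer indices are $\pm$, $\fa$, $\fd$, $Y$, and $\mu$, with the double sum $\sum_{y\equiv Y}\sum_{\tau\equiv\mu v}$ pulled inside. The essential point is that, once the coset representative $Y\in\mathbb{F}_2^{n-1}$ of the unit is fixed, the value of the modified Hilbert symbol $\binom{\mu^2-4u(Y)\varepsilon,\fd}{\fa}$ depends only on $Y$ (and $\mu,\fd,\fa,\varepsilon$) and \emph{not} on the particular lift $y\equiv Y$, i.e., not on the auxiliary unit $v$. This was exactly the content of the identity
\[
\binom{\mu^2 - 4 {u(Y)}v^2\varepsilon, \fd}{\fa} = \binom{(\mu v^{-1})^2 - 4 {u(Y)}\varepsilon, \fd}{\fa},
\]
established right above using the multiplicativity of the restricted Hilbert symbol in its first entry together with the fact that $v^2$ is a square, followed by the relabeling $\mu v^{-1}\mapsto\mu$ which is legitimate because multiplication by the unit $v$ merely permutes the residues in $\cO_K/4\fa\fd^2$ (and preserves the primed congruence conditions by Lemma~\ref{prop: periodicities}, since those depend only on $\tau^2-4u\varepsilon$ modulo $4\fa\fd^2$, which is invariant under $(\tau,u)\mapsto(\tau v,uv^{-2})$ — indeed $(\tau v)^2-4(uv^{-2})\varepsilon = v^2(\tau^2-4u\varepsilon)v^{-2}$... more carefully, $u(Y)$ itself is the relevant unit and $\mu$ ranges over a fixed transversal).

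Concretely, first I would take the displayed expression
\[
\overline{R}(f) = {\abs{D_K}^{1/2} \sp^{-\sk/2}}\sum_{\pm}\sum_{\fa}\sum_{\fd}\sum_{Y\in\mathbb{F}_2^{n-1}}\sum_{y\equiv Y}\sum_{\mu\in \cO_K/4\fa\fd^2}^{'}\binom{\mu^2 - 4 {u(Y)}\varepsilon, \fd}{\fa}\sum_{\tau\equiv\mu v} \left(\frac{\Phi_{\pm}(\tau, u)}{\Norm_K(\fd)\Norm_K(\fa)} + \Norm_K(\fd)\Psi_{\pm}(\tau, u)\right)
\]
and observe that the summand $\binom{\mu^2 - 4 {u(Y)}\varepsilon, \fd}{\fa}$ is independent of the index $y$ of the sum $\sum_{y\equiv Y}$ (the dependence on $y$ is entirely carried by $v$, and $v$ has been removed from the symbol). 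Since every term of $\overline{R}(f)$ is part of an absolutely convergent series — this is the content of the theorem asserting $\Sigma(\square)$ converges absolutely, combined with the absolute convergence of the original $R(f)$ coming from the approximate functional equation — we may freely interchange the order of the $\sum_{y\equiv Y}$ and $\sum_{\mu}^{'}$ summations, and then pull the $y$-independent factor $\binom{\mu^2-4u(Y)\varepsilon,\fd}{\fa}$ outside the $\sum_{y\equiv Y}$. This produces exactly
\[
\overline{R}(f) = {\abs{D_K}^{1/2} \sp^{-\sk/2}} \sum_{\pm}\sum_{\fa}\sum_{\fd}\sum_{Y\in\mathbb{F}_2^{n-1}}\sum_{\mu\in \cO_K/4\fa\fd^2}^{'}\binom{\mu^2 - 4 {u(Y)}\varepsilon, \fd}{\fa}\sum_{y\equiv Y}\sum_{\tau\equiv\mu v} \left(\frac{\Phi_{\pm}(\tau, u)}{\Norm_K(\fd)\Norm_K(\fa)} + \Norm_K(\fd)\Psi_{\pm}(\tau, u)\right),
\]
which is the claimed formula. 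The final sentence of the theorem — that the symbol does not detect the sum $y\equiv Y$ — is then just a restatement of the key observation: the symbol sits outside the $\sum_{y\equiv Y}$, so varying $y$ within its $\mathbb{F}_2^{n-1}$-class changes only the arguments of $\Phi_{\pm}$ and $\Psi_{\pm}$ (via the unit $u$ and the shifted congruence $\tau\equiv\mu v$) but not the character value.

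The only genuinely delicate point — and the one I would be most careful about — is verifying that the relabeling $\mu v^{-1}\mapsto\mu$ is compatible with the primed congruence conditions on $\mu$: one needs that $v$ (being a unit in $\cO_K$, hence in $\cO_{K_{\fp_i}}^*$ for the primes $\fp_i\mid 2$) permutes the set of residues satisfying the congruence $\frac{\tau^2-4u\varepsilon}{\pi_{\fp_i}^{2\val_{\fp_i}(\fd)}}\equiv 0,1\pmod{\pi_{\fp_i}^2}$, which follows since $(\mu v)^2 - 4u(Y)\varepsilon$ and $\mu^2 - 4 u(Y)v^2\varepsilon$ differ by the unit square factor $v^2$; this is already implicit in the manipulations above and in Lemma~\ref{prop: periodicity 4ad2} and Lemma~\ref{prop: periodicities}, so no new work is required. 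With that verification in hand, everything else is a formal interchange of finitely/absolutely-summable series.
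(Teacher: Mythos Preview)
Your proposal is correct and follows essentially the same approach as the paper: the paper derives the displayed expression step by step in the paragraphs immediately preceding the theorem, and the theorem is then simply recorded as the outcome of those manipulations, the only remaining move being to swap the absolutely convergent sums $\sum_{y\equiv Y}$ and $\sum_{\mu}'$ and pull the $y$-independent symbol outside. Your explicit check that the relabeling $\mu v^{-1}\mapsto\mu$ respects the primed congruence conditions is a helpful addition, though the paper treats this as implicit in the earlier periodicity lemmas.
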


\begin{remark}
When $K = \Q$, the unit group is a finite set - there are two units, namely, the two roots of unity $\pm 1$.
Hence, the sums over $Y$ and $y$ are both empty.
The ideals correspond to the integers in this case.
For each ideal, we always pick the positive representative, and we pick $p$ to be the representative of $\fp$.
Therefore, Theorem~\ref{thm: regular elliptic part before poisson} asserts that the completed regular elliptic part of the trace formula can be written as
\[
\sum_{\pm}\sum_{d}\sum_{a}\sum_{\mu\mod{4ad^2}}\binom{(\mu^2 \pm 4p^k)/d^2}{a}\sum_{\tau\equiv\mu}^{'} \left(\frac{\Phi_u(\tau)}{da} + d\Psi_u(\tau)\right).
\]
This is exactly (changing our $a$ for his $l$, our $d$ for his $f$, our $\mu$ for his $a$ and our $\tau$ for his $m$) what appears in the proof of \cite[Theorem 4.2]{AliI}, before performing Poisson summation.
\end{remark}

We must study the indexing set of the two inner sums appearing in the expression in Theorem~\ref{thm: regular elliptic part before poisson} and rewrite them as a lattice.
The following lemma will ensure that the indexing sets form an honest lattice inside $\R^{2n-1}$ which will then allow us to perform the Poisson summation in a future step.

\begin{lemma}
Fix a set of totally positive fundamental units $\beta_1, \cdots, \beta_{n-1}$ of $\cO_K^*$, a choice of sign, and a vector $Y=(Y_1, \cdots, Y_{n-1})\in \mathbb{F}_2^{n-1}$.
Let $u = u(\pm, Y)$ be the unit corresponding to this choice.
Fix an integral basis $\omega_1,\ldots, \omega_n$ for the ideal $4\fa\fd^2$.
Finally, fix a representative $\mu$ of some class of $\cO_K/4\fa\fd^2$.

For each embedding $\sigma_i:K\hookrightarrow\mathbb{R}$, set $A_i = \sigma_i(u\mu)$ and $\omega_{j, i} = \sigma_i(\omega_j)$.
Then the pairs $(\tau, u(y))$ that appear in Theorem~\ref{thm: regular elliptic part before poisson} are those with coordinates
\begin{tiny}
\[
\left(A_1\beta_{1, 1}^{y_1}\cdots\beta_{n-1, 1}^{y_n-1} + x_1\omega_{1, 1} + \cdots + x_n\omega_{n,1},\ldots, A_n\beta_{1, n}^{y_1}\cdots\beta_{n-1, n}^{y_n-1} + x_1\omega_{1, n} + \cdots + x_n\omega_{n,n}, Y_1 + 2y_1,\cdots, Y_{n-1} + 2y_{n-1}\right),
\]
\end{tiny}
for some $x = (x_1,\cdots, x_n)\in\Z^n$ and $(y_1,\ldots, y_{n-1})\in\Z^{n-1}$.
\end{lemma}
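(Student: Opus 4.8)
The plan is to unwind, step by step, the parametrization of the index set of the two innermost sums in Theorem~\ref{thm: regular elliptic part before poisson} and transport it through the coordinate maps $T$ and $S$ fixed in \S\ref{section: blending step}. For the fixed data $\pm,\fa,\fd,Y,\mu$ (with $\mu$ satisfying the congruence conditions indicated by the $'$), the inner double sum $\sum_{y\equiv Y}\sum_{\tau\equiv\mu v}$ there is, after the re-indexing described just before Theorem~\ref{thm: regular elliptic part before poisson}, a sum over vectors $z=(z_1,\dots,z_{n-1})\in\Z^{n-1}$ — each such $z$ giving the full exponent vector $y=Y+2z\equiv Y\pmod 2$ and the auxiliary unit $v=v(z)=\beta_1^{z_1}\cdots\beta_{n-1}^{z_{n-1}}$ — together with the elements $\tau\in\cO_K$ in the residue class $\mu v\pmod{4\fa\fd^2}$. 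The vector called $y$ in the statement of the present lemma is this reduced vector $z$, as its appearance in the last $n-1$ slots of the displayed tuple in the form $Y_t+2y_t$ already signals; I would begin by recording this dictionary cleanly.

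For the last $n-1$ (unit) coordinates: the unit entering $\Phi_\pm$ and $\Psi_\pm$ is $u(Y+2z)=\pm\,\beta_1^{Y_1+2z_1}\cdots\beta_{n-1}^{Y_{n-1}+2z_{n-1}}$, and by the definition of the $(x,y)$-coordinates in \S\ref{section: blending step} the $y$-part of a unit $\pm\beta_1^{a_1}\cdots\beta_{n-1}^{a_{n-1}}$ is exactly $(a_1,\dots,a_{n-1})$. Hence the last $n-1$ coordinates of the pair are $(Y_1+2z_1,\dots,Y_{n-1}+2z_{n-1})$, in agreement with the statement.

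For the first $n$ (trace) coordinates: with $z$, and hence $v$, fixed, the condition $\tau\equiv\mu v\pmod{4\fa\fd^2}$ is equivalent to $\tau=\mu v+\alpha$ with $\alpha\in 4\fa\fd^2$, and expressing $\alpha$ in the fixed integral basis $\omega_1,\dots,\omega_n$ produces a unique $x=(x_1,\dots,x_n)\in\Z^n$ with $\tau=\mu v+x_1\omega_1+\cdots+x_n\omega_n$. Applying the $i$-th real embedding and expanding multiplicatively, $\sigma_i(\tau)=\sigma_i(\mu v)+\sum_{j=1}^n x_j\omega_{j,i}=\sigma_i(\mu)\,\beta_{1,i}^{z_1}\cdots\beta_{n-1,i}^{z_{n-1}}+\sum_{j=1}^n x_j\omega_{j,i}$, so that, since $T$ identifies $\tau$ with $(\sigma_1(\tau),\dots,\sigma_n(\tau))$, the first $n$ coordinates of the pair are precisely the first $n$ entries of the displayed tuple, the term $\sigma_i(\mu v)$ playing the role of $A_i\beta_{1,i}^{y_1}\cdots\beta_{n-1,i}^{y_{n-1}}$. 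I would then verify that $(z,x)\mapsto(\tau,u(Y+2z))$ is a bijection from $\Z^{n-1}\times\Z^n$ onto the set of pairs indexed by the inner double sum: surjectivity is exactly the parametrization just recalled, and injectivity holds because $\omega_1,\dots,\omega_n$ is a basis of $4\fa\fd^2$ and $\beta_1,\dots,\beta_{n-1}$ are multiplicatively independent in $\cO_K^*$, so that distinct $(z,x)$ yield distinct pairs. This bijectivity is the point of the lemma, since in the Poisson step to come one needs to know that the inner double sum really is a sum over a single translate of a rank-$(2n-1)$ lattice in $\R^{2n-1}$, with no repetition.

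No genuine obstacle is expected: the argument is a change of variables plus a check that nothing is created, lost, or double-counted. The one place that requires care — and which I would flag in the write-up — is keeping the three ``$y$-type'' objects straight: the class $Y\in\mathbb{F}_2^{n-1}$, the free reduced vector $z\in\Z^{n-1}$ (the lemma's $y$), and the full exponent vector $Y+2z$ of the unit $u(Y+2z)$; together with the observation that the twist by $v=v(z)$ sitting inside the congruence $\tau\equiv\mu v$ is exactly the residue of pulling $v^{-2}$ out of the modified Hilbert symbol in the reduction that produced Theorem~\ref{thm: regular elliptic part before poisson}.
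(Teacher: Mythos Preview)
Your proposal is correct and follows essentially the same route as the paper's proof: both unwind the parametrization by writing the full exponent vector as $Y+2y'$ (your $z$), identifying $v=\beta_1^{y'_1}\cdots\beta_{n-1}^{y'_{n-1}}$, and then expressing $\tau=\mu v+\alpha$ with $\alpha\in 4\fa\fd^2$ in the chosen integral basis before applying the real embeddings. Your version is more thorough than the paper's (which is quite terse), in that you explicitly argue bijectivity and carefully separate the three ``$y$-type'' objects; this is all to the good, and exactly the content the subsequent Poisson step relies on.
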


\begin{proof}
The condition $y\equiv Y$ implies the existence of $y' = (y'_1,\cdots, y'_{n-1})\in \Z^{n-1}$ such that for each $1\leq t \leq n-1$,
\[
y_t = Y_t + 2y'_t.
\]
This vector $y'\in \Z^{n-1}$ is in one-to-one correspondence with $v=\beta_1^{y'_1}\cdots\beta_{n-1}^{y'_{n-1}}$.
Therefore, for $\mu\in \cO_K/4\fa\fd^2$, the equivalence $\tau \equiv \mu v$ can be rewritten (component-wise) as
\[
\tau_i = \sigma_i(\beta_1^{y'_1}\cdot\ldots\cdot\beta_{n-1}^{y'_{n-1}}\mu) + z_i,
\]
for some integer $z_i$, where $1\leq i \leq n$.
The result now follows (by relabelling $y_t'$ by $y_t$).
\end{proof}

In view of the above discussion, we make the following definition.

\begin{definition}\label{def: transformation A}
Keep the notation introduced above except now suppose that $(x,y)\in \R^n\times\R^{n-1}$.
Define
\begin{align*}
C_i(y) &:= \beta_{1, i}^{y_1}\cdots\beta_{n-1, i}^{y_{n-1}} \textrm{ and }\\
A(x, y) &:= \left(A_1C_1(y) + x_1\omega_{1, 1} + \cdots + x_n\omega_{n,1},\ldots, A_nC_n(y) + \cdots + x_n\omega_{n,n}, Y_1 + 2y_1,\cdots, Y_{n-1} + 2y_{n-1}\right).
\end{align*}
\end{definition}

\begin{proposition}[Change of Variables]
\label{prop: properties A}
Keep the notation introduced above and let $(x,y)\in \R^{n}\times \R^{n-1}$.
Then
\begin{enumerate}[label = \textup{(\roman*)}]
\item \label{exercise invertible} $A(x,y)$ is an invertible function.
\item \label{det Jacobian} The absolute value of the determinant of the Jacobian matrix of $A(x,y)$ is $2^{3n-1}N(\fa)N(\fd)^2\abs{D_K}^{1/2}$.
\end{enumerate}
\end{proposition}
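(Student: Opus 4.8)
The plan is to read off the Jacobian of $A$ directly from the explicit formula in Definition~\ref{def: transformation A}, observe that the resulting $(2n-1)\times(2n-1)$ matrix is block lower-triangular, and thereby reduce the computation to two elementary determinant evaluations; invertibility will follow by writing down the inverse map by hand.

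Order the $2n-1$ output coordinates of $A(x,y)$ as the $n$ archimedean coordinates $a_i(x,y)=A_iC_i(y)+\sum_{j=1}^n x_j\omega_{j,i}$, $i=1,\dots,n$, followed by the $n-1$ unit coordinates $b_t(x,y)=Y_t+2y_t$, $t=1,\dots,n-1$, and order the input variables as $(x_1,\dots,x_n,y_1,\dots,y_{n-1})$. Since $b_t$ does not involve $x$, the Jacobian matrix takes the block form
\[
J(x,y)=\begin{pmatrix}\Omega & B(x,y)\\ 0 & 2I_{n-1}\end{pmatrix},
\]
where $\Omega=(\omega_{j,i})_{i,j}$ is the matrix of real embeddings of the fixed integral basis $\omega_1,\dots,\omega_n$ of $4\fa\fd^2$, and $B(x,y)$ is the $n\times(n-1)$ matrix with entries $\partial a_i/\partial y_t=A_iC_i(y)\log\beta_{t,i}$. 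Here we use \ref{ass: positivity}: since each $\beta_{t,i}>0$, the function $C_i(y)=\exp\!\big(\sum_t y_t\log\beta_{t,i}\big)$ is a smooth function of $y\in\R^{n-1}$, so the partial derivatives make sense. By block-triangularity the off-diagonal block $B$ is irrelevant, and $\det J=2^{n-1}\det\Omega$. The standard identity $\operatorname{disc}(\text{integral basis of }\mathfrak b)=\Norm_K(\mathfrak b)^2D_K$ applied to $\mathfrak b=4\fa\fd^2$, together with $\Norm_K((4))=4^n$, gives $\abs{\det\Omega}=\Norm_K(4\fa\fd^2)\abs{D_K}^{1/2}=4^n\Norm_K(\fa)\Norm_K(\fd)^2\abs{D_K}^{1/2}$. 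Hence
\[
\abs{\det J(x,y)}=2^{n-1}\cdot 2^{2n}\,\Norm_K(\fa)\Norm_K(\fd)^2\abs{D_K}^{1/2}=2^{3n-1}\Norm_K(\fa)\Norm_K(\fd)^2\abs{D_K}^{1/2},
\]
which is (ii); note the value is independent of $(x,y)$.

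For (i), since $A$ is not linear I would not argue purely from the Jacobian but instead exhibit the inverse. Given a target point $(a_1,\dots,a_n,b_1,\dots,b_{n-1})\in\R^{2n-1}$, the last $n-1$ coordinates force $y_t=(b_t-Y_t)/2$, determining $y$ uniquely; with $y$ in hand the quantities $A_iC_i(y)$ are fixed, and the first $n$ equations become the linear system $\Omega x=(a_i-A_iC_i(y))_i$, which has a unique solution because $\det\Omega\ne0$. Thus $A$ is a smooth bijection $\R^{2n-1}\to\R^{2n-1}$ with smooth inverse, proving invertibility.

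I do not expect a genuine obstacle here; the points needing care are (a) correctly identifying the block-triangular shape so that $B$ drops out of the determinant, (b) invoking $\operatorname{disc}=\Norm_K(\mathfrak b)^2D_K$ with $\mathfrak b=4\fa\fd^2$ and bookkeeping the factor $\Norm_K((4))=4^n$ to land on the exponent $3n-1$, and (c) using total positivity of the fundamental units so that the exponentials $C_i(y)$ are defined for real $y$, which is precisely where \ref{ass: positivity} is needed.
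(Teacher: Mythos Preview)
Your proof is correct and follows essentially the same approach as the paper: the paper also observes the block lower-triangular shape of the Jacobian with the $\omega$-matrix and $2I_{n-1}$ on the diagonal, obtains the determinant as $2^{n-1}\cdot\Norm_K(4\fa\fd^2)\abs{D_K}^{1/2}$, and leaves the explicit inverse as an exercise. You actually supply more detail than the paper does---writing out the inverse explicitly and noting where \ref{ass: positivity} enters to make $C_i(y)$ smooth in real $y$.
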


\begin{proof}
\begin{enumerate}[label = \textup{(\roman*)}]
\item One can write explicitly the inverse of this function.
This is left as an exercise for the reader.
\item By computing partial derivatives the reader can check that the Jacobian matrix has the form 
\[
\begin{pmatrix}
\omega_{1, 1} &\cdots & \omega_{n,1} & * & \cdots & *\\
\vdots &\ddots & \vdots & \vdots & \ddots & \vdots\\
\omega_{n, 1} &\cdots & \omega_{n,n} & * & \cdots & *\\
0 & \cdots &0 & 2 & \cdots & 0\\
0 & \cdots &0 & \vdots & \ddots & \vdots\\
0 & \cdots &0 & 0 & \cdots & 2\\
\end{pmatrix}.
\]
\end{enumerate}
Thus, the determinant of the above matrix is 
\[
2^{n-1}\det
\begin{pmatrix}
\omega_{1, 1} &\cdots & \omega_{n,1} \\
\vdots &\ddots & \vdots \\
\omega_{n, 1} &\cdots & \omega_{n,n} \\
\end{pmatrix}
\]
By definition, the determinant of the $n\times n$ matrix is $N(4\fa\fd^2)\abs{D_K}^{1/2} = 4^nN(\fa)N(\fd)^2\abs{D_K}^{1/2}$.
\end{proof}

\begin{notation}
The transformation $A$ depends on $\fa$, $\fd$, $\pm$, $Y$, and $\mu$ but we omit this from the notation.
\end{notation}

We now rewrite the inner two sums of the completed regular elliptic part.

\begin{proposition}
\label{prop: before Poisson with A}
The inner two sums of the expression in Theorem~\ref{thm: regular elliptic part before poisson} can be rewritten as
\[
\sum_{y\in\mathbb{Z}^{n-1}}\sum_{x\in \mathbb{Z}^n} \left(\frac{\Phi_{\pm}\left( A(x,y) \right)}{\Norm_K(\fd)\Norm_K(\fa)} + \Norm_K(\fd)\Psi_{\pm}\left(A(x,y) \right)\right).
\]
\end{proposition}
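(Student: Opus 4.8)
The plan is to read Proposition~\ref{prop: before Poisson with A} as a change of summation variable in the two inner sums $\sum_{y\equiv Y}\sum_{\tau\equiv\mu v}$, using the parametrization already assembled in the Lemma preceding Definition~\ref{def: transformation A}. First I would fix all the ``outer'' data relative to which these two sums are inner: the sign $\pm$, the ideals $\fa$ and $\fd$, the class $Y\in\mathbb{F}_2^{n-1}$, the representative $\mu$ of a class in $\cO_K/4\fa\fd^2$, and an integral basis $\omega_1,\dots,\omega_n$ of $4\fa\fd^2$. With these fixed, the transformation $A=A(\cdot,\cdot)$ of Definition~\ref{def: transformation A} is a well-defined map $\R^n\times\R^{n-1}\to\R^{2n-1}$. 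The preceding Lemma says precisely that the pairs $(\tau,u(y))$ ranging over the index set of $\sum_{y\equiv Y}\sum_{\tau\equiv\mu v}$ --- that is, $y\in\Z^{n-1}$ with $y\equiv Y\pmod 2$ and $\tau\in\cO_K$ with $\tau\equiv\mu v\pmod{4\fa\fd^2}$, where $v=v(y,Y)$ is the totally positive unit with $u(y)=u(Y)v^2$ --- are exactly the points $A(x,y)$ with $(x,y)\in\Z^n\times\Z^{n-1}$.

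The one thing that needs checking is that $(x,y)\mapsto A(x,y)$ is a \emph{bijection} from $\Z^n\times\Z^{n-1}$ onto this index set: surjectivity is the content of the Lemma just quoted, and injectivity follows by reading off the last $n-1$ coordinates of $A(x,y)$ to recover $y=(y_1,\dots,y_{n-1})$ and then using the $\R$-linear independence of $\omega_1,\dots,\omega_n$ (equivalently the nonvanishing of the Jacobian determinant computed in Proposition~\ref{prop: properties A}\ref{det Jacobian}) to recover $x$. Granting this, for each fixed choice of outer data I would simply reindex:
\[
\sum_{y\equiv Y}\sum_{\tau\equiv\mu v}\!\Bigl(\tfrac{\Phi_{\pm}(\tau,u)}{\Norm_K(\fd)\Norm_K(\fa)}+\Norm_K(\fd)\Psi_{\pm}(\tau,u)\Bigr)
=\sum_{x\in\Z^n}\sum_{y\in\Z^{n-1}}\!\Bigl(\tfrac{\Phi_{\pm}(A(x,y))}{\Norm_K(\fd)\Norm_K(\fa)}+\Norm_K(\fd)\Psi_{\pm}(A(x,y))\Bigr),
\]
since in each term $(\tau,u)$ and $A(x,y)$ name the same point of $\R^{2n-1}$ at which $\Phi_{\pm}$ and $\Psi_{\pm}$ are evaluated. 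Here I would invoke the earlier proposition showing that $\Phi_{\pm}$ and $\Psi_{\pm}$ extend to smooth, compactly supported functions on all of $\R^{2n-1}$, so that $\Phi_{\pm}(A(x,y))$ and $\Psi_{\pm}(A(x,y))$ are defined for every $(x,y)$; compact support also guarantees that only finitely many terms on either side are nonzero, so the reindexing preserves the value with no convergence concern.

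I do not expect a genuine obstacle here: modulo the preceding Lemma and Definition~\ref{def: transformation A} the statement is essentially formal. The only point to be careful about is bookkeeping of coordinate conventions --- making sure that the first $n$ slots of the argument of $\Phi_{\pm}$/$\Psi_{\pm}$ are the real embeddings $\sigma_i(\tau)$ while the last $n-1$ slots are the exponent vector of $u$ relative to the fixed fundamental units, and that the relabelling $y'\mapsto y$ (from $y=Y+2y'$, $v=\prod_t\beta_t^{y'_t}$) used in the proof of the Lemma is carried through consistently so that the $u$ appearing inside $\Phi_{\pm},\Psi_{\pm}$ is indeed $u(y)=u(Y)v^2$. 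Once this identification is pinned down, the proof is complete.
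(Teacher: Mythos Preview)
Your proposal is correct and takes essentially the same approach as the paper: the paper itself does not give a separate proof of this proposition, treating it as an immediate consequence of the preceding Lemma (which parametrizes the index set $\{y\equiv Y,\ \tau\equiv\mu v\}$ by $\Z^n\times\Z^{n-1}$ via $A$) together with Definition~\ref{def: transformation A} and Proposition~\ref{prop: properties A}\ref{exercise invertible}. Your write-up spells out the bijectivity and the compact-support/finiteness points that the paper leaves implicit, but there is no difference in method.
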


\subsection{Performing Poisson Summation.}

For $(t,s)\in\mathbb{R}^{n}\times\mathbb{R}^{n-1}$, let $G(t,s)$ be a smooth function with compact support.
Set $\widehat{G}$ to denote the \emph{Fourier transform} of $G$, i.e.,
\[
\widehat{G}(\xi, \eta) := \int_{\mathbb{R}^n} \int_{\mathbb{R}^{n-1}}G(t, s)\exp(-t\cdot\xi -s\cdot\eta )dsdt,
\]
where $(\xi, \eta)\in \Z^{n}\times\Z^{n-1}$.
Then, Poisson summation is the following assertion
\begin{equation}
\label{poisson summation statement}
\sum_{(t,s)\in\Z^{n}\times\Z^{n-1}} G(t,s) = \sum_{(\xi, \eta)\in\Z^{n}\times\Z^{n-1}} \widehat{G}(\xi, \eta).
\end{equation}

The following important result essentially follows from unraveling the definitions.
\begin{proposition}
\label{hat G expresion}
Let $A$ be the transformation from Definition~\ref{def: transformation A}.
Let $G(x,y) = H(A(x,y))$ for some smooth function $H$ with compact support, then
\[
\widehat{G}(\xi, \eta) = \frac{1}{2^{3n-1}N(\fa)N(\fd)^2\abs{D_K}^{1/2}} \int \int H(t,s)\exp(-(\xi, \eta)\cdot A^{-1}(t,s) )dsdt.
\]
\end{proposition}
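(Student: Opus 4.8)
The plan is to read $\widehat{G}$ off its definition and then apply the multivariable change of variables $(t,s)=A(x,y)$, feeding in the two properties of $A$ recorded in Proposition~\ref{prop: properties A}. Since $G(x,y)=H(A(x,y))$, the definition of the Fourier transform gives
\[
\widehat{G}(\xi,\eta)=\int_{\R^n}\int_{\R^{n-1}}H\bigl(A(x,y)\bigr)\exp\bigl(-(\xi,\eta)\cdot(x,y)\bigr)\,dy\,dx ,
\]
where $(\xi,\eta)\cdot(x,y)=\xi\cdot x+\eta\cdot y$. The first thing I would note is that this integral is absolutely convergent: $H$ is smooth of compact support and, by Proposition~\ref{prop: properties A}\ref{exercise invertible}, $A$ is a diffeomorphism of $\R^{2n-1}$ onto itself with explicit inverse, so $H\circ A$ is again compactly supported and the change of variables theorem applies with no boundary contributions.

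Next I would substitute $(t,s)=A(x,y)$, i.e.\ $(x,y)=A^{-1}(t,s)$. By Proposition~\ref{prop: properties A}\ref{det Jacobian} the Jacobian determinant of $A$ equals the constant $2^{3n-1}N(\fa)N(\fd)^2\abs{D_K}^{1/2}$; here the key structural point, already visible in the proof of that proposition, is that the Jacobian matrix is block lower triangular with lower-right block $2I_{n-1}$ and constant upper-left block $(\omega_{j,i})$, so its determinant does not vary with $(x,y)$ and no ``absolute value of a varying Jacobian'' subtlety intervenes. Hence $dy\,dx=\bigl(2^{3n-1}N(\fa)N(\fd)^2\abs{D_K}^{1/2}\bigr)^{-1}\,ds\,dt$, and replacing $(x,y)$ by $A^{-1}(t,s)$ in the exponential turns the integrand into $H(t,s)\exp\bigl(-(\xi,\eta)\cdot A^{-1}(t,s)\bigr)$. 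Collecting everything yields exactly
\[
\widehat{G}(\xi,\eta)=\frac{1}{2^{3n-1}N(\fa)N(\fd)^2\abs{D_K}^{1/2}}\int\int H(t,s)\exp\bigl(-(\xi,\eta)\cdot A^{-1}(t,s)\bigr)\,ds\,dt .
\]

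I do not expect any genuine obstacle here — the statement is essentially a bookkeeping consequence of the change of variables formula once Proposition~\ref{prop: properties A} is available. The one place deserving a line of care is the normalizing constant: one should confirm that $\abs{\det(\omega_{j,i})}$ is the covolume $N(4\fa\fd^2)\abs{D_K}^{1/2}=4^n N(\fa)N(\fd)^2\abs{D_K}^{1/2}$ of an integral basis of the ideal $4\fa\fd^2$, which when multiplied by the factor $2^{n-1}$ coming from the $2I_{n-1}$ block reproduces the $2^{3n-1}N(\fa)N(\fd)^2\abs{D_K}^{1/2}$ appearing in the statement. With that checked, the proof is complete.
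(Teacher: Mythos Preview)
Your proof is correct and follows exactly the approach the paper takes: the paper's proof is a one-line appeal to the definition of the Fourier transform together with the change of variables formula and Proposition~\ref{prop: properties A}, which is precisely what you spell out in detail. Your added remarks on absolute convergence and the check of the normalizing constant are accurate elaborations of that same argument.
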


\begin{proof}
This follows from definition of Fourier transform and the change of variable formula using Proposition~\ref{prop: properties A}.
\end{proof}

\begin{definition}
For the functions $\Phi_{\pm}$ and $\Psi_{\pm}$ introduced in \eqref{defn: Phi} and \eqref{defn: Psi}, define the \emph{scaled Fourier transforms}
\begin{align*}
\Phi_{\pm}^1(\xi, \eta)&:= \int \int \Phi_{\pm}(t,s)\exp(-(\xi, \eta)\cdot A^{-1}(t,s) )ds dt,\\
\Psi_{\pm}^1(\xi, \eta)&:= \int \int \Psi_{\pm}(t,s)\exp(-(\xi, \eta)\cdot A^{-1}(t,s) )ds dt.
\end{align*}
\end{definition}

\begin{remark}
We worked with these scaled Fourier transforms because it allowed us to keep track of the constants after Poisson summation more easily.
\end{remark}

Applying Poisson summation, we get the following result.

\begin{theorem}
\label{thm: after Poisson}
The completed regular elliptic part is 
\begin{tiny}
\[
\frac{\sp^{-\sk/2}}{2^{3n-1} } \sum_{\pm}\sum_{\fa}\sum_{\fd}\sum_{Y\in\mathbb{F}_2^{n-1}}\sum_{\mu\in \cO_K/4\fa\fd^2}^{'}\binom{\mu^2 - 4 {u(Y)}\varepsilon, \fd}{\fa}\sum_{\xi\in \mathbb{Z}^{n}}\sum_{\eta\in\mathbb{Z}^{n-1}} \frac{1}{\Norm_K(\fa)\Norm_K(\fd)^2}\left(\frac{\Phi_{\pm}^{1}(\xi, \eta)}{\Norm_K(\fd)\Norm_K(\fa)} + \Norm_K(\fd)\Psi_{\pm}^{1}(\xi, \eta)\right).
\]
\end{tiny}
\end{theorem}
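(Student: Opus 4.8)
The plan is to derive Theorem~\ref{thm: after Poisson} directly from Theorem~\ref{thm: regular elliptic part before poisson} together with Proposition~\ref{prop: before Poisson with A}, applying Poisson summation \eqref{poisson summation statement} to the innermost double sum over the lattice $\Z^n\times\Z^{n-1}$. First I would recall from Proposition~\ref{prop: before Poisson with A} that the two inner sums appearing in Theorem~\ref{thm: regular elliptic part before poisson} equal
\[
\sum_{y\in\Z^{n-1}}\sum_{x\in\Z^{n}}\left(\frac{\Phi_{\pm}(A(x,y))}{\Norm_K(\fd)\Norm_K(\fa)}+\Norm_K(\fd)\Psi_{\pm}(A(x,y))\right),
\]
so the completed regular elliptic part becomes ${\abs{D_K}^{1/2}\sp^{-\sk/2}}$ times the sum over $\pm,\fa,\fd,Y,\mu$ of the character $\binom{\mu^2-4u(Y)\varepsilon,\fd}{\fa}$ against this expression. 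The point is that $G(x,y):=\frac{\Phi_{\pm}(A(x,y))}{\Norm_K(\fd)\Norm_K(\fa)}+\Norm_K(\fd)\Psi_{\pm}(A(x,y))$ is, by the smoothing results of \S\ref{sec: Smoothing of the Singularities} (the proposition establishing $\Phi_{\pm},\Psi_{\pm}$ are smooth with compact support) and the fact that $A$ is a real-linear-affine-in-$x$, smooth-in-$y$ invertible change of variables (Proposition~\ref{prop: properties A}\ref{exercise invertible}), a smooth compactly supported function on $\R^{n}\times\R^{n-1}$, hence eligible for \eqref{poisson summation statement}.

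Next I would apply \eqref{poisson summation statement} to $G$, obtaining
\[
\sum_{(x,y)\in\Z^n\times\Z^{n-1}}G(x,y)=\sum_{(\xi,\eta)\in\Z^n\times\Z^{n-1}}\widehat{G}(\xi,\eta).
\]
By linearity of the Fourier transform and Proposition~\ref{hat G expresion} applied separately to $H=\Phi_{\pm}$ and $H=\Psi_{\pm}$, the Jacobian factor $\frac{1}{2^{3n-1}\Norm_K(\fa)\Norm_K(\fd)^2\abs{D_K}^{1/2}}$ from Proposition~\ref{prop: properties A}\ref{det Jacobian} pulls out, and the remaining integrals are exactly the scaled Fourier transforms $\Phi_{\pm}^1(\xi,\eta)$ and $\Psi_{\pm}^1(\xi,\eta)$ by their definition. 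Thus
\[
\widehat{G}(\xi,\eta)=\frac{1}{2^{3n-1}\Norm_K(\fa)\Norm_K(\fd)^2\abs{D_K}^{1/2}}\left(\frac{\Phi_{\pm}^1(\xi,\eta)}{\Norm_K(\fd)\Norm_K(\fa)}+\Norm_K(\fd)\Psi_{\pm}^1(\xi,\eta)\right).
\]
Substituting this back, the prefactor $\abs{D_K}^{1/2}$ in the expression for $\overline{R}(f)$ cancels against the $\abs{D_K}^{-1/2}$ coming out of the Jacobian, leaving the $\frac{\sp^{-\sk/2}}{2^{3n-1}}$ and the factor $\frac{1}{\Norm_K(\fa)\Norm_K(\fd)^2}$ displayed in the theorem. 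Reassembling the outer sums over $\pm$, $\fa$, $\fd$, $Y\in\mathbb{F}_2^{n-1}$, and $\mu\in\cO_K/4\fa\fd^2$ (with the congruence-primed summation and the character $\binom{\mu^2-4u(Y)\varepsilon,\fd}{\fa}$, none of which involve the Poisson variables) then yields precisely the stated formula.

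I do not expect a genuine obstacle here: the theorem is essentially a bookkeeping assembly of results already in place. The one point requiring care is the \emph{interchange of the Poisson summation with the outer (infinite) sums over $\fa$ and $\fd$} — one must check that after summing over $(\xi,\eta)$ the resulting double series over $\fa,\fd$ still converges absolutely so that the rearrangement is legitimate; this follows from the same Bessel-type decay bounds on $F$ and $H_\gamma$ (Lemma~\ref{lemma:boundH} and the bound $0<F(x)<\tfrac{e^{-x}}{2K_0(2)}$) used in the proof that $\Sigma(\square)$ converges absolutely, now transported through the Fourier transform using that $\Phi_{\pm}^1,\Psi_{\pm}^1$ decay rapidly in $(\xi,\eta)$ because $\Phi_{\pm},\Psi_{\pm}$ are smooth of compact support. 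A secondary subtlety is verifying that the change of variables $A$ genuinely sends $\Z^n\times\Z^{n-1}$ onto the index set of the inner sums in Theorem~\ref{thm: regular elliptic part before poisson} bijectively, but this is exactly the content of the lemma preceding Definition~\ref{def: transformation A} and of Proposition~\ref{prop: properties A}\ref{exercise invertible}, so it may be invoked directly. With these two convergence/bijectivity checks dispatched, the proof is a one-line application of Poisson summation followed by substitution.
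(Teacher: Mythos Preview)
Your proposal is correct and follows essentially the same route as the paper: the paper's proof is the single line ``This is a direct consequence of Propositions~\ref{prop: before Poisson with A} and \ref{hat G expresion},'' which is precisely the two ingredients you invoke (rewriting the inner double sum via $A$, then applying Poisson summation and reading off the Jacobian factor). Your additional remarks on smoothness, bijectivity of $A$, and absolute convergence are reasonable due-diligence checks that the paper leaves implicit, but they do not constitute a different approach.
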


\begin{proof}
This is a direct consequence of Propositions~\ref{prop: before Poisson with A} and \ref{hat G expresion}.
\end{proof}

Our analysis will now deviate from \cite{AliI}.
If we were to proceed in grouping terms as in \emph{op.~cit.}, we would have to analyze the exponential term that appears inside the integrals.
Our approach will avoid this step and we will focus only on the sum obtained when $(\xi, \eta) = 0$.

\begin{notation}
We write $\overline{\Sigma_0}(f)$ to denote the expression obtained in Theorem~\ref{thm: after Poisson} with $\xi=\eta=0$.
\end{notation}

Note that the term
\[
\frac{1}{\Norm_K(\fa)\Norm_K(\fd)^2}\left(\frac{\Phi_{\pm}^1(0, 0)}{\Norm_K(\fd)\Norm_K(\fa)} + \Norm_K(\fd)\Psi_{\pm}^1(0, 0)\right)
\]
is independent of the value of $Y$ and $\mu$.
We can therefore reorder and simplify the expression.

\begin{proposition}
\label{prop overline Sigma 0 f}
With notation as above
\begin{tiny}
\[
\overline{\Sigma_0}(f) = \frac{\sp^{-\sk/2}}{2^{3n-1} } \sum_{\pm}\sum_{\fa}\sum_{\fd}\left(\frac{\Phi_{\pm}^1(0, 0)}{\Norm_K(\fd)^3\Norm_K(\fa)^2} + \frac{\Psi_{\pm}^1(0, 0)}{\Norm_K(\fa)\Norm_K(\fd)}\right)\left(\sum_{Y\in\mathbb{F}_2^{n-1}}\sum_{\mu\in \cO_K/4\fa\fd^2}^{'}\binom{\mu^2 - 4 {u(Y)}\varepsilon, \fd}{\fa} \right).
\]
\end{tiny}
\end{proposition}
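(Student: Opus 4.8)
\textbf{Proof plan for Proposition~\ref{prop overline Sigma 0 f}.}

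The plan is to start from the expression in Theorem~\ref{thm: after Poisson} and simply set $\xi=\eta=0$, then reorganize the finite sums. Concretely, substituting $(\xi,\eta)=(0,0)$ into the displayed expression from Theorem~\ref{thm: after Poisson} removes the sums over $\xi\in\mathbb{Z}^n$ and $\eta\in\mathbb{Z}^{n-1}$ (each collapses to a single term), leaving
\begin{small}
\[
\overline{\Sigma_0}(f) = \frac{\sp^{-\sk/2}}{2^{3n-1}} \sum_{\pm}\sum_{\fa}\sum_{\fd}\sum_{Y\in\mathbb{F}_2^{n-1}}\sum_{\mu\in \cO_K/4\fa\fd^2}^{'}\binom{\mu^2 - 4 {u(Y)}\varepsilon, \fd}{\fa}\cdot\frac{1}{\Norm_K(\fa)\Norm_K(\fd)^2}\left(\frac{\Phi_{\pm}^{1}(0,0)}{\Norm_K(\fd)\Norm_K(\fa)} + \Norm_K(\fd)\Psi_{\pm}^{1}(0,0)\right).
\]
\end{small}
This is just the definition of $\overline{\Sigma_0}(f)$ spelled out via the Notation preceding the statement, so no work is needed here.

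The only substantive step is the interchange of the order of summation: the factor
\[
\frac{1}{\Norm_K(\fa)\Norm_K(\fd)^2}\left(\frac{\Phi_{\pm}^1(0, 0)}{\Norm_K(\fd)\Norm_K(\fa)} + \Norm_K(\fd)\Psi_{\pm}^1(0, 0)\right)
= \frac{\Phi_{\pm}^1(0,0)}{\Norm_K(\fd)^3\Norm_K(\fa)^2} + \frac{\Psi_{\pm}^1(0,0)}{\Norm_K(\fa)\Norm_K(\fd)}
\]
depends only on the sign $\pm$ and on the ideals $\fa,\fd$ — crucially \emph{not} on $Y$ or on the residue class $\mu$. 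Hence it may be pulled outside the sums over $Y$ and $\mu$, and what remains inside is precisely the inner double sum
\[
\sum_{Y\in\mathbb{F}_2^{n-1}}\sum_{\mu\in \cO_K/4\fa\fd^2}^{'}\binom{\mu^2 - 4 {u(Y)}\varepsilon, \fd}{\fa}.
\]
This yields the claimed identity after relabelling. The justification that the rearrangement is legitimate comes from absolute convergence: the sums over $Y$ and over $\mu\pmod{4\fa\fd^2}$ are finite, and the remaining sums over $\fa$ and $\fd$ converge absolutely by the same estimates used in the proof that $\Sigma(\square)$ converges (the scaled Fourier transforms $\Phi_{\pm}^1,\Psi_{\pm}^1$ at $(0,0)$ are just finite integrals of the smooth compactly supported functions $\Phi_{\pm},\Psi_{\pm}$, hence bounded, and the geometric-type series $\sum_{\fa,\fd}\Norm_K(\fd)^{-3}\Norm_K(\fa)^{-2}$ and $\sum_{\fa,\fd}\Norm_K(\fa)^{-1}\Norm_K(\fd)^{-1}$ converge — the latter only after noting that $\fd$ is constrained and that the finiteness comes bundled with the compact support, exactly as in the convergence proof for $\overline{R}(f)$).

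I do not anticipate a genuine obstacle here; this is a bookkeeping step. The one point requiring a small amount of care is to confirm that $\Phi_{\pm}^1(0,0)$ and $\Psi_{\pm}^1(0,0)$ are indeed independent of $Y$ and $\mu$ as functions, rather than merely as the values displayed: the transformation $A$ — and hence the scaled Fourier transforms defined through it — depends on $\fa,\fd,\pm,Y,\mu$ (see the Notation following Proposition~\ref{prop: properties A}), so one must observe that at $(\xi,\eta)=(0,0)$ the exponential factor $\exp(-(\xi,\eta)\cdot A^{-1}(t,s))$ is identically $1$, so $\Phi_{\pm}^1(0,0)=\int\int\Phi_{\pm}(t,s)\,ds\,dt$ and similarly for $\Psi_{\pm}^1(0,0)$, which manifestly do not involve $A$ and hence do not involve $Y$ or $\mu$. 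With that remark in place the reordering is immediate and the proof concludes.
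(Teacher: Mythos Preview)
Your proposal is correct and matches the paper's approach exactly: the paper simply notes (in the paragraph preceding the statement) that the term $\frac{1}{\Norm_K(\fa)\Norm_K(\fd)^2}\left(\frac{\Phi_{\pm}^1(0,0)}{\Norm_K(\fd)\Norm_K(\fa)} + \Norm_K(\fd)\Psi_{\pm}^1(0,0)\right)$ is independent of $Y$ and $\mu$, then reorders. Your explicit observation that at $(\xi,\eta)=(0,0)$ the exponential factor in the definition of $\Phi_{\pm}^1,\Psi_{\pm}^1$ collapses to $1$---thereby killing the dependence on $A$ and hence on $Y,\mu$---is a detail the paper leaves implicit, so you are in fact slightly more thorough; the only imprecision is in your convergence aside (the integrals $\Phi_{\pm}^1(0,0),\Psi_{\pm}^1(0,0)$ still depend on $\fa,\fd$ through the definitions of $\Phi_{\pm},\Psi_{\pm}$, so boundedness alone is not quite the right word), but this does not affect the validity of the rearrangement since only finite sums are being moved.
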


To further simplify notation, we introduce the following definition.

\begin{definition}
Given ideals $\fa, \fd$ of $\cO_K$ and a unit $u\in \cO_K^*$, define the \emph{Kloosterman-type sums} as
\[
K_{\fa, \fd}(u) := \sum_{\mu\in \cO_K/4\fa\fd^2}^{'}\binom{\mu^2 - 4u\varepsilon, \fd}{\fa}.
\]
\end{definition}

\begin{remark}
The definition of the Kloosterman-type sum does not require any hypothesis on $K$.
\end{remark}

Proposition~\ref{prop overline Sigma 0 f} can now be rewritten as follows.
\begin{theorem}
\label{prop:dominanttermwith(tau,u)=0}
With notation as before,
\[
\overline{\Sigma_0}(f) = \frac{\sp^{-\sk/2}}{2^{3n-1} } \sum_{\pm}\sum_{Y\in\mathbb{F}_2^{n-1}}\sum_{\fa}\sum_{\fd}\left(\frac{\Phi_{\pm}^1(0, 0)}{\Norm_K(\fd)^3\Norm_K(\fa)^2} + \frac{\Psi_{\pm}^1(0, 0)}{\Norm_K(\fa)\Norm_K(\fd)}\right)K_{\fa, \fd}(\pm u(Y)).
\]
\end{theorem}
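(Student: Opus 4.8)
\textbf{Proof plan for Theorem~\ref{prop:dominanttermwith(tau,u)=0}.}
The statement is, as remarked above, essentially a bookkeeping consequence of Theorem~\ref{thm: after Poisson} together with Proposition~\ref{prop overline Sigma 0 f}, so the plan is to assemble those two inputs and then make the cosmetic substitutions that produce the claimed form. First I would recall that $\overline{\Sigma_0}(f)$ is by definition (see the Notation preceding Proposition~\ref{prop overline Sigma 0 f}) the $(\xi,\eta)=(0,0)$ piece of the expression in Theorem~\ref{thm: after Poisson}. Extracting that single summand from the inner double sum over $\xi\in\mathbb{Z}^n,\ \eta\in\mathbb{Z}^{n-1}$ is legitimate because the full double sum converges absolutely: $\Phi_\pm,\Psi_\pm$ are smooth of compact support by the Proposition in \S\ref{sec: Smoothing of the Singularities}, hence their scaled Fourier transforms $\Phi_\pm^1,\Psi_\pm^1$ are rapidly decaying in $(\xi,\eta)$, and the outer sums over $\fa,\fd$ and the finite sums over $\pm$, $Y\in\mathbb{F}_2^{n-1}$, $\mu\in\cO_K/4\fa\fd^2$ cause no trouble (the $\mu$-sum is finite, the $\fa,\fd$ sums decay polynomially thanks to the $\Norm_K(\fa)^{-2}\Norm_K(\fd)^{-3}$ and $\Norm_K(\fa)^{-1}\Norm_K(\fd)^{-1}$ weights, exactly as in the absolute convergence argument for $\Sigma(\square)$).

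Next I would observe, exactly as stated in the sentence preceding Proposition~\ref{prop overline Sigma 0 f}, that the factor
\[
\frac{1}{\Norm_K(\fa)\Norm_K(\fd)^2}\left(\frac{\Phi_{\pm}^1(0, 0)}{\Norm_K(\fd)\Norm_K(\fa)} + \Norm_K(\fd)\Psi_{\pm}^1(0, 0)\right)
= \frac{\Phi_{\pm}^1(0,0)}{\Norm_K(\fd)^3\Norm_K(\fa)^2}+\frac{\Psi_{\pm}^1(0,0)}{\Norm_K(\fa)\Norm_K(\fd)}
\]
depends only on $\pm$, $\fa$, and $\fd$, and in particular is independent of $Y$ and of $\mu$. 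Hence in the $(0,0)$-term of Theorem~\ref{thm: after Poisson} this factor may be pulled outside the sums $\sum_{Y\in\mathbb{F}_2^{n-1}}\sum_{\mu\in\cO_K/4\fa\fd^2}'$, leaving the inner double sum
\[
\sum_{Y\in\mathbb{F}_2^{n-1}}\ \sum_{\mu\in \cO_K/4\fa\fd^2}^{'}\binom{\mu^2 - 4 {u(Y)}\varepsilon, \fd}{\fa}.
\]
This is precisely the content of Proposition~\ref{prop overline Sigma 0 f}, so at this stage I would simply cite that proposition.

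Finally I would invoke the definition of the Kloosterman-type sum,
\[
K_{\fa, \fd}(u) := \sum_{\mu\in \cO_K/4\fa\fd^2}^{'}\binom{\mu^2 - 4u\varepsilon, \fd}{\fa},
\]
applied with $u=u(Y)$ for each choice of sign and each $Y\in\mathbb{F}_2^{n-1}$ (writing $\pm u(Y)$ to absorb the outer $\pm$ into the unit, consistent with the notation $u=u(\pm,Y)$), to rewrite the inner $\mu$-sum as $K_{\fa,\fd}(\pm u(Y))$. Interchanging the (now finite) $Y$-sum with the $\fa,\fd$-sums—again justified by absolute convergence—yields exactly
\[
\overline{\Sigma_0}(f) = \frac{\sp^{-\sk/2}}{2^{3n-1} } \sum_{\pm}\sum_{Y\in\mathbb{F}_2^{n-1}}\sum_{\fa}\sum_{\fd}\left(\frac{\Phi_{\pm}^1(0, 0)}{\Norm_K(\fd)^3\Norm_K(\fa)^2} + \frac{\Psi_{\pm}^1(0, 0)}{\Norm_K(\fa)\Norm_K(\fd)}\right)K_{\fa, \fd}(\pm u(Y)),
\]
which is the asserted identity. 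There is no genuine obstacle here: the only point requiring a word of care is the absolute convergence that licenses both the extraction of the $(0,0)$-term and the reordering of summations, and that is handled exactly as in the proof that $\Sigma(\square)$ converges absolutely (bounding the character by $1$ and using the rapid decay of $F$ and of $H_\gamma$ via Lemma~\ref{lemma:boundH}), now applied at the Fourier-dual point $(\xi,\eta)=(0,0)$ where the oscillatory factor is trivial.
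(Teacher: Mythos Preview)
Your proposal is correct and follows exactly the paper's approach: the paper simply remarks that Proposition~\ref{prop overline Sigma 0 f} can be rewritten using the definition of $K_{\fa,\fd}(u)$, and you have spelled out precisely that substitution together with the (harmless) reordering of the finite $Y$-sum with the $\fa,\fd$-sums. Your added remarks on absolute convergence are more explicit than the paper bothers to be, but they are in the same spirit and not a different route.
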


Our goal in the following section is to evaluate these sums precisely.
We will see that the sum over $Y$ is in fact straightforward.
It is the sum over $\mu$ which presents challenges.

\section{Evaluation of Kloosterman-type Sums} 
\label{sec: Kloosterman}

The evaluation of the Kloosterman-type sums is central to the paper, but the calculations are long and technical.
For ease of following the argument, we omit the calculations from this section and have deferred them to Appendix~\ref{Appendix_A}.
Here, we only state the main result.

Throughout this section we invoke the following hypothesis.
\begin{equation}
\tag*{\textup{\textbf{(Hyp-split)}}}
\label{ass: split}
\begin{minipage}{0.8\textwidth}
Let $K$ be a number field such that 2 splits completely in $K$.
\end{minipage}
\end{equation}
Let $\fp_1,\cdots,\fp_n$ be the primes above $2$ in $\cO_K$, that is,
\[
2\cO_K = \fp_1\cdots\fp_n.
\]
By assumption, $K_{\fp_i} \cong \Q_2$.
We assume $\pi_{\fp_i}$ corresponds to $2$ under the above isomorphism.
This fixes the uniformizer uniquely as there is only one field isomorphism $\mathbb{Q}\longrightarrow\mathbb{Q}$.

\begin{remark}
The splitting condition at the prime 2 is a technical hypothesis we need to impose to evaluate the Kloosterman-type sums.
We believe that the main result of this section should be true in general.
We note that in this section we do not require the hypothesis that $K$ is totally real.
\end{remark}

\begin{definition}
Let $u\in \cO_K^*$ be a unit and let $\fd$ and $\fa$ be arbitrary ideals of $\cO_K$.
For a complex parameter $z = \sigma + it$ with $\sigma > 1$, define
\[
\D(z,u):= \sum_{\fd}\frac{1}{\Norm_K(\fd)^{2z+1}} \sum_{\fa}\frac{K_{\fa,\fd}(u)}{\Norm_K(\fa)^{z+1}}.
\]
\end{definition}
The main result of this section is the following theorem.
We provide a proof of this result in Appendix \ref{Appendix_A}.

\begin{theorem}
\label{main theorem of the kloosterman section}
Let $K$ be a number field satisfying \ref{ass: split} and let $(\fp,\sk)$ be the pair of prime ideal and corresponding positive integer satisfying \ref{ass: class number} fixed in \S\ref{sec: Definition of the test function and its implications}.
Write $\mathsf{p}=\Norm_{K}(\fp)$.
Let $u\in \cO_K^*$ be a unit.
For a complex parameter $z$ with $\Re(z) > 1$,
\[
\D(z,u)=4^n\frac{\zeta_K(2z)}{\zeta_K(z+1)}\cdot\frac{1-1/\sp^{z(\sk+1)}}{1-1/\sp^{z}}.
\]
In particular, $\D(z,u)$ is independent of $u$.
Moreover, $\D(z, u)$ admits an analytic continuation to a meromorphic function in the whole complex plane with poles at $z=0$ and $z=\frac12$.
\end{theorem}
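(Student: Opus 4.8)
The plan is to prove the closed formula by computing $\D(z,u)$ as an Euler product, one prime at a time, and then to read off the meromorphic continuation and the poles directly from the resulting expression. The $u$-independence will fall out of the local computations, since each local factor will turn out not to depend on $u$.

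First I would show that for $\Re(z)>1$ the series factors as $\D(z,u)=\prod_{\fq}\D_{\fq}(z,u)$, where $\D_{\fq}(z,u)=\sum_{\upsilon,r\ge0}\Norm_K(\fq)^{-(2z+1)r-(z+1)\upsilon}\widetilde{K}_{\fq^{\upsilon},\fq^{r}}(u)$ and $\widetilde{K}_{\fq^{\upsilon},\fq^{r}}(u)$ is the local Kloosterman-type sum (this is the content of Lemma~\ref{lemma:kl1}). The factorisation rests on two observations: the modified Hilbert symbol $\binom{\mu^2-4u\varepsilon,\fd}{\fa}$ is by definition a product of restricted Hilbert symbols over the primes dividing $\fa$, and the conditions packaged into the primed sum $\sum^{'}_{\mu\in\cO_K/4\fa\fd^2}$ (namely $\fd^2\mid\mu^2-4u\varepsilon$ together with the mod-$\fp_i^2$ congruence on the deflated quantity) are local. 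Writing $\fa=\prod_{\fq}\fq^{\upsilon_{\fq}}$, $\fd=\prod_{\fq}\fq^{r_{\fq}}$ and applying the Chinese Remainder Theorem to $\cO_K/4\fa\fd^2$, the sum $K_{\fa,\fd}(u)$ becomes multiplicative in $(\fa,\fd)$ through the $\widetilde{K}_{\fq^{\upsilon},\fq^{r}}(u)$, and rearranging the (absolutely convergent) double Dirichlet series yields the Euler product.

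Second, I would evaluate $\D_{\fq}(z,u)$ prime by prime. For $\fq\nmid2$ the restricted Hilbert symbol is the quadratic residue character on $\cO_{K_{\fq}}^{*}$, so $\widetilde{K}_{\fq^{\upsilon},\fq^{r}}(u)$ is a genuine character sum counting squares versus nonsquares of $\mu\mapsto(\mu^2-4u\varepsilon)\pi_{\fq}^{-2r}$ modulo powers of $\fq$; Hensel's Lemma reduces the count to the residue field, and summing the resulting essentially geometric series gives a rational function of $\Norm_K(\fq)^{-z}$ which one checks equals the $\fq$-Euler factor $\dfrac{1-\Norm_K(\fq)^{-(z+1)}}{1-\Norm_K(\fq)^{-2z}}$, except at $\fq=\fp$, where the constraint $\abs{\det\gamma}_{\fp}=\Norm_K(\fp)^{-\sk}$ produces the extra finite factor $\dfrac{1-\Norm_K(\fp)^{-z(\sk+1)}}{1-\Norm_K(\fp)^{-z}}$. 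For $\fq\mid2$, the hypothesis \ref{ass: split} gives $K_{\fp_i}\cong\Q_2$ with $\pi_{\fp_i}=2$, so the evaluation becomes a completely explicit finite computation over $\Q_2$: one counts residues $\mu\bmod2^{b}$ subject to the mod-$4$ congruence on $(\mu^2-4u\varepsilon)/2^{2r}$, weighted by the restricted Hilbert symbol at $2$, and finds that $\D_{\fp_i}(z,u)=4\cdot\dfrac{1-2^{-(z+1)}}{1-2^{-2z}}$, independent of $u$. Multiplying all local factors and comparing Euler products for $\Re(z)>1$ gives $\D(z,u)=4^{n}\,\dfrac{\zeta_K(2z)}{\zeta_K(z+1)}\cdot\dfrac{1-1/\sp^{z(\sk+1)}}{1-1/\sp^{z}}$. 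From this closed form the meromorphic continuation to all of $\C$ is immediate from the meromorphic continuation of $\zeta_K$, and one reads off the poles: $\zeta_K(2z)$ contributes the pole at $z=\tfrac12$, the factor $1/\zeta_K(z+1)$ is responsible for the behaviour at $z=0$, and the finite geometric factor is entire; the residues at these points can then be written down from the analytic class number formula.

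I expect the main obstacle to be the explicit evaluation of $\D_{\fp_i}(z,u)$ at the primes above $2$. Even with $2$ split, so that we are computing over $\Q_2$, the restricted Hilbert symbol at $2$ diverges from the Kronecker symbol precisely at residues $\equiv7\pmod8$, and the additional mod-$4$ congruence on $(\mu^2-4u\varepsilon)/2^{2r}$ makes the underlying counting problem genuinely delicate; organising the count so that the dependence on the unit $u$ visibly cancels is the subtle point, and this is why the computation is deferred to Appendix~\ref{Appendix_A}. A secondary care point is making the Euler-product factorisation rigorous — verifying that the coupled conditions ``$\fd^2\mid\mu^2-4u\varepsilon$'' and the $\fa$-modulus genuinely decouple prime by prime after applying the Chinese Remainder Theorem, and that the rearrangement of the double Dirichlet series is legitimate in the region of absolute convergence.
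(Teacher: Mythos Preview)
Your proposal is correct and follows essentially the same approach as the paper: factor $\D(z,u)$ as an Euler product via Lemma~\ref{lemma:kl1}, evaluate each local factor $\D_{\fq}(z,u)$ by explicit computation of the local Kloosterman-type sums (Propositions~\ref{kloosc1}, \ref{kloosc3}, \ref{kloosc2}, \ref{kloosc4}, \ref{more kloosc}), and read off the meromorphic continuation from the resulting closed form. One small omission is the subcase $\fp\mid 2$, where the local factor at that prime must carry both the factor of $4$ and the extra finite geometric factor $\tfrac{1-\sp^{-z(\sk+1)}}{1-\sp^{-z}}$; the paper treats this separately (Propositions~\ref{kloosc4} and \ref{more kloosc}), but the method is the same.
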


\section{Isolation of the Contribution of the Trivial and Special Representations}
\label{sec: Isolation of the Contribution of the Trivial and Special Representations}

In this section of the paper, the goal is to isolate the contribution of the special representation and finish the proof of the main theorem.
Recall from Theorem~\ref{prop:dominanttermwith(tau,u)=0} that the dominant term is,
\begin{equation}
\label{eq:xi=0new}
\overline{\Sigma_0}(f) = \frac{\sp^{-\sk/2}}{2^{3n-1} } \sum_{\pm}\sum_{Y\in \mathbb{F}_2^{n-1}}\sum_{\fa}\sum_{\fd}\left(\frac{\Phi_{\pm}^1(0, 0)}{\Norm_K(\fd)^3\Norm_K(\fa)^2} + \frac{\Psi_{\pm}^1(0, 0)}{\Norm_K(\fa)\Norm_K(\fd)}\right)K_{\fa, \fd}(\pm u(Y)).
\end{equation}

\begin{notation}
In Theorem~\ref{main theorem of the kloosterman section} we proved that $\D(z,u(Y))$ is independent of $u(Y)$.
To avoid confusion, we will henceforth drop the dependence on $u$ from the notation.
\end{notation}

\subsection{Some algebraic number theory and a discussion}
\label{subsec: A lemma from algebraic number theory and a discussion}

Let $\gamma\in\GL(2, K)$ be a regular elliptic matrix with trace $\tau$ and determinant $\delta$.
For
\[
(\gamma_1, \cdots, \gamma_n) \in G_{\infty},
\]
where $\gamma_i$ is the embedding of $\gamma$ in the $i$-th real embedding of $K$, define
\[
C(\gamma) := C(\CH((\gamma_1, \cdots, \gamma_n))),
\]
where $C$ is the indicator function defined in Definition~\ref{indicator function}.

\begin{lemma}
\label{lemma: Limit with C}
The following equality holds
\[
\lim_{z\to0}\frac{L_{\infty}( z, \chi_{\gamma})}{L_{\infty}(1- z, \chi_{\gamma})} \frac{\zeta_K(2z)}{\zeta_K(z+1)} = -C(\gamma)\abs{D_K}^{1/2}2^{n -1}
\]
where $L_{\infty}( z, \chi_{\gamma}) = \prod_{\nu}L_{\nu}(z, \chi_{\gamma})$ and $\nu$ varies over all real places.
\end{lemma}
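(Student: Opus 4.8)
The plan is to compute the limit by expanding every factor around $z=0$ and tracking which poles and zeros survive. First I would recall the explicit shape of the archimedean factors: $L_\nu(z,\chi_\gamma)=L_\R(z+\delta_{\nu,\gamma})=\pi^{-(z+\delta_{\nu,\gamma})/2}\Gamma((z+\delta_{\nu,\gamma})/2)$, where $\delta_{\nu,\gamma}=0$ when $\chi_\gamma$ is unramified at $\nu$ and $\delta_{\nu,\gamma}=1$ when it is ramified. By Theorem~\ref{thm: reinterpretation of C}, $\chi_\gamma$ is unramified at $\nu$ exactly when $(r_\nu,N_\nu)\in S_{\spl}$, so the number of unramified real places is precisely the quantity controlled by $C(\gamma)$: if $\gamma$ is completely split at infinity then all $\delta_{\nu,\gamma}=0$ and $C(\gamma)=1$; otherwise at least one $\delta_{\nu,\gamma}=1$ and $C(\gamma)=0$. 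This dichotomy is what will ultimately produce the factor $C(\gamma)$ on the right-hand side.

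Next I would treat the two cases separately. Suppose first that some real place $\nu_0$ is ramified, i.e. $\delta_{\nu_0,\gamma}=1$. Then in $L_\infty(z,\chi_\gamma)=\prod_\nu L_\R(z+\delta_{\nu,\gamma})$ the factor $\Gamma((z+1)/2)$ is analytic and nonzero at $z=0$, while in the denominator $L_\infty(1-z,\chi_\gamma)$ the corresponding factor is $\Gamma((1-z+1)/2)=\Gamma((2-z)/2)$, again analytic and nonzero at $z=0$; the unramified factors contribute $\Gamma(z/2)$ in the numerator (a simple pole at $z=0$) and $\Gamma((1-z)/2)$ in the denominator (finite nonzero). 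Meanwhile $\zeta_K(2z)$ has a simple pole at $z=0$ coming from the pole of $\zeta_K$ at $1$? — no: $\zeta_K(2z)$ at $z=0$ is $\zeta_K(0)$, which is finite and nonzero (it is $-h_KR_K/\omega_K$ for a totally real field, in particular nonzero), and $\zeta_K(z+1)$ has a simple pole at $z=0$. So in the ramified case the ratio $L_\infty(z,\chi_\gamma)/L_\infty(1-z,\chi_\gamma)$ has a pole of order equal to the number of unramified real places (at least one simple pole from some $\Gamma(z/2)$), but this is killed by the simple... I would carefully count: each unramified place gives one $\Gamma(z/2)\sim 2/z$, so the numerator has a pole of order $m:=\#\{\text{unramified real }\nu\}$, and $1/\zeta_K(z+1)\sim (z)/\kappa$ contributes one zero. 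If $m\ge 1$ and at least one place is ramified ($m\le n-1$) the net order of vanishing/blowing-up must be reconciled with the functional equation; the cleanest route is to use the functional equation of $\xi_K$ and of $\Lambda(z,\chi_\gamma)$ to rewrite $L_\infty(z,\chi_\gamma)/L_\infty(1-z,\chi_\gamma)$ as $\Lambda(z,\chi_\gamma)/\Lambda(1-z,\chi_\gamma)\cdot L(1-z,\chi_\gamma)/L(z,\chi_\gamma)$ times the conductor factor, and then evaluate at $z=0$ using $\Lambda(z,\chi_\gamma)=\Lambda(1-z,\chi_\gamma)$. This shows the ramified case gives limit $0$, matching $-C(\gamma)\cdot(\ldots)=0$.

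Now suppose $\gamma$ is completely split at infinity, so all $\delta_{\nu,\gamma}=0$ and $C(\gamma)=1$; then $\chi_\gamma$ is unramified everywhere and $L(z,\chi_\gamma)$ is (a twist of) a Dirichlet-type $L$-function whose completed version is $\Lambda(z,\chi_\gamma)=\abs{D_K}^{z/2}\Norm(\Delta_\gamma)^{z/2}L_\R(z)^n L(z,\chi_\gamma)$. Then
\[
\frac{L_\infty(z,\chi_\gamma)}{L_\infty(1-z,\chi_\gamma)}\cdot\frac{\zeta_K(2z)}{\zeta_K(z+1)}
=\frac{L_\R(z)^n}{L_\R(1-z)^n}\cdot\frac{\zeta_K(2z)}{\zeta_K(z+1)}.
\]
I would substitute $\zeta_K(2z)=\abs{D_K}^{-z}L_\R(2z)^{-n}\xi_K(2z)$ and $\zeta_K(z+1)=\abs{D_K}^{-(z+1)/2}L_\R(z+1)^{-n}\xi_K(z+1)$, and use $\xi_K(2z)=\xi_K(1-2z)$ together with the known residue/special values: $\xi_K$ has simple poles at $0$ and $1$ with residues $\pm\kappa$ (precisely $\mathrm{Res}_{s=1}\xi_K=\ldots$), so near $z=0$ one has $\xi_K(2z)\sim -\kappa/(2z)$ while $\xi_K(z+1)\sim \kappa/z$. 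Combining the gamma factors $L_\R(z)^n\sim (2/z)^{n}\pi^{...}$ etc., the $z$-powers cancel to leave a finite nonzero constant; careful bookkeeping of the powers of $\pi$, of $2$, of $\abs{D_K}$, and of the residues should yield exactly $-\abs{D_K}^{1/2}2^{n-1}$. I expect the main obstacle to be precisely this constant-chasing: getting the power of $2$ and the sign right requires being scrupulous about the residue of $\xi_K$ at both $s=0$ and $s=1$ (they differ by a sign), about $L_\R(0)$ and $L_\R(1)$ (the latter has a pole), and about the doubling $2z$ versus $z+1$. A safe way to pin the constant down is to first verify the formula over $K=\Q$, where it must reduce to Altu\u{g}'s identity, and then note that each real place contributes an independent factor of $2$ (hence $2^{n-1}$ after one factor is absorbed into the sign) together with a single global $\abs{D_K}^{1/2}$ from the functional-equation conductor; the totally-real hypothesis guarantees there are no complex places to complicate the gamma-factor count.
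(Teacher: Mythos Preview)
Your overall architecture (split into the completely-split and not-completely-split cases via Theorem~\ref{thm: reinterpretation of C}, then chase constants) matches the paper's, and in the completely-split case your substitution $\zeta_K(2z)=\abs{D_K}^{-z}L_\R(2z)^{-n}\xi_K(2z)$ is exactly what the paper does. However, there is one factual error that derails your pole--zero count, and one place where the paper's argument is sharper than the ``careful bookkeeping'' you leave undone.

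\textbf{The error.} You assert that $\zeta_K(0)$ is finite and nonzero, equal to $-h_KR_K/\omega_K$. This is false for $n=[K:\Q]\ge 2$: since $\xi_K(s)=\abs{D_K}^{s/2}L_\R(s)^{n}\zeta_K(s)$ has a \emph{simple} pole at $s=0$ while $L_\R(s)^{n}$ has a pole of order $n$, the Dedekind zeta function has a \emph{zero of order $n-1$} at $s=0$; the quantity $-h_KR_K/\omega_K$ is its leading Taylor coefficient, not its value. With this correction your count becomes: $L_\infty(z,\chi_\gamma)/L_\infty(1-z,\chi_\gamma)$ contributes a pole of order $m$ (the number of unramified real places), $\zeta_K(2z)$ a zero of order $n-1$, and $1/\zeta_K(z+1)$ a zero of order $1$, for net order $m-n$. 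Hence the limit is finite when $m=n$ and vanishes when $m<n$, exactly matching $C(\gamma)$. Your detour through the functional equation of $\Lambda(z,\chi_\gamma)$ is then unnecessary.

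\textbf{The constant.} The paper avoids any loose bookkeeping by distributing the extra gamma factors \emph{one per real place}: after writing
\[
\frac{L_{\infty}( z, \chi_{\gamma})}{L_{\infty}(1- z, \chi_{\gamma})}\,\frac{\zeta_K(2z)}{\zeta_K(z+1)}
=\abs{D_K}^{(1-z)/2}\,\frac{\xi_K(2z)}{\xi_K(z+1)}\,\prod_{\nu}\frac{L_\R(z+\delta_\nu)L_\R(z+1)}{L_\R(1-z+\delta_\nu)L_\R(2z)},
\]
one observes that $\xi_K(2z)/\xi_K(z+1)\to -1/2$ and that each local ratio has a \emph{finite} limit, computed via the duplication formula $\Gamma(\tfrac{z}{2})\Gamma(\tfrac{z+1}{2})=2^{1-z}\sqrt{\pi}\,\Gamma(z)$:
\[
\lim_{z\to 0}\pi^{z}\,\frac{\Gamma(\tfrac{z+\delta_\nu}{2})\Gamma(\tfrac{z+1}{2})}{\Gamma(\tfrac{1-z+\delta_\nu}{2})\Gamma(z)}
=\begin{cases}2,&\delta_\nu=0,\\ 0,&\delta_\nu=1.\end{cases}
\]
Multiplying gives $-\tfrac12\cdot\abs{D_K}^{1/2}\cdot 2^{n}$ in the completely-split case and $0$ otherwise, which is $-C(\gamma)\abs{D_K}^{1/2}2^{n-1}$. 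This local-factor trick is the step you were missing; it replaces the heuristic ``verify over $\Q$ and then each place contributes a $2$'' with an actual computation and explains cleanly why the ramified places kill the limit.
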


\begin{notation}
In this proof we write $\chi$ in place of $\chi_{\gamma}$ for ease of notation.
\end{notation}

\begin{proof}
Recall that for a totally real field $K$, the completed Dedekind zeta function is defined as
\[
\xi_K(z) := \abs{D_K}^{z/2}L_{\R}(z)^n\zeta_K(z).
\]
Then
\[
\frac{\xi_K(2z)}{\xi_K(z+1)} = \frac{\abs{D_K}^{z}L_\R(2z)^{n} \zeta_K(2z)}{\abs{D_K}^{(z+1)/2}L_\R(z+1)^{n}{\zeta_K(z+1)}} =\abs{D_K}^{(z-1)/2} \frac{L_\R(2z)^{n}\zeta_K(2z)}{L_\R(z+1)^{n} \zeta_K(z+1)}
\]
whose limit at $z=0$ is equal to $-1/2$.
Rearranging terms we write
\[
\frac{\zeta_K(2z)}{\zeta_K(z+1)}
= \frac{\xi_K(2z)}{\xi_K(z+1)} \abs{D_K}^{(1-z)/2} \frac{L_\R(z+1)^{n}}{L_\R(2z)^{n}}
\]
We can therefore obtain
\[
\frac{L_{\infty}( z, \chi)}{L_{\infty}(1- z, \chi)} \frac{\zeta_K(2z)}{\zeta_K(z+1)}
=\abs{D_K}^{(1-z)/2}\prod_{\nu}\frac{L_\R(z+\delta_{\nu})L_\R(z+1)}{L_\R(1-z+\delta_{\nu})L_\R(2z)}\frac{\xi_K(2z)}{\xi_K(z+1)},
\]
where as before, $\delta_\nu = \delta_{\nu, \gamma} = 1$ if $\chi$ is ramified at $\nu$ and 0 otherwise.
As $z\to 0$, we have
\[
\lim_{z\to 0} \frac{L_{\infty}( z, \chi)}{L_{\infty}(1- z, \chi)} \frac{\zeta_K(2z)}{\zeta_K(z+1)} = - \frac12\abs{D_K}^{1/2}\lim_{z\to 0}\left(\prod_{\nu}\frac{L_\R(z+\delta_\nu)L_\R(z)}{L_\R(1-z+\delta_\nu)L_\R(2z)}\right),
\]
and it remains to compute the limit explicitly.
We have
\begin{align*}
\lim_{z\to 0}\frac{L_\R(z+\delta_\nu)L_\R(1+z)}{L_\R(2z)L_\R(1-z+\delta_\nu)}
&= \lim_{z\to 0}\pi^{z}\frac{\Gamma(\frac{z+\delta_\nu}{2})\Gamma(\frac{z+1}{2})}{\Gamma(\frac{1-z+\delta_\nu}{2})\Gamma(z)} 
\\
&=\lim_{z\to 0} 2^{1-z}\pi^{z+1/2}\frac{\Gamma(\frac{z+\delta_\nu}{2})}{\Gamma(\frac{1-z+\delta_\nu}{2})\Gamma(\frac{z}{2})} = \begin{cases}
2, & \textrm{ if } \delta = 0 \\ 0, & \textrm{ if } \delta = 1
\end{cases} 
\end{align*}
Altogether then
\[
\lim_{z\to0}\frac{L_{\infty}( z, \chi)}{L_{\infty}(1- z, \chi)} \frac{\zeta_K(2z)}{\zeta_K(z+1)} = -\abs{D_K}^{1/2}2^{n -1}
\]
when $\chi$ is unramified \textit{at every real place} and $0$ otherwise.
By the Theorem~\ref{thm: reinterpretation of C}, this dichotomy is equivalent to $C(\gamma) = 1$ or $0$, respectively.
\end{proof}
From the above and proposition \eqref{prop: extension H} we get immediately

\begin{corollary}
\label{no dep on gamma}
The following limit holds for any $(x, y)\in\mathbb{R}^{2n-1}$
\[
\lim_{z\to0}\frac{L_{\infty}( z, \chi)}{L_{\infty}(1- z, \chi)} \frac{\zeta_K(2z)}{\zeta_K(z+1)} = -C(x, y)\abs{D_K}^{1/2}2^{n -1}.
\]
\end{corollary}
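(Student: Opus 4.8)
The plan is to reduce the statement to Lemma~\ref{lemma: Limit with C}, exploiting that the expression whose limit is taken depends on a regular elliptic matrix $\gamma$ only through the region $S_J$ to which $\CH(\gamma)$ belongs, so that it extends canonically to a function of $(x,y)\in\R^{2n-1}$ exactly as in Definition~\ref{def: Function H}. Throughout, for $(x,y)$ lying in a region $S_J$ one reads $\chi$ as $\chi_{\gamma_{(x,y)}}$ for any regular elliptic $\gamma_{(x,y)}\in\GL(2,K)$ with $\CH(\gamma_{(x,y)})\in S_J$, in accordance with remark (iii) following Definition~\ref{def: Function H}.

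First I would observe that $\zeta_K(2z)/\zeta_K(z+1)$ is independent of $\gamma$ altogether, while the ratio $L_{\infty}(z,\chi_\gamma)/L_{\infty}(1-z,\chi_\gamma)$, being assembled from the archimedean local factors $L_{\R}(z+\delta_{\nu,\gamma})$, is determined entirely by the tuple $(\delta_{\nu,\gamma})_{\nu\mid\infty}$, i.e.\ by the set of real places at which $\chi_\gamma$ ramifies. By Theorem~\ref{thm: reinterpretation of C}, $\chi_\gamma$ is unramified at $\nu$ precisely when $(r_\nu,N_\nu)\in S_{\spl}$; hence this tuple, and therefore the whole left-hand side of the displayed identity, depends only on the region $S_J\subseteq\R^{2n-1}$ containing $\CH(\gamma)$ — this is the same mechanism that underlies Proposition~\ref{prop: extension H}. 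Since (by the remark following Definition~\ref{def: Function H}) every region $S_J$ with $J=(j_1,\dots,j_n)$, $j_t\in\{\spl,\el\}$, is realised as $\CH(\gamma)$ for some regular elliptic $\gamma\in\GL(2,K)$, the value of the left-hand side at any $(x,y)\in\R^{2n-1}$ is unambiguously defined as its value for a representative $\gamma_{(x,y)}$ with $\CH(\gamma_{(x,y)})$ in the region of $(x,y)$.

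Next I would apply Lemma~\ref{lemma: Limit with C} to $\gamma_{(x,y)}$, which gives
\[
\lim_{z\to0}\frac{L_{\infty}(z,\chi_{\gamma_{(x,y)}})}{L_{\infty}(1-z,\chi_{\gamma_{(x,y)}})}\frac{\zeta_K(2z)}{\zeta_K(z+1)}=-C(\gamma_{(x,y)})\abs{D_K}^{1/2}2^{n-1},
\]
and then identify $C(\gamma_{(x,y)})$ with $C(x,y)$: by Definition~\ref{indicator function} both are the indicator of $S_{\spl}\times\cdots\times S_{\spl}$ (the latter after descent through the $Z_+$-quotient), and $C(\gamma_{(x,y)})=C(\CH(\gamma_{(x,y)}))$ with $\CH(\gamma_{(x,y)})$ lying in the same region as $(x,y)$, so the two values coincide. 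This yields the claimed formula.

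There is no serious obstacle here; every step unwinds a definition or invokes an already-established result. The only point meriting attention is the realisability of every region $S_J$ by a genuine $\GL(2,K)$-matrix, which is already the content of the well-definedness assertion accompanying Definition~\ref{def: Function H}, and which in any case follows from weak approximation: choose $\alpha\in K$ whose image under each $\sigma_i$ has the sign prescribed by $S_J$ (automatically a non-square as soon as $\alpha$ is not totally positive, while totally positive non-squares abound), and take a regular elliptic $\gamma$ with $\tau^2-4\det(\gamma)=\alpha$, for instance a companion matrix of $X^2-\tfrac{1}{4}\alpha$.
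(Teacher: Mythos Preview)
Your proposal is correct and takes essentially the same approach as the paper, which simply states that the corollary follows immediately from Lemma~\ref{lemma: Limit with C} together with Proposition~\ref{prop: extension H}. You have unwound the mechanism behind that one-line justification: the archimedean ratio depends only on the ramification pattern $(\delta_{\nu,\gamma})_\nu$, which by Theorem~\ref{thm: reinterpretation of C} is determined by the region $S_J$, so both sides extend to functions of $(x,y)$ and Lemma~\ref{lemma: Limit with C} applies to any representative $\gamma_{(x,y)}$.
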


\begin{remark}
Recall that the $L$-factors at infinity only depend on the regions, i.e. the behaviours at infinity, and thus we can disregard the dependence on any particular $\gamma$.
\end{remark}

\subsection{Rewriting the dominant term} 

We do this calculation in two steps.
First, we simplify the $\Phi^1_{\pm}(0,0)$-part:

\begin{tiny}
\begin{equation}
\begin{split}
\label{eq:contour1new}
&\frac{\sp^{-\sk/2}}{2^{3n-1} } \sum_{\pm} \sum_{Y\in \mathbb{F}_2^{n-1}} \sum_{\fa}\sum_{\fd}\left(\frac{\Phi_{\pm}^1(0, 0)}{\Norm_K(\fd)^3\Norm_K(\fa)^2}\right)K_{\fa, \fd}(\pm u(Y))\\
 &=\frac{\sp^{-\sk/2}}{2^{3n-1} } \sum_{\pm}\sum_{Y\in \mathbb{F}_2^{n-1}}\sum_{\fd}\frac{1}{\Norm_K(\fd)^3}\sum_{\fa} \int \int\theta^{\pm}(x,y) F\left(\frac{\Norm_K(\fd)^2\Norm_K(\fa)\sp^{-\sk\alpha }}{\abs{D_K}^\alpha} \mathcal{P}^{\pm}(x,y)^{2\alpha}\right)\left(\frac{K_{\fa, \fd}(\pm u(Y))}{\Norm_K(\fa)^2}\right) dy dx\\
&=\frac{\sp^{-\sk/2}}{2^{3n-1} }\sum_{\pm}\sum_{Y\in \mathbb{F}_2^{n-1}}\sum_{\fd}\frac{1}{\Norm_K(\fd)^3}\sum_{\fa}\frac{K_{\fa, \fd}(\pm u(Y))}{\Norm_K(\fa)^2}\left( \int \int\frac{\theta^{\pm}(x,y)}{2\pi i} \int_{\Re{(z)}=1}\widetilde{F}(z)\left(\frac{\Norm_K(\fd)^2\Norm_K(\fa)\sp^{-\sk\alpha }}{\abs{D_K}^\alpha} \mathcal{P}^{\pm}(x,y)^{2\alpha}\right)^{-z}dz dy dx\right)\\
&=\frac{\sp^{-\sk/2}}{2^{3n-1} }\sum_{\pm}\sum_{Y\in \mathbb{F}_2^{n-1}}\left( \int \int\frac{\theta^{\pm}(x,y)}{2\pi i} \int_{\Re{(z)}=1}\widetilde{F}(z)\left(\frac{\sp^{-\sk\alpha }}{\abs{D_K}^\alpha} \mathcal{P}^{\pm}(x,y)^{2\alpha}\right)^{-z} \sum_{\fd}\frac{1}{\Norm_K(\fd)^{2(z + 1)+1}}\sum_{\fa}\frac{K_{\fa, \fd}(\pm u(Y))}{\Norm_K(\fa)^{(z + 1) + 1}}dz dy dx\right)\\
 &=\frac{\sp^{-\sk/2}}{2^{3n-1} } \sum_{\pm}\sum_{Y\in\mathbb{F}_2^{n-1}}\left( \int \int\frac{\theta^{\pm}(x,y)}{2\pi i} \int_{\Re{(z)}=1}\widetilde{F}(z)\left(\frac{\sp^{-\sk\alpha }}{\abs{D_K}^\alpha} \mathcal{P}^{\pm}(x,y)^{2\alpha}\right)^{-z}\D(z+1) dz dy dx \right)\\
&= \sp^{-\sk/2} \sum_{\pm}\left( \int \int\frac{\theta^{\pm}(x,y)}{2 \pi i} \int_{\Re{(z)}=1}\widetilde{F}(z)\left(\frac{\sp^{-\sk\alpha }}{\abs{D_K}^\alpha} \mathcal{P}^{\pm}(x,y)^{2\alpha}\right)^{-z} \frac{\zeta_K(2z+2)}{\zeta_K(z+2)}\cdot\frac{1-1/\sp^{(z+1)(\sk+1)}}{1-1/\sp^{z+1}}dz dy dx \right).
\end{split}
\end{equation}
\end{tiny}

\normalsize
The last line is obtained by applying Theorem~\ref{main theorem of the kloosterman section} which brings out a $4^n$, and by evaluating the sum over $Y \in \mathbb{F}_2^{n-1}$.
These two contributions cancel the $2^{3n-1}$ in the denominator.
Note that after evaluating the Kloosterman-type sums, nothing in the integrals depends on $Y$.
Similarly for the second part of the sum, we have

\begin{tiny}
\begin{equation*}
\begin{split}
 &\frac{\sp^{-\sk/2}}{2^{3n-1} } \sum_{\pm} \sum_{Y\in \mathbb{F}_2^{n-1}} \sum_{\fa}\sum_{\fd}\left(\frac{\Psi_{\pm}^1(0, 0)}{\Norm_K(\fa)\Norm_K(\fd)}\right)K_{\fa, \fd}(\pm u(Y))\\
&=\frac{\sp^{-\sk/2}}{2^{3n-1} }\sum_{\pm} \sum_{Y\in \mathbb{F}_2^{n-1}} \sum_{\fd}\frac{1}{\Norm_K(\fd)^3}\sum_{\fa}\left(\frac{\Norm_K(\fd)^2\Norm_K(\fa)K_{\fa, \fd}(\pm u(Y))}{\Norm_K(\fa)^2}\Psi_{\pm}^{1}(0, 0)\right)\\
 &= \frac{\sp^{-\sk/2}}{2^{3n-1} }\sum_{\pm}\sum_{Y\in \mathbb{F}_2^{n-1}}\sum_{\fd}\frac{1}{\Norm_K(\fd)^3}\sum_{\fa}\left(\frac{\Norm_K(\fd)^2\Norm_K(\fa)K_{\fa, \fd}(\pm u(Y))}{\Norm_K(\fa)^2}\right)\\
& \quad\times \int \int\theta^{\pm}(x, y) \mathcal{P}^{\pm}(x,y)\abs{D_K}^{-1/2}\sp^{-\sk/2}H_{\gamma}\left(1, \frac{\Norm_K(\fd)^2\Norm_K(\fa)\sp^{-\sk(1-\alpha)}}{\abs{D_K}^{1 - \alpha}}\mathcal{P}^{\pm}(x,y)^{2(1-\alpha)}\right) dy dx\\
 &=\frac{\sp^{-\sk/2}}{2^{3n-1} } \sum_{\pm}\sum_{Y\in \mathbb{F}_2^{n-1}}\sum_{\fd}\frac{1}{\Norm_K(\fd)^3}\sum_{\fa}\left(\frac{\Norm_K(\fd)^2\Norm_K(\fa)K_{\fa, \fd}(\pm u(Y))}{\Norm_K(\fa)^2}\right)\\
 &\times \int\int\frac{\theta^{\pm}(x,y)}{2 \pi i} \mathcal{P}^{\pm}(x,y)\abs{D_K}^{-1/2}\sp^{-\sk/2}\int_{\Re(z)=1}\widetilde{F}(z)\left( \frac{\Norm_K(\fd)^2\Norm_K(\fa)\sp^{-\sk(1-\alpha)}}{\abs{D_K}^{1 - \alpha}}\mathcal{P}^{\pm}(x,y)^{2(1-\alpha)}\right)^{-z} \frac{L_{\infty}(z,\chi)}{L_{\infty}(1-z,\chi)} dz dy dx\\
&=\frac{\sp^{-\sk/2}}{2^{3n-1} } \sum_{\pm} \sum_{Y\in\mathbb{F}_2^{n-1}} \int \int\frac{\theta^{\pm}(x,y)}{2 \pi i} \mathcal{P}^{\pm}(x,y)\abs{D_K}^{-1/2}\sp^{-\sk/2} \int_{\Re(z)=1}\widetilde{F}(z)\left( \frac{\sp^{-\sk(1-\alpha)}}{\abs{D_K}^{1 - \alpha}}\mathcal{P}^{\pm}(x,y)^{2(1-\alpha)}\right)^{-z} \frac{L_{\infty}(z,\chi)}{L_{\infty}(1-z,\chi)} \D(z) dz dy dx.
\end{split}
\end{equation*}
\end{tiny}

\normalsize 
Once again, we apply Theorem~\ref{main theorem of the kloosterman section} to the above expression and evaluating the sum over $Y \in \mathbb{F}_2^{n-1}$.
This cancels the $2^{3n-1}$ in the denominator and the above expression simplifies to
\begin{tiny}
\begin{equation}
\label{eq:contour2new}
\begin{split}
{\sp^{-\sk}\abs{D_K}^{-1/2}} & \sum_{\pm} \int \int\frac{\theta^{\pm}(x,y)}{2 \pi i} \mathcal{P}^{\pm}(x,y) \\
&\times \int_{\Re(z)=1}\widetilde{F}(z)\left( \frac{\sp^{-\sk(1-\alpha)}}{\abs{D_K}^{1 - \alpha}}\mathcal{P}^{\pm}(x,y)^{2(1-\alpha)}\right)^{-z} \frac{L_{\infty}(z,\chi)}{L_{\infty}(1-z,\chi)} \frac{\zeta_K(2z)}{\zeta_K(z+1)}\cdot\frac{1-1/\sp^{(z)(\sk+1)}}{1-1/\sp^{z}}dz dy dx.
\end{split}
\end{equation}
\end{tiny}
Note that \eqref{eq:xi=0new} is the sum of \eqref{eq:contour1new} and \eqref{eq:contour2new}.
However, we write the summands separately as we work with each piece one-at-a-time.
The trivial representation should come from \eqref{eq:contour1new}, whereas the special representation should come from \eqref{eq:contour2new} after contour shifting.
In both cases, they will arise from the residue at $z=0$.

\subsection{Calculating the residue}
While our main interest is the residue at $z=0$, we shall follow \cite{AliI} to shift past the line $\Re(z)=-1/2$.
More precisely, we shift the contour to the line $\mathcal{C}_1$ defined by $\Re(z)=\sigma$ for $-1<\sigma<-1/2$.

\begin{lemma}
\label{lem:Contribution1new}
The contribution \eqref{eq:contour1new} is equal to the sum of \begin{align}
\label{eq:triv res new}
& \sp^{-\sk/2}\frac{1-\sp^{-(\sk+1)}}{1-{\sp^{-1}} } \sum_{\pm} \int \int \theta^{\pm}(x,y) dy dx,\\
\label{eq:F(1/2)new}
&\dfrac{\sp^{-\sk/2}\kappa}{\zeta_K(3/2)}\frac{1-\sp^{-(\sk+1)/2}}{1-\sp^{-1/2}} \widetilde{F}\left(-\frac{1}{2}\right)\sum_{\pm}\left( \int \int{\theta^{\pm}(x,y)}\left(\frac{\sp^{-\sk\alpha }}{\abs{D_K}^\alpha} \mathcal{P}^{\pm}(x,y)^{2\alpha}\right)^{1/2} dy dx \right), \textrm{ and }\\
\label{eq:C(1)new}
&\frac{\sp^{-\sk/2}}{4^{n}} \sum_{\pm}\left( \int \int\frac{\theta^{\pm}(x,y)}{2 \pi i} \int_{\mathcal{C}_1}\widetilde{F}(z)\left(\frac{\sp^{-\sk\alpha }}{\abs{D_K}^\alpha} \mathcal{P}^{\pm}(x,y)^{2\alpha}\right)^{-z}\D(z+1) dz dy dx \right).
\end{align}
\end{lemma}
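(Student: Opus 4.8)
The starting point is the last line of \eqref{eq:contour1new}, which already expresses the $\Phi^1_\pm(0,0)$-contribution as
\[
\sp^{-\sk/2}\sum_{\pm}\int\int\frac{\theta^{\pm}(x,y)}{2\pi i}\int_{\Re(z)=1}\widetilde{F}(z)\Big(\tfrac{\sp^{-\sk\alpha}}{\abs{D_K}^\alpha}\mathcal{P}^{\pm}(x,y)^{2\alpha}\Big)^{-z}\frac{\zeta_K(2z+2)}{\zeta_K(z+2)}\cdot\frac{1-1/\sp^{(z+1)(\sk+1)}}{1-1/\sp^{z+1}}\,dz\,dy\,dx.
\]
The plan is to shift the inner $z$-contour from $\Re(z)=1$ to $\mathcal{C}_1=\{\Re(z)=\sigma\}$ with $-1<\sigma<-\tfrac12$ and collect the residues crossed. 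First I would locate the poles of the $z$-integrand in the strip $\sigma<\Re(z)<1$. The factor $1/\zeta_K(z+2)$ is holomorphic there since $\Re(z+2)>1$, so $\zeta_K(z+2)$ has no pole and no zero; the geometric factor equals $\sum_{j=0}^{\sk}\sp^{-j(z+1)}$ and is entire. Hence the only poles are the simple pole of $\widetilde{F}$ at $z=0$, of residue $1$ by Lemma~\ref{lemma:boundforFtilde}, and the simple pole of $\zeta_K(2z+2)$ at $z=-\tfrac12$, whose residue in the variable $z$ is a multiple of $\kappa$ (with $\zeta_K(z+2)$ specialising to $\zeta_K(3/2)$).

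Next I would read off the two residues. At $z=0$ every remaining factor specialises to $1$ except the geometric factor, which becomes $\frac{1-\sp^{-(\sk+1)}}{1-\sp^{-1}}$; multiplying by $\theta^{\pm}(x,y)$, integrating over $(x,y)$, summing over $\pm$ and restoring the prefactor $\sp^{-\sk/2}$ gives exactly \eqref{eq:triv res new}, the trivial-representation term. At $z=-\tfrac12$ the residue produces $\widetilde{F}(-\tfrac12)$, the factor $\big(\tfrac{\sp^{-\sk\alpha}}{\abs{D_K}^\alpha}\mathcal{P}^{\pm}(x,y)^{2\alpha}\big)^{1/2}$, the constant coming from $\kappa/\zeta_K(3/2)$, and the geometric factor $\frac{1-\sp^{-(\sk+1)/2}}{1-\sp^{-1/2}}$; assembling these and integrating against $\theta^{\pm}$ yields \eqref{eq:F(1/2)new}. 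Finally, for the shifted integral over $\mathcal{C}_1$ I would invoke Theorem~\ref{main theorem of the kloosterman section} in the form $\frac{\zeta_K(2z+2)}{\zeta_K(z+2)}\cdot\frac{1-1/\sp^{(z+1)(\sk+1)}}{1-1/\sp^{z+1}}=4^{-n}\D(z+1)$, which converts the leftover contour integral into \eqref{eq:C(1)new}. Summing the three pieces proves the lemma.

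The substance of the argument is the analytic justification of these manipulations. Truncating the shift at height $T$ and letting $T\to\infty$ requires the horizontal segments to vanish; this follows from the bound $\widetilde{F}(z)\ll\abs{z}^{\abs{\sigma}-1}e^{-\pi\abs{t}/2}$ of Lemma~\ref{lemma:boundforFtilde}, together with the standard polynomial growth of $\zeta_K$ in vertical strips and the fact that $\big(\tfrac{\sp^{-\sk\alpha}}{\abs{D_K}^\alpha}\mathcal{P}^{\pm}(x,y)^{2\alpha}\big)^{-z}$ and the geometric factor have modulus bounded on each vertical line. One must also interchange the $(x,y)$-integration with the contour shift, which is permissible once each of the three resulting $(x,y)$-integrals is absolutely convergent. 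I expect this last point to be the main obstacle: after the shift the integrand along $\mathcal{C}_1$ carries $\mathcal{P}^{\pm}(x,y)^{-2\alpha\sigma}$ with $-2\alpha\sigma>0$, which blows up along the singular locus of $\theta^{\pm}$ (the union of parabolas $x_i^2=\mp4\beta_{1,i}^{y_1}\cdots\beta_{n-1,i}^{y_{n-1}}\varepsilon_i$), so one has to check that $\theta^{\pm}(x,y)\,\mathcal{P}^{\pm}(x,y)^{-2\alpha\sigma}$ — whose singularity transverse to that locus is of order roughly $(\mathrm{dist})^{-1/2-\alpha\abs{\sigma}}$ — remains integrable on the compact support of $\theta^{\pm}$; this is what uses the hypothesis $0<\alpha<1$ and is accommodated by the freedom to take $\sigma$ close to $-\tfrac12$. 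The same estimates show the $\mathcal{C}_1$-integral in \eqref{eq:C(1)new} converges absolutely, so Fubini applies throughout and the rearrangement above is valid.
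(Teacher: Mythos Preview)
Your proposal is correct and follows essentially the same approach as the paper: shift the inner contour from $\Re(z)=1$ to $\mathcal{C}_1$, collect the residues at $z=0$ (from $\widetilde{F}$) and $z=-\tfrac12$ (from $\zeta_K(2z+2)$), and rewrite the remaining contour integral using $4^{-n}\D(z+1)$. The paper's proof is in fact terser than yours on the analytic justification; where you discuss the potential blow-up of $\mathcal{P}^{\pm}(x,y)^{-2\alpha\sigma}$ near the singular locus and the integrability needed for Fubini, the paper simply notes that the $(x,y)$-integral may be taken over the regular set (the singular locus having measure zero) so that $\mathcal{P}^{\pm}$ is finite and nonzero there, and does not elaborate further.
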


\begin{proof}
The constant in front of \eqref{eq:contour1new} is $\sp^{-\sk/2}$.

To shift the contour of integration to the line $\mathcal{C}_1$, we describe the poles in the region that is traversed.
The integral over $(x,y)$ can be taken over the regular set, so that $\mathcal{P}^{\pm}(x,y)$ is finite and non-zero.
The Mellin transform $\widetilde{F}(z)$ 
has a simple pole at $z=0$ with residue equal to $1$.
So, the total residue at $z=0$ is equal to \eqref{eq:triv res new}.

The auxiliary Dirichlet series $\D(z)$ has a simple pole at $z=1/2$ with residue equal to
\begin{equation}
\label{aux residuenew}
\lim_{z\rightarrow 1/2}(z - 1/2)\D(z) = \lim_{z\rightarrow 1/2}(z - 1/2)4^n\frac{\zeta_K(2z)}{\zeta_K(z+1)}\cdot\frac{1-1/\sp^{z(\sk+1)}}{1-1/\sp^{z}}=\frac{4^n\kappa} 
{\zeta_K(3/2)}\frac{1-\sp^{-(\sk+1)/2}}{1-\sp^{-1/2}}.
\end{equation}
By a change of variables, we have the same for $\D(z+1)$ at $z=-1/2$.
However, notice that we do not have $\D(z +1)$ but rather $4^{-n}\D(z +1)$ (i.e. we do not include the power of $4$ when substituting the residue).
Thus the total residue there is equal to \eqref{eq:F(1/2)new}.
Notice that the residue at $z = -1/2$ of the quotient of zeta functions is
\begin{equation*}
 \displaystyle\lim_{z\longrightarrow -1/2}(z + 1/2)\dfrac{\zeta_K(2z + 2)}{z + 2} = \dfrac{\kappa}{\zeta_K(3/2)},
\end{equation*}
which explains the constant in front of equation \eqref{eq:F(1/2)new}.

The two residues together yield the main term; the remainder \eqref{eq:C(1)new} is described by the integral over $\mathcal{C}_1$.
The $4^{n}$ appears in the denominator because we rewrote the expression using $\D(z+1)$; we put back the factor $4^n$ needed.
\end{proof}

\begin{lemma}
\label{lemma 9.2}
The contribution of \eqref{eq:contour2new} is equal to the sum of the following three expressions
\begin{tiny}
\begin{align}
\label{eq:spec residue2new}
&-2^{n -1}(\sk + 1)\sp^{-\sk} \sum_{\pm} \int \int\theta^{\pm}(x,y) \mathcal{P}^{\pm}(x,y) C(x, y) dx dy,\\ 
\label{F(1/2)new2}
&\sp^{-\sk/2}\frac{\kappa} 
{\zeta_K(3/2)}\frac{1-\sp^{-(\sk+1)/2}}{1-\sp^{-1/2}}\widetilde{F}\left(\dfrac{1}{2}\right)\sum_{\pm} \int \int\theta^{\pm}(x,y) \mathcal{P}^{\pm}(x,y) \left( \frac{\sp^{-\sk \alpha}}{\abs{D_K}^{\alpha}}\mathcal{P}^{\pm}(x,y)^{2\alpha}\right)^{1/2} dy dx, \textrm{ 
and }
\end{align}
\end{tiny}
\begin{tiny}
\begin{equation}
\label{C2new}
\begin{split}
\dfrac{\sp^{-\sk}\abs{D_K}^{-1/2}}{4^n} \sum_{\pm} \int \int\frac{\theta^{\pm}(x,y)}{2 \pi i} & \mathcal{P}^{\pm}(x,y) \abs{D_K}^{1/2}\sp^{-\sk/2}\\
&\times \int_{\mathcal{C}_2}\widetilde{F}(z)\left( \frac{\sp^{-\sk(1-\alpha)}}{\abs{D_K}^{1 - \alpha}}\mathcal{P}^{\pm}(x,y)^{2(1-\alpha)}\right)^{-z} \frac{L_{\infty}(z,\chi)}{L_{\infty}(1-z,\chi)}\D(z)dz dy dx.
\end{split}
\end{equation}
\end{tiny}
\end{lemma}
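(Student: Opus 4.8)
The plan is to evaluate \eqref{eq:contour2new} by the same contour–shifting device that produced Lemma~\ref{lem:Contribution1new} from \eqref{eq:contour1new}, only now it is applied to the integral over $z$ that already sits inside \eqref{eq:contour2new}. First I would use Theorem~\ref{main theorem of the kloosterman section} to recognise the factor $\frac{\zeta_K(2z)}{\zeta_K(z+1)}\cdot\frac{1-1/\sp^{z(\sk+1)}}{1-1/\sp^{z}}$ as $4^{-n}\D(z)$, and observe that in the strip we shall traverse the only poles of the full $z$–integrand are the simple pole of $\zeta_K(2z)$ at $z=\tfrac12$ and the simple pole of $\widetilde F(z)$ at $z=0$: the pole of $L_{\infty}(z,\chi)/L_{\infty}(1-z,\chi)$ at $z=0$ is exactly cancelled by the zero of $\D(z)$ there, which is the content of Lemma~\ref{lemma: Limit with C} and Corollary~\ref{no dep on gamma}. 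After moving the $(x,y)$–integration inside the $z$–integral — legitimate over the regular set for every $\alpha\in(0,1)$ by the exponential decay of $\widetilde F$ in $\abs{\Im z}$ (Lemma~\ref{lemma:boundforFtilde}), the decay of $H_\gamma$ (Lemma~\ref{lemma:boundH}) and the germ–expansion control of $\theta^{\pm}$ used in the absolute convergence of $\Sigma(\square)$ — I would shift the contour from $\Re(z)=1$ to the line $\mathcal{C}_2$ defined by $\Re(z)=\sigma$ with $-1<\sigma<-\tfrac12$, picking up the residues at $z=\tfrac12$ and $z=0$ and leaving the integral along $\mathcal{C}_2$.

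Next I would compute the two residues. At $z=\tfrac12$ only $\D(z)$ has a pole, through $\zeta_K(2z)$, with residue given by \eqref{aux residuenew}; since $L_{\infty}(\tfrac12,\chi)/L_{\infty}(\tfrac12,\chi)=1$ the remaining factors are $\widetilde F(\tfrac12)$ and the power $\bigl(\frac{\sp^{-\sk(1-\alpha)}}{\abs{D_K}^{1-\alpha}}\mathcal{P}^{\pm}(x,y)^{2(1-\alpha)}\bigr)^{-1/2}$, and multiplying by the prefactor $\sp^{-\sk}\abs{D_K}^{-1/2}\mathcal{P}^{\pm}(x,y)$ and gathering the powers of $\sp$, $\abs{D_K}$ and $\mathcal{P}^{\pm}$ produces \eqref{F(1/2)new2}. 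At $z=0$ the pole is the simple pole of $\widetilde F$ with residue $1$, the power factor equals $1$, the ratio $\frac{1-1/\sp^{z(\sk+1)}}{1-1/\sp^{z}}$ tends to $\sk+1$, and by Corollary~\ref{no dep on gamma} the quotient $\frac{L_{\infty}(z,\chi)}{L_{\infty}(1-z,\chi)}\frac{\zeta_K(2z)}{\zeta_K(z+1)}$ tends to $-C(x,y)\abs{D_K}^{1/2}2^{n-1}$; multiplying by $\sp^{-\sk}\abs{D_K}^{-1/2}\mathcal{P}^{\pm}(x,y)$ and integrating over $(x,y)$ gives precisely \eqref{eq:spec residue2new}. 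What remains is the integral along $\mathcal{C}_2$; restoring the $4^{n}$ that was folded into $\D$ gives \eqref{C2new}, and summing the three pieces yields the claimed decomposition of \eqref{eq:contour2new}.

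The step I expect to be the real obstacle is the justification of the contour shift near the singular locus of $\theta^{\pm}$, i.e. near the set where $\mathcal{P}^{\pm}(x,y)\to\infty$: on the shifted line the factor $\mathcal{P}^{\pm}(x,y)^{-2(1-\alpha)z}$ no longer decays, so one must combine the bound on the archimedean orbital integral coming from the germ expansion (the same estimate used to prove $\Sigma(\square)$ converges absolutely) with the rapid decay of $\widetilde F$ and $H_\gamma$ in order to keep the double integral absolutely convergent and to be sure that only the poles at $z=\tfrac12$ and $z=0$ are crossed — here one leans on the meromorphy statement for $\D$ in Theorem~\ref{main theorem of the kloosterman section}, exactly as in \cite{AliI}. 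Finally I would emphasise that this lemma only extracts the \emph{shape} of the special–representation contribution — the factor $\mathcal{P}^{\pm}(x,y)\,C(x,y)$ — and that its identification with $-2\Tr(\xi_0(f))$ is left to the Weyl–integration computations carried out later.
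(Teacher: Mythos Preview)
Your proposal is correct and follows essentially the same approach as the paper: shift the contour in \eqref{eq:contour2new} to the left past $z=\tfrac12$ and $z=0$, pick up the residue of $\zeta_K(2z)$ at $z=\tfrac12$ (where the $L_\infty$ ratio is $1$) to obtain \eqref{F(1/2)new2} after the same algebraic simplification the paper records, pick up the residue of $\widetilde F$ at $z=0$ using Corollary~\ref{no dep on gamma} and the value $\sk+1$ of the geometric ratio to obtain \eqref{eq:spec residue2new}, and leave the integral over $\mathcal{C}_2$ as \eqref{C2new}. Your treatment is in fact more careful than the paper's in two respects: you explicitly note that the apparent pole of $L_\infty(z,\chi)/L_\infty(1-z,\chi)$ at $z=0$ is absorbed by the zero of $\zeta_K(2z)/\zeta_K(z+1)$ there (this is implicit in the paper via Lemma~\ref{lemma: Limit with C}), and you flag the analytic justification of the contour shift near the singular locus of $\theta^{\pm}$, which the paper passes over in silence.
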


\begin{proof}
Shifting the the left, we see that at $z=1/2$ the Dirichlet series $\D(z+1)$ has a pole with the same residue \eqref{aux residuenew}, so the contribution there is equal to 
\begin{tiny}
\begin{equation}
\label{eqn: partial residue calculation}
 \sp^{-\sk}\abs{D_k}^{-1/2}\frac{\kappa} 
{\zeta_K(3/2)}\frac{1-\sp^{-(\sk+1)/2}}{1-\sp^{-1/2}}\widetilde{F}\left(\frac{1}{2}\right) \sum_{\pm} \int \int\theta^{\pm}(x,y) \mathcal{P}^{\pm}(x,y)
\left( \frac{\sp^{-\sk(1-\alpha)}}{\abs{D_K}^{1 - \alpha}}\mathcal{P}^{\pm}(x,y)^{2(1-\alpha)}\right)^{-1/2} dy dx.
\end{equation}
\end{tiny}
Notice that
\[
\sp^{-\sk} \mathcal{P}^{\pm}(x,y)
\left( \frac{\sp^{-\sk(1-\alpha)}}{\abs{D_K}^{1 - \alpha}}\mathcal{P}^{\pm}(x,y)^{2(1-\alpha)}\right)^{-1/2} = \sp^{-\sk/2}\abs{D_K}^{1/2}\dfrac{\sp^{-\sk\alpha/2}\mathcal{P}^{\pm}(x,y)^{\alpha}}{\abs{D_K}^{\alpha/2}}.
\]
Thus the residue \eqref{eqn: partial residue calculation} can be rewritten as
\[
\sp^{-\sk/2}\frac{\kappa} 
{\zeta_K(3/2)}\frac{1-\sp^{-(\sk+1)/2}}{1-\sp^{-1/2}}\widetilde{F}\left(\frac{1}{2}\right) \sum_{\pm} \int \int\theta^{\pm}(x,y) 
 \left( \frac{\sp^{-\sk \alpha}}{\abs{D_K}^{\alpha}}\mathcal{P}^{\pm}(x,y)^{2\alpha}\right)^{1/2} dy dx.
\]
This is \eqref{F(1/2)new2} above.
We now move on to $z=0$ and note that
\[
\frac{1-\sp^{-z(\sk+1)}}{1-\sp^{-z}} = \sum_{n=0}^{\sk}\sp^{-nz}.
\]
At $z = 0$ the right hand side\footnote{We must use the right hand side of the geometric series as otherwise the expression is undefined.
} contributes $\sk + 1$.
Recall from Lemma~\ref{lemma: Limit with C} that
\[
\lim_{z\to0}\frac{L_{\infty}( z, \chi)}{L_{\infty}(1- z, \chi)} \frac{\zeta_K(2z)}{\zeta_K(z+1)} = -C(x, y)\abs{D_K}^{1/2}2^{n -1}
\]
and that the residue at $z = 0$ of $\widetilde{F}(z)$ is $1$.
Thus the residue at $z = 0$ is
\begin{equation}
\sp^{-\sk}\abs{D_K}^{-1/2} \sum_{\pm} \int \int\theta^{\pm}(x,y)\mathcal{P}^{\pm}(x,y) \cdot (-C(x, y)\abs{D_K}^{1/2}2^{n -1})\cdot (\sk + 1) dy dx,
\end{equation}
which becomes
\begin{equation}
-2^{n -1}(\sk + 1)\sp^{-\sk}\sum_{\pm} \int \int\theta^{\pm}(x,y)\mathcal{P}^{\pm}(x,y) \cdot C(x, y) dy dx,
\end{equation}
Thus the total residue is given by \eqref{eq:spec residue2new} as desired.
The two residues together yield the main term, while the remainder \eqref{C2new} is described by the integral over $\mathcal{C}_2$.
Once more, the $4^n$ appears precisely because we have put $\D(z)$ again.
\end{proof}

Recall that $\widetilde{F}$ is an odd function which allows us to conclude that \eqref{eq:F(1/2)new} + \eqref{F(1/2)new2} = 0.
We obtain the following result.

\begin{proposition}
\label{prop: computation of 23 + 27 +´25 + 29}
The dual sum \eqref{eq:xi=0new} is equal to $\eqref{eq:triv res new}+\eqref{eq:spec residue2new}+\eqref{eq:C(1)new}+\eqref{C2new}$.
\end{proposition}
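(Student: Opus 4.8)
The plan is to assemble the proposition directly from Lemmas~\ref{lem:Contribution1new} and \ref{lemma 9.2} together with the parity of the Mellin transform $\widetilde{F}$, so the argument is short. First I would recall that the dual sum $\overline{\Sigma_0}(f)$ appearing in \eqref{eq:xi=0new} is, by construction, the sum of the $\Phi_{\pm}^1(0,0)$-contribution and the $\Psi_{\pm}^1(0,0)$-contribution: unwinding the definitions of $\Phi_{\pm}^1$ and $\Psi_{\pm}^1$ into their defining integrals and applying Theorem~\ref{main theorem of the kloosterman section} to collapse the inner double sum over $\fa,\fd$ (equivalently, the Kloosterman-type sums) into the Dirichlet series $\D$ produces precisely the expressions \eqref{eq:contour1new} and \eqref{eq:contour2new}. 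Thus $\overline{\Sigma_0}(f) = \eqref{eq:contour1new} + \eqref{eq:contour2new}$, exactly as already noted in the paragraph preceding Lemma~\ref{lem:Contribution1new}.

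Next I would invoke Lemma~\ref{lem:Contribution1new}, which evaluates \eqref{eq:contour1new} as $\eqref{eq:triv res new}+\eqref{eq:F(1/2)new}+\eqref{eq:C(1)new}$, and Lemma~\ref{lemma 9.2}, which evaluates \eqref{eq:contour2new} as $\eqref{eq:spec residue2new}+\eqref{F(1/2)new2}+\eqref{C2new}$. Both are the contour-shift computations already performed: one moves the $z$-contour past $\Re(z)=-1/2$ to the line $\Re(z)=\sigma$ with $-1<\sigma<-1/2$, collecting the residue at $z=0$ (giving \eqref{eq:triv res new}, resp. \eqref{eq:spec residue2new}, using that $\widetilde{F}$ has a simple pole at $0$ with residue $1$, together with the limit of Lemma~\ref{lemma: Limit with C} and Corollary~\ref{no dep on gamma} in the second case) and the residue at $z=-1/2$ (giving \eqref{eq:F(1/2)new}, resp. \eqref{F(1/2)new2}, from the simple pole of $\D(z+1)$ with residue recorded in \eqref{aux residuenew}), while the shifted integrals over $\mathcal{C}_1$ and $\mathcal{C}_2$ are exactly \eqref{eq:C(1)new} and \eqref{C2new}.

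The only step that takes any care is verifying that the two residues at $z=-1/2$ cancel, that is, $\eqref{eq:F(1/2)new}+\eqref{F(1/2)new2}=0$. For this I would observe that, after the elementary simplification carried out inside the proof of Lemma~\ref{lemma 9.2} (absorbing the factor $\mathcal{P}^{\pm}(x,y)$, exactly as done there), the term \eqref{F(1/2)new2} has the same prefactor $\sp^{-\sk/2}\kappa\,\zeta_K(3/2)^{-1}\bigl(1-\sp^{-(\sk+1)/2}\bigr)\bigl(1-\sp^{-1/2}\bigr)^{-1}$ and the same integrand $\theta^{\pm}(x,y)\bigl(\sp^{-\sk\alpha}\abs{D_K}^{-\alpha}\mathcal{P}^{\pm}(x,y)^{2\alpha}\bigr)^{1/2}$ as \eqref{eq:F(1/2)new}, the sole difference being the argument of the Mellin transform: \eqref{eq:F(1/2)new} carries $\widetilde{F}(-1/2)$ whereas \eqref{F(1/2)new2} carries $\widetilde{F}(1/2)$. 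Since $\widetilde{F}$ is odd by Lemma~\ref{lemma:boundforFtilde}, $\widetilde{F}(-1/2)=-\widetilde{F}(1/2)$, and the two terms sum to zero.

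Putting the three steps together gives $\overline{\Sigma_0}(f) = \eqref{eq:contour1new}+\eqref{eq:contour2new} = \eqref{eq:triv res new}+\eqref{eq:spec residue2new}+\eqref{eq:C(1)new}+\eqref{C2new}$, which is the assertion of the proposition. The main (and essentially only) obstacle is the bookkeeping in the last step: one must match the prefactors and integrands of the two residues at $z=-1/2$ precisely enough to see that oddness of $\widetilde{F}$ annihilates them; everything else is a direct citation of Lemmas~\ref{lem:Contribution1new} and \ref{lemma 9.2}.
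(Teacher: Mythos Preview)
Your proposal is correct and follows exactly the paper's approach: the paper's entire argument is the one sentence ``Recall that $\widetilde{F}$ is an odd function which allows us to conclude that \eqref{eq:F(1/2)new} + \eqref{F(1/2)new2} = 0,'' after which the proposition is stated as an immediate consequence of Lemmas~\ref{lem:Contribution1new} and~\ref{lemma 9.2}. One small wording slip: in the $\Psi^1$-piece the pole of $\D(z)$ is at $z=1/2$ (not $z=-1/2$), but you correctly record that the resulting factor is $\widetilde{F}(1/2)$, so the cancellation via oddness goes through as you wrote.
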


\section{Computation of the trace of the trivial and the special representations}
\label{sec: Computation of the trace of the trivial and the special representations}

In this section, we compute the traces of the representations we desire to cancel; namely the trivial representation and the special representations.
The core of the argument lies the computing the trace at the archimedean places.

\subsection{The main statement}

Given a pair of integers $(i,j)$, in \cite[p.~631]{LanBE04}, Langlands introduces a representation of $\GL(2, \R)$ induced from $T_{\spl}$ given by
\[
\chi_{i,j}
\begin{pmatrix}
\alpha & \\ & \beta
\end{pmatrix}
= \mbox{sgn}{(\alpha)}^i\mbox{sgn}{(\beta)}^{j}.
\]

\begin{notation}
Only the parities of $i$ and $j$ matter, so we simplify notation and refer to this as $\chi_{\pm, \pm}$. 
We can unitarily induce this representation to $\GL(2,\R)$ and obtain a corresponding representation which we denote by $\pi_{\pm, \pm}$.
Choosing a pair of signs for each archimedean embedding allows us to construct the representation $\pi_{\pm, \pm}\otimes \cdots \otimes\pi_{\pm, \pm}$ of $G_{\infty}$.
We denote the character of this representation by $\theta_{\pm, \pm, \cdots, \pm, \pm}$.
For our purposes, we will only consider $\theta_{+,+, \cdots, +}$
\end{notation}

Langlands discusses that the representations contributing to specific parts of the trace formula are the trivial and the special one, see \cite[p.~633]{LanBE04}.
The \emph{special representation}\footnote{The unitarily induced representation to $\GL(2, \mathbb{A}_K)$ is denoted by $\xi_z$.
The expected trace is the obtained when $z=0$, i.e,
for $\xi_0$, which Alt\u{u}g names the special representation.} is the one unitarily induced from
\[
\begin{pmatrix}
\alpha & x\\0 & \beta
\end{pmatrix}
\mapsto \abs{\alpha}^{z/2}\abs{\beta}^{z/2}.
\]
He proves in (TF.2) and (31) of \emph{op.~cit.} that the contribution to the trace formula of this representation is
\[
\frac{k+1}{4}\mbox{Tr}((\pi_{+, +,\cdot, +, +})(f_{\infty})).
\]
Thus we need to study the trace at the archimedean place of $\pi_{+,+}\otimes \cdots \otimes\pi_{+,+}$.
Following Langlands' calculation \cite[p.~644]{LanBE04}, the trace of the \emph{trivial representation} at the non-archimedean place $\fp$ is 
\[
\int_{G(K_\fp)} f(g) dg = 
\sp^{\sk/2}\frac{1-\sp^{-(\sk+1)}}{1-\sp^{-1}}.
\]
By our choice of the test function , the contribution at all other non-archimedean places is 1.
So we have 
\begin{equation}
\label{spectral1}
\Tr({\bf1}(f)) = \sp^{\sk/2}\frac{1-\sp^{-(\sk+1)}}{1-\sp^{-1}}\Tr({\bf1}(f_\infty)),
\end{equation}
where $\bf1$ is the trivial representation and $f$ is the test function.
We can break the integral as a product of local integrals because the function factorizes and the ad\`{e}lic measure we have picked is the restricted product measure.

The above formula explains the need to study the archimedean trace of the trivial representation.
The main result we prove in this section is the following

\begin{theorem}
\label{main theorem section 9}
Let $f$ be the fixed test function.
Then
\begin{enumerate}[label = \textup{(\roman*)}]
\item \label{prop: trivial rep value}
The trace of the trivial representation is 
\[
\Tr({\bf1}(f)) = {\sp}^{-\sk/2}\frac{1-{\sp}^{-(\sk+1)}}{1-{\sp}^{-1}} \sum_{\pm} \int_{\R^{2n -1}} \theta^{\pm}(x, y) dxdy,
\]
which equals \eqref{eq:triv res new}
\item \label{thm: Weyl Int special functions}
The trace of the special representation is
\[
\frac{\sk+1}{4}\int f(g)\theta_{+, +, \cdots,+, +}(g) dg = (\sk+1)\sp^{-\sk}2^{n-2} \sum_{\pm}\int_{\R^{2n-1}}C(x, y)\theta^{\pm}(x, y)\mathcal{P}^{\pm}(x,y)dx dy,
\]
which is $-1/2$ times \eqref{eq:spec residue2new}.
\end{enumerate}
In these equations, $\theta^{\pm}$ is the interpolation function defined in \S\ref{section: blending step} and $C(x, y)$ is the indicator function defined in Definition~\ref{indicator function}.
\end{theorem}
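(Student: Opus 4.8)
The plan is to reduce each of the two traces to an integral over $Z_+\backslash G_\infty$ against the appropriate character, to evaluate that integral by the Weyl integration formula together with the germ expansion of \S\ref{sec: arch orb int}, and then to compare the outcome with the expressions \eqref{eq:triv res new} and \eqref{eq:spec residue2new} of the previous section. Thus the proof falls into three stages: reduction to the archimedean place using the factorization of $f$ and Langlands' known computations, evaluation of the archimedean integrals, and assembly.

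\emph{Reduction to the archimedean place.} Since $\mathbf{1}$ is one-dimensional, $\mathbf{1}(f)$ acts as the scalar $\int_{Z_+\backslash G(\A_K)}f(g)\,dg=\prod_{\fq}\Tr(\mathbf{1}(f_\fq))\cdot\Tr(\mathbf{1}(f_\infty))$, the factorization coming from that of $f$. For $\fq\ne\fp$ the function $f_\fq^{(0)}$ is the indicator of $G(\cO_{K_\fq})$, of measure $1$, so that factor is $1$; for $\fq=\fp$ the lattice count recalled above gives $\Tr(\mathbf{1}(f_\fp))=\int_{G(K_\fp)}f_\fp(g)\,dg=\sp^{\sk/2}\frac{1-\sp^{-(\sk+1)}}{1-\sp^{-1}}$. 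Hence part (i) reduces to the identity $\Tr(\mathbf{1}(f_\infty))=\int_{Z_+\backslash G_\infty}f_\infty(g)\,dg=\sp^{-\sk}\sum_{\pm}\int_{\R^{2n-1}}\theta^{\pm}(x,y)\,dx\,dy$, which is Theorem~\ref{thm: malors int fgdg}. For the special representation, Langlands' identities (TF.2) and (31) of \cite{LanBE04} express its total contribution to the trace formula as $\frac{\sk+1}{4}\Tr(\pi_{+, +, \cdots, +, +}(f_\infty))=\frac{\sk+1}{4}\int_{Z_+\backslash G_\infty}f_\infty(g)\,\theta_{+, +, \cdots, +, +}(g)\,dg$, the factor $\sk+1$ being the normalized $\fp$-component $\Tr(\xi_{0,\fp}(f_\fp))$ and $\tfrac14$ the standard normalization of the continuous-spectrum contribution; so part (ii) reduces to Proposition~\ref{prop: 9-17}, namely $\int_{Z_+\backslash G_\infty}f_\infty(g)\,\theta_{+, +, \cdots, +, +}(g)\,dg=\sp^{-\sk}2^{n}\sum_{\pm}\int_{\R^{2n-1}}C(x,y)\,\theta^{\pm}(x,y)\,\mathcal{P}^{\pm}(x,y)\,dx\,dy$.

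\emph{The archimedean integrals.} Both are evaluated via the Weyl integration formula on $Z_+\backslash G_\infty$: for a class function $\Theta$ one has $\int_{Z_+\backslash G_\infty}f_\infty(g)\Theta(g)\,dg=\sum_{[T]}\abs{W(G_\infty,T)}^{-1}\int_{Z_+\backslash T}\abs{D(t)}\,\cO(f_\infty,t)\,\Theta(t)\,dt$, where $[T]$ runs over the $2^n$ conjugacy classes of maximal tori of $G_\infty$ (a split or an elliptic factor at each real place) and $\abs{W(G_\infty,T)}=2^n$. Inserting the germ expansion $\cO(f_{\nu_i},(r,N))=g_{\nu_i,1}(r,N)+\frac{1}{2}\abs{\frac{r^2}{N}-1}^{-1/2}g_{\nu_i,2}(r,N)$ at each real place, the product $\prod_i\abs{D(t_i)}\,\cO(f_{\nu_i},t_i)$ reproduces, up to the factor $\sp^{\sk/2}$ and the normalization of the Weyl discriminant, exactly the per-place factors of the definition of $\theta^{\pm}$ (Definition~\ref{def:thetaplusminus}); here Proposition~\ref{relation discriminants} identifies the product $\prod_i D(\sigma_i\lambda_1,\sigma_i\lambda_2)$ with $\sp^{\sk/2}/\sqrt{\Norm_K(\partial)}$, i.e. with $\mathcal{P}^{\pm}$ (Proposition~\ref{Prop: restatement}). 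One then changes variables from the torus parameters (eigenvalues, respectively polar coordinates, modulo $Z_+$) to the coordinates $(x,y)\in\R^{2n-1}$ of \S\ref{section: blending step}, in which, as the sign and $[T]$ vary, the images sweep out $\{\pm\}\times\R^{2n-1}$ along the partition by the regions $S_J$; collecting the Jacobian, the $2^n$ from $\abs{W}$ and the power of $\sp$ yields Theorem~\ref{thm: malors int fgdg} for $\Theta\equiv1$. For $\Theta=\theta_{+, +, \cdots, +, +}$ the Harish--Chandra character of $\pi_{+,+}$ on each torus factor enters, and one checks --- this is the spectral-side counterpart of Lemma~\ref{thm: reinterpretation of C}, consistent with the limit computed in Lemma~\ref{lemma: Limit with C} --- that after the germ expansion and the eigenvalue/angular integrations only the totally-split tori contribute, which is precisely the indicator $C(x,y)$, and that the surviving weight is $\theta^{\pm}\mathcal{P}^{\pm}$, the extra factor $\mathcal{P}^{\pm}$ reflecting that $\Theta$ carries one more Weyl-discriminant factor than the trivial character does; tracking the constants gives Proposition~\ref{prop: 9-17}.

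\emph{Assembly, and the main obstacle.} Combining the two stages, $\Tr(\mathbf{1}(f))=\sp^{\sk/2}\frac{1-\sp^{-(\sk+1)}}{1-\sp^{-1}}\cdot\sp^{-\sk}\sum_{\pm}\int\theta^{\pm}=\sp^{-\sk/2}\frac{1-\sp^{-(\sk+1)}}{1-\sp^{-1}}\sum_{\pm}\int\theta^{\pm}$, which is literally \eqref{eq:triv res new}; and the special-representation contribution equals $\frac{\sk+1}{4}\cdot\sp^{-\sk}2^{n}\sum_{\pm}\int C\theta^{\pm}\mathcal{P}^{\pm}=2^{n-2}(\sk+1)\sp^{-\sk}\sum_{\pm}\int C\theta^{\pm}\mathcal{P}^{\pm}$, which is $-\tfrac12$ times \eqref{eq:spec residue2new}, as the latter is $-2^{n-1}(\sk+1)\sp^{-\sk}\sum_{\pm}\int C\theta^{\pm}\mathcal{P}^{\pm}$. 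I expect the principal difficulty to lie in the Weyl-integration and change-of-variables step: because $Z_+$ does not act as a product of the local centres, the rescaling cannot be carried out one embedding at a time, so the Jacobian and all the combinatorial constants (the $2^n$ from $\abs{W}$, the $4^n$ that appears in part (ii), the power $\sp^{-\sk}$) must be tracked globally; and, for the special representation, one must verify that the contribution of the non-totally-split tori vanishes, so that the indicator $C(x,y)$ occurring here coincides with the one produced by the archimedean $L$-factors in Lemma~\ref{lemma: Limit with C} --- it is exactly this coincidence that guarantees $\overline{\Sigma_0}(f)$ contains the terms $\Tr(\mathbf{1}(f))$ and $-2\Tr(\xi_0(f))$ we wished to isolate.
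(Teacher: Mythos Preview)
Your proposal is correct and follows essentially the same route as the paper: factor off the non-archimedean places via Langlands' computation at $\fp$, then evaluate the archimedean traces by the Weyl integration formula on $Z_+\backslash G_\infty$ combined with the germ expansion and the change of variables to $(x,y)$-coordinates (the content of Theorem~\ref{thm: malors int fgdg} and Proposition~\ref{prop: 9-17}), with the vanishing of $\theta_{+,+}$ on the elliptic torus producing the indicator $C(x,y)$ and the extra Weyl-discriminant factor producing $\mathcal{P}^{\pm}$. Your identification of the main obstacle --- the non-product action of $Z_+$ forcing a global rather than place-by-place change of variables, and the bookkeeping of the constants $2^n$, $4^n$, $\sp^{-\sk}$ and the regulator --- is exactly where the paper spends its effort in Propositions~\ref{prop: Weyl discriminant us}--\ref{prop: equality integrals}.
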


\subsection{The Weyl integration formula and the trace of the trivial representation}
\label{subsec: The Weyl integration formula and the trace of the Trivial Representation}
Here, we study
\[
\Tr({\bf1}(f_\infty)) = \int_{Z_+\setminus G_{\infty}}f(g)dg.
\]
For this, we use the Weyl integration formula for $G_{\infty}$.

\begin{notation}
Let $G$ be a reductive Lie group $G$ with maximal torus $T$.
Write $W(G,T)$ to denote the Weyl group of $G$ with respect to $T$.
\end{notation}

The following lemma will be useful.

\begin{lemma}
\label{lemma 9-4}
Let $G_1$ and $G_2$ be reductive real groups with maximal tori $T_1$ and $T_2$, respectively.
Then 
\begin{enumerate}[label = \textup{(\roman*)}]
\item $T_1\times T_2$ is a maximal torus of $G_1\times G_2$.
Conversely, all maximal tori of $G_1\times G_2$ are constructed in this way.
\item $\abs{W(G_1\times G_2, T_1\times T_2)} = \abs{W(G_1, T_1)}\times\abs{W(G_2, T_2)}$.
\end{enumerate}
\end{lemma}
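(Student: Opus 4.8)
The statement to prove is Lemma~\ref{lemma 9-4}: that maximal tori of a product $G_1\times G_2$ of reductive real groups are exactly products $T_1\times T_2$ of maximal tori, and that the Weyl group of the product is the product of the Weyl groups. This is a standard structural fact, so the plan is to give a short direct argument from the definitions rather than anything elaborate.

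For part (i), I would argue as follows. Since $T_1$ and $T_2$ are abelian subgroups of $G_1$ and $G_2$ respectively, $T_1\times T_2$ is an abelian subgroup of $G_1\times G_2$; being a product of tori it is itself a torus. To see it is maximal, suppose $S$ is a torus of $G_1\times G_2$ containing $T_1\times T_2$. The projection maps $p_i: G_1\times G_2\to G_i$ send $S$ to connected abelian subgroups $p_i(S)\subseteq G_i$ whose closures are tori containing $T_i$; maximality of $T_i$ forces $p_i(S)\subseteq T_i$, hence $S\subseteq T_1\times T_2$, so $S=T_1\times T_2$. Conversely, if $T$ is any maximal torus of $G_1\times G_2$, then $p_1(T)$ and $p_2(T)$ are contained in maximal tori $T_1\subseteq G_1$, $T_2\subseteq G_2$, so $T\subseteq T_1\times T_2$; since $T_1\times T_2$ is a torus and $T$ is maximal, $T=T_1\times T_2$. (The usual care about connectedness/closures of abelian subgroups can be handled by passing to Zariski closures of the identity components, or by invoking that all maximal tori of a reductive group are conjugate.)

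For part (ii), recall that $W(G,T)=N_G(T)/Z_G(T)$ (or $N_G(T)/T$ in the connected reductive case). With $T=T_1\times T_2$ inside $G_1\times G_2$, an element $(g_1,g_2)$ normalizes $T_1\times T_2$ if and only if each $g_i$ normalizes $T_i$, so $N_{G_1\times G_2}(T_1\times T_2)=N_{G_1}(T_1)\times N_{G_2}(T_2)$; similarly for the centralizer. Taking the quotient gives a natural isomorphism $W(G_1\times G_2,T_1\times T_2)\cong W(G_1,T_1)\times W(G_2,T_2)$, and in particular the orders multiply. One then extends to arbitrary maximal tori of $G_1\times G_2$ using part (i), noting that conjugate tori yield isomorphic Weyl groups.

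I do not anticipate a genuine obstacle here; the only point requiring mild attention is the topological bookkeeping for real (as opposed to algebraic) groups — making precise that the relevant subgroups are closed and that "torus" is used consistently — but for the specific groups in play, namely finite products of copies of $\GL(2,\R)$, everything is completely explicit and one may even verify the claims by hand on the diagonal and rotation tori. The lemma will then feed into the Weyl integration formula for $G_\infty=\GL(2,\R)\times\cdots\times\GL(2,\R)$ in the subsequent computation of $\Tr(\mathbf{1}(f_\infty))$.
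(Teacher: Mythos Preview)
Your proposal is correct and is precisely the ``straightforward from the definition'' argument that the paper alludes to; the paper itself offers no further details beyond that one-line remark, so you have simply written out what the authors left implicit.
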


\begin{proof}
This is straightforward from the definition.
\end{proof}

Recall that the maximal tori of $\GL(2, \R)$ are of two kinds: the split torus $T_{\spl}$ and the elliptic torus $T_{\el}$.

\begin{proposition}
A family of representatives of maximal tori of $G_{\infty}$ is
\[
T_1\times \cdots \times T_n,
\]
where $T_i\in\{T_{\spl}, T_{\el}\}$.
In particular, there are $2^n$ different families of maximal tori.
Furthermore,
\[
\abs{W(G_{\infty}, T_1\times\cdots\times T_n)} = 2^n.
\] 
\end{proposition}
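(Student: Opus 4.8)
The statement has two parts, both of which I would deduce from Lemma~\ref{lemma 9-4} together with standard structure theory of $\GL(2,\R)$. First I would recall the classification of maximal tori (up to conjugacy) of $\GL(2,\R)$: a maximal torus is either the diagonal split torus $T_{\spl}$ or an elliptic torus $T_{\el}$ (the image of $\C^\times$ under a two-dimensional real representation), and these two are non-conjugate. This is the $n=1$ base case. I would also record the two classical facts $\abs{W(\GL(2,\R), T_{\spl})} = 2$ (the Weyl group is generated by the coordinate swap) and $\abs{W(\GL(2,\R), T_{\el})} = 2$ (complex conjugation acts nontrivially), so that in either case the Weyl group has order $2$.

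\textbf{Step 1: produce the list of tori.} Applying part (i) of Lemma~\ref{lemma 9-4} inductively to $G_\infty = \GL(2,\R)^{\times n}$, every maximal torus of $G_\infty$ is conjugate to a product $T_1 \times \cdots \times T_n$ with each $T_i$ a maximal torus of the $i$-th factor $\GL(2,\R)$, hence each $T_i \in \{T_{\spl}, T_{\el}\}$. Conversely each such product is a maximal torus. I would then note that two such products $T_1\times\cdots\times T_n$ and $T_1'\times\cdots\times T_n'$ are conjugate in $G_\infty$ if and only if $T_i$ is conjugate to $T_i'$ in $\GL(2,\R)$ for every $i$ (conjugacy in a direct product is componentwise), hence if and only if they are \emph{equal} as members of our chosen family of representatives. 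Since each index $i$ offers exactly the two choices $T_{\spl}, T_{\el}$, there are exactly $2^n$ conjugacy classes of maximal tori, represented by the $2^n$ products in the statement.

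\textbf{Step 2: compute the Weyl group order.} Applying part (ii) of Lemma~\ref{lemma 9-4} inductively,
\[
\abs{W(G_\infty, T_1\times\cdots\times T_n)} = \prod_{i=1}^{n} \abs{W(\GL(2,\R), T_i)} = \prod_{i=1}^{n} 2 = 2^n,
\]
using that each factor Weyl group has order $2$ regardless of whether $T_i$ is split or elliptic. This gives the last assertion, uniformly over the choice of $(T_1,\dots,T_n)$.

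\textbf{Main obstacle.} There is no serious obstacle here: the proposition is a bookkeeping consequence of Lemma~\ref{lemma 9-4} and the $\GL(2,\R)$ base case. The only point requiring a little care is the ``in particular, there are $2^n$ different families'' claim, i.e. that distinct products on our list are genuinely non-conjugate in $G_\infty$; this needs the remark that conjugacy in a direct product is detected factor-by-factor, which I would state explicitly rather than leave implicit. Everything else is immediate, so I would keep the proof to a few lines.
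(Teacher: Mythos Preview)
Your proposal is correct and follows exactly the same approach as the paper, which simply writes ``This follows immediately from Lemma~\ref{lemma 9-4}.'' You have spelled out the details the paper leaves implicit---the $\GL(2,\R)$ base case, the inductive application, and the componentwise non-conjugacy remark---but the underlying argument is identical.
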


\begin{proof}
This follows immediately from Lemma~\ref{lemma 9-4}.
\end{proof}

Using this, we deduce from the Weyl integration formula the following statement.

\begin{proposition}
\label{prop:weyl integration formula}
Let $f\in C_c(Z_+\setminus G_{\infty})$.
Then
\begin{equation}
\label{prop 9.7 eqn}
\int_{Z_+\backslash G_{\infty}} f(g) dg = \dfrac{1}{2^n}\displaystyle\sum_T \int_{Z_+\backslash T}\int_{T\setminus G_{\infty}} f(g^{-1}tg)dg\abs{D(t)}^2dt,
\end{equation}
where $T$ varies in a set of representatives of the isomorphism classes of tori of $G_{\infty}$ and $\abs{D(t)}$ is the Weyl discriminant of the torus.
\end{proposition}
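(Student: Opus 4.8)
The plan is to derive the Weyl integration formula for $G_\infty = \GL(2,\R)^n$ modulo $Z_+$ by combining the classical Weyl integration formula for $\GL(2,\R)$ (or $Z_+'\backslash\GL(2,\R)$) with the product structure recorded in Lemma~\ref{lemma 9-4} and the preceding proposition. First I would recall the standard Weyl integration formula for a single factor: for a reductive real group $G$ with conjugacy classes of maximal tori $T_1,\dots,T_r$ one has
\[
\int_G \phi(g)\,dg = \sum_{j=1}^r \frac{1}{|W(G,T_j)|}\int_{T_j}\left(\int_{T_j\backslash G}\phi(g^{-1}tg)\,dg\right)|D(t)|^2\,dt,
\]
where $|D(t)|$ is the Weyl discriminant; for $\GL(2,\R)$ the two classes are $T_{\spl}$ and $T_{\el}$, each with $|W|=2$, and everything is compatible with the $Z_+$-action so the same identity holds on $Z_+\backslash\GL(2,\R)$ with the quotient measures fixed in \S\ref{assumption: measure}. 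The Weyl discriminant $|D(t)|$ here is exactly the quantity $\abs{\lambda_1-\lambda_2}/\abs{\lambda_1\lambda_2}^{1/2}$ up to the normalization used in \eqref{defi Weyl discriminant}, so I would be careful to state which normalization makes the formula constant-free, matching the convention that the quotient measures make integration in stages hold without constants.

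Next I would iterate over the $n$ archimedean places. Writing $f\in C_c(Z_+\backslash G_\infty)$ and applying the single-factor formula in each coordinate — which is legitimate because $f$ has compact support and all the measures involved are product measures on the relevant quotients — one obtains a sum over all tuples $(T_1,\dots,T_n)$ with $T_i\in\{T_{\spl},T_{\el}\}$. By the proposition preceding the statement, these tuples are precisely a set of representatives for the conjugacy classes of maximal tori of $G_\infty$, and $|W(G_\infty,T_1\times\cdots\times T_n)| = \prod_i |W(\GL(2,\R),T_i)| = 2^n$ by Lemma~\ref{lemma 9-4}(ii). The Weyl discriminant of a product torus factors as the product of the discriminants of the factors, so $|D(t)|^2 = \prod_i |D(t_i)|^2$, and the chamber of integration $T\backslash G_\infty$ is the product $\prod_i (T_i\backslash\GL(2,\R))$. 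Collecting these, the iterated formula becomes exactly \eqref{prop 9.7 eqn} with the single overall factor $1/2^n$ pulled out front. The one subtlety — and the step I expect to require the most care rather than genuine difficulty — is the bookkeeping of the $Z_+$-quotient: $Z_+$ is the diagonal scalar subgroup, not a product, so one must check that after factoring $G_\infty/Z_+$ the measures still match up as a product of the $Z_+'\backslash\GL(2,\R)$ pieces in a way consistent with the coordinates $(r_1,\dots,r_n,N_2,\dots,N_n)$ introduced in Lemma~\ref{lemma: regions are invariant}; this is precisely the point flagged in the introduction (``$Z_+$ is not a product group''), and it is why the statement restricts to $f$ invariant under (or supported modulo) $Z_+$.

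Concretely, the write-up would proceed: (1) state and cite the classical Weyl integration formula for $\GL(2,\R)$, noting the two torus classes and $|W|=2$ in each case and the compatibility of quotient measures; (2) pass to $Z_+\backslash\GL(2,\R)$ and record the version used here; (3) induct on $n$ using Fubini on the product measures, invoking Lemma~\ref{lemma 9-4} and the preceding proposition to identify the set of tori and compute $|W(G_\infty,T)| = 2^n$; (4) use multiplicativity of the Weyl discriminant and of the orbit integrals to assemble the right-hand side of \eqref{prop 9.7 eqn}; (5) conclude. I would keep the calculations at the level of ``apply Fubini and collect constants'' rather than writing out the Jacobians, since the measure normalizations were already fixed so that integration in stages is constant-free. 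The main obstacle, to reiterate, is not the combinatorics of tori but ensuring the $Z_+$-descent of measures is stated cleanly; once that is in place the formula is a routine consequence of the one-factor case.
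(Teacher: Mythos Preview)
Your proposal is correct but takes a more circuitous route than the paper. The paper applies the Weyl integration formula \emph{directly} to the reductive group $Z_+\backslash G_\infty$ in one stroke: it simply observes that (i) the maximal tori of $Z_+\backslash G_\infty$ are exactly the images $Z_+\backslash T$ of the maximal tori $T\subset G_\infty$, and (ii) quotienting by a central subgroup does not change the Weyl group, so $|W(Z_+\backslash G_\infty, Z_+\backslash T)| = |W(G_\infty,T)| = 2^n$. That is the entire argument.

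Your approach instead builds the formula on $G_\infty$ (or on a product of single-factor quotients) by iterating the $\GL(2,\R)$ case via Fubini, and then performs a separate descent to $Z_+\backslash G_\infty$. This works, but the descent step---which you rightly flag as the main subtlety since $Z_+$ is the diagonal and not a product---is precisely what the paper's approach avoids. In particular, your step (2) as written is ambiguous: passing to ``$Z_+\backslash\GL(2,\R)$'' factor-by-factor gives $\prod_i (Z_+'\backslash\GL(2,\R))$, which is a strictly smaller group than $Z_+\backslash G_\infty$, so you would still have to undo part of that quotient or else carry out the induction on $G_\infty$ itself and only quotient by $Z_+$ at the end. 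Either way you end up doing the measure bookkeeping you describe in (5). The paper's one-line argument sidesteps all of this by never leaving the category of reductive groups: $Z_+\backslash G_\infty$ is itself such a group, and the abstract Weyl integration formula applies to it directly.
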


\begin{proof}
This is a consequence of the Weyl integration formula for the group $Z_+\setminus G_{\infty}$ once we notice that 
\begin{itemize}
\item the family of tori $Z_+\setminus T$ constitutes a family of representatives of the maximal tori in $Z_+\setminus G_{\infty}$.
\item the Weyl groups do not change size.
\end{itemize}
\end{proof}

We record the following observation which will be useful for further discussions.

\begin{proposition}
\label{prop: bijective correspondence}
There is a bijective correspondence between maximal tori of $Z_+\setminus G_{\infty}$ and regions $S_{(j_1,\ldots, j_n)}$ with $j_t\neq 0$ for $1\le t \le n$.
Concretely, the bijection assigns the region $S_{(j_1,\ldots, j_n)}$ to $Z_+\setminus{T_{j_1}\times\cdots \times T_{j_n}}$.
\end{proposition}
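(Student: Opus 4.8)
The statement to prove is Proposition~\ref{prop: bijective correspondence}: there is a bijective correspondence between maximal tori of $Z_+\setminus G_{\infty}$ and regions $S_{(j_1,\ldots,j_n)}$ with every $j_t\neq 0$, given explicitly by $Z_+\setminus(T_{j_1}\times\cdots\times T_{j_n}) \mapsto S_{(j_1,\ldots,j_n)}$.

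\begin{proof}
By Proposition~\ref{prop:weyl integration formula} (and the preceding proposition describing the maximal tori of $G_\infty$), a complete set of representatives of the isomorphism classes of maximal tori of $G_\infty = \GL(2,\R)\times\cdots\times\GL(2,\R)$ is given by the products $T_{j_1}\times\cdots\times T_{j_n}$ with each $j_t\in\{\spl,\el\}$, and passing to the quotient by $Z_+$ yields a complete set of representatives $Z_+\setminus(T_{j_1}\times\cdots\times T_{j_n})$ of the maximal tori of $Z_+\setminus G_\infty$. In particular there are exactly $2^n$ such classes. On the other hand, the index set $\{(j_1,\ldots,j_n): j_t\in\{\spl,\el\}\}$ parameterizing the regions $S_J\subset\R^{2n-1}$ with all $j_t\neq 0$ also has cardinality $2^n$, since by construction $S_J = S_{j_1}\times\cdots\times S_{j_n}$ and the only non-measure-zero choices for each coordinate are $j_t\in\{\spl,\el\}$.

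The correspondence $J=(j_1,\ldots,j_n)\mapsto S_J$ is manifestly injective: two regions $S_J$ and $S_{J'}$ with $J\neq J'$ differ in some coordinate $t$, where one of them contains points inside the parabola $r_t^2=N_t$ and the other contains points outside it, so $S_J\cap S_{J'}$ has measure zero (in fact is empty in the quotient picture after the relabeling of \S\ref{subsec: A partition of R 2n-1}), hence $S_J\neq S_{J'}$. Likewise the assignment $J\mapsto Z_+\setminus(T_{j_1}\times\cdots\times T_{j_n})$ is injective because, by Lemma~\ref{lemma 9-4}(i), a product of tori determines each factor up to isomorphism, and $T_{\spl}$ and $T_{\el}$ are non-isomorphic as real groups (one is split, the other anisotropic modulo center); this is not disturbed by the passage to the $Z_+$-quotient. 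Since both maps have the same finite index set $\{(j_1,\ldots,j_n):j_t\in\{\spl,\el\}\}$ as domain and both are injective with the stated targets, composing one with the inverse of the other gives the desired bijection.

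It remains only to check that under this bijection the torus $Z_+\setminus(T_{j_1}\times\cdots\times T_{j_n})$ is indeed matched with $S_{(j_1,\ldots,j_n)}$, which is exactly the compatibility encoded by the map $\CH$: recall $\ch(T_{\el}) = S_{\el}$ and $\ch(T_{\spl}) = S_{\spl}$ (noted right after the definition of $\CH$), so $\CH$ carries $T_{j_1}\times\cdots\times T_{j_n}$ into $S_{j_1}\times\cdots\times S_{j_n} = S_{(j_1,\ldots,j_n)}$, and by Lemma~\ref{lemma: regions are invariant} this descends to the quotient $Z_+\setminus G_\infty$ with coordinates $(r_1,\ldots,r_n,N_2,\ldots,N_n)$. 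Thus the assignment in the statement is well-defined and is the bijection just constructed.
\end{proof}
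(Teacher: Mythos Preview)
Your proof is correct. The paper actually states this proposition without proof, treating it as an immediate observation from the preceding setup: the maximal tori of $Z_+\setminus G_\infty$ and the regions $S_J$ with all $j_t\neq 0$ are both parametrized by the same finite set $\{\spl,\el\}^n$, and the correspondence is tautological once one recalls $\ch(T_{\spl})=S_{\spl}$ and $\ch(T_{\el})=S_{\el}$. Your argument spells out this bookkeeping in full---verifying injectivity on each side and invoking Lemma~\ref{lemma: regions are invariant} for the descent to the $Z_+$-quotient---which is more than the paper does but entirely consistent with its viewpoint.
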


\begin{remark}\leavevmode
\begin{enumerate}[label = (\roman*)]
\item Notice that the above assignment makes sense because each $j_t$ is one of `${\spl}$' or `${\el}$'.
\item Proposition~\ref{prop: bijective correspondence} allows us to perform a change of variables between the integrals in the $(r, N)$ coordinates and those with the Haar measure of the torus explicitly.
This simplifies the manipulations of the different partitions.
\end{enumerate}
\end{remark}

The torus $T\subset G_{\infty}$ is a product of $n$-tori, each of which is either $T_{\el}$ or $T_{\spl}$.
In either case, the corresponding Haar measure is written according to their eigenvalues{\footnote{In \S\ref{sec: arch orb int} we wrote $\lambda_1,\lambda_2$ to denote the eigenvalues.
To avoid multiple sub-indices we write $\alpha,\beta$ here.}} as
\[
\frac{d\alpha d\beta}{\alpha\beta}.
\]
Thus, the measure used in the above integral is the product of the Haar measure of each tori, except that the first torus gets quotiented by the action of $Z_+$.
More precisely, the measure of the first torus is
\[
\frac{d\lambda}{\lambda},
\]
where $\lambda = \sqrt{\frac{\alpha_1}{\beta_1}}$.
For the rest of the tori $2\leq i \leq n$, we have the Haar measure
\[
\abs{1 - \frac{\alpha_i}{\beta_i}}\abs{1 - \frac{\beta_i}{\alpha_i}}\frac{d\alpha_i d\beta_i}{\alpha_i\beta_i}.
\]
Langlands has proven in \cite[(27)]{LanBE04} that
\begin{equation}
\label{eqn: Weyl discriminant langlands}
 \abs{1 - \frac{\alpha_1}{\beta_1}}\abs{1 - \frac{\beta_1}{\alpha_1}}\abs{\frac{d\lambda}{\lambda}} = 4\abs{\frac{r_1^2}{N_1}-1}^{1/2}\frac{dr_1}{\sqrt{\abs{N_1}}}.
\end{equation}
We now prove the equivalent formula for the other cases.
We have:

\begin{proposition}
\label{prop: Weyl discriminant us}
With notation as before, the following assertion is true for $2\leq i \leq n$,
\[
\abs{1 - \frac{\alpha_i}{\beta_i}}\abs{1 - \frac{\beta_i}{\alpha_i}}\frac{d\alpha_i\wedge d\beta_i}{\alpha_i\beta_i} = 4\abs{\frac{r_i^2}{N_i}-1}^{1/2}\frac{dr_i\wedge dN_i}{\abs{N_i}\sqrt{\abs{N_i}}}.
\]
\end{proposition}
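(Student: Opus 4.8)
The plan is to prove the identity by the change of variables $(\alpha_i,\beta_i)\mapsto(r_i,N_i)$ effected by $\ch$ on the $i$-th factor $\GL(2,\R)$. Recall that an element of a maximal torus with eigenvalues $\alpha_i,\beta_i$ is sent by $\ch$ to $(r_i,N_i)=(\Tr,4\det)=(\alpha_i+\beta_i,\,4\alpha_i\beta_i)$. First I would compute the Jacobian of this map: from $dr_i=d\alpha_i+d\beta_i$ and $dN_i=4(\beta_i\,d\alpha_i+\alpha_i\,d\beta_i)$ one obtains $dr_i\wedge dN_i=4(\alpha_i-\beta_i)\,d\alpha_i\wedge d\beta_i$, equivalently $d\alpha_i\wedge d\beta_i=\dfrac{dr_i\wedge dN_i}{4(\alpha_i-\beta_i)}$.

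Next I would simplify the left-hand side in eigenvalue coordinates. Since $\bigl(1-\tfrac{\alpha_i}{\beta_i}\bigr)\bigl(1-\tfrac{\beta_i}{\alpha_i}\bigr)=-\tfrac{(\alpha_i-\beta_i)^2}{\alpha_i\beta_i}$, we get $\abs{1-\tfrac{\alpha_i}{\beta_i}}\,\abs{1-\tfrac{\beta_i}{\alpha_i}}=\tfrac{\abs{\alpha_i-\beta_i}^2}{\abs{\alpha_i\beta_i}}$, so the left-hand side of the proposition equals $\tfrac{\abs{\alpha_i-\beta_i}^2}{(\alpha_i\beta_i)^2}\,\abs{d\alpha_i\wedge d\beta_i}$. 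Substituting the Jacobian identity from the previous step turns this into $\tfrac{\abs{\alpha_i-\beta_i}}{4(\alpha_i\beta_i)^2}\,\abs{dr_i\wedge dN_i}$. Finally I would re-express this in the $(r_i,N_i)$ coordinates using $\alpha_i\beta_i=N_i/4$ and $(\alpha_i-\beta_i)^2=(\alpha_i+\beta_i)^2-4\alpha_i\beta_i=r_i^2-N_i$, hence $\abs{\alpha_i-\beta_i}=\abs{r_i^2-N_i}^{1/2}$; this gives $\tfrac{4\,\abs{r_i^2-N_i}^{1/2}}{N_i^2}\,\abs{dr_i\wedge dN_i}$, which is exactly $4\abs{\tfrac{r_i^2}{N_i}-1}^{1/2}\tfrac{dr_i\wedge dN_i}{\abs{N_i}\sqrt{\abs{N_i}}}$ after extracting a factor $\abs{N_i}^{1/2}$ from the bracket.

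The same computation applies verbatim to a split torus (where $\alpha_i,\beta_i\in\R^{*}$) and to an elliptic torus (where $\alpha_i=\overline{\beta_i}$, parametrized as $\alpha_i=\varrho e^{i\theta}$, so $r_i=2\varrho\cos\theta$ and $N_i=4\varrho^2$); in the elliptic case $(\alpha_i-\beta_i)^2=-4\varrho^2\sin^2\theta=r_i^2-N_i$ and $\alpha_i\beta_i=\varrho^2>0$, so all the absolute values remain consistent and the same formula drops out. I expect the only delicate point to be the bookkeeping of signs and absolute values — in particular, matching the orientation factor $\alpha_i-\beta_i$ appearing in the Jacobian with the discriminant $(\alpha_i-\beta_i)^2$ on the left, and checking that the measure written as ``$d\alpha_i\wedge d\beta_i/\alpha_i\beta_i$'' in eigenvalue coordinates is genuinely the invariant measure on the torus in both cases — but there is no substantive obstacle here: the statement is just the two-dimensional analogue of Langlands' one-dimensional identity \eqref{eqn: Weyl discriminant langlands}, proved in the same way but without first quotienting by $Z_+$.
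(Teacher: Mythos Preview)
Your proposal is correct and follows essentially the same approach as the paper: both compute the Jacobian via $dr_i=d\alpha_i+d\beta_i$, $dN_i=4(\beta_i\,d\alpha_i+\alpha_i\,d\beta_i)$ to obtain $dr_i\wedge dN_i=4(\alpha_i-\beta_i)\,d\alpha_i\wedge d\beta_i$, and then use $4\alpha_i\beta_i=N_i$ and $\abs{\alpha_i-\beta_i}^2=\abs{r_i^2-N_i}$ to conclude. Your write-up is in fact more detailed than the paper's (which simply says ``the result follows'' after these identities), and your remark about tracking absolute values versus orientation mirrors the paper's remark immediately following the proposition.
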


\begin{proof}
We have the following relationship between the variables
\begin{align*}
r_i &= \alpha_i + \beta_i,\\
N_i &= 4\alpha_i\beta_i.
\end{align*}
Taking derivatives we obtain
\begin{align*}
dr_i &= d\alpha_i + d\beta_i,\\
dN_i &= 4\left(\beta_id\alpha_i + \alpha_id\beta_i\right).
\end{align*}
Thus
\[
dr_i\wedge dN_i = 4(\alpha_i - \beta_i)d\alpha_i\wedge d\beta_i.
\]
Using that $4\alpha_i\beta_i = N_i$ and that $\abs{\alpha_i - \beta_i}^2 = \abs{r_i^2 - N_i}$, the result follows.
\end{proof}

\begin{remark}
We are interested in the measure associated with the wedge product rather than the orientation given by the differential form.
So, we can remove the wedge product and reorganize the variables for our convenience.
\end{remark}

We can now write explicitly the Weyl discriminant in the $(r, N)-$coordinates.

\begin{proposition}
\label{prop: equality integrals}
Let $Z_+\setminus T$ be a maximal tori of $Z_+\setminus G_{\infty}$ and $dt$ be its Haar measure.
Let $S$ be the corresponding region (under the correspondence from Proposition~\ref{prop: bijective correspondence}).
Then for an integrable function $h\in C_c(S)$ 
\begin{tiny}
\[
\int_{Z_+\setminus T} h(\CH(t)) \abs{D(t)}^2dt = 4^n\sp^{-\sk/2} \int_{S} h(r_1, r_2,\ldots, r_n, N_2, \ldots, N_n) \prod_{i = 1}^n\abs{\frac{r_i^2}{N_i}-1}^{1/2}\times \frac{dr_1dr_2 \cdots dr_n dN_2 \cdots dN_n}{\abs{N_2\cdots N_n}}.
\]
\end{tiny}
\end{proposition}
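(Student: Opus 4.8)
\emph{Proof sketch.} The plan is to reduce the statement to the one–variable computations already recorded in \eqref{eqn: Weyl discriminant langlands} and Proposition~\ref{prop: Weyl discriminant us}, and then to exploit the $Z_+$–normalization of the coordinates on $Z_+\setminus G_{\infty}$ to rebalance the powers of the $N_i$.

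First, write $T=T_1\times\cdots\times T_n$ with each $T_i\in\{T_{\spl},T_{\el}\}$. As explained in the paragraph preceding Proposition~\ref{prop: Weyl discriminant us}, the measure $\abs{D(t)}^2\,dt$ on $Z_+\setminus T$ decomposes as the product of local factors: the first factor contributes the variable $\lambda=\sqrt{\alpha_1/\beta_1}$ together with the combination $\abs{1-\frac{\alpha_1}{\beta_1}}\abs{1-\frac{\beta_1}{\alpha_1}}\abs{\frac{d\lambda}{\lambda}}$, and for $2\le i\le n$ the $i$-th factor contributes $\abs{1-\frac{\alpha_i}{\beta_i}}\abs{1-\frac{\beta_i}{\alpha_i}}\frac{d\alpha_i\,d\beta_i}{\alpha_i\beta_i}$. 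In each coordinate the map $\CH$ sends $(\alpha_i,\beta_i)$ to $(r_i,N_i)=(\alpha_i+\beta_i,\,4\alpha_i\beta_i)$. On the regular set (away from the parabolas $r_i^2=N_i$ and the hyperplanes $N_i=0$), where $h$ is supported, these coordinate changes are diffeomorphisms onto the regions $S_{j_i}$; this is valid in both the split and elliptic cases because there $\abs{\alpha_i-\beta_i}^2=\abs{r_i^2-N_i}$.

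Next, apply the change of variables factor-by-factor. For $i=1$, equation \eqref{eqn: Weyl discriminant langlands} gives
\[
\abs{1-\frac{\alpha_1}{\beta_1}}\abs{1-\frac{\beta_1}{\alpha_1}}\abs{\frac{d\lambda}{\lambda}}=4\abs{\frac{r_1^2}{N_1}-1}^{1/2}\frac{dr_1}{\sqrt{\abs{N_1}}},
\]
while for $2\le i\le n$ Proposition~\ref{prop: Weyl discriminant us} gives
\[
\abs{1-\frac{\alpha_i}{\beta_i}}\abs{1-\frac{\beta_i}{\alpha_i}}\frac{d\alpha_i\,d\beta_i}{\alpha_i\beta_i}=4\abs{\frac{r_i^2}{N_i}-1}^{1/2}\frac{dr_i\,dN_i}{\abs{N_i}\sqrt{\abs{N_i}}}.
\]
Multiplying the $n$ identities yields
\[
\abs{D(t)}^2\,dt=4^n\prod_{i=1}^n\abs{\frac{r_i^2}{N_i}-1}^{1/2}\;\frac{dr_1\cdots dr_n\,dN_2\cdots dN_n}{\sqrt{\abs{N_1}}\,\prod_{i=2}^n\abs{N_i}^{3/2}}.
\]

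Finally, recall that the coordinates $(r_1,\ldots,r_n,N_2,\ldots,N_n)$ on $Z_+\setminus G_{\infty}$ are those of the distinguished representative, for which $N_1$ is determined by $\abs{N_1N_2\cdots N_n}=\sp^{\sk}$; hence $\sqrt{\abs{N_1}}=\sp^{\sk/2}/\sqrt{\abs{N_2\cdots N_n}}$, and the denominator above equals $\sp^{\sk/2}\prod_{i=2}^n\abs{N_i}^{3/2-1/2}=\sp^{\sk/2}\abs{N_2\cdots N_n}$. Substituting this and integrating $h\circ\CH$ against the resulting measure over $Z_+\setminus T$ — which, by Proposition~\ref{prop: bijective correspondence}, amounts to integrating over the region $S$ in the $(r,N)$ coordinates — gives exactly the asserted identity. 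The one delicate point in the argument is this last piece of bookkeeping: after the $Z_+$–quotient, $N_1$ is not an independent coordinate but is pinned down by the product normalization, and it is precisely this constraint that converts the surplus powers of $\abs{N_i}$ into the constant $\sp^{-\sk/2}$; everything else is a routine product of the two local change-of-variables formulas already established. $\qed$
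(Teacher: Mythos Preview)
Your proof is correct and follows essentially the same route as the paper's: combine the $i=1$ formula \eqref{eqn: Weyl discriminant langlands} with the $i\ge 2$ formulas from Proposition~\ref{prop: Weyl discriminant us}, and then use the $Z_+$--normalization $\abs{N_1\cdots N_n}=\sp^{\sk}$ to trade $\sqrt{\abs{N_1}}$ for $\sp^{\sk/2}/\sqrt{\abs{N_2\cdots N_n}}$. The paper carries out this last step via an explicit rescaling $\tilde r_i=zr_i$, $\tilde N_i=z^2N_i$ with $z^{2n}\abs{N_1\cdots N_n}=\sp^{\sk}$ and an appeal to the $Z_+$--invariance of $h$, whereas you substitute the constraint directly; the content is the same.
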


\begin{proof}
This follows from the change of variable formula once we explain how the Weyl Discriminant changes as well as how the action by $Z_+$ affects the integration.

By definition, the Weyl measure is the absolute value of 
\[
\abs{1 - \frac{\alpha_1}{\beta_1}}\abs{1 - \frac{\beta_1}{\alpha_1}}\abs{\frac{d\lambda}{\lambda}}\times \displaystyle\prod_{i = 2}^n \abs{1 - \frac{\alpha_i}{\beta_i}}\abs{1 - \frac{\beta_i}{\alpha_i}}\frac{d\alpha_i\wedge d\beta_i}{\alpha_i\beta_i} = 4\abs{\frac{r_i^2}{N_i}-1}^{1/2}\frac{dr_i\wedge dN_i}{\abs{N_i}\sqrt{\abs{N_i}}}.
\]
By \eqref{eqn: Weyl discriminant langlands} and Proposition~\ref{prop: Weyl discriminant us}, this is equal to
\[
4^n \prod_{i = 1}^n\abs{\frac{r_i^2}{N_i}-1}^{1/2}\times \frac{dr_1dr_2\cdots dr_ndN_2\cdots dN_n}{\sqrt{\abs{N_1\cdots N_n}}\abs{N_2\cdots N_n}}.
\]
This shows the change of the Weyl discriminant.
Recall that we require $z > 0$ to satisfy
\[
z^{2n}\abs{N_1 N_2\cdots N_n} = \sp^{\sk}
\]
to find the representative of the orbit.
In other words,
\[
z^{n} = \frac{\sp^{\sk/2}}{\sqrt{\abs{N_1\cdots N_n}}}.
\]

For $i = 1, 2, \cdots, n$ define a new set of variables
\[
\widetilde{r}_i = zr_i, \; \widetilde{N}_i = z^{2}N_i.
\]
Under this change of variable, we have that
\[
d\widetilde{r}_i = zdr_i, \; d\widetilde{N}_i = z^2dN_i.
\]
Thus,
\begin{align*}
\frac{dr_1dr_2\cdots dr_n dN_2 \cdots dN_n}{\sqrt{\abs {N_1\cdots N_n}}\abs{N_2\cdots N_n}} 
& = \frac{z^{-n}d\widetilde{r}_1d\widetilde{r}_2\cdots d\widetilde{r}_n\times z^{-2(n - 1)}d\widetilde{N}_2\cdots d\widetilde{N}_n}{z^{-n}\sp ^{\sk/2}z^{-2(n-1)}\abs{\widetilde{N}_2\cdots \widetilde{N}_n}} \\
& = \sp^{-\sk/2}\frac{d\widetilde{r}_1d\widetilde{r}_2\cdots d\widetilde{r}_n\times d\widetilde{N}_2\cdots d\widetilde{N}_n}{\abs{\widetilde{N}_2\cdots \widetilde{N}_n}}.
\end{align*}
Since
\[
\prod_{i = 1}^n\abs{\frac{\widetilde{r}_i^2}{\widetilde{N}_i}-1}^{1/2} = \prod_{i = 1}^n\abs{\frac{r_i^2}{N_i}-1}^{1/2},
\]
the change of variable formula implies
\begin{tiny}
\[
\int_{Z_+\setminus T} h(\CH(t)) \abs{D(t)}^2dt = \int_{S}h(z^{-1}\widetilde{r}_1, z^{-1}\widetilde{r}_2, \cdots , z^{-1}\widetilde{r}_n, z^{-2}\widetilde{N}_2, \cdots , z^{-2}\widetilde{N}_n)\times 4^n \prod_{i = 1}^n\abs{\frac{\widetilde{r}_i^2}{\widetilde{N}_i}-1}^{1/2}\times \sp^{-\sk/2}\frac{d\widetilde{r}_1d\widetilde{r}_2 \cdots d\widetilde{r}_nd\widetilde{N}_2 \cdots d\widetilde{N}_n}{\abs{\widetilde{N}_2 \cdots \widetilde{N}_n}}.
\]
\end{tiny}
However, because $z > 0$ is a real number, and the orbital integral is invariant under the action of $Z_+$ we have that the above expression is in turn equal to
\[
\int_{Z_+\setminus T}h(\widetilde{r}_1, \widetilde{r}_2, \cdots , \widetilde{r}_n, \widetilde{N}_2, \cdots , \widetilde{N}_n)\times 4^n \prod_{i = 1}^n\abs{\frac{\widetilde{r}_i^2}{\widetilde{N}_i}-1}^{1/2}\times \sp^{-\sk/2}\frac{d\widetilde{r}_1d\widetilde{r}_2 \cdots d\widetilde{r}_nd\widetilde{N}_2 \cdots d\widetilde{N}_n}{\abs{N_2 \cdots N_n}}
\]
where $\widetilde{N}_1$ is no longer a variable because of our assumptions.
More precisely, the equation
\[
z^{2n}\abs{N_1N_2 \cdots N_n} = \sp^{\sk},
\]
implies that
\[
\abs{\widetilde{N}_1\widetilde{N}_2 \cdots \widetilde{N}_n} = \sp^{\sk},
\]
and 
\[
\widetilde{N}_1 = \pm\frac{\sp^{\sk}}{\widetilde{N}_2 \cdots \widetilde{N}_n}.
\]
The sign $\pm$ is determined by the index $i_1$ (i.e., it gets determined by which of the two lines we are quotienting to).

Relabeling the variables as $r_1, \cdots , r_n, N_1, \cdots , N_n$ and requiring that $\abs{N_1 N_2 \cdots N_n} = \sp^{\sk}$ completes the proof.
\end{proof}

Our next goal is to obtain the integral of the interpolation functions $\theta^{\pm}$.
To do so we begin by studying the archimedean orbital integral $\int_{T\setminus G_{\infty}} f(g^{-1}tg)dg$.
As we have done before in \S\ref{germ expansion section for us}, we can write
\begin{align*}
\int_{T\setminus G_{\infty}} f(g^{-1}tg)dg &= \prod_{i = 1}^n \int_{T_i\setminus \GL(2, \R)} f(g^{-1}tg)dg \\
& = \prod_{i= 1}^n\left(g_{\nu_i,1}(r_i, N_i) + \frac{1}{2}\abs{\frac{r_i^2}{N_i}-1}^{-1/2} g_{\nu_i,2}(r_i, N_i)\right).
\end{align*}
In the notation of Proposition~\ref{prop: equality integrals}, let $h$ be the orbital integral of $f$ over the torus $T\setminus G_\infty$.
Then the first equation is what is referred to as $h(\CH(t))$ while the second is $h(r_1, \ldots, r_n, N_2,\ldots, N_n)$.
For each region $S$, the right hand side of the equation in Proposition~\ref{prop: equality integrals} is
\begin{tiny}
\[
\int_S \prod_{i= 1}^n\left(g_{\nu_i,1}(r_i, N_i) + \frac{1}{2}\abs{\frac{r_i^2}{N_i}-1}^{-1/2} g_{\nu_i,2}(r_i, N_i)\right)\times 4^n \prod_{i = 1}^n\abs{\frac{r_i^2}{N_i}-1}^{1/2}\times \sp^{-\sk/2}\frac{dr_1dr_2 \cdots dr_n dN_2 \cdots dN_n}{\abs{N_2\cdots N_n}}.
\]
\end{tiny}
We can rewrite this as
\[
\int_S 2^n \sp^{-\sk/2}\prod_{i= 1}^n \left(2\abs{\frac{r_i^2}{N_i}-1}^{1/2}g_{\nu_i,1}(r_i, N_i) + g_{\nu_i,2}(r_i, N_i)\right)\times\frac{dr_1dr_2\cdots dr_ndN_2\cdots dN_n}{\abs{N_2\ldots N_n}}.
\]

\begin{notation}
Redefine $\cO$ to be
\[
\cO(r_1, r_2,\cdots , r_n, N_2,\cdots , N_n) = \sp^{\frac{\sk}{2}}\prod_{i= 1}^n\left(2\abs{\frac{r_i^2}{N_i}-1}^{1/2}g_{\nu_i,1}(r_i, N_i) + g_{\nu_i,2}(r_i, N_i)\right).
\]
\end{notation}

The above expression then becomes
\[
\int_S 2^n \sp^{-\sk}\cO(r_1, r_2,\cdots , r_n, N_2,\cdots , N_n)\times\frac{dr_1dr_2\cdots dr_ndN_2\cdots dN_n}{\abs{N_2 \cdots N_n}}.
\]
At this stage the Weyl integration formula can be written as follows.

\begin{proposition}
\label{prop: Weyl integration rewrite}
Let $f = f_{\infty}$ be our test function at the archimedean places.
Then
\[
\int f(g)dg = \sp^{-\sk}\displaystyle\sum_{\pm} \int_{\R^{2n-1}}\cO(r_1, r_2,\cdots , r_n, N_2,\cdots , N_n)\times\frac{dr_1dr_2\cdots dr_ndN_2\cdots dN_n}{\abs{N_2 \cdots N_n}}.
\]
\end{proposition}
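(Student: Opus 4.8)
The plan is to combine the Weyl integration formula of Proposition~\ref{prop:weyl integration formula} with the change-of-variables identity of Proposition~\ref{prop: equality integrals}, and then to observe that the regions $S_J$ tile the parameter space exactly. First I would apply Proposition~\ref{prop:weyl integration formula} to $f=f_\infty\in C_c(Z_+\setminus G_\infty)$, obtaining
\[
\int_{Z_+\setminus G_\infty} f(g)\,dg = \frac{1}{2^n}\sum_T \int_{Z_+\setminus T}\left(\int_{T\setminus G_\infty} f(g^{-1}tg)\,dg\right)\abs{D(t)}^2\,dt,
\]
where $T$ runs over the $2^n$ isomorphism classes of maximal tori $T_{j_1}\times\cdots\times T_{j_n}$ with each $j_t\in\{\spl,\el\}$. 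For a fixed such $T$, the inner orbital integral factors over the archimedean places, and in the $(r,N)$-coordinates it equals $h(r_1,N_1,\ldots,r_n,N_n)=\prod_{i=1}^n\bigl(g_{\nu_i,1}(r_i,N_i)+\tfrac12\abs{\tfrac{r_i^2}{N_i}-1}^{-1/2}g_{\nu_i,2}(r_i,N_i)\bigr)$, which is precisely the type of function to which Proposition~\ref{prop: equality integrals} applies, with $S$ the region $S_J$ attached to $Z_+\setminus T$ under the bijection of Proposition~\ref{prop: bijective correspondence}.

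Next I would feed this $h$ into Proposition~\ref{prop: equality integrals}. Absorbing the factor $\prod_i\abs{\tfrac{r_i^2}{N_i}-1}^{1/2}$ into each local germ term rewrites $h\cdot\prod_i\abs{\tfrac{r_i^2}{N_i}-1}^{1/2}$ as $\tfrac{1}{2^n}\prod_i\bigl(2\abs{\tfrac{r_i^2}{N_i}-1}^{1/2}g_{\nu_i,1}+g_{\nu_i,2}\bigr)$, which equals $\tfrac{1}{2^n}\sp^{-\sk/2}\,\cO(r_1,\ldots,r_n,N_2,\ldots,N_n)$ by the (re)definition of $\cO$ recalled just above the statement. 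Hence the right-hand side of Proposition~\ref{prop: equality integrals} becomes $4^n\sp^{-\sk/2}\cdot\tfrac{1}{2^n}\sp^{-\sk/2}\int_{S_J}\cO\,\tfrac{dr_1\cdots dr_n\,dN_2\cdots dN_n}{\abs{N_2\cdots N_n}}=2^n\sp^{-\sk}\int_{S_J}\cO\,\tfrac{dr_1\cdots dr_n\,dN_2\cdots dN_n}{\abs{N_2\cdots N_n}}$, exactly as recorded in the display preceding the statement. Substituting this back and cancelling the $2^n$ against the Weyl prefactor $\tfrac{1}{2^n}$ gives
\[
\int_{Z_+\setminus G_\infty} f(g)\,dg = \sp^{-\sk}\sum_{J}\int_{S_J}\cO(r_1,\ldots,r_n,N_2,\ldots,N_n)\,\frac{dr_1\cdots dr_n\,dN_2\cdots dN_n}{\abs{N_2\cdots N_n}},
\]
the sum being over multi-indices $J=(j_1,\ldots,j_n)$ with all $j_t\neq 0$.

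It then remains to identify $\sum_J\int_{S_J}$ with $\sum_{\pm}\int_{\R^{2n-1}}$. Parametrising $Z_+\setminus G_\infty$ by $(r_1,\ldots,r_n,N_2,\ldots,N_n)\in\R^{2n-1}$ together with the sign $\pm$ of $N_1$ (which is then pinned down by $\abs{N_1N_2\cdots N_n}=\sp^{\sk}$), one has an identification with $\{\pm\}\times\R^{2n-1}$; by Lemma~\ref{lemma: regions are invariant} and the partition of \S\ref{subsec: A partition of R 2n-1}, away from the measure-zero locus where some $r_i^2=N_i$ or $N_i=0$, every point lies in exactly one region $S_J$ with all $j_t\neq 0$, so $\{\pm\}\times\R^{2n-1}=\bigsqcup_J S_J$ up to measure zero. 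Since $f_\infty$ has compactly supported orbital integrals, $\cO$ is integrable, and additivity of the integral over this partition yields $\sum_J\int_{S_J}\cO\,\tfrac{dr_1\cdots dr_n\,dN_2\cdots dN_n}{\abs{N_2\cdots N_n}}=\sum_{\pm}\int_{\R^{2n-1}}\cO\,\tfrac{dr_1\cdots dr_n\,dN_2\cdots dN_n}{\abs{N_2\cdots N_n}}$, which is the asserted identity. The only point needing care — and the main, if mild, obstacle — is this bookkeeping of the sign of $N_1$: a region $S_J$ with $j_1=\el$ meets only the $N_1>0$ sheet while one with $j_1=\spl$ meets both sheets, but counting shows the $2^n$ regions nonetheless cover each sheet exactly once where they overlap, so nothing is double counted or omitted.
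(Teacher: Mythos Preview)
Your proposal is correct and follows essentially the same approach as the paper: apply the Weyl integration formula of Proposition~\ref{prop:weyl integration formula}, convert each torus contribution to the $(r,N)$-coordinates via Proposition~\ref{prop: equality integrals} (absorbing the discriminant factors into the definition of $\cO$ and tracking the constants $4^n\cdot 2^{-n}\cdot\sp^{-\sk/2}\cdot\sp^{-\sk/2}=2^n\sp^{-\sk}$ against the $2^{-n}$ Weyl prefactor), and then use that the regions $S_J$ partition $\{\pm\}\times\R^{2n-1}$ up to measure zero. Your closing remark about the sign of $N_1$ is more bookkeeping than the paper supplies, but it is harmless; the paper simply asserts the partition and moves on.
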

\begin{proof}
 We have seen the Weyl integration formula is
\[
\int f(g) dg = \dfrac{1}{2^n}\displaystyle\sum_T \int_{Z_+\backslash T}\int_{T\setminus G_{\infty}} f(g^{-1}tg)dg\abs{D(t)}^2dt.
\]
By our previous work we can rewrite the right hand side as
\[
\sp^{-\sk} \sum_T \int_{S_T}\cO(r_1, r_2,\cdots , r_n, N_2,\cdots , N_n)\times\frac{dr_1dr_2\cdots dr_ndN_2\cdots dN_n}{\abs{N_2 \cdots N_n}}.
\]
However, we have explained as well that as $T$ varies on the maximal tori, $S_T$ varies on the regions which up to a set of measure zero give a partition of $\{\pm\}\times \mathbb{R}^{2n-1}$.
 Thus the above expression equals
\[
\sp^{-\sk}\displaystyle\sum_{\pm}\int_{\R^{2n-1}}\cO(r_1, r_2,\cdots , r_n, N_2,\cdots , N_n)\times\frac{dr_1dr_2\cdots dr_ndN_2\cdots dN_n}{\abs{N_2 \cdots N_n}}.
\]
Notice we can put $\{\pm\}\times \mathbb{R}^{2n-1}$ since the set of measure zero contribute $0$ to the integral.
\end{proof}

We have not yet expressed the integrand in terms of variables used to define the interpolation functions.
We perform another change of variables.
Recall that $\varepsilon$ is the generator of the ideal $\fp^\sk$ with embeddings (into the different real places) denoted by $\varepsilon_1, \cdots , \varepsilon_n$.
By the product formula
\[
\abs{\varepsilon_1}_1 \cdots \abs{\varepsilon_n}_n = \sp^{\sk}.
\]
Hence,
\[
\abs{N_1\varepsilon_1^{-1}}_1 \cdots \abs{N_n\varepsilon_n^{-1}}_n = 1.
\]
Taking logarithms we get
\[
\log\left(\abs{N_1\varepsilon_1^{-1}}_1\right) + \cdots + \log\left(\abs{N_n\varepsilon_n^{-1}}_n\right) = 0.
\]
By definition, we can find real numbers $y_1, \cdots , y_{n-1}$ such that for $j = 1, \cdots , n$ we have 
\[
N_j = \pm\beta_{1,j}^{y_1}\cdots \beta_{n-1, j}^{y_{n-1}}\varepsilon_j.
\]
This is possible because the norm product is $\sp^{\sk}$.

We can also write
\[
N_j = \pm\exp(y_1\log(\beta_{1,j}) + \cdots + y_{n-1}\log(\beta_{n-1,j}))\varepsilon_j.
\]
Using the chain rule we get
\[
dN_j =\pm N_j\left(\log(\beta_{1,j})dy_1 + \cdots + \log(\beta_{n-1,j})dy_{n-1}\right).
\]
Finally, taking the wedge product (and recalling that we are considering only the associated measure) we get
\[
dN_2 \cdots dN_{n} = \abs{N_2} \cdots \abs{N_{n}} R_K dy_1 \cdots dy_{n-1}.
\]
Here, we have used the fact that the wedge product associated to a change of variable brings out the determinant of the associated derivative linear map -- this shows up in the form of the regulator of the field $K$.
For each $2\leq j \leq n$, we see that $\log\left( \abs{N_j}\right)$ can be represented as a column vector, namely
\[
\log\left(\abs{N_j}\right) = \left(\log\left(\abs{\beta_{i,j}}\right)\right)_{1\leq i\leq n-1}(y_i)_{1\leq i \leq n-1}.
\]
The associated determinant is the one of the $n-1\times n-1$ matrix 
\[
\left(\log\left(\abs{\beta_{i,j}}\right)\right)_{2\leq j \leq n, 1\leq i \leq n-1}
\]
which is by definition the regulator $R_K$.
The other variables are related by
\[
x_1 = r_1, \cdots , x_n = r_n.
\]
\begin{remark}
 We must emphasize a very relevant point for our computations now.
 The above change of variables between the variables $N_1, N_2,\ldots, N_n$ and $y_1,\ldots, y_{n-1}$ subjected to the condition that 
 \[
\log\left(\abs{N_1\varepsilon_1^{-1}}_1\right) + \cdots + \log\left(\abs{N_n\varepsilon_n^{-1}}_n\right) = 0,
\]
was done with respect to the standard Lebesgue measures on both sides.
However, the measure we are using in the plane of $\mathbb{R}^n$ given by
\[
N_1 + \ldots + N_n = 0,
\]
is the one which assigns to the fundamental parallelogram of fundamental units measure $1$.
We have used this when we identified this lattice of units with $\mathbb{Z}^{n-1}$ and the hyperplane with $\mathbb{R}^{n-1}$.

Thus, we need to re-scale the measure by the volume of this fundamental parallelogram, with respect to the standard Lebesgue measure, so that these measures coincide, as we need to compare the interpolation functions on the same space of parameters.
This entails dividing by the regulator $R_K$.
Thus, the regulator that appeared in our computations, cancels due to this re-scaling.

We do not need to take into account the $\sqrt{n}$ that appears in the formulas of the volume of the fundamental parallelogram since that is taken into account by how the perpendicular direction changes measure (i.e. $(1, \ldots , 1)$ becomes unitary only by re-scaling by $\sqrt{n}$).
\end{remark}

The Weyl integration formula can be further rewritten as below.
\begin{theorem}
\label{thm: malors int fgdg}
Let $f\in C_c(Z_+\setminus G_{\infty})$ be the fixed test function at the archimedean places.
Then
\begin{equation}
\label{thm 9.9 eqn}
\int f(g) dg = \sp^{-\sk} \sum_{\pm}\int_{\R^{2n-1}}\theta^{\pm}(x_1,\cdots , x_n, y_1, \cdots , y_{n-1})dx_1 \cdots dx_n dy_1 \cdots dy_{n-1}.
\end{equation}
\end{theorem}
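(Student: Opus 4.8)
The key point is that essentially all of the analytic work is already in place: Proposition~\ref{prop: Weyl integration rewrite} expresses $\int f(g)\,dg$ as
\[
\sp^{-\sk}\sum_{\pm}\int_{\R^{2n-1}}\cO(r_1,\dots,r_n,N_2,\dots,N_n)\,\frac{dr_1\cdots dr_n\,dN_2\cdots dN_n}{\abs{N_2\cdots N_n}},
\]
so comparing with the asserted identity, it remains only to exhibit a measure-preserving change of variables carrying this integrand and measure to $\theta^{\pm}(x,y)\,dx\,dy$. The plan is therefore: (i) perform the change of variables $x_i=r_i$ together with the passage from $(N_2,\dots,N_n)$ to the unit-lattice coordinates $(y_1,\dots,y_{n-1})$; (ii) compute its Jacobian and check that the factor $\abs{N_2\cdots N_n}$ cancels the denominator while the regulator is absorbed into the normalization of the measure on the hyperplane $W$; (iii) recognize that after the substitution $\cO$ becomes $\theta^{\pm}$ by Definition~\ref{def:thetaplusminus}; and (iv) sum over the two signs.

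For step (i) I would use the parametrization set up in the paragraphs preceding the statement: since the chosen orbit representative satisfies $\abs{N_1\cdots N_n}=\sp^{\sk}=\abs{\varepsilon_1\cdots\varepsilon_n}$, the tuple $(N_1\varepsilon_1^{-1},\dots,N_n\varepsilon_n^{-1})$ has absolute norm one, hence after taking logarithms lies on the trace-zero hyperplane $W$, and the theory of fundamental units writes $N_j=\pm\,\beta_{1,j}^{y_1}\cdots\beta_{n-1,j}^{y_{n-1}}\varepsilon_j$ uniquely with $(y_1,\dots,y_{n-1})\in\R^{n-1}$, the sign being fixed by which of the two lines we have quotiented onto; $N_1$ is then determined by the product relation, so the genuinely free coordinates are $(x_1,\dots,x_n,y_1,\dots,y_{n-1})\in\R^{2n-1}$. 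For step (ii), differentiating $N_j=\pm\exp\!\big(\sum_t y_t\log\beta_{t,j}\big)\varepsilon_j$ gives $dN_j=\pm N_j\sum_t\log(\beta_{t,j})\,dy_t$, so $dN_2\wedge\cdots\wedge dN_n$ equals $\abs{N_2\cdots N_n}$ times the absolute value of the determinant of $\big(\log\abs{\beta_{t,j}}\big)_{2\le j\le n,\,1\le t\le n-1}$, i.e.\ $\abs{N_2\cdots N_n}\,R_K\,dy_1\cdots dy_{n-1}$; the factor $\abs{N_2\cdots N_n}$ cancels the denominator, and the leftover $R_K$ disappears upon switching to the measure on $W$ giving the fundamental parallelotope of $\beta_1,\dots,\beta_{n-1}$ volume $1$ (the normalization used throughout when identifying that lattice with $\Z^{n-1}$). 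Finally, comparing the definition of $\cO$ with Definition~\ref{def:thetaplusminus}, under $r_i=x_i$ and $N_i$ as above (scaled as in that definition) the integrand $\cO(r_1,\dots,r_n,N_2,\dots,N_n)$ becomes exactly $\theta^{\pm}(x_1,\dots,x_n,y_1,\dots,y_{n-1})$, and summing over the two signs produces \eqref{thm 9.9 eqn}.

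The main obstacle is the measure bookkeeping rather than anything structural: one must verify that the regulator produced by the Jacobian is cancelled \emph{exactly}, not merely up to a constant, by the rescaling implicit in the identification $W\cong\R^{n-1}$ (with the $\sqrt n$ appearing in volume-of-fundamental-domain formulas accounted for by the change of measure in the direction orthogonal to $W$, as in the remark preceding the theorem), and that the sign assignment threads consistently through all $2^n$ tori and through the two branches of $\theta^{\pm}$. The only remaining point is that the change of variables is legitimate on the actual domain of integration — that the regions $S_T$ cover $\{\pm\}\times\R^{2n-1}$ up to measure zero — but this has already been arranged by the bijective correspondence of Proposition~\ref{prop: bijective correspondence} together with the partition of $\R^{2n-1}$ into the regions $S_J$.
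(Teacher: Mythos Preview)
Your proposal is correct and follows essentially the same approach as the paper: start from Proposition~\ref{prop: Weyl integration rewrite}, perform the change of variables $(r_i,N_j)\mapsto(x_i,y_t)$ via $N_j=\pm\beta_{1,j}^{y_1}\cdots\beta_{n-1,j}^{y_{n-1}}\varepsilon_j$, observe that the Jacobian produces $\abs{N_2\cdots N_n}R_K$ with the regulator absorbed by the measure renormalization on $W$, and identify $\cO$ with $\theta^{\pm}$ via Definition~\ref{def:thetaplusminus}. You have in fact spelled out more of the measure bookkeeping (the role of $\sqrt{n}$ and the consistency of the sign assignment across tori) than the paper's own brief proof does.
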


\begin{proof}
Recall that we have shown
\[
dN_2 \cdots dN_{n} = \abs{N_2} \cdots \abs{N_{n}} R_K dy_1 \cdots dy_{n-1}.
\]
We also have $dr_1\ldots dr_n = dx_1\ldots dx_n$.
By definition,
\[
\cO(r_1, r_2, \cdots , r_n, N_2, \cdots , N_n) = \theta^{\pm}(x_1, \cdots , x_n, y_1, \cdots, y_{n-1}).
\]
Thus, we obtain the result by substituting the above in Proposition~\ref{prop: Weyl integration rewrite} and taking into account the cancellation of the regulator.
\end{proof}

\begin{remark}
We emphasize that in the above theorem, as well as in Proposition~\ref{prop: Weyl integration rewrite}, we are using that $f$ is our fixed archimedean function to guarantee the germ expansions and the interpolation functions $\theta^{\pm}$ are the same as the ones we have used in all the manipulation of the regular elliptic part.
\end{remark}
We have therefore established the equality from Theorem~\ref{main theorem section 9}\ref{prop: trivial rep value}, namely
\[
\Tr({\bf1}(f)) = {\sp}^{-\sk/2}\frac{1-{\sp}^{-(\sk+1)}}{1-{\sp}^{-1}} \sum_{\pm} \int_{\R^{2n -1}} \theta^{\pm}(x, y) dxdy.
\]

\subsection{The trace of the special representation at the archimedean places}

\subsubsection{The case of $\mathbb{R}$}
Langlands computed the character $\theta_{\pm, \pm}$ of the representation $\pi_{\pm, \pm}$ in \cite[pp.~630--631]{LanBE04}.
The values of the character of the representation are given by the regions in which the parabola and the axis divide the plane (notice that this makes sense, because characters only depend on conjugacy class, so they are genuine functions of the variables $(r, N)$).

\begin{lemma}
\label{lemma: char one real place}
The character $\theta_{+,+}$ satisfy the following properties
 \begin{enumerate}[label = \textup{(\roman*)}]
 \item The character is equal to $0$ in region $S_{\el}$ (that is, in the elliptic torus).
 \item In $S_{\spl}$, the character is $\abs{\frac{r^2}{N}-1}^{-1/2}$.
 More precisely, for $\pi_{+, +}$ the constant $C_{+,+}$ equals $1$ in regions $S_{\spl}$.
 \end{enumerate}
\end{lemma}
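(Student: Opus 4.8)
By construction $\pi_{+,+}$ is the normalized (unitary) parabolic induction to $\GL(2,\R)$ of the character $\chi_{+,+}$ of the diagonal torus $T_{\spl}$, and since both exponents are even this inducing character is trivial. The plan is to invoke the Weyl character formula for $\GL(2,\R)$-principal series: the distributional character of such an induced representation is represented, on the regular set, by a locally integrable function which (a) vanishes identically on every maximal torus of $\GL(2,\R)$ not conjugate to $T_{\spl}$, and (b) on the regular split locus equals, for $t = \diag(\alpha,\beta)$ with $\alpha\neq\beta$,
\[
\theta_{+,+}\bigl(\diag(\alpha,\beta)\bigr) = \frac{\chi_{+,+}(\alpha,\beta)+\chi_{+,+}(\beta,\alpha)}{|1-\alpha/\beta|^{1/2}\,|1-\beta/\alpha|^{1/2}},
\]
the two terms in the numerator corresponding to the nontrivial element of the Weyl group $W(\GL(2,\R),T_{\spl})$. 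Statement (i) is then immediate: the elliptic torus $T_{\el}$ is not conjugate to $T_{\spl}$, and under $\ch$ the image of $T_{\el}$ is exactly the interior of the parabola, i.e.\ the region $S_{\el}$, so $\theta_{+,+}\equiv 0$ there. (Strictly, $\theta_{+,+}$ is an \emph{a priori} distribution equal to the stated function on the regular set; this suffices since it will only ever be integrated against compactly supported data.)

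For (ii), since $\chi_{+,+}$ is trivial the numerator above is identically $2$, while the denominator equals $|\alpha-\beta|\,|\alpha\beta|^{-1/2}$, so $\theta_{+,+}(\diag(\alpha,\beta)) = 2|\alpha\beta|^{1/2}/|\alpha-\beta|$ on the regular split locus. I would then pass to the coordinates $(r,N)$ via $r=\alpha+\beta$ and $N=4\alpha\beta$, for which
\[
\frac{r^{2}}{N}-1 = \frac{(\alpha+\beta)^{2}-4\alpha\beta}{4\alpha\beta} = \frac{(\alpha-\beta)^{2}}{4\alpha\beta},
\qquad\text{hence}\qquad
\left|\frac{r^{2}}{N}-1\right|^{-1/2} = \frac{2|\alpha\beta|^{1/2}}{|\alpha-\beta|} = \theta_{+,+}.
\]
This identifies the character on the split locus as $\left|\tfrac{r^{2}}{N}-1\right|^{-1/2}$, i.e.\ the region-dependent constant is $C_{+,+}=1$. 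Finally I would observe that triviality of $\chi_{+,+}$ makes the numerator insensitive to the signs of $\alpha,\beta$, so the value $C_{+,+}=1$ holds uniformly across all four subregions $R_{1},\dots,R_{4}$ whose union is $S_{\spl}$ — this is precisely the point where the $(+,+)$ choice is used, as other sign choices $\chi_{\pm,\pm}$ would give a numerator $0$ or $\pm2$ that varies from subregion to subregion.

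The step that needs genuine care, rather than being a serious obstacle, is bookkeeping of normalizations: one must line up Langlands' conventions for $\chi_{i,j}$ and for the induction in \cite[pp.~630--631]{LanBE04} with the Weyl character formula as stated above, so that the factor $2$ in the numerator is not double-counted — it is exactly the $2$ that reappears when $\left|r^{2}/N-1\right|^{-1/2}$ is rewritten in terms of $|\alpha-\beta|$ and $|\alpha\beta|^{1/2}$. One should also confirm that split regular elements with $N<0$ (eigenvalues of opposite sign) are covered by the same formula, which they are, again because $\chi_{+,+}$ ignores signs. Since Langlands has already performed this character computation, the cleanest write-up is simply to quote his result and translate it into the $(r,N)$-coordinates and the $S_{\spl}\sqcup S_{\el}$ partition of \S\ref{subsec: A partition of R 2n-1}.
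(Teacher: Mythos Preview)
Your proposal is correct and is essentially the approach underlying the paper's proof, which simply cites Langlands \cite[pp.~630--631]{LanBE04} for this computation. You have written out explicitly what Langlands does there: the Harish-Chandra character of a principal series representation induced from $T_{\spl}$ vanishes on the non-conjugate elliptic torus, and on the split regular set is given by the Weyl-sum-over-denominator formula, which for the trivial inducing character $\chi_{+,+}$ collapses to $2|\alpha\beta|^{1/2}/|\alpha-\beta| = |r^2/N-1|^{-1/2}$ in the $(r,N)$-coordinates.
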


\begin{proof}
 See the discussion in \cite[pp.~630--631]{LanBE04}.
\end{proof}

Langlands and Altu\u{g} perform change of variables related to quotienting by $Z_+$, and reinterpret the above result in terms of the trace $r$.
We avoid doing this, as the action of $Z_+$ enters the picture slightly differently for us.

\subsubsection{Totally Real Fields}
\label{subsubsec: totally real field sec 9}
Starting with the representation $\chi_{+,+}\otimes\cdots\otimes\chi_{+,+}$ of $T_{\spl}\times \cdots T_{\spl}$ we can obtain the induced representation $\pi_{+,+}\otimes\cdots\otimes\pi_{+,+}$.
We now prove the analogue of Lemma~\ref{lemma: char one real place} in our setting.

\begin{proposition}
\label{prop: big CH lemma}
Let $\CH:= \ch \otimes \cdots \otimes \ch: G_{\infty} \longrightarrow \R^2\times\cdots\times \R^2$ be given by
\[
\CH(\gamma_1,\ldots, \gamma_n) = (r_1, N_1, \cdots, r_n, N_n).
\]
Let $\theta_{+,+ , \cdots , +,+}$ be the character of $\pi_{+,+}\otimes \cdots \otimes\pi_{+,+}$ and $C$ be the indicator function defined in Definition~\ref{indicator function}.
Then
\[
\CH(\theta_{+,+ , \cdots ,+,+}(r_1, N_1, \cdots, r_n, N_n)) = C(r_1, N_1, \cdots, r_n, N_n) \prod_{i = 1}^n \abs{\frac{r_i^2}{N_i}-1}^{-1/2}.
\]
\end{proposition}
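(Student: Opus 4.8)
The plan is to deduce Proposition~\ref{prop: big CH lemma} from the single-place computation of Lemma~\ref{lemma: char one real place} together with the elementary fact that the character of an external tensor product of representations is the product of the characters of the factors.

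First I would recall that $\pi_{+,+}\otimes\cdots\otimes\pi_{+,+}$ is, by construction, the external tensor product of $n$ copies of the unitarily induced representation $\pi_{+,+}$ of $\GL(2,\R)$, realized on the tensor product of the underlying spaces as a representation of $G_\infty=\GL(2,\R)\times\cdots\times\GL(2,\R)$. For such a representation the (distributional) character factors:
\[
\theta_{+,+,\cdots,+,+}(g_1,\ldots,g_n)=\prod_{i=1}^n\theta_{+,+}(g_i).
\]
Since each $\theta_{+,+}$ is a class function on $\GL(2,\R)$ and, away from the parabola $r^2=N$ and the axis $N=0$, descends through $\ch$, the right-hand side is a genuine function of the coordinates $(r_i,N_i)=\ch(g_i)$; this is exactly the identification of $\theta_{+,+,\cdots,+,+}$ with a function of $(r_1,N_1,\ldots,r_n,N_n)$ that is written in the statement via $\CH$.

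Next I would substitute the values from Lemma~\ref{lemma: char one real place}: for each index $i$, the factor $\theta_{+,+}(r_i,N_i)$ equals $0$ on $S_{\el}$ and $\abs{\frac{r_i^2}{N_i}-1}^{-1/2}$ on $S_{\spl}$ (on the measure-zero locus $S_0$ the value of the character plays no role in any of the integral identities in which this proposition is applied, so I would either adopt Langlands' convention there or simply discard $S_0$). Consequently a single factor lying in $S_{\el}$ forces the whole product to vanish, so $\theta_{+,+,\cdots,+,+}$ is supported on $S_{\spl}\times\cdots\times S_{\spl}$; by Definition~\ref{indicator function} this is exactly the set $\{C=1\}$, and on this set the product is $\prod_{i=1}^n\abs{\frac{r_i^2}{N_i}-1}^{-1/2}$. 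Assembling the two cases gives
\[
\theta_{+,+,\cdots,+,+}(r_1,N_1,\ldots,r_n,N_n)=C(r_1,N_1,\ldots,r_n,N_n)\prod_{i=1}^n\abs{\frac{r_i^2}{N_i}-1}^{-1/2},
\]
which is the claim.

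This is essentially bookkeeping, so I do not anticipate a genuine obstacle; the only points that need care are (a) applying the tensor-product character identity with the same unitary normalization of parabolic induction that Langlands uses, so that the constant $C_{+,+}$ of Lemma~\ref{lemma: char one real place} is indeed $1$ and no stray power of $2$ or $\pi$ is introduced, and (b) checking that the passage to the $(r,N)$-coordinates — and, downstream, to the $Z_+$-quotient — is compatible with the definition (and the abuse of notation) of the indicator function $C$ in Definition~\ref{indicator function}.
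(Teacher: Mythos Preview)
Your proposal is correct and follows essentially the same approach as the paper: factor the character of the external tensor product as the product of the single-place characters $\theta_{+,+}$, then apply Lemma~\ref{lemma: char one real place} at each place and observe that the product vanishes unless every coordinate lies in $S_{\spl}$, which is exactly the condition $C=1$. The paper's proof is terser but identical in substance.
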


\begin{proof}
We begin with the observation that the trace of a tensor product is the product of the traces.
Therefore, the character splits into the characters of each individual archimedean place.

Lemma~\ref{lemma: char one real place} asserts that at each place the character is a multiple of $\abs{\frac{r_i^2}{N_i}-1}^{-1/2}$.
This constant multiple depends on the region and takes a value in $\{0, 1\}$.
The result follows by multiplying over all real places.
\end{proof}

Our next objective is to find the value of $\Tr(\pi_{+,+}\otimes\cdots\otimes\pi_{+,+}(f))$.
The steps are essentially the same as in \S\ref{subsec: The Weyl integration formula and the trace of the Trivial Representation} except that in Proposition~\ref{prop: equality integrals} we substitute 
\[
h(r_1,\ldots , r_n, N_2,\ldots , N_n) = \theta_{+,+,\ldots , +,+}(r_1,\ldots , r_n, N_2,\ldots , N_n)\cO(r_1,\ldots , r_n, N_2,\ldots , N_n).
\]

\begin{proposition}
\label{prop: 9-17}
Let $f = f_{\infty}$ be the fixed test function at the archimedean places.
Then $\int f(g)\theta_{+,+ ,\ldots , +,+}(g)dg$ equals
\begin{tiny}
\[
 \sp^{-\sk} \sum_{\pm} \int_{\R^{2n-1}}C(r_1, N_1, \cdots, r_n, N_n) \prod_{i = 1}^n \abs{\frac{r_i^2}{N_i}-1}^{-1/2}\cO(r_1, r_2,\cdots , r_n, N_2,\cdots , N_n)\times\frac{dr_1dr_2\cdots dr_ndN_2\cdots dN_n}{\abs{N_2 \cdots N_n}}.
\]
\end{tiny}
\end{proposition}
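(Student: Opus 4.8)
The plan is to mirror, essentially verbatim, the computation of the archimedean trace of the trivial representation carried out in \S\ref{subsec: The Weyl integration formula and the trace of the Trivial Representation}, now carrying the character $\theta_{+,+,\ldots,+,+}$ along. First I would apply the Weyl integration formula (Proposition~\ref{prop:weyl integration formula}) to the compactly supported function $f\cdot\theta_{+,+,\ldots,+,+}$ on $Z_+\setminus G_\infty$. Since $\theta_{+,+,\ldots,+,+}$ is a class function it factors out of the inner integral over $T\setminus G_\infty$, so that integral becomes $\theta_{+,+,\ldots,+,+}(\CH(t))$ times the archimedean orbital integral of $f$ over the torus. Next, Proposition~\ref{prop: big CH lemma} identifies $\theta_{+,+,\ldots,+,+}(\CH(t))$ with $C(r_1,N_1,\ldots,r_n,N_n)\prod_{i=1}^n\abs{r_i^2/N_i-1}^{-1/2}$, and expanding the orbital integral by germ expansions as in \S\ref{germ expansion section for us} writes the integrand over each torus explicitly in the $(r,N)$-coordinates.

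Then I would invoke Proposition~\ref{prop: equality integrals} with $h$ taken to be the product of $C\prod_i\abs{r_i^2/N_i-1}^{-1/2}$ with the germ expansion $\prod_{i}\bigl(g_{\nu_i,1}(r_i,N_i)+\tfrac12\abs{r_i^2/N_i-1}^{-1/2}g_{\nu_i,2}(r_i,N_i)\bigr)$ of the orbital integral, and carry out the same change of variables as in the trivial case: first the $Z_+$-scaling, and then the passage from the coordinates $N_j$ to the unit coordinates $y_j$, with the regulator cancelling exactly as in the remark preceding Theorem~\ref{thm: malors int fgdg}. The bookkeeping of constants is the same as for the trivial representation: the factor $4^n$ supplied by Proposition~\ref{prop: equality integrals} and the $\tfrac{1}{2^n}$ from the Weyl formula combine with the powers $\sp^{\pm\sk/2}$ so that the factor $\prod_i\abs{r_i^2/N_i-1}^{1/2}$ of Proposition~\ref{prop: equality integrals} cancels exactly one power of the $\abs{r_i^2/N_i-1}^{-1/2}$ coming from the character, the remaining germ expansion reassembles into the redefined $\cO(r_1,\ldots,r_n,N_2,\ldots,N_n)$, and the overall constant collapses to $\sp^{-\sk}$. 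Finally, as $T$ runs over representatives of the isomorphism classes of maximal tori of $Z_+\setminus G_\infty$, the regions $S_T$ partition $\{\pm\}\times\R^{2n-1}$ up to a set of measure zero (Proposition~\ref{prop: bijective correspondence}), so $\sum_T\int_{S_T}$ becomes $\sum_{\pm}\int_{\R^{2n-1}}$, yielding the claimed formula.

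The only substantive difference from the trivial-representation computation is that the character $\theta_{+,+,\ldots,+,+}$ contributes both the indicator $C$ and one copy of $\prod_i\abs{r_i^2/N_i-1}^{-1/2}$, which is precisely why these two factors survive in the final answer instead of being absorbed into $\cO$. I do not expect a genuine obstacle here; the one point demanding care is the arithmetic of the constants — in particular keeping track that exactly one copy of $\prod_i\abs{r_i^2/N_i-1}^{-1/2}$ remains after it is partially cancelled against the Weyl-discriminant factor $\prod_i\abs{r_i^2/N_i-1}^{1/2}$ produced by Proposition~\ref{prop: equality integrals}.
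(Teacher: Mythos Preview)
Your approach is exactly the paper's: invoke Proposition~\ref{prop: big CH lemma} for the character and then rerun the computation of Proposition~\ref{prop: Weyl integration rewrite} with the extra factor $C\prod_i\abs{r_i^2/N_i-1}^{-1/2}$ carried along. Two small corrections: the passage to $(x,y)$-coordinates and the regulator cancellation belong to the \emph{next} step (Theorem~\ref{main theorem section 9}\ref{thm: Weyl Int special functions}), not to this proposition, whose conclusion is still in $(r,N)$-coordinates; and your bookkeeping description is slightly off --- the factor $\prod_i\abs{r_i^2/N_i-1}^{1/2}$ from Proposition~\ref{prop: equality integrals} does \emph{not} cancel the character's $\prod_i\abs{r_i^2/N_i-1}^{-1/2}$ but rather multiplies the germ expansion to produce $2^{-n}\sp^{-\sk/2}\cO$, while the character's factor simply remains intact (which is why both $\prod_i\abs{r_i^2/N_i-1}^{-1/2}$ and $\cO$ appear in the final formula).
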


\begin{proof}
Proposition~\ref{prop: big CH lemma} yields the expression $C(r_1, N_1, \cdots, r_n, N_n) \prod_{i = 1}^n \abs{\frac{r_i^2}{N_i}-1}^{-1/2}$.
The rest of the proof goes through as Proposition~\ref{prop: Weyl integration rewrite}.
\end{proof}

We now complete the proof of Theorem~\ref{main theorem section 9}\ref{thm: Weyl Int special functions}.
For this, we need to show that
\[
\int f(g)\theta_{+, +, \cdots,+, +}(g) dg = \sp^{-\sk}2^n \sum_{\pm}\int_{\R^{2n-1}}C(x, y)\theta^{\pm}(x, y)\mathcal{P}^{\pm}(x,y)dx dy.
\]
where $C(x, y)$ is the indicator function given by $C$ in the variables $x, y$.
Notice that there will be no regulator because of the same renormalization explained above.

\begin{proof}[Proof of Theorem~\ref{main theorem section 9}\ref{thm: Weyl Int special functions}]
To explain the appearance of $\mathcal{P}^{\pm}(x,y)$ recall that Proposition~\ref{prop: big CH lemma} introduces 
\[
\prod_{i = 1}^n \abs{\frac{r_i^2}{N_i}-1}^{-1/2}.
\]
Using Definition~\ref{def: P pm}, this product in the variables $(x, y)$ is precisely $2^n\mathcal{P}^{\pm}(x,y)$.
\end{proof}
\begin{remark}
Picking other choice of signs we get an analogous formula
\[
\int f(g)\theta_{\pm, \pm, \cdots, \pm, \pm}(g) dg = \sp^{-\sk}2^n \sum_{\pm}\int_{\R^{2n-1}}C_{\pm, \pm, \cdots, \pm, \pm}(x, y)\theta^{\pm}(x, y)\mathcal{P}^{\pm}(x,y)dx dy.
\]
Where $C_{\pm, \pm, \cdots, \pm, \pm}$ is a different step function.
This is the generalization of \cite[(30)]{LanBE04}.
\end{remark}

Finally we conclude with the main result.
 
\begin{theorem}
\label{THE THEOREM}
The following equality holds
\[
\overline{\Sigma_0}(f) = \Tr({\bf{1}}(f)) - 2\Tr(\xi_0(f)) +\eqref{eq:C(1)new}+\eqref{C2new}.
\]
\end{theorem}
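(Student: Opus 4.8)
The plan is to assemble the statement from the pieces already in hand. Recall that in Proposition~\ref{prop: computation of 23 + 27 +´25 + 29} we established
\[
\overline{\Sigma_0}(f) = \eqref{eq:triv res new}+\eqref{eq:spec residue2new}+\eqref{eq:C(1)new}+\eqref{C2new},
\]
so the task reduces to identifying \eqref{eq:triv res new} with $\Tr({\bf 1}(f))$ and \eqref{eq:spec residue2new} with $-2\Tr(\xi_0(f))$. The first identification is immediate from Theorem~\ref{main theorem section 9}\ref{prop: trivial rep value}, which states precisely that
\[
\Tr({\bf 1}(f)) = \sp^{-\sk/2}\frac{1-\sp^{-(\sk+1)}}{1-\sp^{-1}}\sum_{\pm}\int\int\theta^{\pm}(x,y)\,dy\,dx,
\]
and the right-hand side is verbatim the expression \eqref{eq:triv res new}.

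For the special representation, I would recall from \S\ref{subsec: The Weyl integration formula and the trace of the trivial representation} that the contribution of $\xi_0$ to the trace formula is $\frac{\sk+1}{4}\Tr(\pi_{+,+,\ldots,+,+}(f))$, and that at the non-archimedean places the test function contributes a factor $1$ away from $\fp$ (and the $\fp$-factor is already absorbed in the normalization), so that $\Tr(\xi_0(f)) = \frac{\sk+1}{4}\int f(g)\theta_{+,+,\ldots,+,+}(g)\,dg$. Theorem~\ref{main theorem section 9}\ref{thm: Weyl Int special functions} then gives
\[
\frac{\sk+1}{4}\int f(g)\theta_{+,+,\ldots,+,+}(g)\,dg = (\sk+1)\sp^{-\sk}2^{n-2}\sum_{\pm}\int_{\R^{2n-1}}C(x,y)\theta^{\pm}(x,y)\mathcal{P}^{\pm}(x,y)\,dx\,dy,
\]
and comparing with \eqref{eq:spec residue2new}, which equals $-2^{n-1}(\sk+1)\sp^{-\sk}\sum_{\pm}\int\int\theta^{\pm}(x,y)\mathcal{P}^{\pm}(x,y)C(x,y)\,dx\,dy$, one sees that \eqref{eq:spec residue2new} is exactly $-2$ times this quantity. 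Hence \eqref{eq:spec residue2new} $= -2\Tr(\xi_0(f))$, which is the content of the claim "\eqref{eq:spec residue2new} is $-1/2$ times \eqref{eq:spec residue2new}" — more precisely that the special-representation trace is $-\tfrac12$ of \eqref{eq:spec residue2new}.

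Substituting both identifications into Proposition~\ref{prop: computation of 23 + 27 +´25 + 29} yields
\[
\overline{\Sigma_0}(f) = \Tr({\bf 1}(f)) - 2\Tr(\xi_0(f)) + \eqref{eq:C(1)new} + \eqref{C2new},
\]
as desired. The only genuine content here is bookkeeping of constants — tracking the factors $2^{n}$, $2^{n-1}$, $2^{n-2}$, the $\sk+1$, and the sign — together with invoking that $\widetilde F$ is odd so that the two half-integer residues \eqref{eq:F(1/2)new} and \eqref{F(1/2)new2} cancelled in Proposition~\ref{prop: computation of 23 + 27 +´25 + 29}. I expect no real obstacle; the one point requiring care is making sure the normalization of the test function at $\fp$ (the factor $\sp^{-\sk/2}$ in Definition~\ref{defi: ffq(k) defi}) has been consistently propagated so that the powers of $\sp$ in \eqref{eq:triv res new} and \eqref{spectral1} match — but this consistency is exactly what Theorem~\ref{main theorem section 9}\ref{prop: trivial rep value} already records, so the identification is clean.
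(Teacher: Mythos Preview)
Your proof is correct and follows exactly the same route as the paper: invoke Proposition~\ref{prop: computation of 23 + 27 +´25 + 29} to write $\overline{\Sigma_0}(f)$ as the sum of four pieces, then use Theorem~\ref{main theorem section 9}\ref{prop: trivial rep value} and \ref{thm: Weyl Int special functions} to identify \eqref{eq:triv res new} with $\Tr(\mathbf{1}(f))$ and \eqref{eq:spec residue2new} with $-2\Tr(\xi_0(f))$. The paper's own proof is just these two citations without the expanded bookkeeping you provide; your additional unpacking of the constant $-2$ is accurate (modulo the harmless typo where you momentarily wrote ``\eqref{eq:spec residue2new} is $-1/2$ times \eqref{eq:spec residue2new}'' before correcting yourself).
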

\begin{proof}
By Proposition~\ref{prop: computation of 23 + 27 +´25 + 29} we have
\[
\overline{\Sigma_0}(f) = \eqref{eq:triv res new}+\eqref{eq:spec residue2new}+\eqref{eq:C(1)new}+\eqref{C2new}.
\]
Using Theorem~\ref{main theorem section 9} above, we get the result.
\end{proof}
\newpage

\appendix


\section{Kloosterman Sum Calculations}
\label{Appendix_A}
In this appendix, we compute the Kloosterman sums and provide a proof of Theorem~\ref{main theorem of the kloosterman section}.
For the calculations in this section, we assume that $K$ is a number field satisfying \ref{ass: class number} and \ref{ass: split}.
For the convenience of the reader, we repeat the statement below.
\begin{theorem*}
Let $(\fp,\sk)$ be the pair of prime ideal and corresponding positive integer satisfying \ref{ass: class number} fixed in \S\ref{sec: Definition of the test function and its implications}.
Write $\mathsf{p}=\Norm_{K}(\fp)$.
Let $u\in \cO_K^*$ be a unit.
For a complex parameter $z$ with $\Re(z) > 1$,
\[
\D(z,u)=4^n\frac{\zeta_K(2z)}{\zeta_K(z+1)}\cdot\frac{1-1/\sp^{z(\sk+1)}}{1-1/\sp^{z}}.
\]
Hence, $\D(z, u)$ admits an analytic continuation to a meromorphic function in the whole complex plane with poles at 
$z=0$ and $z=\frac12$.
\end{theorem*}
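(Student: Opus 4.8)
The plan is to write $\D(z,u)$ as a product over the primes $\fq$ of $\cO_K$ of local Dirichlet series $\D_\fq(z,u)$, and to evaluate each local factor by reducing it to a point-counting problem over a residue ring. Recall that $(\fp,\sk)$ is the pair of prime ideal and integer fixed in \S\ref{sec: Definition of the test function and its implications}, so that $4u\varepsilon$ has $\fp$-adic valuation $\sk$ and is a unit at every prime not dividing $2\fp$. The first step is localization: using the Fundamental Theorem of Arithmetic, the fact (Definition~\ref{defi: our symbol}) that $\binom{\mu^2-4u\varepsilon,\fd}{\fa}$ is a product of restricted Hilbert symbols over the primes dividing $\fa$, the locality of the congruence conditions from \S\ref{section: congruence condition}, and the Chinese Remainder Theorem decomposition of $\cO_K/4\fa\fd^2$, one factors $K_{\fa,\fd}(u)=\prod_\fq \widetilde K_{\fq^{\val_\fq(\fa)},\fq^{\val_\fq(\fd)}}(u)$; regrouping the double Dirichlet series accordingly (Lemma~\ref{lemma:kl1}) yields $\D(z,u)=\prod_\fq\D_\fq(z,u)$ with $\D_\fq(z,u)=\sum_{r,\upsilon\ge0}\widetilde K_{\fq^\upsilon,\fq^r}(u)\,\Norm_K(\fq)^{-r(2z+1)-\upsilon(z+1)}$. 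It then suffices to evaluate each $\D_\fq$.

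For $\fq\nmid 2\fp$ the restricted Hilbert symbol is just the quadratic residue character of $\cO_{K_\fq}^*$ and $4u\varepsilon$ is a $\fq$-adic unit, so $\widetilde K_{\fq^\upsilon,\fq^r}(u)$ is a quadratic character sum in $\mu$ which Hensel's Lemma reduces to the residue field $\cO_K/\fq$ (one counts the $\mu$ with $\fq^{2r}\mid\mu^2-4u\varepsilon$ and evaluates the attached quadratic character sums over $\cO_K/\fq$); summing the resulting geometric series gives $\D_\fq(z,u)=\frac{1-\Norm_K(\fq)^{-(z+1)}}{1-\Norm_K(\fq)^{-2z}}$, independent of $u$. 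For $\fq=\fp$ the same analysis applies, but now $4u\varepsilon$ has $\fp$-valuation $\sk$, and the additional values of $r$ for which $\fp^{2r}\mid\mu^2-4u\varepsilon$ is possible contribute the extra geometric factor $\sum_{j=0}^{\sk}\sp^{-jz}=\frac{1-\sp^{-z(\sk+1)}}{1-\sp^{-z}}$, so $\D_\fp(z,u)=\frac{1-\sp^{-(z+1)}}{1-\sp^{-2z}}\cdot\frac{1-\sp^{-z(\sk+1)}}{1-\sp^{-z}}$.

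The genuinely hard case is $\fq\mid 2$. Here the split hypothesis \ref{ass: split} identifies $K_\fq$ with $\Q_2$ (uniformizer corresponding to $2$), which keeps the explicit formulas for the restricted Hilbert symbol tractable; nevertheless it is no longer a residue character, and the congruence $\frac{\mu^2-4u\varepsilon}{\pi_\fq^{2r}}\equiv0,1\pmod{\pi_\fq^2}$ must be carried along throughout. Hensel's Lemma now applies only modulo $\pi_\fq^{2e_\fq+1}=\pi_\fq^3$ (cf.\ Lemma~\ref{periodicity hilbert symbols}\ref{case 2 q divides 2 rami eq}), and a careful enumeration of the resulting solution counts — deferred to the appendix — gives $\D_\fq(z,u)=4\cdot\frac{1-2^{-(z+1)}}{1-2^{-2z}}$, once more independent of $u$; the factor $4$, occurring once for each of the $n$ primes above $2$, is the source of the $4^n$. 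Multiplying all local factors — the contributions at $\fq\nmid 2\fp$, the corrected factor at $\fp$ (combined with the $\fq\mid2$ factor if $\fp\mid2$), and the $n$ factors at primes above $2$ — recombines to $4^n\prod_\fq\frac{1-\Norm_K(\fq)^{-(z+1)}}{1-\Norm_K(\fq)^{-2z}}\cdot\frac{1-\sp^{-z(\sk+1)}}{1-\sp^{-z}}=4^n\frac{\zeta_K(2z)}{\zeta_K(z+1)}\cdot\frac{1-\sp^{-z(\sk+1)}}{1-\sp^{-z}}$, which is the asserted identity; since every local factor is independent of $u$, so is $\D(z,u)$. The meromorphic continuation to $\C$ and the location of the poles then follow at once from this closed form together with the analytic continuation of $\zeta_K$, the pole at $z=\tfrac12$ coming from $\zeta_K(2z)$.

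The main obstacle is exactly the evaluation of the local factors at the primes above $2$: there the modified Hilbert symbol is not a quadratic residue symbol, the congruence modulo $4$ genuinely interacts with the summation, and only once the (long) enumeration is complete does one see that the dependence on the unit $u$ and on the choices inside the restricted Hilbert symbol disappears. It is precisely to keep this enumeration manageable that we impose \ref{ass: split}; for a prime above $2$ with ramification or inertia degree larger than $1$ the counting problem appears to require a genuinely different input, which is the reason the theorem is proved here only under that hypothesis.
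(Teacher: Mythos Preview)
Your proposal is correct and follows essentially the same route as the paper: factor $\D(z,u)$ into local Euler factors $\D_\fq(z,u)$ via Lemma~\ref{lemma:kl1}, then evaluate each local factor by case analysis according to whether $\fq$ divides $2$ and whether $\fq=\fp$ (Propositions~\ref{kloosc1}, \ref{kloosc3}, \ref{kloosc2}, \ref{kloosc4}, \ref{more kloosc}), with the hard enumeration at primes above $2$ deferred to the appendix and the split hypothesis~\ref{ass: split} invoked for exactly the reason you give. Your treatment of the overlap case $\fp\mid 2$ is a bit compressed compared to the paper, which splits it further according to the parity of $\sk$ (Propositions~\ref{kloosc4} and~\ref{more kloosc}), but the structure and all the key ideas match.
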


\begin{proof}
The assertion follows from Lemma~\ref{lemma:kl1} and Propositions~\ref{kloosc1},~\ref{kloosc3}, \ref{kloosc2},~\ref{kloosc4}, and~\ref{more kloosc} whose proofs will occupy the remainder of this section.
\end{proof}

The remainder of this section is dedicated to proving the aforementioned propositions.

\subsection{Factorization into local factors.}

At each prime ideal $\fq$ (which may or may not equal $\fp$), we fix a uniformizer $\pi_{\fq}$.
When $\fq\mid 2$, we assume that $\pi_{\fq}=2$.

\begin{proposition}
\label{chinese remainder theorem}
Let $(\fp,\sk)$ be the pair of prime ideal and corresponding positive integer satisfying \ref{ass: class number} fixed in \S\ref{sec: Definition of the test function and its implications}.
Let $\fq$ be any prime ideal of $\cO_K$ (possibly equal to $\fp$).
Fix a generator $\rho$ of $\fp^{h_\fp}$ and fix $\sk' = \sk/h_{\fp}$.
Let $\fa,\fd$ be ideals of $\cO_K$ and let $u$ be a unit.
An integer class $x$ modulo $4\fa\fd^{2}$ satisfies the congruence conditions for $K_{\fa,\fd}(u)$ if and only if for \emph{each} prime $\fq$ its restriction $x_{\fq}$ modulo $\fq^{\val_{\fq}(4\fa\fd^2)}$ satisfies the following conditions:
 \begin{itemize}
 \item If $\fq\nmid 2$, then 
\[
x_{\fq}^2\equiv 4u\rho^{\sk'}\pmod{\fq^{\val_{\fq}(\fd^2)}},
\]
\item If $\fq\mid 2$, then 
\[
x_{\fq}^2\equiv 4u\rho^{\sk'} \pmod{\fq^{\val_{\fq}(\fd^2)}},
\]
and 
\[
\frac{x_{\fq}^2 - 4u\rho^{\sk'}}{\pi_{\fq}}\equiv 0, 1 \pmod{4}.
\]
\end{itemize}
\end{proposition}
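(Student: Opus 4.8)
The plan is to obtain the asserted prime-by-prime description by combining the characterization of the congruence conditions from \S\ref{section: congruence condition} (a special case of \cite[Proposition~9]{malors21}) with the Chinese Remainder Theorem. Recall that, with $\varepsilon = \rho^{\sk'}$ as fixed in \S\ref{sec: Definition of the test function and its implications}, so that $4u\varepsilon = 4u\rho^{\sk'}$, the primed sum in the definition of $K_{\fa,\fd}(u)$ runs over those classes $x$ modulo $4\fa\fd^{2}$ with $\fd^{2}\mid x^{2}-4u\varepsilon$ and, for each prime $\fp_i\mid 2$, with
\[
\frac{x^{2}-4u\varepsilon}{\pi_{\fp_i}^{2\val_{\fp_i}(\fd)}}\equiv 0,1\pmod{\pi_{\fp_i}^{2}}\qquad\text{in }\cO_{K_{\fp_i}}.
\]
So it suffices to transport these conditions through the isomorphism $\cO_K/4\fa\fd^{2}\cong\prod_{\fq}\cO_K/\fq^{\,\val_\fq(4\fa\fd^{2})}$ (the product being over $\fq\mid 4\fa\fd^{2}$) and to read off the resulting conditions on the components $x_\fq$.

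First I would note that each of the conditions above is supported at a single prime, so a class $x$ satisfies all of them if and only if every component $x_\fq$ satisfies the corresponding local condition; the only point to verify is that each local condition is well defined on $\cO_K/\fq^{\val_\fq(4\fa\fd^{2})}$. For the divisibility this is immediate since $\val_\fq(\fd^{2})\le \val_\fq(4\fa\fd^{2})$, and it localizes to $x_\fq^{2}\equiv 4u\rho^{\sk'}\pmod{\fq^{\val_\fq(\fd^{2})}}$, which is vacuous unless $\fq\mid\fd$. For a prime $\fq\mid 2$ I would invoke \ref{ass: split}: since $2$ splits completely, $e_\fq=1$, $\val_\fq(4)=2$, and $K_\fq\cong\Q_2$ with $\pi_\fq=2$, so $\val_\fq(4\fa\fd^{2})=2+\val_\fq(\fa)+2\val_\fq(\fd)\ge 2+2\val_\fq(\fd)$; combined with the elementary observation that $x^{2}\bmod \fq^{m}$ depends only on $x\bmod \fq^{m}$, this shows that $\bigl(x_\fq^{2}-4u\rho^{\sk'}\bigr)/\pi_\fq^{2\val_\fq(\fd)}$ is well defined modulo $\pi_\fq^{2}=4$.

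What then remains is to rephrase the dyadic congruence in the normalized shape $\bigl(x_\fq^{2}-4u\rho^{\sk'}\bigr)/\pi_\fq\equiv 0,1\pmod 4$ stated in the proposition. I would carry this out by working in $K_\fq\cong\Q_2$, using the already-imposed divisibility $\fq^{2\val_\fq(\fd)}\mid x_\fq^{2}-4u\rho^{\sk'}$ to pass between the two normalizations via the factor $\pi_\fq^{2\val_\fq(\fd)-1}$, and checking that the two congruences cut out the same residue classes, treating the cases $\val_\fq(\fd)=0$ and $\val_\fq(\fd)\ge 1$ separately. I expect this reconciliation of the two forms of the dyadic congruence to be the main obstacle, as it is the only place where one must chase $\pi_\fq$-valuations of $x_\fq^{2}-4u\rho^{\sk'}$ carefully; everything else is a routine application of the Chinese Remainder Theorem on top of \cite[Proposition~9]{malors21}. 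Assembling the local conditions prime by prime then gives the statement.
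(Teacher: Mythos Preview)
Your approach via the Chinese Remainder Theorem is exactly the paper's proof; the paper simply records ``This is the Chinese Remainder Theorem'' and moves on. The parts of your write-up that localize the divisibility $\fd^2\mid x^2-4u\varepsilon$ and verify well-definedness on $\cO_K/\fq^{\val_\fq(4\fa\fd^2)}$ are correct and match the intended argument.

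The one place your plan would actually break down is the final ``reconciliation'' step for primes $\fq\mid 2$. You propose to show that the condition
\[
\frac{x_\fq^2-4u\rho^{\sk'}}{\pi_\fq^{\,2\val_\fq(\fd)}}\equiv 0,1\pmod{4}
\]
from \S\ref{section: congruence condition} is equivalent to the condition
\[
\frac{x_\fq^2-4u\rho^{\sk'}}{\pi_\fq}\equiv 0,1\pmod{4}
\]
printed in the proposition. These are \emph{not} equivalent. Already for $\val_\fq(\fd)=1$ (so $x_\fq^2-4u\rho^{\sk'}=4m$ with $m\in\Z_2$), the first condition reads $m\equiv 0,1\pmod 4$ while the second reads $2m\equiv 0,1\pmod 4$, i.e.\ $m$ even; the value $m=1$ satisfies the first but not the second, and $m=2$ satisfies the second but not the first. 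For $\val_\fq(\fd)=0$ the second expression is not even integral when $x_\fq$ is odd.

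What is going on is that the displayed denominator $\pi_\fq$ in the proposition is a misprint for $\pi_\fq^{\,2\val_\fq(\fd)}$. This is confirmed by the very next definition of $\widetilde{K}_{\fq^\upsilon,\fq^r}(u)$, where the dyadic congruence is written with denominator $\pi_\fq^{2r}$ and $r=\val_\fq(\fd)$, and by the remark that these local sums coincide with $K_{\fq^\upsilon,\fq^r}(u)$ at primes above $2$. With the denominator corrected, the dyadic condition is literally the local form of the condition from \S\ref{section: congruence condition}, and no reconciliation is needed; the proof then really is just CRT, as the paper says. So drop the reconciliation paragraph and instead note the typo.
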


\begin{proof}
This is the Chinese Remainder Theorem.
\end{proof}

Based on the above proposition we introduce the following local Kloosterman sum.

\begin{definition}
With notation as in Proposition~\ref{chinese remainder theorem},
For $\rho$ a generator of $\fp^{h_\fp}$ and non-negative integers $\upsilon,r$ define
\[
\widetilde{K}_{\fq^\upsilon, \fq^r}(u) := \begin{cases}
\displaystyle\sum_{\substack{\mu\mod \fq^{\upsilon+2r}\\
\mu^2\equiv 4u\rho^{\sk'}\mod \fq^{2r}}} \binom{\mu^2 - 4u\rho^{\sk'}, \fq^r}{\fq^\upsilon} \quad \text{ if } \fq\nmid 2\\
\displaystyle\sum_{\substack{\mu\mod \fq^{2+\upsilon+2r}\\\mu^2\equiv 4u\rho^{\sk'}\mod \fq^{2r}\\ \frac{\mu^2 - 4u\rho^{\sk'}}{\pi_{\fq}^{2r}}\equiv 0,1 \pmod{4} }} \binom{\mu^2 - 4u\rho^{\sk'}, \fq^r}{\fq^\upsilon} \quad \text{ if } \fq\mid 2.
\end{cases}
\]
\end{definition}

\begin{remark}
The definitions coincide at the primes above 2 with $K_{\fq^\upsilon, \fq^r}(u)$.
However, for the primes not above $2$, the definition is slightly different because the modulus in $K_{\fq^\upsilon, \fq^r}(u)$ is still $4\fq^{\upsilon + 2r}$, which cannot be recovered as a local object because of the factor $4$.
More precisely, when $\fq\nmid 2$ the corresponding local object arise without the factor of $4$ because it is a unit.
\end{remark}

\begin{proposition}\label{factoring kloosterman}
With notation as above, suppose that the ideals $\fa$ and $\fd$ have the following factorization
\[
\fa = \prod \fq^{\upsilon_{\fq}}, \quad \fd = \prod\fq^{r_{\fq}}.
\]
Then
\[
K_{\fa, \fd}(u) = \prod_{\fq} \widetilde{K}_{\fq^{\upsilon_{\fq}}, \fq^{r_{\fq}}}(u).
\]
\end{proposition}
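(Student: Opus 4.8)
The plan is to deduce the factorization from the Chinese Remainder Theorem, using that both the congruence conditions and the modified Hilbert symbol appearing in $K_{\fa,\fd}(u)$ are assembled from purely local data. First I would write $\cO_K/4\fa\fd^2\cong\prod_{\fq}\cO_K/\fq^{\val_\fq(4\fa\fd^2)}$ by CRT, identifying a residue class $\mu$ with the tuple $(\mu_\fq)_\fq$ of its local reductions. Only the primes dividing $\fa\fd$ together with the finitely many (indeed $n$, by \ref{ass: split}) primes above $2$ can contribute a factor different from $1$ on the right-hand side, so the product is effectively finite. By Proposition~\ref{chinese remainder theorem}, a class $\mu$ satisfies the congruence conditions encoded by the $'$ on the sum defining $K_{\fa,\fd}(u)$ if and only if each $\mu_\fq$ satisfies the local conditions listed there, so the restriction on the index set decouples across $\fq$; here I would use $\varepsilon=\rho^{\sk'}$ to match the quantity $4u\varepsilon$ with the $4u\rho^{\sk'}$ appearing in the definition of $\widetilde K_{\fq^{\upsilon},\fq^{r}}(u)$, and note that under $K_{\fp_i}\cong\Q_2$ with $\pi_{\fp_i}\mapsto 2$ the condition $\frac{\mu^2-4u\varepsilon}{\pi_{\fp_i}^{2\val_{\fp_i}(\fd)}}\equiv 0,1\pmod{\pi_{\fp_i}^2}$ is exactly the condition $\pmod 4$ appearing in $\widetilde K$.

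Next I would show that the summand decouples as well. By Definition~\ref{defi: our symbol},
\[
\binom{\mu^2-4u\varepsilon,\fd}{\fa}=\prod_{\fq}\binom{(\mu^2-4u\varepsilon)\pi_\fq^{-2r_\fq}}{\fq}_{rH}^{\upsilon_\fq},\qquad \upsilon_\fq:=\val_\fq(\fa),\ \ r_\fq:=\val_\fq(\fd),
\]
where the primes $\fq\nmid\fa$ contribute the trivial factor $1$. For each $\fq\mid\fa$, the corresponding factor depends on $\mu$ only through $\mu_\fq$: this is the local periodicity already used in the proof of Lemma~\ref{prop: periodicity 4ad2}, since $\mu\equiv\mu'\pmod{\fq^{\val_\fq(4\fa\fd^2)}}$ forces $(\mu^2-4u\varepsilon)\pi_\fq^{-2r_\fq}\equiv(\mu'^2-4u\varepsilon)\pi_\fq^{-2r_\fq}$ modulo $\fq$ (if $\fq\nmid 2$) or modulo $\fq^{2e_\fq+1}=\fq^{3}$ (if $\fq\mid 2$, where $e_\fq=1$ because $2$ splits completely and $\val_\fq(4)=2$, so $\val_\fq(4\fa\fd^2)-2r_\fq=2+\upsilon_\fq\ge 3$), after which one invokes Lemma~\ref{periodicity hilbert symbols}.

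Combining these two points, under the CRT identification the restricted sum defining $K_{\fa,\fd}(u)$ becomes
\[
\sum_{(\mu_\fq)_\fq\ \mathrm{admissible}}\ \prod_{\fq}\binom{(\mu_\fq^2-4u\varepsilon)\pi_\fq^{-2r_\fq}}{\fq}_{rH}^{\upsilon_\fq}
=\prod_{\fq}\ \sum_{\mu_\fq\ \mathrm{admissible}}\binom{(\mu_\fq^2-4u\varepsilon)\pi_\fq^{-2r_\fq}}{\fq}_{rH}^{\upsilon_\fq},
\]
and the inner sum over $\mu_\fq$ modulo $\fq^{\val_\fq(4\fa\fd^2)}$ — which equals $\fq^{\upsilon_\fq+2r_\fq}$ when $\fq\nmid 2$ and $\fq^{2+\upsilon_\fq+2r_\fq}$ when $\fq\mid 2$ — is precisely $\widetilde K_{\fq^{\upsilon_\fq},\fq^{r_\fq}}(u)$ by definition (note that for $\fq\mid\fa$ a single-prime-power argument of the modified Hilbert symbol collapses to $\binom{\cdot}{\fq}_{rH}^{\upsilon_\fq}$).

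I expect the only genuinely delicate point to be the bookkeeping of the extra factor $4$ in the modulus and its distribution among the primes above $2$; this is transparent here precisely because $2$ splits completely, so $\val_\fq(4)=2$ at each of the $n$ primes $\fq\mid 2$, which is also what makes the local moduli line up with the two branches of the definition of $\widetilde K_{\fq^\upsilon,\fq^r}(u)$. Everything else is a routine unwinding of the definitions of $K_{\fa,\fd}(u)$, of the local sums $\widetilde K_{\fq^\upsilon,\fq^r}(u)$, and of the modified Hilbert symbol, together with the periodicity statements already proved.
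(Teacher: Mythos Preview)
Your proposal is correct and follows exactly the approach the paper intends; the paper's own proof is the single sentence ``This is a restatement of Proposition~\ref{chinese remainder theorem},'' and you have simply spelled out the CRT bookkeeping that this sentence packages. Your handling of the symbol's local periodicity and of the distribution of the factor $4$ among the primes above $2$ is precisely the content the paper leaves to the reader.
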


\begin{proof}
This is a restatement of Proposition~\ref{chinese remainder theorem}.
\end{proof}

With this at hand, we make the following definition.

\begin{definition}
Let $\fq$ be a prime ideal of $\cO_K$ and let $z$ be a complex parameter with $\Re(z)>1$.
Define
\[
\D_{\fq}(z,u):= 
\sum_{r=0}^{\infty}\frac{1}{\Norm_K(\fq)^{r(2z+1)}} \sum_{\upsilon=0}^{\infty}\frac{\widetilde{K}_{\fq^{\upsilon}, \fq^{r}}(u)}{\Norm_K(\fq)^{\upsilon(z+1)}}.
\]
\end{definition}

We now record some useful properties of $\D(z,u)$ and $\D_{\fq}(z,u)$ which we will be used without further mention.

\begin{proposition}
\label{abs conv}
Let $\fq$ be any prime ideal of $\cO_K$.
The series defining $\D(z, u)$ and $\D_{\fq}(z, u)$ converge absolutely and uniformly in compact subsets of the right half plane $\Re(z) > 1$.
\end{proposition}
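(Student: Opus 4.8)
The plan is to bound the Kloosterman-type sums by the crudest possible estimate and then apply the Weierstrass $M$-test. The only input needed is that the modified Hilbert symbol takes values in $\{0,\pm1\}$, so that $|K_{\fa,\fd}(u)|$ is at most the number of residue classes $\mu\bmod 4\fa\fd^2$ appearing in the sum; a fortiori
\[
|K_{\fa,\fd}(u)|\;\le\;\Norm_K(4\fa\fd^2)\;=\;4^{n}\,\Norm_K(\fa)\,\Norm_K(\fd)^2,
\]
where $n=[K:\Q]$, and similarly $|\widetilde K_{\fq^{\upsilon},\fq^{r}}(u)|\le \Norm_K(\fq)^{2}\Norm_K(\fq)^{\upsilon+2r}$ for every prime $\fq$ (the extra factor $\Norm_K(\fq)^2$ being a harmless overcount that covers both the case $\fq\nmid 2$, where the modulus is $\fq^{\upsilon+2r}$, and the case $\fq\mid 2$, where it is $\fq^{2+\upsilon+2r}$).

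First I would treat $\D(z,u)$. Fix a compact set $S\subset\{\Re(z)>1\}$ and put $\sigma_0=\min_{z\in S}\Re(z)>1$. For every $z\in S$ and every pair of ideals $\fa,\fd$ one has the termwise bound
\[
\left|\frac{K_{\fa,\fd}(u)}{\Norm_K(\fd)^{2z+1}\Norm_K(\fa)^{z+1}}\right|
\;\le\;4^{n}\,\Norm_K(\fd)^{\,1-2\sigma_0}\,\Norm_K(\fa)^{-\sigma_0},
\]
and the double series of majorants sums to $4^{n}\,\zeta_K(2\sigma_0-1)\,\zeta_K(\sigma_0)$, which is finite because $\sigma_0>1$ forces both $2\sigma_0-1>1$ and $\sigma_0>1$. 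By the $M$-test the series defining $\D(z,u)$ converges absolutely and uniformly on $S$, and since $S$ was arbitrary we are done. For $\D_{\fq}(z,u)$ the argument is identical, now with geometric series: on the same compact $S$,
\[
\sum_{r\ge0}\sum_{\upsilon\ge0}\frac{|\widetilde K_{\fq^{\upsilon},\fq^{r}}(u)|}{\Norm_K(\fq)^{r(2z+1)}\Norm_K(\fq)^{\upsilon(z+1)}}
\;\le\;\Norm_K(\fq)^{2}\Bigl(\sum_{r\ge0}\Norm_K(\fq)^{r(1-2\sigma_0)}\Bigr)\Bigl(\sum_{\upsilon\ge0}\Norm_K(\fq)^{-\upsilon\sigma_0}\Bigr),
\]
a product of two convergent geometric series (both ratios have absolute value $<1$ as soon as $\sigma_0>1/2$), so again the $M$-test gives absolute and uniform convergence on $S$. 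In passing this even shows $\D_{\fq}(z,u)$ converges on the larger half plane $\Re(z)>1/2$, which is consistent with the Euler-type factorization $\D(z,u)=\prod_{\fq}\D_{\fq}(z,u)$ from Proposition~\ref{factoring kloosterman}.

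This is a routine estimate, so I do not expect a genuine obstacle; the single point worth flagging is \emph{why} the trivial count is already enough. The modulus $4\fa\fd^2$ carries the square $\fd^2$, but this is exactly compensated by the exponent $2z+1$ attached to $\Norm_K(\fd)$ in the definition of $\D(z,u)$ — indeed, that normalization was chosen precisely so that the sum over $\fd$ behaves like $\zeta_K(2z-1)$ rather than diverging — while the $4^{n}$ coming from the prime $2$ (split under \ref{ass: split}, or from the primes above $2$ in general) contributes only a bounded constant. In particular no cancellation in the Kloosterman sums is needed here; cancellation, and the attendant arithmetic, only enters when one wants the exact evaluation and the meromorphic continuation of Theorem~\ref{main theorem of the kloosterman section}.
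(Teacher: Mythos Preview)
Your argument is correct and matches the paper's proof essentially line for line: the same trivial bound $|K_{\fa,\fd}(u)|\le 4^n\Norm_K(\fa)\Norm_K(\fd)^2$ from counting residues, the same reduction to $4^n\zeta_K(2\sigma-1)\zeta_K(\sigma)$, and the same observation that $\sigma>1$ forces both exponents past the abscissa of convergence. Your treatment is in fact slightly more explicit about the $M$-test and the uniform bound on compacta, and your remark that the local factor $\D_{\fq}$ already converges for $\Re(z)>1/2$ is a nice bonus not stated in the paper.
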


\begin{proof}
For ideals $\fa$ and $\fd$ we have the bound
\[
\abs{K_{\fa, \fd}(u)} \le 4^n\Norm_K(\fa)\Norm_K(\fd)^2,
\]
where $n=[K:\Q]$.
Writing $\Norm_{K}(\fq) = \sq$, we can analogously show that

\[
\abs{\widetilde{K}_{\fq^\upsilon, \fq^r}(u)} \leq \begin{cases}
 \sq^{\upsilon + 2r} &\text{ if } \fq\nmid 2\\
 \sq^{2 + \upsilon + 2r} & \text{ if } \fq\mid 2.
\end{cases}
\]
Writing $z=\sigma + it$, we have
\begin{align*}
 \sum_{\fd}\frac{1}{\Norm_K(\fd)^{2z+1}} \sum_{\fa}\frac{\abs{K_{\fa,\fd}(u)}}{\Norm_K(\fa)^{z+1}}
 &\le \sum_{\fd}\frac{1}{\Norm_K(\fd)^{2\sigma+1}} \sum_{\fa}\frac{4^n\Norm_K(\fa)\Norm_K(\fd)^2}{\Norm_K(\fa)^{\sigma+1}}\\
 &\le 4^n\sum_{\fd}\frac{1}{\Norm_K(\fd)^{2\sigma-1}} \sum_{\fa}\frac{1}{\Norm_K(\fa)^{\sigma}}\\
 &= 4^n\zeta_K(2\sigma-1)\zeta_K(\sigma).
\end{align*}
Notice that for $\sigma > 1$ we have $2\sigma - 1 > 1$, so the bound implies the absolute convergence and uniformity in compact sets.
The result for $\D_{\fq}(z, u)$ is completely analogous.
\end{proof}

\begin{lemma}
\label{lemma:kl1}
The following local Euler decomposition holds for a complex parameter $z$ with $\Re(z) > 1$, i.e.,
\[
\D(z,u)= \prod_{\fq}\D_{\fq}(z,u).
\] 
Further, the convergence is absolute and uniform in compact sets.
\end{lemma}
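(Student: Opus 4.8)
\textbf{Proof proposal for Lemma~\ref{lemma:kl1}.}

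The plan is to deduce the Euler factorization $\D(z,u)=\prod_{\fq}\D_{\fq}(z,u)$ from the multiplicativity of the Kloosterman-type sums established in Proposition~\ref{factoring kloosterman}, together with the absolute convergence from Proposition~\ref{abs conv}, which is what legitimizes all rearrangements. First I would fix $z$ with $\Re(z)>1$ and recall that every nonzero ideal $\fa$ (resp.\ $\fd$) of $\cO_K$ factors uniquely as $\fa=\prod_{\fq}\fq^{\upsilon_{\fq}}$ (resp.\ $\fd=\prod_{\fq}\fq^{r_{\fq}}$) with almost all exponents zero, by unique factorization of ideals. Under this bijection between pairs of ideals and finitely-supported families of pairs of exponents, the norm is completely multiplicative, so
\[
\frac{1}{\Norm_K(\fd)^{2z+1}}\cdot\frac{1}{\Norm_K(\fa)^{z+1}}
=\prod_{\fq}\frac{1}{\Norm_K(\fq)^{r_{\fq}(2z+1)}}\cdot\frac{1}{\Norm_K(\fq)^{\upsilon_{\fq}(z+1)}},
\]
and by Proposition~\ref{factoring kloosterman}, $K_{\fa,\fd}(u)=\prod_{\fq}\widetilde K_{\fq^{\upsilon_{\fq}},\fq^{r_{\fq}}}(u)$. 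Hence each summand of $\D(z,u)$ is the product over $\fq$ of the corresponding local summands.

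Next I would invoke the standard principle that an absolutely convergent double (here: multi-indexed) series indexed by finitely-supported families $(\upsilon_{\fq},r_{\fq})_{\fq}$ whose general term factors as a product of local terms may be reorganized as the product of the local series, \emph{provided} each local series converges absolutely and the resulting product converges. Concretely, one checks that $\prod_{\fq}\bigl(\sum_{r,\upsilon}|\widetilde K_{\fq^{\upsilon},\fq^{r}}(u)|\Norm_K(\fq)^{-r(2\sigma+1)-\upsilon(\sigma+1)}\bigr)$ is finite: using the bounds $|\widetilde K_{\fq^{\upsilon},\fq^{r}}(u)|\le c_{\fq}\,\sq^{\upsilon+2r}$ with $c_{\fq}=1$ for $\fq\nmid 2$ and $c_{\fq}=\sq^2$ for $\fq\mid 2$ (from the proof of Proposition~\ref{abs conv}), the local sum is dominated by $c_{\fq}(1-\sq^{1-2\sigma})^{-1}(1-\sq^{-\sigma})^{-1}$, whose product over all $\fq$ converges for $\sigma>1$ since it is bounded by a constant times $\zeta_K(2\sigma-1)\zeta_K(\sigma)$ — the finitely many extra factors from $\fq\mid 2$ contribute only a bounded constant. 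This justifies the interchange of summation and (infinite) product, giving $\D(z,u)=\prod_{\fq}\D_{\fq}(z,u)$ as an identity of absolutely convergent expressions. Uniform convergence on compact subsets of $\{\Re(z)>1\}$ follows because on such a set $\sigma$ is bounded below by some $\sigma_0>1$, and the majorant above is then uniform.

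I do not expect a serious obstacle here: the lemma is essentially a bookkeeping consequence of unique factorization of ideals, the multiplicativity statement already proved in Proposition~\ref{factoring kloosterman}, and the absolute convergence already proved in Proposition~\ref{abs conv}. The only point requiring a modicum of care is the rigorous justification of passing from an absolutely convergent sum of products to a product of sums over infinitely many primes — i.e.\ verifying that the product of the local majorants converges — which is handled by the $\zeta_K(2\sigma-1)\zeta_K(\sigma)$ bound noted above. Everything else is routine.
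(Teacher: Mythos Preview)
Your proposal is correct and follows exactly the paper's approach: the paper's proof simply cites unique factorization of ideals together with Proposition~\ref{factoring kloosterman} for the multiplicativity, and Proposition~\ref{abs conv} for the absolute and uniform convergence. You have spelled out the bookkeeping in more detail than the paper does, but the underlying argument is the same.
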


\begin{proof}
This follows from uniqueness of factorization into prime ideals of $\cO_K$ and Proposition~\ref{factoring kloosterman}.
By Lemma~\ref{abs conv} we have the convergence of the product absolutely and uniformly in compact sets.
\end{proof}

\subsection{The primes not above $2$}

Using the definition of the modified Hilbert symbol we have
\[
 \widetilde{K}_{\fq^\upsilon, \fq^r}(u) 
 = \sum_{\substack{\mu\mod \fq^{\upsilon+2r}\\\mu^2\equiv 4u\rho^{\sk'}\mod \fq^{2r}}} \binom{\mu^2 - 4u\rho^{\sk'}, \fq^r}{\fq^\upsilon}
 = \sum_{\substack{\mu\mod \fq^{\upsilon+2r}\\\mu^2\equiv 4u\rho^{\sk'}\mod \fq^{2r}}} \binom{(\mu^2 - 4u\rho^{\sk'})\pi_{\fq}^{-2r}}{\fq}_{rH}^\upsilon.
\]

We quote the following facts, which will be used repeatedly in all the subsequent computations.
\begin{enumerate}[label = (\textbf{Fact}~\arabic*) ]
\item
\label{fact1}
When $\upsilon = 0$, this is a sum of 1's, so the value is the number of terms in the sum.
\item
\label{fact2}
When $\upsilon\neq 0$, the only thing that matters from $\upsilon$ is its parity.
\end{enumerate} 

The following lemma is proven over $\Q$ in \cite{LanBE04}.
We provide a proof in our case.
\begin{lemma}
\label{Langlands lemma}
Let $\fq\nmid 2$ and let $m$ be any rational integer relatively prime to $\fq$.
Then
\[
\sum_{x\pmod{\fq}}\binom{x^2 - m}{\fq} = -1.
\]
\end{lemma}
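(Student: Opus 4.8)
The plan is to reduce the statement to a classical Gauss--Jacobi character sum over the residue field $\mathbb{F}_\fq := \cO_K/\fq$, which has odd cardinality $\sq = \Norm_K(\fq)$ since $\fq\nmid 2$. First I would invoke the Remark following Definition~\ref{defi: restricted Hilbert}: for $\fq\nmid 2$ the restricted Hilbert symbol $\binom{\cdot}{\fq}_{rH}$ agrees on $\cO_{K_\fq}^*$ with the quadratic residue character and is independent of the uniformizer, while by Hensel's Lemma an element of $\cO_{K_\fq}^*$ is a square if and only if its reduction mod $\fq$ is a nonzero square in $\mathbb{F}_\fq$. Combined with the convention that the symbol vanishes on non-units, this identifies $\binom{x^2-m}{\fq}$ with $\chi(\overline{x^2-\bar m})$, where $\chi$ is the Legendre symbol of $\mathbb{F}_\fq$ (with $\chi(0)=0$) and $\bar m\neq 0$ is the image of $m$, which is nonzero precisely because $\fq$ is prime to $m$. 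Since $\chi$ factors through $\mathbb{F}_\fq$, the sum over $x\pmod{\fq}$ becomes an honest sum over the finite field, $\sum_{x\in\mathbb{F}_\fq}\chi(x^2-\bar m)$.

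Next I would evaluate this finite-field sum by the standard two-step counting argument. Writing $N(s):=\#\{x\in\mathbb{F}_\fq : x^2=s\}$, one checks $N(s)=1+\chi(s)$ for \emph{every} $s\in\mathbb{F}_\fq$ (including $s=0$, under $\chi(0)=0$), whence
\[
\sum_{x\in\mathbb{F}_\fq}\chi(x^2-\bar m)=\sum_{t\in\mathbb{F}_\fq}\chi(t)\,N(t+\bar m)=\sum_{t\in\mathbb{F}_\fq}\chi(t)+\sum_{t\in\mathbb{F}_\fq}\chi(t)\chi(t+\bar m).
\]
The first sum vanishes because $\chi$ is a nontrivial character of the cyclic group $\mathbb{F}_\fq^\times$. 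In the second sum the $t=0$ term is $0$, and for $t\neq 0$ I would factor $\chi(t)\chi(t+\bar m)=\chi(t^2)\chi(1+\bar m t^{-1})=\chi(1+\bar m t^{-1})$; as $t$ ranges over $\mathbb{F}_\fq^\times$ the quantity $1+\bar m t^{-1}$ ranges over $\mathbb{F}_\fq\setminus\{1\}$, so the second sum equals $\sum_{u\neq 1}\chi(u)=-\chi(1)=-1$. Adding the two contributions gives $-1$, as claimed.

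I expect no serious obstacle here: this is the textbook evaluation of $\sum_x\left(\tfrac{x^2-m}{\fq}\right)$, and the only mild bookkeeping is keeping track of the zero-convention of the restricted Hilbert symbol and verifying cleanly that it descends to the Legendre symbol on $\mathbb{F}_\fq$ (so that the sum over residue classes mod $\fq$ is genuinely a sum over the residue field). Both points are handled by the Remark after Definition~\ref{defi: restricted Hilbert} and by Hensel's Lemma, after which the computation is purely formal.
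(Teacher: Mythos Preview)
Your proof is correct. Both arguments begin from the same observation that, for $\fq\nmid 2$, the restricted Hilbert symbol reduces to the Legendre character on $\mathbb{F}_\fq$ and that $\#\{y:y^2=s\}=1+\chi(s)$, but they finish differently. The paper interprets $\sum_x\bigl(\chi(x^2-m)+1\bigr)$ as the number of $\mathbb{F}_\fq$-points on the affine conic $x^2-y^2=m$, which it counts directly via the parametrization $(x,y)=\bigl(\tfrac{a+a^{-1}m}{2},\tfrac{a-a^{-1}m}{2}\bigr)$ for $a\in\mathbb{F}_\fq^\times$, yielding $\sq-1$ points and hence the value $-1$. You instead push the computation through a Jacobi-sum manipulation, reducing to $\sum_{t\neq 0}\chi(t)\chi(t+\bar m)$ and evaluating it by the substitution $u=1+\bar m t^{-1}$. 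The paper's route is marginally shorter and more geometric; yours is the textbook character-sum argument and has the minor advantage that it never appeals to division by $2$ in $\mathbb{F}_\fq$ (though this is harmless since $\fq\nmid 2$). Either is perfectly adequate here.
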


\begin{proof}
The number of $y$ such that $x^2 - m = y^2$ is given by $\binom{x^2 - m}{\fq} + 1$.
Hence, the number of solutions modulo $\fq$ to the curve $x^2 - m \equiv y^2 \pmod{\fq}$ is
\[
\sum_{x\pmod{\fq}}\left(\binom{x^2 - m}{\fq} + 1\right).
\]
On the other hand, factoring the curve as
\[
(x -y)(x + y) = m,
\]
we can parameterize all solutions as $x = \frac{a + a^{-1}m}{2}$ and $y = \frac{a - a^{-1}m}{2}$ for some $a$ which is invertible modulo $\fq$.
Hence, there are $\Norm_K(\fq) - 1$ solutions.
The result follows.
\end{proof}

\subsubsection{Case \texorpdfstring{$\fq\nmid 2\rho$}{}}
We begin with the computations of $\widetilde{K}_{\fq^\upsilon, \fq^r}(u)$.

\begin{lemma}
\label{q odd q not rho Kvals}
Let $(\fp,\sk)$ be the pair of prime ideal and corresponding positive integer satisfying \ref{ass: class number} fixed in \S\ref{sec: Definition of the test function and its implications}.
Set ${\sk'} = \sk/h_{\fp}$ and denote by $\rho$ the generator of $\fp^{h_{\fp}}$.
Let $\fq$ be a prime ideal of $\cO_K$ such that $\sq = \Norm_K(\fq)$ and $\fq\nmid 2\rho$, i.e. $\fq$ is coprime to $2$ and $\fp$.
Then the values of $\widetilde{K}_{\fq^\upsilon, \fq^r}(u)$ are given as follows:
\begin{enumerate}[label = \textup{(\roman*)}]
\item \label{case 1 q not divide rho} Case $\upsilon= 0, r = 0$: 
 $\widetilde{K}_{1, 1}(u) = 1$
\item \label{case 2 q not divide rho} Case $\upsilon= 0, r > 0 $:
 $\widetilde{K}_{1, \fq^r}(u) = 1 + \binom{u\rho^{\sk'}}{\fq}$
\item \label{case 3 q not divide rho} Case $\upsilon> 0$ even , $r = 0$: $\widetilde{K}_{\fq^\upsilon, 1}(u) = \sq^{\upsilon}-\sq^{\upsilon-1}\left(1+\binom{u\rho^{\sk'}}{\fq}\right)$
\item \label{case 4 q not divide rho} Case $\upsilon> 0$ odd , $r = 0$: $\widetilde{K}_{\fq^\upsilon, 1}(u) = -\sq^{\upsilon- 1}$
\item \label{case 5 q not divide rho} Case $\upsilon> 0$ even , $r > 0$: $\widetilde{K}_{\fq^\upsilon, \fq^r}(u) = \sq^{\upsilon-1}(\sq-1)\left( 1 + \binom{\mu\rho^{\sk'}}{\fq} \right)$
\item \label{case 6 q not divide rho} Case $\upsilon> 0$ odd , $r > 0$: $\widetilde{K}_{\fq^\upsilon, \fq^r}(u) = 0$
 \end{enumerate}
\end{lemma}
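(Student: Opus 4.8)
The plan is to evaluate $\widetilde K_{\fq^\upsilon,\fq^r}(u)$ case by case, reducing each sum to a counting problem solved by Hensel's lemma. Throughout I set $m:=4u\rho^{\sk'}$, which is a unit of $\cO_{K_\fq}$ since $\fq\nmid 2\rho$, and I note $\binom{m}{\fq}_{rH}=\binom{u\rho^{\sk'}}{\fq}_{rH}$ because $4$ is a square. As $\fq\nmid 2$, the restricted Hilbert symbol $\binom{\cdot}{\fq}_{rH}$ is the quadratic residue character on $\cO_{K_\fq}^*$ extended by $0$; combined with the two observations recorded just before the lemma (that $\upsilon=0$ makes the summand identically $1$, and that otherwise only the parity of $\upsilon$ matters), this shows that $\binom{(\mu^2-m)\pi_\fq^{-2r}}{\fq}_{rH}^{\upsilon}$ equals $1$ when $\upsilon=0$; equals $1$ when $\val_\fq(\mu^2-m)=2r$ and $0$ otherwise when $\upsilon>0$ is even; and equals $\binom{(\mu^2-m)\pi_\fq^{-2r}}{\fq}_{rH}$ when $\upsilon>0$ is odd.

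The cases $r=0$ are then elementary. Parts \ref{case 1 q not divide rho} and \ref{case 2 q not divide rho} are immediate: for $\upsilon=r=0$ there is a single residue class, and for $\upsilon=0$, $r>0$ one counts the square roots of the unit $m$ modulo $\fq^{2r}$, of which there are $1+\binom{m}{\fq}_{rH}$ by Hensel's lemma. For part \ref{case 3 q not divide rho} ($\upsilon>0$ even, $r=0$) I subtract from the total $\sq^{\upsilon}$ the number $\sq^{\upsilon-1}(1+\binom{m}{\fq}_{rH})$ of classes $\mu\bmod\fq^{\upsilon}$ with $\fq\mid\mu^2-m$. For part \ref{case 4 q not divide rho} ($\upsilon>0$ odd, $r=0$) the summand $\binom{\mu^2-m}{\fq}_{rH}$ depends only on $\mu\bmod\fq$ by Lemma~\ref{periodicity hilbert symbols}\ref{case 1 q not divides 2 periodicity}, so the sum equals $\sq^{\upsilon-1}\sum_{x\bmod\fq}\binom{x^2-m}{\fq}_{rH}=-\sq^{\upsilon-1}$ by Lemma~\ref{Langlands lemma} (whose proof applies to any $m$ coprime to $\fq$).

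The cases $r>0$, $\upsilon>0$ are the crux. I would partition the $\mu$ occurring in the sum according to which square root of $m$ modulo $\fq^{2r}$ they reduce to; there are $1+\binom{m}{\fq}_{rH}$ such roots (none when $m$ is a nonsquare, in which case the sum is empty and both \ref{case 5 q not divide rho} and \ref{case 6 q not divide rho} give $0$), each a unit. Fixing one, a lift $\tilde\mu_0\in\cO_{K_\fq}$, and writing $\mu=\tilde\mu_0+s\pi_\fq^{2r}$ with $s$ over $\cO_{K_\fq}/\fq^{\upsilon}$, and setting $\tilde\mu_0^2-m=w\pi_\fq^{2r}$, one finds
\[
(\mu^2-m)\pi_\fq^{-2r}=w+2s\tilde\mu_0+s^2\pi_\fq^{2r}\equiv w+2s\tilde\mu_0\pmod{\fq},
\]
valid since $r\ge 1$. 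Because $2\tilde\mu_0$ is a unit mod $\fq$, the map $s\bmod\fq\mapsto w+2s\tilde\mu_0$ is a bijection of the residue field. For $\upsilon$ even this yields, per root, exactly $\sq^{\upsilon-1}(\sq-1)$ values of $s\bmod\fq^{\upsilon}$ making $w+2s\tilde\mu_0$ a unit, hence part \ref{case 5 q not divide rho} upon summing over the $1+\binom{u\rho^{\sk'}}{\fq}$ roots; for $\upsilon$ odd the per-root contribution is $\sq^{\upsilon-1}\sum_{a\bmod\fq}\binom{a}{\fq}_{rH}=0$, giving part \ref{case 6 q not divide rho}.

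I expect the bookkeeping in this last step to be the main hurdle: one must ensure that $\tilde\mu_0$ ranges only over lifts of the square roots modulo $\fq^{2r}$, that the symbol depends only on the class modulo $\fq$ so that the remaining freedom in $s$ contributes a clean factor $\sq^{\upsilon-1}$, and that the inequality $4r\ge 2r+1$ — which holds exactly because $r\ge 1$ — is what legitimately discards the term $s^2\pi_\fq^{2r}$. The $r=0$ cases are, by contrast, routine applications of Hensel's lemma and Lemma~\ref{Langlands lemma}, and I do not anticipate difficulties there.
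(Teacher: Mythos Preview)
Your proof is correct and follows essentially the same strategy as the paper: cases (i)--(iv) are treated identically via Hensel's lemma and Lemma~\ref{Langlands lemma}, and for $\upsilon>0$, $r>0$ both arguments exploit that the quotient $(\mu^2-m)\pi_\fq^{-2r}$ runs over each residue class modulo $\fq$ equally often as $\mu$ varies over lifts of a fixed square root. The only cosmetic difference is that the paper first invokes periodicity to reduce to $\mu\bmod\fq^{1+2r}$ and then counts solutions modulo $\fq^{2r}$ versus $\fq^{2r+1}$ directly, whereas you parameterize $\mu=\tilde\mu_0+s\pi_\fq^{2r}$ and observe that $s\mapsto w+2s\tilde\mu_0$ is a bijection of the residue field; these are two ways of recording the same fiber count.
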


\begin{proof} We divide into the cases as in the statement.

\textit{\underline{Case $\upsilon= r = 0$.}}
Then,
\begin{equation}
\begin{split}
\widetilde{K}_{(1),(1)}(u) 
&= \sum_{\substack{\mu\mod \fq^{0}\\\mu^2\equiv 4u\rho^{\sk'}\mod \fq^{0}}} \binom{(\mu^2 - 4u\rho^{\sk'})\pi_{\fq}^{0}}{\fq}^0 
=1
\end{split}
\end{equation}
because the sum goes over the representatives of $\cO_K/(1)=1.$ This settles \ref{case 1 q not divide rho}.

\textit{\underline{Case $\upsilon=0$, $r >0$.}}
Then,
\begin{equation}
\begin{split}
\widetilde{K}_{(1),(\fq^r)}(u)
&= \sum_{\substack{\mu\mod \fq^{2r}\\\mu^2\equiv 4u\rho^{\sk'}\mod \fq^{2r}}} \binom{(\mu^2 - 4u\rho^{\sk'})\pi_{\fq}^{-2r}}{\fq}^0\\
&= \sum_{\substack{\mu\mod \fq^{2r}\\\mu^2\equiv 4u\rho^{\sk'}\mod \fq^{2r}}}1\\
 &=1 + \binom{u\rho^{\sk'}}{\fq}.
\end{split}
\end{equation}
We used \ref{fact1} above because $\upsilon= 0$.
Notice for the third equality we used the representatives run over $\cO_K/\fq^{2r}$ and that the number of solutions to the congruence
\[
\mu^2\equiv 4u\rho^{\sk'}
\]
is given by $1 + \binom{u\rho^{\sk'}}{\fq}$ since $\fq\nmid 2$ and by the use of Hensel's Lemma.
This settles \ref{case 2 q not divide rho}.

\textit{\underline{Case $\upsilon>0$, $r=0$:}}
Then,
\begin{align*}
\widetilde{K}_{(\fq^\upsilon),(1)}(u)
&= \sum_{\substack{\mu\mod \fq^{\upsilon}\\\mu^2\equiv 4u\rho^{\sk'}\mod \fq^{0}}} \binom{\mu^2 - 4u\rho^{\sk'}}{\fq}^\upsilon\\
&= \sum_{\mu\mod \fq^\upsilon} \binom{\mu^2-4u\rho^{\sk'}}{\fq}^\upsilon\\
&=q^{\upsilon-1} \sum_{\mu'\mod{\fq}}\binom{{\mu'}^2-4u\rho^{\sk'}}{\fq}^\upsilon \textrm{ by periodicity of the modified Hilbert symbol modulo }\fq.
\end{align*}
When $\upsilon$ is even, each term contributes $1$ except for the solutions of ${\mu'}^2 \equiv 4u\rho^{\sk'} \pmod{\fq}$, which are $1+\binom{u\rho^{\sk'}}{\fq}$ in quantity.
Hence, for even $\upsilon$ we have
\[
\sum_{\mu'\mod{\fq}}\binom{{\mu'}^2-4u\rho^{\sk'}}{\fq}^\upsilon= \sq - \left(1+\binom{u\rho^{\sk'}}{\fq}\right).
\]
We use Lemma~\ref{Langlands lemma} for the case when $\upsilon$ is odd to get the value of the sum.
Therefore,
\begin{equation}
K_{(\fq^\upsilon),(1)}(u)=\begin{cases}
\sq^{\upsilon}-\sq^{\upsilon-1}\left(1+\binom{u\rho^{\sk'}}{\fq}\right) &\text{ if }\upsilon \text { is even }\\
-\sq^{\upsilon-1} &\text{ if }\upsilon \text{ is odd }.
\end{cases} 
\end{equation}
This settles \ref{case 3 q not divide rho} and \ref{case 4 q not divide rho}.

\textit{\underline{Case $\upsilon>0$, $r>0$:}}
Then,
\begin{align*}
\widetilde{K}_{\fq^\upsilon,\fq^r}(u)
&= \sum_{\substack{\mu\mod \fq^{\upsilon+2r}\\\mu^2\equiv 4u\rho^{\sk'}\mod \fq^{2r}}} \binom{(\mu^2 - 4u\rho^{\sk'})\pi_{\fq}^{-2r}}{\fq}^\upsilon\\
&=q^{\upsilon-1} \sum_{\substack{\mu\mod 
\fq^{1+2r}\\\mu^2\equiv 4u\rho^{\sk'}\mod \fq^{2r}}} \binom{(\mu^2 - 4u\rho^{\sk'})\pi_{\fq}^{-2r}}{\fq}^\upsilon \textrm{ by periodicity of modified Hilbert symbol mod }\fq.
\end{align*}
If $\upsilon$ is even we have to count the number of solutions to the congruence
\[
\mu^2\equiv 4u\rho^{\sk'}\mod \fq^{2r}
\]
modulo $\fq^{1+2r}$ because each one contributes a $1$ to the sum.
As we have said before, the number of solutions is $1 + \binom{u\rho^{\sk'}}{\fq}$.
Each such solution has $\sq$ lifts modulo $\fq$, so there are
\[
\sq\left(1 + \binom{u\rho^{\sk'}}{\fq}\right)
\]
solutions to this congruence we seek.
However, we must subtract from our count those for which
\[
\mu^2\equiv 4u\rho^{\sk'}\mod \fq^{2r + 1},
\]
because for those $\binom{(\mu^2 - 4u\rho^{\sk'})\pi_{\fq}^{-2r}}{\fq}$ is 0, since $(\mu^2 - 4u\rho^{\sk'})\pi_{\fq}^{-2r} = 0$ is a multiple of $\pi_{\fq}$.
We conclude, for even $\upsilon$,
\[
\sum_{\substack{\mu\mod 
\fq^{1+2r}\\\mu^2\equiv 4u\rho^{\sk'}\mod \fq^{2r}}} \binom{(\mu^2 - 4u\rho^{\sk'})\pi_{\fq}^{-2r}}{\fq}^\upsilon= \sq\left(1 + \binom{u\rho^{\sk'}}{\fq}\right) - \left(1 + \binom{u\rho^{\sk'}}{\fq}\right) = (\sq - 1)\left(1 + \binom{u\rho^{\sk'}}{\fq}\right).
\]
On the other hand, for odd $\upsilon$, we must evaluate the sum
\[
\sum_{\substack{\mu\mod 
\fq^{1+2r}\\\mu^2\equiv 4u\rho^{\sk'}\mod \fq^{2r}}} \binom{(\mu^2 - 4u\rho^{\sk'})\pi_{\fq}^{-2r}}{\fq}.
\]
The terms that contribute to the sum are again those congruences modulo $\fq^{2r + 1}$ such that 
\[
\mu^2\equiv 4u\rho^{\sk'}\mod \fq^{2r}
\]
but not
\[
\mu^2\equiv 4u\rho^{\sk'}\mod \fq^{2r + 1}.
\]
If $4u\rho^{\sk'}$ is a quadratic residue modulo $\fq$, it follows from Hensel's lemma that we can find a solution to $\mu^2\equiv 4u\rho^{\sk'}\mod \fq^{2r}$ with any prescribed congruence modulo $\fq$ of $(\mu^2 - 4u\rho^{\sk'})\pi_{\fq}^{-2r}$.
Since the number of quadratic residues and non-residues is the same modulo $\fq$, we conclude half the terms contribute $1$ and the other half contribute $-1$, i.e., 
\[
\sum_{\substack{\mu\mod 
\fq^{1+2r}\\\mu^2\equiv 4u\rho^{\sk'}\mod \fq^{2r}}} \binom{(\mu^2 - 4u\rho^{\sk'})\pi_{\fq}^{-2r}}{\fq}.
\]
In conclusion,
\begin{align*}
\widetilde{K}_{\fq^\upsilon,\fq^r}(u)
&=\begin{cases}
\sq^{\upsilon-1}(\sq-1)\left( 1 + \binom{\mu\rho^{\sk'}}{\fq} \right) &\text{ if }\upsilon \text { is even}
\\
0 & \text{ otherwise.}
\end{cases}
\end{align*}
This settles the remaining cases \ref{case 5 q not divide rho} and \ref{case 6 q not divide rho}.
\end{proof}

With this at hand, the following result is a (tedious) algebraic computation.

\begin{proposition}
\label{kloosc1}
With the same notation introduced in Lemma~\ref{q odd q not rho Kvals},
\[
\D_{\fq}(z,u) = \frac{1-1/\sq^{z+1}}{1-1/\sq^{2z}}.
\]
\end{proposition}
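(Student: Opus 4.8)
The strategy is to plug the six values of $\widetilde{K}_{\fq^\upsilon,\fq^r}(u)$ from Lemma~\ref{q odd q not rho Kvals} directly into the definition
\[
\D_{\fq}(z,u) = \sum_{r=0}^{\infty}\frac{1}{\sq^{r(2z+1)}}\sum_{\upsilon=0}^{\infty}\frac{\widetilde{K}_{\fq^\upsilon,\fq^r}(u)}{\sq^{\upsilon(z+1)}},
\]
split the double sum according to the cases $r=0$ versus $r>0$ and $\upsilon=0$, $\upsilon$ even positive, $\upsilon$ odd positive, and sum the resulting geometric series. Abbreviate $\chi := \binom{u\rho^{\sk'}}{\fq} \in\{\pm1\}$, $w := \sq^{-(z+1)}$ for the $\upsilon$-variable and $W := \sq^{-(2z+1)}$ for the $r$-variable; all series converge absolutely for $\Re(z)>1$ by Proposition~\ref{abs conv}, so rearrangement is legitimate.

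First I would compute the inner $\upsilon$-sum at $r=0$. Using \ref{case 1 q not divide rho}, \ref{case 3 q not divide rho}, \ref{case 4 q not divide rho}, this is
\[
1 + \sum_{\upsilon\geq 2,\,\upsilon\text{ even}}\bigl(\sq^{\upsilon} - \sq^{\upsilon-1}(1+\chi)\bigr)w^{\upsilon} + \sum_{\upsilon\geq 1,\,\upsilon\text{ odd}}(-\sq^{\upsilon-1})w^{\upsilon},
\]
which is a combination of three geometric series in $w^2$ (with a factor $w$ in the odd case) and collapses to a rational function of $\sq^{-z}$. Then I would compute the inner $\upsilon$-sum at each fixed $r>0$: by \ref{case 2 q not divide rho}, \ref{case 5 q not divide rho}, \ref{case 6 q not divide rho} it equals
\[
(1+\chi) + \sum_{\upsilon\geq 2,\,\upsilon\text{ even}}\sq^{\upsilon-1}(\sq-1)(1+\chi)\,w^{\upsilon} = (1+\chi)\Bigl(1 + \tfrac{\sq-1}{\sq}\cdot\tfrac{\sq^2w^2}{1-\sq^2w^2}\Bigr),
\]
which is independent of $r$; I would simplify it to $(1+\chi)\cdot\frac{1-1/\sq}{1-1/\sq^{2z}}\cdot(\text{something})$ and then sum the outer geometric series $\sum_{r\geq1}W^r = \frac{W}{1-W}$ over it. Adding the $r=0$ contribution and the $r>0$ contribution, the terms involving $\chi$ must cancel (since the final answer is independent of $u$, hence of $\chi$), and after algebraic simplification the total should collapse to $\frac{1-1/\sq^{z+1}}{1-1/\sq^{2z}}$. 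I would present the $r=0$ sum and the $r>0$ sum as two displayed computations, then combine.

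The main obstacle is purely bookkeeping: making the $\chi$-dependent terms visibly cancel and verifying that the $r=0$ geometric series (which has the "extra" structure from cases \ref{case 3 q not divide rho}–\ref{case 4 q not divide rho}, absent for $r>0$) conspires with the $r>0$ sum to give exactly the claimed rational function. A useful sanity check along the way is to specialize to $K=\Q$ and $\fq$ an odd prime not dividing $u\rho^{\sk'}$, where this is the local factor computed by Altu\u{g} in \cite{AliI}; matching that known value confirms the constants. I expect no conceptual difficulty — every ingredient is already in Lemma~\ref{q odd q not rho Kvals} — so the proof is a finite, if slightly lengthy, manipulation of geometric series, and I would state it as such and relegate the most routine lines to a single display with intermediate steps shown.
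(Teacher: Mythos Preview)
Your proposal is correct and takes essentially the same approach as the paper: plug in the six values from Lemma~\ref{q odd q not rho Kvals}, split according to the cases, sum the resulting geometric series, and verify that the $\chi$-dependent terms cancel. The only difference is organizational --- the paper separates the double sum directly into the six named cases and then isolates the $\chi$-terms in a bracket to show they vanish, whereas you first evaluate the inner $\upsilon$-sum for $r=0$ and $r>0$ separately (noting the latter is $r$-independent) and then sum over $r$; both orderings lead to the same elementary geometric-series manipulation.
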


\begin{proof}
We divide the sum according to the six cases of Proposition~\ref{q odd q not rho Kvals}.
Note that the term corresponding to \ref{case 6 q not divide rho} vanishes.
For the cases where $\upsilon$ is assumed to be even or odd we will write $2\upsilon$ or $2\upsilon+ 1$, respectively.

We have
\begin{align*}
\D_{\fq}(z,u)
&= \sum_{r=1}^{\infty}\frac{1}{\sq^{r(2z+1)}} \sum_{\upsilon=1}^{\infty}\frac{\widetilde{K}_{\fq^\upsilon,\fq^r}(u)}{\sq^{\upsilon(z+1)}}\\
&=1+ \sum_{r=1}^{\infty}\frac{\widetilde{K}_{(1),\fq^r}(u)}{\sq^{r(2z+1)}} + \sum_{\upsilon=1}^{\infty}\frac{\widetilde{K}_{\fq^\upsilon,(1)}(u)}{\sq^{\upsilon(2z+1)}} + \sum_{r=1}^{\infty}\frac{1}{\sq^{r(2z+1)}} \sum_{\upsilon=1}^{\infty}\frac{K_{\fq^\upsilon,\fq^r}(u)}{\sq^{\upsilon(z+1)}}\\
&=1+\sum_{r=1}^{\infty}\frac{\widetilde{K}_{(1),\fq^r}(u)}{\sq^{r(2z+1)}} + \sum_{\upsilon=1}^{\infty}\frac{\widetilde{K}_{\fq^{2\upsilon},(1)}(u)}{\sq^{2\upsilon(2z+1)}} 
+ \sum_{\upsilon=1}^{\infty}\frac{\widetilde{K}_{\fq^{2\upsilon+1},(1)}(u)}{\sq^{(2\upsilon+1)(2z+1)}} + \sum_{r=1}^{\infty}\frac{1}{{\sq}^{r(2z+1)}} \sum_{\upsilon=1}^{\infty}\frac{\widetilde{K}_{{\sq}^{2\upsilon},{\sq}^r}(u)}{\sq^{2\upsilon(z+1)}}.
\end{align*}
We simplify each of the terms appearing in the final expression of $\D_{\fq}(z,u)$ We proceed in order of appearance.

The first one, corresponding to $\upsilon= 0, r > 0$, is
\begin{align*}
\sum_{r=1}^{\infty}\frac{\widetilde{K}_{(1),\fq^r}(u)}{\sq^{r(2z+1)}}
&=\left(1+\binom{u\rho^{\sk'}}{\fq}\right) \sum_{r=1}^{\infty}\frac{1}{\sq^{r(2z+1)}.}
\end{align*}
Next we have the ones corresponding to $\upsilon> 0$ and $r = 0$.
When $\upsilon$ is odd we have
\[
\sum_{\upsilon=1}^{\infty}\frac{\widetilde{K}_{\fq^{2\upsilon+1},(1)}(u)}{\sq^{(2\upsilon+1)(2z+1)}}
=-\frac{1}{\sq} \sum_{\upsilon=0}^{\infty}\frac{\sq^{2\upsilon+1}}{\sq^{(2\upsilon+1)(z+1)}}
=-\frac{1}{\sq} \sum_{\upsilon=0}^{\infty}\frac{1}{\sq^{z(2\upsilon+1)}}.
\]
On the other hand, when $\upsilon$ is even we get
\begin{align*}
 \sum_{\upsilon=1}^{\infty}\frac{\widetilde{K}_{\fq^{2\upsilon},(1)}(u)}{\sq^{2\upsilon(2z+1)}} &=\left(\sq^{2\upsilon}-\sq^{2\upsilon-1}\left(1+\binom{u\rho^{\sk'}}{\fq}\right)\right) \sum_{\upsilon=1}^{\infty}\frac{1}{{\sq^{2\upsilon(z+1)}}}\\
 &=\left( 1-\frac{1}{\sq} \left( 1+\binom{u\rho^{\sk'}}{\fp}\right)\right) \sum_{\upsilon=1}^{\infty}\frac{\sq^{2\upsilon}}{\sq^{2\upsilon(z+1)}}\\
 &=\left(1-\frac{1}{\sq}\left(1+\binom{u\rho^{\sk'}}{\fq}\right)\right) \sum_{\upsilon=1}^{\infty}\frac{1}{\sq^{2\upsilon z}}.
\end{align*}
Finally, when both $\upsilon, r$ are positive:
\begin{align*}
 \sum_{r=1}^{\infty}\frac{1}{\sq^{r(2z+1)}} \sum_{\upsilon=1}^{\infty}\frac{\widetilde{K}_{\fq^{2\upsilon},\fq^r}(u)}{\sq^{2\upsilon(z+1)}}
&= \sum_{r=1}^{\infty}\frac{1}{\sq^{r(2z+1)}} \sum_{\upsilon=1}^{\infty}\frac{\sq^{2\upsilon}-\sq^{2\upsilon-1}\left(1+\binom{u\rho^{\sk'}}{\fq}\right)}{\sq^{2\upsilon(z+1)}}\\
&= \sum_{r=1}^{\infty}\frac{1}{\sq^{r(2z+1)}} \sum_{\upsilon=1}^{\infty}\frac{\sq^{2\upsilon}(1-\frac{1}{\sq})\left(1+\binom{u\rho^{\sk'}}{\fq}\right)}{\sq^{2\upsilon(z+1)}}\\
&= \sum_{r=1}^{\infty}\frac{1}{\sq^{r(2z+1)}} \sum_{\upsilon=1}^{\infty}\frac{(1-\frac{1}{\sq})\left(1+\binom{u\rho^{\sk'}}{\fq}\right)}{\sq^{2\upsilon z}}\\
&=\left(1-\frac{1}{\sq}\right)\left(1+\binom{u\rho^{\sk'}}{\fq}\right) \sum_{r=1}^{\infty}\frac{1}{\sq^{r(2z+1)}} \sum_{\upsilon=1}^{\infty}\frac{1}{\sq^{2\upsilon z}}.
\end{align*}

So we consider the following sum
\begin{tiny}
\begin{align*}
1+\left(1-\frac{1}{\sq}\left(1+\binom{u\rho^{\sk'}}{\fq}\right)\right)& \sum_{\upsilon=1}^{\infty}\frac{1}{\sq^{2\upsilon z}}-\frac{1}{\sq} \sum_{\upsilon=0}^{\infty}\frac{1}{\sq^{(2\upsilon+1)z}}+\left(1+\binom{u\rho^{\sk'}}{\fq}\right) \sum_{r=1}^{\infty}\frac{1}{\sq^{zr(2z+1)}}\\
&\ +\left(1-\frac{1}{\sq}\right)\left(1+\binom{u\rho^{\sk'}}{\fq}\right) \sum_{r=1}^{\infty}\frac{1}{\sq^{r(2z+1)}} \sum_{\upsilon=1}^{\infty}\frac{1}{\sq^{2\upsilon z}}\\
&=1+ \sum_{\upsilon=1}^{\infty}\frac{1}{\sq^{2\upsilon z}}-\frac{1}{\sq} \sum_{\upsilon=0}^{\infty}\frac{1}{\sq^{(2\upsilon +1)z}}- \left[\frac{1}{\sq}\left(1+\binom{u\rho^{\sk'}}{\fq}\right) \sum_{\upsilon=1}^{\infty}\frac{1}{\sq^{2\upsilon z}}\right.\\
&\ \left.+\left(1+\binom{u\rho^{\sk'}}{\fq}\right) \sum_{r=1}^{\infty}\frac{1}{\sq^{r(2z+1)}}+\left(1-\frac{1}{\sq}\right)\left(1+\binom{u\rho^{\sk'}}{\fq}\right) \sum_{r=1}^{\infty}\frac{1}{\sq^{r(2z+1)}} \sum_{\upsilon=1}^{\infty}\frac{1}{\sq^{2\upsilon z}}\right]\\
&=1+\frac{1/\sq^{2z}}{1-1/\sq^{2z}}-\frac{1}{\sq} \cdot \frac{1/\sq^{z}}{1-1/\sq^{2z}}\\
&=1+\frac{1/\sq^{2z}}{1-1/\sq^{2z}}- \frac{1/\sq^{(z+1)}}{1-1/\sq^{2z}}\\
&=\frac{1-1/\sq^{z+1}}{1-1/\sq^{2z}}.
\end{align*}
\end{tiny}

To justify the second equality we show that the terms in the square brackets sum to zero.
Indeed,

\begin{tiny}
\begin{align*}
 -\frac{1}{\sq}\left(1+\binom{u\rho^{\sk'}}{\fq}\right)& \sum_{\upsilon=1}^{\infty}\frac{1}{\sq^{2\upsilon z}}+\left(1+\binom{u\rho^{\sk'}}{\fq}\right) \sum_{r=1}^{\infty}\frac{1}{\sq^{r(2z+1)}} +\left(1-\frac{1}{\sq}\right)\left(1+\binom{u\rho^{\sk'}}{\fq}\right) \sum_{r=1}^{\infty}\frac{1}{\sq^{r(2z+1)}} \sum_{\upsilon=1}^{\infty}\frac{1}{\sq^{2\upsilon z}}\\
 &= -\frac{1}{\sq}\left(1+\binom{u\rho^{\sk'}}{\fq}\right) \sum_{\upsilon=1}^{\infty}\frac{1}{\sq^{2\upsilon z}}+\left(1+\binom{u\rho^{\sk'}}{\fq}\right) \sum_{r=1}^{\infty}\frac{1}{\sq^{r(2z+1)}}\\
 &+\left(1+\binom{u\rho^{\sk'}}{\fq}\right) \sum_{r=1}^{\infty}\frac{1}{\sq^{r(2z+1)}} \sum_{\upsilon=1}^{\infty}\frac{1}{\sq^{2\upsilon z}}-\frac{1}{\sq}\left(1+\binom{u\rho^{\sk'}}{\fq}\right) \sum_{r=1}^{\infty}\frac{1}{\sq^{r(2z+1)}} \sum_{\upsilon=1}^{\infty}\frac{1}{\sq^{2\upsilon z}} \\
 &=\left(1+\binom{u\rho^{\sk'}}{\fq}\right) \sum_{r=1}^{\infty}\frac{1}{\sq^{r(2z+1)}}\left(1+ \sum_{\upsilon=1}^{\infty}\frac{1}{\sq^{2\upsilon z}}\right) - \frac{1}{\sq}\left(1+\binom{u\rho^{\sk'}}{\fq}\right)\left(1+ \sum_{r=1}^{\infty}\frac{1}{\sq^{r(2z+1)}} \right) \sum_{\upsilon=1}^{\infty}\frac{1}{\sq^{2\upsilon z}}\\
 &=\left(1+\binom{u\rho^{\sk'}}{\fq}\right) \sum_{r=1}^{\infty}\frac{1}{\sq^{r(2z+1)}} \sum_{\upsilon=0}^{\infty}\frac{1}{\sq^{2\upsilon z}}-\frac{1}{\sq}\left(1+\binom{u\rho^{\sk'}}{\fq}\right) \sum_{r=0}^{\infty}\frac{1}{\sq^{r(2z+1)}} \sum_{\upsilon=1}^{\infty}\frac{1}{\sq^{2\upsilon z}}\\
 &=\left(1+\binom{u\rho^{\sk'}}{\fq}\right)\left(\frac{1/\sq^{(2z+1)}}{1-1/\sq^{(2z+1)}}\right)\left(\frac{1}{1-1/\sq^{2z}}\right) -\frac{1}{\sq}\left(1+\binom{u\rho^{\sk'}}{\fq}\right)\left(\frac{1}{1-1/\sq^{(2z+1)}}\right)\left(\frac{1/\sq^{2z}}{1-1/\sq^{2z}}\right)\\
 &=\left(\frac{1}{(1-1/\sq^{(2z+1)})}\frac{1}{(1-1/\sq^{2z})}\right)\left(\frac{\left(1+\binom{u\rho^{\sk'}}{\fq}\right)}{\sq^{(2z+1)}} - \frac{\left(1+\binom{u\rho^{\sk'}}{\fq}\right)}{\sq^{2z+1}} \right)\\
 &=\left(\frac{1}{(1-1/\sq^{(2z+1)})}\frac{1}{(1-1/\sq^{2z})}\right)\left(\frac{\left(1+\binom{u\rho^{\sk'}}{\fq}\right)}{\sq^{2z+1}} - \frac{\left(1+\binom{u\rho^{\sk'}}{\fq}\right)}{\sq^{2z+1}} \right)\\
 &=0.\qedhere
\end{align*}
\end{tiny}
\end{proof}

\subsubsection{Case \texorpdfstring{$\fq\nmid 2$ and $\fq\mid \rho$}{}}
Here, we have to further consider whether $\sk$ is even or odd.
Recording the following lemma will be useful in proving the next case of Kloostermn type sums (and we can avoid repeating certain arguments).

\begin{lemma}
\label{ntfacts}
Let $\fq\nmid 2$ and fix an \emph{odd} positive integer $k$.
Let $r > k$ be any integer.
Then for any $a\in\cO_K$ with $\val_{\fq}(a) = k$, there are no solutions to the congruence
\[
 x^2\equiv a \pmod{\fq^r}.
\]
\end{lemma}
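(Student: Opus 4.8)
The plan is to run a direct valuation (ultrametric) argument, working in the completion $\cO_{K_\fq}$ where $\val_\fq$ is the normalized discrete valuation.

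First I would argue by contradiction: suppose $x\in\cO_K$ satisfies $x^2\equiv a\pmod{\fq^r}$, so that $\val_\fq(x^2-a)\ge r$. Since by hypothesis $\val_\fq(a)=k$ and $r>k$, we have $\val_\fq(x^2-a)>\val_\fq(a)$. The strong triangle inequality for $\val_\fq$ then forces equality in $\val_\fq(x^2)=\val_\fq\bigl((x^2-a)+a\bigr)$, namely
\[
\val_\fq(x^2)=\min\{\val_\fq(x^2-a),\val_\fq(a)\}=\val_\fq(a)=k.
\]
On the other hand $\val_\fq(x^2)=2\val_\fq(x)$ is even, so $k$ is even, contradicting the assumption that $k$ is odd. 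Hence no solution exists.

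I expect there to be essentially no obstacle here; the only points meriting a sentence of care are (i) that a representative $x\pmod{\fq^r}$ of a putative solution can be taken in $\cO_K\subset\cO_{K_\fq}$, so that $\val_\fq(x)$ is a well-defined nonnegative integer, and (ii) that the case $\val_\fq(x)$ large (so $x^2\equiv 0\pmod{\fq^r}$) is not special — it is subsumed by the displayed identity, since $\val_\fq(x^2-a)\ge r>k$ still holds and the conclusion $\val_\fq(x^2)=k$ is still derived. With these remarks the proof is complete, and it requires neither that $\fq\nmid\rho$ nor any hypothesis on $K$ beyond $\fq$ being an odd prime (the hypothesis $\fq\nmid 2$ is in fact not used in this particular lemma, only parity of $k$ is).
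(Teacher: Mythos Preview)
Your proof is correct and follows essentially the same valuation-parity argument as the paper. The paper phrases it as: from $x^2\equiv a\pmod{\fq^r}$ and $r>k$ one gets $2\val_\fq(x)\ge k$, hence (by oddness of $k$) $\val_\fq(x^2)\ge k+1$, and then since $r\ge k+1$ the congruence forces $\val_\fq(a)\ge k+1$, a contradiction. Your use of the equality case of the ultrametric inequality to obtain $\val_\fq(x^2)=k$ in one stroke is a slight streamlining of the same idea. Your closing remark that $\fq\nmid 2$ is not actually used is correct; the paper's proof does not use it either.
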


\begin{proof}
Suppose $x$ is a potential solution to
\[
x^2 \equiv a \pmod{\fq^r}.
\]
Since $r > k$ we have $x^2\equiv 0\pmod{\fq^k}$.
This implies
\[
2\val_{\fq}(x)\ge k.
\]
Since $k$ is odd this inequality is actually strict.
That is,
\[
\val_{\fq}(x^2)\ge k + 1.
\]
Since $r\ge k + 1$, the congruence $x^2\equiv a \pmod{\fq^{k + 1}}$ holds.
Hence we conclude
\[
 a\equiv 0 \pmod{\fq}^{k + 1},
\]
which contradicts $\val_{\fq}(a)= k$.
Thus no solution can exist.
\end{proof}

\begin{lemma}
\label{q odd q div rho Kvals}
Let $\fq\nmid 2$ and suppose that $\fq^\sk\Vert \rho^{\sk'}$.
In other words, $\fq=\fp$ and $\Norm_K(\fq)=\Norm_K(\fp)=\sp$.
If $\sk = 2\sk_0 + 1$ is odd, then the values of $\widetilde{K}_{\fq^\upsilon, \fq^r}(u)$ are as below:
 \begin{enumerate}[label = \textup{(O-\roman*)}]
\item \label{case 1 q odd q div rho k odd} Case $\upsilon= 0, r = 0$: 
 $\widetilde{K}_{1, 1}(u) = 1$,
\item \label{case 2 q odd q div rho k odd} Case $\upsilon> 0$, $r = 0$: $\widetilde{K}_{\fq^\upsilon, 1}(u) = \sp^\upsilon- \sp^{\upsilon- 1}$,
\item \label{case 3 q odd q div rho k odd} Case $\upsilon= 0, 1 \le r \le \sk_0 $:
 $\widetilde{K}_{1, \fq^r}(u) = \sp^r$,
\item \label{case 4 q odd q div rho k odd} Case $\upsilon= 0, r \ge \sk_0 + 1 $:
 $\widetilde{K}_{1, \fq^r}(u) = 0$,
\item \label{case 5 q odd q div rho k odd} Case $\upsilon> 0$, $1 \le r \le \sk_0 $: $\widetilde{K}_{\fq^\upsilon, \fq^r}(u) = \sp^{r +\upsilon} - \sp^{r +\upsilon - 1}$,
\item \label{case 6 q odd q div rho k odd} Case $\upsilon> 0$, $r\ge \sk_0 + 1 $: $\widetilde{K}_{\fq^\upsilon, \fq^r}(u) = 0$.
\end{enumerate}
On the other hand, if $\sk = 2\sk_0$ is even and $\rho^{\sk'} = \pi_{\fq}^{\sk}w$ for some unit $w$, then
\begin{enumerate}[label = \textup{(E-\roman*)}]
\item \label{case 1 q odd q div rho k even} Case $\upsilon= 0, r = 0$: 
 $\widetilde{K}_{1, 1}(u) = 1$
\item \label{case 2 q odd q div rho k even} Case $\upsilon= 0, 1\le r \le \sk_0 $:
 $\widetilde{K}_{1, \fq^r}(u) = \sp^r$
\item \label{case 3 q odd q div rho k even} Case $\upsilon= 0, r \ge \sk_0 + 1$:
 $\widetilde{K}_{1, \fq^r}(u) = \sp^{\sk_0}\left(1 + \binom{4uw}{\fq}\right)$
\item \label{case 4 q odd q div rho k even} Case $\upsilon> 0$, $0 \le r \le \sk_0 - 1$: $\widetilde{K}_{\fq^\upsilon, 1}(u) = \sp^{r +\upsilon} - \sp^{r +\upsilon - 1}$
\item \label{case 5 q odd q div rho k even} Case $\upsilon> 0$ even, $r = \sk_0$: $\widetilde{K}_{\fq^\upsilon, 1}(u) =\sp^{k_0 +\upsilon} - \sp^{\sk_0 +\upsilon - 1}\left(1 + \binom{4uw}{\fq}\right)$
\item \label{case 6 q odd q div rho k even} Case $\upsilon> 0$ odd, $r = \sk_0$: $\widetilde{K}_{\fq^\upsilon, 1}(u) = -\sp^{\sk_0 +\upsilon -1}$
\item \label{case 7 q odd q div rho k even} Case $\upsilon> 0$ even , $r \ge \sk_0 + 1$: $\widetilde{K}_{\fq^\upsilon, \fq^r}(u) = \left(1 - \frac{1}{\sp}\right)\sp^{\sk_0 +\upsilon}\left(1 + \binom{4uw}{\fq}\right)$
\item \label{case 8 q odd q div rho k even} Case $\upsilon> 0$ odd, $r \ge \sk_0 + 1$: $\widetilde{K}_{\fq^\upsilon, \fq^r}(u) = 0$
\end{enumerate}
\end{lemma}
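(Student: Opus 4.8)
\textbf{Proof plan for Lemma~\ref{q odd q div rho Kvals}.}
The plan is to evaluate $\widetilde{K}_{\fq^\upsilon,\fq^r}(u)$ directly from its definition by a case analysis on the valuation of the ``numerator'' $\mu^2 - 4u\rho^{\sk'}$, using repeatedly the two facts recorded after the definition of $\widetilde{K}_{\fq^\upsilon,\fq^r}(u)$ (namely that $\upsilon=0$ turns the sum into a point count, and that for $\upsilon>0$ only the parity of $\upsilon$ matters), together with the periodicity of the restricted Hilbert symbol modulo $\fq$ (Lemma~\ref{periodicity hilbert symbols}\ref{case 1 q not divides 2 periodicity}), Hensel's Lemma for counting square roots in $\cO_{K_\fq}$, and Lemma~\ref{Langlands lemma} for the character sum over a full residue system. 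Throughout, since $\fq\nmid 2$, the factor $4$ is a unit, so $\val_\fq(4u\rho^{\sk'}) = \val_\fq(\rho^{\sk'}) = \sk$ (this is the hypothesis $\fq^\sk\Vert\rho^{\sk'}$); I will use this constantly.

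\textbf{The odd case $\sk = 2\sk_0+1$.} The key structural input is Lemma~\ref{ntfacts}: since $\sk$ is odd and $\val_\fq(4u\rho^{\sk'})=\sk$, the congruence $\mu^2\equiv 4u\rho^{\sk'}\pmod{\fq^{r'}}$ has \emph{no} solution as soon as $r' \ge \sk+1$, i.e. as soon as $2r \ge \sk+1$, which is $r \ge \sk_0+1$. This immediately gives the vanishing cases \ref{case 4 q odd q div rho k odd} and \ref{case 6 q odd q div rho k odd}. For $r \le \sk_0$ the constraint $\mu^2\equiv 4u\rho^{\sk'}\pmod{\fq^{2r}}$ forces $\val_\fq(\mu^2)\ge 2r$, hence $\val_\fq(\mu)\ge r$; conversely every $\mu$ with $\val_\fq(\mu)\ge r$ satisfies it automatically since then both sides are $\equiv 0 \pmod{\fq^{2r}}$ (using $2r\le\sk$). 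So for $\upsilon=0$ the count of $\mu\bmod\fq^{2r}$ with $\val_\fq(\mu)\ge r$ is $\sp^r$, giving \ref{case 3 q odd q div rho k odd}. For $\upsilon>0, r=0$ the sum is $\sum_{\mu\bmod\fq^\upsilon}\binom{\mu^2-4u\rho^{\sk'}}{\fq}^\upsilon$; by periodicity mod $\fq$ this is $\sp^{\upsilon-1}\sum_{\mu'\bmod\fq}\binom{{\mu'}^2-4u\rho^{\sk'}}{\fq}^\upsilon$, and since $\fq\mid\rho^{\sk'}$ we have $\binom{{\mu'}^2-4u\rho^{\sk'}}{\fq} = \binom{{\mu'}^2}{\fq}$, which is $1$ for $\mu'\not\equiv 0$ and $0$ for $\mu'\equiv 0$ (regardless of parity of $\upsilon$), yielding $\sp^{\upsilon-1}(\sp-1) = \sp^\upsilon - \sp^{\upsilon-1}$, which is \ref{case 2 q odd q div rho k odd}. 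For $\upsilon>0$, $1\le r\le\sk_0$, writing $\mu = \pi_\fq^r\nu$ one checks $(\mu^2 - 4u\rho^{\sk'})\pi_\fq^{-2r}$ has valuation $\min(2\val_\fq(\nu), \sk-2r)$, and since $\sk-2r = 2(\sk_0-r)+1$ is odd while $2\val_\fq(\nu)$ is even, this valuation is never $0$ unless $\val_\fq(\nu)=0$ and $\sk-2r>0$... in fact it is always $>0$ here, so the restricted Hilbert symbol vanishes identically — wait, that would give $0$, contradicting \ref{case 5 q odd q div rho k odd}; instead I must recount: when $\val_\fq(\nu)\ge 1$ the valuation of the quotient is $\min(2\val_\fq(\nu), 2(\sk_0-r)+1)$ which can be $0$ only if $\sk_0 = r$, and then it is $1>0$ anyway. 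The resolution is that I should sum over $\mu\bmod\fq^{\upsilon+2r}$ with the quadratic constraint; the correct bookkeeping (number of $\mu$ with $\val_\fq(\mu)\ge r$, modulo $\fq^{\upsilon+2r}$, times the symbol value) gives $\sp^{\upsilon}\cdot\sp^{r-1}(\sp-1)$ after carefully tracking which lifts survive, i.e. $\sp^{r+\upsilon}-\sp^{r+\upsilon-1}$ — this is the step I will write out in full.

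\textbf{The even case $\sk = 2\sk_0$.} Here $\val_\fq(4u\rho^{\sk'}) = \sk = 2\sk_0$ is even, and writing $\rho^{\sk'} = \pi_\fq^\sk w$ with $w$ a unit, the congruence $\mu^2 \equiv 4u\rho^{\sk'}\pmod{\fq^{r'}}$ behaves like the non-degenerate case once we substitute $\mu = \pi_\fq^{\sk_0}\nu$: it becomes $\nu^2 \equiv 4uw \pmod{\fq^{r'-\sk}}$ with $4uw$ a unit, which by Hensel has $1+\binom{4uw}{\fq}$ solutions once $r'-\sk\ge 1$. The case splits at $r = \sk_0$ (where $2r = \sk$): for $r<\sk_0$ the constraint is vacuous-ish and gives clean powers of $\sp$ exactly as in the odd case (cases \ref{case 2 q odd q div rho k even}, \ref{case 4 q odd q div rho k even}); at $r = \sk_0$ and beyond the unit $4uw$ and its quadratic character $\binom{4uw}{\fq}$ enter, producing the terms in \ref{case 3 q odd q div rho k even}, \ref{case 5 q odd q div rho k even}--\ref{case 8 q odd q div rho k even} — the parity of $\upsilon$ now matters, exactly as in Lemma~\ref{q odd q not rho Kvals}, because the argument of the Hilbert symbol is now a genuine unit whose square class is detected. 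I will treat $r=\sk_0$ and $r>\sk_0$ separately, using Lemma~\ref{Langlands lemma} for the odd-$\upsilon$ contributions (giving $-\sp^{\sk_0+\upsilon-1}$ and $0$ respectively) and a direct residue count for the even-$\upsilon$ contributions.

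\textbf{Main obstacle.} The bulk of the difficulty — and where I expect to spend the most care — is the bookkeeping in the mixed cases $\upsilon>0, r>0$: one must correctly count $\mu$ modulo the full modulus $\fq^{\upsilon+2r}$ (resp. $\fq^{2+\upsilon+2r}$) subject to $\mu^2\equiv 4u\rho^{\sk'}\pmod{\fq^{2r}}$, then stratify by the residue of $(\mu^2-4u\rho^{\sk'})\pi_\fq^{-2r}$ modulo $\fq$, distinguishing the $\mu$ for which this quotient is itself $\equiv 0 \pmod\fq$ (symbol value $0$) from those for which it is a unit, and finally weighting by $\binom{\cdot}{\fq}^\upsilon$ according to parity. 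The substitution $\mu = \pi_\fq^{\sk_0}\nu$ (even case) or the valuation inequalities from Lemma~\ref{ntfacts} (odd case) reduce everything to the unit-argument situation already handled in Lemma~\ref{q odd q not rho Kvals}, so no genuinely new idea is needed; the work is in doing the reduction cleanly and tracking the powers of $\sp$ coming from the lifting of solutions at each level. Once the values are in hand, they feed directly into Proposition~\ref{kloosc3} (odd $\sk$) and the even-$\sk$ analogue to compute $\D_\fq(z,u)$ at $\fq = \fp$.
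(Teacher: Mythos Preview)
Your plan is correct and follows essentially the same route as the paper: Lemma~\ref{ntfacts} to kill the large-$r$ cases when $\sk$ is odd, periodicity plus $\fq\mid\rho^{\sk'}$ to reduce $\binom{\mu^2-4u\rho^{\sk'}}{\fq}$ to $\binom{\mu^2}{\fq}$ in the low-$r$ cases, the substitution $\mu=\pi_\fq^{\sk_0}\nu$ and Hensel for the even-$\sk$ cases, and Lemma~\ref{Langlands lemma} for the odd-$\upsilon$ character sums.

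The one place your in-line reasoning goes astray is exactly where you flag it, case \ref{case 5 q odd q div rho k odd}: your claim ``in fact it is always $>0$ here'' is false --- when $\val_\fq(\nu)=0$ the valuation of $(\mu^2-4u\rho^{\sk'})\pi_\fq^{-2r}=\nu^2-4u\rho^{\sk'}\pi_\fq^{-2r}$ is $\min(0,\sk-2r)=0$, so the symbol is nonzero precisely there. The paper's clean one-line observation is that since $\sk-2r\ge 1$, the term $4u\rho^{\sk'}\pi_\fq^{-2r}$ is still divisible by $\pi_\fq$, hence $\binom{(\mu\pi_\fq^{-r})^2-4u\rho^{\sk'}\pi_\fq^{-2r}}{\fq}=\binom{(\mu\pi_\fq^{-r})^2}{\fq}$, which is $1$ if $\val_\fq(\mu)=r$ and $0$ if $\val_\fq(\mu)>r$; the count of $\mu\bmod\fq^{\upsilon+2r}$ with valuation exactly $r$ is $\sp^{r+\upsilon}-\sp^{r+\upsilon-1}$, regardless of the parity of $\upsilon$. (Also, drop the ``resp.\ $\fq^{2+\upsilon+2r}$'' in your Main obstacle paragraph --- that modulus is only for primes above $2$, which this lemma excludes.)
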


\begin{proof}
The proof is very similar to the one of Lemma~\ref{q odd q not rho Kvals} above.
We deal with each case separately.\newline

\textbf{Case $\sk$ is odd:} 

\textit{\underline{Case $\upsilon= r = 0$}:} Just as before,
\[
\widetilde{K}_{(1),(1)}(u) = 1.
\]
This settles \ref{case 1 q odd q div rho k odd}.
 
 \textit{\underline{Case $\upsilon>0, r = 0$}:} The sum becomes
 \begin{equation}
 \begin{split}
 \widetilde{K}_{\fq^\upsilon, (1)}(u) 
 &= \sum_{\substack{\mu\mod \fq^{\upsilon}\\\mu^2\equiv 4u\rho^{\sk'}\mod \fq^{0}}} \binom{\mu^2 - 4u\rho^{\sk'}}{\fq}^\upsilon\\
 &= \sum_{\substack{\mu\mod \fq^{\upsilon}\\\mu^2\equiv 4u\rho^{\sk'}\mod \fq^{0}}} \binom{\mu^2}{\fq}^\upsilon\\
 &= \sp^\upsilon- \sp^{\upsilon- 1},
\end{split}
\end{equation}
where we used that $\fq\mid \rho$ to deduce
\[
\binom{\mu_i^2-4u\rho^{\sk'}}{\fq} = \binom{\mu_i^2}{\fq},
\]
and the involved symbols are $1$ only when $\fq\nmid \mu_i$, which doesn't occur exactly $\sp^{\upsilon- 1}$ times, and otherwise are $0$.

\textit{\underline{Case $\upsilon= 0, r > 0$:}} The sum becomes
\begin{align*}
 \widetilde{K}_{(1), \fq^r}(u) 
 &= \sum_{\substack{\mu\mod \fq^{2r}\\\mu^2\equiv 4u\rho^{\sk'}\mod \fq^{2r}}} \binom{(\mu^2 - 4u\rho^{\sk'})\pi_{\fq}^{-2r}}{\fq}^0\\
 &=\sum_{\substack{\mu \pmod{\fq^{2r}}\\ \mu^2\equiv 4u\rho^{\sk'}\pmod{\fq^{2r}}}} 1,
\end{align*}
where we have used \ref{fact1} above since $\upsilon= 0$.

Since $\sk$ is odd, the last computation breaks down into the cases 
$r\le \sk_0$ and $r>\sk_0$.
The first case reduces to counting modulo $\fq^{2r}$ the solutions to $\mu^2\equiv 0 \pmod{\fq^{2r}}$, which has $\sp^r$ solutions.
The latter case by Proposition~\ref{ntfacts} has no solutions.
 Concretely, we get
\[
\widetilde{K}_{\fq^\upsilon, (1)}(u) = 
 \left\{
 \begin{array}{ll}
 0 & \text{if } r > \sk_0,\\
 \sp^r & \text{if } r \le \sk_0.
 \end{array}
 \right.
\]
This settles \ref{case 3 q odd q div rho k odd} and \ref{case 4 q odd q div rho k odd}.

\textit{\underline{Case $\upsilon, r > 0$}:} The sum is
\[
\widetilde{K}_{\fq^\upsilon, \fq^r}(u) 
 = \sum_{\substack{\mu\mod \fq^{\upsilon+2r}\\\mu^2\equiv 4u\rho^{\sk'}\mod \fq^{2r}}} \binom{(\mu^2 - 4u\rho^{\sk'})\pi_{\fq}^{-2r}}{\fq}^\upsilon.
\]
The is non-zero only if $\mu^2\equiv 4u\rho^{\sk'} \pmod{\fq^{2r}}$ but $\mu^2\not\equiv 4u\rho^{\sk'} \pmod{\fq^{2r + 1}}$.
By Proposition~\ref{ntfacts} this congruence has no solution if $2r > \sk$, because $\sk$ is odd.
In the other case we can split
\[
(\mu^2-4u\rho^{\sk'})\pi_{\fq}^{-2r} = (\mu\pi_{\fq}^{-r})^2 - 4u\rho^{\sk'}\pi_{\fq}^{-2r},
\]
and each term is an integer, with the second one still multiple of $\pi_{\fq}$ (because $\sk$ is odd!).
Hence, using the periodicity modulo $\fq$ of the symbol,
\[
\binom{(\mu^2-4u\rho^{\sk'})\pi_{\fq}^{-2r}}{\fq} = \binom{(\mu\pi_{\fq}^{-r})^2 - 4u\rho^{\sk'}\pi_{\fq}^{-2r}}{\fq} = \binom{(\mu\pi_{\fq}^{-r})^2}{\fq},
\]
which is $1$ or $0$ according to whether $\val_{\fq}(\mu) = r$ or bigger.
That is, this sum counts the integers mod $\fq^{2r +\upsilon}$ that are in $\fq^r$ but not in $\fq^{r + 1}$.
This has cardinality
\[
\sp^{r +\upsilon} - \sp^{r +\upsilon - 1}.
\]
This concludes \ref{case 5 q odd q div rho k odd} and \ref{case 6 q odd q div rho k odd}.

\textbf{Case $\sk$ is even:}

\textit{\underline{Case $\upsilon= r = 0$:}} Exactly as before.

\textit{\underline{Case $\upsilon= 0$, $r>0$:}}
The sum becomes 
\[
\widetilde{K}_{(1), \fq^r}(u) 
 = \sum_{\substack{\mu\mod \fq^{2r}\\\mu^2\equiv 4u\rho^{\sk'}\mod \fq^{2r}}} \binom{(\mu^2 - 4u\rho^{\sk'})\pi_{\fq}^{-2r}}{\fq}^0.
\]
Using \ref{fact1} and because $\upsilon= 0$, the value it has is the number of solutions modulo $\fq^{2r}$ of the congruence $\mu^2\equiv 4u\rho^{\sk'}\pmod{\fq^{2r}}$.
When $2r\le \sk$, this reduces to $\mu^2\equiv 0 \pmod{\fq^{2r}}$, which has $q^r$ solutions.
On the other hand, if $2r > \sk$ then our sought congruence implies $\mu^2\equiv 0 \pmod{\fq^{2\sk_0}}$.
Hence $\mu = \pi^{\sk_0}\alpha$ for some $\alpha$.
So the congruence $\mu^2\equiv 4u\rho^{\sk'}\pmod{\fq^{2r}}$ becomes
\[
\alpha^2\equiv 4u\rho^{\sk'}\pi_{\fq}^{-\sk} \equiv 4uw\pmod{\pi_{\fq}^{2r - \sk}} .
\]
This has $1 + \binom{4uw}{\fq}$ solutions modulo $\fq^{2r - k}$.
This corresponds to $\sp^{\sk_0}\left(1 + \binom{4uw}{\fq}\right)$ lifts modulo $\fq^{2r - \sk_0}$, any of which can be the value of $\alpha$.
We conclude 
\[
\widetilde{K}_{(1), \fq^r}(u) = 
 \left\{
 \begin{array}{ll}
 \sp^r & \text{if } r \le \sk_0,\\
 \sp^{\sk_0}\left(1 + \binom{4uw}{\fq}\right)& \text{if } r \ge \sk_0 + 1.
 \end{array}
 \right.
\]
This settles cases \ref{case 2 q odd q div rho k even} and \ref{case 3 q odd q div rho k even}.

\textit{\underline{Case $\upsilon\ge 0$, $0\le r \le \sk_0 - 1$}:}
This is solved exactly as case \ref{case 5 q odd q div rho k odd}.

\textit{\underline{Case $\upsilon> 0$, $r = \sk_0$}:} The sum becomes
\[
\widetilde{K}_{\fq^\upsilon, \fq^{\sk_0}}(u) = \sum_{\substack{\mu\mod \fq^{\upsilon+\sk}\\\mu^2\equiv 4u\rho^{\sk'}\mod \fq^{\sk}}} \binom{(\mu^2 - 4u\rho^{\sk'})\pi_{\fq}^{-\sk}}{\fq}^\upsilon= q^\upsilon\sum_{\substack{\mu\mod \fq^{\sk}\\\mu^2\equiv 0\mod \fq^{\sk}}} \binom{(\mu^2 - 4u\rho^{\sk'})\pi_{\fq}^{-\sk}}{\fq}^\upsilon,
\]
where we have used the periodicity of the modified Hilbert symbol.

When $\upsilon$ is even, we are counting the solutions to $\mu^2\equiv 0 \pmod{\fq^\sk}$ such that $\mu^2 \not\equiv 4u\rho^{\sk'} \pmod{\fq^{\sk + 1}}$.
The first congruence has $\sp^{\sk_0}$ solutions.
On the other hand, counting similarly to the case \ref{case 3 q odd q div rho k even}, the second congruence has
\[
\sp^{\sk_0 - 1}\left(1 + \binom{4uw}{\fq}\right),
\]
By subtracting, we deduce
\[
\widetilde{K}_{\fq^{\upsilon}, \fq^{\sk_0}} = \sp^{\sk_0 +\upsilon} - \sp^{\sk_0 +\upsilon - 1}\left(1 + \binom{u}{\fq}\right).
\]
This settles case \ref{case 5 q odd q div rho k even}.

When $\upsilon$ is odd, we get 
\[
\sum_{\substack{\mu\mod \fq^{\sk}\\\mu^2\equiv 0\mod \fq^{\sk}}} \binom{(\mu^2 - 4u\rho^{\sk'})\pi_{\fq}^{-\sk}}{\fq}^\upsilon
= \sum_{\substack{\mu\mod \fq^{\sk}\\\mu^2\equiv 0\mod \fq^{\sk}}} \binom{(\mu^2 - 4u\rho^{\sk'})\pi_{\fq}^{-\sk}}{\fq}
= q^{\sk_0 - 1} \sum_{\substack{\alpha\mod \fq}} \binom{\alpha^2 - 4uw}{\fq},
\]
where we have used that $\mu^2\equiv 0 \pmod{\sk}$ is equivalent to $\mu = \pi_{\fq}^{\sk_0}\alpha$ where alpha can be any class modulo $\fq^{k_0}$.
These classes form groups of size $\sp^{\sk_0 - 1}$ as lifts of the classes modulo $\fq$, which explains the factor $\sp^{\sk_0 - 1}$ that comes out from the periodicity of the symbol.
This last sum is $-1$ by Lemma~\ref{Langlands lemma}.
We conclude
\[
\widetilde{K}_{\fq^{\upsilon}, \fq^{\sk_0}} = -\sp^{\upsilon+ \sk_0 - 1},
\]
This settles case \ref{case 6 q odd q div rho k even}.

\textit{\underline{Case $\upsilon> 0$, $r \ge \sk_0 + 1$}:} This is solved in exactly the same way as cases \ref{case 5 q not divide rho} and \ref{case 6 q not divide rho} of Lemma~\ref{q odd q not rho Kvals}.
\end{proof}


\begin{proposition}
\label{kloosc3}
Keep the notation introduced in Lemma~\ref{q odd q div rho Kvals}.
Then 
\[
\D_{\fq}(z,u)= \D_{\fp}(z,u)=
\frac{(1-\sp^{z(\sk+1)})(1-1/\sp^{z+1})}{(1-1/\sp^{2z})(1-1/\sp^z)} 
\]
\end{proposition}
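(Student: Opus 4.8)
The plan is to evaluate the local Dirichlet series $\D_{\fp}(z,u)$ by substituting the explicit values of $\widetilde{K}_{\fp^\upsilon,\fp^r}(u)$ provided by Lemma~\ref{q odd q div rho Kvals}, splitting into the cases $\sk$ odd and $\sk$ even, and summing the resulting geometric series. Since Lemma~\ref{q odd q div rho Kvals} already does the heavy arithmetic lifting, what remains is a (lengthy) bookkeeping of geometric sums, entirely parallel in spirit to the proof of Proposition~\ref{kloosc1}. First I would write
\[
\D_{\fp}(z,u)= \sum_{r=0}^{\infty}\frac{1}{\sp^{r(2z+1)}}\sum_{\upsilon=0}^{\infty}\frac{\widetilde{K}_{\fp^\upsilon,\fp^r}(u)}{\sp^{\upsilon(z+1)}}
\]
and break the double sum according to whether $\upsilon=0$ or $\upsilon>0$, and in the latter case further according to the parity of $\upsilon$ (recall that by \ref{fact2} only the parity of $\upsilon$ enters). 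In each parity block the factor $\sp^{\cdots+\upsilon}$ appearing in the $\widetilde{K}$-values cancels the $\sp^{-\upsilon(z+1)}$ down to $\sp^{-\upsilon z}$ or $\sp^{-\upsilon z\cdot(\text{odd shift})}$, turning every inner $\upsilon$-sum into a convergent geometric series for $\Re(z)>1$.

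For the case $\sk=2\sk_0+1$ odd, I would use items \ref{case 1 q odd q div rho k odd}--\ref{case 6 q odd q div rho k odd}: the $r$-sum naturally splits at $r=\sk_0$, with the tail $r\ge \sk_0+1$ contributing nothing (the $\widetilde{K}$-values vanish there by Lemma~\ref{ntfacts}). Thus $\D_{\fp}$ becomes a \emph{finite} sum over $0\le r\le \sk_0$ of geometric series in $\upsilon$, whose closed form should be massaged into
\[
\frac{(1-\sp^{z(\sk+1)})(1-1/\sp^{z+1})}{(1-1/\sp^{2z})(1-1/\sp^z)}.
\]
The key simplification is to recognize $\sum_{r=0}^{\sk_0}\sp^{r}\sp^{-r(2z+1)}=\sum_{r=0}^{\sk_0}\sp^{-2rz}$ as a truncated geometric series equal to $(1-\sp^{-2z(\sk_0+1)})/(1-\sp^{-2z})$, and then to observe $2(\sk_0+1)=\sk+1$, which is precisely how the factor $1-\sp^{z(\sk+1)}$ (up to a power of $\sp$) emerges. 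For the case $\sk=2\sk_0$ even, items \ref{case 1 q odd q div rho k even}--\ref{case 8 q odd q div rho k even} are used instead; here the $r$-sum has both a finite head ($r\le \sk_0$, counting solutions of $\mu^2\equiv 0$) and an infinite tail ($r\ge \sk_0+1$, where a Legendre-symbol term $\binom{4uw}{\fp}$ appears). As in Proposition~\ref{kloosc1}, I expect the $\binom{4uw}{\fp}$-dependent pieces to cancel in aggregate — this cancellation is exactly analogous to the "terms in the square brackets sum to zero" computation there — leaving a $u$-independent answer that again reorganizes into the stated closed form, with the truncation length $\sk_0$ and the extra unit-class contribution conspiring to produce $1-\sp^{z(\sk+1)}$.

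The main obstacle I anticipate is purely organizational rather than conceptual: keeping track, across roughly eight sub-cases, of which geometric series are finite versus infinite and of the precise exponents of $\sp$, and verifying that the Legendre-symbol terms cancel in the even case. A secondary subtlety is the sign/power bookkeeping in the even case around $r=\sk_0$, where the odd-$\upsilon$ value $-\sp^{\sk_0+\upsilon-1}$ (via Lemma~\ref{Langlands lemma}) interacts with the even-$\upsilon$ value; getting the $-1$ contribution to assemble correctly into the numerator factor is the delicate point. Once these are handled, a final algebraic identity — combining the four or five partial sums over a common denominator $(1-1/\sp^{2z})(1-1/\sp^z)$ — yields the claimed formula, and the identity $\D_{\fq}=\D_{\fp}$ is immediate since the hypothesis $\fq^\sk\Vert\rho^{\sk'}$ forces $\fq=\fp$.
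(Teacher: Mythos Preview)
Your proposal is correct and follows essentially the same approach as the paper: split into $\sk$ odd and $\sk$ even, substitute the values from Lemma~\ref{q odd q div rho Kvals}, recognize the truncated geometric series in $r$ (with $2(\sk_0+1)=\sk+1$ in the odd case), and in the even case verify that the $\binom{4uw}{\fq}$-dependent pieces cancel exactly as in the proof of Proposition~\ref{kloosc1}. The paper carries out precisely this computation, with the even-case cancellation of the symbol terms and the delicate $r=\sk_0$ bookkeeping you flagged being the two places where the algebra is most involved.
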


\begin{proof}
We do each case separately by separating the series into the cases given by Proposition~\ref{q odd q div rho Kvals}.

\underline{\textit{Case $\sk$ is odd}:}
We write $\sk = 2\sk_0 + 1$.
Then
\begin{align*}
 \D_{\fq}(z,u) 
 &= \sum_{r=0}^{\infty}\frac{1}{\sp^{r(2z+1)}} \sum_{\upsilon=0}^{\infty}\frac{\widetilde{K}_{\fq^\upsilon,\fq^r}(u)}{\sp^{\upsilon(z+1)}}\\
 &= 1 + \sum_{\upsilon= 1}^\infty \frac{\sp^\upsilon- \sp^{\upsilon- 1}}{\sp^{z(\upsilon+ 1)}} + \sum_{r = 1}^{\sk_0}\frac{\sp^r}{\sp^{r(2z + 1)}} + \sum_{\upsilon= 1}^{\infty}\sum_{r = 1}^{\sk_0}\frac{\sp^{r +\upsilon} - \sp^{r +\upsilon - 1}}{\sp^{z(\upsilon+ 1)}\sp^{r(2z + 1)}}\\
 &= 1 + \left(1 - \frac{1}{\sp}\right)\sum_{\upsilon= 1}^\infty \frac{1}{\sp^{z\upsilon}} + \sum_{r = 1}^{\sk_0}\frac{1}{\sp^{2rz}} + \left(1 - \frac{1}{\sp}\right)\sum_{\upsilon= 1}^\infty \frac{1}{\sp^{z\upsilon}}\sum_{r = 1}^{\sk_0}\frac{1}{\sp^{2rz}}\\
 &= \left(\left(1 - \frac{1}{\sp}\right)\sum_{\upsilon= 1}^\infty \frac{1}{\sp^{z\upsilon}} + 1 \right)\sum_{r = 0}^{\sk_0}\frac{1}{\sp^{2rz}}\\
 &= \left(\left(1 - \frac{1}{\sp}\right)\frac{\sp^{-z}}{1 - \sp^{-z}} + 1\right)\frac{1 - \sp^{2z(\sk_0 + 1)}}{1 - \sp^{-2z}}\\
 &=\frac{1 - \sp^{-(z + 1)}}{1 - \sp^{-2z}}\frac{1 - \sp^{-z(\sk + 1)}}{1 - \sp^{-z}}.
\end{align*}

\underline{\textit{Case $\sk$ is even}:} Writing $\sk = 2\sk_0$, we have 

\begin{align*}
\D_{\fq}(z,u) 
 &= \sum_{r=0}^{\infty}\frac{1}{\sp^{r(2z+1)}} \sum_{\upsilon=0}^{\infty}\frac{\widetilde{K}_{\fq^\upsilon,\fq^r}(u)}{\sp^{\upsilon(z+1)}}\\
 &= 1 
 + \sum_{\upsilon= 1}^{\infty}\frac{\sp^\upsilon- \sp^{\upsilon- 1}}{\sp^{\upsilon(z + 1)}} 
 + \sum_{r = 1}^{\sk_0}\frac{\sp^r}{\sp^{r(2z + 1)}}
 + \sum_{r = \sk_0 + 1}^{\infty}\frac{\sp^{\sk_0}\left(1 + \binom{4uw}{\fq}\right)
 }{\sp^{r(2z + 1)}}\\
 &+ \sum_{\upsilon= 1}^{\infty}\frac{1}{\sp^{\upsilon(z + 1)}}\sum_{r = 1}^{\sk_0 - 1}\frac{\sp^{r +\upsilon} - \sp^{r +\upsilon - 1}}{\sp^{r(2z + 1)}}\\
 &+\frac{1}{\sp^{k_0(2z + 1)}}\sum_{\upsilon= 1}^{\infty}\frac{\sp^{\sk_0 + 2\upsilon } - \sp^{\sk_0 + 2\upsilon- 1}\left(1 + \binom{4uw}{\fq}\right)}{\sp^{(2\upsilon )(z + 1)}}
 +\frac{1}{\sp^{\sk_0(2z + 1)}}\sum_{\upsilon= 0}^{\infty}\frac{-\sp^{\sk_0 + (2\upsilon +1) -1}}{\sp^{(2\upsilon+ 1)(z + 1)}}\\
 &+\sum_{\upsilon= 1}^{\infty}\sum_{r = \sk_0 + 1}^{\infty}\frac{\left(1 - \frac{1}{\sp}\right)\sp^{\sk_0 + 2\upsilon }\left(1 + \binom{4uw}{\fq}\right)}{\sp^{2\upsilon (z + 1)}}.
\end{align*}

We evaluate separately and then we substitute back into the previous sums.
The second term is 
\[
\sum_{\upsilon= 1}^{\infty}\frac{\sp^\upsilon- \sp^{\upsilon- 1}}{\sp^{\upsilon(z + 1)}} = \left(1 - \frac{1}{\sp}\right)\frac{\sp^{-z}}{1 - \sp^{-z}}.
\]The third and the fourth term together contribute
\[
\sum_{r = 1}^{\sk_0}\frac{\sp^r}{\sp^{r(2z + 1)}}
 + \sum_{r = \sk_0 + 1}^{\infty}\frac{\sp^{\sk_0}\left(1 + \binom{4uw}{\fq}\right)}{\sp^{r(2z + 1)}}
 = \sp^{-2z}\frac{1 - \sp^{-2z\sk_0}}{1 - \sp^{-2z}} + \sp^{\sk_0}\left(1 + \binom{4uw}{\fq}\right)\frac{\sp^{-(2z + 1)(\sk_0 + 1)}}{1 - \sp^{-(2z + 1)}}.
\]
The next term in the sum becomes
\begin{align*}
 \sum_{\upsilon= 1}^{\infty}\frac{1}{\sp^{\upsilon(z + 1)}}\sum_{r = 1}^{\sk_0 - 1}\frac{\sp^{r +\upsilon} - \sp^{r +\upsilon - 1}}{\sp^{r(2z + 1)}} 
 &= \left(1 - \frac{1}{\sp}\right)\sum_{\upsilon= 1}^{\infty}\sum_{r = 1}^{\sk_0 -1}\frac{\sp^{\upsilon+ r}}{\sp^{\upsilon(z + 1)}{\sp^{r(2z + 1)}}}\\ 
 &= \sp^{-2z}\left(1 - \frac{1}{\sp}\right)\frac{\sp^{-z}}{1 - \sp^{-z}}\frac{1 - \sp^{-2z(\sk_0-1)}}{1 - \sp^{-2z}},
\end{align*}
since the $r$ and $\upsilon$ sums are independent.
The sixth term in the sum is
\[
\frac{1}{\sp^{\sk_0(2z + 1)}}\sum_{\upsilon= 1}^{\infty}\frac{\sp^{\sk_0 + 2\upsilon } - \sp^{\sk_0 + 2\upsilon- 1}\left(1 + \binom{4uw}{\fq}\right)}{\sp^{(2\upsilon )(z + 1)}}
 =\sp^{-2\sk_0z}\frac{\sp^{-2z}}{1 - \sp^{-2z}} - \frac{\sp^{\sk_0 - 1}}{\sp^{\sk_0(2z+1)}}\left(1 + \binom{4uw}{\fp}\right)\frac{\sp^{-2z}}{1 - \sp^{-2z}},
\]
while the seventh is
\[
\frac{1}{\sp^{\sk_0(2z + 1)}}\sum_{\upsilon= 0}^{\infty}\frac{-\sp^{\sk_0 + (2\upsilon +1) -1}}{\sp^{(2\upsilon+ 1)(z + 1)}} 
 = -\sp^{-2\sk_0z - 1}\sum_{\upsilon= 0}^{\infty}\frac{1}{\sp^{(2\upsilon+ 1)z}}
 = -\sp^{-2\sk_0z}\frac{\sp^{-(z+1)}}{1 - \sp^{-2z}}.
\]
Finally the last term is
\begin{align*}
 \sum_{\upsilon= 1}^{\infty}\sum_{r = \sk_0 + 1}^{\infty}\frac{\left(1 - \frac{1}{\sp}\right)\sp^{\sk_0 + 2\upsilon }\left(1 + \binom{4uw}{\fq}\right)}{\sp^{2\upsilon (z + 1)}\sp^{r(2z + 1)}} 
 &= \sp^{\sk_0}\left(1 - \frac{1}{\sp}\right)\left(1 + \binom{4uw}{\fq}\right) \sum_{\upsilon= 1}^{\infty}\sum_{r = \sk_0 + 1}^{\infty}\frac{1}{\sp^{2\upsilon z}\sp^{r(2z + 1)}}\\
 &= \sp^{\sk_0}\left(1 - \frac{1}{\sp}\right)\left(1 + \binom{4uw}{\fq}\right) \frac{\sp^{-2z}}{1 - \sp^{-2z}}\frac{\sp^{-(\sk_0 + 1)(2z+1)}}{1 - \sp^{-(2z + 1)}}.
\end{align*}

By collecting the terms that share the factor $\left(1 + \binom{4uw}{\fq}\right)$ we obtain from them
\begin{tiny}
\begin{align*}
 &\;\sp^{\sk_0}\left(1 + \binom{4uw}{\fq}\right)\left(\frac{\sp^{-(2z + 1)(\sk_0 + 1)}}{1 - \sp^{-(2z + 1)}} -\frac{\sp^{-(2z + 1)(\sk_0 + 1)}}{1 - \sp^{-2z}} + \left(1 - \frac{1}{\sp}\right)\frac{\sp^{-2z}}{1 - \sp^{-2z}}\frac{\sp^{-(\sk_0 + 1)(2z+1)}}{1 - \sp^{-(2z + 1)}}\right)\\
 &= \sp^{\sk_0}\left(1 + \binom{4uw}{\fq}\right)\left(\sp^{-(2z + 1)(\sk_0 + 1)}\left(\frac{1}{1 - \sp^{-(2z + 1)}} -\frac{1}{1 - \sp^{-2z}} \right) +\left(1 - \frac{1}{\sp}\right)\frac{\sp^{-2z}}{1 - \sp^{-2z}}\frac{\sp^{-(\sk_0 + 1)(2z+1)}}{1 - \sp^{-(2z + 1)}}\right)\\
 &= \sp^{\sk_0}\left(1 + \binom{4uw}{\fq}\right)\left(\sp^{-(2z + 1)(\sk_0 + 1)}\left(\frac{\sp^{-(2z+1)}-\sp^{-2z}}{(1-\sp^{-2z})(1-\sp^{-(2z+1)})} \right) + \left(1 - \frac{1}{\sp}\right)\frac{\sp^{-2z}}{1 - \sp^{-2z}}\frac{\sp^{-(\sk_0 + 1)(2z+1)}}{1 - \sp^{-(2z + 1)}}\right)\\
 &= \sp^{\sk_0}\left(1 + \binom{4uw}{\fq}\right)\left(-\left(1 - \frac{1}{\sp}\right)\frac{\sp^{-2z}}{1 - \sp^{-2z}}\frac{\sp^{-(\sk_0 + 1)(2z+1)}}{1 - \sp^{-(2z + 1)}} + \left(1 - \frac{1}{\sp}\right)\frac{\sp^{-2z}}{1 - \sp^{-2z}}\frac{\sp^{-(\sk_0 + 1)(2z+1)}}{1 - \sp^{-(2z + 1)}}\right)\\
 &=0.
\end{align*}
 \end{tiny}

Plugging the terms in order the terms that remain are
\begin{align*}
 \D_{\fq}(z, u) &= 1 
 + \left(1 - \frac{1}{\sp}\right)\frac{\sp^{-z}}{1 - \sp^{-z}} 
 + \sp^{-2z}\frac{1 - \sp^{-2z\sk_0}}{1 - \sp^{-2z}}\\ 
 &+ \sp^{-2z}\left(1 - \frac{1}{\sp}\right)\frac{\sp^{-z}}{1 - \sp^{-z}}\frac{1 - \sp^{-2z(\sk_0-1)}}{1 - \sp^{-2z}}\\
 &+\sp^{-2\sk_0z}\frac{\sp^{-2z}}{1 - \sp^{-2z}}
 -\sp^{-2\sk_0z}\frac{\sp^{-(z+1)}}{1 - \sp^{-2z}}.
\end{align*}
Notice that
\[
\left(1 - \frac{1}{\sp}\right)\frac{\sp^{-z}}{1 - \sp^{-z}} + \sp^{-2z}\left(1 - \frac{1}{\sp}\right)\frac{\sp^{-z}}{1 - \sp^{-z}}\frac{1 - \sp^{-2z(\sk_0-1)}}{1 - \sp^{-2z}}
 =\left(1-\frac{1}{\sp}\right)\frac{\sp^{-z}}{1-\sp^{-z}}\frac{1-\sp^{-2\sk_0z}}{1-\sp^{-2z}},
\]
and so we get finally
\begin{align*}
 \D_{\fp}(z, u) &= 1 
 + \sp^{-2z}\frac{1 - \sp^{-2z\sk_0}}{1 - \sp^{-2z}}
 + \left(1-\frac{1}{\sp}\right)\frac{\sp^{-z}}{1-\sp^{-z}}\frac{1-\sp^{-2\sk_0z}}{1-\sp^{-2z}}
 +\sp^{-2\sk_0z}\frac{\sp^{-2z}}{1 - \sp^{-2z}}
 -\sp^{-2\sk_0z}\frac{\sp^{-(z+1)}}{1 - \sp^{-2z}}\\
 &= \frac{1- \sp^{-2\sk_0z -(z + 1)}}{1 - \sp^{-2z}}
 + \left(1-\frac{1}{\sp}\right)\frac{\sp^{-z}}{1-\sp^{-z}}\frac{1-\sp^{-2\sk_0z}}{1-\sp^{-2z}}\\
 &= \frac{1}{(1-\sp^{-2z})(1-\sp^{-z})}\left((1- \sp^{-2\sk_0z -(z + 1)})(1 - \sp^{-z}) + \left(1 - \frac{1}{\sp}\right)\sp^{-z}(1-\sp^{-2\sk_0z})\right)\\
 &= \frac{(1-\sp^{-(z + 1)})(1 - \sp^{-z(2\sk_0 + 1)})}{(1-\sp^{-2z})(1-\sp^{-z})}\\
 &= \frac{(1-\sp^{-(z + 1)})(1 - \sp^{-z(\sk + 1)})}{(1-\sp^{-2z})(1-\sp^{-z})}.
\end{align*}
\end{proof} 

\subsection{The primes above $2$}

Our assumption \ref{ass: split} that $2$ is a totally split prime in $K$ means that the computations are the same ones as the ones for $\Z$ with the Kronecker Symbol.
Unfortunately, these computations were neither written in \cite{AliI} nor in \cite{altug2013beyond}.
We include the computations for the sake of completeness.
For ease of notation, we write $2$ instead of $\fq$ and $\pi_{\fq}$.

In this case, the Kloosterman-type sums look like
\[
\widetilde{K}_{2^\upsilon, 2^r}(u) = \sum_{\substack{\mu\mod 2^{2+\upsilon+2r}\\\mu^2\equiv 4u\rho^{\sk'}\mod 2^{2r}\\ \frac{\mu^2 - 4u\rho^{\sk'}}{2^{2r}}\equiv 0,1 \pmod{4} }} \binom{\mu^2 - 4u\rho^{\sk'}, 2^r}{2^\upsilon} 
 = \sum_{\substack{\mu\mod 2^{2+\upsilon+2r}\\\mu^2\equiv 4u\rho^{\sk'}\mod 2^{2r}\\ \frac{\mu^2 - 4u\rho^{\sk'}}{2^{2r}}\equiv 0,1 \pmod{4} }} \binom{(\mu^2 - 4u\rho^{\sk'})2^{-2r}}{2}_K^\upsilon.
\]

\begin{notation}
For the remainder of this section we will drop the `K' from the notation of the symbol as we will always work with the Kronecker symbol and there is no chance of confusion.
\end{notation}
The steps to follow are similar in essence to what we did before.
The main differences are the following
\begin{enumerate}[label = \textup{(}\alph*\textup{)}]
\item the congruence conditions.
\item the Kronecker Symbol is not the quadratic character.
\end{enumerate}
These complicate the computations significantly.

\begin{remark}
Set $c:\Z_2\rightarrow \Z_2/8\Z_2$ to denote the reduction map modulo $8$.
By Hensel's Lemma we know that for a unit $w$ the quantity $c(w) = 1$ if and only if $w$ is a square in $\Z_2$.
The main complication with the following computations is that the Kronecker Symbol does not detect this behaviour: it assigns $1$ also to the class $7$ modulo $8$, even though $7$ is not a square in $\Z_2$.
\end{remark}

\subsubsection{Case $2\nmid \rho$}
We begin with a series of lemmas that will make arguments clearer later.
The purpose of the following lemma is to understand when do have solutions to the system of congruence conditions
\begin{equation}
\label{eqn: system of cong cond}
\begin{split}
\mu^2 &\equiv 4u\rho^{\sk'} \pmod{4^r},\\
\frac{\mu^2 - 4u\rho^{\sk'}}{2^{2r}} &\equiv 0,1 \pmod{4}.
\end{split}
\end{equation}

\begin{lemma}
\label{solutions kloos 1}
Let $(\fp,\sk)$ be the tuple fixed in \S\ref{sec: Definition of the test function and its implications} such that the prime ideal and corresponding positive integer satisfy \ref{ass: class number}.
Set ${\sk'} = \sk/h_\fp$ and denote by $\rho$ the generator of $\fp^{h_\fp}$.
Suppose that $2\nmid \rho$ (equivalently $\fp\neq 2$).
Then
\begin{enumerate}[label = \textup{(\roman*)}]
\item \label{r=1} 

When $r=1$, the solutions to the system of congruence conditions \eqref{eqn: system of cong cond} can be read off from the table below\footnote{We explain how to read the first line of the table.
Note that $\mu^2 \equiv 4 \equiv 0\pmod{4}$ has a solution precisely when $\mu\equiv 0,2\pmod{4}$.
Suppose that $c(u\rho^{\sk'})\equiv 1\pmod{8}$.
If $\mu\equiv 2\pmod{4}$ then $\frac{\mu^2 - 4u\rho^{\sk'}}{4}\equiv 0\pmod{4}$; whereas $\mu\equiv 0\pmod{4}$ gives $\frac{\mu^2 - 4u\rho^{\sk'}}{4}\equiv -1\pmod{4}$.}:
\begin{center}
 \begin{tabular}{|c||c|c|c|c|} 
 \hline
 \; & \multicolumn{4}{|c|}{$\frac{\mu^2 - 4u\rho^{\sk'}}{4} \pmod{4}$} \\
 \hline
 $c(u\rho^{\sk'})$ & 0& 1 & 2 & 3\\
 \hline
 1& $\mu \equiv 2$& - &- & $\mu \equiv 0$ \\
 \hline
 3&-& $\mu \equiv 0$ &$\mu \equiv 2$ & - \\
 \hline
 5& $ \mu \equiv 2$& - & -& $ \mu \equiv 0$ \\
 \hline
 7& -& $ \mu \equiv 0$ & $ \mu \equiv 2$ & - \\
 \hline
 \end{tabular}
 \end{center}
\item \label{r=2}
When $r = 2$ and $c(u\rho^{\sk'}) \equiv 1, 5\pmod{8}$ the solutions to the system \eqref{eqn: system of cong cond} can be read off from the table:
 \begin{center}
 \begin{tabular}{|c||c|c|c|c|} 
 \hline
 \; & \multicolumn{4}{|c|}{$\frac{\mu^2 - 4u\rho^{\sk'}}{16} \pmod{4}$} \\
 \hline
 $c(u\rho^{\sk'}) \pmod{16}$ & 0& 1 & 2 & 3\\
 \hline
 $1$& $ \mu\equiv 2, 14$ & - & $ \mu\equiv 6, 10$ & -\\
 \hline
 $9$& $ \mu\equiv 6, 10$ & - & $ \mu\equiv 2, 14$ & -\\
 \hline
 $5$& - & $ \mu\equiv 6, 10$ & & $ \mu\equiv 2, 14$\\
 \hline
 $13$& - & $ \mu\equiv 2, 14$ & & $ \mu\equiv 6, 10$\\
 \hline
 \end{tabular}
 \end{center}
\item \label{reduction is 3 or 7 n r>1} If $c(u\rho^{\sk'}) \equiv 3, 7\pmod{8}$ and $r\ge 2$, then there are no solutions modulo $4^r$ to the system \eqref{eqn: system of cong cond}.
\item \label{reduction is 3,5 or 7 n r>2} If $c(u\rho^{\sk'}) \equiv 3, 5, 7\pmod{8}$ and $r\ge 3$, then there are no solutions modulo $4^r$ to the system \eqref{eqn: system of cong cond}.
\end{enumerate}
\end{lemma}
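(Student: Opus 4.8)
The plan is an elementary $2$-adic computation. Since $2$ splits completely we work in $\Z_2$; write $\delta:=u\rho^{\sk'}$, which lies in $\Z_2^{*}$ because $2\nmid\rho$ forces $\rho$ (hence $\delta$) to be a $2$-adic unit, and note $\val_2(4\delta)=2$. The first observation is that the congruence $\mu^2\equiv 4\delta\pmod{4^r}$ forces $\mu$ to be even, and in fact forces $\val_2(\mu)=1$ as soon as $r\ge 2$: if $\val_2(\mu)\ge 2$ then $\val_2(\mu^2)\ge 4>2$, so $\val_2(\mu^2-4\delta)=2<2r$, contradicting $4^r\mid\mu^2-4\delta$. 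Thus for $r\ge 2$ one may write $\mu=2\nu$ with $\nu$ odd, so that
\[
\frac{\mu^2-4\delta}{2^{2r}}=\frac{\nu^2-\delta}{4^{r-1}} .
\]
The only arithmetic input needed is the description of odd squares in $\Z_2$: an odd $\nu$ satisfies $\nu^2\equiv 1\pmod 8$, and more precisely $\nu^2\equiv 1$ or $9\pmod{16}$ according as $\nu\equiv\pm1$ or $\pm3\pmod 8$, with the evident refinements modulo $32$ and $64$.

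For $\ref{r=1}$ I would argue directly: the first congruence is just $\mu^2\equiv 0\pmod 4$, so $\mu\equiv 0$ or $2\pmod 4$ (both are admissible here, since $r=1$ is too small for the divisibility argument above). One computes $\mu^2\equiv 0\pmod{16}$ when $\mu\equiv 0\pmod 4$ and $\mu^2\equiv 4\pmod{16}$ when $\mu\equiv 2\pmod 4$, whence $\tfrac{\mu^2-4\delta}{4}\equiv -\delta$, respectively $1-\delta$, modulo $4$. This value depends only on $\delta\bmod 4$, which is recorded by $c(\delta)\bmod 8$; reading off which of the resulting residues lie in $\{0,1\}\bmod 4$ produces the first table. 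For $\ref{r=2}$ one has $\val_2(\mu)=1$, so $\mu^2\equiv 4\pmod{16}$; this matches $4\delta\bmod 16$ exactly when $\delta\equiv 1\pmod 4$, i.e.\ $c(\delta)\equiv 1,5\pmod 8$, which is the standing hypothesis there. Writing $\mu=2\nu$, one has $\tfrac{\mu^2-4\delta}{16}=\tfrac{\nu^2-\delta}{4}$, whose residue modulo $4$ is determined by $\nu^2\bmod 16$ (equivalently $\mu\bmod 16$) together with $\delta\bmod 16$ (the quantity tabulated in $\ref{r=2}$); running over the four possibilities for each gives the second table.

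For the nonexistence statements $\ref{reduction is 3 or 7 n r>1}$ and $\ref{reduction is 3,5 or 7 n r>2}$ I would note that $\mu^2\bmod 16$ (for $\mu$ even) can only be $0$ or $4$, and that in the latter case $\mu^2\equiv 4\pmod{32}$, while $4\delta\equiv 4(\delta\bmod 4)\pmod{16}$. If $c(\delta)\equiv 3,7\pmod 8$ then $\delta\equiv 3\pmod 4$, so $4\delta\equiv 12\pmod{16}$, which is incompatible with both admissible values, and hence there is already no solution modulo $16$; this proves $\ref{reduction is 3 or 7 n r>1}$. If instead $c(\delta)\equiv 5\pmod 8$, then $4\delta\equiv 20\pmod{32}$, which is incompatible with $\mu^2\equiv 4\pmod{32}$ (and with $\mu^2\equiv 0,16\pmod{32}$ when $\val_2(\mu)\ge 2$), so there is no solution modulo $32$; since $32\mid 4^r$ for $r\ge 3$, this together with the previous case gives $\ref{reduction is 3,5 or 7 n r>2}$.

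The work is not conceptually difficult but requires careful bookkeeping: one must track exactly the power of $2$ modulo which $\mu$ (equivalently $\nu$) and $\delta$ need to be specified in order to pin down $\tfrac{\mu^2-4\delta}{2^{2r}}\bmod 4$, and then verify that the residues of $\mu$ modulo $4$ (resp.\ modulo $16$) listed in the tables are precisely the ones that occur. The most delicate part is $\ref{r=2}$, where $\mu\bmod 16$ and $\delta\bmod 16$ must be handled simultaneously; I expect organizing that case cleanly to be the main obstacle to a readable write-up.
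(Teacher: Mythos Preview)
Your proposal is correct and follows essentially the same approach as the paper. The paper's proof of (i) and (ii) is literally ``straightforward counting, so we omit the details,'' and for (iii)--(iv) it makes the same substitution $\mu=2\beta$ and appeals to the fact that odd squares are $\equiv 1\pmod 4$ (resp.\ $\equiv 1\pmod 8$) to rule out $c(u\rho^{\sk'})\equiv 3,7\pmod 8$ for $r\ge 2$ and $c(u\rho^{\sk'})\equiv 5\pmod 8$ for $r\ge 3$; your argument with $\mu^2\bmod 16$ and $\mu^2\bmod 32$ is the same computation phrased without first dividing by $4$.
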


\begin{proof}
\begin{enumerate}[label = \textup{(\roman*)}]
\item is straightforward counting, so we omit the details.
\item is also straightforward counting.
We mention that we have to consider both lifts, since each case of $c(u\rho^{\sk'})$ produces potential solutions representatives modulo $16$ which have to be checked.
\item when $r\ge 2$, the existence of a solution to the system (either of \ref{reduction is 3 or 7 n r>1} or \ref{reduction is 3,5 or 7 n r>2}) implies that $\mu = 2\beta$ for some $\beta\in\Z_2$.
Hence, cancelling a $4$ we obtain that
\[
\beta^2\equiv u\rho^{\sk'} \pmod{4^{r - 1}}.
\]
If $r\ge 2$, then $u\rho^{\sk'}$ is an odd square modulo $4$, i.e. $1$.
Therefore, modulo $8$ it must be $1$ or $5$.
\item if $r\ge 3$, then we can conclude
\[
\beta^2\equiv u\rho^{\sk'} \pmod{8}.
\]
which implies $c(u\rho^{\sk'}) = 1$.
\end{enumerate}
\end{proof}

Lemma~\ref{solutions kloos 1} shows that when $c(u\rho^{\sk'})\not\equiv 1\pmod{8}$, the Kloosterman-type sum will have most of its term equal to $0$ because the congruence conditions are mostly not satisfied.
For $c(u\rho^{\sk'}) \equiv 1\pmod{8}$ the situation is different.

\begin{lemma}
\label{solutions kloos c=1 r ge 3 }
Keep the notation introduced in Lemma~\ref{solutions kloos 1}.
Let $r\ge 3$ and $c(u\rho^{\sk'}) \equiv 1\pmod{8}$ with $u\rho^{\sk'} = \beta^2$ for some $\beta\in\Z_2^*$.
Then the solutions to 
\begin{equation}
\label{eqn: first congruence}
\mu^2 \equiv 4u\rho^{\sk'} \pmod{4^r},
 \end{equation}
are of the form $2\beta + 4\delta$ where $\delta$ is a lift of $0$ or $-\beta$ modulo $4^{r - 1}$.
Of these, only \emph{four} satisfy the other congruence
\begin{equation}
\label{eqn: second congruence}
\frac{\mu^2 - 4u\rho^{\sk'}}{4^r} \equiv 0, 1 \pmod{4}.
\end{equation}
Among these solutions, \emph{two} give $0$ modulo $4$ while the other \emph{two} give $1$ modulo $4$.
\end{lemma}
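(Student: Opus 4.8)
The plan is to work entirely inside $\Z_2\cong\cO_{K_\fq}$, abbreviating $m:=u\rho^{\sk'}\in\Z_2^{\times}$, so that $m=\beta^2$ by hypothesis, and to convert both congruences into $2$-adic divisibility statements. First I would note that every solution $\mu$ of $\mu^2\equiv 4m\pmod{4^r}$ is even, so we may write $\mu=2\beta+4\delta$; the two square roots $\pm\beta$ of $m$ are both captured this way, since $-2\beta+4\delta'=2\beta+4(\delta'-\beta)$. Expanding yields the identity
\[
\mu^2-4m=16\,\delta(\beta+\delta),
\]
so that, dividing by $16=4^2$ (legitimate since $r\ge 3$), the first congruence becomes $\delta(\beta+\delta)\equiv 0\pmod{4^{r-2}}$ and the second becomes $\delta(\beta+\delta)/4^{r-2}\equiv 0,1\pmod 4$.

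Next I would use that $\delta$ and $\beta+\delta$ differ by the unit $\beta$, so exactly one of the two is even. This splits the solutions of the first congruence into two branches, $4^{r-2}\mid\delta$ and $4^{r-2}\mid\beta+\delta$, i.e. $\delta\equiv 0$ or $\delta\equiv-\beta\pmod{4^{r-2}}$; recording $\delta$ modulo $4^{r-1}$, each branch has exactly the four lifts $\delta=4^{r-2}s$ with $s\in\Z/4$, respectively $\delta=-\beta+4^{r-2}t$ with $t\in\Z/4$. Since $-\beta\not\equiv 0\pmod{4^{r-2}}$ the two branches are disjoint, giving the eight solutions of $\mu^2\equiv 4m\pmod{4^r}$ in the form asserted in the statement.

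For the second congruence I would substitute these parametrisations into $\mu^2-4m=16\,\delta(\beta+\delta)$, obtaining $4^r\,s(\beta+4^{r-2}s)$ on the first branch and $4^r\,t(-\beta+4^{r-2}t)$ on the second. Since $r\ge 3$ forces $4^{r-2}\equiv 0\pmod 4$, these give
\[
\frac{\mu^2-4m}{4^r}\equiv s\beta\pmod 4\qquad\text{and}\qquad\frac{\mu^2-4m}{4^r}\equiv -t\beta\pmod 4 ,
\]
respectively. As $\beta$ is a $2$-adic unit, $s\mapsto s\beta$ and $t\mapsto -t\beta$ are bijections of $\Z/4$, so on each branch exactly two of the four parameters land in $\{0,1\}$, one giving residue $0$ (namely $s=0$, resp. $t=0$) and one giving residue $1$. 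Hence precisely four of the eight solutions satisfy $\frac{\mu^2-4m}{4^r}\equiv 0,1\pmod 4$, and among them two give residue $0$ and two give residue $1$, as claimed.

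The expansions and the lift-counting are routine; the point requiring genuine care is the $2$-adic bookkeeping — keeping straight the modulus at which $\delta$ is recorded ($4^{r-1}$) versus the modulus at which the branch constraint $0$ or $-\beta$ lives ($4^{r-2}$), and confirming that the hypothesis $r\ge 3$ is used exactly where $16$ is divided out and where $4^{r-2}$ is absorbed modulo $4$ (for $r=2$ the dichotomy degenerates, in line with Lemma~\ref{solutions kloos 1}\ref{r=2}). As a sanity check I would verify the whole argument on the explicit instance $r=3$, $\beta=1$: there $\mu\equiv 2,14,18,30,34,46,50,62\pmod{64}$ solve the first congruence, of which $2,18,46,62$ survive the second, with $(\mu^2-4)/64\equiv 0,1,1,0\pmod 4$.
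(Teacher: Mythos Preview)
Your proof is correct and follows essentially the same route as the paper: write $\mu=2\beta+4\delta$, reduce the first congruence to $\delta(\beta+\delta)\equiv 0\pmod{4^{r-2}}$, split into the two branches $\delta\equiv 0,-\beta\pmod{4^{r-2}}$, and then test the four lifts of each branch against the second congruence via the reductions $s\beta$ and $-t\beta$ modulo $4$. Your packaging via the identity $\mu^2-4m=16\,\delta(\beta+\delta)$ and the parity observation that exactly one of $\delta,\beta+\delta$ is even is a little cleaner than the paper's stepwise extraction of factors of $2$, but the substance is identical; the numerical check at $r=3$, $\beta=1$ is a nice confirmation.
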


\begin{proof}
Notice that $2\beta$ satisfies the congruence system.
Another potential solution is $2\beta + \gamma$.
The first congruence \eqref{eqn: first congruence} can be rewritten as
\[
\gamma(4\beta + \gamma) \equiv 0 \pmod{4^r}.
\]
This implies that $2\mid \gamma$; say $\gamma = 2\alpha$.
Then, the above congruence becomes
\[
\alpha(2\beta + \alpha) \equiv 0 \pmod{4^{r-1}}.
\]
This implies that $2$ divides $\alpha$ (we are using that $r - 1\ge 1)$; say $\alpha = 2\delta$.
Then
\[
\delta(\beta + \delta) \equiv 0 \pmod{4^{r-2}}.
\]
Since $\beta$ is relatively prime to $2$, 
\[
\delta \equiv 0, -\beta \pmod{4^{r - 2}}.
\]
These classes together produce \emph{eight} lifts modulo $4^{r - 1}$.
Since $\gamma = 4\delta$ is a class modulo $4^{r}$, these are all the possibilities.

We now check when the second congruence condition \eqref{eqn: second congruence} is satisfied.
The lifts modulo $4^{r-1}$ of $0 \pmod{4^{r - 2}}$ are $0, 4^{r-2}, 2\cdot 4^{r-2}, 3\cdot 4^{r-2}$.
Hence
\[
 \frac{(2\beta + k4^{r-1})^2 - 4u\rho^{\sk'}}{4^r} 
 = \frac{k\beta4^{r} + k^24^{2r-2}}{4^r} 
 = k\beta + k^24^{r-2} \equiv k\beta \pmod{4}.
\]
Hence only when $k = 0$ or $k = \beta^{-1}\pmod{4}$ we get $0$ or $1$.
Similarly, the lifts of $-\beta \pmod{4^{r-2}}$ are $-\beta + k4^{r - 2}$ with $k = 0, 1, 2, 3$.
The congruence condition is
\[
 \frac{(2\beta - 4\beta + k4^{r-1})^2 - 4u\rho^{\sk'}}{4^r}
 = \frac{(-2\beta + k4^{r-1})^2 - 4u\rho^{\sk'}}{4^r}
 = -\beta k + k^24^{r - 2} \equiv -\beta k\pmod{4}.
\]
Once more, the ones that satisfy it are when $k = 0$ or when $k = -\beta^{-1}$.
Notice that for the choices $k = 0$, we get the congruence $0$ and there are \emph{two} of them.
This concludes the proof.
\end{proof}

With all the previous results at hand, we now proceed to the computations of $\widetilde{K}_{2^\upsilon, 2^r}(u)$.
\begin{lemma}
\label{kloos values even not rho}
Keep the notation introduced in Lemma~\ref{solutions kloos 1}.
Let $\upsilon,r$ be any non-negative integers.
Then the values of $\widetilde{K}_{2^\upsilon, 2^r}$ are given as follows:
\begin{enumerate}[label = \textup{(\roman*)}]
\item \label{case 1 2 not div rho} Case $\upsilon= 0, r = 0$: 
 $\widetilde{K}_{1, 1}(u) = 4$
\item \label{case 2 2 not div rho} Case $\upsilon= 0, r = 1 $:
 $\widetilde{K}_{1, 2}(u) = 4$
\item \label{case 3 2 not div rho} Case $\upsilon= 0, r = 2 $:
 $\widetilde{K}_{1, 4}(u) = 
 \left\{
	\begin{array}{ll}
		8 & \mbox{if } c(u\rho^{\sk'}) = 1, 5 \\
		0 & \mbox{if } c(u\rho^{\sk'}) = 3, 7
	\end{array}
 \right.$
\item \label{case 4 2 not div rho} Case $\upsilon= 0, r \ge 3 $:
 $\widetilde{K}_{1, 2^r}(u) = 
 \left\{
	\begin{array}{ll}
		16 & \mbox{if } c(u\rho^{\sk'}) = 1 \\
		0 & \mbox{if } c(u\rho^{\sk'}) = 3, 5, 7
	\end{array}
 \right.$
\item \label{case 5 2 not div rho} Case $\upsilon> 0$ even , $r = 0$: $\widetilde{K}_{2^\upsilon, 1}(u) = 2^{\upsilon+ 1}$
\item \label{case 6 2 not div rho} Case $\upsilon> 0$ odd , $r = 0$: $\widetilde{K}_{2^\upsilon, 1}(u) = -2^{\upsilon+ 1}$
\item \label{case 7 2 not div rho} Case $\upsilon> 0$ even , $r = 1 $: $\widetilde{K}_{2^\upsilon, 2}(u) = \left\{
	\begin{array}{ll}
		0 & \mbox{if } c(u\rho^{\sk'}) = 1, 5 \\
		2^{\upsilon+ 2} & \mbox{if } c(u\rho^{\sk'}) = 3, 7
	\end{array}
 \right.$
\item \label{case 8 2 not div rho} Case $\upsilon> 0$ even , $r = 2$: $\widetilde{K}_{2^\upsilon, 4}(u) = \left\{
	\begin{array}{ll}
		2^{\upsilon+ 3} & \mbox{if } c(u\rho^{\sk'}) = 5 \\
		0 & \mbox{if } c(u\rho^{\sk'}) = 1, 3, 7
	\end{array}
 \right.$
\item \label{case 9 2 not div rho} Case $\upsilon> 0$ even , $r \ge 3$: $\widetilde{K}_{2^\upsilon, 2^r}(u) = 
 \left\{
	\begin{array}{ll}
		2^{\upsilon+ 3} & \mbox{if } c(u\rho^{\sk'}) = 1 \\
		0 & \mbox{if } c(u\rho^{\sk'}) = 3, 5, 7
	\end{array}
 \right.$
\item \label{case 10 2 not div rho} Case $\upsilon> 0$ odd , $r = 1 $: $\widetilde{K}_{2^\upsilon, 2}(u) = 0$
\item \label{case 11 2 not div rho} Case $\upsilon> 0$ odd , $r = 2 $: $\widetilde{K}_{2^\upsilon, 4}(u) = 0$
\item \label{case 12 2 not div rho} Case $\upsilon> 0$ odd , $r \ge 3 $: $\widetilde{K}_{2^\upsilon, 2^r}(u) = 0$
 \end{enumerate}
\end{lemma}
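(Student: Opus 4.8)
The plan is to prove Lemma~\ref{kloos values even not rho} by a direct but careful case-by-case evaluation of the local Kloosterman-type sum
\[
\widetilde{K}_{2^\upsilon, 2^r}(u) = \sum_{\substack{\mu\bmod 2^{2+\upsilon+2r}\\ \mu^2\equiv 4u\rho^{\sk'}\bmod 2^{2r}\\ (\mu^2-4u\rho^{\sk'})2^{-2r}\equiv 0,1\bmod 4}}\binom{(\mu^2-4u\rho^{\sk'})2^{-2r}}{2}^{\upsilon},
\]
using the structural input already assembled: \ref{fact1} and \ref{fact2} (so only the parity of $\upsilon$ matters, and the $\upsilon=0$ case is just a count of admissible $\mu$), Lemma~\ref{solutions kloos 1} and Lemma~\ref{solutions kloos c=1 r ge 3 } (which pin down exactly the $\mu$ satisfying the two congruence conditions in each regime), and the classical values of the Kronecker symbol $\binom{\cdot}{2}$, namely $\binom{m}{2}=1$ if $m\equiv 1,7\bmod 8$ and $-1$ if $m\equiv 3,5\bmod 8$. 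The arithmetic of $2$-adic squares — $c(u\rho^{\sk'})=1$ iff $u\rho^{\sk'}$ is a square in $\Z_2^*$ — will be invoked throughout.

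First I would dispose of the $\upsilon=0$ cases \ref{case 1 2 not div rho}--\ref{case 4 2 not div rho}. Here the summand is identically $1$, so $\widetilde{K}_{1,2^r}(u)$ is the number of $\mu$ modulo $2^{2+2r}$ satisfying the system \eqref{eqn: system of cong cond}. For $r=0$ there are no congruence constraints beyond $\mu^2\equiv 4u\rho^{\sk'}\bmod 1$ and the quotient condition, which reads $\mu^2-4u\rho^{\sk'}\equiv 0,1\bmod 4$; counting $\mu$ modulo $4$ gives $4$. For $r=1$ and $r=2$ one reads the number of admissible residue classes straight off the tables in Lemma~\ref{solutions kloos 1}\ref{r=1},\ref{r=2} (two classes modulo $4$, resp. four classes modulo $16$, each lifting to the appropriate number of classes modulo $2^{2+2r}$), and for $r\ge 3$ one invokes Lemma~\ref{solutions kloos c=1 r ge 3 } (four admissible $\mu$ modulo $4^r$, hence $16$ modulo $2^{2+2r}$) together with the vanishing statements \ref{reduction is 3,5 or 7 n r>2} of Lemma~\ref{solutions kloos 1}. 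The bookkeeping of how many lifts modulo $2^{2+\upsilon+2r}$ each admissible class modulo $4^r$ (or $16$, or $4$) contributes — a factor of $2^\upsilon$ coming from the extra modulus $2^\upsilon$ beyond $2^{2+2r}$ — is the only subtlety, and it is routine.

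Next I would handle $\upsilon>0$. By \ref{fact2} I split on the parity of $\upsilon$. When $\upsilon$ is even the summand equals $1$ whenever $(\mu^2-4u\rho^{\sk'})2^{-2r}$ is a \emph{unit} (so the symbol is $\pm1$, and $(\pm1)^\upsilon=1$) and $0$ when that quantity is divisible by $2$; so again $\widetilde{K}_{2^\upsilon,2^r}(u)$ is a count, now of those $\mu$ with $\mu^2\equiv 4u\rho^{\sk'}\bmod 2^{2r}$ but $\not\equiv\bmod 2^{2r+1}$ and with the quotient $\equiv 0,1\bmod 4$. The relevant counts come from subtracting the "$\bmod 2^{2r+1}$" solution count from the "$\bmod 2^{2r}$" count inside the tables/lemmas above, then multiplying by the $2^\upsilon$ lift factor; this yields cases \ref{case 5 2 not div rho},\ref{case 7 2 not div rho},\ref{case 8 2 not div rho},\ref{case 9 2 not div rho}. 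When $\upsilon$ is odd the summand is the Kronecker symbol itself, and for $r\ge 1$ the vanishing inputs of Lemma~\ref{solutions kloos 1} leave only the case $c(u\rho^{\sk'})=1$, $r\ge 3$ (and $r=1,2$ require a short separate check using the tables); in the surviving case the odd solutions split evenly between quotient-values giving symbol $+1$ and symbol $-1$ — this is where Lemma~\ref{solutions kloos c=1 r ge 3 } (two solutions giving quotient $0\bmod 4$, which has $\binom{0}{2}$... careful: when the quotient is a unit) must be combined with the symbol values at $1$ and $3,5,7$ — producing the cancellations in \ref{case 10 2 not div rho}--\ref{case 12 2 not div rho}. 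The case $r=0$, $\upsilon$ odd is separate: there is no congruence restriction from $2^{2r}$, the quotient is $\mu^2-4u\rho^{\sk'}$ itself, and one computes $\sum_{\mu\bmod 2^{\upsilon+2}}\binom{\mu^2-4u\rho^{\sk'}}{2}$ over the admissible $\mu$, getting $-2^{\upsilon+1}$ by an explicit count of residues modulo $8$ (using $\binom{m}{2}$ depends only on $m\bmod 8$); this gives \ref{case 6 2 not div rho}.

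The main obstacle I anticipate is precisely the discrepancy between the Kronecker symbol and the quadratic character at $2$: the symbol assigns $+1$ to the nonsquare class $7\bmod 8$, so the value of $\widetilde{K}_{2^\upsilon,2^r}(u)$ in the odd-$\upsilon$ cases is \emph{not} governed by "how many $\mu$ make the quotient a square" but by "how many make the quotient $\equiv 1,7$ versus $\equiv 3,5$ modulo $8$". One must track the quotient $(\mu^2-4u\rho^{\sk'})2^{-2r}$ modulo $8$ (not just modulo $4$), which requires refining the solution parametrizations of Lemma~\ref{solutions kloos c=1 r ge 3 } one $2$-adic digit further; the congruence condition $\equiv 0,1\bmod 4$ already filters out $3$ and part of $5,7$, so the delicate point is exactly which of the remaining admissible $\mu$ land in $1\bmod 8$ versus $7\bmod 8$ (for the count) and whether the $0\bmod 4$ quotients are actually $0$ exactly or $4\bmod 8$ (where the symbol is $\binom{4}{2}=0$ if $4$ is even — in fact the quotient is then non-unit and the symbol vanishes, which is why those terms drop). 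Once this one-digit refinement is done, the even-$\upsilon$ counts and all the geometric series needed downstream (in Propositions~\ref{kloosc2},~\ref{kloosc4},~\ref{more kloosc}) follow mechanically.
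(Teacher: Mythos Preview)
Your proposal is essentially correct and follows the same case-by-case architecture as the paper's proof: use \ref{fact1} to reduce the $\upsilon=0$ cases to pure counts read off from Lemmas~\ref{solutions kloos 1} and~\ref{solutions kloos c=1 r ge 3 }, and for $\upsilon>0$ split on parity, with the even-$\upsilon$ cases again reducing to a count (of solutions whose quotient is $\equiv 1\pmod 4$, since an even quotient kills the symbol) multiplied by the number of lifts to $\bmod\,2^{\upsilon+2+2r}$. A few of your parenthetical counts are off (for instance, for $r=1$ there is \emph{one} admissible class modulo $4$ in each residue case of $c(u\rho^{\sk'})$, not two, and for $r=2$ there are two admissible classes modulo $16$, not four), but the lifting factor corrects for this and the final numbers come out right.

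The one place where the paper is slicker than your plan is the odd-$\upsilon$, $r\ge 1$ cancellation \ref{case 10 2 not div rho}--\ref{case 12 2 not div rho}. You propose to refine the solution tables one $2$-adic digit further so as to track the quotient modulo $8$ and then check that the residues $1$ and $5$ (symbol $+1$ vs.\ $-1$) occur equally often. That works, but it is more labor than necessary. The paper instead observes a single uniform mechanism: if $x$ is an admissible solution modulo $4^r$, then $x$ and $x+4^r$ are both admissible lifts to the next level, and one computes
\[
\frac{(x+4^r)^2-4u\rho^{\sk'}}{4^r}=\frac{x^2-4u\rho^{\sk'}}{4^r}+2x+4^r,
\]
where $2x+4^r\equiv 4\pmod 8$ in every relevant case (since the solutions with quotient $\equiv 1\pmod 4$ always have $x\equiv 0\pmod 4$ when $r=1$ and $x\equiv 2\pmod 4$ when $r\ge 2$). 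Hence the two lifts swap the quotient between $1$ and $5\pmod 8$, flipping the sign of $\binom{\cdot}{2}$; this gives the pairwise cancellation immediately and avoids any further table-building.
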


\begin{proof}
First, we explain the counting strategy.
Lemmas~\ref{solutions kloos 1} and \ref{solutions kloos c=1 r ge 3 } give the number of solutions to the system \eqref{eqn: system of cong cond} for the different values of $c(u\rho^{\sk'})$.
On the other hand, the Kloosterman-type sums we are evaluating consider the solutions to this system modulo $2^{2 +\upsilon + 2r}$.
The system itself remains invariant, but the value of the symbol might not (since it can exchange $1$ and $5$).

Note that if $x$ is a solution to the system modulo $4^r$, then $x + 4^r$ is another a lift modulo $4^{r + 1}$.
We get
\[
\frac{(x + 4^r)^2 - 4u\rho^{\sk'}}{4^r} = \frac{x^2 - 4u\rho^{\sk'}}{4^r} + 2x +4^r.
\]
Recall that all solutions to the system are even.
Hence, this equation implies the following: if the quotient is $0$, then its lifts are also $0$ modulo $4$ (since all lifts are obtained by adding $4^r$ consecutively).
Hence, they can only count when $\upsilon= 0$ as otherwise the symbol has value $0$.

If $\frac{x^2 - 4u\rho^{\sk'}}{4^r} + 2x +4^r \equiv 1\pmod{4}$, the term $2x + 4^r$ can swap between the two lifts $1,5 \pmod{8}$.
Using Lemma~\ref{solutions kloos 1}, we see that (for quotient having value $1$) when $r = 1$, $2x$ is a multiple of $4$ and so the change is $x$ to $x + 4$, so a swap of sign happens in the symbol.
Similarly, for $r\ge 2$, the change is controlled by $2x$ and we see this is an odd multiple of $4$ every time.
Once more, a swap of sign happens in the symbol.

The conclusion is: when $\upsilon\ge 2$ is \emph{even}, all lifts of the solutions with quotient $1$ contribute a $1$ (to the counting) and it is enough to count the number of lifts.
On the other hand, if $\upsilon\ge 1$ is \emph{odd} then all lifts cancel in pairs, except for $r = 0$.
In this final case, the division is vacuous.
This proves \ref{case 10 2 not div rho}, \ref{case 11 2 not div rho} and \ref{case 12 2 not div rho} having value $0$.

We do each case separately.

\textit{\underline{Case $\upsilon= r = 0$}:}
In this case the congruence conditions are vacuous.
We just had a term, contributing $1$ to the sum, per class $\mu \pmod{4}$.
Hence $\widetilde{K}_{1, 1}(u) = 4$.
This settles \ref{case 1 2 not div rho}.

\textit{\underline{Case $\upsilon= 0, r = 1$}:} In this case 
\[
\widetilde{K}_{(1), 2}(u) = \sum_{\substack{\mu\mod 2^{4}\\\mu^2\equiv 4u\rho^{\sk'}\mod 2^{2}\\ \frac{\mu^2 - 4u\rho^{\sk'}}{2^{2}}\equiv 0,1 \pmod{4} }} \binom{(\mu^2 - 4u\rho^{\sk'})2^{-2}}{2}^0
 = 4 \sum_{\substack{\mu\mod 2^{2}\\\mu^2\equiv 4u\rho^{\sk'}\mod 2^{2}\\ \frac{\mu^2 - 4u\rho^{\sk'}}{2^{2}}\equiv 0,1 \pmod{4} }} \binom{(\mu^2 - 4u\rho^{\sk'})2^{-2}}{2}^0,
\]
where we have used the periodicity of the symbol for the last line.
Since $\upsilon= 0$ we have that we have one term per solution to the congruence solutions.
The table in Proposition~\ref{solutions kloos 1}\ref{r=1} shows there is in each case a single solution.
This settles (2).

\textit{\underline{Case $\upsilon= 0, r = 2$}:}
Since $\upsilon= 0$, we must count the corresponding number of solutions of the given system.
By Proposition~\ref{solutions kloos 1}\ref{r=2} and \ref{reduction is 3 or 7 n r>1}, the result follows.

\textit{\underline{Case $\upsilon= 0, r \ge 3$}:} We count one more time the number of solutions modulo $4^r$ of the system
\begin{align*}
 \mu^2 &\equiv 4u\rho^{\sk'} \pmod{4^r},\\
 \frac{\mu^2 - 4u\rho^{\sk'}}{4^r} &\equiv 0, 1 \pmod{4}.
 \end{align*}
By Proposition~\ref{solutions kloos 1} we get that for $c(u\rho^{\sk'})\neq 1$ there are no solutions.
On the other hand, if $c(u\rho^{\sk'}) = 1$ then there are $4$ solutions.
The results follows.

\textit{\underline{Case $\upsilon> 0, r = 0$}:} In this case 
\[
\widetilde{K}_{2^\upsilon, (1)}(u) = \sum_{\substack{\mu\mod 2^{2+v}\\\mu^2\equiv 4u\rho^{\sk'}\mod 2^{0}\\ \frac{\mu^2 - 4u\rho^{\sk'}}{2^{0}}\equiv 0,1 \pmod{4} }} \binom{(\mu^2 - 4u\rho^{\sk'})2^{0}}{2}^\upsilon
 = 2 \sum_{\substack{\mu\mod 2^{v+1}}} \binom{\mu^2 - 4u\rho^{\sk'}}{2}^\upsilon.
\]
Notice we have used in the last line that the congruences conditions are vacuous, since all squares are $0, 1$ modulo $4$.
The expression 
\[
\binom{(\mu^2 - 4u\rho^{\sk'})2^{0}}{2}
\]
is periodic modulo $4$, and modulo $2^{\upsilon+ 1}$ there are $2^{\upsilon- 1}$ lifts of each class modulo $4$.
Hence
\begin{align*}
 \widetilde{K}_{2^\upsilon, (1)}(u) 
 &= 2^\upsilon\left(\binom{0^2 - 4u\rho^{\sk'}}{2}^\upsilon+ \binom{1^2 - 4u\rho^{\sk'}}{2}^\upsilon+ \binom{2^2 - 4u\rho^{\sk'}}{2}^\upsilon+ \binom{3^2 - 4u\rho^{\sk'}}{2}^\upsilon\right)\\
 &= 2^\upsilon\left(\binom{-4u\rho^{\sk'}}{2}^\upsilon+ \binom{1 - 4u\rho^{\sk'}}{2}^\upsilon+ \binom{4 - 4u\rho^{\sk'}}{2}^\upsilon+ \binom{1 - 4u\rho^{\sk'}}{2}^\upsilon\right)\\
 &= 2^\upsilon\left(\binom{1 - 4u\rho^{\sk'}}{2}^\upsilon+ \binom{1 - 4u\rho^{\sk'}}{2}^\upsilon\right),\\
 &= 2^{v+1}\binom{1 - 4u\rho^{\sk'}}{2}^\upsilon,
\end{align*}
where we have used in the last equality that the symbol is $0$ because $-4u\rho^{\sk'}$ and $4 - 4u\rho^{\sk'}$ are multiples of $2$.
If $\upsilon$ is even, the symbol equals $1$ and we obtain statement \ref{case 5 2 not div rho}.

On the other hand, if $\upsilon$ is odd then
\[
\widetilde{K}_{2^\upsilon, (1)}(u) = 2^{v+1}\binom{1 - 4u\rho^{\sk'}}{2}.
\]
Notice that $4u\rho^{\sk'} \equiv 4 \pmod{8}$, since $u\rho^{\sk'}$ is not a multiple of $2$.
Hence,
\[
\widetilde{K}_{2^\upsilon, (1)}(u) = 2^{v+1}\binom{1 - 4u\rho^{\sk'}}{2} = 2^{\upsilon+ 1}\binom{-3}{2} = -2^{\upsilon+ 1},
\]
which is statement \ref{case 6 2 not div rho}.

\textit{\underline{Case $\upsilon> 0$ even , $r = 1 $}:} In this case 
\[
\widetilde{K}_{2^\upsilon, 2}(u) = \sum_{\substack{\mu\mod 2^{2+\upsilon+ 2}\\\mu^2\equiv 4u\rho^{\sk'}\mod 2^{2}\\ \frac{\mu^2 - 4u\rho^{\sk'}}{2^{2}}\equiv 0,1 \pmod{4} }} \binom{(\mu^2 - 4u\rho^{\sk'})2^{-2}}{2}^\upsilon
 = 4 \sum_{\substack{\mu\mod 2^{\upsilon+ 2}\\\mu^2\equiv 4u\rho^{\sk'}\mod 2^{2}\\ \frac{\mu^2 - 4u\rho^{\sk'}}{2^{2}}\equiv 0,1 \pmod{4} }} \binom{(\mu^2 - 4u\rho^{\sk'})2^{-2}}{2}^\upsilon.
\]
This time, since $\upsilon$ is even and positive, we only get a contribution of $1$ from the solutions with 
\[
\frac{\mu^2 - 4u\rho^{\sk'}}{4}\equiv 1 \pmod{4}
\]
We obtain the result using the table of Proposition~\ref{solutions kloos 1}\ref{r=1}.

\textit{\underline{Case $\upsilon> 0$ even , $r = 2 $}:} Result follows from Proposition~\ref{solutions kloos 1}\ref{r=2} and \ref{reduction is 3 or 7 n r>1}.

\textit{\underline{Case $\upsilon> 0$ even , $r \ge 3 $}:} When $c(u\rho^{\sk'}) \neq 1$, the sum has no terms by Proposition~\ref{solutions kloos 1}\ref{reduction is 3,5 or 7 n r>2}.
If $c(u\rho^{\sk'}) = 1$, then Proposition~\ref{solutions kloos c=1 r ge 3 } proves there are \emph{two} solutions.
This settles \ref{case 9 2 not div rho}.

\textit{\underline{Case $\upsilon> 0$ odd , $r = 1 $}:}
We obtain the result using the tables in Propositions~\ref{solutions kloos 1} and \ref{solutions kloos c=1 r ge 3 }.
In each case, we see that there are as many terms whose quotients are $1$ modulo $8$ as cases where it it $5$ modulo $8$.
Hence, their contribution cancels.

\textit{\underline{Case $\upsilon> 0$ odd , $r\ge 2 $}:} Proposition~\ref{solutions kloos 1} implies there are no solutions and the claim follows
\end{proof}


\begin{proposition}
\label{kloosc2}
We keep the notation introduced in Lemma~\ref{solutions kloos 1}.
Then 
\[
\D_{2}(z,u) = 4\frac{1-1/2^{z+1}}{1-1/2^{2z}}
\]
\end{proposition}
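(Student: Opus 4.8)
The plan is to compute the local Dirichlet series $\D_2(z,u)$ directly from the values of the local Kloosterman-type sums $\widetilde{K}_{2^\upsilon, 2^r}(u)$ recorded in Lemma~\ref{kloos values even not rho}. Recall that, under the splitting hypothesis \ref{ass: split}, we have $2\nmid\rho$ here (this is the case $\fq\mid 2$, $\fq\nmid\rho$, analogous to Proposition~\ref{kloosc1}), so every relevant value of $c(u\rho^{\sk'})$ must be treated, but the claim is that the answer is independent of $u$. First I would split the defining double series
\[
\D_2(z,u) = \sum_{r=0}^{\infty}\frac{1}{2^{r(2z+1)}}\sum_{\upsilon=0}^{\infty}\frac{\widetilde{K}_{2^\upsilon,2^r}(u)}{2^{\upsilon(z+1)}}
\]
into the twelve cases of Lemma~\ref{kloos values even not rho}, noting immediately that cases \ref{case 10 2 not div rho}, \ref{case 11 2 not div rho}, \ref{case 12 2 not div rho} (i.e. $\upsilon$ odd, $r\ge 1$) contribute nothing. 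The $\upsilon$-sums that survive are geometric in $2^{-z}$ after absorbing the powers of $2$ coming from the sizes $2^{\upsilon+c}$ of the Kloosterman values against $2^{-\upsilon(z+1)}$, exactly as in the proof of Proposition~\ref{kloosc1}.

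Next I would organize the computation by the residue class $c(u\rho^{\sk'})\pmod 8$, since the nonzero contributions differ across the four classes $1,3,5,7$. For $c=1$, cases \ref{case 1 2 not div rho}, \ref{case 2 2 not div rho}, \ref{case 4 2 not div rho}, \ref{case 5 2 not div rho}, \ref{case 6 2 not div rho}, \ref{case 9 2 not div rho} are active (with $r=2$ and the even-$\upsilon$, $r=2$ case vanishing); for $c=3$ or $c=7$, the active ones are \ref{case 1 2 not div rho}, \ref{case 2 2 not div rho}, \ref{case 5 2 not div rho}, \ref{case 6 2 not div rho}, \ref{case 7 2 not div rho}; for $c=5$, the active ones are \ref{case 1 2 not div rho}, \ref{case 2 2 not div rho}, \ref{case 3 2 not div rho}, \ref{case 5 2 not div rho}, \ref{case 6 2 not div rho}, \ref{case 8 2 not div rho}. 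In each case I would sum the resulting geometric series in $2^{-z}$ and $2^{-(2z+1)}$ and check, by the same cancellation bookkeeping used at the end of Proposition~\ref{kloosc1}, that the $u$-dependent and $c$-dependent pieces telescope away, leaving the common value
\[
\D_2(z,u) = 4\,\frac{1-1/2^{z+1}}{1-1/2^{2z}}.
\]
A useful sanity check is the overall factor $4$: it must come entirely from the $\upsilon=r=0$ term $\widetilde{K}_{1,1}(u)=4$ (Lemma~\ref{kloos values even not rho}\ref{case 1 2 not div rho}), reflecting that the modulus in $K_{\fa,\fd}(u)$ carries the extra factor of $4$ that at odd primes is a unit; this is consistent with the appearance of $4^n$ in Theorem~\ref{main theorem of the kloosterman section} once one multiplies the $n$ local factors at the primes above $2$.

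I expect the main obstacle to be the bookkeeping of the four separate cases according to $c(u\rho^{\sk'})\bmod 8$ and verifying that the $c$-dependent contributions genuinely cancel — in particular matching, for $c=3,5,7$, the contribution of the surviving $r=1$ (or $r=2$) terms against the absence of the $r\ge 3$ terms, so that all four cases yield the same closed form. This is the $2$-adic analogue of the long cancellation computation that concludes Proposition~\ref{kloosc1}, and it is where sign errors in the Kronecker symbol evaluations (the discrepancy at $7\bmod 8$ flagged in the introduction) are most likely to creep in; I would double-check each case by comparing with the known rational answer implicit in \cite{AliI}. Once the four computations agree, the statement follows, and the proof is complete.
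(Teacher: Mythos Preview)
Your approach is essentially identical to the paper's: split by the residue class $c(u\rho^{\sk'})\pmod 8$, feed in the values from Lemma~\ref{kloos values even not rho}, and check that the $c$-dependent pieces cancel (the paper does $c=5$ first and then compares the other cases to it). One bookkeeping slip: for $c=1$ you omitted case~\ref{case 3 2 not div rho} ($\upsilon=0$, $r=2$, which gives $\widetilde{K}_{1,4}(u)=8$), so the active list there should be \ref{case 1 2 not div rho}, \ref{case 2 2 not div rho}, \ref{case 3 2 not div rho}, \ref{case 4 2 not div rho}, \ref{case 5 2 not div rho}, \ref{case 6 2 not div rho}, \ref{case 9 2 not div rho}; without that term the $c=1$ computation will not close up.
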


\begin{proof}
We separate into cases according to the value of $c(u\rho^{\sk'})$.
Notice that the following sum is common to all the cases (corresponding to \ref{case 1 2 not div rho}, \ref{case 2 2 not div rho}, \ref{case 5 2 not div rho}, and \ref{case 6 2 not div rho} in Lemma~\ref{kloos values even not rho}):
\begin{equation}\label{shared terms}
 4 
 + \frac{4}{2^{2z + 1}} 
 + \sum_{\upsilon= 1}^{\infty}\frac{2^{2\upsilon+ 1}}{2^{2\upsilon (z + 1)}}
 - \sum_{\upsilon= 0}^{\infty}\frac{2^{2\upsilon+ 2}}{2^{(2\upsilon +1)(z + 1)}} 
 = 4 
 + \frac{4}{2^{2z + 1}} 
 +\frac{2^{-2z+1} - 2^{-z+1}}{1-2^{-2z}}
 = \frac{4 - 2^{-4z + 1} - 2^{-z+1}}{1-2^{-2z}}.
\end{equation}
We begin with $c(u\rho^{\sk'}) = 5$ and then we compare all of them to this one.

\textit{\underline{Case $c(u\rho^{\sk'}) = 5$}:} We have
\begin{align*}
 \D_{2}(z, U) 
 &= \sum_{r = 0}^{\infty}\frac{1}{2^{r(2z + 1)}} \sum_{\upsilon= 0}^{\infty}\frac{\widetilde{K}_{2^\upsilon, 2^r}(u)}{2^{v(z + 1)}}\\
 &= 4 
 + \frac{4}{2^{2z + 1}} 
 + \frac{8}{2^{2(2z+1)}} 
 + \sum_{\upsilon= 1}^{\infty}\frac{2^{2\upsilon+ 1}}{2^{2\upsilon (z + 1)}}
 - \sum_{\upsilon= 0}^{\infty}\frac{2^{2\upsilon+ 2}}{2^{(2\upsilon +1)(z + 1)}} 
 +\frac{8}{2^{2(2z + 1)}} \sum_{\upsilon=1}^{\infty}\frac{2^{2\upsilon }}{2^{2\upsilon (z + 1)}}\\
 &= \frac{4 - 2^{-4z + 1} - 2^{-z+1}}{1-2^{-2z}}
 +\frac{8}{2^{2(2z + 1)}} \sum_{\upsilon=0}^{\infty}\frac{2^{2\upsilon }}{2^{2\upsilon (z + 1)}}\\
 &= \frac{4 - 2^{-4z + 1} - 2^{-z+1}}{1-2^{-2z}}
 +\frac{8}{2^{2(2z + 1)}}\frac{1}{1-2^{-2z}}\\
 &= \frac{4 - 2^{-z+1}}{1-2^{-2z}}\\
 &= 4\cdot \frac{1 -2^{-z-1}}{1 - 2^{-2z}}
\end{align*}

\textit{\underline{Case $c(u\rho^{\sk'}) = 1$}:}
Notice that $c(u\rho^{\sk'})=1$ and $c(u\rho^{\sk'})=5$ share almost all terms except for \ref{case 4 2 not div rho} and \ref{case 9 2 not div rho}, which appear when $c(u\rho^{\sk'}) = 1$ but not when $c(u\rho^{\sk'})=5$.
Similarly, \ref{case 8 2 not div rho} appears for $c(u\rho^{\sk'})=5$ but not for $c(u\rho^{\sk'})=1$.

Since we already proved that the case $c(u\rho^{\sk'})=5$ works, we only need to prove that the terms \ref{case 4 2 not div rho} and \ref{case 9 2 not div rho} sum to the same as \ref{case 8 2 not div rho}.
We have
\begin{align*}
 \sum_{r\ge3}\frac{16}{2^{r(2z+1)}} + \sum_{r\ge 3}\frac{1}{2^{r(2z + 1)}} \sum_{\upsilon= 1}^{\infty}\frac{2^{2\upsilon +3}}{2^{2\upsilon (z + 1)}}
 &= 16\cdot\frac{2^{-6z-3}}{1 - 2^{-2z-1}} + 8\cdot\frac{2^{-6z-3}}{1 - 2^{-2z-1}}\cdot \frac{2^{-2z}}{1 - 2^{-2z}}\\
 &= \frac{2^{-6z+1}}{1 - 2^{-2z-1}} + \frac{2^{-6z}}{1 - 2^{-2z-1}}\cdot \frac{2^{-2z}}{1 - 2^{-2z}}\\
 &= \frac{2^{-6z+1}(1 - 2^{-2z}) + 2^{-8z}}{(1 - 2^{-2z-1})(1 - 2^{-2z})}\\
 &= \frac{2^{-6z+1} - 2^{-8z + 1} + 2^{-8z}}{(1 - 2^{-2z-1})(1 - 2^{-2z})}\\
 &= \frac{2^{-6z+1} - 2^{-8z}}{(1 - 2^{-2z-1})(1 - 2^{-2z})}\\
 &= \frac{2^{-6z+1}}{1 - 2^{-2z}}.
\end{align*}
On the other hand, sum \ref{case 5 2 not div rho} is
\[
\frac{1}{2^{2(2z + 1)}} \sum_{\upsilon=1}^{\infty}\frac{2^{2\upsilon+ 3}}{2^{2\upsilon (z + 1)}} = \frac{8}{2^{2(2z + 1)}} \sum_{\upsilon=1}^{\infty}\frac{1}{2^{2\upsilon z}} = \frac{8}{2^{2(2z + 1)}}\frac{2^{-2z}}{1 - 2^{-2z}} = \frac{2^{-6z+1}}{1 - 2^{-2z}}.
\]
This proves that the assertion also holds for the case $c(u\rho^{\sk'}) = 1$.

\textit{\underline{Case $c(u\rho^{\sk'}) = 3, 7$}: }
Notice that all the values of the Kloosterman sums coincide for $c(u\rho^{\sk'}) =3, 7$.
In either case, the sum has one additional term compared to the expression \eqref{shared terms}, namely
\[
\frac{1}{2^{2z+1}} \sum_{\upsilon=1}^{\infty}\frac{2^{2\upsilon +2}}{2^{2\upsilon (z+1)}} = \frac{8}{2^{2(2z+1)}} 
 +\frac{8}{2^{2(2z + 1)}} \sum_{\upsilon=1}^{\infty}\frac{2^{2\upsilon }}{2^{2\upsilon (z + 1)}}.
\]
The result follows from the case $c(u\rho^{\sk'}) = 5$.
\end{proof}

\subsubsection{Case \texorpdfstring{$2\mid \rho$ and $\sk$}{} odd.}

In order to complete this case we need some lemmas.
As before, we write $\sk = 2\sk_0 +1$.

\begin{lemma}
\label{lemma: 2 div rho, k odd, never integer}
For $r > \sk_0 + 1$, $\mu$ an integer, and $u$ a unit, the quantity $\frac{x^2 - 2^{2 + \sk}u}{2^{2r}}$ is never an integer.
Equivalently,
\[
2^{2r} \nmid x^2 - 2^{2 + k}u.
\]
\end{lemma}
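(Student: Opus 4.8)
The statement is a $2$-adic non-integrality claim about $x^2 - 2^{2+\sk}u$, and the natural approach is a short valuation argument at the prime above $2$. Since $\sk = 2\sk_0 + 1$ is odd, I would first argue by contradiction: suppose $2^{2r} \mid x^2 - 2^{2+\sk}u$ for some $r > \sk_0 + 1$. Because $2^{2 + \sk}u = 2^{2\sk_0 + 3}u$ has $2$-adic valuation exactly $2\sk_0 + 3 = \sk + 2$, and $2r \geq 2\sk_0 + 4 = \sk + 3 > \val_2(2^{2+\sk}u)$, the divisibility $2^{2r} \mid x^2 - 2^{2+\sk}u$ forces $\val_2(x^2) = \val_2(2^{2+\sk}u) = \sk + 2$ (standard ultrametric inequality: if $a - b$ has strictly larger valuation than $b$, then $\val_2(a) = \val_2(b)$, applied with $a = x^2$, $b = 2^{2+\sk}u$).

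The contradiction then comes from parity: $\val_2(x^2) = 2\val_2(x)$ is even, whereas $\sk + 2 = 2\sk_0 + 3$ is odd. Hence no such $x$ exists, and $2^{2r} \nmid x^2 - 2^{2+\sk}u$ for all integers $x$, which is exactly the claim. This is the same mechanism as Lemma~\ref{ntfacts} (the odd-exponent non-existence lemma for primes not above $2$), transplanted to the prime $2$, so I would phrase the proof to parallel that one.

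The only subtlety to be careful about — and this is the step I expect to be the main (minor) obstacle — is the boundary value $r = \sk_0 + 1$: there $2r = 2\sk_0 + 2 = \sk + 1 < \sk + 2$, so the argument above still applies, but the lemma as stated only asserts the conclusion for $r > \sk_0 + 1$, i.e. $r \geq \sk_0 + 2$, giving $2r \geq \sk + 3$; so in fact there is extra room and nothing delicate happens. I would just make sure the inequalities $2r > \val_2(2^{2+\sk}u)$ are stated cleanly so that the ultrametric step is unambiguous, and note explicitly that $u$ being a unit is what pins down $\val_2(2^{2+\sk}u) = \sk + 2$. No further machinery is needed; this is a two-line valuation computation once the parity observation is in place.
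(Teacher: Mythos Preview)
Your proof is correct and follows essentially the same valuation-and-parity argument as the paper: both use that $r > \sk_0 + 1$ gives $2r \geq \sk + 3 > \val_2(2^{2+\sk}u) = \sk + 2$, forcing $\val_2(x^2) = \sk + 2$, which contradicts the evenness of $\val_2(x^2)$. (Your aside about the boundary case $r = \sk_0 + 1$ is slightly muddled---the divisibility \emph{can} hold there, e.g.\ for $x = 2^{\sk_0+1}$, which is precisely why the hypothesis excludes it---but this has no bearing on the actual proof.)
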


\begin{proof}
By assumption, we have that $2r > 2 + 2\sk_0$.
It is clear that $2r > 2 + 2\sk_0 + 1 = 2 + \sk$.
In other words, $2r\ge 3+\sk$.
We have that $2^{2+\sk}\mid 2^{2r}$ or in fact, $2^{3+\sk}\mid 2^{2r}$.

If we are to assume that $2^{2r} \mid \mu^2 - 2^{2+\sk}u$, then the above observation forces that
\[
2^{2+\sk}\mid 2^{2r} \mid \mu^2 -2^{2+\sk}u.
\]
Hence, $2^{2+\sk}\mid \mu^2$.
But, comparing powers (and parity]), we actually have that 
\[
2^{3 +\sk}\mid x^2.
\]
This, together with the above divisibility, implies
\[
2^{3+\sk} \mid 2^{2+\sk}.
\]
which is impossible.
This contradicts our assumption.
\end{proof}

We now have the values of the Kloosterman-type sums:

\begin{lemma}
\label{lemma: 2 divides rho, k odd}
Let $\fq= 2$ and suppose that $2^\sk\Vert \rho^{\sk'}$.
In other words, $\fq=\fp=2$ and in our previous notation $\Norm_K(\fq)=\Norm_K(\fp)=2$.
If $\sk = 2\sk_0 + 1$ is odd, then the values of $\widetilde{K}_{2^\upsilon, 2^r}(u)$ are as given below:
\begin{enumerate}[label = \textup{(O-\roman*)}]
\item \label{Malors O-i 13-Jul case} Case $\upsilon= 0, r = 0$: 
 $\widetilde{K}_{1, 1}(u) = 4$
 
\item \label{Malors O-ii 13-Jul case} Case $\upsilon> 0, r = 0 $:
 $\widetilde{K}_{2^\upsilon, 1}(u) = 2^{v+2} - 2^{v+1}$
 
\item \label{Malors O-iii 13-Jul case} Case $\upsilon= 0$ , $r \le \sk_0$: $\widetilde{K}_{1, 2^r}(u) = 2^{2+r}$
 
\item \label{Malors O-iv 13-Jul case} Case $\upsilon= 0$ , $r = 1 + \sk_0$: $\widetilde{K}_{1, 2^r}(u) = 0$
 
\item \label{Malors O-v 13-Jul case} Case $\upsilon= 0$ , $r > 1 + \sk_0$: $\widetilde{K}_{1, 2^r}(u) = 0$
 
\item \label{Malors O-vi 13-Jul case} Case $\upsilon> 0$ even , $r \le \sk_0$: $\widetilde{K}_{2^\upsilon, 2^r}(u) = 2^{v-1}(2^{r+3}-2^{r+2})$

\item \label{Malors O-vii 13-Jul case} Case $\upsilon> 0$ even , $r = \sk_0 + 1$: $\widetilde{K}_{2^\upsilon, 2^r}(u) = 0$

\item \label{Malors O-viii 13-Jul case} Case $\upsilon> 0$ even , $r > \sk_0 + 1$: $\widetilde{K}_{2^\upsilon, 2^r}(u) = 0$

\item \label{Malors O-ix 13-Jul case} Case $\upsilon> 0$ odd , $r \le \sk_0$: $\widetilde{K}_{2^\upsilon, 2^r}(u) = 2^{\upsilon+ r + 1}$

\item \label{Malors O-x 13-Jul case} Case $\upsilon> 0$ odd , $r = \sk_0 + 1$: $\widetilde{K}_{2^\upsilon, 2^r}(u) = 0$

\item \label{Malors O-xi 13-Jul case} Case $\upsilon> 0$ odd , $r > \sk_0 + 1$: $\widetilde{K}_{2^\upsilon, 2^r}(u) = 0$
\end{enumerate}
\end{lemma}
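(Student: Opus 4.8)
The plan is to run the same explicit $2$-adic case analysis used in the proofs of Lemma~\ref{q odd q div rho Kvals} and Lemma~\ref{kloos values even not rho}, relying on the two standing simplifications: since $2$ splits in $K$, the modified Hilbert symbol $\binom{\,\cdot\,,\,2^r}{2^\upsilon}$ is literally the Kronecker symbol $\binom{(\mu^2-4u\rho^{\sk'})2^{-2r}}{2}^\upsilon$, and by \ref{fact1} and \ref{fact2} only the parity of $\upsilon$ matters once $\upsilon>0$ while for $\upsilon=0$ the sum just counts the solutions of the congruence system. Writing $\rho^{\sk'}=2^{\sk}w$ with $w$ a unit and absorbing $w$ into $u$, the defining conditions become $\mu^2\equiv 2^{2+\sk}u\pmod{2^{2r}}$ together with $(\mu^2-2^{2+\sk}u)2^{-2r}\equiv 0,1\pmod 4$, and I set $\sk=2\sk_0+1$ throughout. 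The structural feature that is absent when $2\nmid\rho$ is that $2+\sk=2\sk_0+3$ is \emph{odd}; this is exactly what forces the dichotomy between $r\le\sk_0$, where every solution survives, and $r\ge\sk_0+1$, where there are none.

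I would first dispose of the vanishing ranges. For $r>\sk_0+1$, Lemma~\ref{lemma: 2 div rho, k odd, never integer} says $2^{2r}\nmid\mu^2-2^{2+\sk}u$ for every $\mu$, so the congruence $\mu^2\equiv 2^{2+\sk}u\pmod{2^{2r}}$ has no solutions and the sum is empty; this gives \ref{Malors O-v 13-Jul case}, \ref{Malors O-viii 13-Jul case} and \ref{Malors O-xi 13-Jul case}. For $r=\sk_0+1$ one has $2r=2\sk_0+2<2+\sk$, so the congruence reduces to $\mu^2\equiv 0\pmod{2^{2\sk_0+2}}$, i.e.\ $\mu=2^{\sk_0+1}\beta$; substituting gives $(\mu^2-2^{2+\sk}u)2^{-2r}=\beta^2-2u$, and since $\beta^2\equiv 0,1\pmod 4$ while $2u\equiv 2\pmod 4$ this quotient is $\equiv 2$ or $3\pmod 4$, never $0$ or $1$. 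Hence the sum is again empty, giving \ref{Malors O-iv 13-Jul case}, \ref{Malors O-vii 13-Jul case} and \ref{Malors O-x 13-Jul case}.

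For the surviving range $0\le r\le\sk_0$ I would argue uniformly. When $r\ge 1$ the congruence $\mu^2\equiv 2^{2+\sk}u\pmod{2^{2r}}$ (note $2r\le 2\sk_0<2+\sk$) is equivalent to $2^r\mid\mu$; writing $\mu=2^r\beta$ with $\beta$ running over $\Z/2^{2+\upsilon+r}$ gives quotient $\beta^2-2^{2+\sk-2r}u$ with exponent $2+\sk-2r\ge 3$, so this quotient is automatically $\equiv 0,1\pmod 4$, is $\equiv 1\pmod 8$ on odd $\beta$, and is even on even $\beta$. Thus for $\upsilon=0$ every $\beta$ contributes $1$ and the sum equals $\#(\Z/2^{2+r})=2^{2+r}$, which is \ref{Malors O-iii 13-Jul case}; for $\upsilon>0$ the symbol is $1$ on odd residues and $0$ on even ones, so the sum collapses to $2^{1+\upsilon+r}$, which after the cosmetic rewriting $2^{1+\upsilon+r}=2^{\upsilon-1}(2^{r+3}-2^{r+2})$ is \ref{Malors O-vi 13-Jul case} and \ref{Malors O-ix 13-Jul case}. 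The case $r=0$ is the same computation with the divisibility condition vacuous and the mod-$4$ condition automatic: one counts all $4$ residues modulo $4$ when $\upsilon=0$ (\ref{Malors O-i 13-Jul case}), and the $2^{1+\upsilon}$ odd residues modulo $2^{2+\upsilon}$ when $\upsilon>0$, since $\mu^2-2^{2+\sk}u\equiv 1\pmod 8$ for odd $\mu$ and is even for even $\mu$, so that $2^{1+\upsilon}=2^{\upsilon+2}-2^{\upsilon+1}$ is \ref{Malors O-ii 13-Jul case}.

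I do not anticipate a conceptual obstacle; the only point requiring care — exactly as flagged in the text for the primes above $2$ — is that the Kronecker symbol also sends residues $\equiv 7\pmod 8$ to $1$, so before declaring a symbol equal to $1$ I must verify that the relevant quotient is $\equiv 1\pmod 8$ rather than merely $\pm 1\pmod 8$. The computations above do give $\equiv 1\pmod 8$ on every odd-$\beta$ term precisely because $2+\sk-2r\ge 3$, so no sign ambiguity enters. The remaining work is routine power-of-two bookkeeping, and the slightly baroque closed forms displayed in the statement (such as $2^{\upsilon+2}-2^{\upsilon+1}$ and $2^{\upsilon-1}(2^{r+3}-2^{r+2})$) are written that way only to match the shape of the companion lemmas, so that the local Dirichlet series $\D_{\fp}(z,u)$ can be summed by the same manipulation as in Proposition~\ref{kloosc3}.
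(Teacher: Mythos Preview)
Your proposal is correct and follows essentially the same case analysis as the paper: use Lemma~\ref{lemma: 2 div rho, k odd, never integer} for $r>\sk_0+1$, the explicit quotient $\beta^2-2u\equiv 2,3\pmod 4$ at $r=\sk_0+1$, and the substitution $\mu=2^r\beta$ with $2+\sk-2r\ge 3$ for $r\le\sk_0$. Your treatment is in fact slightly more streamlined than the paper's, since by observing that the quotient is $\equiv 1\pmod 8$ (not merely odd) on odd $\beta$, you handle the even and odd $\upsilon>0$ subcases of \ref{Malors O-vi 13-Jul case} and \ref{Malors O-ix 13-Jul case} in one stroke, whereas the paper separates them.
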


\begin{proof}
We do each case separately.

\textit{\underline{Case $\upsilon= r = 0$}:}
The calculations are the same as Lemma~\ref{kloos values even not rho}\ref{case 1 2 not div rho}.

\underline{\textit{Case $\upsilon>0, r=0$}:} The congruence condition is vacuously satisfied in this case.
By the periodicity property of the symbol at the even prime (modulo $4$), we have
\[
\widetilde{K}_{2^\upsilon, 1} 
= \sum_{\mu\mod{2^{\upsilon+ 2}}}\binom{\mu^2 - 2^{2 + \sk}u}{2}^{v}
= 2^\upsilon \sum_{\mu\mod{4}}\binom{\mu^2 - 2^{2 + \sk}u}{2}^{v}
\]
Notice that for $\sk \ge 1$ we always have
\[
\mu^2 - 2^{2 + \sk}u\equiv \mu^2 \pmod{8}.
\]
Hence,
\[
\sum_{\mu\mod{4}}\binom{\mu^2 - 2^{2 + \sk}u}{2}^{v} = \sum_{\mu\mod{4}}\binom{\mu^2}{2}^{v} = \sum_{\mu\mod{4}}\binom{\mu^2}{2}.
\]
The final equality follows from the fact that the symbol only take the value $0$ or $1$ on squares.
To evaluate the Kloosterman sum, we only sum over the odd values of $\mu$ (which contribute a 1 to the sum).
Since there are $2^{\upsilon+ 1}$-many even terms, we get that 
\[
\widetilde{K}_{2^\upsilon, 1} = 2^{\upsilon+ 2} - 2^{\upsilon+ 1}.
\]

\underline{\textit{Case $\upsilon= 0$ , $r \le \sk_0$}:}
The definition of the Kloosterman-type sum becomes
\[
\widetilde{K}_{1, 2^r} 
= \sum_{\substack{\mu\mod{2^{2r + 2}}\\ \mu^2 \equiv 2^{2 + k} \pmod {2^{2r}}\\ (\mu^2 - 2^{2 + k}u)2^{-2r} \equiv 0, 1\mod{4}}}\binom{(\mu^2 - 2^{2 + k}u)2^{-2r}}{2}^0.
\]
More precisely, a term contributes a $1$ to the sum if and only if it is a class $\mu\pmod{2^{2r + 2}}$ such that 
\[
\mu^2\equiv 2^{2 + \sk}u\pmod{2^{2r}},
\]
and that satisfies the congruence condition.
Otherwise it contributes a $0$.

Since $r\le \sk_0$, we have $2r \le 2\sk_0 < 2 + k$, so that this last congruence is actually
\[
\mu^2\equiv 2^{2 + \sk}u\equiv 0\pmod{2^{2r}}.
\]
Equivalently, $\mu\equiv 0 \pmod{2^r}$.
There are $2^r$ solutions to this congruence modulo $2^{2r}$, and each of them lifts to \emph{four} solutions modulo $2^{2 + 2r}$.

Suppose that $\mu = 2^r y$ is such a solution.
Then
\[
\frac{\mu^2 - 2^{2 + \sk}u}{2^{2r}} = \frac{y^2\cdot2^{2r} - 2^{2 + \sk}u}{2^{2r}} = y^2 - 2^{2 + \sk -2r}u \equiv y^2 \pmod{4}.
\]
Where, we have used the fact that
\[
2 + \sk - 2r = 2 + 1 + 2(\sk_0 - r)> 2.
\]
The congruence condition is therefore satisfied and we conclude
\[
\widetilde{K}_{1, 2^r} = 2^{2 + r}.
\]

\underline{\textit{Case $\upsilon= 0$ , $r = 1 + \sk_0$}:}
In this situation 
\[
\widetilde{K}_{1, 2^{1+\sk_0}} 
= \sum_{\substack{\mu\mod{2^{2r + 2}}\\ \mu^2 \equiv 2^{2 + \sk}u \pmod {2^{2r}}\\ (\mu^2 - 2^{2 + \sk}u)2^{-2r} \equiv 0, 1\mod{4}}}\binom{(\mu^2 - 2^{2 + \sk}u)2^{-2r}}{2}^0.
\]

Here, we will show that the congruence conditions are never satisfied simultaneously.
Note that if we require $2^{2r} \mid \mu^2 - 2^{2+\sk}u$ then $\mu = 2^r x$, for some $x$.
Upon simplification and using the fact that $r = \sk_0 + 1$,
\[
\frac{\mu^2 - 2^{2+\sk}u}{2^{2r}} = x^2 - 2u.
\]
Observe that $u\equiv \pm 1\pmod{4}$ and that
\[
x^2 \pm 2 \not\equiv 0,1 \pmod{4}.
\]
This calculation concludes this case.

\underline{\textit{Case $\upsilon= 0$ , $r > 1 + \sk_0$}:}
Lemma~\ref{lemma: 2 div rho, k odd, never integer} asserts that the congruence conditions can never be satisfied.
This is an empty sum and the Kloosterman sum is 0.

\underline{\textit{Case $\upsilon> 0$ even , $r \le \sk_0$}:}
Using the periodicity of the symbol, we get that \[
\widetilde{K}_{2^\upsilon, 2^r} 
= 2^{\upsilon- 1} \sum_{\mu\mod{2^{2r + 3}}}\binom{(\mu^2 - 2^{2 + \sk}u){2^{-2r}}}{2}^{v}.
\]
The assumption on $r$ guarantees that
\[
2r < 2r + 1 \le 2\sk_0 + 1 = \sk < 2 + \sk.
\]
Therefore, the terms that contribute to the Kloosterman-type sum are those $\mu\pmod{2^{2r + 3}}$ such that 
\begin{itemize}
 \item $\mu^2\equiv 2^{2 + \sk}u \equiv 0\pmod{2^{2r}}$,
 \item $\mu^2\not\equiv 2^{2 + \sk}u\pmod{2^{2r + 1}}$ (equivalently, $\mu^2\not\equiv 0\pmod{2^{2r + 1}}$)
 \item they satisfy the congruence condition modulo $4$.
\end{itemize}
Now observe that modulo $2^{2r + 1}$, there are $2^{r + 1}$ solutions to the first congruence; of those, $2^r$ satisfy the second one.

Write $\mu = 2^ry$ for some solution $\mu$ that satisfies the first two the congruence conditions.
Then
\[
\frac{\mu^2 - 2^{2 + \sk}u}{2^{2r}} = \frac{y^2\cdot2^{2r} - 2^{2 + \sk}u}{2^{2r}} = y^2 - 2^{2 + \sk -2r}u \equiv y^2 \pmod{4},
\]
where we have used the fact that
\[
2 + \sk - 2r = 3 + 2(\sk_0 - r) > 2.
\]
The calculations show that the third congruence condition is always satisfied and \[
\widetilde{K}_{2^\upsilon, 2^r} = 2^{\upsilon- 1}(2^{r + 3} - 2^{r+2}).
\]

\underline{\textit{Case $\upsilon> 0$ even , $r = \sk_0 + 1$}:}
As in the case \ref{Malors O-iv 13-Jul case}, the congruence conditions are not satisfied simultaneously and the sum is 0.

\underline{\textit{Case $\upsilon> 0$ even , $r > \sk_0 + 1$}:} 
We can once again use Lemma~\ref{lemma: 2 div rho, k odd, never integer} to show that the sum is 0.

\underline{\textit{Case $\upsilon> 0$ odd , $r \le \sk_0$}:} 
As in \ref{Malors O-vi 13-Jul case} above, the condition $r \le \sk_0$ implies that $2r < 2 + \sk$.
The quotient $\frac{\mu^2 - 2^{2 + \sk}u}{2^{2r}}$ is an integer if and only if $2^r\mid \mu$.
Writing $\mu = 2^ry$ for some $y$ and using the periodicity of the symbol, we can replace the sum over the $\mu$ to that over the $y$.
We get 
\begin{align*}
K_{2^\upsilon, 2^r} 
&= \sum_{\substack{\mu\mod{2^{\upsilon+ 2r + 2}}\\ \mu^2 \equiv 2^{2 + \sk}u \pmod {2^{2r}}\\ (\mu^2 - 2^{2 + \sk}u)2^{-2r} \equiv 0, 1\mod{4}}}\binom{(\mu^2 - 2^{2 + \sk}u)2^{-2r}}{2}^{v}.\\
&= 2^{v} \sum_{\substack{y\mod{2^{r + 2}}\\ y^2 - 2^{2 + \sk -2r}u \equiv 0, 1\mod{4}}}\binom{(2^{2r}y^2 - 2^{2 + \sk}u)2^{-2r}}{2}.
\end{align*}
The congruence condition
\[
y^2 - 2^{2 + \sk -2r}u \equiv 0, 1 \pmod{4}
\]
holds because $2 + \sk - 2r \geq 2$.
Actually, more is true; because $r\le \sk_0$, we have
\[
2 + \sk - 2r = 3 + 2(\sk_0 - r)\ge 3.
\]
Invoking the periodicity modulo $8$ of the symbol and the fact that the congruences condition hold, the sum becomes:
\begin{align*}
K_{2^\upsilon, 2^r} 
 &= 2^{\upsilon+ r} \sum_{y\mod{4}}\binom{y^2 - 2^{2 + \sk - 2r}u}{2}\\
 &= 2^{\upsilon+ r} \sum_{y\mod{4}}\binom{y^2}{2}\\
 &= 2^{\upsilon+ r + 1} \quad \text{ since contribution comes from $y = \pm 1$ and the sum equals $2$}.
\end{align*}

\underline{\textit{Case $\upsilon> 0$ odd , $r = \sk_0 + 1$}:} As in cases \ref{Malors O-iv 13-Jul case} and \ref{Malors O-vii 13-Jul case} above, the sum is 0.

\underline{\textit{Case $\upsilon> 0$ odd , $r > \sk_0 + 1$}:} For this case, we once again use Lemma~\ref{lemma: 2 div rho, k odd, never integer} to show that the sum is 0.
\end{proof}

\begin{proposition}
\label{kloosc4}
With notation introduced in Lemma~\ref{lemma: 2 divides rho, k odd},
\[
\D_{2}(z,u)=
4\frac{(1-2^{z(\sk+1)})(1-1/2^{z+1})}{(1-1/2^{2z})(1-1/2^z)} 
\]
\end{proposition}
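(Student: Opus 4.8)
\textbf{Proof plan for Proposition~\ref{kloosc4}.}
The strategy mirrors exactly the computation carried out in Proposition~\ref{kloosc3} for the odd prime dividing $\rho$ with $\sk$ odd, except that the local Kloosterman values are now those recorded in Lemma~\ref{lemma: 2 divides rho, k odd} rather than in Lemma~\ref{q odd q div rho Kvals}. The plan is to substitute the eleven cases (O-i) through (O-xi) of Lemma~\ref{lemma: 2 divides rho, k odd} directly into the defining series
\[
\D_{2}(z,u)=\sum_{r=0}^{\infty}\frac{1}{2^{r(2z+1)}}\sum_{\upsilon=0}^{\infty}\frac{\widetilde{K}_{2^{\upsilon},2^{r}}(u)}{2^{\upsilon(z+1)}},
\]
noting first that every case with $r\ge \sk_0+1$ contributes zero (cases (O-iv), (O-v), (O-vii), (O-viii), (O-x), (O-xi)), so the $r$-sum is finite, running only over $0\le r\le \sk_0$. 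This truncation is precisely the phenomenon responsible for the numerator factor $1-2^{z(\sk+1)}$ (recall $\sk=2\sk_0+1$, so $2\sk_0+2=\sk+1$), just as in the odd-prime case.

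Next I would split the remaining double sum into the pieces $(\upsilon=0,r=0)$, $(\upsilon>0,r=0)$, $(\upsilon=0,1\le r\le\sk_0)$, $(\upsilon>0\text{ even},1\le r\le\sk_0)$, $(\upsilon>0\text{ odd},1\le r\le\sk_0)$, using the respective values $\widetilde{K}_{1,1}=4$, $\widetilde{K}_{2^{\upsilon},1}=2^{\upsilon+2}-2^{\upsilon+1}$, $\widetilde{K}_{1,2^{r}}=2^{2+r}$, $\widetilde{K}_{2^{\upsilon},2^{r}}=2^{\upsilon-1}(2^{r+3}-2^{r+2})$, $\widetilde{K}_{2^{\upsilon},2^{r}}=2^{\upsilon+r+1}$. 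Each geometric series is summed in closed form; in particular the even-$\upsilon$ and odd-$\upsilon$ contributions for $r\ge 1$ combine, after factoring out $2^{2+r}/2^{r(2z+1)}$ and summing over $1\le r\le\sk_0$, into a single factor $\dfrac{1-2^{-2z\sk_0}}{1-2^{-2z}}$ times a $\upsilon$-dependent factor that collapses — exactly the arrangement seen in the odd case
\[
\Bigl(\bigl(1-\tfrac1{2}\bigr)\tfrac{2^{-z}}{1-2^{-z}}+1\Bigr)\sum_{r=0}^{\sk_0}2^{-2rz}=\frac{1-2^{-(z+1)}}{1-2^{-2z}}\cdot\frac{1-2^{-z(\sk+1)}}{1-2^{-z}},
\]
now carrying the extra overall factor $4$ coming from the fact that every local Kloosterman value at the split prime $2$ is four times the corresponding value for an odd prime (compare $\widetilde{K}_{1,1}(u)=4$ here against $\widetilde{K}_{1,1}(u)=1$ in Lemma~\ref{q odd q div rho Kvals}). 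I would then verify that the algebra produces
\[
\D_{2}(z,u)=4\cdot\frac{(1-2^{-(z+1)})(1-2^{-z(\sk+1)})}{(1-2^{-2z})(1-2^{-z})}=4\,\frac{(1-2^{z(\sk+1)})(1-1/2^{z+1})}{(1-1/2^{2z})(1-1/2^{z})},
\]
where the last equality is the same elementary rewriting $1-2^{-z(\sk+1)}=-2^{-z(\sk+1)}(1-2^{z(\sk+1)})$ together with absorbing the sign, as in the statement.

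\textbf{Main obstacle.} The computation itself is entirely routine geometric-series bookkeeping; the only genuinely delicate point is confirming that the truncation at $r=\sk_0$ is clean, i.e.\ that cases (O-iv), (O-vii), (O-x) with $r=\sk_0+1$ really vanish and not merely the $r>\sk_0+1$ cases. This is already established in Lemma~\ref{lemma: 2 divides rho, k odd} (via the observation $x^2\pm2\not\equiv 0,1\pmod 4$ for the boundary case, and Lemma~\ref{lemma: 2 div rho, k odd, never integer} beyond it), so it can be invoked directly; the remaining work is purely formal. One should also double-check that the overall factor of $4$ is tracked consistently through every case — this is the one place where an arithmetic slip would be easy — but the uniform appearance of the extra $4$ across all of (O-i)–(O-xi) makes this transparent once the cases are tabulated side by side with those of Lemma~\ref{q odd q div rho Kvals}.
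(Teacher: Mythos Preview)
Your approach is correct and essentially identical to the paper's: the paper also notes that cases (O-iv), (O-v), (O-vii), (O-viii), (O-x), (O-xi) vanish, tabulates the geometric-series contribution of each surviving case, and sums (leaving the algebra to the reader). Your additional observation that every value in Lemma~\ref{lemma: 2 divides rho, k odd} is exactly $4$ times the corresponding value in Lemma~\ref{q odd q div rho Kvals} (with $\sp=2$) is a clean shortcut the paper does not make explicit: it lets you read off $\D_2(z,u)=4\cdot\D_\fp(z,u)\big|_{\sp=2}$ directly from Proposition~\ref{kloosc3}.

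One small point: your closing remark about ``absorbing the sign'' is confused. The factor you correctly derive is $1-2^{-z(\sk+1)}$, and this is what the result should say (compare Theorem~\ref{main theorem of the kloosterman section}, which has $1-1/\sp^{z(\sk+1)}$). The exponent $1-2^{z(\sk+1)}$ in the displayed statement is a typo, and no sign manipulation makes the two expressions equal; do not try to reconcile them.
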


\begin{proof}
The sum $\D_2(z, U)$ splits into several parts according to the values of $r$ and $\upsilon$ given by Lemma~\ref{lemma: 2 divides rho, k odd}.
Cases \ref{Malors O-iv 13-Jul case}, \ref{Malors O-v 13-Jul case}, \ref{Malors O-vii 13-Jul case}, \ref{Malors O-viii 13-Jul case}, \ref{Malors O-x 13-Jul case}, and \ref{Malors O-xi 13-Jul case} 
do not contribute to the sum.
On the other hand,
\begin{itemize}
\item \ref{Malors O-i 13-Jul case} contributes $4$,
\item \ref{Malors O-ii 13-Jul case} contributes $\frac{2^{-z+1}}{1 - 2^{-z}}$,
\item \ref{Malors O-iii 13-Jul case} contributes $4\cdot\frac{2^{-2z}(1 - 2^{-2\sk_0z})}{1 - 2^{-2z}}$,
\item \ref{Malors O-vi 13-Jul case} contributes $2\cdot\frac{2^{-2z}(1 - 2^{-2\sk_0z})}{1 - 2^{-2z}}\cdot\frac{2^{-2z}}{1 - 2^{-2z}}$,
\item \ref{Malors O-ix 13-Jul case} contributes $2\cdot\frac{2^{-2z}(1 - 2^{-2\sk_0z})}{1 - 2^{-2z}}\cdot\frac{2^{-z}}{1 - 2^{-2z}}$.
\end{itemize}
The sum of all these terms gives
\[
4\frac{(1-1/2^{z(\sk+1)})(1-1/2^{z+1})}{(1-1/2^{2z})(1-1/2^z)}.
\]
We leave the details to the reader.
\end{proof} 

\subsubsection{Case \texorpdfstring{$2\mid \rho$ and $\sk$}{} even.}
In this section, we only record the main result.

\begin{proposition}
\label{more kloosc}
Let $\fq= 2$ and suppose that $2^\sk\Vert \rho^{\sk'}$ with $\sk$ even; 
i.e., $\fq=\fp=2$ and $\Norm_K(\fq)=\Norm_K(\fp)=2$.
Then
\[
\D_{2}(z,u)=
4^n\frac{(1-2^{z(\sk+1)})(1-1/2^{z+1})}{(1-1/2^{2z})(1-1/2^z)} .
\]
\end{proposition}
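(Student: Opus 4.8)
The plan is to compute the local Dirichlet series $\D_2(z,u)$ at the prime $\fp = 2$ by exactly the same scheme that produced Propositions~\ref{kloosc3} and \ref{kloosc4}: first tabulate the local Kloosterman-type sums $\widetilde{K}_{2^\upsilon, 2^r}(u)$ in all relevant ranges of $(\upsilon, r)$, then substitute into $\D_2(z,u) = \sum_{r\ge 0} 2^{-r(2z+1)} \sum_{\upsilon\ge 0} \widetilde{K}_{2^\upsilon, 2^r}(u)\, 2^{-\upsilon(z+1)}$ and sum the geometric series. Write $\sk = 2\sk_0$ and, using \ref{ass: split} to identify $K_{\fp} \cong \Q_2$, write $\rho^{\sk'} = 2^{\sk} w$ for a unit $w \in \Z_2^*$, so that $4u\rho^{\sk'} = 2^{2+\sk}uw$. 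The case analysis splits into two regimes according to the size of $r$: for $r \le \sk_0$ the congruence $\mu^2 \equiv 2^{2+\sk}uw \pmod{2^{2r}}$ reduces to $\mu \equiv 0 \pmod{2^r}$, and writing $\mu = 2^r y$ the quotient $(\mu^2 - 2^{2+\sk}uw)2^{-2r} = y^2 - 2^{2+\sk-2r}uw$ is automatically $\equiv 0,1 \pmod 4$ since $2+\sk-2r \ge 2$ — this is structurally identical to cases \ref{Malors O-iii 13-Jul case}, \ref{Malors O-vi 13-Jul case}, \ref{Malors O-ix 13-Jul case} of Lemma~\ref{lemma: 2 divides rho, k odd}; for $r \ge \sk_0+1$, one is forced to write $\mu = 2^{\sk_0}\alpha$ and the count becomes governed by solutions of $\alpha^2 \equiv 4uw$ modulo a power of $2$ together with the modulo-$4$ condition, which, exactly as at primes above $2$ in Lemma~\ref{solutions kloos 1}, depends on the reduction $c(uw) \in \Z_2/8\Z_2$ and introduces the Kronecker symbol $\binom{4uw}{2}$ — with the familiar caveat that this symbol does not detect the square class of $7 \bmod 8$.

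Concretely, I would prove the analogues of Lemmas~\ref{solutions kloos 1} and \ref{solutions kloos c=1 r ge 3} (for the quantity $\mu^2 - 2^{2+\sk}uw$ in place of $\mu^2 - 4u\rho^{\sk'}$), and replace Lemma~\ref{lemma: 2 div rho, k odd, never integer} by its even-$\sk$ counterpart: for $2r > \sk$ the quotient $(\mu^2 - 2^{2+\sk}uw)2^{-2r}$ is an algebraic integer precisely when $2^{\sk_0} \mid \mu$. From these, the table of $\widetilde{K}_{2^\upsilon, 2^r}(u)$ is assembled by the same bookkeeping as before: the parity of $\upsilon$ matters only through whether lifts of a given solution all contribute $+1$ ($\upsilon$ even) or cancel in pairs ($\upsilon$ odd, except where the division is vacuous or where a Langlands-type sum $\sum_{\alpha \bmod 2}\binom{\alpha^2 - 4uw}{2}$ over the residue field must be evaluated via Lemma~\ref{Langlands lemma}). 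The resulting list will mirror the case split (E-i)--(E-viii) of Lemma~\ref{q odd q div rho Kvals} — with subcases $0 \le r \le \sk_0-1$, $r = \sk_0$ (split by parity of $\upsilon$), and $r \ge \sk_0+1$ (split by parity of $\upsilon$ and by $c(uw)$) — except carried out at the prime $2$, so that several ranges acquire the extra factor $\bigl(1 + \binom{4uw}{2}\bigr)$ coming from the unit congruence, and several further subcases appear because the modulo-$4$ condition interacts nontrivially with it.

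Finally, I would substitute the table into the defining double series and sum termwise, exactly as in the proof of Proposition~\ref{kloosc3}: the geometric sums in $r$ and $\upsilon$ decouple wherever the summand factors, and the crucial point — the reason $\D_2(z,u)$ is independent of $u$ — is that the aggregate of all terms carrying the factor $\bigl(1 + \binom{4uw}{2}\bigr)$ (those from $r = \sk_0$ with $\upsilon$ even, and from $r \ge \sk_0+1$) telescopes to zero by the very same rational-function identity used in the square-bracket computation at the end of the proofs of Propositions~\ref{kloosc2} and \ref{kloosc3}. What survives collapses, after the usual partial-fraction manipulation, into $4\,\dfrac{(1 - 2^{-z(\sk+1)})(1 - 2^{-(z+1)})}{(1 - 2^{-2z})(1 - 2^{-z})}$, the exact analogue of the $\sk$-odd formula in Proposition~\ref{kloosc4}; multiplying this $\fp$-factor against the contributions $4\,\dfrac{1 - 2^{-(z+1)}}{1 - 2^{-2z}}$ from the remaining $n-1$ primes above $2$ (Proposition~\ref{kloosc2}) yields the stated expression. \emph{The main obstacle} is the case analysis of the two paragraphs above: at the prime $2$ with $\sk$ even one simultaneously confronts the extra modulo-$4$ congruence present at all even primes \emph{and} a genuine quadratic-residue question in the unit $uw$ for which the Kronecker symbol is the ``wrong'' character, so the counting has strictly more subcases than either Lemma~\ref{q odd q div rho Kvals} or Lemma~\ref{lemma: 2 divides rho, k odd}, and the delicate part is to verify that the Kronecker-symbol contributions are arranged so as to cancel identically in the Dirichlet series.
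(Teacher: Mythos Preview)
Your approach is exactly what the paper intends: its own proof consists of the single sentence ``This can be proven in exactly the same way as before. This proof is left as an exercise for the reader,'' and the case analysis you sketch --- merging the $r \le \sk_0$ mechanics of Lemma~\ref{lemma: 2 divides rho, k odd} with the $r \ge \sk_0+1$ mechanics of Lemmas~\ref{solutions kloos 1} and \ref{solutions kloos c=1 r ge 3}, then showing that the $\bigl(1+\binom{4uw}{2}\bigr)$-terms telescope as in the square-bracket computation of Proposition~\ref{kloosc3} --- is precisely that exercise.

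One point to flag: the $4^n$ in the displayed formula is a typo for $4$. The quantity $\D_2(z,u)$ is the local factor at the \emph{single} prime $\fp$ above $2$ (compare Propositions~\ref{kloosc2} and \ref{kloosc4}, each of which produces a leading constant $4$); the $4^n$ in Theorem~\ref{main theorem of the kloosterman section} arises only after taking the product over all $n$ primes above $2$. Your final move --- multiplying the $\fp$-factor against the other $n-1$ even-prime factors from Proposition~\ref{kloosc2} to manufacture the $4^n$ --- is therefore unnecessary, and in any case does not reproduce the stated right-hand side, since those extra factors would contribute an additional $(1-2^{-(z+1)})^{n-1}/(1-2^{-2z})^{n-1}$ that the formula lacks. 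A minor caveat as well: Lemma~\ref{Langlands lemma} is stated only for $\fq\nmid 2$, so the short character sums that arise at $r=\sk_0$ with $\upsilon$ odd must be computed directly (as in Lemma~\ref{kloos values even not rho}\ref{case 6 2 not div rho}) rather than by citing that lemma.
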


\begin{proof}
This can be proven in exactly the same way as before.
This proof is left as an exercise for the reader.
\end{proof}

\bibliographystyle{alpha}

\bibliography{bibliography}

\newcommand{\etalchar}[1]{$^{#1}$}
\begin{thebibliography}{GKM{\etalchar{+}}18}

\bibitem[Alt13]{altug2013beyond}
Salim~Ali Altug.
\newblock {\em Beyond Endoscopy via the trace formula}.
\newblock PhD thesis, Princeton University, 2013.

\bibitem[Alt15]{AliI}
Salim~Ali Altu\u{g}.
\newblock Beyond endoscopy via the trace formula: 1. {P}oisson summation and
  isolation of special representations.
\newblock {\em Compos. Math.}, 151(10):1791--1820, 2015.

\bibitem[Alt17]{AliII}
S~Ali Altu{\u{g}}.
\newblock Beyond endoscopy via the trace formula, {II}: asymptotic expansions
  of {F}ourier transforms and bounds towards the {R}amanujan conjecture.
\newblock {\em Am. J. Math.}, 139(4):863--913, 2017.

\bibitem[Alt20]{AliIII}
S~Ali Altu{\u{g}}.
\newblock Beyond endoscopy via the trace formula--{III} the standard
  representation.
\newblock {\em J. Inst. Math. Jussieu}, 19(4):1349--1387, 2020.

\bibitem[Art03]{Art_PoF}
James Arthur.
\newblock The principle of functoriality.
\newblock {\em Bull. Am. Math. Soc.}, 40(1):39--53, 2003.

\bibitem[Art05]{Art-intro}
James Arthur.
\newblock An introduction to the trace formula.
\newblock {\em Harmonic analysis, the trace formula, and Shimura varieties},
  4:1--263, 2005.

\bibitem[Art17]{problems}
James Arthur.
\newblock Problems beyond endoscopy.
\newblock In {\em Representation Theory, Number Theory, and Invariant Theory},
  pages 23--45. Springer, 2017.

\bibitem[Art18]{artstrat}
James Arthur.
\newblock A stratification related to characteristic polynomials.
\newblock {\em Adv. Math.}, 327:425--469, 2018.

\bibitem[Art23]{arthur2023work}
James~G. Arthur.
\newblock The work of {R}obert {L}anglands, 2023.
\newblock arxiv preprint 2307.02571.

\bibitem[CNT23]{clozel2023nonabelian}
Laurent Clozel, James Newton, and Jack~A. Thorne.
\newblock Non-abelian base change for symmetric power liftings of holomorphic
  modular forms, 2023.
\newblock arxiv preprint 2312.01774.

\bibitem[DV18]{DV18}
David~S Dummit and John Voight.
\newblock The 2-{S}elmer group of a number field and heuristics for narrow
  class groups and signature ranks of units.
\newblock {\em Proc. Lond. Math. Soc.}, 117(4):682--726, 2018.

\bibitem[Esp23a]{malors21}
Malors Espinosa.
\newblock Multiplicative formula of {L}anglands for orbital integrals in
  ${GL}(2)$, 2023.
\newblock arxiv preprint 2402.08013.

\bibitem[Esp23b]{espinosa2023zeta}
Malors Espinosa.
\newblock Zeta functions of certain quadratic orders.
\newblock {\em J. Number Theory}, 252:379--404, 2023.

\bibitem[Fen23]{feng2023modular}
Tony Feng.
\newblock Modular functoriality in the local {L}anglands correspondence, 2023.
\newblock arxiv preprint 2312.12542.

\bibitem[FLN10]{FLN}
Edward Frenkel, Robert Langlands, and B\'ao~Ch\^au Ng\^o.
\newblock Formule des traces et fonctorialit\'e: le d\'ebut d'un programme.
\newblock {\em Ann. Sci. Math. Qu\'ebec}, 34(2):199--243, 2010.

\bibitem[Gel96]{Gel95}
Stephen Gelbart.
\newblock {\em Lectures on the {A}rthur--{S}elberg trace formula}, volume~9 of
  {\em University Lecture Series}.
\newblock American Mathematical Society, Providence, RI, 1996.

\bibitem[GKM{\etalchar{+}}18]{GKMPW}
Oscar~E. Gonz\'{a}lez, Chung~Hang Kwan, Steven~J. Miller, Roger Van~Peski, and
  Tian~An Wong.
\newblock On smoothing singularities of elliptic orbital integrals on {$GL(n)$}
  and beyond endoscopy.
\newblock {\em J. Number Theory}, 183:407--427, 2018.

\bibitem[Gor22]{Gor22}
Julia Gordon.
\newblock Orbital integrals and normalizations of measures (with appendix by
  matthew koster), 2022.

\bibitem[IK04]{IK04}
Henryk Iwaniec and Emmanuel Kowalski.
\newblock {\em Analytic number theory}, volume~53 of {\em American Mathematical
  Society Colloquium Publications}.
\newblock American Mathematical Society, Providence, RI, 2004.

\bibitem[JL70]{JacquetLanglands1970}
H.~Jacquet and R.~P. Langlands.
\newblock {\em Automorphic forms on {${\rm GL}(2)$}}, volume Vol. 114 of {\em
  Lecture Notes in Mathematics}.
\newblock Springer-Verlag, Berlin-New York, 1970.

\bibitem[KKS00]{IwanamiSeries}
Kazuya Kato, Nobushige Kurokawa, and Takeshi Sait{\=o}.
\newblock {\em Number theory 2: Introduction to Class Field Theory}.
\newblock AMS Bookstore, 2000.

\bibitem[KL06]{KL06}
Andrew Knightly and Charles Li.
\newblock {\em Traces of Hecke operators}.
\newblock Number 133. American Mathematical Soc., 2006.

\bibitem[Kot05]{Art05_Harmonic-Analysis}
Robert Kottwitz.
\newblock Harmonic analysis on reductive groups and {Lie} algebras.
\newblock In {\em Harmonic analysis, the trace formula, and {Shimura}
  varieties}. 2005.

\bibitem[Laf15]{Lafforgue}
Laurent Lafforgue.
\newblock Formules de {P}oisson non lin\'{e}aires et principe de
  fonctorialit\'{e} de {L}anglands.
\newblock In {\em Introduction to modern mathematics}, volume~33 of {\em Adv.
  Lect. Math. (ALM)}, pages 323--347. Int. Press, Somerville, MA, 2015.

\bibitem[Lan80]{Langlands1980}
Robert~P. Langlands.
\newblock {\em Base change for {${\rm GL}(2)$}}, volume No. 96 of {\em Annals
  of Mathematics Studies}.
\newblock Princeton University Press, Princeton, NJ; University of Tokyo Press,
  Tokyo, 1980.

\bibitem[Lan97]{Langlands1997}
R.~P. Langlands.
\newblock Representations of abelian algebraic groups.
\newblock {\em Pacific J. Math.}, pages 231--250, 1997.
\newblock Olga Taussky-Todd: in memoriam.

\bibitem[Lan04]{LanBE04}
Robert~P. Langlands.
\newblock Beyond endoscopy.
\newblock In {\em Contributions to automorphic forms, geometry, and number
  theory}, pages 611--697. Johns Hopkins Univ. Press, Baltimore, MD, 2004.

\bibitem[Lem10]{Lem_RecLaw}
Franz Lemmermeyer.
\newblock {\em Reciprocity laws: from {E}uler to {E}isenstein}.
\newblock Springer Berlin, Heidelberg, 2010.

\bibitem[Roh11]{Rohrlich}
David~E. Rohrlich.
\newblock Root numbers.
\newblock In {\em Arithmetic of {$L$}-functions}, volume~18 of {\em IAS/Park
  City Math. Ser.}, pages 353--448. Amer. Math. Soc., Providence, RI, 2011.

\bibitem[She79]{shel}
D.~Shelstad.
\newblock Orbital integrals for {${\rm GL}\sb{2}({\bf R})$}.
\newblock In {\em Automorphic forms, representations and {$L$}-functions
  ({P}roc. {S}ympos. {P}ure {M}ath., {O}regon {S}tate {U}niv., {C}orvallis,
  {O}re., 1977), {P}art 1,}, Proc. Sympos. Pure Math., XXXIII, pages 107--110.
  1979.

\bibitem[SY13]{SY13}
K.~Soundararajan and Matthew~P. Young.
\newblock The prime geodesic theorem.
\newblock {\em J. Reine Angew. Math.}, 676:105--120, 2013.

\bibitem[Ven04]{Venkatesh}
Akshay Venkatesh.
\newblock ``{B}eyond endoscopy'' and special forms on {GL}(2).
\newblock {\em J. Reine Angew. Math.}, 577:23--80, 2004.

\bibitem[Yun13]{yun2013orbital}
Zhiwei Yun.
\newblock Orbital integrals and {D}edekind zeta functions.
\newblock In {\em The legacy of {S}rinivasa {R}amanujan}, volume~20 of {\em
  Ramanujan Math. Soc. Lect. Notes Ser.}, pages 399--420. Ramanujan Math. Soc.,
  Mysore, 2013.

\bibitem[Zag06]{zagier2006modular}
Don Zagier.
\newblock Modular forms whose {F}ourier coefficients involve zeta-functions of
  quadratic fields.
\newblock In {\em Modular Functions of One Variable VI: Proceedings
  International Conference, University of Bonn, Sonderforschungsbereich
  Theoretische Mathematik July 2--14, 1976}, pages 105--169. Springer, 2006.

\end{thebibliography}

\end{document}